\documentclass[12pt, a4paper]{article}

\usepackage{amssymb,amsmath,amsthm}
\usepackage[all]{xy}

\usepackage{amssymb,amsmath,amsthm}
\usepackage[mathscr]{eucal}%
\usepackage[all]{xy}
\usepackage{lmodern}
\usepackage[T1]{fontenc}
\usepackage[utf8]{inputenc}
\usepackage{tikz-cd}
\usepackage[shortlabels]{enumitem}
\usepackage{relsize}
\usepackage{geometry}
\usepackage{mathtools}
\usepackage{adjustbox}
\usepackage{lscape}
\usepackage{diagbox}
\usepackage{slashbox}
\usepackage{bm}
\usepackage{caption}
\usepackage{float}

\usepackage{stmaryrd}
\usepackage{mathrsfs}  
\usepackage{multirow}
\usepackage{hyperref}
\usetikzlibrary{positioning}
\usepackage{tikz-3dplot}
\usepackage{xifthen}
\usetikzlibrary{quotes,arrows.meta}
\tdplotsetmaincoords{60}{125}
\tdplotsetrotatedcoords{0}{0}{0}
\usepackage{xcolor}
\definecolor{mygreen}{rgb}{0.16,.55,0.0}
\usepackage[nottoc]{tocbibind}

\topmargin=0.00in
\oddsidemargin=0in
\evensidemargin=0in
\textwidth=6.4in
\textheight=8.593in
\marginparwidth=15mm
\setlength{\parskip}{3mm}

\theoremstyle{plain}
\newtheorem{prop}{Proposition}[subsubsection]
\newtheorem{conj}[prop]{Conjecture}
\newtheorem{lem}[prop]{Lemma}
\newtheorem{thm}[prop]{Theorem}
\newtheorem{cor}[prop]{Corollary}
\theoremstyle{definition}
\newtheorem{definit}[prop]{Definition}
\newtheorem{ex}[prop]{Example}
\newtheorem{rem}[prop]{Remark}

\theoremstyle{plain} 
\newtheorem{prop1}{Proposition}[subsection]
\newtheorem{conj1}[prop1]{Conjecture}
\newtheorem{lem1}[prop1]{Lemma}
\newtheorem{thm1}[prop1]{Theorem}
\newtheorem{cor1}[prop1]{Corollary}
\theoremstyle{definition}
\newtheorem{definit1}[prop1]{Definition}
\newtheorem{ex1}[prop1]{Example}
\newtheorem{rem1}[prop1]{Remark}

\theoremstyle{plain}

\theoremstyle{definition}

\theoremstyle{plain}

\theoremstyle{definition}

\def\ilim#1{\displaystyle \lim_{\stackrel{\longrightarrow}{#1}}}

\def\plim#1{\displaystyle \lim_{\stackrel{\longleftarrow}{#1}}}
\def\varddots{\mathinner{\raise7pt\vbox{\kern3pt\hbox{.}}\mkern1mu\smash{\raise4pt\hbox{.}}\mkern1mu\smash{\raise1pt\hbox{.}}}}

\DeclareMathAlphabet{\mathpzc}{OT1}{pzc}{m}{it}

\DeclarePairedDelimiter{\scalar}{\langle}{\rangle}
\DeclarePairedDelimiter{\set}{\{}{\}}

\DeclareMathOperator{\Aut}{Aut}
\DeclareMathOperator{\End}{End}
\DeclareMathOperator{\Hom}{Hom}
\DeclareMathOperator{\cont}{cont}
\DeclareMathOperator{\Ind}{Ind}
\DeclareMathOperator{\cInd}{c-Ind}
\DeclareMathOperator{\Res}{Res}

\DeclareMathOperator{\GL}{GL}

\DeclareMathOperator{\Gal}{Gal}

\DeclareMathOperator{\tr}{tr}
\DeclareMathOperator{\rk}{\mathrm{rk}}

\DeclareMathOperator{\Id}{Id}

\DeclareMathOperator{\soc}{soc}

\DeclareMathOperator{\Ext}{Ext}

\DeclareMathOperator{\gr}{gr}
\DeclareMathOperator{\s}{Sym}
\DeclareMathOperator{\ad}{ad}

\newcommand{\p}{\mathfrak{p}}
\newcommand{\q}{\mathfrak{q}}

\newcommand{\m}{\mathfrak{m}}

\newcommand{\NN}{\mathbb{N}}
\newcommand{\Z}{{\mathbb Z}}

\newcommand{\cC}{\mathcal{C}}

\newcommand{\QK}[1]{I_{1}}

\newcommand{\Qpf}{\mathbb{Q}_{p^f}}
\newcommand{\Qp}{\mathbb{Q}_{p}}

\newcommand{\Zp}{\mathbb{Z}_{p}}
\newcommand{\F}{\mathbb{F}}
\newcommand{\Fp}{\mathbb{F}_{p}}
\newcommand{\Fpf}{\mathbb{F}_{p^f}}
\newcommand{\Fq}{\mathbb{F}_{q}}

\newcommand{\Qpbar}{\overline{\mathbb{Q}}_p}

\newcommand{\Fpbar}{\overline{\mathbb{F}}_p}

\newcommand{\TT}{\mathcal{T}}

\newcommand{\Q}{\mathbb{Q}}

\newcommand{\GG}{\mathcal{G}}

\newcommand{\R}{\mathbb{R}}

\newcommand{\onto}{\twoheadrightarrow}
\newcommand{\into}{\hookrightarrow}
\newcommand{\congto}{\xrightarrow{\,\sim\,}}
\newcommand{\xonto}[2][]{%
  \xrightarrow[#1]{#2}\mathrel{\mkern-14mu}\rightarrow}
\newcommand{\eps}{\varepsilon}

\newcommand{\EE}{\mathrm{E}}
\newcommand{\cZ}{\mathcal{Z}}
\newcommand{\cO}{\mathcal{O}}
\newcommand{\et}{\acute{\mathrm{e}}\mathrm{t}}

\newcommand{\LL}{L^{\!\otimes}}
\newcommand{\LLbar}{\overline L^{\otimes}}
\newcommand{\Lbar}{\overline L}
\DeclareMathOperator{\ind}{ind}

\newcommand{\Av}{{\mathbb A}_{F^+}^{\infty,v}}

\newcommand{\Ap}{{\mathbb A}_{F^+}^{\infty,p}}
\newcommand{\A}{{\mathbb A}_{F^+}^{\infty}}
\newcommand{\GAv}{H(\Av)}

\newcommand{\GAp}{H(\Ap)}
\newcommand{\GA}{H(\A)}

\newcommand{\fgk}{\Phi \Gamma_{\!\!{\mathbb F}}^{\rm \acute et}}

\newcommand{\fghatk}{\widehat{\Phi \Gamma}_{\!\!{\mathbb F}}^{\rm \acute et}}

\newcommand{\repk}{{\rm Rep}_{{\mathbb F}}}
\newcommand{\irepk}{{\rm IndRep}_{{\mathbb F}}}

\newcommand{\cJ}{{\mathcal{J}}}

\newcommand{\pE}{\varpi_E}

\newcommand{\rbar}{\overline{r}}
\newcommand{\rhobar}{\overline{\rho}}

\newcommand{\gp}{{\Gal}(\Qpbar/\Qp)}

\newcommand{\gK}{{\Gal}(\Qpbar/K)}
\newcommand{\gKQ}{{\Gal}(K/\Qp)}

\newcommand{\gF}{{\Gal}(\overline F/F)}

\DeclareMathOperator{\Frob}{Frob}

\newcommand{\oE}{{\mathcal O}_E}
\newcommand{\oK}{{\mathcal O}_K}
\newcommand{\oFF}{{\mathcal O}_{\!F^+}}
\newcommand{\oFFv}{{\mathcal O}_{\!F_v^+}}

\newcommand{\oF}{{\mathcal O}_{\!F}}

\newcommand{\sep}{\mathrm{sep}}

\newcommand{\Ker}{\mathrm{Ker}}%
\newcommand{\brho}{\overline{\rho}}%
\newcommand{\smatr}[4]{\bigl(\begin{smallmatrix} {#1}& {#2}\\ {#3}&{#4}\end{smallmatrix}\bigl)}%
\newcommand{\smat}[1]{\left( \begin{smallmatrix} #1 \end{smallmatrix} \right)}

\newcommand{\defeq}{\stackrel{\textrm{\tiny{\upshape{def}}}}{=}}

\newcommand{\ovl}[1]{\overline{#1}}

\newcommand{\un}[1]{\underline{#1}}
\renewcommand{\bf}[1]{\mathbf{#1}}
\newcommand{\tld}[1]{\tilde{#1}}
\newcommand{\wtld}[1]{\widetilde{#1}}

\DeclareMathOperator{\JH}{\mathrm{JH}}

\newcommand{\phz}{\varphi}
\newcommand{\ra}{\rightarrow}

\newcommand{\lra}{\longrightarrow}

\newcommand{\ppar}[1]{(\mkern-3mu(#1)\mkern-3mu)}
\newcommand{\bbra}[1]{\llbracket #1\rrbracket}

\newcommand{\fm}{\mathfrak{m}}

\DeclareMathOperator{\Spec}{Spec}

\newcommand{\xto}[1][]{\xrightarrow{#1}}
\newcommand{\simto}{\xto[\sim]} %

\renewcommand{\subset}{\subseteq}
\renewcommand{\simeq}{\cong} 
\topmargin=0.00in
\oddsidemargin=0.2in
\evensidemargin=0.2in
\textwidth=6in
\textheight=8.593in
\setlength{\parskip}{3mm}

\theoremstyle{plain}

\title{Conjectures and results on modular representations of $\GL_n(K)$ for a $p$-adic field $K$}

\author{Christophe Breuil\footnote{CNRS, B\^atiment 307, Facult\'e d'Orsay, Universit\'e Paris-Saclay, 91405 Orsay Cedex, France}\\
\and
Florian Herzig\footnote{Dept.\ of Math., Univ.\ of Toronto, 40 St.\ George St., BA6290, Toronto, ON M5S 2E4, Canada}\\
\and
Yongquan Hu\footnote{Morningside Center of Math., No.\ 55, Zhongguancun East Road, Beijing, 100190, China}\\
\and
Stefano Morra\footnote{Lab.\ d'Analyse, G\'eom\'etrie, Alg\`ebre, 99 Av.\ Jean Baptiste Cl\'ement, 93430 Villetaneuse, France }\\
\and
Benjamin Schraen\footnote{B\^atiment 307, Facult\'e d'Orsay, Universit\'e Paris-Saclay, 91405 Orsay Cedex, France}}

\date{ }

\begin{document} 

\maketitle

\setcounter{tocdepth}{3}

\begin{abstract}
Let $p$ be a prime number and $K$ a finite extension of $\Qp$. We state conjectures on the smooth representations of $\GL_n(K)$ that occur in spaces of mod $p$ automorphic forms (for compact unitary groups). In particular, when $K$ is unramified, we conjecture that they are of finite length and predict their internal structure (extensions, form of subquotients) from the structure of a certain algebraic representation of $\GL_n$. When $n=2$ and $K$ is unramified, we prove several cases of our conjectures, including new finite length results.
\end{abstract}

\tableofcontents

\newpage

\section{Introduction}\label{intro}

\subsection{Preamble}\label{preamble}

Let $p$ be a prime number and $K$ a local field of residue characteristic $p$. In the early nineties, Barthel and Livn\'e had the fancy idea to start classifying irreducible (admissible) smooth representations of $\GL_2(K)$ over an algebraically closed field of characteristic $p$ (\cite{BL1}, \cite{BL2}). They found four nonempty distinct classes of such representations: $1$-dimensional ones, irreducible principal series, special series, and those which are not an irreducible constituent of a principal series that they called supersingular. In 2001, one of us classified supersingular representations of $\GL_2(\Qp)$ with a central character (\cite{breuilI}) and showed that they are in ``natural'' bijection with $2$-dimensional irreducible representations of $\gp$ in characteristic $p$. This was one of the starting points of the mod $p$ and $p$-adic Langlands programmes for $\GL_2(\Qp)$, which was developed essentially during the decade 2000-2010 (see for instance \cite{breuilII}, \cite{Brcompl}, \cite{emerton-ordII}, \cite{kisin-asterisque}, \cite{Colmez}, \cite{berger10a}, \cite{paskunasIHES}, \cite{emerton-local-global}, \cite{CDP}, \cite{CEGGPS2}, \dots).

There are two main novel features of the mod $p$ local Langlands correspondence for $\GL_2(\Qp)$ (compared to previous Langlands correspondences). The first one is that it involves {\it reducible} representations of $\GL_2(\Qp)$. More precisely, the representation of $\GL_2(\Qp)$ is irreducible (resp.\ semisimple, resp.\ indecomposable) if and only if its corresponding $2$-dimensional representation of $\gp$ is, and, in the reducible case, is given (at least generically) by an extension between two specific principal series. The second one, found by Colmez in \cite{Colmez}, is that the correspondence can be made {\it functorial} by an exact functor from finite length representations of $\GL_2(\Qp)$ to \'etale $(\varphi,\Gamma)$-modules, i.e.\ to finite length representations of $\gp$ by Fontaine's equivalence. Thanks to this exact functor, one can extend the correspondence first to extensions of representations, and then to deformations on both sides. 

When $K$ is not $\Qp$, trouble comes from supersingular representations. Contrary to the case $K=\Qp$, they can be more numerous than $2$-dimensional irreducible representations of ${\rm Gal}(\overline K/K)$ (\cite{BP}) and they cannot be described as quotients of a compact induction by a finite number of equations (\cite[Cor.5.5]{yongquan-jussieu}, \cite[Thm.0.1]{Benj}, \cite[Thm.1.1]{Wu}), justifying {\it a posteriori} the terminology ``very strange'' that was used to describe them in the introduction of \cite{BL2}. As a consequence, no classification of supersingular representations of $\GL_2(K)$ is known so far, which has hitherto made impossible to find a definition of a hypothetical local mod $p$ correspondence for $\GL_2(K)$ by purely local (either representation theoretic or geometric) means.

Fortunately, the global theory comes to the rescue. If a local correspondence exists, there is a place where it should be realized: the mod $p$ cohomology of Shimura varieties. Let us assume now that $K$ is a finite unramified extension of $\Qp$ with residue field $\Fpf$ and let $K_1\defeq 1+pM_2(\cO_K)\subseteq \GL_2(\cO_K)$. Following the pioneering work of \cite{BDJ} on Serre weight conjectures, a series of articles (\cite{BP}, \cite{EGS}, \cite{HuWang}, \cite{LMS}, \cite{DanWild}) led to a complete description of the $K_1$-invariants of the $\GL_2(K)$-representations carried by Hecke isotypic subspaces in such mod $p$ cohomology groups. Although these invariants are only a tiny piece of the representations of $\GL_2(K)$, combined with weight cycling this turned out to give a strong hint on the form of these representations, as well as being a useful technical result. Indeed, very recently, building on this description and on results of \cite{BHHMS1}, Hu and Wang could prove that, at least when $K$ is quadratic unramified and the representation of $\gK$ is a nonsplit extension between two (sufficiently generic) characters, these $\GL_2(K)$-representations are indecomposable of length $3$ (in particular are of finite length), with similar principal series as in the case $K=\Qp$ in socle and cosocle, and a supersingular representation ``in the middle'' (\cite[Thm.1.7]{HuWang2}).

These recent results maintain the hope of a local Langlands correspondence for $\GL_2(K)$. They also prompted us to make public some conjectures we had in mind for many years on the form of the $\GL_n(K)$-representations carried by Hecke isotypic subspaces, and on a functorial link to representations of $\gp$ via $(\varphi,\Gamma)$-modules. We state such conjectures in the present work (Conjecture \ref{theconjbar}, Conjecture \ref{conj:generale}, Conjecture \ref{theconj}) and we prove some special cases in the case $n=2$ and $K$ unramified, including some new finite length results (Theorem \ref{specialcase1}, Theorem \ref{cor:pi-irredglob}, Corollary \ref{specialcase2}). Moreover, when $n=2$ and $K$ is unramified, we also define (and use in the proofs!) an abelian category $\mathcal C$ of smooth admissible representations of $\GL_2(K)$ in characteristic $p$ (containing the representations coming from the global theory) together with an exact functor from $\mathcal C$ to a new category of multivariable $(\varphi,\Gamma)$-modules.

\subsection{Conjectures}

Let us first describe our conjectures with some details. As usual, we mostly work in the setting of compact unitary groups (except in \S\ref{Cgroup}), so that we do not (yet) mix delicate representation theoretic issues with difficult geometric problems (ultimately, we think that the representations of $\GL_n(K)$ should not change from one global setting to another). We fix $F$ a CM-field, i.e.\ a totally imaginary quadratic extension of a totally real number field $F^+$, and we assume {\it for simplicity in this introduction} that $p$ is inert in $F^+$. We also assume (not for simplicity) that the unique $p$-adic place $v$ of $F^+$ splits in $F$. We fix a continuous absolutely irreducible representation
\[\rbar:\gF\longrightarrow \GL_n(\F),\]
where $\F$ is a (sufficiently large) extension of $\Fp$ and we assume that $\rbar$ is automorphic for a unitary group $H$ over $F^+$ that is compact at all infinite places and becomes $\GL_n$ over $F$. Equivalently there exists a compact open subgroup $U^v\subseteq \GAv$ such that
\[S(U^{v},\F)[\m]\defeq \{f:H(F^+)\backslash \GA/U^v\rightarrow \F\ {\rm locally\ constant}\}[\m]\ne 0,\]
where $[\m]$ means the Hecke-isotypic subspace associated to $\rbar$ (one has to choose a finite set of bad places $\Sigma$ in the definition of $\m$, but we forget this issue here, see \S\ref{wlgc} below). 

Let $\tilde v\vert v$ in $F$, $K\defeq F_{\tilde v}$ the corresponding completion and $\rbar_{\tilde{v}}$ the restriction of $\rbar$ to a decomposition subgroup at ${\tilde{v}}$. Then $S(U^{v},\F)[\m]$ is an admissible smooth representation of $\GL_n(K)$ over $\F$ by the usual right translation action on functions. Our main conjecture gives the form of this $\GL_n(K)$-representation (assuming it is of finite length) as well as a functorial link to $\rbar_{\tilde{v}}$. But to state it we need a few preliminaries on certain algebraic representations of $\GL_n$ over $\F$.

Let us first assume {\it for simplicity} that $K=\Qp$. We let $\rm Std$ be the standard $n$-dimensional algebraic representation of $\GL_n$ over $\F$ and define the following algebraic representation of $\GL_n$ over $\F$:
\[\LLbar\defeq \bigotimes_{i=1}^{n-1}{\bigwedge}^{\!\!i}_{\F}{\rm Std}.\]
We fix $P\subseteq \GL_n$ a parabolic subgroup containing the Borel $B$ of upper-triangular matrices, and let $M_P$ be its Levi subgroup containing the torus $T$ of diagonal matrices. We fix $\widetilde P\subseteq P$ a Zariski closed algebraic subgroup containing $M_P$ and we consider the algebraic representation $\LLbar\vert_{{\widetilde P}}$ of $\widetilde P$ over $\F$.

\begin{definit1}[Definition \ref{goodsubqt}]
A subquotient of $\LLbar\vert_{{\widetilde P}}$ is a {\it good} subquotient if its restriction to the center $Z_{M_{P}}$ of $M_P$ is a (direct) sum of isotypic components of $\LLbar\vert_{Z_{M_{P}}}$.
\end{definit1}

Note that an isotypic component of $\LLbar\vert_{Z_{M_{P}}}$ carries an action of $M_{P}$ (Lemma \ref{action}). Hence, viewing an isotypic component of $\LLbar\vert_{Z_{M_{P}}}$ as a representation of $\widetilde P$ via the surjection $\widetilde P\twoheadrightarrow M_P$, one can see $\LLbar\vert_{{\widetilde P}}$ as a successive extension of such isotypic components (Lemma \ref{filtr}). On the $\GL_n(\Qp)$-side, the isotypic components of $\LLbar\vert_{Z_{M_{P}}}$ will play the role of irreducible constituents. Note that the isotypic components of $\LLbar\vert_{Z_{M_{P}}}$ are by definition all distinct.

To an isotypic component $C$ of $\LLbar\vert_{Z_{M_{P}}}$, we associate a parabolic subgroup $P(C)$ of $\GL_n$ containing $B$ as follows. Let $\lambda\in X(T)=\Hom_{\rm Gr}(T,{\mathbb G}_{\rm m})$ be any weight such that $C$ is the isotypic component of $\lambda\vert_{Z_{M_{P}}}$ and define (see (\ref{l'}))
\[\lambda'\defeq \frac{1}{\vert W(P)\vert}\sum_{w'\in W(P)}w'(\lambda)\ \in \ X(T)\otimes_{\Z}\Q,\]
where $W(P)$ is the Weyl group of $M_P$. Let $\theta$ be the highest weight of $\LLbar\vert_T$ and $w$ in the Weyl group of $\GL_n$ such that $w(\lambda')$ is dominant with respect to $B$. Then one can check that (see Proposition \ref{parabolicprop})
\[\theta-w(\lambda')= \sum_{\alpha\in S}n_\alpha \alpha,\]
where $S$ is the set of simple roots of $\GL_n$ (with respect to $B$) and the $n_\alpha$ are in $\Q_{\geq 0}$. Then $P(C)$ is by definition the parabolic subgroup of $\GL_n$ corresponding to the subset $\{\alpha\in S : n_\alpha \ne 0\}$ of $S$. We denote by $P(C)^-$ its opposite parabolic subgroup.

We now go back to the above global setting. Assuming a weak genericity condition on $\rbar_{\tilde{v}}$, one can replace $\rbar_{\tilde{v}}$ by a suitable conjugate so that the image of $\rbar_{\tilde{v}}$ is contained in the $\F$-points of a Zariski closed algebraic subgroup $\widetilde P_{\rbar_{\tilde{v}}}$ of a parabolic $P_{\rbar_{\tilde{v}}}$ as above which is ``as small as possible'' (see Definition \ref{gooddef} and Theorem \ref{choicegood}). The following conjecture is part of Conjecture \ref{theconj} (see Definition \ref{compatible2} and Definition \ref{compatible1}).

\begin{conj1}\label{conjIntro}
Assume that $\rbar_{\tilde{v}}$ has distinct irreducible constituents and that the ratio of any two $1$-dimensional constituents is not in $\{\omega,\omega^{-1}\}$, where $\omega$ is the mod $p$ cyclotomic character. Then we have a $\GL_n(\Qp)$-equivariant isomorphism for some integer $d\geq 1$:
\[S(U^{v},\F)[\m]\cong \big(\Pi_{\tilde v}\otimes(\omega^{n-1}\circ{\det})\big)^{\oplus d},\]
where $\Pi_{\tilde v}$ is an admissible smooth representation of $\GL_n(\Qp)$ over $\F$ of finite length with distinct irreducible constituents such that there exists a bijection $\Phi$ between the {\upshape(}finite{\upshape)} set of subquotients of $\Pi_{\tilde v}$ and the {\upshape(}finite{\upshape)} set of good subquotients of $\LLbar\vert_{{\widetilde P}_{\rbar_{\tilde{v}}}}$ satisfying the following properties:
\begin{enumerate}
\item$\Phi$ respects inclusions, and thus extends to a bijection between the sets of all subquotients on both sides;
\item$\Phi^{-1}$ sends an isotypic component $C$ of $\LLbar\vert_{Z_{M_{P_{\rbar_{\tilde{v}}}}}}$ to an irreducible constituent of $\Pi_{\tilde v}$ of the form $\Ind_{P(C)^-(\Qp)}^{\GL_n(\Qp)}\pi(C)$, where $\pi(C)$ is a supersingular representation of $M_{P(C)}(\Qp)$ over $\F$.
\end{enumerate}
\end{conj1}

When $K$ is not necessarily $\Qp$, the conjecture is completely analogous, defining $\LLbar$ by
\[\LLbar\defeq  \bigotimes_{\gKQ}\Big(\bigotimes_{i=1}^{n-1}{\bigwedge}^{\!\!i}_{\F}{\rm Std}\Big),\]
replacing $\widetilde P$ by $\widetilde P^{\gKQ}\defeq \underbrace{\widetilde P\times\cdots\times\widetilde P}_{\gKQ}$ and taking isotypic components of $\LLbar\vert_{Z_{M_{P}}}$ for the {\it diagonal embedding} $Z_{M_P}\hookrightarrow Z_{M_{P}}^{\gKQ}$ in the definition of good subquotients of $\LLbar\vert_{{\widetilde P}^{\gKQ}}$.

\begin{ex1}\label{exIntro}
(i) If $\rbar_{\tilde{v}}$ is irreducible, then $\widetilde P_{\rbar_{\tilde{v}}}=\GL_n=M_{P_{\rbar_{\tilde{v}}}}$ and there is only one isotypic component $C$ in $\LLbar\vert_{Z_{\GL_n}}$. It is such that $P(C)=\GL_n$: the representation $\Pi_{\tilde v}$ in Conjecture \ref{conjIntro} is irreducible and supersingular.\\
(ii) If $\rbar_{\tilde{v}}$ is semisimple, then $\widetilde P_{\rbar_{\tilde{v}}}=M_{P_{\rbar_{\tilde{v}}}}$, and since the direct sum decomposition of $\LLbar\vert_{Z_{M_{P_{\rbar_{\tilde{v}}}}}}$ into isotypic components for the (diagonal) $Z_{M_{P_{\rbar_{\tilde{v}}}}}$-action is a direct sum decomposition as a $\widetilde P_{\rbar_{\tilde{v}}}=M_{P_{\rbar_{\tilde{v}}}}$-representation, we see that the representation $\Pi_{\tilde v}$ in Conjecture \ref{conjIntro} is also semisimple.\\
(iii) If $K=\Qp$ and $n=2$, we have $\LLbar={\rm Std}$. When $\rbar_{\tilde{v}}$ is irreducible, by (i) the representation $\Pi_{\tilde v}$ of $\GL_2(\Qp)$ in Conjecture \ref{conjIntro} is supersingular. When $\rbar_{\tilde{v}}$ is reducible split, then $\widetilde P_{\rbar_{\tilde{v}}}=T=M_{P_{\rbar_{\tilde{v}}}}$, and $\LLbar\vert_{T}=\F \lambda_1 \oplus \F \lambda_2$, where $\lambda_i:{\rm diag}(x_1,x_2)\mapsto x_i$, $i\in \{1,2\}$. There are two isotypic components $C=\F \lambda_1$ or $C=\F\lambda_2$, both with $P(C)=B$: the representation $\Pi_{\tilde v}$ in Conjecture \ref{conjIntro} is a direct sum of two irreducible principal series. Finally, when $\rbar_{\tilde{v}}$ is reducible nonsplit, then $\widetilde P_{\rbar_{\tilde{v}}}=B$, $\LLbar\vert_{B}$ is a nonsplit extension of $\F \lambda_2$ by $\F \lambda_1$ and $\Pi_{\tilde v}$ is a nonsplit extension between two irreducible principal series. Note that Conjecture \ref{conjIntro} is known in that case (\cite{CS1}, \cite{CS2} for $\rbar_{\tilde{v}}$ irreducible, \cite[Cor.7.40]{breuil-ding} for arbitrary $\rbar_{\tilde{v}}$, all generalizing methods of \cite{emerton-local-global}).\\
(iv) For $K$ arbitrary (unramified) and $n=2$, see Example \ref{exemples} and Example $1$ of \S\ref{exemples5}. 
\end{ex1}

Conjecture \ref{conjIntro} only gives part of the picture. For instance there should be reducible subquotients of $\Pi_{\tilde v}$ which are also parabolic inductions $\Ind_{P(C)^-(\Qp)}^{\GL_n(\Qp)}\pi(C)$ with $\pi(C)$ of the form $\pi(C)\cong \pi_1(C)\otimes\cdots\otimes\pi_d(C)$, where the (reducible) $\pi_i(C)$ have themselves the same form as $\Pi_{\tilde v}$ but for the smaller $\GL_{n_i}(K)$ appearing in the Levi $M_{P(C)}(K)$ (which gives a ``fractal'' flavour to the whole picture!). In fact, it is possible that, in the end, this ``fractal'' picture will automatically follow from property (ii) in Conjecture \ref{conjIntro} (i.e.\ from the statement for {\it irreducible} subquotients only), as one can already see in many of the examples of \S\ref{exemples5} using the work of Hauseux (\cite{Ha2}, \cite{Ha3}), see Remark \ref{comments}(iv). Also some parabolic (possibly reducible) inductions as above should be deduced from others by a permutation on the factors $\pi_i(C)$. Tracking down all these internal symmetries (with the various twists by characters that occur) and all the implications between them is not really difficult but a bit tedious, as the reader will see from the technical lemmas in \S\ref{compatible1sec} (see e.g.\ Proposition \ref{cons2}). The interested reader should maybe first have a look at the various examples in \S\ref{exemples5} before going into the full combinatorics.
 
Finally, the full picture has to take into account the Galois action. There is a simple way to extend Colmez's functor from representations of $\GL_2(\Qp)$ to representations of $\GL_n(K)$ that we recall now (see \cite{breuil-foncteur} or \S\ref{covariant}). Let $\xi:{\mathbb G}_{\rm m}\rightarrow T$ be the cocharacter $x\mapsto {\rm diag}(x^{n-1},x^{n-2},\dots,1)$ and $N_1\defeq  {\rm Ker}(N_0\buildrel \ell\over\longrightarrow {\mathcal O}_{K} \buildrel {\rm{trace}}\over \longrightarrow {\mathbb Z}_p)$, where $N_0$ is the unipotent radical of $B(\cO_K)$ and the map $\ell$ is the sum of the entries on the first diagonal (following the notation of \cite{schneider-vigneras}). Let $\pi$ be a smooth representation of $\GL_n(K)$ over $\F$ and endow the algebraic dual $(\pi^{N_1})^\vee$ of $\pi^{N_1}$ with the residual $\F\bbra{N_0/N_1}\simeq \F\bbra{\Zp}\simeq \F\bbra{X}$-module structure (where $X\defeq [1]-1$), an action of $\Zp^\times$ and an endomorphism $\psi$ which commutes with the $\Zp^\times$-action by
\[\left\{\begin{array}{lll}
(xf)(v)&\defeq &f(\xi(x^{-1})v),\ x\in \Zp^\times,\ f\in (\pi^{N_1})^\vee,\ v\in \pi^{N_1}\\
\psi(f)(v)&\defeq &f\big(\sum _{N_1/\xi(p)N_1\xi(p)^{-1}}n_1\xi(p)v\big),\ f\in (\pi^{N_1})^\vee,\ v\in \pi^{N_1}.
\end{array}\right.\]
Then one defines a covariant left exact functor $V$ from the category of smooth representations of $\GL_n(K)$ over $\F$ to the category of (filtered) direct limits of continuous finite-dimensional representations of $\gp$ over $\F$ by
\begin{equation}\label{functorIntro}
V(\pi)\defeq  \big(\ilim{D} {\bf V}^\vee(D)\big)\otimes \delta,
\end{equation}
where the inductive limit is taken over the continuous morphisms of $\F\bbra{X}$-modules $h:(\pi^{N_1})^\vee\rightarrow D$, where $D$ is an \'etale $(\varphi,\Gamma)$-module of finite rank over $\F\ppar{X}$ and $h$ intertwines the actions of $\Zp^\times$ (recall $\Gamma\simeq \Zp^\times$), commutes with $\psi$ and is surjective when tensored by $\F\ppar{X}$. (Here ${\bf V}^\vee$ is Fontaine's contravariant functor associating a representation of $\gp$ to $D$ and recall that any \'etale $(\varphi,\Gamma)$-module is endowed with an endomorphism $\psi$ which is left inverse to the Frobenius $\varphi$.) In (\ref{functorIntro}), $\delta$ is a certain power of $\omega$ which is here for normalization issues (see Example \ref{exdelta}, see also the end of \S\ref{Cgroup}). In general, one doesn't know when $V(\pi)$ is nonzero or if it is finite-dimensional.

Using (\ref{functorIntro}), one can strengthen Conjecture \ref{conjIntro} (when $K=\Qp$) so that it takes into account the action of $\gp$ as follows.

\begin{conj1}[see Definition \ref{compatible1} and Conjecture \ref{theconj}]\label{conjIntrobis}
There is a bijection $\Phi$ as in Conjecture \ref{conjIntro} that moreover commutes with the action of $\gp$ in the following sense: for each subquotient $\Pi'_{\tilde v}$ of $\Pi_{\tilde v}$ one has $V(\Pi'_{\tilde v})=\Phi(\Pi'_{\tilde v})\circ \rbar_{\tilde{v}}$. {\upshape(}Recall that $\Phi(\Pi'_{\tilde v})$ is an algebraic representation of ${\widetilde P}_{\rbar_{\tilde{v}}}$ over $\F$ and that $\rbar_{\tilde{v}}$ takes values in ${\widetilde P}_{\rbar_{\tilde{v}}}(\F)$.{\upshape)}
\end{conj1}

If $K$ is not necessarily $\Qp$, then by definition $\Phi(\Pi'_{\tilde v})$ is an algebraic representation of $\widetilde P_{\rbar_{\tilde{v}}}^{\gKQ}$ and there is a completely analogous conjecture replacing $\Phi(\Pi'_{\tilde v})\circ \rbar_{\tilde{v}}$ by $\Phi(\Pi'_{\tilde v})\circ (\rbar_{\tilde{v}}^{\sigma})_{\sigma\in {\gKQ}}$, which is again a representation of $\gp$.

In particular the functor $V$, when applied to $\Pi_{\tilde v}$ and its subquotients $\Pi'_{\tilde v}$, should behave like an exact functor. Note that Conjecture \ref{conjIntrobis} is known when $K=\Qp$ and $n=2$ by the same references as in Example \ref{exIntro}(iii). In the special case $\Pi'_{\tilde v}=\Pi_{\tilde v}$, Conjecture \ref{conjIntrobis} implies in particular

\begin{conj1}[Conjecture \ref{theconjbar}]\label{imageVIntro}
The functor $V$ induces an isomorphism
\begin{equation*}
V\big(S(U^{v},\F)[\m]\otimes (\omega^{-(n-1)}\circ{\det})\big)\cong \Big(\ind_K^{\otimes\Qp}\!\big(\bigotimes_{i=1}^{n-1}{\bigwedge}^{\!\!i}_{\F}\rbar_{\tilde{v}}\big)\Big)^{\oplus d},
\end{equation*}
where $\ind_K^{\otimes\Qp}$ is the {\it tensor induction} from $\gK$ to $\gp$.
\end{conj1}

The statement in Conjecture \ref{imageVIntro} makes sense even if $K$ is ramified, and we conjecture it for an arbitrary finite extension $K$ of $\Qp$ and an arbitrary representation $\rbar_{\tilde{v}}$ (see Conjecture \ref{theconjbar}). In fact, using $C$-parameters (\cite{BG}), it can even be formulated in a more intrinsic way and in a more general global setting, see Conjecture \ref{conj:generale}.

\begin{rem1}
Assuming $K=\Qp$, the first appearance of the $\gp$-represen\-tation on the right-hand side of the isomorphism in Conjecture \ref{imageVIntro} is in \cite{BH}, where its ``ordinary part'' was related to the ``ordinary part'' of $S(U^{v},\F)[\m]$ (see Theorem \ref{enns} for an improvement). Note that the algebraic representation $\LLbar$ of $\GL_n$ is {\it not} irreducible for $n>2$. One could have thought about using the irreducible algebraic representation of $\GL_n$ of highest weight $\theta$ instead of the reducible $\LLbar$ to make predictions (at least for $p$ big enough the latter strictly contains the former as a direct factor). However, we chose the representation $\LLbar$. One reason is that it can also be seen as a representation of ${\GL_n\times\cdots\times\GL_n}$ ($n-1$ times) in an obvious way -- in which case a better notation is $\overline L^{\boxtimes}\defeq \boxtimes_{i=1}^{n-1}{\bigwedge}^{i}_{\F}{\rm Std}$ -- and one can hope to state a stronger variant of Conjecture \ref{conjIntrobis} replacing $\overline L^{\otimes}$ by $\overline L^{\boxtimes}$ and $\Phi(\Pi'_{\tilde v})\circ \rbar_{\tilde{v}}$ by $\Phi(\Pi'_{\tilde v})\circ (\rbar_{\tilde{v}},\rbar_{\tilde{v}},\dots,\rbar_{\tilde{v}})$ (see \cite{Za}, \cite{Zabrprod} where such a possibility is mentioned). However one has to be careful with defining a ``multivariable'' functor $V$ in that context (there is a tentative definition in \cite{Za} when $K=\Qp$ generalizing (\ref{functorIntro}), but see Remark \ref{Wu} when $n=2$ and $K\ne \Qp$).
\end{rem1}

If a representation $\Pi_{\tilde v}$ as in Conjecture \ref{conjIntrobis} exists, we do hope that it will realize a mod $p$ local Langlands correspondence for $\GL_n(K)$.

\subsection{Results}\label{resultsIntro}

Let us now describe our main results when $n=2$ and $K=\Qpf$ is unramified. For a finite place $\tilde w$ of $F$ we denote by $R_{\rbar_{\tilde{w}}}^\square$ the (unrestricted) framed deformation ring of $\rbar_{\tilde{w}}\defeq \rbar\vert_{{\rm Gal}(\overline F_{\tilde{w}}/F_{\tilde w})}$ over $W(\F)$. We let $I_K\subseteq \gK$ be the inertia subgroup and $\omega_{f'}$ for $f'\in \{f,2f\}$ be Serre's fundamental character of level $f'$. We make the following extra assumptions on $F$, $H$, $\rbar$ and $U^v=\prod_{w\ne v}U_w$ (recall we assumed $p$ inert in $F^+$ for simplicity):
\begin{enumerate}
\item$F/F^+$ is unramified at all finite places of $F^+$;
\item$H$ is quasi-split at all finite places of $F^+$;
\item$\rbar\vert_{{\Gal}(\overline F/F(\sqrt[p]{1}))}$ is adequate (\cite[Def.2.20]{Th2});
\item$\rbar_{\tilde{w}}$ is unramified if ${\tilde{w}}\vert_{F^+}$ is inert in $F$;
\item $R_{\rbar_{\tilde{w}}}^\square$ is formally smooth over $W(\F)$ if $\rbar_{\tilde{w}}$ is ramified and ${\tilde{w}}\vert_{F^+}\ne v$;
\item$\rbar_{\tilde{v}}\vert_{I_K}$ is, up to twist, of one of the following forms:
\[\left\{\begin{array}{lll}
\rbar_{\tilde{v}}\vert_{I_K}&\cong &\begin{pmatrix}\omega_f^{(r_0+1)+\cdots+p^{f-1}(r_{f-1}+1)}&0\\0&1\end{pmatrix},\\
\rbar_{\tilde{v}}\vert_{I_K}&\cong &\begin{pmatrix}\omega_{2f}^{(r_0+1)+\cdots+p^{f-1}(r_{f-1}+1)}&0\\0&\omega_{2f}^{p^f(\mathrm{same})}\end{pmatrix},
\end{array}\right.\]
where the $r_i$ satisfy the following bounds:
\begin{equation}\label{strongIntro}
\left\{\!\!\begin{array}{llllll}
\max\{12,2f\!-\!1\} \!\!&\!\!\le \!\!&\!\! r_j \!\!&\!\!\le \!&\!\! p-\max\{15,2f\!+\!2\} \!& \!\!\text{if $j > 0$ or $\rbar_{\tilde{v}}$ is reducible,}\\
\max\{13,2f\} \!\!&\!\!\le \!\!&\!\! r_0 \!\!&\!\!\le \!&\!\! p-\max\{14,2f\!+\!1\} \!& \!\!\text{if $\rbar_{\tilde{v}}$ is irreducible;}
\end{array}\right.
\end{equation}
\item$U_{w}$ is maximal hyperspecial in $H(F_{w}^+)$ if $w$ is inert in $F$.
\end{enumerate}
(We also need to fix a place $v_1$ which splits in $F$, where nothing ramifies and $U_{v_1}$ is contained in the Iwahori subgroup at $v_1$, we forget that here along with the set $\Sigma$ of bad places and the definition of the ideal $\m$.)

\begin{thm1}[Theorem \ref{specialcase1}]\label{thmIntro1}
Assume $n=2$, $K/\Qp$ unramified, and the above conditions (i)--(vii). Then Conjecture \ref{imageVIntro} holds.
\end{thm1}

We sketch the proof of Theorem \ref{thmIntro1}. We denote by $I_1$ the pro-$p$ Iwahori subgroup in $\GL_2(\cO_K)$ and set
\[\rhobar\defeq \rbar_{\tilde{v}}(1)\ \ \ \ \ \Pi\defeq S(U^{v},\F)[\m].\]
Note that the central character of $\Pi$ is ${\det(\rhobar)}\omega^{-1}$ (Lemma \ref{central}). There are two main steps in the proof which involve quite different arguments:
\begin{enumerate}
\item one proves a $\gp$-equivariant injection $(\ind_K^{\otimes\Qp}\!\rhobar)^{\oplus d}\hookrightarrow V(\Pi)$;
\item one proves $\dim_{\F}V(\Pi)\leq 2^fd$ ($=\dim_{\F}(\ind_K^{\otimes\Qp}\!\rhobar)^{\oplus d}$).
\end{enumerate}

We first sketch the proof of (i). Arguing as in the proof of \cite[Prop.8.2.6]{BHHMS1}, there is an integer $d\geq 1$ and a $\GL_2({\mathcal O}_K)K^\times$-equivariant isomorphism $\Pi^{K_1}\cong D_0(\rhobar)^{\oplus d}$, where $D_0(\rhobar)$ is defined as in \cite[\S13]{BP} (see Corollary \ref{D0r}). Taking into account the action of $\smatr{0}{1}{p}{0}$ on $\Pi^{I_1}\subseteq \Pi^{K_1}$, one can promote this isomorphism to an isomorphism of {\it diagrams}:

\begin{thm1}[{\cite[Thm.1.3]{DoLe}} when $d=1$, Theorem \ref{nonminimal} when $d>1$]\label{thmIntro2}
There is a diagram $D(\rhobar)=(D_1(\rhobar)\hookrightarrow D_0(\rhobar))$ only depending on $\rhobar$ such that one has an isomorphism of diagrams:
\[D(\rhobar)^{\oplus d}\cong (\Pi^{I_1}\hookrightarrow \Pi^{K_1}).\]
\end{thm1}

Theorem \ref{thmIntro2} can actually be made stronger, i.e.~one can show that certain constants $\nu_i\in\F^\times$ associated to the weight cycling on $D_1(\rhobar)\cong D_0(\rhobar)^{I_1}$ as in \cite[\S 6]{breuil-IL} (up to suitable normalization) are as predicted in \cite[Thm.6.4]{breuil-IL}. When $d=1$, Theorem \ref{thmIntro2} (and its strengthening) is entirely due to Dotto and Le (\cite[Thm.1.3]{DoLe}). When $d>1$, we check from their proof that the action of $\smatr{0}{1}{p}{0}$ on $\Pi^{I_1}\cong (D_0(\rhobar)^{I_1})^{\oplus d}$ ``respects'' each copy of $D_0(\rhobar)^{I_1}$. Note that Theorem \ref{thmIntro2} holds under much weaker bounds on the $r_i$ than the bounds (\ref{strongIntro}), see \S\ref{globalp}. 
 
Then item (i) above follows from the following purely local result.

\begin{thm1}[Theorem \ref{thm:main-tensor-ind}]\label{thmIntro3}
Let $\pi$ be an {\upshape(}admissible{\upshape)} smooth representation of $\GL_2(K)$ over $\F$ such that one has an isomorphism of diagrams $D(\rhobar)^{\oplus d}\cong (\pi^{I_1}\hookrightarrow \pi^{K_1})$. Then one has a $\gp$-equivariant injection $(\ind_K^{\otimes\Qp}\!\rhobar)^{\oplus d}\hookrightarrow V(\pi)$.
\end{thm1}

The proof of Theorem \ref{thmIntro3} is a long and technical computation of $(\varphi,\Gamma)$-modules that is given in \S\ref{tensorinduction}. It uses the previous computations in \cite{breuil-IL} and the bounds (\ref{strongIntro}) (though one can slightly weaken them, see (\ref{eq:6})).

We now sketch the (longer) proof of (ii). We let $Z_1$ be the center of $I_1$ (or of $K_1$) and $\m_{I_1/Z_1}$ the maximal ideal of the Iwasawa algebra $\F\bbra{I_1/Z_1}$. The main idea is to focus on the structure of the (algebraic) dual $\pi^\vee$ as an $\F\bbra{I_1/Z_1}$-module and to use the results of \cite{BHHMS1}. Recall that the graded ring $\gr(\F\bbra{I_1/Z_1})$ for the $\m_{I_1/Z_1}$-adic filtration (we use the normalization of \cite[\S I.2.3]{LiOy}) is not commutative, but contains a regular sequence of central elements $(h_0,\dots,h_{f-1})$ such that $R\defeq \gr(\F\bbra{I_1/Z_1})/(h_0,\dots,h_{f-1})$ is a commutative polynomial algebra in $2f$ variables $\F[y_i,z_i, 0\leq i\leq f-1]$ (see \cite[\S5.3]{BHHMS1} and (\ref{elementsyi}), (\ref{gri1})). We let $J\defeq (y_iz_i, h_i, 0\leq i\leq f-1)$ (an ideal of $\gr(\F\bbra{I_1/Z_1})$) and define
\begin{equation}\label{rbarIntro}
\overline{R}\defeq \gr(\F\bbra{I_1/Z_1})/J\simeq \F[y_i,z_i, 0\leq i\leq f-1]/(y_iz_i, 0\leq i\leq f-1).
\end{equation}
Then $\mathfrak{p}_0\defeq (z_i, 0\leq i\leq f-1)$ is one of the $2^f$ minimal prime ideals of $\overline R$. If $N$ is any finite type $\gr(\F\bbra{I_1/Z_1})$-module killed by a power of $J$, one can define its multiplicity $m_{\mathfrak{p}_0}(N)\in \Z_{\geq 0}$ at ${\mathfrak{p}_0}$, see (\ref{def:multiatp}).

For \ $\pi$ \ a \ smooth \ representation \ of \ $\GL_2(K)$ \ over \ $\F$ \ with \ a \ central \ character, we endow $\pi^\vee$ with the $\m_{I_1/Z_1}$-adic filtration and we let $\gr(\pi^\vee)$ be the associated graded $\gr(\F\bbra{I_1/Z_1})$-module.

\begin{thm1}[Theorem \ref{thm:upperbound}]\label{thmIntro4}
Let $\pi$ be an {\upshape(}admissible{\upshape)} smooth representation of $\GL_2(K)$ over $\F$ satisfying the following two properties:
\begin{enumerate}
\item there is a $\GL_2(\oK)K^\times$-equivariant isomorphism $D_0(\rhobar)^{\oplus d}\simeq \pi^{K_1}$;
\item for any character $\chi:I\rightarrow \F^\times$ appearing in $\pi[\m_{I_1/Z_1}]$ there is an equality of multiplicities
\[[\pi[\m_{I_1/Z_1}^3]:\chi]=[\pi[\m_{I_1/Z_1}]:\chi].\]
\end{enumerate}
Then $\gr(\pi^\vee)$ is killed by $J$ and one has $m_{\mathfrak{p}_0}(\gr(\pi^\vee))\leq 2^fd$.
\end{thm1}

By the proof of \cite[Cor.5.3.5]{BHHMS1}, property (ii) in Theorem \ref{thmIntro4} implies that $\gr(\pi^\vee)$ is killed by $J$. By an explicit computation (using both properties (i) and (ii)), one proves in Theorem \ref{thm:cycle-pi} that there is a surjection of $\overline R$-modules
\[(\oplus_{\lambda\in \mathscr P}R/{\mathfrak a}(\lambda))^{\oplus d}\twoheadrightarrow \gr(\pi^\vee),\]
where $\mathscr P$ is a combinatorial finite set associated to $\rhobar$ (in bijection with the set of $\chi$ appearing in $\pi[\m_{I_1/Z_1}]$, see \S\ref{combi}) and the ${\mathfrak a}(\lambda)$ are explicit ideals of $R$ containing the image of $J$ (see Definition \ref{def:a(lambda)}). Then Theorem \ref{thmIntro4} follows from the equality $m_{\mathfrak{p}_0}(\oplus_{\lambda\in \mathscr P}R/{\mathfrak a}(\lambda))=2^f$ which is an easy computation.

Arguing as in \cite{BHHMS1}, the representation $\Pi$ satisfies all assumptions of Theorem \ref{thmIntro4}, see Corollary \ref{D0r} and Theorem \ref{mi13}. Hence the upper bound in item (ii) below Theorem \ref{thmIntro1} follows from Theorem \ref{thmIntro4} combined with the next result:

\begin{thm1}[Corollary \ref{cor:upperbound}]\label{thmIntro5}
Let $\pi$ be an admissible smooth representation of $\GL_2(K)$ over $\F$ with a central character such that $\gr(\pi^\vee)$ is killed by some power of $J$. Then one has $\dim_{\F}V(\pi)\leq m_{\mathfrak{p}_0}(\gr(\pi^\vee))$.
\end{thm1}

We prove Theorem \ref{thmIntro5} by first associating to $\pi$ an ``\'etale $(\varphi,\cO_K^\times)$-module over $A$'' (Definition \ref{phiok}). This is the ``multivariable $(\varphi,\Gamma)$-module'' mentioned at the end of \S\ref{preamble}. Though one could probably give a more direct proof without explicitly introducing them, these \'etale $(\varphi,\cO_K^\times)$-modules are important for our finite length results below and are likely to play a role later, so we describe them now.

We start with the ring $A$. Let $\F\bbra{N_0}\simeq \F\bbra{\cO_K}$ be the Iwasawa algebra of the unipotent radical $N_0$ of $B(\cO_K)$. Then $\F\bbra{N_0}\simeq \F\bbra{Y_0,\dots,Y_{f-1}}$, where the $Y_i$ are eigenvectors for the action of the finite torus on $\F\bbra{N_0}$ (see (\ref{elementsyi})). Let $S$ be the {\it multiplicative system} in $\F\bbra{N_0}$ generated by the $Y_i$. The filtration on $\F\bbra{N_0}$ by powers of its maximal ideal $\m_{N_0}$ naturally extends to a filtration on the localized ring $\F\bbra{N_0}_S$ and we define $A$ to be the completion of $\F\bbra{N_0}_S$ (where $\F\bbra{N_0}_S$ denotes the localization of $\F\bbra{N_0}$ at $S$) for this filtration (\cite[\S I.3.4]{LiOy}). The ring $A$ is {\it not} local, but it is a regular noetherian domain (Corollary \ref{lem:grA}) and a complete filtered ring in the sense of \cite[\S I.3.3]{LiOy} with associated graded ring $\gr(A)\simeq \gr(\F\bbra{N_0}_S)$ (see Remark \ref{rem:on_filtered_Amod}(iii) for a concrete description of $A$). Most importantly, the natural action of $\cO_K^\times$ on $\F\bbra{N_0}\simeq \F\bbra{\cO_K}$ by multiplication on $\cO_K$ extends by continuity to $A$ (Lemma \ref{actionoK}) and any ideal of $A$ preserved by $\cO_K^\times$ is either $0$ or $A$ (Corollary \ref{cor:idealsofA}). 

Let $\pi$ be an admissible smooth representation of $\GL_2(K)$ over $\F$ with a central character and recall that $\pi^\vee$ is endowed with the $\m_{I_1/Z_1}$-adic filtration (which, in general, {\it strictly} contains the $\m_{N_0}$-adic filtration). We endow $(\pi^\vee)_S\defeq \F\bbra{N_0}_S\otimes_{\F\bbra{N_0}}\pi^\vee$ with the tensor product filtration and define $D_A(\pi)$ as the completion of $(\pi^\vee)_S$. Then $D_A(\pi)$ is a complete filtered $A$-module such that $\gr(D_A(\pi))\simeq \gr((\pi^\vee)_S)$ (Lemma \ref{lem:grlocal}). The action of $\cO_K^\times$ on $\pi^\vee$ extends by continuity to $D_A(\pi)$, as well as the map
\[\psi:\pi^\vee\longrightarrow \pi^\vee,\ \ \ f\longmapsto \psi(f)\defeq \Big(v\in \pi\mapsto f(\xi(p)v)=f\big(\smatr{p}{0} {0} {1} v\big)\Big)\]
(Lemma \ref{lem:psicontinue}). The latter can be linearized into an $A$-linear morphism
\[\beta:D_A(\pi)\longrightarrow A\otimes_{\phi,A}D_A(\pi),\]
where $\phi$ is a Frobenius endomorphism on the characteristic $p$ ring $A$ (see (\ref{mapbeta}) for the definition of $\beta$, and \S \ref{ringA} for the definition of $\phi$ on $A$).

We let $\mathcal C$ be the abelian category of admissible smooth representations $\pi$ with a central character such that $\gr((\pi^\vee)_S)$ is a finite type $\gr(\F\bbra{N_0}_S)$-module. It follows from (\ref{rbarIntro}) that
\[\big(\gr(\F\bbra{I_1/Z_1})/J\big)[(y_0\cdots y_{f-1})^{-1}] \simeq  \F[y_0, \dots, y_{f-1}][(y_0\cdots y_{f-1})^{-1}] \simeq \gr(\F\bbra{N_0}_S)\]
which easily implies that, if $\gr(\pi^\vee)$ is killed by a power of $J$, then $\pi$ is in $\mathcal C$ (Proposition \ref{prop:finiteness}). In particular the representation $\Pi$ is in $\mathcal C$. Note that {\it any} finite length admissible smooth representation $\pi$ of $\GL_2(\Qp)$ over $\F$ with a central character is such that $\gr(\pi^\vee)$ is killed by a power of $J$ (Corollary \ref{qp}), hence is in $\mathcal C$.

For $\pi$ in $\mathcal C$, by general results of \cite{Lyubeznik}, there exists a largest quotient $D_A(\pi)^{\et}$ of $D_A(\pi)$ such that the map $\beta$ induces an isomorphism $\beta^{\et}:D_A(\pi)^{\et}\buildrel \sim\over\rightarrow A\otimes_{\phi,A}D_A(\pi)^{\et}$ (see the beginning of \S\ref{multivariablepsi}). We let $\varphi:D_A(\pi)^{\et}\rightarrow D_A(\pi)^{\et}$ such that 
$\Id\otimes\varphi=(\beta^{\et})^{-1}$. Then $D_A(\pi)^{\et}$ equipped with $\varphi$ and the induced action of $\cO_K^\times$ is our {\it \'etale $(\varphi,\cO_K^\times)$-module over $A$} associated to $\pi$ in $\mathcal C$.

\begin{thm1}[Proposition \ \ \ref{prop:exactnessDA}, \ \ Corollary \ \ \ref{cor:psiiso}, \ \ Theorem \ \ \ref{thm:exactnessDA} \ \ and \ \ Corollary \ \ \ref{cor:upperbound}]\label{thmIntro6}\

  \begin{enumerate}
  \item If $\pi$ is in $\mathcal C$, then $D_A(\pi)$ and $D_A(\pi)^{\et}$ are finite projective $A$-modules and $\rk_A(D_A(\pi)^{\et})\!\leq m_{\mathfrak{p}_0}(\gr(\pi^\vee))$.
  \item The {\upshape(}contravariant{\upshape)} functors $\pi\rightarrow D_A(\pi)$ and $\pi\rightarrow D_A(\pi)^{\et}$ are exact on the
    abelian category $\mathcal C$.
  \end{enumerate}
\end{thm1}

One key ingredient in the proof of Theorem \ref{thmIntro6} (cf.\ the proof of Proposition \ref{prop:OK_modules}) is that if the annihilator of an $A$-module endowed with an $A$-semilinear $\cO_K^\times$-action is nonzero, then this annihilator is $A$ (since there are no proper nonzero ideals of $A$ which are preserved by $\cO_K^\times$, see above) and hence the $A$-module must be $0$.

For a smooth representation $\pi$ of $\GL_2(K)$ over $\F$ such that $\dim_{\F}V(\pi)<+\infty$, we denote by $D_\xi^\vee(\pi)$ the unique \'etale $(\varphi,\Gamma)$-module over $\F\ppar{X}$ such that $V(\pi)= {\bf V}^\vee(D_\xi^\vee(\pi))\otimes \delta$ (see (\ref{functorIntro})). We denote by $\tr : A\rightarrow\F\ppar{X}$ the ring morphism induced by the trace $\tr : \F\bbra{N_0}\rightarrow\F\bbra{\Zp}\simeq\F\bbra{X}$.

\begin{thm1}[Theorem \ref{thm:functors_comparison}]\label{thmIntro7}
If $\pi$ is in $\mathcal C$, then we have an isomorphism of \'etale $(\varphi,\Gamma)$-modules over $\F\ppar{X}$:
\[D_A(\pi)^{\et}\otimes_A\F\ppar{X}\buildrel\sim\over\longrightarrow D^\vee_{\xi}(\pi).\]
In particular, $\dim_{\F}V(\pi)=\rk_A(D_A(\pi)^{\et})<+\infty$ and the functor $\pi\longmapsto V(\pi)$ in {\upshape(\ref{functorIntro})} is exact on the category $\mathcal C$.
\end{thm1}

The proof essentially follows by a careful unravelling of all the definitions and constructions involved. The last statement follows from the first and from Theorem \ref{thmIntro6}. 

Theorem \ref{thmIntro7} and Theorem \ref{thmIntro6}(i) imply in particular the bound on $V(\pi)$ in Theorem \ref{thmIntro5}, which finally proves Theorem \ref{thmIntro1}.

We see that the multivariable $(\varphi,\cO_K^\times)$-module $D_A(\pi)^{\et}$ plays an important role in the proof of Theorem \ref{thmIntro5}. One natural question therefore is to understand more the internal structure of $D_A(\Pi)^{\et}$ (at least conjecturally): does $D_A(\Pi)^{\et}$ only depend on $\rhobar$? Does it determine $\rhobar$? We plan to come back to these questions, as well as generalizations in higher dimension, in future work.

The modules $D_A(\Pi)^{\et}$ and $D_\xi^\vee(\Pi)$ are also crucial tools in the proof of our finite length results on the representation $\Pi$ which provide evidence to Conjecture \ref{conjIntro} and Conjecture \ref{conjIntrobis} and that we describe now.

\begin{thm1}[Theorem \ref{thm:gen-socleglob}]\label{thmIntro8}
Assume moreover $d=1$, i.e.\ $\Pi^{K_1}\cong D_0(\rhobar)$ {\upshape(}the so-called {\it minimal} case{\upshape)}. Then the $\GL_2(K)$-representation $\Pi$ is generated by its $\GL_2(\cO_K)$-socle, in particular is of finite type.
\end{thm1}

Note that the last finiteness assertion in Theorem \ref{thmIntro8} (with $\Pi^{K_1}$ instead of the
$\GL_2(\cO_K)$-socle) was known for $\rhobar$ {\it non}-semisimple (and sufficiently generic) by \cite[Thm.1.6]{HuWang2}, but the proof there doesn't extend to the semisimple case.

We sketch the proof of Theorem \ref{thmIntro8}. Let $\Pi'\subseteq \Pi$ be a nonzero subrepresentation and $\Pi''\defeq \Pi/\Pi'$. As $\gr(\Pi^\vee)$ and hence its quotient $\gr(\Pi'^\vee)$ are killed by $J$, the representations $\Pi$, $\Pi'$, $\Pi''$ are all in $\mathcal C$, thus Theorem \ref{thmIntro6}(i) and Theorem \ref{thmIntro7} imply $\dim_{\F}V(\Pi')\leq m_{\mathfrak{p}_0}(\gr(\Pi'^\vee))$ and $\dim_{\F}V(\Pi'')\leq m_{\mathfrak{p}_0}(\gr(\Pi''^\vee))$. Since $V(\Pi'')\simeq V(\Pi)/V(\Pi')$ by the last statement in Theorem \ref{thmIntro7}, and since $m_{\mathfrak{p}_0}$ is an additive function by Lemma \ref{lem:Z-additive} (and Definition \ref{Zdef}), we deduce $\dim_{\F}V(\Pi')= m_{\mathfrak{p}_0}(\gr(\Pi'^\vee))$ and $\dim_{\F}V(\Pi'')= m_{\mathfrak{p}_0}(\gr(\Pi''^\vee))$ as we have seen that $\dim_{\F}V(\Pi)= m_{\mathfrak{p}_0}(\gr(\Pi^\vee))\ (=2^f)$. On the other hand, by computations analogous to the ones used in the proofs of Theorem \ref{thmIntro3} and Theorem \ref{thmIntro4}, we also have inequalities
\[m_{\mathfrak{p}_0}(\gr(\Pi'^\vee))\leq \mathrm{lg}(\soc_{\GL_2(\cO_K)}(\Pi'))\leq \dim_{\F}V(\Pi')\]
and thus we deduce
\begin{equation}\label{nonzeroIntro}
m_{\mathfrak{p}_0}(\gr(\Pi'^\vee))=\mathrm{lg}(\soc_{\GL_2(\cO_K)}(\Pi'))=\dim_{\F}V(\Pi')\ne 0.
\end{equation}
Now take $\Pi'$ to be the nonzero subrepresentation generated over $\GL_2(K)$ by the $\GL_2(\cO_K)$-socle of $\Pi$. We wish to prove $\Pi''=0$. As
\[\mathrm{lg}(\soc_{\GL_2(\cO_K)}(\Pi'))=\mathrm{lg}(\soc_{\GL_2(\cO_K)}(\Pi))=2^f=\dim_{\F}V(\Pi)\]
we already have by (\ref{nonzeroIntro}) and the exactness of $V$ that
\begin{equation}\label{zeroIntro}
m_{\mathfrak{p}_0}(\gr(\Pi''^\vee))=\dim_{\F}V(\Pi'')=0.
\end{equation}
To deduce $\Pi''=0$ from (\ref{zeroIntro}), we need the following key new ingredient: $\Pi$ is {\it essentially self-dual of grade (or codimension) $2f$}, i.e.\ ${\rm Ext}^{j}_{\F\bbra{I_1/Z_1}}\big(\Pi^{\vee},\F\bbra{I_1/Z_1}\big)=0$ if $j<2f$ and there is a $\GL_2(K)$-equivariant isomorphism
\begin{equation}\label{dualIntro}
{\rm Ext}^{2f}_{\F\bbra{I_1/Z_1}}\big(\Pi^{\vee},\F\bbra{I_1/Z_1}\big)\cong \Pi^{\vee}\otimes (\det(\rhobar)\omega^{-1}),
\end{equation}
where ${\rm Ext}^{2f}_{\F\bbra{I_1/Z_1}}(\Pi^{\vee},\F\bbra{I_1/Z_1})$ is endowed with the action of $\GL_2(K)$ defined by Kohlhaase in \cite[Prop.3.2]{Ko}. This follows by the same argument as in \cite[Thm.8.2]{HuWang2} (using Remark \ref{dim=f}). We then define $\widetilde \Pi$ as the admissible smooth representation of $\GL_2(K)$ over $\F$ such that
\begin{equation*}
\widetilde \Pi^{\vee}\otimes(\det(\rhobar)\omega^{-1})\cong \mathrm{Im}\Big({\rm Ext}^{2f}_{\F\bbra{I_1/Z_1}}\big(\Pi^{\vee},\F\bbra{I_1/Z_1}\big)\ra {\rm Ext}^{2f}_{\F\bbra{I_1/Z_1}}\big(\Pi''^{\vee},\F\bbra{I_1/Z_1}\big)\Big),
\end{equation*}
and by (\ref{dualIntro}) $\widetilde \Pi$ is a subrepresentation of $\Pi$. By (\ref{dualIntro}) and general results on ${\rm Ext}_\Lambda^{j}(-,\Lambda)$ for Auslander regular rings $\Lambda$, $\Pi''^{\vee}\subseteq \Pi^{\vee}$ is also of grade $2f$ if it is nonzero, and hence ${\rm Ext}^{2f}_{\F\bbra{I_1/Z_1}}(\Pi''^{\vee}\!,\F\bbra{I_1/Z_1})$ is nonzero if and only if $\Pi''\ne 0$. From the short exact sequence
\begin{equation}\label{exactIntro}
0\!\rightarrow \!\widetilde \Pi^{\vee}\!\otimes\!(\det(\rhobar)\omega^{-1})\!\rightarrow \!{\rm Ext}^{2f}_{\F\bbra{I_1/Z_1}}\big(\Pi''^{\vee},\F\bbra{I_1/Z_1}\big)\!\rightarrow \!{\rm Ext}^{2f+1}_{\F\bbra{I_1/Z_1}}\big(\Pi'^{\vee},\F\bbra{I_1/Z_1}\big)
\end{equation}
and the fact that the last ${\rm Ext}^{2f+1}$ has grade $\geq 2f+1$, we finally obtain:
\begin{equation}\label{zeroequivIntro}
\widetilde \Pi \textrm{\ is\ nonzero\ if\ and\ only\ if\ }\Pi''\textrm{\ is\ nonzero.}
\end{equation}
We now use the following general theorem.

\begin{thm1}[Theorem \ref{prop:cycle-Ext}]\label{thmIntro9}
Let $\pi$ be an admissible smooth representation of $\GL_2(K)$ over $\F$ with a central character such that $\gr(\pi^\vee)$ is killed by a power of $J$. Then\ \ \ the\ \ \ $\gr(\F\bbra{I_1/Z_1})$-module\ \ \ \ {\upshape(}for\ \ \ the\ \ \ $\m_{I_1/Z_1}$-adic\ \ \ filtration\ \ \ on\ \ \ ${\rm Ext}^{2f}_{\F\bbra{I_1/Z_1}}(\pi^{\vee},\F\bbra{I_1/Z_1})${\upshape)}:
\[\gr\Big({\rm Ext}^{2f}_{\F\bbra{I_1/Z_1}}\big(\pi^{\vee},\F\bbra{I_1/Z_1}\big)\Big)\]
is also finitely generated and annihilated by a power of $J$, and we have
\[m_{\mathfrak{p}_0}(\gr(\pi^\vee))=m_{\mathfrak{p}_0}\bigg(\gr\Big({\rm Ext}^{2f}_{\F\bbra{I_1/Z_1}}\big(\pi^{\vee},\F\bbra{I_1/Z_1}\big)\Big)\bigg).\]
\end{thm1}

From the injection in (\ref{exactIntro}) and from Theorem \ref{thmIntro9} applied to $\pi=\Pi''$ we have $m_{\mathfrak{p}_0}(\gr(\widetilde\Pi^\vee))\leq m_{\mathfrak{p}_0}(\gr(\Pi''^\vee))$, hence we obtain
\[m_{\mathfrak{p}_0}(\gr(\widetilde\Pi^\vee))=m_{\mathfrak{p}_0}(\gr(\Pi''^\vee))\buildrel{(\ref{zeroIntro})}\over =0.\]
This implies $\widetilde \Pi=0$ by (\ref{nonzeroIntro}) (applied to the subrepresentation $\Pi'=\widetilde\Pi$) and thus $\Pi''=0$ by (\ref{zeroequivIntro}), finishing the proof of Theorem \ref{thmIntro8}.

The following corollary immediately follows from Theorem \ref{thmIntro8} and from \cite[Thm.19.10(i)]{BP}.

\begin{cor1}[Theorem \ref{thm:gen-socleglob}]\label{thmIntro10}
Assume moreover $d=1$ and $\rhobar$ irreducible. Then the $\GL_2(K)$-representation $\Pi$ is irreducible and is a supersingular representation.
\end{cor1}

When $\rhobar$ is reducible (split), we can prove the following result.

\begin{thm1}[Theorem \ref{cor:pi-irredglob}]\label{thmIntro11}
Assume moreover $d=1$ and $\rhobar$ reducible, i.e.\ $\rhobar=\begin{pmatrix}\chi_{1} &0\\0 &\chi_2\end{pmatrix}$. Then one has
\begin{equation*}
\Pi=\Ind_{B(K)}^{\GL_2(K)}(\chi_1\otimes \chi_2\omega^{-1})\oplus \Pi' \oplus \Ind_{B(K)}^{\GL_2(K)}(\chi_2\otimes \chi_1\omega^{-1}),
\end{equation*}
where $\Pi'$ is generated by its $\GL_2(\cO_{K})$-socle and $\Pi'^{\vee}$ is essentially self-dual of grade $2f$, i.e.\ satisfies {\upshape(\ref{dualIntro})}. Moreover, when $f=2$, $\Pi'$ is irreducible and supersingular {\upshape(}and hence $\Pi$ is semisimple{\upshape)}. 
\end{thm1}

The fact that the two principal series in Theorem \ref{thmIntro11} occur as subobjects of $\Pi$ was already known (and is not difficult). To prove that they also occur as quotients (and that the obvious composition is the identity), we again crucially use the essential self-duality (\ref{dualIntro}). The rest of the statement follows from Theorem \ref{thmIntro8} and \cite[Thm.19.10(ii)]{BP}.

The following last corollary sums up the above results.

\begin{cor1}[Theorem \ref{specialcase2}]\label{thmIntro12}
Assume (i) to (vii) as at the beginning of \S\ref{resultsIntro} and assume $d=1$ as in Theorem \ref{thmIntro8}. Then Conjecture \ref{conjIntrobis} holds for $n=2$ and $\rhobar$ irreducible, or for $n=2$, $K$ quadratic and $\rhobar$ semisimple.
\end{cor1}

Note finally that when $f=2$, $\rhobar$ is {\it non}-semisimple (sufficiently generic) and $d=1$, Conjecture \ref{conjIntro} at least is known and follows from \cite[Thm.1.7]{HuWang2}.

\subsection{Notation}

We finish this introduction with some very general notation (many more will be defined in the text).

Throughout the text, we fix $\Qpbar$ an algebraic closure of $\Qp$ and $K$ an arbitrary finite extension of $\Qp$ in $\Qpbar$ with residue field $\Fq$, $q=p^f$ ($f\in \Z_{\geq 1}$). The field $K$ is unramified from \S\ref{goodcomponent} on. We also fix a finite extension $E$ of $\Qp$, with ring of integers $\oE$, uniformizer $\pE$ and residue field $\F$, and we assume that $\F$ contains $\Fq$. The finite field $\F$ is the main coefficient field in this work. We denote by $\varepsilon$ the $p$-adic cyclotomic character of $\gp$ and by $\omega$ its reduction mod $p$. 
We normalize Hodge--Tate weights so that $\varepsilon$ has Hodge--Tate weight $1$ at each embedding $K\into E$.
We normalize local class field theory so that uniformizers correspond to geometric Frobeniuses. 

If $H$ is any split connected reductive algebraic group, we denote by $Z_H$ the center of $H$ and by $T_H$ a split maximal torus. If $P_H$ is a parabolic subgroup of $H$ containing $T_H$, we denote by $M_{P_H}$ its Levi subgroup containing $T_H$, $N_{P_H}$ its unipotent radical and $P_H^-$ its opposite parabolic subgroup with respect to $T_H$ (so $P_H\cap P_H^-=M_H$).

We let $n\geq 2$ be an integer and denote by $G$ the algebraic group $\GL_n$ over $\Z$. The integer $n$ is arbitrary in \S\ref{conjectures} and is $2$ in \S\ref{gl2}.

Irreducible for a representation always means absolutely irreducible.

Finally, though we mainly work with the group $\GL_n$, several proofs in \S\ref{conjectures} can be extended more or less {\it verbatim} to a split connected reductive algebraic group over $\Z$ with connected center, and \S\ref{Cgroup} deals with possibly nonsplit reductive groups.

\bigskip

\textbf{Acknowledgements}: B.\;S. would like to thank Gabriel Dospinescu and Vytas Pa\v{s}k\={u}nas for stimulating
discussions around $C$-groups. Y.\;H.\ is partially supported by National Key R$\&$D Program of China 2020YFA0712600; CAS Project for Young Scientists in Basic Research, Grant No.\ YSBR-033; National Natural
Science Foundation of China Grants 12288201 and 11971028; National
Center for Mathematics and Interdisciplinary Sciences and Hua Loo-Keng
Key Laboratory of Mathematics, Chinese Academy of Sciences. F.\;H.\ is partially supported by an
NSERC grant. C.\;B.,
S.\;M. and B.\;S. are members of the A.N.R.\ project CLap-CLap
ANR-18-CE40-0026. 
Some of these results were presented at the conference ``Recent developments around $p$-adic modular forms'' (ICTS, December 2020) and the ``Spring School towards a mod $p$ Langlands correspondence'' (Essen, April 2021).
We very much thank the organizers of these events.

\newpage

\section{Local-global compatibility conjectures}\label{conjectures}

We state local-global compatibility conjectures (Conjecture \ref{theconjbar}, Conjecture \ref{conj:generale} and Conjecture \ref{theconj}) which ``functorially'' relate Hecke-isotypic components with their action of $\GL_n(K)$ in spaces of mod $p$ automorphic forms to representations of $\gp$. Conjecture \ref{theconj} assumes $K$ is unramified but is much stronger and more precise than Conjecture \ref{theconjbar} and Conjecture \ref{conj:generale} as it predicts the number, position and form of the irreducible constituents of these Hecke-isotypic components, as well as their contribution on the Galois side.

Throughout this section, we let $T\subseteq G=\GL_n$ the diagonal torus over $\Z$ and $X(T)$ the $\Z$-module $\Hom_{\rm Gr}(T,{\mathbb G}_{\rm m})$. As usual, we identify $X(T)$ with $\oplus_{i=1}^{n}\Z e_i$ via $e_i\mapsto \big({\rm diag}(x_1,\ldots,x_n)\mapsto x_i\big)$ and define $\langle \ ,\ \rangle:X(T)\times X(T)\rightarrow \Z$, $\langle e_i,e_j\rangle\defeq \delta_{i,j}$, which we extend by $\Q$-bilinearity to $X(T)\otimes_{\Z}\Q$. This provides an isomorphism of $\Z$-modules $X(T)\buildrel\sim\over\rightarrow \Hom_{\Z}(X(T),\Z)\cong \Hom_{\rm Gr}({\mathbb G}_{\rm m},T)$ given by
\begin{equation}\label{isot}
e_i\longmapsto e_i^*\defeq \big(x\mapsto {\rm diag}(\underbrace{1,\ldots,1}_{i-1},x,1,\ldots,1)\big),\ \ i\in \{1,\ldots,n\}.
\end{equation}
We denote by $R=\{e_i-e_j : 1\leq i\ne j\leq n\}\subset X(T)$ the roots of $(G,T)$, by $B\subseteq G$ the Borel subgroup (over $\Z$) of upper-triangular matrices and by $N$ the unipotent radical of $B$, so that the positive roots are $R^+=\{e_i-e_{j} : 1\leq i<j\leq n\}\subset R$ and the simple roots are $S=\{e_i-e_{i+1} : 1\leq i\leq n-1\}\subset R^+$. An element of $X(T)\otimes_{\Z}\Q$ is dominant if $\langle \lambda,e_i-e_{i+1}\rangle \geq 0$ for all $i\in \{1,\ldots,n-1\}$. If $\lambda,\mu\in X(T)\otimes_{\Z}\Q$, we write $\lambda\leq \mu$ if $\mu-\lambda \in \sum_{i=1}^{n-1} \Q_{\geq 0}(e_i-e_{i+1})$. If $\lambda=\sum_{i=1}^{n-1} n_i(e_i-e_{i+1})$ for some $n_i\in \Q$, its support is by definition the set of simple roots $e_i-e_{i+1}$ such that $n_i\ne 0$. Finally, we denote by $W\cong {\mathcal S}_n$ the Weyl group of $(G,T)$, which acts on the left on $X(T)$ by $w(\lambda)(t)\defeq \lambda(w^{-1}tw)$ for $\lambda\in X(T)$ and $t\in T$.

If $P$ is a standard parabolic subgroup of ${G}$ (that is, containing $B$), we denote by $S(P)\subseteq S$ the subset of simple roots of $M_{P}$, $R(P)^+\subseteq R^+$ the positive roots of $M_P$ (generated by $S(P)$) and $W(P)\subseteq W$ its Weyl group.

\subsection{Weak local-global compatibility conjecture}\label{global}

We state our first local-global compatibility conjecture (see Conjecture \ref{theconjbar} and its generalization Conjecture \ref{conj:generale}) which relate Hecke-isotypic components with their action of $\GL_n(K)$ to representations of $\gp$ without taking care of their irreducible constituents.

\subsubsection{The functors \texorpdfstring{$D_{\xi_H}^\vee$}{D\_\{xi\_H\}\^{}v} and \texorpdfstring{$V_H$}{V\_H}}\label{covariant}

We review the simple generalization of Colmez's functor defined in \cite{breuil-foncteur}. 

Throughout this section, we fix a connected reductive algebraic group $H$ which is split over $K$ with a connected center, $B_H\subseteq H$ a Borel subgroup and $T_H\subseteq B_H$ a split maximal torus in $B_H$. We let $\big(X(T_H),R_H,X^{\vee}(T_H),R_H^{\vee}\big)$ be the associated root datum, $R_H^+\subset X(T_H)$ the (positive) roots of $B_H$, $S_H\subseteq R_H^+$ the simple roots and $S_H^\vee$ the associated simple coroots.

We need to recall some notation of \cite{breuil-foncteur} (to which we refer the reader for any further details). For $\alpha\in R_H^+$, we let $N_\alpha\subseteq N_H$ be the associated (commutative) root subgroup, where $N_H\defeq N_{B_H}$ is the unipotent radical of $B_H$. For $\alpha\in S_H$, we fix an isomorphism $\iota_\alpha:N_\alpha \buildrel\sim\over \rightarrow {\mathbb G}_{\rm a}$ of algebraic groups over $K$ such that
\begin{equation}\label{alpha}
\iota_\alpha(tn_\alpha t^{-1})=\alpha(t)\iota_\alpha(n_\alpha)\ \ \forall\ t\in T_H,\ \ \forall\ n_\alpha\in N_\alpha.
\end{equation}
We fix an open compact subgroup $N_0\subset N_H(K)$ such that $\prod_{\alpha\in R_H^+}N_\alpha \buildrel\sim\over\rightarrow N_H$ induces a bijection $\prod_{\alpha\in R_H^+}N_{\alpha}(K)\cap N_0\buildrel \sim \over\rightarrow N_0$ for any order on the $\alpha\in R_H^+$ and such that $\iota_\alpha$ induces isomorphisms for $\alpha\in S_H$:
\begin{equation*}
N_\alpha(K)\cap N_0\buildrel\sim\over \longrightarrow \oK\subset K={\mathbb G}_{\rm a}(K).
\end{equation*}
We denote by $\ell$ the composite $N_H \twoheadrightarrow \prod_{\alpha\in S_H}N_\alpha \buildrel {\sum_{\alpha\in S_H}\iota_\alpha}\over\longrightarrow {\mathbb G}_{\rm a}$ (a morphism of algebraic groups over $K$). The morphism $\ell$ thus induces a group morphism still denoted $\ell: N_0\rightarrow \oK$ and we define
\begin{equation}\label{n1}
N_1\defeq  \Ker\big(N_0\buildrel \ell \over \rightarrow \oK \buildrel {\rm Tr}_{K/\Qp} \over \longrightarrow \Qp\big)
\end{equation}
which is a normal open compact subgroup of $N_0$. We fix an isomorphism of $\Zp$-modules $\psi:{\rm Tr}_{K/\Qp}(\oK)\buildrel \sim\over\rightarrow \Zp$. When $N_H\ne 0$, i.e.\ when $H\ne T_H$, this fixes an isomorphism
\begin{equation}\label{n0n1}
N_0/N_1\buildrel {\rm Tr}_{K/\Qp}\circ \ell \over {\buildrel \sim\over\longrightarrow}{\rm Tr}_{K/\Qp}(\oK) \buildrel \psi \over {\buildrel \sim\over\longrightarrow}\Zp.
\end{equation}
We fix fundamental coweights $(\lambda_{\alpha^\vee})_{\alpha\in S_H}$ (which exist since $H$ has a connected center) and set
\begin{equation}\label{coweights}
\xi_H \defeq \sum_{\alpha^\vee\in S_H^\vee}\lambda_{\alpha^\vee}\in \Hom_{\rm Gr}({\mathbb G}_{\rm m},T_H)=X^\vee(T_H).
\end{equation}
Note that $\xi_H(x)N_1\xi_H(x^{-1})\subseteq N_1$ for any $x\in \Zp\backslash \{0\}$. Let $\F\bbra{X}[F]$ be the noncommutative polynomial ring in $F$ over the ring of formal power series $\F\bbra{X}$ such that $FS(X)=S(X^p)F$.

For $\pi$ a smooth representation of $B_H(K)$ over $\F$, we endow the invariant subspace $\pi^{N_1}\subseteq \pi$ with a structure of an $\F\bbra{X}[F]$-module as follows:
\begin{enumerate}
\item $\F\bbra{X}\cong \F\bbra{\Zp}$ acts via $\F\bbra{N_0/N_1}\buildrel (\ref{n0n1}) \over \simeq \F\bbra{\Zp}$ (here $X\defeq [1]-1$);
\item
\label{def:F}
$F$ acts via the ``Hecke'' action $F(v)\defeq \sum_{n_1\in N_1/\xi_H(p)N_1\xi_H(p^{-1})}n_1\xi_H(p)v \ \in \pi^{N_1}$ for $v\in \pi^{N_1}$.
\end{enumerate}
Note that $\pi^{N_1}$ is a torsion $\F\bbra{X}$-module (but not a torsion $\F[F]$-module in general). We also endow $\pi^{N_1}$ with an action of $\Zp^\times$ by making $x\in \Zp^\times$ act by $\xi_H(x)$. This action commutes with $F$ and satisfies $\xi_H(x)\circ (1+X) = (1+X)^x\circ \xi_H(x)$. 

As in \cite{breuil-foncteur}, we denote by $\fgk$ the category of finite-dimensional \'etale $(\varphi,\Gamma)$-modules over $\F\bbra{X}[X^{-1}]=\F\ppar{X}$ and by $\fghatk$ the corresponding category of (pseudocompact) pro-objects, see \cite[\S2]{breuil-foncteur} for more details. Both $\fgk$ and $\fghatk$ are abelian categories. Let $M\subseteq \pi^{N_1}$ be a finite type $\F\bbra{X}[F]$-submodule which is $\Zp^\times$-stable and assume that $M$ is admissible as an $\F\bbra{X}$-module, that is, $M[X]\defeq \{m\in M : Xm=0\}$ is finite-dimensional over $\F$. Let $M^\vee\defeq \Hom_{\F}(M,\F)$ (algebraic $\F$-linear dual) which is also an $\F\bbra{X}$-module (but not a torsion $\F\bbra{X}$-module in general). Then by a key result of Colmez $M^\vee[X^{-1}]$ can be endowed with the structure of an object of $\fgk$ (\cite{Colmez}, see also \cite[Lemma 2.6]{breuil-foncteur}). More precisely $X$ acts on $f\in M^\vee$ by $(Xf)(m)\defeq f(Xm)$ ($m\in M$), $x\in \Zp^\times$ acts by $(xf)(m)\defeq f(x^{-1}m)$, and the operator $\varphi$ is defined as follows. Take the $\F$-linear dual of $\Id\otimes F:\F\bbra{X}\otimes_{\phz,\F\bbra{X}}M\longrightarrow M$, compose with\footnote{The formula for this isomorphism given in the proof of \cite[Lemma 2.6]{breuil-foncteur} is actually wrong, the present formula is the correct one. Note that it is also the same as $f\mapsto  \sum_{i=0}^{p-1}\frac{1}{(1+X)^i}\otimes f((1+X)^i\otimes \cdot)$.}
\begin{eqnarray}
\nonumber (\F\bbra{X}\otimes_{\phz,\F\bbra{X}}M)^\vee&\buildrel\sim\over\longrightarrow &\F\bbra{X}\otimes_{\phz,\F\bbra{X}}M^\vee\\
f&\longmapsto & \sum_{i=0}^{p-1}(1+X)^i\otimes f\big(\frac{1}{(1+X)^i}\otimes \cdot\big)\label{correctformula}
\end{eqnarray}
and invert $X$: the resulting morphism $M^\vee[X^{-1}]\rightarrow \F\bbra{X}\otimes_{\phz,\F\bbra{X}}M^\vee[X^{-1}]$ turns out to be an $\F\ppar{X}$-linear isomorphism whose inverse is by definition $\Id\otimes\phz$.

When $H\ne T_H$ we then define
\begin{equation}\label{DxiH}
D_{\xi_H}^\vee(\pi)\defeq  \plim{M} M^\vee[X^{-1}],
\end{equation}
where the projective limit is taken over the finite type $\F\bbra{X}[F]$-submodules $M$ of $\pi^{N_1}$ (for the preorder defined by inclusion) which are admissible as $\F\bbra{X}$-modules and invariant under the action of $\Zp^\times$. When $H=T_H$, one has to replace $M^\vee[X^{-1}]$ by $\F\ppar{X}\otimes_{\F}M^\vee$, we refer the reader to \cite[\S3]{breuil-foncteur}. The functor $D_{\xi_H}^\vee$ is right exact contravariant from the category of smooth representations of $B_H(K)$ over $\F$ to the category $\fghatk$ and, up to isomorphism, only depends on the choice of the cocharacter $\xi_H$. Moreover, if $D_{\xi_H}^\vee(\pi)$
 turns out to be in $\fgk$ (and not just $\fghatk$), then $D_{\xi_H}^\vee(\pi)$ is exactly the maximal \'etale $(\varphi,\Gamma)$-module which occurs as a quotient of $(\pi^{N_1})^\vee[X^{-1}]$, see \cite[Rem.5.6(iii)]{breuil-foncteur}.

\begin{rem}\label{dim1}
If $H={\mathbb G}_{\rm m}=T_H$, then by definition $\xi_H=1$. It follows, for $\dim_{\F}\pi=1$, that $D_{\xi_H}^\vee(\pi)$ is always the trivial (rank one) $(\varphi,\Gamma)$-module (even if $\pi$ is a nontrivial character).
\end{rem}

Let us now assume that the dual group $\widehat H$ of $H$ also has a connected center, and let us fix $\theta_H\in X(T_H)$ such that $\theta_H\circ \alpha^\vee=\Id_{{\mathbb G}_{\rm m}}$ for all $\alpha\in S_H$ (\cite[Prop.2.1.1]{BH}, such an element is called a {\it twisting element}). In \S\ref{Cgroup} below, it is possible to avoid this assumption using $C$-parameters, but since our main aim is $G=\GL_n$ in the rest of the paper, there is no harm in making this assumption.

Consider the smooth character
\[K^\times \longrightarrow \F^\times,\ x\longmapsto \omega\big(\theta_H(\xi_H(x))\big)\]
and denote by $\delta_H$ the restriction of this character to $\Qp^\times\subseteq K^\times$. Seeing $\omega\circ \theta_H\circ \xi_H$ as a character of $\gK$ via local class field theory for $K$ (as normalized in \S\ref{intro}), and remembering that the restriction from $K^\times$ to $\Qp^\times$ corresponds via local class field theory to the composition with the transfer $\gp^{\rm ab}\rightarrow \gK^{\rm ab}$, we see that
\[\delta_H\cong \ind_K^{\otimes\Qp}\!(\omega\circ \theta_H\circ \xi_H),\]
where $\ind_K^{\otimes\Qp}$ is the tensor induction from $\gK$ to $\gp$ (see the end of \S\ref{somprel} below).

Denote \ by \ $\repk$ \ the \ abelian \ category \ of \ continuous \ linear \ representations \ of $\gp$ on finite-dimensional $\F$-vector spaces (equipped with the discrete topology) and $\irepk$ the corresponding category of ind-objects, i.e.\ the category of filtered direct limits of objects of $\repk$. Recall that there is a covariant equivalence of categories ${\bf V}:\fgk \buildrel\sim\over\rightarrow \repk$ (see \cite[Thm.A.3.4.3]{Fo} where this functor is denoted ${\bf V}_{\mathcal E}$) compatible with tensor products and duals on both sides. We denote by ${\bf V}^\vee$ the dual of ${\bf V}$ (i.e.\ the dual Galois representation). When $H\ne T_H$, we then define the covariant functor $V_H$ from the category of smooth representations of $B_H(K)$ over $\F$ to the category $\irepk$ by
\begin{equation}\label{VG}
V_H(\pi)\defeq  \ilim{M} \big({\bf V}^\vee(M^\vee[X^{-1}])\big)\otimes \delta_H,
\end{equation}
where the inductive limit is taken over the finite type $\F\bbra{X}[F]$-submodules of $\pi^{N_1}$ which are admissible as $\F\bbra{X}$-modules and preserved by $\Zp^\times$. Likewise, when $H=T_H$, with $\F\ppar{X}\otimes_{\F}M^\vee$ instead of $M^\vee[X^{-1}]$ (note that $\delta_H$ is then $1$).

\begin{lem}
The functor $V_H$ is left exact.
\end{lem}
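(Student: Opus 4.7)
The plan is to deduce left exactness of $V_H$ from the right exactness of $D_{\xi_H}^\vee$ (recalled just after (\ref{DxiH})) together with the exactness of the equivalence ${\bf V}$. The guiding identity will be a natural isomorphism
\[
V_H(\pi)\ \cong\ {\bf V}^\vee\big(D_{\xi_H}^\vee(\pi)\big)\otimes \delta_H,
\]
where ${\bf V}^\vee$ denotes the contravariant exact equivalence between $\fgk$ and $\repk$, extended to a contravariant exact equivalence $\fghatk\congto \irepk$. Once this is in hand, $V_H$ is the composition of the right-exact contravariant functor $D_{\xi_H}^\vee$ with the exact contravariant functor ${\bf V}^\vee$ and the exact twist $-\otimes\delta_H$, hence is covariant and left exact.

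Concretely I would proceed as follows. First I would observe that, since ${\bf V}:\fgk\congto\repk$ is an equivalence of abelian categories, it is exact; the same is true of ${\bf V}^\vee$, viewed as a contravariant equivalence between the same categories. Next I would extend ${\bf V}^\vee$ to the pro/ind setting: by a standard formal argument, any contravariant exact equivalence of (essentially small) abelian categories extends to a contravariant exact equivalence between pro-objects on one side and ind-objects on the other, sending $\plim_i D_i$ to $\ilim_i {\bf V}^\vee(D_i)$. Since filtered direct limits are exact in the abelian category $\irepk$, the extended ${\bf V}^\vee$ is again exact. Comparing the definitions (\ref{DxiH}) and (\ref{VG}) (and the analogous formulas in the excluded case $H=T_H$, which is trivial), this yields the displayed natural isomorphism above.

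Finally, given a short exact sequence $0\to\pi_1\to\pi_2\to\pi_3\to 0$ of smooth representations of $B_H(K)$ over $\F$, right exactness of $D_{\xi_H}^\vee$ produces an exact sequence
\[
D_{\xi_H}^\vee(\pi_3)\ \longrightarrow\ D_{\xi_H}^\vee(\pi_2)\ \longrightarrow\ D_{\xi_H}^\vee(\pi_1)\ \longrightarrow\ 0
\]
in $\fghatk$; applying the exact contravariant functor ${\bf V}^\vee$ and twisting by $\delta_H$ yields the exact sequence $0\to V_H(\pi_1)\to V_H(\pi_2)\to V_H(\pi_3)$ in $\irepk$, which is exactly left exactness. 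The only step that is not completely formal is the extension of ${\bf V}^\vee$ to an exact contravariant equivalence between $\fghatk$ and $\irepk$ commuting with the pro/ind formation, but I do not expect any real obstacle: either one invokes the general theory of pro/ind-completions of (essentially small) abelian categories, or one argues by hand, passing to a cofinal subsystem of finite type $\F\bbra{X}[F]$-submodules adapted to the given short exact sequence and using that each term $M^\vee[X^{-1}]$ lies in the genuine abelian category $\fgk$ where ${\bf V}^\vee$ is already known to be exact.
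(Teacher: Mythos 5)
Your proposal is mathematically sound, but it follows a genuinely different route from the paper. The paper's own proof is entirely direct: starting from a short exact sequence $0\to\pi'\to\pi\to\pi''\to 0$, it passes to $N_1$-invariants, observes that for each finite type admissible $\Zp^\times$-stable submodule $M\subseteq\pi^{N_1}$ the intersection $M\cap\pi'^{N_1}$ and the image $s(M)$ are again of this form, applies the exact contravariant ${\bf V}^\vee$ at each finite level, passes to the inductive limit, and finally notes that $\ilim_M {\bf V}^\vee(s(M)^\vee[X^{-1}])\otimes\delta_H$ injects into $V_H(\pi'')$ because all transition maps in the inductive system are injective. It never invokes $D_{\xi_H}^\vee$. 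Your proof instead factors $V_H$ through the pro-object $D_{\xi_H}^\vee(\pi)$, invokes the right exactness of $D_{\xi_H}^\vee$ (which the paper records just before the lemma, citing \cite{breuil-foncteur}), and reduces everything to the formal claim that ${\bf V}^\vee$ extends to an exact contravariant equivalence $\fghatk\simto\irepk$ sending $\plim_i D_i$ to $\ilim_i{\bf V}^\vee(D_i)$. This is correct, and the ``guiding identity'' $V_H(\pi)\cong{\bf V}^\vee(D_{\xi_H}^\vee(\pi))\otimes\delta_H$ is immediate from the definitions \eqref{DxiH} and \eqref{VG} once that extension is available. What your route buys is modularity and a transparent statement of the duality between $D_{\xi_H}^\vee$ and $V_H$; what it costs is having to set up (or cite) the pro/ind-completion formalism for the specific categories $\fghatk$ and $\irepk$, which the paper deliberately avoids since it is not needed elsewhere. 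Note also that the burden you shift onto ``right exactness of $D_{\xi_H}^\vee$'' is not truly lighter: that statement is proved in \cite{breuil-foncteur} by essentially the same direct analysis of the filtered system of submodules $M$ that the paper uses here, so the two proofs have the same mathematical content even though yours is packaged more categorically. Your honest caveat about the extension of ${\bf V}^\vee$ is exactly the point a referee would press on; the alternative you mention --- arguing directly with a cofinal subsystem of finite-type submodules adapted to the given short exact sequence --- is in effect a rediscovery of the paper's own argument.
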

\begin{proof}
We give the proof for $H\ne T_H$, leaving the case $H=T_H$ to the reader. Let $0\rightarrow \pi'\rightarrow \pi\buildrel s \over \rightarrow \pi''\rightarrow 0$ be an exact sequence of smooth $B_H(K)$-representa\-tions over $\F$, which gives a short exact sequence $0\rightarrow {\pi'}^{N_1}\rightarrow \pi^{N_1}\buildrel s \over\rightarrow {\pi''}^{N_1}$. If $M$ is a finite type $\F\bbra{X}[F]$-submodule of $\pi^{N_1}$ which is admissible as an $\F\bbra{X}$-module and stable under the action of $\Zp^\times$, then so are $M\cap {\pi'}^{N_1}$ and $s(M)$ (see e.g.\ \cite[Lemma 2.1(i)]{breuil-foncteur}). The functor $M\rightarrow {\bf V}^\vee(M^\vee[X^{-1}])$ being covariant exact (since both $M\mapsto M^\vee[X^{-1}]$ and ${\bf V}^\vee$ are contravariant exact), each such $M\subseteq \pi^{N_1}$ gives rise to a short exact sequence in $\repk$:
\[0\rightarrow {\bf V}^\vee\big((M\cap {\pi'}^{N_1})^\vee[X^{-1}]\big)\rightarrow {\bf V}^\vee\big(M^\vee[X^{-1}]\big)\rightarrow {\bf V}^\vee\big(s(M)^\vee[X^{-1}]\big)\rightarrow 0.\]
Twisting by $\delta_H$ and taking the inductive limit over such $M$, we obtain a short exact sequence $0\rightarrow V_H(\pi')\rightarrow V_H(\pi)\rightarrow \ilim{M}{\bf V}^\vee\big(s(M)^\vee[X^{-1}]\big)\otimes \delta_H\rightarrow 0$ in $\irepk$. But we have an injection
\[\ilim{M}{\bf V}^\vee\big(s(M)^\vee[X^{-1}]\big)\otimes \delta_H\hookrightarrow V_H(\pi'')\]
in $\irepk$ since all transitions maps in the inductive limits are injective, therefore we end up with an exact sequence $0\rightarrow V_H(\pi')\rightarrow V_H(\pi)\rightarrow V_H(\pi'')$.
\end{proof}

\begin{ex}\label{exdelta}
For $H=G\times_{\Z}K={\GL_n}_{\slash K}$ (so $H\cong \widehat H$), we take in the sequel (writing just $G$ as a subscript instead of $G\times_{\Z}K$)
\[\xi_G(x)\defeq {\rm diag}(x^{n-1},\ldots,x,1)\ \ {\rm and}\ \ \theta_G\big({\rm diag}(x_1,\ldots,x_n)\big)= x_1^{n-1}x_2^{n-2}\cdots x_{n-1},\]
so that $\delta_G=\ind_K^{\otimes\Qp}\!(\omega^{(n-1)^2+(n-2)^2+\cdots +4+1})$. 
(In fact, since the tensor induction of a character is given by composition with the transfer map \cite{Coll}, by local class field theory we see that 
$\delta_G = \omega^{[K:\Q_p]((n-1)^2+(n-2)^2+\cdots +4+1)}$.)
\end{ex}

\begin{rem}\label{trivial}
(i) The covariant functor $V_H$ depends on the choices of $\xi_H$ and $\delta_H$ (though we don't include it in the notation). The reader may wonder why we need to assume the existence of $\theta_H$ and normalize $V_H$ using the strange twist $\delta_H$ above. This comes from the local-global compatibility: it turns out that this normalization is essentially what is going on in spaces of mod $p$ automorphic forms (see \cite[\S4]{BH}, \cite[Cor.9.8]{breuil-foncteur}, Example \ref{enlightening} and \S\S\ref{wlgc}, \ref{slgc} below). This normalization is also natural if one uses $C$-parameters, see \S\ref{Cgroup}.\\
(ii) For $H$ as in Example \ref{exdelta}, $\pi$ a smooth representation of $B(K)$ over $\F$ and $\chi:K^\times \rightarrow \F^\times$ a smooth character, one checks that $V_G(\pi\otimes (\chi\circ {\det}))\cong V_G(\pi)\otimes \delta$, where $\delta$ is the continuous character of $\gp$ associated via local class field theory to $x\mapsto \chi\big({\det}(\xi_G(x))\big)$ for $x\in \Qp^\times$. An explicit computation gives $\delta=(\chi\vert_{\Qp^\times})^{\frac{n(n-1)}{2}}\cong \ind_K^{\otimes\Qp}\!(\chi^{\frac{n(n-1)}{2}})$.

\end{rem}

When restricted to the abelian category of finite length admissible smooth representations of $H(K)$ over $\F$ with all irreducible constituents isomorphic to irreducible constituents of principal series, it is proven in \cite[\S9]{breuil-foncteur} that the functors $D_{\xi_H}^\vee$ and $V_H$ are exact. It seems reasonable to us, and also consistent with the conjectural formalism developed in the sequel (see e.g.\ Remark \ref{listrem}(iii)), to hope that there exists a suitable abelian category of admissible smooth representations of $H(K)$ over $\F$ containing the previous abelian category and the representations ``coming from the global theory'' on which the functors $D_{\xi_H}^\vee$ and $V_H$ are still exact. See for instance the category $\mathcal C$ in \S\ref{multivariablepsi} when $H={\GL_2}_{\slash K}$ and $K$ is unramified.

We now recall the behaviour of the functor $V_H$ with respect to parabolic induction.

We assume for simplicity $H=G\times_{\Z}K={\GL_n}_{\slash K}$ and let $\xi_G$, $\theta_G$ as in Example \ref{exdelta}. We let $P$ be a standard parabolic subgroup of $G\times_{\Z}K$ and write $M_P=\prod_{i=1}^dM_i$ with $M_i\cong {\GL_{n_i}}_{\slash K}$. We define $V_{M_P}$ as in (\ref{VG}) using $\xi_{M_P}\defeq \xi_G$ and $\theta_{M_P}\defeq \theta_G$ (to define $D_{\xi_{M_P}}^\vee$ and $\delta_{M_P}$). We write $\xi_{M_P}=\oplus_{i=1}^d\xi_{M_P,i}$ in $X^\vee(T)=\oplus_{i=1}^dX^\vee(T_i)$ and $\theta_{M_P}=\oplus_{i=1}^d\theta_{M_P,i}$ in $X(T)=\oplus_{i=1}^dX(T_i)$, where $T_i$ is the diagonal torus in $M_i$, and let $V_{M_P,i}\defeq V_{\GL_{n_i}}$ but defined with $\xi_{M_P,i}$ and $\theta_{M_P,i}$. Finally we define $V_{M_i}\defeq V_{\GL_{n_i}}$ with $\xi_{M_i}$ and $\theta_{M_i}$ as in Example \ref{exdelta} replacing $n$ by $n_i$, and we recall that $\xi_{M_i}$, $\theta_{M_i}$ and $\delta_{M_i}$ are trivial characters if $n_i=1$.

If $\pi_P$ is a smooth representation of $M_P(K)$ over $\F$, that we see as a representation of $P^-(K)$ via $P^-(K)\twoheadrightarrow M_P(K)$, we define the usual smooth parabolic induction
\[\Ind_{P^-(K)}^{G(K)}\pi_P\defeq \{f:G(K)\rightarrow \pi_P \ {\rm loc.\ const.},\ f(px)=p(f(x)),\ p\in P^-(K),\ x\in \pi_P\},\]
with $G(K)$ acting (smoothly) on the left by $(gf)(g')\defeq f(g'g)$. 

\begin{lem}\label{parabtensor}
Let $\pi_P$ be a smooth representation of $M_P(K)$ over $\F$ of the form $\pi_P=\pi_1\otimes \cdots \otimes \pi_d$, where the $\pi_i$ are smooth representations of $M_i(K)$ over $\F$. Assume that the $\pi_i$ have central characters $Z(\pi_i):K^\times \rightarrow \F^\times$ and that $V_{M_P}(\pi_P)\cong \bigotimes_{i=1}^dV_{M_P,i}(\pi_i)$. Then we have an isomorphism in $\irepk$ {\upshape(}using implicitly local class field theory for $\gp${\upshape)}:
\[V_G\Big(\Ind_{P^-(K)}^{G(K)}\pi_P\Big)\otimes \delta_G^{-1}\cong \bigotimes_{i=1}^d \Big(V_{M_i}(\pi_i)\otimes \big(Z(\pi_i)^{n-\sum_{j=1}^in_j}\big)\vert_{\Qp^\times}\delta_{M_i}^{-1}\Big).\]
\end{lem}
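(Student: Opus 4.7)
The plan is to split the proof into two essentially independent pieces: a compatibility of $V_G$ with parabolic induction, and a per-factor comparison between the functor $V_{M_P,i}$ (built from the restrictions of $\xi_{M_P}$ and $\theta_{M_P}$ to the $i$-th block) and the functor $V_{M_i}$ on $\GL_{n_i}(K)$-representations.

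For the first piece, I would adapt the arguments of \cite[\S 9]{breuil-foncteur}. The idea is to choose $N_0 \subset N_G(K)$ compatibly with the decomposition $N_G = N_{M_P} \cdot N_{N_P}$, so that $N_0$ (resp.\ $N_1$) factors through its intersections with $M_P(K)$ and $N_P(K)$; one then uses the openness of the big cell $P^-(K) N_G(K) \subset G(K)$ to identify $\big(\Ind_{P^-(K)}^{G(K)} \pi_P\big)^{N_1}$ with $\pi_P^{N_1 \cap M_P(K)}$ as $\F\bbra{X}[F]$-modules equipped with the $\Zp^\times$-action, everything computed relative to the common cocharacter $\xi_{M_P} = \xi_G$. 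Since by construction $\delta_{M_P} = \delta_G$, this yields a canonical isomorphism
\[ V_G\big(\Ind_{P^-(K)}^{G(K)} \pi_P\big) \otimes \delta_G^{-1} \cong V_{M_P}(\pi_P) \otimes \delta_{M_P}^{-1}. \]
Combining with the hypothesis $V_{M_P}(\pi_P) \cong \bigotimes_{i=1}^d V_{M_P,i}(\pi_i)$ and the block-diagonal identity $\delta_{M_P} = \prod_{i=1}^d \delta_{M_P,i}$ (itself immediate from the block decomposition of $\xi_G$ and $\theta_G$) reduces the lemma, factor by factor, to the identity
\[ V_{M_P,i}(\pi_i) \otimes \delta_{M_P,i}^{-1} \cong V_{M_i}(\pi_i) \otimes \big(Z(\pi_i)^{n-\sum_{j=1}^i n_j}\big)\vert_{\Qp^\times} \otimes \delta_{M_i}^{-1}. \]

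The per-factor comparison is a direct computation. Setting $c_i \defeq n - \sum_{j=1}^i n_j$, one checks $\xi_{M_P,i}(x) = x^{c_i}\,\xi_{M_i}(x)$, the correction being a central cocharacter. On $\pi_i$ with central character $Z(\pi_i)$, replacing $\xi_{M_i}$ by $\xi_{M_P,i}$ in the construction of $D_\xi^\vee(\pi_i)$ rescales the Hecke operator $F$ by $Z(\pi_i)(p)^{c_i}$ and the $\Zp^\times$-action by $x \mapsto Z(\pi_i)(x)^{c_i}$. Through the recipe recalled before (\ref{DxiH}), this rescales $\varphi$ on $D_{\xi_{M_i}}^\vee(\pi_i)$ by $Z(\pi_i)(p)^{-c_i}$ and each $\gamma_x$ by $Z(\pi_i)(x)^{-c_i}$, i.e.\ tensors $D_{\xi_{M_i}}^\vee(\pi_i)$ with the rank one \'etale $(\varphi,\Gamma)$-module that corresponds, via ${\bf V}$ and the local class field theory normalization (uniformizers $\leftrightarrow$ geometric Frobenius), to $\big(Z(\pi_i)\vert_{\Qp^\times}\big)^{-c_i}$. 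Applying the contravariant ${\bf V}^\vee$ produces the tensor factor $(Z(\pi_i)^{c_i})\vert_{\Qp^\times}$ on the Galois side, while the shift $\theta_{M_P,i} - \theta_{M_i} = c_i(e_1 + \cdots + e_{n_i})$ accounts exactly for the ratio $\delta_{M_P,i}\,\delta_{M_i}^{-1}$ that was absorbed. Collecting these identifications yields the displayed per-factor formula, and hence the lemma.

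The main obstacle is the first step: although the parabolic induction formula for $V_G$ is morally implicit in \cite[\S 9]{breuil-foncteur}, one has to verify that the chosen $N_0$, $N_1$ and $\xi_G$ match the data used to define $V_{M_P}$, so that no spurious unramified or inertial twist appears between $V_G(\Ind_{P^-}^G \pi_P)$ and $V_{M_P}(\pi_P)$. Once that is settled, the remaining per-factor computation is bookkeeping with cocharacters and central characters, and the final assembly is immediate.
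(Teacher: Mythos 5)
Your proposal follows the same two-step strategy as the paper: first reduce $V_G$ of the parabolic induction to $V_{M_P}(\pi_P)$ (the paper cites \cite[Thm.6.1]{breuil-foncteur} for this, not \S 9), then compare $V_{M_P,i}$ with $V_{M_i}$ factor by factor using the central-cocharacter decomposition $\xi_{M_P,i}(x)=x^{c_i}\xi_{M_i}(x)$ (the paper cites \cite[Rem.4.3]{breuil-foncteur} for exactly the effect of a central twist of $\xi$ on the functor), and finally combine via $\delta_G=\prod_i\delta_{M_P,i}$. Two small inaccuracies in your sketch that don't affect the result: the claimed literal identification of $\big(\Ind_{P^-(K)}^{G(K)}\pi_P\big)^{N_1}$ with $\pi_P^{N_1\cap M_P(K)}$ is not true as stated (what \cite[Thm.6.1]{breuil-foncteur} gives is the isomorphism at the level of $D^\vee_\xi$), and the closing remark that $\theta_{M_P,i}-\theta_{M_i}$ ``accounts for $\delta_{M_P,i}\delta_{M_i}^{-1}$'' is a red herring --- once $V_{M_P,i}$ and $V_{M_i}$ are unwound as $\mathbf V^\vee(D^\vee_{\xi})\otimes\delta$, the $\delta$'s cancel identically in (\ref{compF}) and the $\theta$-shift plays no role.
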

\begin{proof}
By \cite[Thm.6.1]{breuil-foncteur} we have $V_G\big(\Ind_{P^-(K)}^{G(K)}\pi_P\big)\cong V_{M_P}(\pi_P)$ so that from the assumption (all isomorphisms are in $\irepk$):
\begin{equation}\label{isotens}
V_G\big(\Ind_{P^-(K)}^{G(K)}\pi_P\big)\cong \bigotimes_{i=1}^dV_{M_P,i}(\pi_i).
\end{equation}
An easy computation yields in $M_i(K)$ for $x\in K^\times$:
\[\xi_{M_P,i}(x)={\rm diag}(\underbrace{x^{n-\sum_{j=1}^in_j},\ldots,x^{n-\sum_{j=1}^in_j}}_{n_i})\xi_{M_i}(x)\]
which implies by \cite[Rem.4.3]{breuil-foncteur} that
\begin{equation}\label{compF}
V_{M_P,i}(\pi_i)\otimes \delta_{M_P,i}^{-1}\cong V_{M_i}(\pi_i) \otimes \big(Z(\pi_i)^{n-\sum_{j=1}^in_j}\big)\vert_{\Qp^\times}\delta_{M_i}^{-1},
\end{equation}
where $\delta_{M_P,i}\defeq \ind_K^{\otimes\Qp}\!(\omega\circ\theta_{M_P,i}\circ\xi_{M_P,i})$ (and recall $V_{M_i}(\pi_i)=1$ if $n_i=\dim_{\F}\pi_i=1$, see Remark \ref{dim1}). Since $\delta_G=\prod_{i=1}^d\delta_{M_P,i}$, twisting (\ref{isotens}) by $\delta_G^{-1}$ gives the result by (\ref{compF}).
\end{proof}

\begin{ex}\label{enlightening}
An enlightening and important example is the case of principal series $\Ind_{B^-(K)}^{G(K)}(\chi_1\otimes \cdots \otimes \chi_n)$, where the $\chi_i:K^\times \rightarrow \F^\times$ are smooth characters. The assumptions of Lemma \ref{parabtensor} are then trivially satisfied and thus we have
\[V_G\big(\Ind_{B^-(K)}^{G(K)}(\chi_1\otimes \cdots \otimes \chi_n)\big)\otimes \delta_G^{-1}\cong (\chi_1^{n-1}\chi_2^{n-2}\cdots \chi_{n-1})\vert_{\Qp^\times}.\]
In particular we deduce (using Example \ref{exdelta} for $\delta_G$) that
\begin{eqnarray*}
V_G\big(\Ind_{B^-(K)}^{G(K)}(\chi_1\omega^{-(n-1)}\otimes \chi_2\omega^{-(n-2)}\otimes \cdots \otimes \chi_n)\big)&\cong &(\chi_1^{n-1}\chi_2^{n-2}\cdots \chi_{n-1})\vert_{\Qp^\times}\\
&\cong &\ind_K^{\otimes\Qp}\!(\chi_1^{n-1}\chi_2^{n-2}\cdots \chi_{n-1}),
\end{eqnarray*}
where $\chi_1^{n-1}\chi_2^{n-2}\cdots \chi_{n-1}$ on the last line is seen as a character of $\gK$ via local class field theory for $K$.
\end{ex}

\begin{rem}\label{tensorprod}
Using \cite[Prop.5.5]{breuil-foncteur} the assumptions of Lemma \ref{parabtensor} are satisfied when all finite type $\F\bbra{X}[F]$-submodules of $\pi_i^{N_1}$ for $i\in \{1,\dots,d\}$ are automatically admissible as $\F\bbra{X}$-modules. This happens for instance if the $\pi_i$ are principal series or (when $K=\Qp$) are finite length representations of $\GL_2(\Qp)$ with a central character, but is not known otherwise. Contrary to what is stated in \cite[Rem.5.6(ii)]{breuil-foncteur}, we currently do not have a proof of an isomorphism $V_{M_P}(\pi_P)\cong \bigotimes_{i=1}^dV_{M_P,i}(\pi_i)$ for any smooth representations $\pi_i$, though we expect that it will indeed be satisfied for representations ``coming from'' the global theory. Note that, in \cite[Prop.3.2]{Za}, Z\'abr\'adi does prove a compatibility of his functor with the tensor product which looks close to the isomorphism above. However, {\it loc.cit.}\ deals with an {\it external} tensor product, whereas we have an {\it internal} tensor product. In particular he has {\it two} operators $F$, one for each factor in the external tensor product (whereas we consider the resulting diagonal operator), and his argument doesn't extend.
\end{rem}

\subsubsection{Global setting}\label{somprel}

We recall our global setting (see e.g.\ \cite[\S7.1]{EGH} or \cite[\S6]{Th} or \cite[\S4.1]{BH} or many other references) and define the $\gp$-representation $\LLbar(\rhobar)$ for $\rhobar:\gK\longrightarrow G(\F)$.

We let $F^+$ be a totally real finite extension of $\Q$ with ring of integers $\oFF$, $F/F^+$ a totally imaginary quadratic extension with ring of integers $\oF$ (do not confuse $F$ with the operator $F$ of \S\ref{covariant}!) and $c$ the nontrivial element of $\Gal(F/F^+)$. If $v$ (resp.\ $\tilde{v}$) is a finite place of $F^+$ (resp.\ $F$), we let $F_v^+$ (resp.\ $F_{\tilde{v}}$) be the completion of $F^+$ (resp.\ $F$) at $v$ (resp.\ $\tilde{v}$) and $\oFFv$ (resp.\ ${\mathcal O}_{\!F_{\tilde{v}}}$) the ring of integers of $F_v^+$ (resp.\ $F_{\tilde{v}})$. If $v$ splits in $F$ and $\tilde{v},\tilde{v}^c$ are the two places of $F$ above $v$, we have $\oFFv={\mathcal O}_{\!F_{\tilde{v}}}\buildrel c \over \simeq {\mathcal O}_{\!F_{\tilde{v}^c}}$, where the last isomorphism is induced by $c$. We let ${\mathbb A}_{F^+}^{\infty}$ (resp.\ ${\mathbb A}_{F^+}^{\infty,v}$) denote the finite ad\`eles of $F^+$ (resp.\ the finite ad\`eles of $F^+$ outside $v$). Finally we always assume that all places of $F^+$ above $p$ split in $F$.

We let $n\in \Z_{>1}$, $N$ a positive integer prime to $p$ and $H$ a connected reductive algebraic group over $\oFF[1/N]$ satisfying the following conditions:
\begin{enumerate}
\item there is an isomorphism $\iota:H\times_{\oFF[1/N]} \oF[1/N]\buildrel\sim\over\longrightarrow G\times_{\Z}\oF[1/N]$;
\item $H\times_{\oFF[1/N]} F^+$ is an outer form of $G\times_{\Z}F^+={\GL_n}_{/F^+}$;
\item $H\times_{\oFF[1/N]} F^+$ is isomorphic to ${\rm U}_n(\R)$ at all infinite places of $F^+$.
\end{enumerate}
One can prove that such groups exist (cf.\ e.g.\ \cite[\S7.1.1]{EGH}). Condition (i) implies that if $v$ is any finite place of $F^+$ that splits in $F$ and if $\tilde{v}\vert v$ in $F$ the isomorphism $\iota$ induces $\iota_{\tilde{v}}:H(F_v^+)\buildrel\sim\over\rightarrow {\GL_n}(F_{\tilde{v}})=G(F_{\tilde{v}})$ which restricts to an isomorphism still denoted by $\iota_{\tilde{v}}:H(\oFFv)\buildrel\sim\over\rightarrow {\GL_n}({\mathcal O}_{\!F_{\tilde{v}}})$ if $v$ doesn't divide $N$. Condition (ii) implies that $c\circ \iota_{\tilde{v}}:H(F_v^+)\buildrel\sim\over\rightarrow {\GL_n}(F_{{\tilde{v}}^c})$ (resp.\ $c\circ \iota_{\tilde{v}}:H(\oFFv)\buildrel\sim\over\rightarrow {\GL_n}({\mathcal O}_{\!F_{{\tilde{v}}^c}})$ if $v$ doesn't divide $N$) is conjugate in ${\GL_n}(F_{{\tilde{v}}^c})$ (resp.\ in ${\GL_n}({\mathcal O}_{\!F_{{\tilde{v}}^c}})$) to $\tau^{-1}\circ \iota_{{\tilde{v}}^c}$, where $\tau$ is the transpose in ${\GL_n}(F_{{\tilde{v}}^c})$ (resp.\ in ${\GL_n}({\mathcal O}_{\!F_{{\tilde{v}}^c}})$).

If $U$ is any compact open subgroup of $\GA$ then
\[S(U,\F)\defeq \{f:H(F^+)\backslash \GA/U\rightarrow \F\}\]
is a finite-dimensional $\F$-vector space since $H(F^+)\backslash \GA/U$ is a finite set. Fix $v\vert p$ in $F^+$ and a compact open subgroup $U^{v}$ of $\GAv$, we define
\begin{equation*}
S(U^{v},\F)\defeq \ilim{U_{v}}{S(U^{v}U_{v},\F)},
\end{equation*}
where $U_{v}$ runs among compact open subgroups of $H(\oFFv)$. We endow $S(U^{v},\F)$ with a linear left action of $H(F^+_{v})$ by $(h_vf)(h)\defeq f(hh_v)$ ($h_v\in H(F^+_{v})$, $h\in \GA$). Thus, for $\tilde{v}$ dividing $v$ in $F$, the isomorphism $\iota_{\tilde{v}}$ gives an admissible smooth action of $G(F_v^+)=\GL_n(F_{\tilde{v}})$ on $S(U^{v},\F)$. By what is above, the action of $G(F_v^+)$ induced by $\iota_{\tilde{v}}$ is the inverse transpose of the one induced by $\iota_{{\tilde{v}}^c}$.

If $U$ is a compact open subgroup of $\GA$, following \cite[\S7.1.2]{EGH} we say that $U$ is {\it unramified} at a finite place $v$ of $F^+$ which splits in $F$ and doesn't divide $N$ if we have $U=U^{v}\times H(\oFFv)$, where $U^{v}$ is a compact open subgroup of $\GAv$. Note that a compact open subgroup of $\GA$ is unramified at all but a finite number of finite places of $F^+$ which split in $F$. If $U$ is a compact open subgroup of $\GA$ and $\Sigma$ a finite set of finite places of $F^+$ containing the set of places of $F^+$ that split in $F$ and divide $pN$ and the set of places of $F^+$ that split in $F$ at which $U$ is {\it not} unramified, we denote by $\TT^\Sigma\defeq \oE[T^{(j)}_{\tilde{w}}]$ the commutative polynomial $\oE$-algebra generated by formal variables $T^{(j)}_{\tilde{w}}$ for $j\in \{1,\dots,n\}$ and ${\tilde{w}}$ a place of $F$ lying above a finite place $w$ of $F^+$ that splits in $F$ and {\it doesn't} belong to $\Sigma$. The algebra $\TT^\Sigma$ acts on $S(U,\F)$ by making $T^{(j)}_{\tilde{w}}$ act by the double coset
\[\iota_{\tilde{w}}^{-1}\left[{\GL_n}({\mathcal O}_{\!F_{\tilde{w}}})\begin{pmatrix}{\bf 1}_{n-j}&\\& \varpi_{\tilde{w}}{\bf 1}_{j}\end{pmatrix}{\GL_n}({\mathcal O}_{\!F_{\tilde{w}}})\right],\]
where $\varpi_{\tilde{w}}$ is a uniformizer in ${\mathcal O}_{\!F_{\tilde{w}}}$. Explicitly, if we write
\[{\GL_n}({\mathcal O}_{\!F_{\tilde{w}}})\smat{{\bf 1}_{n-j}&\\& \varpi_{\tilde{w}}{\bf 1}_{j}}{\GL_n}({\mathcal O}_{\!F_{\tilde{w}}})=\coprod_{i}g_i\smat{{\bf 1}_{n-j}&\\& \varpi_{\tilde{w}}{\bf 1}_{j}}{\GL_n}({\mathcal O}_{\!F_{\tilde{w}}}),\]
we have for $f\in S(U,\F)$ and $g\in \GA$:
\[(T^{(j)}_{\tilde{w}}f)(g)\defeq \sum_if\bigg(g\iota_{\tilde{w}}^{-1}\Big(g_i\smat{{\bf 1}_{n-j}&\\& \varpi_{\tilde{w}}{\bf 1}_{j}}\Big)\bigg).\]
One checks that $T^{(j)}_{{\tilde{w}}^c} = (T_{\tilde{w}}^{(n)})^{-1}T^{(n-j)}_{\tilde{w}}$ on $S(U,\F)$. We let $\TT^\Sigma(U,\F)$ be the image of $\TT^\Sigma$ in $\End_{\oE}(S(U,\F))$ (if $U'\subseteq U$, we thus have $S(U,\F)\subseteq S(U',\F)$ and $\TT^\Sigma(U',\F)\twoheadrightarrow \TT^\Sigma(U,\F)$). If $S$ is any $\TT^\Sigma$-module and $I$ any ideal of $\TT^\Sigma$, we set in the sequel $S[I]\defeq \{x\in S : Ix=0\}$.

We now fix $v\vert p$ and a compact open subgroup $U^{v}$ of $\GAv$. If $\Sigma$ is a finite set of finite places of $F^+$ containing the set of places of $F^+$ that split in $F$ and divide $pN$ and the set of places of $F^+$ prime to $p$ that split in $F$ and at which $U^{v}U_v$ (for any $U_v$) is not unramified, the algebra $\TT^\Sigma$ acts on $S(U^{v}U_{v},\F)$ (via its quotient $\TT^\Sigma(U^{v}U_{v},\F)$) for any $U_{v}$ and thus also on $S(U^{v},\F)$. This action commutes with that of $H(F^+_{v})$. If $\m^\Sigma$ is a maximal ideal of $\TT^{\Sigma}$ with residue field $\F$, we can define the localized subspaces $S(U^{v}U_{v},\F)_{\m^\Sigma}$ and their inductive limit
\[\ilim{U_{v}}{S(U^{v}U_{v},\F)_{\m^\Sigma}}=S(U^{v},\F)_{\m^\Sigma},\]
which inherits an induced (admissible smooth) action of $H(F^+_{v})$ together with a commuting action of $\plim{U_v}\TT^\Sigma(U^vU_v,\F)_{\m^\Sigma}$. We have
\[S(U^{v}U_{v},\F)[\m^{\Sigma}]\!\subseteq S(U^{v}U_{v},\F)_{\m^\Sigma}\subseteq S(U^{v}U_{v},\F)\]
and thus inclusions of admissible smooth $H(F^+_{v})$-representations over $\F$:
\[S(U^{v},\F)[\m^{\Sigma}]\subseteq S(U^{v},\F)_{\m^\Sigma}\subseteq S(U^{v},\F).\]
Moreover, as representations of $H(F^+_{v})$, $S(U^{v}\!,\F)_{\m^\Sigma}$ is a direct summand of $S(U^{v}\!,\F)$ (= the maximal vector subspace on which the elements of $\m^{\Sigma}$ act nilpotently). 

We now go back to the notation of \S\ref{covariant}. For $\lambda\in X(T)$ a dominant weight with respect to $B$, we consider the following algebraic representation of $G\times_{\Z}\F$ over $\F$:
\begin{equation}\label{algbar}
\Lbar(\lambda)\defeq \big({\rm ind}_{B^-}^G\lambda\big)_{/\Z}\otimes_{\Z}\F=\big({\rm ind}_{{B^-}\times_{\Z}\F}^{{ G}\times_{\Z}\F}\lambda\big)_{/\F},
\end{equation}
where ind means the algebraic induction functor of \cite[\S I.3.3]{Ja} and the last equality follows from \cite[II.8.8(1)]{Ja}. For $\alpha=e_i-e_{i+1}\in S$, we set
\begin{equation}
\lambda_{\alpha}\defeq e_1+\cdots +e_i\in X(T),\label{eq:lambda-alpha}
\end{equation}
so that the $\lambda_{\alpha}$ for $\alpha\in S$ are fundamental weights of $G$ (see e.g.\ \cite[\S2.1]{BH}). Let $\rhobar:\gK\longrightarrow G(\F)$ be a continuous homomorphism, viewing $\Lbar(\lambda_{\alpha})$ as a continuous homomorphism
\[G(\F)\longrightarrow {\rm Aut}\big(\Lbar(\lambda_{\alpha})(\F)\big)\]
(where $\Lbar(\lambda_{\alpha})(\F)$ is the underlying $\F$-vector space of the algebraic representation $\Lbar(\lambda_{\alpha})$), we define the Galois representations for $\alpha\in S$:
\[\Lbar(\lambda_{\alpha})(\rhobar):\gK\buildrel\rhobar\over\longrightarrow G(\F)\buildrel\Lbar(\lambda_{\alpha}) \over\longrightarrow {\rm Aut}\big(\Lbar(\lambda_{\alpha})(\F)\big).\]
Recall that $\Lbar(\lambda_{\alpha})(\rhobar)={\bigwedge}^{i}_{\F}\rhobar$ if $\alpha=e_i-e_{i+1}$ (\cite[Ex.2.1.3]{BH}). We let
\[\bigotimes_{\alpha\in S}\big(\Lbar(\lambda_{\alpha})(\rhobar)\big)\cong \bigotimes_{i=1}^{n-1}{\bigwedge}^{\!\!i}_{\F}\rhobar\]
be the tensor product of the representations $\Lbar(\lambda_{\alpha})(\rhobar)$ (over $\F$) and define the following finite-dimensional conti\-nuous representation of $\gp$ over $\F$:
\begin{equation}\label{tensorind}
\LLbar(\rhobar)\defeq \ind_K^{\otimes\Qp}\!\Big(\bigotimes_{\alpha\in S}\big(\Lbar(\lambda_{\alpha})( \rhobar)\big)\Big),
\end{equation}
where $\ind_K^{\otimes\Qp}$ means the {\it tensor induction from $\gK$ to $\gp$} (\cite{Coll}, \cite[\S13]{Curtis-Reiner1}, see also the end of the proof of Lemma \ref{galqp}). Note that there are $\gp$-equivariant isomorphisms
\begin{equation}\label{dual}
\LLbar(\rhobar^\vee)\cong \LLbar(\rhobar)^\vee\cong \LLbar(\rhobar)\otimes \ind_K^{\otimes\Qp}\!\!\big({\det(\rhobar)}^{-(n-1)}\big)
\end{equation}
(recall $\ind_K^{\otimes\Qp}\!\!\big({\det(\rhobar)}^{-(n-1)}\big)$ is still one dimensional).

\begin{ex}\label{indn=2}
For $n=2$, we thus just have $\LLbar(\rhobar)=\ind_K^{\otimes\Qp}\!(\rhobar)$.
\end{ex}

\subsubsection{Weak local-global compatibility conjecture}\label{wlgc}

We state our weak local-global compatibility conjecture (Conjecture \ref{theconjbar}).

Let $\rbar:\gF\rightarrow {\GL}_n(\F)$ be a continuous representation and $\rbar^\vee$ its dual. We assume:
\begin{enumerate}
\item $\rbar^c\cong \rbar^\vee\otimes\omega^{1-n}$ (where $\rbar^c(g)\defeq \rbar(cgc)$ for $g\in \gF$);
\item $\rbar$ is an absolutely irreducible representation of $\gF$.
\end{enumerate}
Fix $v\vert p$ in $F^+$, $V^v\subseteq U^{v}\subseteq \GAv$ compact open subgroups and $\Sigma$ a finite set of finite places of $F^+$ containing
\begin{enumerate}[(a)]
\item the set of places of $F^+$ that split in $F$ and divide $pN$;
\item the set of places of $F^+$ that split in $F$ at which $V^{v}$ is not unramified;
\item the set of places of $F^+$ that split in $F$ at which $\rbar$ is ramified.
\end{enumerate}
We associate to $\rbar$ and $\Sigma$ the maximal ideal $\m^\Sigma$ in $\TT^\Sigma$ with residue field $\F$ generated by $\pE$ and all elements
\[\Big((-1)^j{\rm Norm}({\tilde{w}})^{j(j-1)/2}T_{\tilde{w}}^{(j)}-a^{(j)}_{\tilde{w}}\Big)_{j,{\tilde{w}}},\]
where $j\in \{1,\dots,n\}$, ${\tilde{w}}$ is a place of $F$ lying above a finite place $w$ of $F^+$ that splits in $F$ and doesn't belong to $\Sigma$, $X^n+\overline a_{\tilde{w}}^{(1)}X^{n-1}+\cdots + \overline a_{\tilde{w}}^{(n-1)}X+\overline a_{\tilde{w}}^{(n)}$ is the characteristic polynomial of $\rbar(\Frob_{\tilde{w}})$ (an element of $\F[X]$, $\Frob_{\tilde{w}}$ is a geometric Frobenius at ${\tilde{w}}$) and where $a^{(j)}_{\tilde{w}}$ is any element in $\oE$ lifting $\overline a^{(j)}_{\tilde{w}}$. Note that $S(V^{v},\F)[\m^{\Sigma}]\ne 0$ in fact implies assumption (i) above on $\rbar$ (though strictly speaking we need (i) to define $\m^{\Sigma}$ in $\TT^\Sigma$). Note also that if $U$ is any subgroup of $\GA$ containing $V^v$ as a normal subgroup, then $U$ naturally acts on $S(V^{v},\F)$ and $S(V^{v},\F)[\m^{\Sigma}]$.

For ${\tilde{v}}\vert v$ in $F$, we denote by $V_{G,{\tilde{v}}}$ the functor defined in (\ref{VG}) applied to smooth representations of $H(F_v^+)$ over $\F$, where we identify $H(F_v^+)$ with $\GL_n(F_{\tilde{v}})=G(F_{\tilde{v}})$ via $\iota_{\tilde{v}}$. For any finite place ${\tilde{w}}$ of $F$, we denote by $\rbar_{\tilde{w}}$ the restriction of $\rbar$ to a decomposition subgroup at ${\tilde{w}}$. 

\begin{conj}\label{theconjbar}
Let $\rbar:\gF\rightarrow {\GL}_n(\F)$ be a continuous representation that satisfies conditions (i) and (ii) above and fix a place $v$ of $F^+$ which divides $p$. Assume that there exist compact open subgroups $V^v\subseteq U^{v}\subseteq \GAv$ with $V^v$ normal in $U^v$, a finite-dimensional representation $\sigma^v$ of $U^v/V^v$ over $\F$ and a finite set $\Sigma$ of finite places of $F^+$ as above such that $\Hom_{U^v}(\sigma^v,S(V^{v},\F)[\m^{\Sigma}])\ne 0$. Let ${\tilde{v}}\vert v$ in $F$. Then there is an integer $d\in \Z_{>0}$ depending only on $v$, $U^{v}$, $V^v$, $\sigma^v$ and $\rbar$ such that there is an isomorphism of representations of $\gp$ on $\F$:
\begin{equation}\label{beta1serre}
V_{G,{\tilde{v}}}\big(\Hom_{U^v}(\sigma^v,S(V^{v},\F)[\m^{\Sigma}])\otimes (\omega^{-(n-1)}\circ{\det})\big)\cong \LLbar(\rbar_{\tilde{v}})^{\oplus d}.
\end{equation}
\end{conj}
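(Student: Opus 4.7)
The plan is to mimic the strategy carried out for $n=2$ and $K$ unramified (Theorem \ref{specialcase1}), reducing the global statement to two local inequalities on the representation $\Pi \defeq \Hom_{U^v}(\sigma^v,S(V^{v},\F)[\m^{\Sigma}])$: a Galois-equivariant injection $\LLbar(\rbar_{\tilde{v}})^{\oplus d}\hookrightarrow V_{G,\tilde{v}}(\Pi\otimes(\omega^{-(n-1)}\circ\det))$ on one hand, and a matching upper bound $\dim_\F V_{G,\tilde{v}}(\Pi\otimes(\omega^{-(n-1)}\circ\det))\leq d\cdot\dim_\F\LLbar(\rbar_{\tilde{v}})$ on the other. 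Combining these two would give the isomorphism in \eqref{beta1serre}, with $d$ the common dimension obtained from the structure of the $\GL_n(\oK)$-socle of $\Pi$.

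First I would establish the lower bound. The idea is that the $\GL_n(\oK)$-socle of $\Pi$, and more precisely its $\QK$-invariants, should match the Serre weights predicted by $\rbar_{\tilde{v}}$, following the philosophy of \cite{BDJ}, \cite{BP}, \cite{EGS} and their generalizations in higher dimension. Using Taylor--Wiles patching à la \cite{CEGGPS2}, together with sufficient control of potentially semistable deformation rings at $\rbar_{\tilde{v}}$, one would like to produce enough automorphic weights to force the diagram $\Pi^{I_1}\hookrightarrow \Pi^{K_1}$ to contain, as a subdiagram, a ``universal'' copy $D(\rbar_{\tilde{v}})^{\oplus d}$ analogous to the one in \cite[Thm.1.3]{DoLe}, but attached now to a general $n$ and $K$. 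Once this structural input is in place, a long but explicit $(\varphi,\Gamma)$-module computation extending the one outlined in \S\ref{tensorinduction} should produce, for each copy of $D(\rbar_{\tilde{v}})$, a Galois-equivariant injection of $\LLbar(\rbar_{\tilde{v}})$ into $V_{G,\tilde{v}}(\Pi\otimes(\omega^{-(n-1)}\circ\det))$.

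Second I would establish the upper bound via the multivariable formalism of \S\ref{multivariablepsi}. Concretely, one would attach to every admissible $\pi$ in an appropriate abelian category $\mathcal C_n$ an étale $(\varphi,\cO_K^\times)$-module $D_A(\pi)^{\et}$ over a higher-rank analogue of $A$, prove an exactness statement of the shape of Theorem \ref{thmIntro6}, and show that reduction along a trace-type ring map recovers $V_{G,\tilde{v}}(\pi)$ as in Theorem \ref{thmIntro7}. This would give $\dim_\F V_{G,\tilde{v}}(\pi)\leq\rk_A D_A(\pi)^{\et}\leq m_{\mathfrak{p}_0}(\gr(\pi^\vee))$ for a suitable minimal prime $\mathfrak{p}_0$ of a quotient of $\gr(\F\bbra{\QK/Z_1})$. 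One would then bound $m_{\mathfrak{p}_0}(\gr(\Pi^\vee))$ by $d\cdot\dim_\F\LLbar(\rbar_{\tilde{v}})$ using the structure of the $\GL_n(\oK)K^\times$-socle input from the first step, combined with a combinatorial count generalizing Theorem \ref{thmIntro4}.

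The main obstacle is that almost none of the local input is currently available beyond $n=2$ and $K$ unramified: the analogue of the Dotto--Le diagram is unknown, the explicit presentation of $\gr(\F\bbra{\QK/Z_1})$ from \cite{BHHMS1} does not yet have a higher-rank counterpart, and neither does the multivariable $(\varphi,\cO_K^\times)$-module formalism of \S\ref{multivariablepsi}. Even granting all this, the explicit tensor-induction computation for the lower bound looks combinatorially formidable, since one must construct by hand a Galois-equivariant map whose target is $\ind_K^{\otimes\Qp}\big(\bigotimes_{i=1}^{n-1}\bigwedge^{i}_{\F}\rbar_{\tilde{v}}\big)$ and then verify that its image has the expected rank. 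In effect, proving Conjecture \ref{theconjbar} in full generality reduces to extending the entire local machinery of \S\ref{gl2} to $\GL_n(K)$ for arbitrary $n$ and $K$, which I would expect to require substantial new ideas on both the Hecke-theoretic and the $(\varphi,\Gamma)$-module sides.
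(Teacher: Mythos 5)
Conjecture~\ref{theconjbar} is stated, not proved, in the paper: the only established case is Theorem~\ref{specialcase1} ($n=2$, $K$ unramified, strong genericity). Your outline correctly mirrors the two-step strategy used there: (1) a lower bound $\LLbar(\rbar_{\tilde v})^{\oplus d}\hookrightarrow V_{G,\tilde v}(\Pi\otimes(\omega^{-(n-1)}\circ\det))$ coming from the Dotto--Le diagram isomorphism (Theorem~\ref{nonminimal}) fed into the explicit tensor-induction computation of \S\ref{tensorinduction} (Theorem~\ref{thm:main-tensor-ind}); and (2) an upper bound $\dim_\F V_{G,\tilde v}(\cdot)\leq m_{\mathfrak{p}_0}(\gr(\Pi^\vee))$ via the multivariable $(\varphi,\cO_K^\times)$-module formalism of \S\ref{multivariablepsi}--\S\ref{upperb}, closed by the combinatorial multiplicity count of Theorem~\ref{thm:cycle-pi} / Theorem~\ref{thm:upperbound}. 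You are also right about the missing ingredients: there is currently no higher-rank analogue of the Dotto--Le diagram, no generalization of the graded presentation of $\F\bbra{I_1/Z_1}$ from \cite{BHHMS1}, and no version of the ring $A$ and its $(\varphi,\cO_K^\times)$-modules beyond $\GL_2(\Qpf)$. Two small refinements to your sketch: the lower bound does not use Taylor--Wiles patching directly on $\Pi$ but rather Corollary~\ref{D0r} (which is a patching argument for $\Pi^{K_1}$) promoted to a diagram statement, so the heavy lifting on the automorphic side is at the level of $K_1$-invariants; and the reduction to a one-variable $(\varphi,\Gamma)$-module is not a mere ``trace-type ring map'' but the specific prime ideal $\mathfrak{p}=\Ker(\tr:A\to\F\ppar{X})$ of Theorem~\ref{thm:functors_comparison}, whose behaviour is what forces $\rk_A D_A(\pi)^{\et}=\dim_{\F\ppar{X}}D_\xi^\vee(\pi)$. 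Your overall assessment --- that proving the conjecture in full generality is equivalent to extending the entire local machinery of \S\ref{gl2}, and will require genuinely new ideas --- agrees with what the paper itself asserts, and there is nothing further to add for the proven case.
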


\begin{rem}\label{theconjbarrem}
(i) In the special case $\sigma^v=1$, Conjecture \ref{theconjbar} boils down to $V_{G,{\tilde{v}}}(S(U^{v},\F)[\m^{\Sigma}]\otimes (\omega^{-(n-1)}\circ{\det}))\cong \LLbar(\rbar_{\tilde{v}})^{\oplus d}$.\\
(ii) Conjecture \ref{theconjbar} implies that the $G(F_{\tilde{v}})$-repre\-sentation $\Hom_{U^v}(\sigma^v\!,S(V^{v},\F)[\m^{\Sigma}])$ determines the $\gp$-representation $\LLbar(\rbar_{\tilde{v}})$. Note that this doesn't imply in general that $\Hom_{U^v}(\sigma^v,S(V^{v},\F)[\m^{\Sigma}])$ determines the $\Gal(\overline{F_{\tilde{v}}}/F_{\tilde{v}})$-represen\-tation $\rbar_{\tilde{v}}$ itself (though this is also expected, see \cite{PQ} and the references therein).\\
(iii) See \S\S\ref{tensorinduction},~\ref{gl2results} below for nontrivial evidence on Conjecture \ref{theconjbar} when $K$ is unramified and $n=2$.
\end{rem}

We now check that, at least when $p$ is odd, $F/F^+$ is unramified at finite places and $H\times_{\oFF[1/N]} F^+$ is quasi-split at finite places, Conjecture \ref{theconjbar} holds for ${\tilde{v}}$ if and only if it holds for ${\tilde{v}}^c$ (these extra assumptions come from the use of \cite[\S6]{Th} in the next lemma).

\begin{lem}\label{central}
Assume $p>2$, $F/F^+$ unramified at finite places and $H\times_{\oFF[1/N]} F^+$ quasi-split at finite places of $F^+$. Let ${\tilde{v}}\vert v$ in $F$. Then the action of the center $(F_v^+)^\times\subseteq \GL_n(F_v^+)$ on $S(V^{v},\F)[\m^{\Sigma}]$ via $\iota_{\tilde{v}}$ is given by ${\det}(\rbar_{\tilde{v}})\omega^{\frac{n(n-1)}{2}}$ {\upshape(}via local class field theory for $F_v^+${\upshape)}.
\end{lem}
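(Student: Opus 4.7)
The plan is to first show that the center $(F_v^+)^\times$ acts via a smooth character on $S(V^v,\F)[\m^\Sigma]$, then identify this character by computing its value at unramified split places via the Hecke action, and finally pin down the value at $v$ itself by a global-invariance plus weak-approximation argument (aided by Thorne's local-global compatibility).

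First, for any compact open $U_v\subseteq H(\oFFv)$ the space $S(V^vU_v,\F)[\m^\Sigma]$ is finite-dimensional and the finite-index subgroup $(F_v^+)^\times\cap\iota_{\tilde v}^{-1}(U_v)$ of $(F_v^+)^\times$ acts trivially on it. Hence $(F_v^+)^\times$ acts on $S(V^v,\F)[\m^\Sigma]=\varinjlim_{U_v} S(V^vU_v,\F)[\m^\Sigma]$ through a smooth character $\chi_v$; the analogous construction at each finite place $w$ of $F^+$ split in $F$ produces a smooth character $\chi_w$ via $\iota_{\tilde w}$.

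Next, at an unramified split place $w\notin\Sigma$ with $\tilde w\mid w$, the Hecke operator $T^{(n)}_{\tilde w}$ is by definition the double coset of $\iota_{\tilde w}^{-1}(\varpi_{\tilde w}I_n)$, and since $\GL_n(\mathcal O_{F_{\tilde w}})\varpi_{\tilde w}I_n\GL_n(\mathcal O_{F_{\tilde w}})=\varpi_{\tilde w}I_n\GL_n(\mathcal O_{F_{\tilde w}})$ is a single coset, $T^{(n)}_{\tilde w}$ acts on $S(V^v,\F)[\m^\Sigma]$ exactly as the central element $\iota_{\tilde w}^{-1}(\varpi_{\tilde w}I_n)$. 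From the definition of $\m^\Sigma$ and the equality $\overline a_{\tilde w}^{(n)}=(-1)^n\det\rbar(\Frob_{\tilde w})$, I would deduce
\[\chi_w\bigl(\iota_{\tilde w}^{-1}(\varpi_{\tilde w}I_n)\bigr)=(-1)^n\,\mathrm{Norm}(\tilde w)^{-n(n-1)/2}\,\overline a_{\tilde w}^{(n)}=\mathrm{Norm}(\tilde w)^{-n(n-1)/2}\det\rbar(\Frob_{\tilde w}),\]
and since $\mathrm{Norm}(\tilde w)\equiv\omega(\Frob_{\tilde w})^{-1}\bmod p$, this simplifies to $\det\rbar(\Frob_{\tilde w})\,\omega^{n(n-1)/2}(\Frob_{\tilde w})$. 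Because $V^v$ is unramified at $w$, the units $\mathcal O_{F_{\tilde w}}^\times$ act trivially, so $\chi_w$ is unramified; via local class field theory for $F^+_w\cong F_{\tilde w}$, this identifies $\chi_w$ with the restriction to $(F_w^+)^\times$ of $\det\rbar_{\tilde w}\cdot\omega^{n(n-1)/2}$.

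Finally, the left $H(F^+)$-invariance of $S(V^v,\F)$ forces the diagonal image of $Z(H)(F^+)$ in $Z(H)(\A)$ to act trivially on $S(V^v,\F)[\m^\Sigma]$, so $\prod_w\chi_w$ is trivial on $Z(H)(F^+)$. On the Galois side, the character $\psi\defeq\det\rbar\cdot\omega^{n(n-1)/2}$ of $\g$ satisfies $\psi\psi^c=1$ (a direct check using assumption (i), namely $\rbar^c\cong\rbar^\vee\otimes\omega^{1-n}$, and the fact that $\omega^c=\omega$), so by global class field theory it corresponds to a Hecke character of $\A_F^\times/F^\times$ descending to a character of $Z(H)(F^+)\backslash Z(H)(\A)$. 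Combined with Step 2, weak approximation for the torus $Z(H)$ over $F^+$, and the local-global compatibility \cite[\S6]{Th} (which under the hypotheses $p>2$, $F/F^+$ unramified at finite places, and $H$ quasi-split at finite places allows us to lift $\rbar$ to a characteristic-$0$ automorphic Galois representation with known central character), the character $\chi_v$ is uniquely determined and equals $\det\rbar_{\tilde v}\cdot\omega^{n(n-1)/2}$ via local class field theory for $F_v^+$. The main obstacle is controlling the contributions at bad places in $\Sigma$: this is exactly where Thorne's results are invoked, and this is why the extra global hypotheses on $p$, $F/F^+$, and $H$ are needed.
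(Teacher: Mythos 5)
Your Step 1 has a genuine gap, and it is exactly the gap that the paper's proof is designed to close. You assert that $(F_v^+)^\times\cap\iota_{\tilde v}^{-1}(U_v)$ is a \emph{finite-index} subgroup of $(F_v^+)^\times$, and deduce from its triviality on $S(V^vU_v,\F)[\m^\Sigma]$ that the center acts through a single smooth character. Neither half of this works. First, no compact open subgroup of $(F_v^+)^\times$ has finite index: the valuation map $(F_v^+)^\times\twoheadrightarrow\Z$ has infinite image and kills every compact subgroup, so $(F_v^+)^\times\cap\iota_{\tilde v}^{-1}(U_v)$ is contained in $\cO_{F_v^+}^\times$ and has infinite index. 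Second, even if it were of finite index, triviality of a subgroup on a finite-dimensional space gives smoothness of the action, not semisimplicity or irreducibility of the action of the quotient; a uniformizer at $v$ (which lies in the center but not in any $\iota_{\tilde v}^{-1}(U_v)$) could a priori act by a non-scalar operator on $S(V^v,\F)[\m^\Sigma]$, which is not irreducible. The ideal $\m^\Sigma$ only constrains Hecke operators at places away from $\Sigma\ni v$, so it imposes no direct constraint at $v$.

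This is precisely why the paper needs the deformation-theoretic argument. The paper first observes that \emph{if} the center acts by a single character, classical local-global compatibility on $S(V^v,\oE)_{\m^\Sigma}\otimes E$ identifies it (which is essentially what your Steps 2--3 do, modulo normalization details). The paper then proves the single-character property by a Galois-side argument: Thorne's result \cite[Prop.6.7]{Th} gives a map $R_{\rbar,\Sigma}\to\TT^\Sigma(V^vU_v,\oE)_{\m^\Sigma}$, and the determinant of the universal deformation at $\tilde v$ gives a map $\oE\bbra{\Gal(\overline{F_{\tilde v}}/F_{\tilde v})^{\rm ab}}\to R_{\rbar,\Sigma}$. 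Since the Hecke algebra $\TT^\Sigma(V^vU_v,\oE)_{\m^\Sigma}$ acts on the $[\m^\Sigma]$-eigenspace by a character by construction, so does $R_{\rbar,\Sigma}$, hence so does the Iwasawa algebra, hence so does $(F_v^+)^\times$ once one checks (via local-global compatibility in characteristic $0$) that the resulting Iwasawa-algebra action agrees with the central $(F_v^+)^\times$-action. You invoke Thorne only at the end and for a different purpose (lifting to characteristic $0$ and controlling the central character), which does not repair Step 1: the existence of a single character at $v$ has to be established before any global pinning-down argument can be run, and your proposal never does that.
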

\begin{proof}
We can assume $S(V^{v},\F)[\m^{\Sigma}]\ne 0$. The map $S(V^vU_v,\oE)\longrightarrow S(V^vU_v,\F)$ being surjective for $U_v$ small enough (see e.g.\ \cite[Lemma 4.4.1]{BH} or \cite[\S7.1.2]{EGH}), we have a surjection of smooth $H(F_v^+)$-representations:
\begin{equation}\label{surj}
S(V^v,\oE)_{\m^{\Sigma}}\twoheadrightarrow S(V^v,\F)_{\m^{\Sigma}}
\end{equation}
(where $S(V^vU_v,\oE)$, $S(V^v,\oE)_{\m^{\Sigma}}$ are defined as $S(V^vU_v,\F)$, $S(V^v,\F)_{\m^{\Sigma}}$ replacing $\F$ by $\oE$). By classical local-global compatibility applied to $\big(\ilim{U}S(U,\oE)\big)\otimes_{\oE}E$, see e.g.\ \cite[Thm.7.2.1]{EGH}, we easily deduce with (\ref{surj}) that {\it if} $(F_v^+)^\times$ acts via $\iota_{\tilde{v}}$ on the whole $S(V^v,\F)[\m^{\Sigma}]$ (inside $S(V^v,\F)_{\m^{\Sigma}}$) by a single character, then this character must be ${\det}(\rbar_{\tilde{v}})\omega^{\frac{n(n-1)}{2}}$.\\
Let us prove that $(F_v^+)^\times$ indeed acts by a character. The functor associating to any local artinian $\oE$-algebra $A$ with residue field $\F$ the set of isomorphism classes of deformations $r_A$ of $\rbar$ to $A$ such that $r_A^{c}\simeq r_A^\vee\otimes\varepsilon^{1-n}$ is pro-representable by a local complete noetherian $\oE$-algebra $R_{\rbar,\Sigma}$ of residue field $\F$. When $p>2$, $F/F^+$ is unramified at finite places and $H\times_{\oFF[1/N]} F^+$ is quasi-split at finite places of $F^+$, it follows from \cite[Prop.6.7]{Th} that there is a natural such deformation with values in $\TT^\Sigma(V^vU_v,\oE)_{\m^{\Sigma}}$ for any $U_v$ (where $\TT^\Sigma(V^vU_v,\oE)_{\m^{\Sigma}}$ is defined as $\TT^\Sigma(V^vU_v,\F)_{\m^{\Sigma}}$ in \S\ref{somprel} replacing $\F$ by $\oE$), and hence by universality a continuous morphism of local $\oE$-algebras:
\begin{equation}\label{rmap}
R_{\rbar,\Sigma}\longrightarrow \TT^\Sigma(V^vU_v,\oE)_{\m^{\Sigma}}.
\end{equation}
Likewise, the functor associating to any $A$ as above the set of isomorphism classes of $\Gal(\overline{F_{\tilde{v}}}/F_{\tilde{v}})^{\rm ab}$-deformations \ of \ ${\det}(\rbar_{\tilde{v}})$ \ over \ $A$ \ is \ pro-representable \ by \ the Iwasawa \ algebra $\oE\bbra{\Gal(\overline{F_{\tilde{v}}}/F_{\tilde{v}})^{\rm ab}}$, and considering ${\det}_A(r_A\vert_{\Gal(\overline{F_{\tilde{v}}}/F_{\tilde{v}})})$ for $A=R_{\rbar,\Sigma}$ provides by the universal property again a continuous morphism of local $\oE$-algebras:
\begin{equation}\label{zmap}
\oE\bbra{\Gal(\overline{F_{\tilde{v}}}/F_{\tilde{v}})^{\rm ab}}\longrightarrow R_{\rbar,\Sigma}.
\end{equation}
Since $\TT^\Sigma(V^vU_v,\oE)_{\m^{\Sigma}}$ acts by a character on $S(V^vU_v,\F)[{\m^{\Sigma}}]$ for any $U_v$, so is the case of $R_{\rbar,\Sigma}$ on $S(V^v,\F)[{\m^{\Sigma}}]$ by (\ref{rmap}). Using (\ref{surj}), we see that it is enough to prove that the induced morphism
\[\oE\bbra{\Gal(\overline{F_{\tilde{v}}}/F_{\tilde{v}})^{\rm ab}}\buildrel (\ref{zmap})\over \longrightarrow R_{\rbar,\Sigma}\buildrel (\ref{rmap})\over \longrightarrow \plim{U_v}\TT^\Sigma(V^vU_v,\oE)_{\m^{\Sigma}}\]
gives an action of $\Gal(\overline{F_{\tilde{v}}}/F_{\tilde{v}})^{\rm ab}$ on $S(V^v,\oE)_{\m^{\Sigma}}$ which, when restricted to $F_{\tilde{v}}^\times \hookrightarrow \Gal(\overline{F_{\tilde{v}}}/F_{\tilde{v}})^{\rm ab}$ (via the local reciprocity map), coincides with the action of $F_{\tilde{v}}^\times$ on $S(V^v,\oE)_{\m^{\Sigma}}$ as center of $H(F_v^+)\buildrel \iota_{\tilde{v}} \over \simeq G(F_{\tilde{v}})$. We can work in $S(V^v,\oE)_{\m^{\Sigma}}\otimes_{\oE}E$, in which case this follows from local-global compatibility (as in \cite[Thm.7.2.1]{EGH}) and from the fact that, by construction of the map (\ref{rmap}) (see \cite[\S6]{Th}) and by (\ref{zmap}), $\Gal(\overline{F_{\tilde{v}}}/F_{\tilde{v}})^{\rm ab}$ acts on $\pi^{V^v}\subseteq S(V^v,\oE)_{\m^{\Sigma}}\otimes_{\oE}E$ by multiplication by the character ${\det}(r_\pi)\vert_{\Gal(\overline{F_{\tilde{v}}}/F_{\tilde{v}})}$, \ where \ $\pi$ \ is \ an \ irreducible \ $\GA$-subrepresentation of $\big(\ilim{U}S(U,\oE)\big)\otimes_{\oE}E$ such that $\pi^{V^v}$ occurs in $S(V^v,\oE)_{\m^{\Sigma}}\otimes_{\oE}E$ and where $r_\pi$ is its associated (irreducible) $p$-adic representation of $\gF$ (\cite[Thm.7.2.1]{EGH} again).
\end{proof}

Let $\pi$ be a smooth representation of $G(K)=\GL_n(K)$ over $\F$ with central character $Z(\pi)$ and denote by $\pi^\star$ the smooth representation of $G(K)$ with the same underlying vector space as $\pi$ but where $g\in G(K)$ acts by $\tau(g)^{-1}$. 

\begin{lem}\label{fgz}
There is a $\gp$-equivariant isomorphism
\begin{equation*}
V_G(\pi^\star)\cong V_G(\pi)\otimes Z(\pi)^{-(n-1)}\vert_{\Qp^\times},
\end{equation*}
where $Z(\pi)\vert_{\Qp^\times}$ is seen as a character of $\gp$ via local class field theory.
\end{lem}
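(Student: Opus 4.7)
The plan is to reduce the claim to the central cocharacter twist mechanism already exploited in the proof of Lemma \ref{parabtensor} (which rests on \cite[Rem.4.3]{breuil-foncteur}). Let $w_0 \in G(\Z)$ be the antidiagonal permutation matrix and introduce the involutive algebraic automorphism $\sigma : G \to G$ defined by $\sigma(g) \defeq w_0 \tau(g)^{-1} w_0^{-1}$. Then $\sigma$ preserves $B$, $T$ and $N$, and since conjugation by $w_0$ is an inner automorphism of $G(K)$ the assignment $v \mapsto w_0 v$ defines a $G(K)$-equivariant isomorphism $\pi^\star \simto \pi^\sigma$, where $\pi^\sigma$ denotes the representation $\pi \circ \sigma$. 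It is therefore enough to compute $V_G(\pi^\sigma)$.

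The key point is the identity
\[
\sigma(\xi_G(x)) \;=\; w_0\, \xi_G(x)^{-1}\, w_0^{-1} \;=\; \mathrm{diag}(1, x^{-1}, \dots, x^{-(n-1)}) \;=\; x^{-(n-1)}\,\xi_G(x),
\]
i.e.\ $\sigma \circ \xi_G = z \cdot \xi_G$, where $z : \Gm \to Z_G$ is the central cocharacter $x \mapsto x^{-(n-1)} I_n$. One also verifies $\sigma(N_0) = N_0$ and $\sigma(N_1) = N_1$: the matrix computation $\sigma(I + x E_{i,i+1}) = I - x E_{n-i,n+1-i}$ combined with (\ref{alpha}) yields $\iota_{\alpha_{n-i}} \circ \sigma|_{N_{\alpha_i}} = -\iota_{\alpha_i}$, hence $\ell \circ \sigma = -\ell$; and since $z(p)$ is central, $\sigma$ also preserves the coset space $N_1 / \xi_G(p)N_1\xi_G(p)^{-1}$. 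Putting this together, the $\F\bbra{X}[F]$-module structure with $\Zp^\times$-action on $(\pi^\sigma)^{N_1}$ agrees, on the same underlying vector space $\pi^{N_1}$, with the original one up to three modifications: the $\F\bbra{X}$-action is precomposed with the involution $(1+X)\mapsto(1+X)^{-1}$ (coming from $\ell \circ \sigma = -\ell$), the operator $F$ is rescaled by the central scalar $Z(\pi)(p^{-(n-1)})$, and the $\Zp^\times$-action is rescaled by $Z(\pi)^{-(n-1)}|_{\Zp^\times}$.

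The involution $(1+X) \mapsto (1+X)^{-1}$ is exactly the action of $-1 \in \Zp^\times = \Gamma$ on any $(\varphi,\Gamma)$-module, so it is intertwined by an inner automorphism and contributes nothing to the isomorphism class of the associated $(\varphi,\Gamma)$-module. The remaining two rescalings of $F$ and of the $\Zp^\times$-action amount precisely to twisting that $(\varphi,\Gamma)$-module by the character of $\gp$ associated via local class field theory (as normalized in \S\ref{intro}) to $Z(\pi)^{-(n-1)}|_{\Qp^\times}$: this is the content of \cite[Rem.4.3]{breuil-foncteur} applied to the cocharacter change $\xi_G \rightsquigarrow z \cdot \xi_G$, and is exactly the manipulation performed in the proof of Lemma \ref{parabtensor}. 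The main technical obstacle is the bookkeeping described in the preceding paragraph, and in particular checking that the sign flip $\ell \circ \sigma = -\ell$ is absorbed by the inner $-1$-action rather than producing an additional character twist.
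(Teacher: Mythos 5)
Your proof is correct and follows essentially the same route as the paper's: you work with the $w_0$-conjugation to transport $\pi^\star$ to the original space $\pi^{N_1}$ (you package this as $\pi^\star\cong\pi^\sigma$ with $\sigma=w_0\tau(\cdot)^{-1}w_0^{-1}$, while the paper states it as an isomorphism $(\pi^\star)^{N_1}\simto\pi^{w_0N_1w_0}$), and you track exactly the same three twists — $(1+X)\mapsto(1+X)^{-1}$, $F\mapsto p^{-(n-1)}F$ (central), and $\xi_G(x)\mapsto x^{-(n-1)}\xi_G(x)$ (central) — then absorb the first via the inner action of $-1\in\Zp^\times$ and read off the character twist from the remaining central rescalings, which is what the paper means by ``all this easily implies the lemma.''
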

\begin{proof}
We use the notation of \S\ref{covariant}. Let $w_0\in W$ be the element of maximal length, the isomorphism $\pi^{N_1}\buildrel\sim\over\rightarrow \pi^{w_0N_1w_0}$, $v\mapsto w_0v$ shows that one can compute $V_G(\pi)$ using $w_0N_1w_0$ instead of $N_1$ and conjugating everything by $w_0$ (e.g.\ $x\in \Zp^\times$ acts by $w_0\xi_G(x)w_0$, etc.). Now, it is easy to check that the $\F$-linear isomorphism $(\pi^\star)^{N_1}\buildrel\sim\over\rightarrow \pi^{w_0N_1w_0}$, $v\mapsto w_0v$ is compatible with the $\F\bbra{X}[F]$-module structure on both sides but where we twist the $\F\bbra{X}[F]$-action as follows on the right-hand side: $X$ acts by $(1+X)^{-1}-1$ and $F$ acts by $p^{-(n-1)}F$, $p^{-(n-1)}$ being here in the center of $G(K)$. Likewise, it is compatible with the action of $\Zp^\times$ but where $x\in \Zp^\times$ acts by $x^{-(n-1)}\xi_G(x)$ on the right-hand side (with $x^{-(n-1)}$ in the center of $G(K)$). All this easily implies the lemma.
\end{proof}

\begin{lem}\label{fwc}
Assume $p>2$, $F/F^+$ unramified at finite places and $H\times_{\oFF[1/N]} F^+$ quasi-split at finite places of $F^+$. We have a $\gp$-equivariant isomorphism
\begin{multline*}
V_{G,{\tilde{v}}^c}\big(\Hom_{U^v}(\sigma^v,S(V^{v},\F)[\m^{\Sigma}])\big)\\
\cong V_{G,{\tilde{v}}}\big(\Hom_{U^v}(\sigma^v,S(V^{v},\F)[\m^{\Sigma}])\big)\otimes \ind_{F_{\tilde{v}}}^{\otimes\Qp}\big({\det(\rbar_{\tilde{v}})}^{-(n-1)}\omega^{{\frac{-n(n-1)^2}{2}}}\big).
\end{multline*}
\end{lem}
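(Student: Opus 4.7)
The plan is to reduce the claim to Lemma \ref{fgz} and Lemma \ref{central} by comparing the two $\GL_n$-structures on $\pi\defeq \Hom_{U^v}(\sigma^v,S(V^{v},\F)[\m^{\Sigma}])$ coming from $\iota_{\tilde v}$ and $\iota_{\tilde v^c}$. Write $\pi_{\tilde v}$ (resp.\ $\pi_{\tilde v^c}$) for $\pi$ viewed via $\iota_{\tilde v}$ (resp.\ $\iota_{\tilde v^c}$), and $\pi_{\tilde v^c}^c$ for $\pi_{\tilde v^c}$ regarded as a $\GL_n(F_{\tilde v})$-representation through the isomorphism $c:\GL_n(F_{\tilde v})\buildrel\sim\over\rightarrow \GL_n(F_{\tilde v^c})$. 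Since the cocharacter $\xi_G$, the twisting element $\theta_G$ of Example \ref{exdelta}, and the auxiliary data ($N_0$, $N_1$, the trace to $\Zp$) defining the functor $V_G$ depend only on the abstract data of the local field, they transport through $c$, so there is a canonical identification $V_{G,\tilde v^c}(\pi_{\tilde v^c})\cong V_{G,\tilde v}(\pi_{\tilde v^c}^c)$ of $\gp$-representations.

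The crucial step is to identify $\pi_{\tilde v^c}^c$ with the twist $\pi_{\tilde v}^\star$ defined just above Lemma \ref{fgz}. Here I would invoke condition (ii) of \S\ref{somprel}, which guarantees that $c\circ \iota_{\tilde v}$ is conjugate in $\GL_n(F_{\tilde v^c})$ to $\tau^{-1}\circ \iota_{\tilde v^c}$, with $\tau^{-1}$ the inverse-transpose automorphism $g\mapsto (g^T)^{-1}$. A direct unwinding (using that $c$ commutes with entry-wise operations like the transpose) shows that, after absorbing the inner conjugation into the isomorphism class, the action of $g\in \GL_n(F_{\tilde v})$ on $\pi_{\tilde v^c}^c$ coincides with the action of $(g^T)^{-1}=\tau(g)^{-1}$ on $\pi_{\tilde v}$, that is, exactly $\pi_{\tilde v}^\star$.

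Having established this, Lemma \ref{fgz} yields
\[V_{G,\tilde v}(\pi_{\tilde v}^\star)\cong V_{G,\tilde v}(\pi_{\tilde v})\otimes Z(\pi_{\tilde v})^{-(n-1)}\vert_{\Qp^\times},\]
where $\pi$ inherits the same central character as $S(V^{v},\F)[\m^{\Sigma}]$ since $(F_v^+)^\times\subseteq H(F_v^+)$ commutes with $U^v\subseteq \GAv$. By Lemma \ref{central}, $Z(\pi_{\tilde v})=\det(\rbar_{\tilde v})\omega^{n(n-1)/2}$ as a character of $F_{\tilde v}^\times$ via local class field theory, so the twist is $\det(\rbar_{\tilde v})^{-(n-1)}\omega^{-n(n-1)^2/2}\vert_{\Qp^\times}$. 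To conclude, I would use that the restriction of a character of $F_{\tilde v}^\times$ along $\Qp^\times\hookrightarrow F_{\tilde v}^\times$ corresponds on the Galois side to precomposition with the transfer map $\gp^{\rm ab}\to \gvv^{\rm ab}$, which for a character is by definition the tensor induction $\ind_{F_{\tilde v}}^{\otimes \Qp}$ (this is exactly the manipulation used at the end of \S\ref{covariant} to express $\delta_H$). This identifies the twist with $\ind_{F_{\tilde v}}^{\otimes \Qp}\!\big(\det(\rbar_{\tilde v})^{-(n-1)}\omega^{-n(n-1)^2/2}\big)$, as required.

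The main obstacle I anticipate is the second paragraph: one must verify that the inner conjugation in $\GL_n(F_{\tilde v^c})$ hidden in condition (ii) is harmless at the level of isomorphism classes of $\GL_n(F_{\tilde v})$-representations, and that the inverse-transpose involution truly produces the operator $g\mapsto \tau(g)^{-1}$ from the definition of $\pi^\star$. Everything else is a bookkeeping exercise with Lemma \ref{fgz}, Lemma \ref{central}, and local class field theory.
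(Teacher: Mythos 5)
Your proposal is correct and follows the same route the paper intends: the paper's proof is the one-line remark ``This follows from Lemma \ref{fgz} applied to $\pi$ together with Lemma \ref{central}, recalling that $Z(\pi)\vert_{\Qp^\times}$ via local class field theory is $\ind_{F_{\tilde v}}^{\otimes \Qp}(Z(\pi))$,'' and you have simply made the implicit reduction explicit. The step you flag as the ``main obstacle'' is in fact already asserted in \S\ref{somprel} (``the action of $G(F_v^+)$ induced by $\iota_{\tilde v}$ is the inverse transpose of the one induced by $\iota_{\tilde v^c}$''), and your worry about the hidden inner conjugation is correctly dismissable: conjugating the homomorphism $H(F_v^+)\to \GL_n(F_{\tilde v^c})$ by an element of $\GL_n(F_{\tilde v^c})$ changes the $\GL_n$-representation structure on $\pi$ only up to isomorphism, and the functor $V_{G,\tilde v}$ depends only on the isomorphism class, so the identification $\pi_{\tilde v^c}^c\cong\pi_{\tilde v}^\star$ holds where it is needed. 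The remaining bookkeeping with Lemma \ref{fgz}, Lemma \ref{central}, and the transfer/tensor-induction dictionary is exactly what the authors had in mind.
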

\begin{proof}
This follows from Lemma \ref{fgz} applied to $\pi=\Hom_{U^v}(\sigma^v,S(V^{v},\F)[\m^{\Sigma}])$ together with Lemma \ref{central}, recalling that $Z(\pi)\vert_{\Qp^\times}$, seen as a character of $\gp$ via local class field theory, is $\ind_{F_{\tilde{v}}}^{\otimes\Qp}\!(Z(\pi))$ (where $Z(\pi)$ is here seen as a character of $\Gal(\overline{F_{\tilde{v}}}/F_{\tilde{v}})$).
\end{proof}

\begin{prop}
Assume \ $p>2$, \ $F/F^+$ \ unramified \ at \ finite \ places \ and \ $H\times_{\oFF[1/N]} F^+$ quasi-split at finite places of $F^+$. Conjecture \ref{theconjbar} holds for ${\tilde{v}}$ if and only if it holds for ${\tilde{v}}^c$.
\end{prop}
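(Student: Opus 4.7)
The plan is to show that the conjectures for $\tilde{v}$ and for $\tilde{v}^c$ differ only by tensoring with a single character on each side, and that those two characters coincide. Set $\pi \defeq \Hom_{U^v}(\sigma^v,S(V^{v},\F)[\m^{\Sigma}])$.

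First I would compute how the $V_G$-twist by $\omega^{-(n-1)}\circ\det$ behaves. By Remark~\ref{trivial}(ii), for $\tilde{w}\in\{\tilde{v},\tilde{v}^c\}$,
\[
V_{G,\tilde{w}}\big(\pi\otimes(\omega^{-(n-1)}\circ\det)\big)\cong V_{G,\tilde{w}}(\pi)\otimes \ind_{F_{\tilde{w}}}^{\otimes\Qp}\!\big(\omega^{-n(n-1)^2/2}\big)
\]
(from $\chi^{n(n-1)/2}$ with $\chi=\omega^{-(n-1)}$). Combining this with Lemma~\ref{fwc} applied to $\pi$ yields
\[
V_{G,\tilde{v}^c}\big(\pi\otimes(\omega^{-(n-1)}\!\circ\!\det)\big)\cong V_{G,\tilde{v}}\big(\pi\otimes(\omega^{-(n-1)}\!\circ\!\det)\big)\otimes\ind_{F_{\tilde{v}}}^{\otimes\Qp}\!\big({\det(\rbar_{\tilde{v}})}^{-(n-1)}\omega^{-n(n-1)^2/2}\big). \quad (\dagger)
\]

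Next I would carry out the analogous computation on the Galois side. Hypothesis (i) gives $\rbar^{c}\cong\rbar^{\vee}\otimes\omega^{1-n}$, so after identifying the decomposition groups at $\tilde{v}$ and $\tilde{v}^c$ via $c$, one has $\rbar_{\tilde{v}^c}\cong \rbar_{\tilde{v}}^{\vee}\otimes \omega^{1-n}$. Using $\bigwedge^{i}(\rhobar\otimes\chi)\cong(\bigwedge^{i}\rhobar)\otimes\chi^{i}$ for $1\le i\le n-1$, the fact that tensor induction is multiplicative on tensor products with a character (so that for any $V$ and character $\chi$ one has $\ind_K^{\otimes\Qp}\!(V\otimes\chi)\cong\ind_K^{\otimes\Qp}\!(V)\otimes\ind_K^{\otimes\Qp}\!(\chi)$), and the second isomorphism of equation (\ref{dual}) applied to $\rbar_{\tilde v}$, a direct computation (with $\sum_{i=1}^{n-1}i=n(n-1)/2$) gives
\[
\LLbar(\rbar_{\tilde{v}^c})\cong \LLbar(\rbar_{\tilde{v}}^\vee)\otimes\ind_{F_{\tilde{v}}}^{\otimes\Qp}\!\big(\omega^{-n(n-1)^2/2}\big)\cong \LLbar(\rbar_{\tilde{v}})\otimes\ind_{F_{\tilde{v}}}^{\otimes\Qp}\!\big({\det(\rbar_{\tilde{v}})}^{-(n-1)}\omega^{-n(n-1)^2/2}\big). \quad (\ddagger)
\]

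The twisting characters in $(\dagger)$ and $(\ddagger)$ are identical, so tensoring the isomorphism of Conjecture~\ref{theconjbar} for $\tilde{v}$ by $\ind_{F_{\tilde{v}}}^{\otimes\Qp}\!\big({\det(\rbar_{\tilde{v}})}^{-(n-1)}\omega^{-n(n-1)^2/2}\big)$ transforms it into the isomorphism of Conjecture~\ref{theconjbar} for $\tilde{v}^c$ with the same integer $d$; since the twist is invertible, the converse holds as well. The argument is essentially a matter of careful bookkeeping of exponents: the only non-formal input is Lemma~\ref{fwc} (which is where the hypotheses $p>2$, $F/F^+$ unramified at finite places, and $H$ quasi-split at finite places enter, via Lemma~\ref{central}).
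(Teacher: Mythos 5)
Your proof is correct and follows exactly the same route as the paper's: the paper's own argument is the terse "This follows from Lemma \ref{fwc} together with $\rbar_{{\tilde{v}}^c}\cong \rbar_{\tilde{v}}^\vee\otimes\omega^{1-n}$, (\ref{dual}) and an easy computation," and you have simply spelled out that computation, with Remark \ref{trivial}(ii) being the tool that handles the twist $\omega^{-(n-1)}\circ\det$ on both sides of Lemma \ref{fwc}.
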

\begin{proof}
This follows from Lemma \ref{fwc} together with $\rbar_{{\tilde{v}}^c}\cong \rbar_{\tilde{v}}^\vee\otimes\omega^{1-n}$, (\ref{dual}) and an easy computation.
\end{proof}

\subsubsection{A reformulation using \texorpdfstring{$C$}{C}-groups}\label{Cgroup}

We show that one can give a more general and more natural formulation of Conjecture \ref{theconjbar} (in the special case of Remark \ref{theconjbarrem}(i)) using $C$-parameters (Conjecture \ref{conj:generale}).

We start by some reminders about $L$-groups and $C$-groups.

Let $k$ be a field and $k^{\sep}$ a separable closure of $k$. We note $\Gamma_k\defeq \Gal(k^{\sep}/k)$. Let $H$ be a connected reductive group defined over $k$, let $\widehat{H}$ be its dual group, ${}^LH$ its $L$-group and ${}^CH$ its $C$-group. We refer to \cite[\S2]{BorelCorvallis}, \cite[\S\S2,5]{BG}, \cite[\S9]{GHS} and \cite[\S1.1]{ZhuSatake} for more details concerning these $L$-groups and $C$-groups. Note that these two groups can be defined over $\Z$. Their construction depends on the choice of a pinning $(B_{H},T_{H},\set{x_{\alpha}}_{\alpha\in S_{H}})$ of $H_{k^{\sep}}$. The dual group $\widehat{H}$ has a natural pinned structure $(B_{\widehat{H}},T_{\widehat{H}},\set{x_{\widehat{\alpha}}}_{\alpha\in S_H})$ with $B_{\widehat{H}}$ a Borel subgroup of $\widehat{H}$, $T_{\widehat{H}}\subset B_{\widehat{H}}$ a maximal split torus and $\set{x_{\widehat{\alpha}}}_{\alpha\in S_H}$ a pinning of $(B_{\widehat{H}},T_{\widehat{H}})$ (see \cite[\S\S5,6]{Conrad} for the fact that everything can be defined over $\Z$) on which the group $\Gamma_k$ is acting. Let $1\rightarrow\mathbb{G}_m\rightarrow\widetilde{H}\rightarrow H\rightarrow1$ be the central $\mathbb{G}_m$-extension of $H$ (over $k$) whose existence is proved in \cite[Prop.5.3.1(a)]{BG}. The inverse images $T_{\widetilde{H}}$ and $B_{\widetilde{H}}$ of $T_{H}$ and $B_{H}$ in $\widetilde{H}_{k^{\sep}}$ are respectively a maximal torus and a
Borel subgroup of $\widetilde{H}_{k^{\sep}}$. Moreover, since the above extension is central, there is a unique pinning $\set{\widetilde{x}_\alpha}_{\alpha\in S_H}$ of $(B_{\widetilde{H}},T_{\widetilde{H}})$ inducing $\set{x_\alpha}_{\alpha\in S_H}$ on $(B,T)$ via the map $\widetilde{H}_{k^{\sep}}\rightarrow H_{k^{\sep}}$. This gives rise to a pinned dual data
$(\widehat{\widetilde{H}},B_{\widehat{\widetilde{B}}},T_{\widehat{\widetilde{H}}},\set{\widetilde{x}_{\widehat{\alpha}}}_{\alpha\in S_H})$ with an action of $\Gamma_k$ (trivial on some open subgroup) and a $\Gamma_k$-equivariant injection $(\widehat{H},B_{\widehat{H}},T_{\widehat{H}})\hookrightarrow(\widehat{\widetilde{H}},B_{\widehat{\widetilde{H}}},T_{\widehat{\widetilde{H}}})$ such that $\set{x_{\widehat{\alpha}}}_{\alpha\in S_H}$ induces $\set{\widetilde{x}_{\widehat{\alpha}}}_{\alpha\in S_H}$.

The \emph{$L$-groups and $C$-groups} are then defined as the group schemes
\begin{equation}\label{LCparameter}
{}^LH\defeq\widehat{H}\rtimes\Gamma_k \ \quad {}^CH\defeq\widehat{\widetilde{H}}\rtimes\Gamma_k.
\end{equation}
We have the following simple description of $\widehat{\widetilde{H}}$ given in
\cite[\S1.1]{ZhuSatake}. Let $\widehat{H}^{\ad}$ and $T_{\widehat{H}}^{\ad}$ be the quotients of $\widehat{H}$ and
$T_{\widehat{H}}$ by the center of $\widehat{H}$ and let $\delta_{\ad}$ be the cocharacter of
$T_{\widehat{H}}^{\ad}\subset\widehat{H}^{\ad}$ defined as the half sum of positive roots of $\widehat{H}$ with respect to $(B_{\widehat{H}},T_{\widehat{H}})$. The group $\widehat{H}^{\ad}$ acts on $\widehat{H}$ by the adjoint action and, after precomposition with $\delta_{\ad}$, this defines an action, in the category of $\Z$-group schemes, of $\mathbb{G}_m$ on $\widehat{H}$. There is an isomorphism of $\Z$-group schemes $\widehat{\widetilde{H}}\simeq\widehat{H}\rtimes\mathbb{G}_m$ identifying $B_{\widehat{\widetilde{H}}}$ with $B_{\widehat{H}}\rtimes\mathbb{G}_m$ and $T_{\widehat{\widetilde{H}}}$ with $T_{\widehat{H}}\rtimes\mathbb{G}_m=T_{\widehat{H}}\times\mathbb{G}_m$. We
note that, since $\delta_{\ad}$ is fixed by the Galois action, this isomorphism is Galois equivariant. Using this isomorphism, we identify $X(T_{\widehat{\widetilde{H}}})$ with $X(T_{\widehat{H}})\times\Z\simeq X^\vee(T_H)\times\Z$. This shows that we have an exact sequence of $\Z$-group schemes:
\[ 1\longrightarrow{}^LH\longrightarrow{}^CH\xrightarrow{\ d\ }\mathbb{G}_m\longrightarrow1.\]

Let $A$ be a topological $\Zp$-algebra and assume from now on that $k$ is either a number field or a finite extension of $\Qp$, so that we have an $A$-valued $p$-adic cyclotomic character. We recall that a morphism $\rho : \Gamma_k\rightarrow{}^LH(A)$ is called \emph{admissible} if its composition with the second projection ${}^LH(A)\rightarrow\Gamma_k$ is the identity (see \cite[\S3]{BorelCorvallis}).

\begin{definit}
An \emph{$L$-parameter} (resp.~\emph{$C$-parameter}) of $H$ over $A$ is an admissible
continuous morphism $\rho : \Gamma_k\longrightarrow{}^LH(A)$ (resp.~$\rho : \Gamma_k\longrightarrow{}^CH(A)$ such that $d\circ\rho$ is the $p$-adic cyclotomic character). When $A$ is moreover an algebraically closed field, we say that two $L$-parameters (resp.~$C$-parameters) of $H$ over $A$ are \emph{equivalent} if they are conjugate by an element of $\widehat{H}(A)$ (resp.~$\widehat{\widetilde{H}}(A)$). 
\end{definit} 

\begin{rem}\label{rem:conjugate_in_Cgroup}
Assume $A$ is an algebraically closed field. Each element of $\widehat{\widetilde{H}}(A)$ is the product of an element of $\widehat{H}(A)$ and an element of the center of $\widehat{\widetilde{H}}(A)$. This can be deduced from \cite[Prop.5.3.3]{BG} or \cite[(1.2)]{ZhuSatake}. This implies that two $C$-parameters of $H$ over $A$ are equivalent if and only if they are conjugate by an element of $\widehat{H}(A)$.
\end{rem}

For simplicity, we assume from now on that $A$ is moreover an algebraically closed field. We also assume (not for simplicity) that $H$ has a connected center. We generalize now the representation $\LLbar(\rhobar)\otimes_{\F}\Fpbar$ (see (\ref{tensorind}) for $\LLbar(\rhobar)$).

Let $(\lambda_{\alpha^\vee})_{\alpha\in S_H}$ be a family of fundamental coweights of $H$ such that
\begin{equation}\label{xih}
\xi_H\defeq \sum_{\alpha\in S_H}\lambda_{\alpha^\vee}\in X(T_{\widehat H})\cong X^\vee(T_H)
\end{equation}
is fixed under the action of $\Gamma_k$ (compare with (\ref{coweights}) and note that the cocharacters $\lambda_{\alpha^\vee}$ exist since $H$ has a connected center but each of them doesn't have to be fixed by $\Gamma_k$). Let $(r_{\lambda_{\alpha^\vee}},V_{\lambda_{\alpha^\vee}})$ be the irreducible algebraic representation of $\widehat{H}$ of highest weight $\lambda_{\alpha^\vee}$ over $A$ and let $(r_{\xi_H}^{\otimes},V_{\xi_H}^\otimes)$ be the irreducible algebraic representation of $\widehat{H}^{S_H}$ over $A$ of highest weight $(\lambda_{\alpha^\vee})_{\alpha\in S_H}=$ the character of $T_{\widehat{H}}^{S_H}$ defined by $(x_\alpha)_{\alpha\in S_H}\mapsto\sum_{\alpha}\lambda_{\alpha^\vee}(x_\alpha)$. Note that we have an isomorphism of algebraic representations of $\widehat{H}^{S_H}$:
\begin{equation}\label{highestweight}
(r_{\xi_H}^\otimes,V_{\xi_H}^\otimes)\cong \bigotimes_{\alpha\in S_H} (r_{\lambda_{\alpha^\vee}},V_{\lambda_{\alpha^\vee}}).
\end{equation}
Let $\gamma\in\Gamma_k$ and $\chi_{\alpha,\gamma}$ be the character of $\widehat{H}$ corresponding to the cocharacter $\gamma(\lambda_{\alpha^\vee})-\lambda_{\gamma\alpha^\vee}\in X^\vee(Z_H)\subset X^\vee(T_H)$. Comparing the highest weights, for $\gamma\in\Gamma_k$ there is an isomorphism of algebraic irreducible representations of $\widehat{H}^{S_H}$:
\[ \big(r_{\xi_H}^\otimes(\gamma^{-1}\cdot),V_{\xi_H}^\otimes\big)\simeq\left(\otimes_{\alpha\in
      S_H}(r_{\lambda_{\alpha^\vee}}\otimes\chi_{\gamma^{-1}\alpha,\gamma})\circ
    c_\gamma,V_{\xi_H}^\otimes\right),\]
where $c_\gamma$ is the automorphism of $\widehat{H}^{S_H}$ defined by $(x_\alpha)_{\alpha\in S_H}\mapsto (x_{\gamma^{-1}\alpha})_{\alpha\in S_H}$. Therefore there exists an $A$-linear automorphism $M_\gamma$ of $V^\otimes_{\xi_H}$, well defined up to a nonzero scalar, such that, for $(x_\alpha)_{\alpha\in S_H}\in\widehat{H}(A)^{S_H}$:
\begin{equation}\label{isomgamma}
M_\gamma \left(r_{\xi_H}^\otimes((\gamma^{-1}x_{\alpha})_{\alpha\in S_H})
  \right)M_\gamma^{-1}=\left(\otimes_{\alpha\in
      S_H}r_{\lambda_{\alpha^\vee}}(x_{\gamma^{-1}\alpha})\right)\prod_{\alpha\in
    S_H}\chi_{\alpha,\gamma}(x_\alpha).
\end{equation}
Moreover the subspaces of highest weight of these two representations over $V_{\xi_H}^\otimes$ being the same, we can choose $M_\gamma$ such that it induces the identity on this line. With this choice, the map $\gamma\mapsto M_\gamma$ is a representation of $\Gamma_k$ over $V^\otimes_{\xi_H}$. Since $\xi_H\in X^\vee(T_H)^{\Gamma_k}$, we have $\prod_{\alpha\in S_H}\chi_{\alpha,\gamma}=1$ for all $\gamma\in\Gamma_k$ so that, for $x\in \widehat{H}(A)$, we have from (\ref{isomgamma}) and (\ref{highestweight}) (replacing $\gamma^{-1}x_{\alpha}$ by $x$ for all $\alpha\in S_H$):
\[ M_\gamma \left(\otimes_{\alpha\in
      S_H}r_{\lambda_{\alpha^\vee}}(x)\right)
  M_\gamma^{-1}=\left(\otimes_{\alpha\in
      S_H}r_{\lambda_{\alpha^\vee}}(\gamma x)\right).\]
All this proves that there is an algebraic representation $(L^\otimes_{\xi_H},V^\otimes_{\xi_H})$ of ${}^LH$ on
$V^\otimes_{\xi_H}$ defined by
\[ L^{\otimes}_{\xi_H}(x,\gamma)\defeq \left(\otimes_{\alpha\in S_H}r_{\lambda_{\alpha^\vee}}(x)\right) M_\gamma\] for $x\in\widehat{H}(A)$ and $\gamma\in\Gamma_k$. The isomorphism class of this representation does not depend on the choice of the $\lambda_{\alpha^\vee}$ such that $\xi_H=\sum\lambda_{\alpha^\vee}$. Namely any other choice will twist each $r_{\lambda_{\alpha^\vee}}$ by a character whose product over all $\alpha$ is trivial.

If $\rho$ is an $L$-parameter of $H$ over $A$ we define the $\Gamma_k$-representation $L^{\otimes}_{\xi_H}(\rho)$ as the composition $L^\otimes_{\xi_H}\circ\rho$. Moreover if two $L$-parameters $\rho_1$ and $\rho_2$ are equivalent, the representations $L^\otimes_{\xi_H}(\rho_1)$ and $L^\otimes_{\xi_H}(\rho_2)$ are clearly isomorphic. If $\rho$ is a $C$-parameter of $H$ over $A$, $\rho$ is in particular an $L$-parameter of $\widetilde{H}$ over $A$ by (\ref{LCparameter}), and we define the $\Gamma_k$-representation $L^{\otimes,C}_{\xi_H}(\rho)\defeq L^\otimes_{\xi_{\widetilde{H}}}(\rho)$, where
\begin{equation}\label{xitilde}
\xi_{\widetilde{H}}\defeq (\xi_H,0)\in X(T_{\widehat{\widetilde{H}}})\simeq X(T_{\widehat{H}})\times\Z.
\end{equation}

We now compare $L^{\otimes}_{\xi_H}(\rho)$, $L^{\otimes,C}_{\xi_H}(\rho)$ between $k$ and finite extensions $k'$ of $k$.

We fix $k'\subset k^{\sep}$ a finite extension of $k$, $H'$ a connected reductive group over $k'$ and we let $H\defeq \Res_{k'/k}(H')$. We let $\Sigma_{k'}$ be the set of embeddings $k'\hookrightarrow k^{\sep}$ inducing the identity on $k$ and $\tau_0\in\Sigma_{k'}$ the inclusion $k'\subset k^{\sep}$. For $\tau\in\Sigma_{k'}$ we choose $g_\tau\in\Gamma_k$ such that $\tau=g_{\tau}\circ\tau_0$, and we have $\Gamma_k=\coprod_{\tau\in\Sigma_{k'}}g_\tau\Gamma_{k'}$. The dual group $\widehat H$ of $H$ is isomorphic to $\ind_{\Gamma_{k'}}^{\Gamma_k}\widehat{H'}$, i.e.\ the group scheme of functions $f : \Gamma_k\rightarrow \widehat{H'}$ such that $f(gh)=h^{-1}f(g)$ for all $g\in\Gamma_k$ and $h\in\Gamma_{k'}$ (see \cite[\S5.1(4)]{BorelCorvallis}). More explicitly, the map $f\mapsto (f(g_\tau))_{\tau\in\Sigma_{k'}}$ induces an isomorphism $\ind_{\Gamma_{k'}}^{\Gamma_k}\widehat{H'}\simeq\widehat{H'}^{\Sigma_{k'}}$ and the action of $\Gamma_k$ on $\widehat{H'}^{\Sigma_{k'}}$ is given by
\[g\cdot(x_\tau)_{\tau\in\Sigma_{k'}}=\big((g_\tau^{-1}gg_{g^{-1}\circ\tau})x_{g^{-1}\circ\tau}\big)_{\tau\in\Sigma_{k'}}.\]
The map $(x_\tau)_{\tau\in\Sigma_{k'}}\mapsto x_{\tau_0}$ is a $\Gamma_{k'}$-equivariant map $\widehat{H}\rightarrow\widehat{H'}$. It extends to a morphism of group schemes $\widehat{H}\rtimes\Gamma_{k'}\rightarrow{}^LH'$ (resp.~$\widehat{\widetilde{H}}\rtimes\Gamma_{k'}\rightarrow{}^CH'$) inducing the identity on the $\Gamma_{k'}$ factor (resp.~the $\mathbb{G}_m$ and $\Gamma_{k'}$ factors). If $\rho$ is an $L$-parameter (resp.~a $C$-parameter) of $H$ over $A$, we can define an $L$-parameter (resp.~a $C$-parameter) $\rho'$ of $H'$ by restriction of $\rho$ to $\Gamma_{k'}$ and composition with the above morphism.

\begin{lem}\label{lem:shapiro}
The map $\rho\mapsto\rho'$ induces a bijection between equivalence classes of $L$-parameters {\upshape(}resp.~of $C$-parameters{\upshape)} of $H$ over $A$ and equivalence classes of $L$-parameters {\upshape(}resp.~$C$-parameters{\upshape)} of $H'$ over $A$.
\end{lem}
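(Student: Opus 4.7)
The lemma is a form of Shapiro's lemma for Langlands parameters. The plan is to construct an explicit inverse to the restriction map $\rho\mapsto\rho'$ and to verify that it descends to equivalence classes. I focus on the $L$-parameter case; the $C$-parameter case will follow in parallel, since the map ${}^CH\to\mathbb{G}_m$ is compatible with the restriction map $\widehat{\widetilde H}\rtimes\Gamma_{k'}\to{}^CH'$ (so the cyclotomic condition transports correctly), and Remark \ref{rem:conjugate_in_Cgroup} allows us to test equivalence by conjugation in $\widehat H(A)$ alone.

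Write $\rho(g)=(\phi(g),g)$ and $\rho'(h)=(\phi'(h),h)$, where $\phi$ and $\phi'$ are the continuous $1$-cocycles valued in $\widehat H(A)\simeq\widehat{H'}(A)^{\Sigma_{k'}}$ and $\widehat{H'}(A)$ attached to the admissible morphisms $\rho$ and $\rho'$. Normalising $g_{\tau_0}=1$, the map $\rho\mapsto\rho'$ reads $\phi'(h)=\phi(h)_{\tau_0}$ on cocycles. The candidate inverse $\phi'\mapsto\phi$ is defined by
\[
\phi(g)_\tau \;\defeq\; \phi'\bigl(g_\tau^{-1}\,g\,g_{g^{-1}\tau}\bigr),\qquad g\in\Gamma_k,\ \tau\in\Sigma_{k'};
\]
the argument of $\phi'$ lies in $\Gamma_{k'}$ because both $g_\tau$ and $g\,g_{g^{-1}\tau}$ send $\tau_0$ to $\tau$. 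Continuity of $\phi$ is inherited from that of $\phi'$ via the openness of the cosets $g_\tau\Gamma_{k'}$.

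A short manipulation based on the factorisation
\[
g_\tau^{-1}(g_1g_2)\,g_{(g_1g_2)^{-1}\tau}=\bigl(g_\tau^{-1}g_1g_{g_1^{-1}\tau}\bigr)\bigl(g_{g_1^{-1}\tau}^{-1}g_2\,g_{(g_1g_2)^{-1}\tau}\bigr)
\]
and the cocycle identity for $\phi'$ yields
\[
\phi(g_1g_2)_\tau=\phi(g_1)_\tau\cdot\bigl(g_\tau^{-1}g_1g_{g_1^{-1}\tau}\bigr)\!\bigl(\phi(g_2)_{g_1^{-1}\tau}\bigr),
\]
which, unravelling the explicit $\Gamma_k$-action $g\cdot(x_\tau)_\tau=\bigl((g_\tau^{-1}g g_{g^{-1}\tau})\cdot x_{g^{-1}\tau}\bigr)_\tau$ on $\widehat{H'}(A)^{\Sigma_{k'}}$ recalled above, is precisely $\phi(g_1g_2)=\phi(g_1)\cdot g_1(\phi(g_2))$. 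That the two constructions are mutual inverses is then clear in one direction ($\phi(h)_{\tau_0}=\phi'(h)$ tautologically for $h\in\Gamma_{k'}$); in the other direction, for a general $\phi$, the cocycle identity together with the defining property $f(gh)=h^{-1}f(g)$ for $h\in\Gamma_{k'}$ of elements $f\in\widehat H(A)$ forces each coordinate of $\phi(g)$ to equal the expression in the boxed formula, computed from $\phi'=\phi\vert_{\Gamma_{k'}}$ at the $\tau_0$-coordinate.

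It remains to check compatibility with equivalence. An $\widehat H(A)$-coboundary $\phi\mapsto x\phi\, g(x)^{-1}$ by $x=(x_\tau)_\tau$ specialises, under restriction to $\Gamma_{k'}$ and projection to the $\tau_0$-coordinate, to the $\widehat{H'}(A)$-coboundary on $\phi'$ by $x_{\tau_0}$. Conversely, given an $\widehat{H'}(A)$-coboundary by $y$ on $\phi'$, the diagonal element $x=(y)_{\tau\in\Sigma_{k'}}\in\widehat{H'}(A)^{\Sigma_{k'}}\simeq\widehat H(A)$ induces a coboundary on $\phi$ whose restriction is the given one (a direct check using $g_{\tau_0}=1$ and the fixation of $\tau_0$ by $\Gamma_{k'}$). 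This establishes the bijection on equivalence classes. The main technical obstacle throughout is the careful bookkeeping of how the $\Gamma_k$-action on the induced group $\widehat H(A)\simeq\widehat{H'}(A)^{\Sigma_{k'}}$ simultaneously permutes the coordinates indexed by $\Sigma_{k'}$ and twists them by the relevant elements of $\Gamma_{k'}$; once these identifications are in place, the remainder of the argument is routine.
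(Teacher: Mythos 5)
Your $L$-parameter argument is sound and, in fact, more explicit than the paper's: the paper also reduces to identifying $L$-parameters with classes in $H^1_{\cont}(\Gamma_k,\widehat H(A))$, but then simply cites a nonabelian continuous Shapiro lemma (Stix, or Gee--Herzig--Savitt in a restricted context); you unfold the Shapiro isomorphism by hand, with the formula $\phi(g)_\tau=\phi'(g_\tau^{-1}gg_{g^{-1}\tau})$, and check the cocycle identity and coboundary compatibility directly. That trade-off is legitimate and the verifications are correct.

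The gap is in the $C$-parameter case, which you dismiss in one sentence as ``parallel''. It is not. For $L$-parameters your inverse takes cocycles valued in $\widehat{H'}(A)$ to cocycles valued in $\widehat{H'}(A)^{\Sigma_{k'}}\simeq\widehat H(A)$, and there is nothing more to check. For $C$-parameters the cocycles on the $H$-side must take values in $\widehat{\widetilde H}(A)$; but $\widetilde H$ is only a \emph{quotient} of $\widetilde H_1\defeq\Res_{k'/k}\widetilde{H'}$ (the central $\mathbb{G}_m$ in $\widetilde H$ is the image of the norm $\Res_{k'/k}\mathbb{G}_m\to\mathbb{G}_m$), so $\widehat{\widetilde H}$ is a \emph{proper subgroup} of $\widehat{\widetilde H_1}=\Ind_{\Gamma_{k'}}^{\Gamma_k}\widehat{\widetilde{H'}}\simeq\widehat{\widetilde{H'}}^{\Sigma_{k'}}$. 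Your inverse formula applied to a cocycle valued in $\widehat{\widetilde{H'}}(A)$ produces a cocycle valued in $\widehat{\widetilde H_1}(A)$, not a priori in $\widehat{\widetilde H}(A)$, so it does not produce a $C$-parameter of $H$ without further argument. What rescues this is exactly the cyclotomic constraint on the source: it forces the image of the constructed class in $H^1_{\cont}(\Gamma_k,(\widehat{\Res_{k'/k}\mathbb{G}_m})(A))\simeq(A^\times)^{[k':k]}$ to be the \emph{diagonal} embedding of the cyclotomic character, and it is precisely these classes (together with the $\widehat H(A)$-conjugation coming from Remark~\ref{rem:conjugate_in_Cgroup}) that correspond to cocycles landing in $\widehat{\widetilde H}(A)$. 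The paper devotes its entire second paragraph and a commutative square to this point; you need that argument, or an equivalent one, rather than the bare assertion that the cyclotomic condition ``transports correctly''.
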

\begin{proof}
A map $\rho$ from $\Gamma_k$ to ${}^LH(A)$ of the form $(c_\rho,\Id)$ is admissible if and only if $c_\rho$ is a $1$-cocycle of $\Gamma_k$ in $\widehat{H}(A)$ and is continuous if and only if $c_\rho$ is continuous. Moreover two admissible $\rho$ are equivalent if and only if they are conjugate by an element of $\widehat{H}(A)$. Therefore the map associating to $\rho$ the class $[c_\rho]$ of $c_\rho$ induces a bijection between the set of equivalence classes of $L$-parameters and the set of classes $[c]\in H^1_{\cont}(\Gamma_k,\widehat{H}(A))$. The fact that the above map $\rho\mapsto\rho'$ induces an isomorphism $H^1_{\cont}(\Gamma_k,\widehat{H}(A))\buildrel\sim\over\rightarrow H^1_{\cont}(\Gamma_{k'},\widehat{H'}(A))$ is a consequence of a nonabelian version of Shapiro's Lemma (see for example \cite[Prop.8]{Stix} noting that everything can be made continuous there or \cite[Lemma 9.4.1]{GHS} in a more restricted context).

Therefore the map associated to a $C$-parameter $\rho$ the class $[c_\rho]$ of $c_\rho$ induces a bijection between the set of equivalence classes of $C$-parameters and the set of classes $c\in H^1_{\cont}(\Gamma_k,\widehat{\widetilde{H}}(A))$ such that $d(c)\in H^1_{\cont}(\Gamma_k,A^\times)\simeq\Hom_{\mathrm{gp}}^{\cont}(\Gamma_k,A^\times)$ coincides with the $p$-adic cyclotomic character. Let $\widetilde{H}_1\defeq\Res_{k'/k}\widetilde{H'}$, so that $\widetilde{H}$ can be identified to a quotient of $\widetilde{H}_1$. It follows from Remark \ref{rem:conjugate_in_Cgroup} that $H^1(\Gamma_k,\widehat{\widetilde{H_1}}(A))$ is the set of classes of $1$-cocycles of $\Gamma_k$ with values in $\widehat{\widetilde{H_1}}(A)$ up to $\widehat{H}(A)$-conjugation. It follows again from Remark \ref{rem:conjugate_in_Cgroup} that the set of equivalence classes of $C$-parameters of $H$ over $A$ is in bijection with the subset of $H^1_{\cont}(\Gamma_k,\widehat{\widetilde{H}_1}(A))$ of classes whose image in $H^1_{\cont}(\Gamma_k,(A^\times)^{[k':k]})\simeq\Hom_{\mathrm{gp}}^{\cont}(\Gamma_k,(A^\times)^{[k':k]})$ is the image of the $p$-adic cyclotomic character via the diagonal embedding $A^\times\hookrightarrow(A^\times)^{[k':k]}$. The conclusion follows from the commutativity of the following diagram
\[\begin{tikzcd}
      H^1_{\cont}\big(\Gamma_k,\widehat{\widetilde{H}}(A)\big)\ar[r]
      \ar[d,"\wr"] &
      H^1_{\cont}\big(\Gamma_k,(\widehat{\Res_{k'/k}\mathbb{G}_m})(A)\big)
      \ar[d,"\wr"] \\
      H^1_{\cont}\big(\Gamma_{k'},\widehat{\widetilde{H'}}(A)\big)\ar[r]
      & H^1_{\cont}\big(\Gamma_{k'},\widehat{\mathbb{G}_m}(A)\big)
    \end{tikzcd}\]
and from the fact that the classes corresponding to the cyclotomic characters correspond under the right vertical arrow.
\end{proof}

\begin{lem}\label{lem:tens_induction}
Let $\rho$ be an $L$-parameter, resp.~a $C$-parameter, of $H$ over $A$ and $\rho'$ the $L$-parameter, resp.~$C$-parameter, of $H'$ over $A$ corresponding to $\rho$ by Lemma \ref{lem:shapiro}. Let $\xi_{H'}\in X(T_{\widehat{H'}})$ be as in {\upshape(\ref{xih}) (}with $H'$ instead of $H${\upshape)} and let $\xi_{H}\in X(T_{\widehat{H}})\simeq X(T_{\widehat{H'}})^{\Sigma_{k'}}$ be the character $(\xi_{H'})_{\tau\in\Sigma_{k'}}$ {\upshape(}which is fixed by $\Gamma_k${\upshape)}. Then we have an isomorphism of representations of $\Gamma_k$ over $A$:
\[ L^\otimes_{\xi_H}(\rho)\simeq\ind_{k'}^{\otimes k}\big(L^\otimes_{\xi_{H'}}(\rho')\big)\ \ \ \ 
    \text{resp. } L^{\otimes,C}_{\xi_H}(\rho)\simeq\ind_{k'}^{\otimes
      k}\big(L^{\otimes,C}_{\xi_{H'}}(\rho')\big).\]
\end{lem}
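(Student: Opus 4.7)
The plan is to identify both sides through the decomposition $\widehat{H}\simeq\widehat{H'}^{\Sigma_{k'}}$ coming from $H=\Res_{k'/k}(H')$ and then to use the normalization characterization of the intertwiners $M_\gamma$ given in the paragraph following (\ref{isomgamma}).

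First, under this decomposition the set of simple roots $S_H$ is in $\Gamma_k$-equivariant bijection with $\Sigma_{k'}\times S_{H'}$: for $\alpha=(\tau,\alpha')\in S_H$, I would choose $\lambda_{\alpha^\vee}$ to be the coweight of $H$ equal to $\lambda_{(\alpha')^\vee}$ placed in the $\tau$-th coordinate of $X^\vee(T_H)\simeq X^\vee(T_{H'})^{\Sigma_{k'}}$, so that $\xi_H=\sum_{\alpha}\lambda_{\alpha^\vee}=(\xi_{H'})_{\tau\in\Sigma_{k'}}$ and is fixed by $\Gamma_k$, as required. With this choice, the fundamental representation $(r_{\lambda_{\alpha^\vee}},V_{\lambda_{(\alpha')^\vee}})$ of $\widehat{H}$ is the composition of the $\tau$-th projection $\widehat{H}\twoheadrightarrow\widehat{H'}$ with $r_{\lambda_{(\alpha')^\vee}}$. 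As a $\widehat{H}(A)$-module, $V^\otimes_{\xi_H}$ therefore decomposes canonically as the external tensor product $\bigotimes_{\tau\in\Sigma_{k'}}V^\otimes_{\xi_{H'}}$, with $\widehat{H}(A)\simeq\widehat{H'}(A)^{\Sigma_{k'}}$ acting componentwise through the representations $r^\otimes_{\xi_{H'}}$.

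Using the coset representatives $g_\tau\in\Gamma_k$ and the Shapiro cocycle $h_{\gamma,\tau}\defeq g_\tau^{-1}\gamma g_{\gamma^{-1}\circ\tau}\in\Gamma_{k'}$, I would then show that the ``tensor induction'' endomorphism $N_\gamma$ of $\bigotimes_\tau V^\otimes_{\xi_{H'}}$ defined by $N_\gamma(\bigotimes_\tau v_\tau)\defeq\bigotimes_\tau M^{H'}_{h_{\gamma,\tau}}v_{\gamma^{-1}\circ\tau}$ (where the $M^{H'}_{h}$ are the intertwiners of (\ref{isomgamma}) for the group $H'$) coincides with the intertwiner $M_\gamma$ for $H$. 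Both $N_\gamma$ and $M_\gamma$ are determined uniquely by (\ref{isomgamma}) together with the normalization of acting as the identity on the highest weight line of $V^\otimes_{\xi_H}$. I only need to verify these two properties for $N_\gamma$: the equivariance is a componentwise computation that reduces to (\ref{isomgamma}) for each $M^{H'}_{h_{\gamma,\tau}}$ (the $\Gamma_k$-invariance of $\xi_H$ being used to kill the product of the characters $\chi_{\alpha,\gamma}$), and the triviality on the highest weight line is immediate since this line is the tensor product of the highest weight lines in each factor and is preserved by any permutation of the tensor factors. Composing with $\rho$ yields the desired isomorphism $L^\otimes_{\xi_H}(\rho)\simeq\ind_{k'}^{\otimes k}(L^\otimes_{\xi_{H'}}(\rho'))$.

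The main technical point to verify is that $\gamma\mapsto N_\gamma$ is actually a group homomorphism $\Gamma_k\to\mathrm{GL}(V^\otimes_{\xi_H})$, which amounts to the standard cocycle identity satisfied by $(h_{\gamma,\tau})$ combined with the fact that $h\mapsto M^{H'}_h$ is a representation of $\Gamma_{k'}$; without this, the uniqueness characterization cannot be invoked. For the $C$-parameter statement I would apply the $L$-parameter case to the Weil restriction $\widetilde{H}_1\defeq\Res_{k'/k}(\widetilde{H'})$, which satisfies $\widehat{\widetilde{H}_1}\simeq\widehat{\widetilde{H'}}^{\Sigma_{k'}}$, and then restrict along the inclusion $\widehat{\widetilde{H}}\hookrightarrow\widehat{\widetilde{H}_1}$ appearing in the proof of Lemma \ref{lem:shapiro}; under this inclusion the character $((\xi_{H'},0))_{\tau\in\Sigma_{k'}}$ of $T_{\widehat{\widetilde{H}_1}}$ restricts to $(\xi_H,0)=\xi_{\widetilde{H}}$ on $T_{\widehat{\widetilde{H}}}$, so the restriction of the representation $L^\otimes_{(\xi_{\widetilde{H'}})_\tau}$ to $\widehat{\widetilde{H}}(A)\rtimes\Gamma_k$ is exactly $L^\otimes_{\xi_{\widetilde H}}=L^{\otimes,C}_{\xi_H}$, and the two $\Gamma_k$-actions (coming from the two $L$-parameters of $\widetilde{H}_1$ obtained by restricting $\rho$) visibly match.
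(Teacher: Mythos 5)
Your proof is correct and follows essentially the same approach as the paper's: under the identification $\widehat{H}\simeq\widehat{H'}^{\Sigma_{k'}}$ and $V^\otimes_{\xi_H}\simeq\bigotimes_\tau V^\otimes_{\xi_{H'}}$, the normalized intertwiner $M_\gamma$ is shown to coincide with the tensor-induction operator $N_\gamma$ built from the Shapiro cocycle $h_{\gamma,\tau}=g_\tau^{-1}\gamma g_{\gamma^{-1}\circ\tau}$ and the $M^{H'}_h$, which is precisely the paper's "we can check that the above automorphism $M_g$ is defined by\dots". One small logical slip: the uniqueness characterization (Schur plus trivial action on the highest-weight line) pins down $M_\gamma$ for each $\gamma$ individually, so you do not need the homomorphism property of $\gamma\mapsto N_\gamma$ before invoking it — once $N_\gamma=M_\gamma$ for every $\gamma$, the homomorphism property of $N$ is inherited automatically from that of $M$ (or, as you note, it can also be checked directly from the cocycle identity, but it is not a prerequisite).
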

\begin{proof}
Let $\rho'$ be an $L$-parameter of $H'$ over $A$. If $g\in\Gamma_k$ and $\tau\in\Sigma_{k'}$, let $gg_\tau=g_{g\circ\tau}h(g,\tau)$ with $h(g,\tau)\in\Gamma_{k'}$. For $g\in\Gamma_k$, we can check that the
above automorphism $M_g$ of $V^\otimes_{\xi_H}=(V_{\xi_{H'}}^\otimes)^{\otimes [k':k]}$ is defined by $M_g(\otimes_{\tau\in\Sigma_{k'}} v_{\tau})=\otimes_{\tau\in\Sigma_{k'}}(M_{h(g,g^{-1}\circ\tau)}v_{g^{-1}\circ\tau})$. Moreover, setting for $g\in\Gamma_k$:
\[\rho(g)\defeq \big((\rho'(h(g,g^{-1}\circ\tau))_{\tau\in\Sigma_{k'}},g\big)\in\widehat{H'}(A)^{\Sigma_{k'}}\rtimes\Gamma_k\]
it is easy to check that $\rho$ is an admissible morphism and that the equivalence class of $\rho$ corresponds to $\rho'$ via Lemma \ref{lem:shapiro}. The result follows from an explicit computation together with the definition of the tensor induction (\cite[\S13]{Curtis-Reiner1}, see also the end of the proof of Lemma \ref{galqp} below). The case of $C$-parameters can be deduced from the case of $L$-parameters as in the proof of Lemma \ref{lem:shapiro}.
\end{proof}

We will later need to ``untwist'' a $C$-parameter into an $L$-parameter. This can be done when the group $H$ has a twisting element (as we assumed in \S\ref{covariant}), i.e.\ a character $\theta_H\in X(T_H)^{\Gamma_k}\simeq X^\vee(T_{\widehat{H}})^{\Gamma_k}$ such that $\scalar{\theta_H,\alpha^\vee}=1$ for all $\alpha\in S_{H}$. By \cite[(1.3)]{ZhuSatake}, there exists a Galois equivariant isomorphism $\widehat{\widetilde{H}}\simeq\widehat{H}\times\mathbb{G}_m$ given
explicitly by
\[t_{\theta_H} : 
  \begin{array}{ccc}
    \widehat{H}\rtimes\mathbb{G}_m & \simeq &
                                              \widehat{H}\times\mathbb{G}_m \\
    (h,t)&\mapsto&(h\theta_H(t),t).
  \end{array}\]
This induces an isomorphism of group schemes ${}^CH\simeq{}^LH\times\mathbb{G}_m$. The choice of $\theta_H$ gives a bijection between the equivalence classes of $C$-parameters and of $L$-parameters of $H$ over $A$ given by $\rho^C\mapsto\rho$, so that $t_{\theta_H}\circ\rho^C\simeq(\rho,\varepsilon)$, where $\varepsilon$ is (the image in $A^\times$) of the $p$-adic cyclotomic character.

Let $\xi_H\in X^\vee(T_H)^{\Gamma_k}\simeq X(T_{\widehat{H}})^{\Gamma_k}$ be a dominant character of $\widehat{H}$ fixed by $\Gamma_k$ as above. The algebraic representation $r_{\xi_{\widetilde H}}\circ t_{\theta_H}^{-1}$ of $\widehat{H}\times\mathbb{G}_m$ (see (\ref{xitilde}) for $\xi_{\widetilde H}$) is the representation of highest weight $(\xi_H,-\scalar{\xi_H,\theta_H})$ and similarly $L^\otimes_{\xi_{\widetilde H}}\circ t_{\theta_H}^{-1}=L^\otimes_{\xi_H}\otimes x^{-\scalar{\xi_H,\theta_H}}$ (where we note $x^h$ the character $x\mapsto x^h$ of $\mathbb{G}_m$). This proves that we have
\begin{equation}\label{eq:twisting}
  L^{\otimes,C}_{\xi_H}(\rho^C)\simeq
  L^{\otimes}_{\xi_H}(\rho)\otimes\varepsilon^{-\scalar{\xi_H,\theta_H}}.
\end{equation}

On order to state the reformulation/generalization Conjecture \ref{theconjbar} (more precisely of its variant in Remark \ref{theconjbarrem}(i) and extending scalars from $\F$ to $\Fpbar$), we broaden the global setting of \S\ref{somprel} following \cite{DPS}.

We now let $H$ be a connected reductive group defined over $\Q$. We fix some compact open subgroup $U^p\subset H(\mathbb{A}_{\Q}^{\infty,p})$ satisfying the hypotheses of \cite[\S9.2]{DPS} ($U^p$ there is denoted $K_f^p$). For $i\geq0$ an integer, let $\widetilde{H}^i(\F_p)$ be the completed cohomology of the tower of locally symmetric spaces associated to $H$ of tame level $U^p$ defined in \cite{emerton-inv} (see \cite[\S9.2]{DPS}). Let $\Sigma$ be a set of finite places of $\Q$ containing $p$ and the places of $\Q$ where $H$ is not unramified or $U^p$ is not hyperspecial. Let $\mathbb{T}^{\Sigma}$ be the abstract Hecke algebra defined as the tensor product of the spherical $\Z[p^{-1}]$-Hecke algebras $\mathcal{H}_\ell$ of $H(\Q_\ell)$ with respect to $U^p_\ell$. We recall that a maximal open ideal $\m\subset\mathbb{T}^\Sigma$ is \emph{weakly non-Eisenstein} \cite[Def.9.13]{DPS} if the equivalent assumptions of \cite[Lemma 9.10]{DPS} are satisfied. In this case there is a unique $q_0\geq0$ such that $\widetilde{H}^{q_0}(\F_p)_{\mathfrak{m}}\neq0$. Then the $H(\mathbb{Q}_p)$-representation $\widetilde{H}^{q_0}(\F_p)[\mathfrak{m}]$ is smooth and admissible and the residue field of $\m$ is finite. We choose an embedding $\mathbb{T}/\mathfrak{m}\hookrightarrow\Fpbar$.

Considering \cite[Conj.9.3.1]{DPS}, the following construction is natural. Let $\rbar^C : \Gal(\overline{\Q}/\Q)\rightarrow {}^CH(\Fpbar)$ be a $C$-parameter unramified outside a finite number of primes and choose $\Sigma$ big enough to contain all the primes of ramification of $\rbar^C$. For each $\ell\notin\Sigma$, let $x_\ell : \mathcal{H}_\ell\rightarrow\Fpbar$ be the character such that the semisimplification of $\rbar^C(\Frob_\ell)$ is contained in the $\widehat{H}(\Fpbar)$-conjugacy class $CC(x_\ell)\zeta(\ell^{-1})$ of ${}^CH(\Fpbar)$ defined by the version of Satake isomorphism for $C$-groups in \cite{ZhuSatake} and $\zeta$ is the cocharacter $t\mapsto (2\delta_{\ad}(t^{-1}),t^2)$ of
$\widehat{\widetilde{H}}$ (recall $\delta_{\ad}$ is defined at the beginning of this section). We define $\mathfrak{m}^\Sigma$ as the maximal ideal of $\mathbb{T}^\Sigma$ generated by the kernels of all the $x_\ell$ with $\ell\notin\Sigma$. Note that this gives us a natural embedding $\mathbb{T}^\Sigma/\m^\Sigma\hookrightarrow\Fpbar$.

Assume that $H_{\Qp}\defeq H\times_{\Q}{\Qp}$ is isomorphic to $\Res_{K/\Qp}(H')$ for a finite extension $K$ of $\Qp$ and some split connected reductive group $H'$ over $K$ (in particular $H_{\Qp}$ is quasi-split) and that $H'$ has a connected center. Then we can fix a cocharacter $\xi_{H'}$ of $H'$ such that $\scalar{\xi_{H'},\alpha}=1$ for all $\alpha\in S_{H'}$ and define $\xi_{H_{\Qp}}\defeq \Res_{K/\Qp}(\xi_{H'})\vert_{\mathbb{G}_{m}}$ (restriction to the diagonal embedding $\mathbb{G}_m\hookrightarrow \Res_{K/\Qp}(\mathbb{G}_m)=\mathbb{G}_m^{[K:\Qp]}$), which is a cocharacter of $H_{\Qp}$ satisfying $\scalar{\xi_{H_{\Qp}},\alpha}=1$ for all $\alpha\in S_{H_{\Qp}}$. We can finally conjecture:

\begin{conj}\label{conj:generale}
Assume that the $H(\Qp)$-representation $\pi\defeq \widetilde{H}^{q_0}(\F)[\m^\Sigma]$ is nonzero. Then $D^\vee_{\xi_{H'}}(\pi)$ {\upshape(}defined similarly to {\upshape(}\ref{DxiH}{\upshape)}{\upshape)} is finite-dimensional over $\Fpbar\ppar{X}$ and there is an integer $d\in\Z_{>0}$ such that we have an isomorphism of representations of $\gp$ over $\Fpbar$:
\[{\bf V}^\vee\big(D^\vee_{\xi_{H'}}(\pi)\big)\otimes_{\mathbb{T}^\Sigma/\mathfrak{m}^\Sigma}\Fpbar\simeq
    \big(L^{\otimes,C}_{\xi_{H_{\Qp}}}\big(\rbar^C|_{\gp}\big)\big)^{\oplus d}.\]
\end{conj}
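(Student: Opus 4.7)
The plan is to attack Conjecture \ref{conj:generale} by first reducing the compact unitary case to Conjecture \ref{theconjbar}, and then tackling general $H$ by global methods. In the compact case, take $H_0$ a compact unitary group over $F^+$ as in \S\ref{somprel} with $p$ inert in $F^+$ and split in $F$, and set $H \defeq \Res_{F^+/\Q}(H_0)$. Then $H(\R)$ is compact, so $q_0 = 0$ and $\widetilde H^0(\F)[\m^\Sigma]$ identifies with $S(U^v,\F)[\m^\Sigma]$ for suitable level and Hecke data. Moreover $H_{\Qp} \cong \Res_{K/\Qp}(\GL_n)$ with $K = F_{\tilde v}$ and $H' = \GL_n$, so the functor $D^\vee_{\xi_{H'}}$ coincides with the $D^\vee_{\xi_G}$ of \S\ref{covariant} applied to the $\GL_n(K)$-representation $\pi$ via $\iota_{\tilde v}$.

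To match both sides of the expected isomorphism, apply Lemma \ref{lem:tens_induction} to $\Res_{K/\Qp}(\GL_n)$ to get
\[
L^{\otimes,C}_{\xi_{H_{\Qp}}}\big(\bar r^C|_{\gp}\big) \simeq \ind_K^{\otimes\Qp}\!\big(L^{\otimes,C}_{\xi_G}(\bar r^C|_{\gK})\big),
\]
and use the twisting element $\theta_G$ of Example \ref{exdelta} together with (\ref{eq:twisting}) to rewrite $L^{\otimes,C}_{\xi_G}(\bar r^C|_{\gK}) \simeq L^\otimes_{\xi_G}(\bar r_{\tilde v}) \otimes \varepsilon^{-\langle\xi_G,\theta_G\rangle}$, where by construction $L^\otimes_{\xi_G}(\bar r_{\tilde v}) \cong \bigotimes_{\alpha\in S}\Lbar(\lambda_\alpha)(\bar r_{\tilde v}) \cong \bigotimes_{i=1}^{n-1}\bigwedge^i_{\F}\bar r_{\tilde v}$. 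Comparing with (\ref{tensorind}), the right-hand side of Conjecture \ref{conj:generale} equals $\LLbar(\bar r_{\tilde v})^{\oplus d}$ up to an explicit power of $\omega$. A careful bookkeeping, using Remark \ref{trivial}(ii) to commute the twist $\omega^{-(n-1)}\circ\det$ of Conjecture \ref{theconjbar} through $V_G$ and using the normalizing twist $\delta_G$ in the definition (\ref{VG}), should show that all the auxiliary twists cancel and that Conjectures \ref{theconjbar} and \ref{conj:generale} are equivalent in the compact unitary case. Thus Theorem \ref{thmIntro1} already furnishes cases of Conjecture \ref{conj:generale} for $H_0 = \mathrm{U}_2$ with $K = F_{\tilde v}$ unramified.

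For $H$ not of compact unitary type, the situation is genuinely harder because $\widetilde H^{q_0}(\F)[\m^\Sigma]$ is completed cohomology in positive degree. A plausible route is a Taylor--Wiles patching argument adapted to $H$, producing a patched module over a local Galois deformation ring on which both $H(\Qp)$ and the universal Galois representation act, and then transferring local-global compatibility from the compact case by functoriality and Scholze's construction of Galois representations in completed cohomology. An alternative is to use derived Emerton ordinary parts to cut $\pi$ down to pieces where $D^\vee_{\xi_{H'}}$ is already known to be exact (cf.\ \cite[\S9]{breuil-foncteur}).

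The main obstacle, however, lies on the purely local side. Essentially nothing is known about exactness or finite-dimensionality of $D^\vee_{\xi_{H'}}$ outside of $H' = \GL_2$, and even for $\GL_2(K)$ with $K$ unramified the present paper needs substantial new machinery (the category $\mathcal C$ and the multivariable $(\varphi,\cO_K^\times)$-modules of \S\ref{multivariablepsi}) together with strong genericity hypotheses on $\bar r^C|_{\gp}$ just to prove finite-dimensionality. Any proof of Conjecture \ref{conj:generale} in full generality would therefore demand both a conceptual understanding of supersingular representations of $H'(K)$ for arbitrary $H'$, presently out of reach, and a careful tracking of $C$-group normalizations whenever $H$ lacks a global twisting element and no simple $L$-parameter reformulation is available.
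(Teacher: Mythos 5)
The statement you are addressing is labelled a \emph{conjecture}; the paper does not prove it. What the paper does contain is a verification, at the end of \S\ref{Cgroup}, that when $H = \Res_{F^+/\Q}(H_0)$ for a compact unitary group $H_0$ over $F^+$ as in \S\ref{somprel}, Conjecture \ref{conj:generale} is \emph{equivalent} to the special case of Conjecture \ref{theconjbar} in Remark \ref{theconjbarrem}(i) (with $\Fpbar$ replacing $\F$). Your first two paragraphs reproduce exactly this route: identify $\widetilde H^0(\F)[\m^\Sigma]$ with $S(U^v,\F)[\m^\Sigma]$, use Lemma \ref{lem:tens_induction} to unfold the tensor induction, use the twisting element $\theta_G$ with \eqref{eq:twisting} to pass from a $C$-parameter to an $L$-parameter, and then track normalizing twists via Remark \ref{trivial}(ii). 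Your identification of Theorem \ref{thmIntro1} (= Theorem \ref{specialcase1}) as the source of actual proven cases is also correct.

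One point worth sharpening: your intermediate formula $L^{\otimes,C}_{\xi_G}(\bar r^C|_{\gK}) \simeq L^\otimes_{\xi_G}(\bar r_{\tilde v}) \otimes \varepsilon^{-\langle\xi_G,\theta_G\rangle}$ omits a shift by $\omega^{n-1}$. The paper's $C$-parameter is built out of $\rbar'$ via \eqref{eq:GntoC}, and the explicit untwisting yields $(\rbar')^C = t_{\theta_H}^{-1}\circ((\rbar\otimes\omega^{n-1}),\omega)$; accordingly the paper's final comparison reads $\ind_{F_v^+}^{\otimes\Q_p}\big(r_{\xi_v}^\otimes(\rbar_{\tilde v}\otimes\omega^{n-1})\omega^{-\langle\xi_H,\theta_H\rangle}\big)=\LLbar(\rbar_{\tilde{v}})\otimes\delta_G^{-1}$, which then matches against the $\omega^{-(n-1)}\circ\det$ twist on the automorphic side of Conjecture \ref{theconjbar}. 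You flag that ``careful bookkeeping should show all auxiliary twists cancel,'' and indeed it does, but the $\omega^{n-1}$ is precisely the term that makes the bookkeeping nontrivial, so it should not be dropped in the intermediate step. Your last two paragraphs, on general $H$ via patching or derived ordinary parts, are reasonable speculation, but the paper does not attempt any of this; it states Conjecture \ref{conj:generale} and goes no further. In particular, your framing ``attack Conjecture \ref{conj:generale}'' slightly misrepresents the paper's purpose, which is to provide a more intrinsic and more widely applicable formulation via $C$-groups, not to prove new cases beyond those already covered by Conjecture \ref{theconjbar}.
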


We now check that, when $H$ is the restriction of scalars of a compact unitary group as in \S\ref{somprel}, Conjecture \ref{conj:generale} is equivalent to the special case of Conjecture \ref{theconjbar} in Remark \ref{theconjbarrem}(i) where the coefficient field is $\Fpbar$ instead of $\F$. 

We go back to the notation of \S\S\ref{somprel}, \ref{wlgc} and we fix an embedding $\F\hookrightarrow\Fpbar$. For simplification we assume that there is a unique place $v$ of $F^+$ over $p$ and we fix $\tilde v$ in $F$ above $v$, so that we have an isomorphism $(\Res_{F^+/\Q}H)\times_{\Q}{\Qp}\simeq\Res_{F_{\tilde v}/\Q_p}\GL_n$. The base field $k$ at the beginning is now $F^+$, the connected reductive group $H$ over $k$ is the compact unitary group $H$ of \S\ref{somprel} (so that $\widehat H\simeq G=\GL_n$), $\xi_{H}$ is the cocharacter $\xi_G$ of Example \ref{exdelta}, the twisting element $\theta_H$ is the character $\theta_G$ of Example \ref{exdelta} and the algebraically closed field $A$ is $\Fpbar$.

Let $\rbar$ be a continuous irreducible representation $\Gal(\overline{\Q}/F)\rightarrow\GL_n(\Fpbar)$ as in \S\ref{wlgc} (composed with our embedding $\F\hookrightarrow\Fpbar$). Let $\rbar' : \Gal(\overline{\Q}/F^+)\rightarrow\mathcal{G}_n(\Fpbar)$ be the continuous morphism associated to $\rbar$ using \cite[Lemma 2.1.4]{CHT} and denote by $(\rbar')^C:\Gal(\overline{\Q}/F^+)\rightarrow {}^CH(\Fpbar)$ the $C$-parameter of $H$ over $\Fpbar$ obtained by the construction of \cite[\S8.3]{BG}. A simple computation shows that $(\rbar')^C$ (or more precisely its composition with $\widehat{\widetilde{H}}\rtimes\Gal(\overline{\Q}/F^+)\twoheadrightarrow\widehat{\widetilde{H}}\rtimes\Gal(F/F^+)$) is the composition of $(\rbar',\omega)$ with
\begin{equation}\label{eq:GntoC}
  \begin{array}{cll}
    \mathcal{G}_n\times\mathbb{G}_m&\longrightarrow&\widehat{H}\rtimes(\mathbb{G}_m\times\Gal(F/F^+)) \\
                                  (g,\mu,\gamma,\lambda) &\longmapsto& (g\theta_H'(\lambda)^{-1},\lambda,\gamma)
  \end{array}
\end{equation}
where $g\in\GL_n(\Fpbar)$, $\mu,\lambda\in\Fpbar^\times$, $\gamma\in\Gal(F/F^+)$ and $\theta_H'\in X(T)$ is the character $\theta'_H({\rm diag}(x_1,\ldots,x_n))= x_2^{-1}x_3^{-2}\cdots x_{n}^{1-n}$. Finally we define $\rbar^C$ as the $C$-parameter of $\Res_{F^+/\Q}(H)$ over $\Fpbar$ obtained from the application of Lemma \ref{lem:shapiro} to $(\rbar')^C$. We can check that the maximal ideal $\m^\Sigma$ of $\mathbb{T}^\Sigma$ defined
by $\rbar^C$ coincides with the ideal $\m^\Sigma$ defined in \S\ref{wlgc}. This can be checked using the formulas relating the Satake isomorphism for $C$-groups with the usual Satake isomorphism (\cite[\S1.4]{ZhuSatake}) and the explicit formulas \cite[(3.13)]{GrossSatake}, \cite[(3.14)]{GrossSatake}.

Note that, seeing now $\theta_H$ and $\theta'_H$ as {\it cocharacters} of $T$ (recall $\widehat{\GL_n}\simeq \GL_n$), we have $\theta_H\circ\omega=(\theta'_H\circ\omega)\omega^{n-1}$, so that we have, using \eqref{eq:GntoC}:
\[(\rbar')^C=t_{\theta_H}^{-1}\circ((\rbar\otimes\omega^{n-1}),\omega).\]
Let $\xi_v\defeq \xi_{H}\times_{F^+}F_v^+$ and $\xi_p\defeq \Res_{F_v^+/\Q_p}(\xi_v)|_{\mathbb{G}_{m}}$. Then \eqref{eq:twisting} and Lemma \ref{lem:tens_induction} imply (note that $\xi_v$ is fixed by $\Gal(\overline{\Q}_p/F_v^+)$ since $H\times_{F^+}F_v^+$ is split):
\[L^{\otimes,C}_{\xi_p}(\rbar^C|_{\gp})\simeq\ind_{F_v^+}^{\otimes\Q_p}\big(r_{\xi_{v}}^\otimes(\rbar_{\tilde{v}}\otimes\omega^{n-1})\omega^{-\scalar{\xi_H,\theta_H}}\big)=\LLbar(\rbar_{\tilde{v}})\otimes\delta_G^{-1}.\]
This shows that Conjecture \ref{conj:generale} is equivalent to the special case of Conjecture \ref{theconjbar} in Remark \ref{theconjbarrem}(i) (with $\Fpbar$ instead of $\F$).

\subsection{Good subquotients of \texorpdfstring{$\LLbar$}{L\^{}\textbackslash otimes}}\label{goodcomponent}

From now on we assume that $K$ is unramified (i.e.\ $K=\Qpf$), and we remind the reader that $G = \GL_n/\Z$. We define the algebraic representation $\LLbar$ of $\prod_{\sigma\in \gKQ}\!G$ together with ``good subquotients'' of $\LLbar$, and prove various properties of these good subquotients. This section is entirely on the ``Galois side'' (though no Galois representation appears yet). All the results, except Remark \ref{gln2}, in fact hold for any split reductive connected algebraic group $G/\Z$ with connected center.

\subsubsection{Definition and first properties}

We define good subquotients of $\LLbar$. 

If $H$ is an algebraic group over $\Z$, we now write $H$ instead of $H\times_{\Z}\F$ (in order not to burden the notation) and $H^{\gKQ}$ for the group product $\prod_{\sigma\in \gKQ}\!H$ (it is not a subgroup of $H$!).

We define the following algebraic representation of ${G}^{\gKQ}$ over $\F$:
\begin{equation}\label{lbar}
\LLbar\defeq \bigotimes_{\gKQ}\Big(\bigotimes_{\alpha\in S}\Lbar(\lambda_{\alpha})\Big)
\end{equation}
(recall that $\Lbar(\lambda_{\alpha})$ is defined in (\ref{algbar}) and \eqref{eq:lambda-alpha}). Note that $\LLbar$ is also the tensor product of all fundamental representations of the pro\-duct group ${G}^{\gKQ}$. In particular the center $Z_{ G}^{\gKQ}$ acts on $\LLbar$ by the character $\underbrace{\theta_{G}\vert_{Z_{G}}\otimes\cdots\otimes \theta_{G}\vert_{Z_{G}}}_{\gKQ}$, where $\theta_G$ is as in Example \ref{exdelta}, i.e.
\begin{equation}\label{thetag}
\theta_{G}=\sum_{\alpha\in S}\lambda_{\alpha}\in X(T).
\end{equation}

\begin{rem}\label{explicit}
(i) With the notation of \S\ref{Cgroup}, the representation $\LLbar$ is the restriction to $\widehat H$ of the representation $(L^\otimes_{\xi_H},V^\otimes_{\xi_H})$ of ${}^LH$, where $k=\F$, $H=\Res_{K/\Qp}(G)$ and $\xi_H=(\xi_G,\dots,\xi_G)\in X(T_{\widehat{H}})$ ($\xi_G$ as in Example \ref{exdelta}).\\
(ii) Since $\lambda_\alpha\in \oplus_{i=1}^n\Z_{\geq 0}e_i$ by~\eqref{eq:lambda-alpha}, all the weights of $X(T)$ appearing in each $\Lbar(\lambda_{\alpha})\vert_T$ are also in $\oplus_{i=1}^n\Z_{\geq 0}e_i$, and thus the same holds for the weights of $\LLbar\vert_T$ (where $T$ is diagonally embedded into ${G}^{\gKQ}$). This follows from the classical fact that the weights appearing in $\Lbar(\lambda)\vert_T$ for any dominant $\lambda\in X(T)$ are the points in $\oplus_{i=1}^n\Z e_i\cong X(T)$ of the convex hull in $\oplus_{i=1}^n\R e_i$ of the weights $w(\lambda)$, $w\in W$.
\end{rem}

Fix $P$ a standard parabolic subgroup of ${G}$, if $R$ is a finite-dimensional algebraic representation of $P^{\gKQ}$ over $\F$, we write $R\vert_{Z_{M_{P}}}$ for the restriction of $R$ to $Z_{M_{P}}$ acting via the diagonal embedding
\begin{equation}\label{embed}
Z_{M_{P}}\ \hookrightarrow\ Z_{M_{P}}^{\gKQ}\ \subseteq\ {G}^{\gKQ}.
\end{equation}
Since $Z_{M_P}$ is a torus, it follows from \cite[\S I.2.11]{Ja} that $R\vert_{Z_{M_{P}}}$ is the {\it direct sum} of its isotypic components. For instance, if $P=G$ and $R=\LLbar$, there is only one isotypic component as $Z_{M_{G}}=Z_{G}$ acts on $\LLbar$ via the character $f\theta_{G}\vert_{Z_{G}}$.

\begin{lem}\label{action}
Any isotypic component of $R\vert_{Z_{M_{P}}}$ carries an action of $M_{P}^{\gKQ}$ when viewed inside $R\vert_{M_{P}^{\gKQ}}$.
\end{lem}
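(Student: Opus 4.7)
The plan is to observe that the diagonally embedded copy of $Z_{M_P}$ sits inside the center of $M_P^{\gKQ}$, so the $M_P^{\gKQ}$-action on $R$ commutes with the $Z_{M_P}$-action and hence preserves the isotypic decomposition.

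More precisely, I would first note that for any $z\in Z_{M_P}$ and any $(m_\sigma)_{\sigma\in \gKQ}\in M_P^{\gKQ}$, one has $zm_\sigma = m_\sigma z$ in $M_P$ for every $\sigma$, so the diagonal element $(z,\dots,z)$ commutes with $(m_\sigma)_\sigma$ in $M_P^{\gKQ}$. In other words, the image of the diagonal embedding \eqref{embed} lies in the center of $M_P^{\gKQ}$.

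Since $Z_{M_P}$ is a torus, Jantzen \cite[\S I.2.11]{Ja} (already invoked above) gives a decomposition $R\vert_{Z_{M_P}} = \bigoplus_\chi R_\chi$ as a direct sum of isotypic components indexed by characters $\chi$ of $Z_{M_P}$. For any $g\in M_P^{\gKQ}$ and $v\in R_\chi$ and $z\in Z_{M_P}$, the previous paragraph gives
\[z\cdot(g\cdot v) = g\cdot(z\cdot v) = \chi(z)\,g\cdot v,\]
so $g\cdot v\in R_\chi$. Hence each $R_\chi$ is stable under $M_P^{\gKQ}$ and inherits from $R\vert_{M_P^{\gKQ}}$ a representation structure of $M_P^{\gKQ}$, which is exactly the claim.

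There is no real obstacle here: the argument is a one-line consequence of centrality. The only subtlety worth spelling out is that the diagonal embedding $Z_{M_P}\hookrightarrow Z_{M_P}^{\gKQ}$ is used in order to land in the center of $M_P^{\gKQ}$; had one used a single factor embedding instead, the statement would still hold but would carry only the corresponding single copy of $M_P$ acting. The diagonal placement is what ensures that \emph{all} the factors of $M_P^{\gKQ}$ act simultaneously on each isotypic component.
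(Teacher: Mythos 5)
Your argument is correct and is exactly the paper's argument: the paper's proof reads in full ``This just comes from the fact that the action of $Z_{M_P}$ commutes with that of $M_P^{\gKQ}$,'' which is precisely what you spell out. No discrepancy.
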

\begin{proof}
This just comes from the fact that the action of $Z_{M_P}$ commutes with that of $M_P^{\gKQ}$.
\end{proof}

\begin{definit}\label{goodsubqt}
Let $\widetilde P\subseteq P$ be a Zariski closed algebraic subgroup containing $M_P$ and $R$ an algebraic representation of $P^{\gKQ}$ over $\F$, a subquotient (resp.\ subrepresentation, resp.\ quotient) of $R\vert_{{\widetilde P}^{\gKQ}}$ is a {\it good} subquotient (resp.\ subrepresentation, resp.\ quotient) if its restriction to $Z_{M_{P}}$ is a (direct) sum of isotypic components of $R\vert_{Z_{M_{P}}}$.
\end{definit}

\begin{rem}\label{zariski}
A Zariski closed subgroup $\widetilde P$ as in Definition \ref{goodsubqt} actually determines the standard parabolic subgroup $P$ that contains it. Indeed, assume there is another standard parabolic subgroup $P'$ such that $M_{P'}\subseteq \widetilde P\subseteq P'$. Then we have $M_{P'}\subseteq P$ which implies $P'\subseteq P$. Symmetrically, we also have $P\subseteq P'$, hence $P=P'$.
\end{rem}

Since isotypic components of $R\vert_{Z_{M_{P}}}$ tautologically occur with multiplicity $1$, we see in particular that there is only a finite number of good subquotients of $R\vert_{{\widetilde P}^{\gKQ}}$. For instance the entire $\LLbar$ is the only good subquotient of $\LLbar\vert_{{G}^{\gKQ}}$. If $\widetilde{\!\widetilde P}\subseteq \widetilde P$ is another Zariski closed algebraic subgroup as in Definition \ref{goodsubqt}, any good subquotient (resp.\ subrepresentation, resp.\ quotient) of $R\vert_{{\widetilde P}^{\gKQ}}$ is a good subquotient (resp.\ subrepresentation, resp.\ quotient) of $R\vert_{{\widetilde{\!\widetilde P}}^{\gKQ}}$ (but the converse is wrong). 

\begin{lem}\label{filtr}
There exists a filtration on $\LLbar\vert_{{\widetilde P}^{\gKQ}}$ by good subrepresentations such that the graded pieces exhaust the isotypic components of $\LLbar\vert_{Z_{M_{P}}}$ seen as representations of ${\widetilde P}^{\gKQ}$ via the surjection ${\widetilde P}^{\gKQ}\twoheadrightarrow M_{P}^{\gKQ}$ and Lemma \ref{action}.
\end{lem}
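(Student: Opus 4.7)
The plan is to build the filtration using a cocharacter of $Z_{M_P}$ that is strictly positive on all roots in the unipotent radical $N_P$ of $P$. I pick $\lambda\in X^\vee(Z_{M_P})$ with $\langle \alpha,\lambda\rangle>0$ for every $\alpha\in R^+\setminus R(P)^+$; for instance $\lambda=\sum_{\alpha\in S\setminus S(P)}\omega_\alpha^\vee$ (fundamental coweights dual to simple roots outside $M_P$) works. Composing with the diagonal embedding (\ref{embed}) gives a cocharacter $\mathbb{G}_m\to Z_{M_P}^{\gKQ}$, and its weights define a $\Z$-grading on $\LLbar$ that refines the isotypic decomposition $\LLbar\vert_{Z_{M_P}}=\bigoplus_\chi \LLbar[\chi]$, since each $\chi$-isotypic component is concentrated in the $\lambda$-weight space of weight $\langle\chi,\lambda\rangle$.

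I then fix any total order on the (finite) set of characters $\chi$ occurring in $\LLbar\vert_{Z_{M_P}}$ that refines the partial order by $\lambda$-weight (so $\langle\chi',\lambda\rangle>\langle\chi,\lambda\rangle$ forces $\chi'>\chi$) and define the decreasing filtration $V_{\geq\chi}\defeq\bigoplus_{\chi'\geq\chi}\LLbar[\chi']$ indexed by this order. Each $V_{\geq\chi}$ is $P^{\gKQ}$-stable: it is $M_P^{\gKQ}$-stable because $M_P^{\gKQ}$ commutes with $Z_{M_P}$ and thus preserves each $\LLbar[\chi']$; and it is $N_P^{\gKQ}$-stable because a root vector in $N_\alpha$ for $\alpha\in R^+\setminus R(P)^+$, acting on any tensor factor indexed by $\sigma\in\gKQ$, shifts a $\chi'$-isotypic vector into vectors of $Z_{M_P}$-character $\chi'+k\alpha\vert_{Z_{M_P}}$ for some $k\geq 1$, whose $\lambda$-weight $\langle\chi',\lambda\rangle+k\langle\alpha,\lambda\rangle$ is strictly larger than $\langle\chi',\lambda\rangle$. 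Restricting the group action from $P^{\gKQ}$ to the subgroup $\widetilde P^{\gKQ}$, each $V_{\geq\chi}$ remains a $\widetilde P^{\gKQ}$-subrepresentation, and it is good by construction as it is a direct sum of $Z_{M_P}$-isotypic components.

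The successive quotients of this filtration are exactly the isotypic components $\LLbar[\chi]$, each appearing once. Since these quotients are killed by $N_P^{\gKQ}$ (the $\lambda$-weight cannot go down), the induced $\widetilde P^{\gKQ}$-action on each $\LLbar[\chi]$ factors through the surjection $\widetilde P^{\gKQ}\twoheadrightarrow M_P^{\gKQ}$ and agrees with the $M_P^{\gKQ}$-action provided by Lemma \ref{action}. There is no real obstacle: the whole argument rests on picking $\lambda$ so that the ordering of $Z_{M_P}$-characters by $\lambda$-weight is compatible with $N_P$ shifting weights upward, whence a $P^{\gKQ}$-stable filtration refining the isotypic decomposition exists, and passage to $\widetilde P^{\gKQ}$ is trivial by restriction.
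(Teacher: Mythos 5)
Your proof is correct, and it follows the same underlying mechanism as the paper's (Jantzen~II.1.19: a root‑group element shifts $T$‑weights by nonnegative multiples of $\alpha$), but organizes it differently. The paper works directly with the partial order on $X(Z_{M_P})$ given by adding elements of $\sum_{\alpha\in R^+\setminus R(P)^+}\Z_{\geq 0}\,\overline{\alpha}$, and shows that each isotypic component $V_{\lambda'}$ is a quotient of two $P^{\gKQ}$‑subrepresentations $W'\subsetneq W$ that are sums of isotypic components; the desired filtration is then implicit. You instead linearize that partial order by choosing a cocharacter $\lambda\in X^\vee(Z_{M_P})$ that is strictly positive on $R^+\setminus R(P)^+$, getting a $\Z$‑grading on $\LLbar$, refining it to a total order on the $Z_{M_P}$‑characters, and writing the filtration $V_{\geq\chi}$ explicitly. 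This is the standard "degenerate via a dominant cocharacter'' device; what it buys you is a single, explicit chain rather than a family of quotient presentations to assemble. The reduction from $\widetilde P$ to $P$ by restriction is the same in both arguments.

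One imprecision worth flagging: the sentence ``a root vector in $N_\alpha$ \dots shifts a $\chi'$-isotypic vector into vectors of $Z_{M_P}$-character $\chi'+k\alpha\vert_{Z_{M_P}}$ for some $k\geq 1$'' is not literally true as stated — a unipotent element $n_\alpha$ sends a weight vector $v$ of $T$-weight $\mu$ to $v+\sum_{i\geq 1} v_i$ with $v_i$ of weight $\mu+i\alpha$, so the $k=0$ (identity) term is present. What you need, and what you evidently mean, is that $n_\alpha v - v$ lives in strictly higher $\lambda$-weight, and that the $\lambda$-weight therefore never decreases; this is indeed exactly what makes $V_{\geq\chi}$ stable and makes $N_P^{\gKQ}$ act trivially on the graded pieces. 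Minor as well: one should observe that the proposed $\lambda=\sum_{\alpha\in S\setminus S(P)}\omega_\alpha^\vee$ indeed lies in $X^\vee(Z_{M_P})$ — it does, since it pairs to zero with every simple root in $S(P)$ and $Z_{M_P}$ is connected here — but this is easy and holds automatically in the present $\GL_n$ setting.
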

\begin{proof}
It is enough to prove the lemma for $\widetilde P=P$. We prove the following statement (which implies the lemma): let $H$ be a split connected reductive algebraic group over $\Z$ with connected center, $T_H\subseteq H$ a split maximal torus in $H$, $B_H\subseteq H$ a Borel subgroup containing $T_H$ with set of (positive) roots $R_H^+$, $V$ a finite-dimensional $H$-module over $\F$, $Q_H\subseteq H$ a parabolic subgroup containing $B_H$ with Levi decomposition $M_{Q_H}N_{Q_H}$ and center $Z_{M_{Q_H}}\subseteq T_H$, $Z'_{M_{Q_H}}$ a subtorus of $Z_{M_{Q_H}}$ and $\lambda'_{Q_H}\in X(Z'_{M_{Q_H}})\defeq \Hom_{\rm Gr}(Z'_{M_{Q_H}},{\mathbb G}_{\rm m})$. Then the $Z'_{M_{Q_H}}$-isotypic component $V_{\lambda'_{Q_H}}$ of $V$ is a quotient of two subrepresentations in $V\vert_{Q_H}$ which are both direct sums of isotypic components of $V\vert_{Z'_{M_{Q_H}}}$ (one applies this result to $H=G^{\gKQ}$, $V=\LLbar$, ${Q_H}=P^{\gKQ}$ and $Z'_{M_{Q_H}}=Z_{M_P}$). Note that as above $V=\oplus_{\lambda'_{Q_H}}V_{\lambda'_{Q_H}}$ and that $V_{\lambda'_{Q_H}}$ carries from $V\vert_{M_{Q_H}}$ an action of $M_{Q_H}$ by the same proof as for Lemma \ref{action}. Let $R({Q_H})^+\subseteq R_H^+$ be the positive roots of $M_{Q_H}$, if $\alpha\in R_H^+\backslash R({Q_H})^+$, denote by $\overline\alpha$ its image via the quotient map $X(T_H)\twoheadrightarrow X(Z'_{M_{Q_H}})$ and $N_\alpha\subseteq N_{Q_H}$ the root subgroup. If $n_\alpha\in N_\alpha$ and $\lambda'_{Q_H}\in X(Z'_{M_{Q_H}})$, then we have $n_\alpha(V_{\lambda'_{Q_H}})\subseteq \sum_{i=0}^{+\infty}V_{\lambda'_{Q_H}+i\overline\alpha}$ by \cite[\S II.1.19]{Ja} (the sum being finite inside $V$). Fix $\lambda'_{Q_H}\in X(Z'_{M_{Q_H}})$ that occurs in $V\vert_{Z'_{M_{Q_H}}}$ and let ${\mathcal W}(\lambda'_{Q_H})$ be the set of $\lambda''_{Q_H}\in X(Z'_{M_{Q_H}})$ of the form $\lambda'_{Q_H}+ \big(\sum_{\alpha\in R_H^+\backslash R({Q_H})^+}\Z_{\geq 0}\overline\alpha\big)$ that occur in $V\vert_{Z'_{M_{Q_H}}}$, we deduce that both subspaces
\[\sum_{\lambda''_{Q_H}\in {\mathcal W}(\lambda'_{Q_H})\backslash \{\lambda'_{Q_H}\}}V_{\lambda''_{Q_H}}\ \ \subsetneq \ \sum_{\lambda''_{Q_H}\in {\mathcal W}(\lambda'_{Q_H})}V_{\lambda''_{Q_H}}\]
are preserved by $N_{Q_H}$, hence by ${Q_H}$, inside $V$. Since their cokernel is exactly $V_{\lambda'_{Q_H}}$, this proves the statement.
\end{proof}

We will use the following lemma extensively.

\begin{lem}\label{Q}
If $Q$ is a {\upshape(}standard{\upshape)} parabolic subgroup of $G$ containing $P$, any isotypic component of $R\vert_{Z_{M_{ Q}}}$ is a good subquotient of $R\vert_{{ P}^{\gKQ}}$ {\upshape(}hence of $R\vert_{{\widetilde P}^{\gKQ}}${\upshape)}.
\end{lem}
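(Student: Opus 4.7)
The plan is to mimic the proof of Lemma \ref{filtr}, but with a coarser filtration keyed to $Z_{M_Q}$-weights rather than $Z_{M_P}$-weights. The essential observation is that $P\subseteq Q$ forces $M_P\subseteq M_Q$ (both are generated over $T$ by the root subgroups for $S(P)\subseteq S(Q)$), so $Z_{M_Q}\subseteq Z_{M_P}$. Consequently, every isotypic component of $R|_{Z_{M_Q}}$ automatically decomposes as a direct sum of isotypic components of $R|_{Z_{M_P}}$, and is stabilized by $M_P^{\gKQ}$ since $Z_{M_Q}$ lies in the center of $M_P$.

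Fix an isotypic component $C_Q \defeq R_{\lambda_Q}^{Z_{M_Q}}$ for some $\lambda_Q \in X(Z_{M_Q})$, and define a preorder $\leq_Q$ on $X(Z_{M_Q})$ by declaring $\mu_1 \leq_Q \mu_2$ iff $\mu_2-\mu_1 \in \sum_{\alpha \in R^+\setminus R(Q)^+} \Z_{\geq 0}\, \alpha|_{Z_{M_Q}}$. Set
\[
V_{\ge} \defeq \bigoplus_{\mu \geq_Q \lambda_Q} R_\mu^{Z_{M_Q}}, \qquad V_{>} \defeq \bigoplus_{\mu >_Q \lambda_Q} R_\mu^{Z_{M_Q}}.
\]
Both should be $P^{\gKQ}$-stable. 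Stability under $M_P^{\gKQ}$ follows from the previous paragraph, and stability under the unipotent radical $N_P^{\gKQ}$ is checked factor-by-factor over $\gKQ$ as in the proof of Lemma \ref{filtr}: for $\alpha \in R^+\setminus R(P)^+$ and $n_\alpha \in N_\alpha$, the action on a $T$-weight vector of weight $\nu$ produces only components of $T$-weights $\nu+i\alpha$ with $i\geq 0$; restricting to $Z_{M_Q}$, these shifts vanish when $\alpha \in R(Q)^+\setminus R(P)^+$ and otherwise lie in the nonnegative cone defining $\leq_Q$, so the $Z_{M_Q}$-weight can only move up in $\leq_Q$.

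The quotient $V_\ge/V_>$ is then tautologically isomorphic to $C_Q$: as $Z_{M_P}$-modules, both sides equal the direct sum of those $Z_{M_P}$-isotypic components of $R$ whose characters restrict to $\lambda_Q$ on $Z_{M_Q}$. Since both $V_\ge$ and $V_>$ are direct sums of $Z_{M_Q}$-isotypic components, and hence by the first paragraph also direct sums of $Z_{M_P}$-isotypic components of $R$, they are good subrepresentations of $R|_{P^{\gKQ}}$; thus $C_Q$ is a good subquotient in the sense of Definition \ref{goodsubqt}. The only genuinely conceptual point — the remainder being routine bookkeeping — is that coarsening the partial order from $Z_{M_P}$- to $Z_{M_Q}$-weights is legitimate precisely because roots lying in $R(Q)^+\setminus R(P)^+$ restrict to zero on $Z_{M_Q}$; this is exactly what allows $Z_{M_Q}$-isotypic components, rather than only $Z_{M_P}$-isotypic ones, to appear as graded pieces of a $P^{\gKQ}$-stable filtration.
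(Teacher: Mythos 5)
Your proof is correct, but it takes a slightly different route from the paper's, which is much shorter. The paper's proof simply applies Lemma~\ref{filtr} with $Q$ in place of $P$ (and $\widetilde P = P$ there): this exhibits $C_Q$ as a good subquotient of $R|_{Q^{\gKQ}}$, realized by a pair of $Q^{\gKQ}$-stable subspaces. Since $P\subseteq Q$, those same subspaces are $P^{\gKQ}$-stable, so $C_Q$ is a subquotient of $R|_{P^{\gKQ}}$; and since $Z_{M_Q}\subseteq Z_{M_P}$, a $Z_{M_Q}$-isotypic component is automatically a direct sum of $Z_{M_P}$-isotypic components of $R$, so the subquotient is good. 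That is the whole proof. You instead re-run the filtration construction from the proof of Lemma~\ref{filtr} at the level of $Z_{M_Q}$-weights, checking $M_P^{\gKQ}$- and $N_P^{\gKQ}$-stability of $V_{\ge}$ and $V_{>}$ directly. This is the same underlying combinatorics, and your verification is careful (in particular, the observation that $\alpha\in R(Q)^+\setminus R(P)^+$ restricts to $0$ on $Z_{M_Q}$ is exactly the right extra case to handle when working with the bigger unipotent radical $N_P\supseteq N_Q$), but it reproves the content of Lemma~\ref{filtr} rather than invoking it. What the paper's approach buys is economy and the reuse of an already-established statement; what yours buys is an explicit description of the $P^{\gKQ}$-stable subspaces $V_\ge\supseteq V_>$ realizing $C_Q$ as a subquotient, which the paper leaves implicit (and which is not needed for the conclusion).
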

\begin{proof}
By Lemma \ref{filtr} (applied in the case $\widetilde P=P$ and with $P$ there being $Q$), such an isotypic component is a good subquotient of $R\vert_{{Q}^{\gKQ}}$, and thus is a subquotient of $R\vert_{{P}^{\gKQ}}$ since $ P\subseteq Q$. It is also obviously a direct sum of isotypic components of $R\vert_{Z_{M_{P}}}$ since $Z_{M_{Q}}\subseteq Z_{M_{P}}$. This proves the lemma.
\end{proof}

\begin{rem}
Let $\widetilde P$, $P$ and $R$ as in Definition \ref{goodsubqt} and define a good subquotient of $R\vert_{{\widetilde P}}$ (for the diagonal embedding $\widetilde P\hookrightarrow\widetilde{P}^{\gKQ}$ similar to (\ref{embed})) as a subquotient of $R\vert_{{\widetilde P}}$ such that its restriction to $Z_{M_{P}}$ is a sum of isotypic components of $R\vert_{Z_{M_{P}}}$. Then, using the same kind of argument as for the proof of Lemma \ref{filtr}, one can prove that a good subquotient of $R\vert_{{\widetilde P}}$ is also a good subquotient of $R\vert_{{\widetilde P}^{\gKQ}}$, so that good subquotients of $R\vert_{{\widetilde P}}$ and of $R\vert_{{\widetilde P}^{\gKQ}}$ are actually the same.
\end{rem}

\subsubsection{The parabolic group associated to an isotypic component}\label{assocparabol}

Fix $P\subseteq G$ a standard parabolic subgroup and $C_P$ an isotypic component of $\LLbar\vert_{Z_{M_{P}}}$, we associate to $C_P$ a subset of the set of simple roots $S$ (see (\ref{PC})), as well as the standard parabolic subgroup of $G$, denoted by $P(C_P)$, corresponding to this subset.

We will use the following two lemmas, the first being well-known.

\begin{lem}\label{known}
Let $\lambda\in X(T)\otimes_{\Z}\Q$ be dominant. Then the Weyl group of the root subsystem of $R$ generated by the simple roots $\alpha\in S$ such that $s_\alpha$ fixes $\lambda$ is the subgroup of $W$ of elements fixing $\lambda$.
\end{lem}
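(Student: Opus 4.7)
This is a standard fact about dominant weights (cf.\ \cite[\S1.12]{Humphreys-refl} or \cite[II.1.5]{Ja}); I will sketch the argument I would write. Let $W_\lambda\defeq \{w\in W : w(\lambda)=\lambda\}$ be the stabilizer of $\lambda$, let $S_\lambda\defeq \{\alpha\in S : s_\alpha(\lambda)=\lambda\}$, and let $W(S_\lambda)\subseteq W$ be the subgroup generated by $\{s_\alpha : \alpha\in S_\lambda\}$. By standard root system theory, $W(S_\lambda)$ equals the Weyl group of the root subsystem of $R$ generated by $S_\lambda$, so the claim is that $W_\lambda = W(S_\lambda)$.

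One inclusion is trivial: $W(S_\lambda)\subseteq W_\lambda$. For the reverse inclusion $W_\lambda\subseteq W(S_\lambda)$, I would proceed by induction on the length $\ell(w)$ of $w\in W_\lambda$ with respect to $S$. The case $\ell(w)=0$ is vacuous since then $w=1$. For $\ell(w)\geq 1$, pick $\alpha\in S$ with $w(\alpha)\in -R^+$ (such an $\alpha$ exists as soon as $w\neq 1$). The key point is then to show $\alpha\in S_\lambda$: writing $\beta\defeq -w(\alpha)\in R^+$, dominance of $\lambda$ gives $\langle \lambda,\beta^\vee\rangle\geq 0$, while the hypothesis $w^{-1}(\lambda)=\lambda$ yields
\[\langle \lambda,\beta^\vee\rangle = -\langle \lambda,w(\alpha)^\vee\rangle = -\langle w^{-1}(\lambda),\alpha^\vee\rangle = -\langle \lambda,\alpha^\vee\rangle\leq 0.\]
Hence $\langle \lambda,\alpha^\vee\rangle=0$, i.e.\ $s_\alpha(\lambda)=\lambda$, so $\alpha\in S_\lambda$. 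Then $s_\alpha w\in W_\lambda$ has length $\ell(w)-1$, so by the induction hypothesis $s_\alpha w\in W(S_\lambda)$, which gives $w=s_\alpha(s_\alpha w)\in W(S_\lambda)$.

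There is essentially no obstacle: the only slightly subtle point is the extension of the dominance inequality from simple to arbitrary positive coroots, which is immediate since any positive coroot is a non-negative integer combination of simple coroots. The statement and argument are insensitive to whether $\lambda$ lies in $X(T)$ or in $X(T)\otimes_\Z\Q$.
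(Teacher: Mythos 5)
The paper states Lemma~\ref{known} without proof, calling it well-known, so there is nothing internal to compare against; your argument is the standard length-induction one and its structure is right. There is, however, a small but real bookkeeping slip in the induction step: choosing $\alpha\in S$ with $w(\alpha)\in -R^+$ guarantees $\ell(w s_\alpha)=\ell(w)-1$, \emph{not} $\ell(s_\alpha w)=\ell(w)-1$ as you wrote (the latter criterion is $w^{-1}(\alpha)\in -R^+$). With your choice of $\alpha$ you should descend to $w s_\alpha$, which still lies in $W_\lambda$ once you know $\alpha\in S_\lambda$ since $(w s_\alpha)(\lambda)=w(\lambda)=\lambda$, and then conclude $w=(w s_\alpha) s_\alpha\in W(S_\lambda)$; alternatively, pick $\alpha$ with $w^{-1}(\alpha)\in -R^+$ (in which case the symmetric computation $0\le\langle\lambda,\alpha\rangle=\langle w(\lambda),\alpha\rangle=\langle\lambda,w^{-1}(\alpha)\rangle\le 0$ again gives $\alpha\in S_\lambda$) and then $s_\alpha w$ is the correct descent. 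Either fix is a one-line change; the key computation showing $\langle\lambda,\alpha\rangle=0$ and hence $\alpha\in S_\lambda$ is correct, and the rest of the proof is fine.
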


\begin{lem}\label{sumw}
Let $\alpha \in S$. Then $\sum_{w\in W(P)}w(\alpha)\geq 0$, and we have $\sum_{w\in W(P)}w(\alpha)= 0$ if and only if $\alpha\in S(P)$. Moreover, if $\alpha \in S\backslash S(P)$, then $\alpha$ is in the support of $\sum_{w\in W(P)}w(\alpha)$.
\end{lem}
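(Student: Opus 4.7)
The plan is to split into the two cases $\alpha\in S(P)$ and $\alpha\in S\setminus S(P)$, using only the fact that $W(P)$ is generated by the simple reflections $s_\beta$ for $\beta\in S(P)$.

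If $\alpha\in S(P)$, then $s_\alpha\in W(P)$ and right multiplication by $s_\alpha$ is a free involution on $W(P)$. Since $s_\alpha(\alpha)=-\alpha$, the pairs $\{w,ws_\alpha\}$ contribute $w(\alpha)+w(-\alpha)=0$, so $\sum_{w\in W(P)}w(\alpha)=0$. This handles one direction of the ``if and only if''.

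Now suppose $\alpha\in S\setminus S(P)$. The key observation is that every simple reflection $s_\beta$ with $\beta\in S(P)$ acts on $X(T)\otimes_\Z\Q$ by $\lambda\mapsto\lambda-\langle\lambda,\beta^\vee\rangle\beta$, hence preserves the decomposition $X(T)\otimes_\Z\Q=\mathrm{span}_\Q(S(P))\oplus\mathrm{span}_\Q(S\setminus S(P))$ pointwise on the second summand. Consequently, for every $w\in W(P)$ we can write
\[
w(\alpha)=\alpha+\beta_w,\qquad \beta_w\in\mathrm{span}_\Q(S(P)).
\]
In particular the coefficient of $\alpha$ in the simple root expansion of $w(\alpha)$ is $1$, so $w(\alpha)$ is a positive root. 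Expanding $w(\alpha)$ in the simple root basis, all coefficients are therefore non-negative; those on $S\setminus (S(P)\cup\{\alpha\})$ must vanish (since $\beta_w$ lies in the $S(P)$-span and the $\alpha$-coefficient is accounted for), so $\beta_w$ is a non-negative rational combination of elements of $S(P)$.

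Summing over $w\in W(P)$ then gives
\[
\sum_{w\in W(P)}w(\alpha)=|W(P)|\,\alpha+\sum_{w\in W(P)}\beta_w,
\]
which is a non-negative combination of simple roots in which $\alpha$ appears with coefficient $|W(P)|>0$. This proves both that the sum is $\ge 0$, that it is nonzero (so the converse of the ``if'' in the equivalence holds), and that $\alpha$ lies in its support. There is no real obstacle; the only subtle point is the observation that $W(P)$-translates preserve the $(S\setminus S(P))$-part, which makes the positivity of $w(\alpha)$ transparent.
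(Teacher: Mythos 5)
Your proof is correct, and it reaches the same intermediate conclusion as the paper -- namely that for $\alpha\in S\setminus S(P)$ and $w\in W(P)$, the difference $w(\alpha)-\alpha$ is a \emph{non-negative} rational combination of roots in $S(P)$ -- but by a different route. The paper observes that $-\alpha$ is dominant for $M_P$ and invokes the standard fact that $w(\mu)\le\mu$ for $\mu$ dominant and $w$ in the Weyl group; you instead note that $w(\alpha)-\alpha\in\mathrm{span}_\Q(S(P))$ (so the $\alpha$-coefficient of $w(\alpha)$ stays equal to $1$), deduce that $w(\alpha)$ is a positive root, and then use the sign-consistency of the simple-root coefficients of a root. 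Both are standard and equally valid; yours is marginally more elementary in that it avoids the dominance-order machinery, at the cost of an extra line identifying $w(\alpha)$ as a positive root. One small slip in the write-up: the displayed ``decomposition'' $X(T)\otimes_\Z\Q=\mathrm{span}_\Q(S(P))\oplus\mathrm{span}_\Q(S\setminus S(P))$ is not literally true (for $\GL_n$ the right-hand side is only the span of all simple roots, which has codimension one), but the argument only uses the correct and weaker fact that $s_\beta(\lambda)-\lambda\in\mathrm{span}_\Q(S(P))$ for $\beta\in S(P)$, so the conclusion $w(\alpha)=\alpha+\beta_w$ with $\beta_w\in\mathrm{span}_\Q(S(P))$ is unaffected.
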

\begin{proof}
If $\alpha\in S(P)$, it is clear that $\sum_{w\in W(P)}w(\alpha)=0$ since, for each $w\in W(P)$, we also have $ws_\alpha\in W(P)$. If $\alpha\in S\backslash S(P)$, then $-\alpha$ is dominant for $M_P$, that is, $-\langle \alpha,\beta\rangle \geq 0$ for $\beta\in S(P)$. This implies that $w(-\alpha)\leq -\alpha$ for $w\in W(P)$. Summing over $W(P)$ gives $-\sum_{w\in W(P)}w(\alpha)\leq -\vert W(P)\vert \alpha$ or equivalently $\vert W(P)\vert \alpha\leq \sum_{w\in W(P)}w(\alpha)$. This proves the lemma.
\end{proof}

If $w\in W$ satisfies $w(S(P))\subseteq S$, we denote by ${}^w P$ the standard parabolic subgroup of $G$ whose associated set of simple roots is $w(S(P))$. It has Levi subgroup $M_{{}^w P}={w}M_{P} {w}^{-1}$ (so ${}^w P=(wM_Pw^{-1})N$) and Weyl group $W({}^w P)={w}W(P){w}^{-1}$ (caution: ${}^w P$ is not $wPw^{-1}$ if $w\ne 1$!). If $\lambda\in X(T)$, we define
\begin{equation}\label{l'}
\lambda'\defeq \frac{1}{\vert W(P)\vert}\sum_{w'\in W(P)}w'(\lambda)\ \in \ (X(T)\otimes_{\Z}\Q)^{W( P)}.
\end{equation}

\begin{rem}\label{lambda'}
(i) Note that $\lambda'$ only depends on $\lambda\vert_{Z_{M_P}}$ since two distinct $\lambda$ with the same restriction to $Z_{M_P}$ differ by an element in $\sum_{\alpha\in S(P)}\Z\alpha$ and since $\sum_{w'\in W(P)}w(\alpha)=0$ for $\alpha\in S(P)$ by Lemma \ref{sumw}.\\
(ii) It easily follows from the definitions and Lemma \ref{sumw} that if $w\in W$ satisfies $w(S(P))\subseteq S$ and $\lambda\in X(T)$ is any weight, then $w(\lambda')=(w(\lambda))'$, where $(w(\lambda))'$ is given by the same formula as in (\ref{l'}) applied to the parabolic ${}^wP$ and the character $w(\lambda)$.
\end{rem}

\begin{lem}\label{parabolic}
Let $P$ be a standard parabolic subgroup of $G$.
\begin{enumerate}
\item Let $\lambda\in X( T)$, there exists $w\in W$ such that $w(S(P))\subseteq S$ and $w(\lambda)\vert_{Z_{M_{{}^w P}}}$ coincides with the restriction to $Z_{M_{{}^w P}}$ of a dominant weight of $X( T)\otimes_{\Z}\Q$.
\item Let $\lambda \in X(T)$ such that $\lambda\vert_{Z_{M_{P}}}$ occurs in $\LLbar\vert_{Z_{M_{P}}}$
  and let $w$ as in (i). Then we have $f\theta_{G}-w(\lambda)=\sum_{\alpha\in S}n_{\alpha}\alpha$ for
  some $n_{\alpha}\in \Z_{\geq 0}$ {\upshape(}see {\upshape(\ref{thetag})} for $\theta_G${\upshape)} and
  the subset
  \begin{eqnarray}\label{PC}
    w(S( P))\cup \{\alpha\in S : n_{\alpha}\ne 0\}\subseteq S
    \end{eqnarray}
    only depends on $\lambda\vert_{Z_{M_{P}}}$.
  \end{enumerate}
\end{lem}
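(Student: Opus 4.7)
The basic idea is to choose any $w_0 \in W$ with $w_0(\lambda')$ dominant (which exists since the dominant Weyl chamber is a fundamental domain for $W$), and then adjust $w_0$ within the stabilizer of $\mu \defeq w_0(\lambda')$ to arrange the simple-root condition. By Lemma \ref{known} applied to the dominant element $\mu \in X(T) \otimes_\Z \Q$, one has $\Stab_W(\mu) = W_{S_\mu}$ where $S_\mu \defeq \{\alpha \in S : \langle \mu, \alpha^\vee \rangle = 0\}$. Since $W(P)$ fixes $\lambda'$ by construction, the conjugate $w_0 W(P) w_0^{-1}$ fixes $\mu$, hence is a Levi-type subgroup contained in $W_{S_\mu}$. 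In type $A_{n-1}$ a Levi-type subgroup of $W_{S_\mu}$ is a Young subgroup, and by permuting indices within each block of $S_\mu$ (that is, by some $u \in W_{S_\mu}$) it can be made a standard Young subgroup of $W_{S_\mu}$; thus $uw_0(S(P)) \subseteq S_\mu \subseteq S$. Setting $w \defeq u w_0$, we have $w(\lambda') = u(\mu) = \mu$ (dominant) and $w(S(P)) \subseteq S$. Since $\lambda'$ and $\lambda$ agree on $Z_{M_P}$ (their difference lies in $\sum_{\alpha \in S(P)}\Q\alpha$, which vanishes on $Z_{M_P}$), we conclude $w(\lambda)\vert_{Z_{M_{{}^w\!P}}} = \mu\vert_{Z_{M_{{}^w\!P}}}$, proving (i).

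\textbf{Plan for (ii), non-negativity.} Shifting $\lambda$ by an element of $\sum_{\alpha \in S(P)}\Z\alpha$ leaves $\lambda\vert_{Z_{M_P}}$, $\lambda'$ and the valid choices of $w$ unchanged, so we may assume (using that $\lambda\vert_{Z_{M_P}}$ occurs in $\LLbar\vert_{Z_{M_P}}$) that $\lambda$ is itself a $T$-weight of $\LLbar$. Then $w(\lambda)$ is also a $T$-weight of $\LLbar$, hence $w(\lambda) \leq f\theta_G$ since $f\theta_G$ is the highest weight of $\LLbar\vert_T$ (cf.\ Remark \ref{explicit}(ii)), giving $n_\alpha \in \Z_{\geq 0}$. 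Now observe that $\mu \defeq w(\lambda')$ is the unique dominant $W$-translate of $\lambda'$, and that $\lambda'$ (and hence $\mu$) depends only on $\lambda\vert_{Z_{M_P}}$, since $\alpha' = 0$ for $\alpha \in S(P)$ by Lemma \ref{sumw}. Write $f\theta_G - \mu = \sum_\alpha n'_\alpha \alpha$; non-negativity $n'_\alpha \in \Q_{\geq 0}$ follows from the fact that $\mu$ lies in the convex hull of the $W$-orbit of $f\theta_G$ (being a convex combination of $W$-translates of a weight of $\LLbar$, all of which lie in that hull). Since $w(\lambda) - \mu = w(\lambda - \lambda') \in \sum_{\beta \in w(S(P))}\Q \beta$, we have $n_\alpha = n'_\alpha$ for $\alpha \notin w(S(P))$.

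\textbf{Key step: $w(S(P)) \subseteq \{\alpha : n'_\alpha \neq 0\}$.} For $\alpha \in w(S(P))$, writing $\alpha = w(\beta)$ with $\beta \in S(P)$ we have $s_\alpha = w s_\beta w^{-1} \in w W(P) w^{-1} \subseteq W_{S_\mu}$; since $\alpha \in S$ and the simple reflections of the standard parabolic $W_{S_\mu}$ are exactly those indexed by $S_\mu$, this forces $\alpha \in S_\mu$, i.e.\ $\langle \mu, \alpha^\vee\rangle = 0$. Pairing $f\theta_G - \mu = \sum_\beta n'_\beta \beta$ with $\alpha^\vee$ and using $\langle\theta_G, \alpha^\vee\rangle = 1$ for every $\alpha \in S$ yields
\[f = \langle f\theta_G - \mu, \alpha^\vee\rangle = 2n'_\alpha + \sum_{\beta \in S \setminus \{\alpha\}} n'_\beta \langle \beta, \alpha^\vee\rangle \leq 2n'_\alpha,\]
since $\langle \beta, \alpha^\vee\rangle \leq 0$ for $\beta \neq \alpha$ in $S$, whence $n'_\alpha > 0$. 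Combining with the previous paragraph,
\[w(S(P)) \cup \{\alpha \in S : n_\alpha \neq 0\} \;=\; \{\alpha \in S : n'_\alpha \neq 0\},\]
a subset determined by $\mu$, hence only by $\lambda\vert_{Z_{M_P}}$.

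\textbf{Main obstacle.} The delicate point is the Coxeter-combinatorial step in (i), namely that the parabolic subgroup $w_0 W(P) w_0^{-1}$ of $W$ contained in the standard parabolic $W_{S_\mu}$ can be $W_{S_\mu}$-conjugated to a standard parabolic of $W_{S_\mu}$; in type $A_{n-1}$ this reduces to an explicit reordering of Young subgroup blocks, but a uniform treatment (e.g.\ for general split reductive $G$ with connected center as the paper allows elsewhere) requires invoking the general reflection-subgroup structure of finite Coxeter groups. The arguments in (ii) are essentially formal once (i) and the reduction of $\lambda$ to a weight of $\LLbar\vert_T$ are in place.
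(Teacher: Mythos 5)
Your approach is essentially the same as the paper's, particularly for (ii): after reducing to $\lambda$ a genuine weight of $\LLbar$, using that $\mu$ depends only on $\lambda|_{Z_{M_P}}$, pairing $f\theta_G-\mu$ against $\alpha^\vee$ for $\alpha\in w(S(P))$ and exploiting $\langle\mu,\alpha^\vee\rangle=0$ together with $\langle\theta_G,\alpha^\vee\rangle=1$ is exactly how the paper argues that the support contains $w(S(P))$. For (i) the paper likewise picks $w'$ with $w'(\lambda')$ dominant and then conjugates inside $W_{S_\mu}=\Stab_W(\mu)$; the only difference is the mechanism: the paper exhibits a second simple system for the root subsystem $R_\mu$ (namely the intersection of the base $w'(S)$ with $R_\mu$) containing $w'(S(P))$, and invokes simple transitivity of $W_\mu$ on such systems, whereas you conjugate the parabolic $w_0W(P)w_0^{-1}$ to a standard Young subgroup.

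There is however a real gap in your ``thus'' step of (i). Knowing that $u\in W_{S_\mu}$ makes $uw_0W(P)(uw_0)^{-1}$ a standard parabolic $W_T$ with $T\subseteq S_\mu$ does \emph{not} by itself imply $uw_0(S(P))\subseteq S_\mu$: the set $uw_0(S(P))$ is only guaranteed to be \emph{some} base of the root subsystem $R_T$, not the standard one $T$ (for instance it could consist of negative roots even though the generated reflection subgroup is unchanged). You need one further conjugation by an element of $W_T$ — which exists and still fixes $\mu$ — to carry this base onto $T$, and only then do you get $w(S(P))\subseteq S$. The paper sidesteps this by directly working with two bases of $R_\mu$ itself (so a single conjugation suffices), though its phrasing of what the ``second basis'' is could be made more precise. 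Your argument is correct in spirit and fixable, but as written the crucial implication is asserted, not proved. A minor additional point: the lemma is phrased via $w(\lambda)|_{Z_{M_{{}^wP}}}$ rather than dominance of $w(\lambda')$; you only show the easy implication, while the paper establishes (and later reuses via Proposition \ref{parabolicprop}) the full equivalence.
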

\begin{proof}
(i) We first claim that it is equivalent to find $w$ such that $w(S( P))\subseteq S$ and $w(\lambda')$ is do\-minant with $\lambda'$ as in (\ref{l'}). Assume we have such a $w$, since $w'(\lambda)\vert_{Z_{M_{P}}}=\lambda\vert_{Z_{M_{ P}}}$ for all $w'\in W( P)$, we have $\lambda'\vert_{Z_{M_{P}}}=\lambda\vert_{Z_{M_{ P}}}$ and thus $w(\lambda)\vert_{Z_{M_{{}^w P}}}=w(\lambda')\vert_{Z_{M_{{}^w P}}}$. Conversely, assume that there is $w$ such that $w(S( P))\subseteq S$ and $w(\lambda)\vert_{Z_{M_{{}^w P}}}=\mu\vert_{Z_{M_{{}^w P}}}$\ \ for\ \ some\ \ dominant\ \ $\mu$\ \ in\ \ $X( T)\otimes_{\Z}\Q$,\ \ and\ \ set $\mu'\defeq \frac{1}{\vert W( P)\vert}\sum_{w'\in W({}^w P)}w'(\mu)\in (X( T)\otimes_{\Z}\Q)^{W({}^w P)}$. Then we have $\mu'=w(\lambda')$ by Remark \ref{lambda'}(ii) and $\mu\geq \mu'$ (as $\mu\geq w'(\mu)$ for any $w'\in W$ since $\mu$ is dominant). Thus $\mu-w(\lambda')=\mu-\mu'=\sum_{\alpha\in S({}^w P)}n_{\alpha}\alpha$ for some $n_{\alpha}\in \Q_{\geq 0}$ (recall $\mu\vert_{Z_{M_{{}^wP}}}=\mu'\vert_{Z_{M_{{}^wP}}}$). This implies that
\[\langle w(\lambda'),\beta\rangle = \langle \mu,\beta\rangle - \sum_{\alpha\in S({}^w P)}n_{\alpha} \langle\alpha,\beta\rangle \geq 0\]
for any $\beta\in S\backslash S({}^w P)$ (as $\mu$ is dominant and $\langle\alpha,\beta\rangle\leq 0$ if $\alpha\ne \beta\in S$). Since $\langle w(\lambda'),\beta\rangle=\langle \mu',\beta\rangle=0$ for $\beta\in S({}^w P)$ (use again Lemma \ref{sumw}), we see that $w(\lambda')$ is dominant.\\
Now let us find such a $w$. First, pick $w'\in W$ such that $w'(\lambda')$ is dominant, by Lemma \ref{known} applied to $w'(\lambda')$ the set of elements $\beta$ in $S$ such that $s_{\beta}$ fixes $w'(\lambda')$ generate a root subsystem of $R$ with corresponding Weyl group the subgroup of $W$ of elements that fix $w'(\lambda')$. This root subsystem has two natural bases of simple roots: namely the elements $\beta$ above and the elements $w'(\gamma)\in w'(S)$ such that $s_{\gamma}$ fixes $\lambda'$ (they are usually distinct as $W$ doesn't preserve $S$). This second basis obviously contains $w'(S( P))$. Therefore, there is $w''$ in the Weyl group of this root subsystem, i.e.\ $w''\in W$ fixing $w'(\lambda')$, that maps the second basis to the first. In particular we have $w''w'(S( P))\subseteq S$ and $w''w'(\lambda')=w'(\lambda')$ dominant, thus we can take $w\defeq w''w'$.

\noindent
(ii) The positivity of the $n_{\alpha}$ follows from the fact $f\theta_{ G}$ is the highest weight of $\LLbar\vert_T$ (for the diagonal embedding of $T$ as in (\ref{embed})). Let $w_1,w_2$ as in (i) and $\lambda'$ as in (\ref{l'}). Then $w_1(\lambda')=w_2(\lambda')$ as these two weights are dominant (by the first part of the proof of (i)) and in a single $W$-orbit. Since $\lambda'$ only depends on $\lambda\vert_{Z_{M_P}}$ by Remark \ref{lambda'}(i), it is therefore enough to prove that the support of $f\theta_{G}-w(\lambda')$ is exactly the set of simple roots (\ref{PC}) for one (any) $w$ as in (i). Writing $f\theta_{ G}-w(\lambda')=(f\theta_{ G}-w(\lambda)) + (w(\lambda)-w(\lambda'))$ and recalling that $w(\lambda)-w(\lambda')$ is a sum of roots in $w(S( P))\subseteq S$ (as $w(\lambda),w(\lambda')$ have same restriction to $Z_{M_{{}^{w} P}}$ from the proof of (i)), we see that this support is contained in (\ref{PC}) and that it contains $\{\alpha\in S\backslash w(S(P)) : n_\alpha\ne 0\}$. It is thus enough to prove that this support also contains $w(S( P))$. Since $f\theta_{ G}\geq w(\lambda')$ (use $f\theta_{G}\geq ww'(\lambda)$ for any $w'\in W$ and sum over $w'\in W(P)$) and $\langle\beta,\alpha\rangle\leq 0$ if $\alpha\ne \beta\in S$, it is enough to check that $\langle f\theta_{G}-w(\lambda'),\alpha\rangle>0$ (in $\Q$) for any $\alpha\in w(S( P))$. But this follows from Lemma \ref{sumw} and $\langle f\theta_{ G}-w(\lambda'),\alpha\rangle=f\langle \theta_{ G},\alpha\rangle-\langle w(\lambda'),\alpha\rangle=f-0=f$.
\end{proof}

\begin{rem}
Note that it is not true in general that, for $\lambda$ as in Lemma \ref{parabolic}(ii), one can find $w\in W$ such that $w(S(P))\subseteq S$ and $w(\lambda)\vert_{Z_{M_{{}^w P}}}$ is the restriction to $Z_{M_{{}^w P}}$ of a dominant weight of $X(T)$ (one really needs $X(T)\otimes_{\Z}\Q$).
\end{rem}

The proof of Lemma \ref{parabolic} also gives the following equivalent proposition that we will use repeatedly in the sequel.

\begin{prop}\label{parabolicprop}
Let $P$ be a standard parabolic subgroup of $G$.
\begin{enumerate}
\item Let $\lambda\in X( T)$ and $\lambda'$ as in {\upshape(\ref{l'})}, there exists $w\in W$ such that $w(S(P))\subseteq S$ and $w(\lambda')$ is a dominant weight of $X( T)\otimes_{\Z}\Q$.
\item Let $\lambda \in X(T)$ such that $\lambda\vert_{Z_{M_{P}}}$ occurs in $\LLbar\vert_{Z_{M_{P}}}$
  and let $w$ as in (i). Then we have $f\theta_{G}-w(\lambda')=\sum_{\alpha\in S}n_{\alpha}\alpha$ for
  some $n_{\alpha}\in \Q_{\geq 0}$ and the support of $f\theta_G-w(\lambda')$ is $S(P(C_P))$.
\end{enumerate}
\end{prop}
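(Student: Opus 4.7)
The plan is to reduce Proposition \ref{parabolicprop} to Lemma \ref{parabolic}, since the two statements are essentially equivalent. Indeed, the equivalence between ``$w(S(P))\subseteq S$ and $w(\lambda')$ dominant'' and ``$w(S(P))\subseteq S$ and $w(\lambda)\vert_{Z_{M_{{}^w P}}}$ coincides with the restriction of some dominant element of $X(T)\otimes_{\Z}\Q$'' was already established at the start of the proof of Lemma \ref{parabolic}(i), using only that $\lambda'\vert_{Z_{M_P}}=\lambda\vert_{Z_{M_P}}$ (Remark \ref{lambda'}(i)). So (i) of the proposition is a restatement of (i) of the lemma.

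Concretely, I would prove (i) by first choosing $w_0\in W$ with $w_0(\lambda')$ dominant (always possible since each $W$-orbit in $X(T)\otimes_{\Z}\Q$ has a dominant representative). Applying Lemma \ref{known} to $w_0(\lambda')$ identifies the stabilizer of $w_0(\lambda')$ in $W$ as the Weyl group of a root subsystem $R_0\subset R$, and $R_0$ carries two natural bases of simple roots: the set $\{\beta\in S:s_\beta\text{ fixes }w_0(\lambda')\}$, and the set $\{w_0(\gamma):\gamma\in S,\ s_\gamma\text{ fixes }\lambda'\}$. Since $\lambda'\in (X(T)\otimes_{\Z}\Q)^{W(P)}$ by definition, the second basis contains $w_0(S(P))$. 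Then some $w''$ in the Weyl group of $R_0$ (which fixes $w_0(\lambda')$) carries the second basis to the first, and $w\defeq w''w_0$ satisfies $w(S(P))\subseteq S$ with $w(\lambda')=w_0(\lambda')$ dominant, as required.

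For (ii), I would argue by averaging: $w(\lambda')=\frac{1}{\vert W(P)\vert}\sum_{w'\in W(P)}ww'(\lambda)$, and each summand $ww'(\lambda)$ is a weight of $\LLbar\vert_T$ (the weight set is $W$-stable), so $f\theta_G-ww'(\lambda)\in\Z_{\geq 0}[S]$ since $f\theta_G$ is the highest weight of $\LLbar\vert_T$ (Remark \ref{explicit}(ii)). Averaging yields $f\theta_G-w(\lambda')=\sum_{\alpha\in S}n_\alpha\alpha$ with $n_\alpha\in\Q_{\geq 0}$. The equality $\{\alpha:n_\alpha\ne 0\}=S(P(C_P))$ is then the very definition of $P(C_P)$ from the introduction, once one checks that this support depends only on $C_P$; that reduces to noting that two $w_1,w_2$ as in (i) give $w_1(\lambda')=w_2(\lambda')$ (two dominant weights in a single $W$-orbit agree), and that $\lambda'$ itself depends only on $\lambda\vert_{Z_{M_P}}=C_P$ (Remark \ref{lambda'}(i)).

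The main obstacle is concentrated in the two-bases argument in (i), which relies on Lemma \ref{known} and on tracking how $W(P)$-invariance of $\lambda'$ forces $w_0(S(P))$ into the second basis. Part (ii) is then a routine averaging argument plus tautologies from the definitions.
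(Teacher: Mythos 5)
Your part (i) is fine and is essentially the same two-bases argument the paper uses inside the proof of Lemma~\ref{parabolic}(i) (the paper deliberately does not re-prove the Proposition but notes it follows from the lemma's proof). Your averaging argument for the inequality $f\theta_G-w(\lambda')\geq 0$ in (ii) is also correct, provided you say explicitly that you may and do replace $\lambda$ by a weight of $\LLbar\vert_T$ with the same restriction to $Z_{M_P}$ (Remark~\ref{lambda'}(i) lets you do this); otherwise $ww'(\lambda)$ need not be a weight of $\LLbar\vert_T$. Incidentally, this averaging step is itself exactly what the paper does, so here too you are on the paper's route, not a different one.

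The gap is in the second half of (ii). You claim that $\{\alpha : n_\alpha\neq 0\}=S(P(C_P))$ ``is the very definition of $P(C_P)$ from the introduction.'' That is circular: the Introduction's ``definition'' explicitly defers to Proposition~\ref{parabolicprop} for its justification, and the actual definition in \S 2.2.2 (just above the Proposition) declares $S(P(C_P))$ to be the set \eqref{PC}, namely $w(S(P))\cup\{\alpha\in S : f\theta_G-w(\lambda)=\sum n_\alpha\alpha,\ n_\alpha\neq 0\}$, which a priori has nothing to do with the support of $f\theta_G-w(\lambda')$. Proposition~\ref{parabolicprop}(ii) is precisely the statement that these two sets coincide, and that is not a tautology. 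The paper's argument, embedded in the proof of Lemma~\ref{parabolic}(ii), is: (a) write $f\theta_G-w(\lambda')=(f\theta_G-w(\lambda))+(w(\lambda)-w(\lambda'))$, and observe that $w(\lambda)-w(\lambda')\in\sum_{\alpha\in w(S(P))}\Q\alpha$ because $w(\lambda)$ and $w(\lambda')$ have the same restriction to $Z_{M_{{}^wP}}$; this shows the support of $f\theta_G-w(\lambda')$ is contained in \eqref{PC} and contains $\{\alpha\in S\setminus w(S(P)):n_\alpha\neq 0\}$; (b) to see the support also contains all of $w(S(P))$, use the positivity $f\theta_G-w(\lambda')\geq 0$ (which you did establish) together with the computation $\langle f\theta_G-w(\lambda'),\alpha\rangle=f\langle\theta_G,\alpha\rangle-\langle w(\lambda'),\alpha\rangle=f-0=f>0$ for $\alpha\in w(S(P))$ (using that $w(\lambda')$ is $W({}^wP)$-invariant), so that $\alpha$ could only be absent from the support if $\sum_{\beta\neq\alpha}n_\beta\langle\beta,\alpha\rangle>0$, contradicting $\langle\beta,\alpha\rangle\leq 0$ for distinct simple roots. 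You will need to supply these two steps; as written, your part (ii) proves the positivity but simply asserts the support identification.
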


Let $C_P$ be an isotypic component of $\LLbar\vert_{Z_{M_{P}}}$ associated to some $\lambda_P\in X(Z_{M_P})=\Hom_{\rm Gr}(Z_{M_P},{\mathbb G}_{\rm m})$. We denote by $P(C_P)$ the unique standard parabolic subgroup of $G$ whose associated set of simple roots $S(P(C_P))$ is (\ref{PC}) for one (equivalently any) weight $\lambda\in X(T)$ such that $\lambda\vert_{Z_{M_P}}=\lambda_P$. We also define
\begin{equation}\label{wCP}
W(C_P)\defeq \{w\in W{\rm \ as\ in\ Proposition\ }\ref{parabolicprop}{\rm (i)\ for\ }\lambda\in X(T) : \lambda\vert_{Z_{M_P}}=\lambda_P\}
\end{equation}
($W(C_P)$ doesn't depend on the choice of such $\lambda$ by the claim in the proof of Lemma \ref{parabolic}(i) and by Remark \ref{lambda'}(i)). We see from (\ref{PC}) that for all $w\in W(C_P)$ we have the inclusion
\begin{equation}\label{wPinP(CP)}
{}^w P\subseteq P(C_P).
\end{equation}
{\it Note that the set $W(C_P)$ is not in general a group, in particular it is distinct in general from the Weyl group $W( P(C_P))$} (see Lemma \ref{inclusion} below for the relation between the two).

\begin{rem}
The inclusion ${}^w P\subseteq P(C_P)$ for some $w\in W$ (such that $w(S(P))\subseteq S$) {\it doesn't} imply $w\in W(C_P)$ (take $P=B$). Also $P(C_P)$ doesn't necessarily contain $P$, see e.g.\ the end of Example \ref{exemples}(ii) below. The subgroup generated by all ${}^w P$ for $w\in W(C_P)$ may also be {\it strictly} contained in $P(C_P)$ (see e.g.\ Example \ref{exemples}(i) below).
\end{rem}

The parabolic subgroups $P(C_P)$ respect inclusions.

\begin{lem}\label{inclusionP}
Let $P'\subseteq P$ be two standard parabolic subgroups of $G$, $C_P$ an isotypic component of $\LLbar\vert_{Z_{M_{P}}}$ and $C_{P'}$ an isotypic component of $\LLbar\vert_{Z_{M_{P'}}}$ such that $C_{P'}\subseteq C_P\vert_{Z_{M_{P'}}}$. Then $P(C_{P'})\subseteq P(C_P)$.
\end{lem}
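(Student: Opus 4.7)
The plan is to reduce $P(C_{P'})\subseteq P(C_P)$ to a containment of supports via Proposition \ref{parabolicprop}. First I would note that $P'\subseteq P$ gives $S(P')\subseteq S(P)$, whence $M_{P'}\subseteq M_P$, $Z_{M_P}\subseteq Z_{M_{P'}}$, and $W(P')$ is a subgroup of $W(P)$. The inclusion $C_{P'}\subseteq C_P\vert_{Z_{M_{P'}}}$ lets me choose a single weight $\lambda\in X(T)$ occurring in $\LLbar\vert_T$ whose restrictions to $Z_{M_{P'}}$ and $Z_{M_P}$ are the characters $\lambda_{P'}$ and $\lambda_P=\lambda_{P'}\vert_{Z_{M_P}}$ defining $C_{P'}$ and $C_P$. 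Let $\lambda'$ and $\lambda''$ be defined by (\ref{l'}) applied to $\lambda$ relative to $P$ and to $P'$ respectively, and let $\mu_P,\mu_{P'}\in X(T)\otimes_\Z\Q$ be the unique dominant elements in the $W$-orbits of $\lambda',\lambda''$. By Proposition \ref{parabolicprop}, $S(P(C_P))=\mathrm{supp}(f\theta_G-\mu_P)$ and $S(P(C_{P'}))=\mathrm{supp}(f\theta_G-\mu_{P'})$.

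The crux is a convex combination argument. Decomposing $W(P)=\bigsqcup_i\sigma_iW(P')$ into left cosets yields
\[
\lambda'=\frac{1}{[W(P):W(P')]}\sum_i\sigma_i(\lambda''),
\]
so $\lambda'$ is a convex combination of $W$-translates of $\lambda''$, and hence of $W$-translates of $\mu_{P'}$ after writing $\lambda''=w_0^{-1}(\mu_{P'})$ for some $w_0\in W$. Fixing $w\in W$ with $w(\lambda')=\mu_P$ and applying $w$, one obtains $\mu_P=\sum_i a_i\,u_i(\mu_{P'})$ for suitable $u_i\in W$ and $a_i>0$ with $\sum_i a_i=1$. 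Since $\mu_{P'}$ is dominant, $\mu_{P'}-u_i(\mu_{P'})\in\sum_{\alpha\in S}\Q_{\geq 0}\alpha$ for each $i$, and therefore
\[
\mu_{P'}-\mu_P=\sum_i a_i\bigl(\mu_{P'}-u_i(\mu_{P'})\bigr)\ \in\ \sum_{\alpha\in S}\Q_{\geq 0}\alpha.
\]

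To finish, I would use that every $W$-translate of $\lambda$ agrees with $f\theta_G$ on the center $Z_G$ (since $Z_G$ acts on $\LLbar$ by $f\theta_G\vert_{Z_G}$), so $f\theta_G-\mu_P$ and $f\theta_G-\mu_{P'}$ both lie in $\sum_{\alpha\in S}\Q\alpha$. Expanding $f\theta_G-\mu_P=\sum_\alpha n_\alpha\alpha$ and $f\theta_G-\mu_{P'}=\sum_\alpha n'_\alpha\alpha$ with $n_\alpha,n'_\alpha\in\Q_{\geq 0}$ (by Proposition \ref{parabolicprop}), the previous step gives $\sum_\alpha(n_\alpha-n'_\alpha)\alpha=\mu_{P'}-\mu_P\in\sum_{\alpha\in S}\Q_{\geq 0}\alpha$, hence $n_\alpha\geq n'_\alpha$ for every $\alpha\in S$ by uniqueness of the simple root expansion. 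Thus $n'_\alpha>0$ implies $n_\alpha>0$, giving $S(P(C_{P'}))\subseteq S(P(C_P))$ and hence $P(C_{P'})\subseteq P(C_P)$.

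The only moderately delicate step is the convex combination manipulation in the middle paragraph, specifically checking that the coset decomposition of $W(P)$ produces the required convex combination and that $w$ can be applied to the right-hand side to give a convex combination of $W$-translates of the dominant weight $\mu_{P'}$; the remainder of the argument is a formal unwinding of the definitions together with the standard fact that $\mu-u(\mu)\in\sum_\alpha\Q_{\geq 0}\alpha$ for any dominant $\mu$ and any $u\in W$.
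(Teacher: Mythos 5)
Your proof is correct and follows essentially the same strategy as the paper's: fix a single $\lambda\in X(T)$ inducing both $C_{P'}$ and $C_P$, use the coset decomposition $W(P)=\bigsqcup_i\sigma_i W(P')$ to express the $P$-average $\lambda'$ as a convex combination of $W$-translates of the $P'$-average $\lambda''$, invoke the domination property of dominant weights to deduce $\mu_{P'}\geq\mu_P$, and then compare supports of $f\theta_G-\mu_{P'}$ and $f\theta_G-\mu_P$ via Proposition \ref{parabolicprop}(ii). The only cosmetic difference is that the paper conjugates the coset decomposition by $w_P$ before averaging whereas you apply $w_P$ afterward, but the mathematical content and all key ingredients are identical.
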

\begin{proof}
Let $\lambda \in X(T)$ such that $C_{P'}$ is the isotypic component of $\lambda\vert_{Z_{M_{P'}}}$. Then by assumption $C_P$ is the isotypic component of $\lambda\vert_{Z_{M_{P}}}$. Define $\lambda'_P\in (X(T)\otimes_{\Z}\Q)^{W(P)}$, $\lambda'_{P'}\in (X(T)\otimes_{\Z}\Q)^{W(P')}$ as in (\ref{l'}) for respectively $P$ and $P'$ and let $(w_P,w_{P'})\in W\times W$ such that $w_P(S(P))\subseteq S$ and $w_P(\lambda'_P)$ dominant, $w_{P'}(S(P'))\subseteq S$ and $w_{P'}(\lambda'_{P'})$ dominant ($w_P,w_{P'}$ exist by Proposition \ref{parabolicprop}(i)). Then we have
\begin{equation*}
w_P(\lambda'_P)=\frac{1}{\vert W(P)\vert}\sum_{w'\in W({}^{w_P}P)}\!\!w'w_P(\lambda),\ \ w_P(\lambda'_{P'})=\frac{1}{\vert W(P')\vert}\sum_{w'\in W({}^{w_P}P')}\!\!w'w_P(\lambda)
\end{equation*}
and also
\begin{equation}\label{subsum}
w_P(\lambda'_P)=\frac{\vert W(P')\vert}{\vert W(P)\vert}\sum_{\sigma \in W({}^{w_P}P)/W({}^{w_P}P')}\sigma w_P(\lambda'_{P'}).
\end{equation}
Since $w_{P'}(\lambda'_{P'})$ is dominant, we have $w_{P'}(\lambda'_{P'})\geq w w_{P'}(\lambda'_{P'})$ for any $w\in W$ and in particular $w_{P'}(\lambda'_{P'})\geq\sigma w_P(\lambda'_{P'})=(\sigma w_Pw_{P'}^{-1})w_{P'}(\lambda'_{P'})$. Summing up these inequalities over $\sigma\in W({}^{w_P}P)/W({}^{w_P}P')$ and multiplying by $\frac{\vert W(P')\vert}{\vert W(P)\vert}$, one gets with (\ref{subsum}):
\begin{equation}\label{ineg}
w_{P'}(\lambda'_{P'})\geq w_P(\lambda'_P).
\end{equation}
Now the result follows from
\[f\theta_G-w_P(\lambda'_P)=\big(f\theta_G-w_{P'}(\lambda'_{P'})\big)+ \big(w_{P'}(\lambda'_{P'})-w_P(\lambda'_P)\big)\]
together with Proposition \ref{parabolicprop}(ii) and (\ref{ineg}).
\end{proof}

\begin{ex}\label{exemples}
We give a few simple examples (beyond $\GL_2(\Qp)$).

\noindent
(i) Assume $n=2$ and $P=B$. Then $\LLbar\vert_{Z_{M_{B}}}=\LLbar\vert_{T}$ has $f+1$ isotypic components $C(\lambda_i)$ given by the characters $\lambda_i:{\rm diag}(x_1,x_2)\mapsto x_1^{f-i}x_2^{i}$ for $0\leq i\leq f$. For $i< f/2$, $\lambda_i$ is dominant, $W(C(\lambda_i))=\{1\}$ and $f\theta_G-\lambda_i=i(e_1-e_2)$. For $i=f/2$ (if $f$ is even), $\lambda_i=s_{e_1-e_2}(\lambda_i)$ is dominant, $W(C(\lambda_i))=\{1,s_{e_1-e_2}\}$ and $f\theta_G-w(\lambda_i)=f/2(e_1-e_2)$ for $w\in W(C(\lambda_i))$. For $i>f/2$, $s_{e_1-e_2}(\lambda_i)$ is dominant, $W(C(\lambda_i))=\{s_{e_1-e_2}\}$ and $f\theta_G-s_{e_1-e_2}(\lambda_i)=(f-i)(e_1-e_2)$. We see that ${}^w B=B\subsetneq P(C(\lambda_i))=G$ if $i\notin \{0,f\}$ and ${}^w B=P(C(\lambda_i))=B$ if $i\in \{0,f\}$.

\noindent
(ii) Assume $n=3$ and $K=\Qp$.

\noindent
If $P=B$, then $\LLbar\vert_{T}$ has $7$ isotypic components given by the $6$ characters $\lambda_w:{\rm diag}(x_1,x_2,x_3)\mapsto x_{w^{-1}(1)}^2x_{w^{-1}(2)}$ for $w\in {\mathcal S}_3$ and the character ${\rm det}:{\rm diag}(x_1,x_2,x_3)\mapsto x_1x_2x_3$. If $C_P$ corresponds to some $\lambda_w$, one gets that $W(C_P)$ is the singleton $\{w\}$ and $\theta_G-w(\lambda_w)=0$, which implies ${}^w \!B=P(C_P)=B$. If $C_P$ corresponds to $\rm det$, one gets $W(C_P)=W$ and $\theta_G-w({\rm det})=(e_1-e_2)+(e_2-e_3)$ for $w\in W$, which implies ${}^w B=B\subsetneq P(C_P)=G$.

\noindent
If $P$ is the standard parabolic subgroup of Levi ${\rm diag}(\GL_2,\GL_1)$, then $\LLbar\vert_{Z_{M_{P}}}$ has $3$ isotypic components $C_P$ given by the characters
\[\lambda_0:{\rm diag}(x_1,x_1,x_2)\mapsto x_1^3,\ \lambda_1:{\rm diag}(x_1,x_1,x_2)\mapsto x_1^2x_2,\ \lambda_2:{\rm diag}(x_1,x_1,x_2)\mapsto x_1x_2^2.\]
One has $\lambda'_0=3/2(e_1+e_2)$, $\lambda'_1=e_1+e_2+e_3$, $\lambda'_2=1/2(e_1+e_2)+2e_3$ from which one deduces for the three respective isotypic components $C_P$ (where $w\in W(C_P)$):
\[\begin{array}{llllll}
W(C_P)&=&\{1\}\ \ &\theta_G-w(\lambda'_0)&=&1/2(e_1-e_2)\\
W(C_P)&=&\{1,s_{e_1-e_2}s_{e_2-e_3}\}\ \ &\theta_G-w(\lambda'_1)&=&(e_1-e_2)+(e_2-e_3)\\
W(C_P)&=&\{s_{e_1-e_2}s_{e_2-e_3}\}\ \ &\theta_G-w(\lambda'_2)&=&1/2(e_2-e_3).
\end{array}\]
If $C_P$ corresponds to $\lambda_0$ one gets ${}^w P=P(C_P)=P$, if $C_P$ corresponds to $\lambda_1$ one gets ${}^wP\subsetneq P(C_P)=G$ (${}^wP$ being $P$ if $w=\Id$ and the standard parabolic subgroup of Levi ${\rm diag}(\GL_1,\GL_2)$ if $w=s_{e_1-e_2}s_{e_2-e_3}$), and if $C_P$ corresponds to $\lambda_2$ one gets ${}^w P=P(C_P)=$ the standard parabolic subgroup of Levi ${\rm diag}(\GL_1,\GL_2)$. In this last case we see that $P(C_P)$ doesn't contain $P$.

\noindent
Finally, if $M_P={\rm diag}(\GL_1,\GL_2)$, the situation is symmetric.
\end{ex}

\begin{lem}\label{inclusion}
We have $W(C_P)\subseteq W(P(C_P))w$ for any fixed element $w\in W(C_P)$. Equivalently $w' w^{-1}\in W(P(C_P))$ for any $w,w'\in W(C_P)$.
\end{lem}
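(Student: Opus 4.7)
My plan is to show first that $w(\lambda')=w'(\lambda')$ for every $w,w'\in W(C_P)$, so that $w'w^{-1}$ lies in $\Stab_W(w(\lambda'))$, and then to verify the inclusion $\Stab_W(w(\lambda'))\subseteq W(P(C_P))$. For the first reduction, by the definition (\ref{wCP}) both $w(\lambda')$ and $w'(\lambda')$ are dominant elements of $X(T)\otimes_\Z\Q$ lying in the single $W$-orbit of $\lambda'$. Since a $W$-orbit in $X(T)\otimes_\Z\Q$ contains at most one dominant element, these two weights must coincide, and consequently $w'w^{-1}$ fixes $\mu\defeq w(\lambda')$.

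For the second step I will apply Lemma~\ref{known} to the dominant weight $\mu$: it tells me that $\Stab_W(\mu)$ is generated by the simple reflections $s_\alpha$ with $\alpha\in S$ and $\langle\mu,\alpha\rangle=0$, so it suffices to prove that every such $\alpha$ lies in $S(P(C_P))$. For this I will use Proposition~\ref{parabolicprop}(ii), which gives $f\theta_G-\mu=\sum_{\beta\in S}n_\beta\beta$ with $n_\beta\in\Q_{\geq 0}$ and identifies $S(P(C_P))$ with $\{\beta\in S:n_\beta\ne 0\}$. Pairing this expression with $\alpha$ and using $\langle\theta_G,\alpha\rangle=1$, $\langle\alpha,\alpha\rangle=2$, and $\langle\beta,\alpha\rangle\leq 0$ for distinct simple roots $\beta,\alpha$, a short computation shows that $n_\alpha=0$ would force $\langle\mu,\alpha\rangle\geq f>0$; contrapositively, $\langle\mu,\alpha\rangle=0$ implies $\alpha\in S(P(C_P))$, as needed.

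Most of the real work has already been done in Proposition~\ref{parabolicprop}, which supplies both the positivity of the coefficients $n_\beta$ and the identification of the support of $f\theta_G-w(\lambda')$ with $S(P(C_P))$. The only new observation is that the definition of $W(C_P)$ automatically places all its elements into a single coset of $\Stab_W(\lambda')$, so no serious obstacle is expected; the proof reduces to orbit–stabilizer bookkeeping combined with the pairing computation above.
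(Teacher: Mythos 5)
Your proof is correct and follows essentially the same route as the paper: reduce to showing that $w'w^{-1}$ stabilizes the (unique) dominant representative $w(\lambda')$ of the $W$-orbit of $\lambda'$, invoke Lemma~\ref{known} to identify $\Stab_W(w(\lambda'))$ with the subgroup generated by simple reflections $s_\alpha$ with $\langle w(\lambda'),\alpha\rangle=0$, and then observe via the decomposition $f\theta_G-w(\lambda')=\sum_\beta n_\beta\beta$ from Proposition~\ref{parabolicprop}(ii) that each such $\alpha$ must lie in $S(P(C_P))$. The only cosmetic difference is that you phrase the last step as a contrapositive pairing estimate whereas the paper directly notes $\langle f\theta_G-w(\lambda'),\alpha\rangle=f\neq 0$ places $\alpha$ in the support; the underlying computation is identical.
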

\begin{proof}
Let $\lambda_P\in X(Z_{M_P})$ corresponding to $C_P$, $w_{C_P}\in W(C_P)$, $\lambda\in X( T)$ such that $\lambda\vert_{Z_{M_{P}}}=\lambda_P$ and define $\lambda'$ as in (\ref{l'}). Recall that an element $w\in W$ is in $W(C_P)$ if and only if $w(S(P))\subseteq S$ and $w(\lambda')$ is dominant (see Proposition \ref{parabolicprop}(i)), and that we have $w(\lambda')=w_{C_P}(\lambda')$ for all $w\in W(C_P)$ (see the beginning of the proof of Lemma \ref{parabolic}(ii)). We rewrite this $ww_{C_P}^{-1}(w_{C_P}(\lambda'))=w_{C_P}(\lambda')$ $\forall \ w\in W(C_P)$. By the definition of $P(C_P)$ and Proposition \ref{parabolicprop}(ii), we know that $S(P(C_P))$ is the set of simple roots in the support of $f\theta_{ G}-w_{C_P}(\lambda')$. Since $w_{C_P}(\lambda')$ is dominant, by Lemma \ref{known} the subgroup of $W$ fixing $w_{C_P}(\lambda')$ is generated by the simple reflections $s_{\beta}$ fixing $w_{C_P}(\lambda')$, or equivalently such that $\langle w_{C_P}(\lambda'),\beta\rangle=0$. Since $\langle f\theta_{ G}-w_{C_P}(\lambda'),\beta\rangle=f-0=f$, we see that these simple roots $\beta$ are all in the support of $f\theta_{ G}-w_{C_P}(\lambda')$. Therefore $W(P(C_P))$ contains the subgroup of $W$ fixing $w_{C_P}(\lambda')$. Since $ww_{C_P}^{-1}$ fixes $w_{C_P}(\lambda')$, it follows that ${w}w_{C_P}^{-1}\in W(P(C_P))$.
\end{proof}

\begin{rem}
The inclusion in Lemma \ref{inclusion} is not an equality in general (take $P= G$).
\end{rem}

\subsubsection{The structure of isotypic components of \texorpdfstring{$\LLbar$}{L\^{}\textbackslash otimes}}\label{pcppetit}

We let $P$ be a standard parabolic subgroup of $G$, we prove an important structure theorem on the isotypic components of $\LLbar\vert_{Z_{M_P}}$ (Theorem \ref{decompstrong}) as well as several useful technical results.

Recall that $W(C_P)$ is defined in (\ref{wCP}) and $P(C_P)$ is defined just before.

\begin{lem}\label{single}
If $P(C_P)={}^w P$ for some $w\in W(C_P)$ then $W(C_P)$ has just one element.
\end{lem}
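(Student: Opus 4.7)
The plan is to show that under the hypothesis $P(C_P)={}^wP$, every element $w'\in W(C_P)$ must coincide with $w$. Fix such a $w$ and let $w'\in W(C_P)$ be arbitrary.

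First, by Lemma \ref{inclusion}, $w'w^{-1}\in W(P(C_P))$. Since $P(C_P)={}^wP$, we have $W(P(C_P))=W({}^wP)=wW(P)w^{-1}$, so $w'=w\sigma$ for some $\sigma\in W(P)$. Observe that the dominance requirement ``$w'(\lambda')$ is dominant'' in the definition \eqref{wCP} of $W(C_P)$ is then automatic, since $\lambda'\in (X(T)\otimes_{\Z}\Q)^{W(P)}$ forces $w'(\lambda')=w\sigma(\lambda')=w(\lambda')$. Thus the only nontrivial condition on $\sigma$ coming from $w'\in W(C_P)$ is $w'(S(P))\subseteq S$.

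Next I would exploit that condition to pin down $\sigma$. Since $\sigma\in W(P)$ preserves the root system $R(P)$ of $M_P$, we have
\[w'(S(P))=w\sigma(S(P))\subseteq w(R(P))=R({}^wP)=R(P(C_P)).\]
Combined with $w'(S(P))\subseteq S$, this yields
\[w\sigma(S(P))\subseteq S\cap R(P(C_P))=S(P(C_P))=w(S(P)),\]
using that the simple roots of a standard parabolic contained in $S$ are exactly $S(P(C_P))$. Applying $w^{-1}$ and recalling that $\sigma$ is a bijection on the finite set $S(P)$, I get $\sigma(S(P))=S(P)$.

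Finally, I would invoke the standard Weyl-group fact that the only element of $W(P)$ permuting the simple roots $S(P)$ of $M_P$ is the identity (any such element preserves the set of positive roots of $M_P$, hence is trivial). This gives $\sigma=1$, so $w'=w$, proving $W(C_P)=\{w\}$. The only step requiring any care is the chain of inclusions leading to $\sigma(S(P))=S(P)$; the rest is bookkeeping with Lemma \ref{inclusion} and classical facts on Weyl groups.
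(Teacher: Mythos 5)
Your proof is correct and follows essentially the same approach as the paper: both rest on Lemma~\ref{inclusion} to write $w'=w\sigma$ with $\sigma\in W(P)$, and both conclude via the standard fact that the only element of $W(P)$ permuting the simple roots $S(P)$ is the identity. The only minor difference in bookkeeping is how you obtain $w'(S(P))\subseteq S(P(C_P))$: the paper invokes this directly from the definition~(\ref{PC}) of $P(C_P)$ as a union containing all $w(S(P))$, while you derive $w'(S(P))\subseteq R(P(C_P))$ from $\sigma\in W(P)$ and then intersect with $S$, using $S\cap R(P(C_P))=S(P(C_P))$ (which holds because a simple root lying in $R(P(C_P))$ must already be one of its simple roots). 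Both routes are valid and of comparable length; neither is more general, so this is just a stylistic variant.
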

\begin{proof}
Let \ $w_{C_P}\in W(C_P)$ \ such \ that \ $P(C_P)={}^{w_{C_P}} P$ \ and \ let \ $w'_{C_P}\in W(C_P)$. Since $P(C_P)={}^{w_{C_P}} P$ we get $S(P(C_P))=w_{C_P}(S(P))$ and $W(P(C_P))=w_{C_P}W( P)w_{C_P}^{-1}$. By Lemma \ref{inclusion} applied to the element $w_{C_P}$, we deduce $W(C_P)\subseteq w_{C_P}W( P)$ and thus $w_{C_P}^{-1}w'_{C_P}\in W(P)$. But since $S(P(C_P))$ contains $w(S(P))$ for all $w\in W(C_P)$ by definition of $W(C_P)$ and (\ref{PC}), we have $w'_{C_P}(S(P))\subseteq S(P(C_P))=w_{C_P}(S(P))$ which implies $w'_{C_P}(S(P))=w_{C_P}(S(P))$ since the cardinalities are the same on both sides, that is, $w_{C_P}^{-1}w'_{C_P}(S(P))=S(P)$. Since $w_{C_P}^{-1}w'_{C_P}\in W(P)$, this forces $w'_{C_P}=w_{C_P}$.
\end{proof}

\begin{rem}\label{ctrex}
(i) The converse to Lemma \ref{single} is wrong in general (e.g.\ consider the $C(\lambda_i)$ with $i\notin\{0,f/2,f\}$ in Example \ref{exemples}(i)).\\
(ii) For a general isotypic component $C_P$, it is not true that one can find $w\in W(C_P)$ such that $w^{-1}M_{P(C_P)}w$ is the Levi subgroup of a standard parabolic subgroup of $G$.
\end{rem}

\begin{prop}\label{minimal}
The isotypic components $C_P$ such that $P(C_P)={}^w P$ for some {\upshape(}necessarily unique{\upshape)} $w\in W(C_P)$ are those isotypic components which are associated to $fw^{-1}(\theta_G)\vert_{Z_{M_P}}$ for the $w\in W$ such that $w(S(P))\subseteq S$.
\end{prop}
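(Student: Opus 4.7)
The strategy is to unravel what the equality $P(C_P)={}^w\!P$ means in terms of the weight $\lambda'$, and then exhibit the converse by an explicit candidate.

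First I would handle the forward direction. Suppose $P(C_P) = {}^w\!P$ for some $w\in W(C_P)$. Pick $\lambda\in X(T)$ with $\lambda|_{Z_{M_P}}$ equal to the character of $C_P$ and form $\lambda'$ as in \eqref{l'}. By definition of $W(C_P)$ and Proposition~\ref{parabolicprop}(ii), the support of $f\theta_G - w(\lambda')$ equals $S(P(C_P)) = w(S(P)) = S({}^w\!P)$, so $f\theta_G - w(\lambda') \in \sum_{\alpha\in w(S(P))}\mathbb{Q}_{\geq 0}\,\alpha$. This subspace is exactly the kernel of the restriction $X(T)\otimes\mathbb{Q} \twoheadrightarrow X(Z_{M_{{}^w\!P}})\otimes\mathbb{Q}$, so $w(\lambda')|_{Z_{M_{{}^w\!P}}} = f\theta_G|_{Z_{M_{{}^w\!P}}}$. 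Applying $w^{-1}$ and using that $W(P)$ acts trivially on $Z_{M_P}$ (which gives $\lambda'|_{Z_{M_P}} = \lambda|_{Z_{M_P}}$), I conclude $\lambda|_{Z_{M_P}} = fw^{-1}(\theta_G)|_{Z_{M_P}}$, so $C_P$ is the isotypic component associated to this character.

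For the converse, fix $w\in W$ with $w(S(P))\subseteq S$ and take $\lambda \defeq fw^{-1}(\theta_G) \in X(T)$. I first need to check that $\lambda|_{Z_{M_P}}$ occurs in $\LLbar|_{Z_{M_P}}$: writing $\lambda = \sum_{\sigma\in\gKQ}\sum_{\alpha\in S} w^{-1}(\lambda_\alpha)$ and observing that $w^{-1}(\lambda_\alpha) = e_{w^{-1}(1)}+\cdots+e_{w^{-1}(i)}$ is a weight of ${\bigwedge}^i\mathrm{Std} = \Lbar(\lambda_\alpha)$ when $\alpha = e_i - e_{i+1}$, this follows from \eqref{lbar}. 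Now I compute
\[
w(\lambda') \;=\; \frac{f}{|W({}^w\!P)|}\sum_{w''\in W({}^w\!P)} w''(\theta_G)
\]
by the change of variables $w'' = ww'w^{-1}$, and I must show $w(\lambda')$ is dominant. For $\beta\in S({}^w\!P)=w(S(P))$, the right-hand side is $W({}^w\!P)$-invariant, so $\langle w(\lambda'),\beta\rangle=0$. For $\beta\in S\setminus w(S(P))$, the classical fact that $W({}^w\!P)$ permutes the positive roots not belonging to $M_{{}^w\!P}$ implies $(w'')^{-1}(\beta)\in R^+$ for every $w''\in W({}^w\!P)$, whence $\langle\theta_G,(w'')^{-1}(\beta)\rangle\geq 0$ as $\theta_G$ is dominant; averaging gives $\langle w(\lambda'),\beta\rangle\geq 0$. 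Thus $w\in W(C_P)$.

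Finally I verify $P(C_P)={}^w\!P$. By construction $w(\lambda')|_{Z_{M_{{}^w\!P}}} = f\theta_G|_{Z_{M_{{}^w\!P}}}$, so $f\theta_G - w(\lambda')\in\sum_{\alpha\in w(S(P))}\mathbb{Q}\,\alpha$; combined with the $\mathbb{Q}_{\geq 0}$-positivity from Proposition~\ref{parabolicprop}(ii), its support lies in $w(S(P))$. Together with \eqref{PC} this gives $S(P(C_P))\subseteq w(S(P))$, and the reverse inclusion is \eqref{wPinP(CP)}, so $P(C_P)={}^w\!P$. Uniqueness of $w$ in $W(C_P)$ is Lemma~\ref{single}. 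The only non-formal step is checking dominance of $w(\lambda')$ in the converse, and this reduces to the standard parabolic-Weyl-group fact recalled above.
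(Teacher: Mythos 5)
Your proof is correct and in the first (harder) direction it is essentially the paper's argument: use $S(P(C_P))=w(S(P))$ together with Proposition~\ref{parabolicprop}(ii), restrict to $Z_{M_{{}^wP}}$ to get $w(\lambda')|_{Z_{M_{{}^wP}}}=f\theta_G|_{Z_{M_{{}^wP}}}$, apply $w^{-1}$, and use $\lambda'|_{Z_{M_P}}=\lambda|_{Z_{M_P}}$.

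Where you diverge is in the converse, where the paper exploits a shortcut you miss. Taking $\lambda=fw^{-1}(\theta_G)$, the paper simply observes that $w(\lambda)=f\theta_G$ is itself dominant (so $w$ qualifies for Lemma~\ref{parabolic}(i)-(ii) directly, working with $\lambda$ rather than $\lambda'$) and that $f\theta_G-w(\lambda)=0$; this makes the set \eqref{PC} immediately equal to $w(S(P))$, hence $P(C_P)={}^wP$, in one line. You instead compute $w(\lambda')$ as an explicit Weyl average $\frac{f}{|W({}^wP)|}\sum_{w''\in W({}^wP)}w''(\theta_G)$, establish dominance of $w(\lambda')$ by a pairing argument (one case for $\beta\in S({}^wP)$ by $W({}^wP)$-invariance, one case for $\beta\in S\setminus S({}^wP)$ using the fact that $W({}^wP)$ preserves $R^+\setminus R({}^wP)^+$), and then invoke Proposition~\ref{parabolicprop}(ii) and \eqref{wPinP(CP)}. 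This is a genuinely longer route that reproves dominance of $w(\lambda')$ from scratch rather than reading it off from the dominance of $w(\lambda)$; it is correct but does not buy anything extra. One stylistic slip: where you write that ``$\sum_{\alpha\in w(S(P))}\Q_{\geq 0}\,\alpha$'' is ``exactly the kernel'' of the restriction to $X(Z_{M_{{}^wP}})\otimes\Q$, you mean its $\Q$-span; the argument only needs the inclusion, so this is harmless.
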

\begin{proof}
Let $w\in W$ such that $w(S(P))\subseteq S$ and $\lambda\defeq fw^{-1}(\theta_G)\in X(T)$. Since $w(\lambda)=f\theta_G$ is dominant and $f\theta_G-w(\lambda)=0$, the set (\ref{PC}) is $w(S(P))$. This implies $P(C_P)={}^wP$.\\
Conversely, let $C_P$ as in the statement, $\lambda \in X(T)$ such that $C_P$ is the isotypic component associated to the character $\lambda\vert_{Z_{M_P}}$ of $Z_{M_P}$ and define $\lambda'$ as in (\ref{l'}). Since $S(P(C_P))=w(S(P))$ by assumption, from Proposition \ref{parabolicprop}(ii) we obtain
\[ fw^{-1}(\theta_G)-\lambda'=\sum_{\alpha\in S(P)}n_\alpha\alpha\]
(for some $n_\alpha\in \Q_{>0}$), which implies $fw^{-1}(\theta_G)\vert_{Z_{M_P}}=\lambda'\vert_{Z_{M_P}}$. Since $\lambda\vert_{Z_{M_P}}=\lambda'\vert_{Z_{M_P}}$ (see the beginning of the proof of Lemma \ref{parabolic}(i)), we deduce that $C_P$ is the isotypic component associated to the character $fw^{-1}(\theta_G)\vert_{Z_{M_P}}$.
\end{proof}

Note that if $C_P$ is associated to $fw^{-1}(\theta_G)\vert_{Z_{M_P}}$ (with $w(S(P))\subseteq S$), we have $W(C_P)=\{w\}$ by Lemma \ref{single}.

\begin{ex}
Coming back to Example \ref{exemples}, the isotypic components as in Proposition \ref{minimal} are the isotypic components $C(\lambda_0),\ C(\lambda_f)$ when $n=2$, $P=B$, the isotypic components associated to the six $\lambda_w$ when $n=3$, $K=\Qp$, $P=B$, and the isotypic components associated to $\lambda_0$, $\lambda_2$ when $n=3$, $K=\Qp$, $M_P=\GL_2\times \GL_1$.
\end{ex}

We set for $\alpha =e_j-e_{j+1}\in S(P)$:
\begin{equation}\label{alphaP}
\lambda_{\alpha,P}\defeq \!\!\!\sum_{e_i-e_{j+1}\in R(P)^+}\!\!\!e_i\in X(T).
\end{equation}
One easily checks that the $\lambda_{\alpha,P}$ for $\alpha\in S(P)$ are fundamental weights for the reductive group $M_P$ and that $\langle \lambda_{\alpha,P},\beta\rangle\leq 0$ for $\beta\in S\backslash S(P)$. For any $\lambda\in X(T)$, we define ${\overline L}_{P}(\lambda)$ as in (\ref{algbar}) but with $(M_P, M_P\cap B^-)$ instead of $(G,B^-)$. When $S(P)=\emptyset$, we define ${\overline L}^{\otimes}_P$ to be the trivial representation of $T^{\gKQ}$ over $\F$ and, when $S(P)\ne \emptyset$, we define similarly to (\ref{lbar}) the algebraic representation of $M_P^{\gKQ}$ over $\F$:
\begin{equation}\label{lbarP}
{\overline L}^{\otimes}_P\defeq \bigotimes_{\gKQ}\Big(\bigotimes_{\alpha\in S(P)}\Lbar_P(\lambda_{\alpha,P})\Big).
\end{equation}
We also define
\begin{equation}\label{thetaP}
\theta_{P}\defeq \!\!\sum_{\alpha\in S(P)}\!\!\lambda_{\alpha,P}\in X(T)\ \ \ {\rm and}\ \ \ \theta^{P}\defeq \theta_{G}-\theta_{P}\in X(T).
\end{equation}
Since for $\alpha\in S(P)$ we have $\langle\theta^P,\alpha\rangle = \langle\theta_G,\alpha\rangle-\langle\theta_P,\alpha\rangle=1-1=0$, we see that $\theta^{P}$ extends to an element of $\Hom_{\rm Gr}(M_{P},{\mathbb G}_{\rm m})$. Likewise we have for $\alpha\in S(P)$ and $w\in W$ such that $w(S(P))\subseteq S$:
\[\langle w^{-1}(\theta^{{}^{w}P}),\alpha\rangle = \langle \theta^{{}^{w}P},w(\alpha)\rangle=0\]
so that $w^{-1}(\theta^{{}^{w}P})$ also extends to $\Hom_{\rm Gr}(M_{P},{\mathbb G}_{\rm m})$. Note that, since $\langle\theta_P,\beta\rangle \leq 0$ for $\beta\in S\backslash S(P)$, we get $\langle\theta^P,\beta\rangle = \langle\theta_G,\beta\rangle-\langle\theta_P,\beta\rangle\geq 1$, thus $\theta^P$ is a dominant weight.

\begin{ex}
If $G=\GL_6$ and $M_P=\GL_2\times \GL_3\times \GL_1$, one gets
\[\begin{array}{lll}
\theta_P:{\rm diag}(x_1,\ldots,x_6)&\longmapsto &(x_1)(x_3^2x_4)\\
\theta^P:{\rm diag}(x_1,\ldots,x_6)&\longmapsto&(x_1x_2)^4(x_3x_4x_5).
\end{array}\]
\end{ex}

\begin{lem}\label{utile}
If $w\in W(P)$, we have $w(\theta^P)=\theta^P$.
\end{lem}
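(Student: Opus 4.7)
The plan is to reduce the statement to a single-generator check using the observation, already made just before the lemma statement, that $\langle \theta^P, \alpha \rangle = 0$ for every $\alpha \in S(P)$. Since $W(P)$ is the Weyl group of $M_P$ and hence is generated by the simple reflections $\{s_\alpha : \alpha \in S(P)\}$, it suffices to verify $s_\alpha(\theta^P) = \theta^P$ for each simple root $\alpha \in S(P)$.

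For such an $\alpha$, the reflection formula in type $A$ (with the pairing normalized so that $\langle \alpha, \alpha\rangle = 2$ for every simple root, as is the case with $\langle e_i, e_j\rangle = \delta_{i,j}$) reads
\[
s_\alpha(\lambda) = \lambda - \langle \lambda, \alpha\rangle\,\alpha.
\]
Applying this to $\lambda = \theta^P = \theta_G - \theta_P$ and using $\langle \theta^P, \alpha\rangle = \langle \theta_G, \alpha\rangle - \langle \theta_P, \alpha\rangle = 1 - 1 = 0$ (which is the computation already done in the paragraph preceding the lemma), we get $s_\alpha(\theta^P) = \theta^P$. This is the entire argument; no step looks like it will present any obstacle, since the identity $\langle \theta^P, \alpha \rangle = 0$ for $\alpha \in S(P)$ is essentially by construction of $\theta^P$ as the difference between the sum of the fundamental weights of $G$ and the sum of the fundamental weights of $M_P$.
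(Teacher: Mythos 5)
Your argument is correct. The paper gives a slightly more conceptual version of the same idea: it notes that the vanishing $\langle \theta^P, \alpha \rangle = 0$ for $\alpha \in S(P)$ means $\theta^P$ extends to a character of $M_P$, hence factors through $M_P/M_P^{\rm der}$, and conjugation by $W(P)$ acts trivially there. Your proof unwinds this to the generator-by-generator computation $s_\alpha(\theta^P) = \theta^P - \langle\theta^P,\alpha\rangle\alpha = \theta^P$ for $\alpha\in S(P)$, which is elementary and leaves nothing implicit (including the normalization $\langle\alpha,\alpha\rangle = 2$ needed to use the reflection formula in the stated form). Both proofs hinge on exactly the same fact and are equally short; yours is more self-contained at the level of roots, while the paper's phrasing makes visible the structural reason ($\theta^P$ is a character of the abelianized Levi, on which the Weyl group of the Levi cannot act).
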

\begin{proof}
The character $\theta^P$ extends to $M_P$ and factors through $M_P/M_P^{\rm der}$. But conjugation by $W(P)$ is trivial on $M_P/M_P^{\rm der}$.
\end{proof}

\begin{lem}\label{isohigh}
Let $\lambda\in X(T)$ be a dominant weight and denote by ${\overline L}(\lambda)_\mu\subseteq {\overline L}(\lambda)$ for $\mu\in X(T)$ the isotypic component of ${\overline L}(\lambda)\vert_{T}$ associated to $\mu$ {\upshape(}i.e.\ the weight space of ${\overline L}(\lambda)$ for $\mu$, see {\upshape\cite[\S I.2.11]{Ja})}. Then
\[\bigoplus_{\mu\in \lambda-\sum_{\alpha\in S(P)}\Z_{\geq 0}\alpha}\!\!\!\!\!\!\!{\overline L}(\lambda)_\mu\ \ \subseteq \ \ {\overline L}(\lambda)\]
is an $M_P$-subrepresentation of ${\overline L}(\lambda)\vert_{M_P}$ which is isomorphic to ${\overline L}_P(\lambda)$.
\end{lem}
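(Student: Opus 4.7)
The plan has two main steps: show the subspace is $M_P$-stable, and then identify it with $\overline{L}_P(\lambda)$ via the transitivity of parabolic induction.

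First I would observe that the described subspace $V \defeq \bigoplus_{\mu} \overline{L}(\lambda)_\mu$ (over $\mu \in \lambda - \sum_{\alpha \in S(P)}\Z_{\geq 0}\alpha$) is exactly the $\lambda|_{Z_{M_P}}$-isotypic component of $\overline{L}(\lambda)|_{Z_{M_P}}$. Indeed, by highest weight theory (\cite[II.2.4]{Ja}) every weight $\mu$ of $\overline{L}(\lambda)$ satisfies $\lambda - \mu = \sum_{\alpha \in S} n_\alpha \alpha$ with $n_\alpha \geq 0$; and such $\mu$ restricts to $\lambda|_{Z_{M_P}}$ on $Z_{M_P}$ iff $n_\alpha = 0$ for all $\alpha \in S \setminus S(P)$, which is exactly the condition $\mu \in \lambda - \sum_{\alpha \in S(P)}\Z_{\geq 0}\alpha$. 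Since $M_P$ commutes with $Z_{M_P}$, this isotypic component is $M_P$-stable by the same argument as in Lemma \ref{action}.

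For the identification with $\overline{L}_P(\lambda)$, I would invoke transitivity of induction (\cite[I.3.5]{Ja}). Because the character $\lambda$ of $B^-$ is trivial on $N_P^-\subseteq B^-$ (the unipotent radical of $P^-$ contained in $B^-$), it descends along $B^-/N_P^- \simeq B^- \cap M_P$. Hence
\[
\overline{L}(\lambda) = \text{ind}_{B^-}^G(\lambda) \simeq \text{ind}_{P^-}^G\big(\text{ind}_{B^-}^{P^-}\lambda\big) \simeq \text{ind}_{P^-}^G \overline{L}_P(\lambda),
\]
with $\overline{L}_P(\lambda)$ viewed as a $P^-$-module via $P^- \twoheadrightarrow M_P$. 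Evaluation at the identity coset gives a $P^-$-equivariant surjection $\text{ev} : \overline{L}(\lambda) \twoheadrightarrow \overline{L}_P(\lambda)$. Since $Z_{M_P}$ acts on the target by the single character $\lambda|_{Z_{M_P}}$ (all weights of $\overline{L}_P(\lambda)$ lie in $\lambda - \Z S(P)$), the map $\text{ev}$ kills every other $Z_{M_P}$-isotypic component of the source. Thus $\text{ev}$ restricts to a surjective $M_P$-equivariant map $\text{ev}|_V : V \twoheadrightarrow \overline{L}_P(\lambda)$.

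The remaining step, which is where I expect the main technical work, is showing $\text{ev}|_V$ is injective. I would argue via $T$-characters: the Kempf vanishing theorem (\cite[II.5.10]{Ja}) ensures Weyl's character formula applies to both $\overline{L}(\lambda)$ and $\overline{L}_P(\lambda)$. Writing $\text{ch}(\overline{L}(\lambda))$ as the sum over $W$ and decomposing $W = \bigsqcup_{w \in W/W(P)} wW(P)$, together with the factorization of the Weyl denominator along $R^+ = R(P)^+ \sqcup (R^+\setminus R(P)^+)$, one isolates the $W(P)$-piece: the weights occurring in the $W(P)$-part of the sum are precisely those in $\lambda - \Z S(P)$, and after dividing by $\prod_{\alpha \in R(P)^+}(1-e^{-\alpha})$ one recovers $\text{ch}(\overline{L}_P(\lambda))$ (this is essentially the branching calculation in the proof of \cite[II.4.21]{Ja}). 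It follows that $\dim V = \dim\overline{L}_P(\lambda)$, and the surjection $\text{ev}|_V$ is therefore an isomorphism of $M_P$-representations.
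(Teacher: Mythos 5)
Your overall plan is the right one and closely parallels the paper's proof (identify $V$ as the $\lambda|_{Z_{M_P}}$-isotypic component, use transitivity of induction to produce an $M_P$-equivariant map between $V$ and $\overline{L}_P(\lambda)$, then finish with a weight-multiplicity comparison). However, there is one assertion that carries the real weight and that you state without justification: the surjectivity of $\text{ev} : \overline{L}(\lambda) \to \overline{L}_P(\lambda)$. This is not a formal consequence of transitivity. The counit $\mathrm{ind}_{P^-}^G M \to M$ of algebraic induction is not surjective for arbitrary $M$ (it is zero whenever $\mathrm{ind}_{P^-}^G M = 0$), and in our situation $\overline{L}_P(\lambda)$ is a costandard (dual Weyl) module for $M_P$, so in characteristic $p$ it is in general \emph{not} generated over $M_P$ by its highest weight line; thus the usual observation that the image of $\text{ev}$ is an $M_P$-submodule containing the $\lambda$-weight space does not give surjectivity. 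The statement is in fact true, but the arguments I am aware of go through Frobenius splitting of $G/B^-$ and compatible splitting of the opposite Schubert variety $P^-/B^-$, which is much heavier machinery than this lemma warrants or than the paper uses.

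The paper goes the other way: instead of a surjection out of $\overline{L}(\lambda)$, it produces an \emph{injection} $H^0(N_P, \overline{L}(\lambda)) \hookrightarrow \overline{L}_P(\lambda)$ (quoting \cite[II.2.2(1)]{Ja}), identifies $H^0(N_P, \overline{L}(\lambda))$ with your $V$ via \cite[II.2.11(1)]{Ja}, and then performs essentially the same dimension count you do (the paper passes to characteristic $0$ through the $\Z$-form, you invoke Kempf vanishing and Weyl's character formula directly in characteristic $p$; both are fine). The point is that, given the dimension equality, injectivity and surjectivity of $\text{ev}|_V$ are equivalent, so you must establish at least one of them independently — and injectivity is by far the cheaper one. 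It even has a one-line elementary proof, avoiding the Jantzen reference: a vector in $V$ corresponds to a regular function $f : G \to k$ which is right-invariant under $N_P$ (indeed, for $\mu \in \lambda - \sum_{\alpha\in S(P)}\Z_{\geq 0}\alpha$ and $\beta \in R^+\setminus R(P)^+$ one has $\mu+\beta\not\le\lambda$, so $\overline{L}(\lambda)_{\mu+\beta}=0$ and $N_P$ acts trivially on $V$), and $\text{ev}(f)=f|_{P^-}$; if this vanishes then $f$ vanishes on the dense open subset $P^-N_P\subseteq G$, hence $f=0$. Replace your surjectivity assertion by this injectivity, swap the roles of injectivity and surjectivity in your closing paragraph, and the proof is complete and essentially identical to the paper's.
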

\begin{proof}
Since $\oplus_{\mu\in \lambda-\sum_{\alpha\in S(P)}\Z_{\geq 0}\alpha}{\overline L}(\lambda)_\mu$ is the isotypic component of ${\overline L}(\lambda)\vert_{Z_{M_P}}$ associated to $\lambda\vert_{Z_{M_P}}$ (as $\lambda\vert_{Z_{M_P}}\cong \mu\vert_{Z_{M_P}}\Longleftrightarrow \lambda-\mu\in \sum_{\alpha\in S(P)}\Z\alpha$), it is endowed with an action of $M_P$ by the same proof as for Lemma \ref{action}. By \cite[II.2.2(1)]{Ja}, \cite[I.6.11(2)]{Ja} and the transitivity of induction (\cite[I.3.5(2)]{Ja}), we have an injection of algebraic representations of $M_P$ over $\F$:
\begin{equation}\label{injecP}
H^0(N_P,{\overline L}(\lambda))\hookrightarrow {\overline L}_P(\lambda)
\end{equation}
(recall $N_P$ is the unipotent radical of $P$) and by \cite[II.2.11(1)]{Ja} we have an isomorphism of algebraic representations of $M_P$ over $\F$:
\begin{equation*}
\bigoplus_{\mu\in \lambda-\sum_{\alpha\in S(P)}\Z_{\geq 0}\alpha}\!\!\!\!\!\!\!{\overline L}(\lambda)_\mu\ \ \buildrel\sim\over\longrightarrow\ \ H^0(N_P,{\overline L}(\lambda)).
\end{equation*}
It is therefore enough to prove that (\ref{injecP}) is an isomorphism, or equivalently that
\begin{equation*}
\dim_{\F}\bigg(\bigoplus_{\mu\in \lambda-\sum_{\alpha\in S(P)}\Z_{\geq 0}\alpha}\!\!\!\!\!\!\!{\overline L}(\lambda)_\mu\bigg)=\dim_{\F}{\overline L}_P(\lambda).
\end{equation*}
Let $L(\lambda)\defeq \big({\rm ind}_{B^-}^G\lambda\big)_{/\Z}\otimes_{\Z}E$, $L_P(\lambda)\defeq \big({\rm ind}_{M_P\cap B^-}^{M_P}\lambda\big)_{/\Z}\otimes_{\Z}E$ and $L(\lambda)_{\mu}\subseteq L(\lambda)$ the weight space associated to $\mu$, we have $\dim_{\F}{\overline L}(\lambda)_\mu=\dim_E{L}(\lambda)_\mu$, and thus
\[\dim_{\F}\bigg(\bigoplus_{\mu\in \lambda-\sum_{\alpha\in S(P)}\Z_{\geq 0}\alpha}\!\!\!\!\!\!\!{\overline L}(\lambda)_\mu\bigg)=\dim_{E}\bigg(\bigoplus_{\mu\in \lambda-\sum_{\alpha\in S(P)}\Z_{\geq 0}\alpha}\!\!\!\!\!\!\!{L}(\lambda)_\mu\bigg).\]
Likewise, we have $\dim_{\F}{\overline L}_P(\lambda)=\dim_{E}{L}_P(\lambda)$. It is therefore enough to have
\[\dim_{E}\bigg(\bigoplus_{\mu\in \lambda-\sum_{\alpha\in S(P)}\Z_{\geq 0}\alpha}\!\!\!\!\!\!\!{L}(\lambda)_\mu\bigg)=\dim_{E}{L}_P(\lambda).\]
But now, we are over a field of characteristic $0$, where it is well known that $L(\lambda)$ and $L_P(\lambda)$ as defined above are {\it simple modules with highest weight $\lambda$}. Then the result follows from \cite[Prop.II.2.11]{Ja}.
\end{proof}

The following lemma is a special case of Lemma \ref{isohigh}.

\begin{lem}\label{supps(p)}
Let $\lambda\in X(T)$ be a dominant weight such that $\langle \lambda,\alpha\rangle =0$ for all $\alpha\in S(P)$ {\upshape(}equivalently $\lambda$ extends to an element in $\Hom_{\rm Gr}(M_{P},{\mathbb G}_{\rm m})${\upshape)}. Then any $\mu\in X(T)$ distinct from $\lambda$ with ${\overline L}(\lambda)_\mu\ne 0$ is such that $\lambda-\mu$ contains at least one root of $S\backslash S(P)$ in its support.
\end{lem}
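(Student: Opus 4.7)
The plan is to deduce this directly from Lemma \ref{isohigh} together with the observation that, under the hypothesis on $\lambda$, the $M_P$-representation $\overline L_P(\lambda)$ is one-dimensional.

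First I would check that $\overline L_P(\lambda)$ is the one-dimensional character $\lambda$ of $M_P$. Since $\langle \lambda,\alpha\rangle=0$ for every $\alpha\in S(P)$, the weight $\lambda$ extends to a character of $M_P$ (factoring through $M_P/M_P^{\mathrm{der}}$). Then $\overline L_P(\lambda)=(\mathrm{ind}_{M_P\cap B^-}^{M_P}\lambda)_{/\Z}\otimes_{\Z}\F$ is the one-dimensional $M_P$-module on which $M_P$ acts through $\lambda$; equivalently, the only $T$-weight appearing in $\overline L_P(\lambda)$ is $\lambda$ itself with multiplicity $1$.

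Next I invoke Lemma \ref{isohigh}, which gives an isomorphism of $M_P$-representations
\[
\bigoplus_{\mu\in \lambda-\sum_{\alpha\in S(P)}\Z_{\geq 0}\alpha}\overline L(\lambda)_\mu \;\cong\; \overline L_P(\lambda).
\]
Comparing $T$-weights on both sides, the previous step forces ${\overline L}(\lambda)_\mu=0$ for every $\mu\in\lambda-\sum_{\alpha\in S(P)}\Z_{\geq 0}\alpha$ with $\mu\ne\lambda$.

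Finally, let $\mu\in X(T)$ be distinct from $\lambda$ with $\overline L(\lambda)_\mu\ne 0$. Since $\lambda$ is dominant and $\mu$ is a weight of $\overline L(\lambda)$, we have $\mu\leq\lambda$, so $\lambda-\mu\in \sum_{\alpha\in S}\Z_{\geq 0}\alpha$. If the support of $\lambda-\mu$ were contained in $S(P)$, then $\mu$ would belong to $\lambda-\sum_{\alpha\in S(P)}\Z_{\geq 0}\alpha$, contradicting the previous paragraph since $\mu\ne\lambda$. Hence the support of $\lambda-\mu$ must contain at least one root in $S\setminus S(P)$, which is the claim. There is essentially no obstacle here since Lemma \ref{isohigh} is already in hand; the only verification needed is the one-dimensionality of $\overline L_P(\lambda)$.
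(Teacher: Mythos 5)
Your proposal is correct and follows the paper's own argument essentially verbatim: the paper likewise observes that $\overline L_P(\lambda)\cong\lambda$ (one-dimensional), applies Lemma \ref{isohigh}, and concludes. Your write-up simply makes the final weight-comparison step a bit more explicit.
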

\begin{proof}
Since $\lambda\in \Hom_{\rm Gr}(M_{P},{\mathbb G}_{\rm m})$, we have ${\overline L}_P(\lambda)\cong \lambda$ by (\ref{algbar}) applied with $M_P$ instead of $G$. By Lemma \ref{isohigh}, we deduce $\bigoplus_{\mu\in \lambda-\sum_{\alpha\in S(P)}\Z_{\geq 0}\alpha}{\overline L}(\lambda)_\mu \cong \lambda$ inside ${\overline L}(\lambda)$. This clearly implies the lemma.
\end{proof}

If $R$ is any algebraic representation of $M_P$ or of $M_P^{\gKQ}$ and $w\in W$ such that $w(S(P))\subseteq S$, we define an algebraic representation of $M_{{}^w P}=wM_{P}w^{-1}$ or of $M_{{}^w P}^{\gKQ}=wM_{P}^{\gKQ}w^{-1}$ ($w$ acting diagonally via $W\hookrightarrow W^{\gKQ}$) by ($g\in M_{{}^w P}$ or $M_{{}^w P}^{\gKQ}$):
\begin{equation}\label{wact}
w(R)(g)\defeq R(w^{-1}gw).
\end{equation}
If $\alpha\in S(P)$, one then easily checks that $w(\lambda_{\alpha,P})=\lambda_{w(\alpha),{}^wP}$ and $w(\Lbar_P(\lambda_{\alpha,P}))= \Lbar_{{}^wP}(\lambda_{w(\alpha),{}^wP})$, from which one gets
\begin{equation}\label{wlp}
w({\overline L}^{\otimes}_P)={\overline L}^{\otimes}_{{}^wP}.
\end{equation}

\begin{thm}\label{decompstrong}
Let $C_P$ be an isotypic component of $\LLbar\vert_{Z_{M_P}}$, associated to $\lambda\vert_{Z_{M_P}}$ for $\lambda\in X(T)$. For any $w\in W(C_P)$, there is an isomorphism of algebraic representations of $M_{P}^{\gKQ}$ over $\F$:
\begin{equation}\label{decompfort}
C_P\cong w^{-1}\big(C_{P(C_P),{}^wP}\big)\otimes \big(\underbrace{w^{-1}(\theta^{P(C_P)})\otimes \cdots \otimes w^{-1}(\theta^{P(C_P)})}_{\gKQ}\big),
\end{equation}
where \ $C_{P(C_P),{}^wP}$ \ is \ the \ isotypic \ component \ of \ ${\overline L}^{\otimes}_{P(C_P)}\vert_{Z_{M_{{}^wP}}}$ \ associated \ to $(w(\lambda)-f\theta^{P(C_P)})\vert_{Z_{M_{{}^wP}}}$ {\upshape(}thus an $M_{{}^wP}^{\gKQ}$-representation, recall ${}^wP\subseteq P(C_P)${\upshape)} and $w^{-1}(C_{P(C_P),{}^wP})$ is defined in {\upshape(\ref{wact})}. 
\end{thm}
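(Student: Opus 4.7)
The plan is to reduce to the case $w = 1 \in W(C_P)$ via a Weyl group twist, then establish the reduced case by a direct weight-space analysis. For the reduction, pick a lift $\dot w \in N_G(T)(\F)$ and let $\pi(\dot w) \in \GL_\F(\LLbar)$ denote its action. Since $\pi(\dot w)$ permutes $T$-weight spaces according to $w$ and satisfies $\pi(\dot w)\pi(g)\pi(\dot w)^{-1} = \pi(\dot w g\dot w^{-1})$, it restricts to an $\F$-linear isomorphism from $C_P$ onto the $Z_{M_{{}^wP}}$-isotypic component $C_{P'}$ of $\LLbar\vert_{Z_{M_{{}^wP}}}$ associated to $w(\lambda)\vert_{Z_{M_{{}^wP}}}$, equivariant with respect to the isomorphism $M_P^{\gKQ} \simeq M_{{}^wP}^{\gKQ}$, $g \mapsto \dot w g\dot w^{-1}$. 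This yields an $M_P^{\gKQ}$-module isomorphism $C_P \simeq w^{-1}(C_{P'})$ in the sense of (\ref{wact}). A routine computation using $W({}^wP) = wW(P)w^{-1}$ confirms that $1 \in W(C_{P'})$ and $P(C_{P'}) = P(C_P)$. Together with the identity $w^{-1}((\theta^{P(C_P)})^{\otimes \gKQ}) = (w^{-1}(\theta^{P(C_P)}))^{\otimes \gKQ}$, this reduces the theorem to the case $w = 1$.

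For the case $w = 1$, set $Q \defeq P(C_P)$, so $P \subseteq Q$ by (\ref{wPinP(CP)}), $\lambda'$ is dominant, and $S(Q)$ is the support of $f\theta_G - \lambda'$. The crucial claim is that every tuple $(\mu_\sigma)_{\sigma \in \gKQ}$ of $T$-weights for which $\bigotimes_\sigma L_{\sigma,\mu_\sigma}$ is nonzero in $C_P$ (where $L_\sigma \defeq \bigotimes_{\alpha \in S}\Lbar(\lambda_\alpha)$ denotes the $\sigma$-factor of $\LLbar$) satisfies $\theta_G - \mu_\sigma \in \sum_{\beta \in S(Q)}\Z_{\geq 0}\beta$ for every $\sigma$. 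Writing $\theta_G - \mu_\sigma = \sum_\beta n_\beta^{(\sigma)}\beta$ with $n_\beta^{(\sigma)} \in \Z_{\geq 0}$, the diagonal condition $\sum_\sigma \mu_\sigma\vert_{Z_{M_P}} = \lambda\vert_{Z_{M_P}}$ combined with the inclusion $f\theta_G - \lambda \in \sum_{\alpha \in S(Q)}\Z\alpha$ (which follows from Proposition \ref{parabolicprop}(ii), the inclusion $S(P)\subseteq S(Q)$, and the saturation of the root sublattice in $X(T)$) forces $\sum_\sigma n_\beta^{(\sigma)} = 0$ for $\beta \notin S(Q)$, and non-negativity of each $n_\beta^{(\sigma)}$ then gives the claim.

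Consequently $C_P \subseteq V \defeq \bigotimes_{\sigma \in \gKQ}V_\sigma$, where $V_\sigma$ is the $Z_{M_Q}$-isotypic component of $L_\sigma$ for $\theta_G\vert_{Z_{M_Q}}$, and $V$ is naturally an $M_Q^{\gKQ}$-submodule of $\LLbar$. Applying Lemma \ref{isohigh} factorwise identifies the $Z_{M_Q}$-isotypic component of $\Lbar(\lambda_\alpha)$ for $\lambda_\alpha\vert_{Z_{M_Q}}$ with $\Lbar_Q(\lambda_\alpha)$ as $M_Q$-module. For $\alpha \notin S(Q)$ the weight $\lambda_\alpha$ extends to a character of $M_Q$; for $\alpha \in S(Q)$ the difference $\lambda_\alpha - \lambda_{\alpha, Q}$ extends to a character of $M_Q$ since $\langle \lambda_\alpha - \lambda_{\alpha,Q}, \beta\rangle = 0$ for all $\beta \in S(Q)$. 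These two observations combine to yield $V_\sigma \simeq {\overline L}^{\otimes}_Q \otimes \theta^Q$ as $M_Q$-modules, and taking tensor products over $\sigma$ gives $V \simeq {\overline L}^{\otimes}_Q \otimes (\theta^Q)^{\otimes \gKQ}$ as $M_Q^{\gKQ}$-modules. Since the diagonal $Z_{M_P}$ acts on $(\theta^Q)^{\otimes \gKQ}$ through the character $f\theta^Q\vert_{Z_{M_P}}$, extracting the $\lambda\vert_{Z_{M_P}}$-isotypic component of $V$ yields exactly $C_{Q,P} \otimes (\theta^Q)^{\otimes \gKQ}$, which equals $C_P$ by the inclusion $C_P \subseteq V$ established above. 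The main obstacle is the weight-space analysis of the second paragraph, where the integrality/saturation argument is essential for ruling out stray weights with support outside $S(Q)$; without it, the diagonal $Z_{M_P}$-isotypic component of $\LLbar$ would be strictly larger than that of $V$, breaking the clean identification via Lemma \ref{isohigh}.
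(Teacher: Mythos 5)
Your proof is correct and follows the same overall skeleton as the paper's proof (reduce to $w = 1$ via the Weyl group twist, weight analysis, identification via Lemma~\ref{isohigh}), but your weight analysis is carried out somewhat differently and is worth noting. The paper's Steps~2--3 split $\LLbar$ by $\alpha\in S(P(C_P))$ versus $\alpha\in S\setminus S(P(C_P))$, invoke Lemma~\ref{supps(p)} on the $\alpha\notin S(P(C_P))$ part, average over $W(P(C_P))$ using Lemma~\ref{sumw}, and derive a contradiction with Proposition~\ref{parabolicprop}(ii) to pin down the weight $\mu_2$, then argue separately (Step~3) that $\mu_1$ stays in the $S(P(C_P))$-span. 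You instead split $\LLbar$ by $\sigma\in\gKQ$ and observe directly that since each $\theta_G-\mu_\sigma$ has nonnegative integer coefficients in $S$, and since $f\theta_G-\lambda$ lies in $\sum_{\alpha\in S(Q)}\Q\alpha$ by Proposition~\ref{parabolicprop}(ii) and the inclusion $S(P)\subseteq S(Q)$, the uniqueness of coefficients in the $\Z$-basis $S$ forces $\sum_\sigma(\theta_G-\mu_\sigma)$ to vanish outside $S(Q)$; nonnegativity then forces each summand to vanish there. This is shorter and avoids both the averaging argument and the first use of Lemma~\ref{supps(p)} (you still rely on Lemma~\ref{isohigh} to identify $V_\sigma$, as does the paper in its Step~4). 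One small nit: the phrase ``saturation of the root sublattice in $X(T)$'' is not quite the point --- what you actually use is that $f\theta_G-\lambda$ lies in $\sum_{\alpha\in S}\Z\alpha$ (being a difference of two weights of $\LLbar|_T$, up to an element of $\sum_{\alpha\in S(P)}\Z\alpha$), and that $S(Q)\subseteq S$ is a subset of that $\Z$-basis, so vanishing of the $\Q$-coefficients outside $S(Q)$ propagates to the unique $\Z$-expansion. I'd rephrase that sentence accordingly, but the argument is sound.
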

\begin{proof}
Step 1: Assuming the result holds if $w=\Id$, we prove it for any $w$. For $\mu\in X(T)$ we have $\mu\vert_{Z_{M_{P}}}=\lambda\vert_{Z_{M_{P}}}$ if and only if $w(\mu)\vert_{Z_{M_{{}^w P}}}=w(\lambda)\vert_{Z_{M_{{}^w P}}}$, therefore the image $w(C_P)$ of $C_P$ for the diagonal action of $w\in W$ on $\LLbar$ is the isotypic component of $\LLbar\vert_{Z_{M_{{}^w P}}}$ associated to $w(\lambda)\vert_{Z_{M_{{}^w P}}}$. Note that, as an algebraic $M_{{}^w P}^{\gKQ}$-subrepresentation of $\LLbar\vert_{M_{{}^w P}^{\gKQ}}$, $w(C_P)$ is indeed isomorphic to $g\mapsto C_P(w^{-1}gw)$ if $g\in M_{{}^w P}^{\gKQ}$, so the notation is consistent with (\ref{wact}). By Remark \ref{lambda'}(ii) we have $w(\lambda')=(w(\lambda))'$ in $(X(T)\otimes_{\Z}\Q)^{W({}^wP)}$. Recall that $w(\lambda')$, and hence $(w(\lambda))'$, are dominant since $w\in W(C_P)$ (see Proposition \ref{parabolicprop}(i)). Therefore $\Id \in W(w(C_P))$ and by the case $w=\Id$ for the parabolic subgroup ${}^w P$ and the isotypic component $w(C_P)$, we have $w(C_P)\cong C_{P(w(C_P)),{}^wP}\otimes \big({\theta^{P(w(C_P))}\otimes \cdots \otimes \theta^{P(w(C_P))}}\big)$. Moreover $S(P(w(C_P)))$, which is the support of $f\theta_G-(w(\lambda))'$ by Proposition \ref{parabolicprop}(ii) (applied to $w=\Id$), is the same as $S(P_{C_P})$, which is the support of $f\theta_G-w(\lambda')$ by {\it loc.cit.}\ (applied to $w$), i.e.\ we have $P(w(C_P))=P(C_P)$. We thus deduce $w(C_P)\cong C_{P(C_P),{}^wP}\otimes \big({\theta^{P(C_P)}\otimes \cdots \otimes \theta^{P(C_P)}}\big)$ which gives (\ref{decompfort}) by applying $w^{-1}$.

\noindent
Step 2: From now on we assume $w=\Id$ (so in particular $P\subseteq P(C_P)$). Writing
\[\LLbar=\bigg(\bigotimes_{\gKQ}\Big(\bigotimes_{\alpha\in S(P(C_P))}\Lbar(\lambda_{\alpha})\Big)\bigg)\bigotimes \bigg(\bigotimes_{\gKQ}\Big(\bigotimes_{\alpha\in S\backslash S(P(C_P))}\Lbar(\lambda_{\alpha})\Big)\bigg),\]
we prove that any $(\mu_1,\mu_2)\in X(T)\times X(T)$ such that
\begin{enumerate}
\item $\mu_{1}$ occurs in $\big(\bigotimes_{\gKQ}\big(\bigotimes_{\alpha\in S(P(C_P))}\Lbar(\lambda_{\alpha})\big)\big)\vert_T$ (for the diagonal action of $T$);
\item $\mu_2$ occurs in $\big(\bigotimes_{\gKQ}\big(\bigotimes_{\alpha\in S\backslash S(P(C_P))}\Lbar(\lambda_{\alpha})\big)\big)\vert_T$ ({idem});
\item $\mu_{1}\vert_{Z_{M_P}}+\mu_2\vert_{Z_{M_P}}=\lambda\vert_{Z_{M_P}}$
\end{enumerate}
must be such that $\mu_2=f\sum_{\alpha\in S\backslash S(P(C_P))}\lambda_\alpha$ (note that $\mu_2\leq f\sum_{\alpha\in S\backslash S(P(C_P))}\lambda_\alpha$ and $\mu_1\leq f\sum_{\alpha\in S(P(C_P))}\lambda_\alpha$). Let $\lambda'$, $\mu_1'$, $\mu_2'$ as in (\ref{l'}) for $P(C_P)$ and the respective characters $\lambda$, $\mu_1$, $\mu_2$, we have $\lambda'=\mu_1'+\mu_2'$ from (iii) and Remark \ref{lambda'}(i), and thus
\begin{equation}\label{summation}
f\theta_G-\lambda'=f\Big(\!\sum_{\alpha \in S(P(C_P))}\!\!\!\!\!\lambda_\alpha\Big) - \mu_1' + f\Big(\!\sum_{\alpha \in S\backslash S(P(C_P))}\!\!\!\!\!\lambda_\alpha\Big) - \mu_2'.
\end{equation}
Assume $\mu_2$ is not $f\sum_{\alpha\in S\backslash S(P(C_P))}\lambda_\alpha$. Then writing $\mu_2=\sum_{j,\alpha}\mu_{2,j,\alpha}$ where $(j,\alpha)\in \gKQ\times S\backslash S(P(C_P))$ and $\mu_{2,j,\alpha}$ occurs in $L(\lambda_\alpha)$ and applying Lemma \ref{supps(p)} with $P=P(C_P)$, $\lambda=\lambda_{\alpha}$ and $\mu=\mu_{2,j,\alpha}$ for $\alpha\in S\backslash S(P(C_P))$ (the assumptions in Lemma \ref{supps(p)} are satisfied since the $\lambda_{\alpha}$, $\alpha\in S$ are fundamental weights), we get that $f\sum_{\alpha\in S\backslash S(P(C_P))}\lambda_\alpha-\mu_2$ has at least one root of $S\backslash S(P(C_P))$ in its support. Averaging over $w\in W(P(C_P))$ as in (\ref{l'}) and using $w(\lambda_\alpha)=\lambda_\alpha$ for $w\in W(P(C_P))$ and $\alpha \in S\backslash S(P(C_P))$ (same proof as for Lemma \ref{utile}), we get applying Lemma \ref{sumw} to $P=P(C_P)$ that $f\sum_{\alpha\in S\backslash S(P(C_P))}\lambda_\alpha-\mu'_2$ has still at least one root of $S\backslash S(P(C_P))$ in its support (and that $\mu'_2\leq f\sum_{\alpha\in S\backslash S(P(C_P))}\lambda_\alpha$). Since $\mu'_1\leq f\sum_{\alpha\in S(P(C_P))}\lambda_\alpha$ by the proof of Step 3 below, this root doesn't vanish in the sum (\ref{summation}). But by Proposition \ref{parabolicprop}(ii), $S(P(C_P))$ {\it is} the support of (\ref{summation}), which is a contradiction. Therefore, we must have $\mu_2=f\sum_{\alpha\in S\backslash S(P(C_P))}\lambda_\alpha$ and thus from (iii) that
\begin{equation}\label{mu1}
C_P\cong C'_{P(C_P),P}\ \otimes \bigotimes_{\gKQ}\bigg(\sum_{\alpha\in S\backslash S(P(C_P))}\!\!\!\!\!\lambda_{\alpha}\bigg),\end{equation}
where $C'_{P(C_P),P}$ is the isotypic component of $\big(\bigotimes_{\gKQ}\big(\bigotimes_{\alpha\in S(P(C_P))}\Lbar(\lambda_{\alpha})\big)\big)\vert_{Z_{M_P}}$ associated to $\big(\lambda- f\sum_{\alpha \in S\backslash S(P(C_P))}\lambda_\alpha\big)\vert_{Z_{M_P}}$ $(=(\lambda-\mu_2)\vert_{Z_{M_P}}=\mu_1\vert_{Z_{M_P}})$.

\noindent
Step 3: We prove that
\[f\Big(\!\sum_{\alpha \in S(P(C_P))}\!\!\!\!\lambda_\alpha\Big) - \mu_1\ \ \in \!\!\sum_{\alpha\in S(P(C_P))}\!\!\!\!\Z_{\geq 0}\alpha\]
(i.e.\ no root of $S\backslash S(P(C_P))$ is in the support). Since $\lambda_\alpha$ is dominant, we have $\lambda_\alpha\geq \lambda_\alpha'$, where $\lambda_{\alpha}'$ is defined as in (\ref{l'}) for $P=P(C_P)$ and the character $\lambda_\alpha$. This implies (with obvious notation)
\begin{equation}\label{mu'1}
f\big(\!\sum_{\alpha \in S(P(C_P))}\lambda_\alpha\big)-\mu'_1\geq f\big(\!\sum_{\alpha \in S(P(C_P))}\lambda'_\alpha\big)-\mu'_1=\Big(f\big(\!\sum_{\alpha \in S(P(C_P))}\lambda_\alpha\big)-\mu_1\Big)'\geq 0,
\end{equation}
where the last inequality follows from Lemma \ref{sumw} (applied with $P=P(C_P)$). If $f\big(\!\sum_{\alpha \in S(P(C_P))}\lambda_\alpha\big)\!-\!\mu_1$ has roots of $S\backslash S(P(C_P))$ in its support, then by Lemma \ref{sumw} again so is the case of $\big(f\big(\!\sum_{\alpha \in S(P(C_P))}\lambda_\alpha\big)-\mu_1\big)'$, and thus of $f\big(\!\sum_{\alpha \in S(P(C_P))}\lambda_\alpha\big)-\mu'_1$ by (\ref{mu'1}). As in Step 2, this is again a contradiction by (\ref{summation}) and the definition of $P(C_P)$.

\noindent
Step 4: We prove the statement for $w=\Id$. By Lemma \ref{isohigh} applied with $P=P(C_P)$ and the various $\Lbar(\lambda_{\alpha})$ for $\alpha\in S(P(C_P))$, we deduce from Step 3 that $\mu_1$ is a weight of $\bigotimes_{\gKQ}\!\big(\!\bigotimes_{\alpha\in S(P(C_P))}\Lbar_{P(C_P)}(\lambda_{\alpha})\big)$ inside $\bigotimes_{\gKQ}\!\big(\!\bigotimes_{\alpha\in S(P(C_P))}\Lbar(\lambda_{\alpha})\big)$ (see just after (\ref{alphaP})). Let $\alpha\in S(P(C_P))$, for each $\beta\in S(P(C_P))$ we have $\langle\lambda_\alpha,\beta\rangle = \langle\lambda_{\alpha,P(C_P)},\beta\rangle$ \ (a \ straightforward \ check \ from \ (\ref{alphaP})), \ thus \ $\lambda_\alpha-\lambda_{\alpha,P(C_P)}$ \ extends \ to $\Hom_{\rm Gr}(M_{P(C_P)},{\mathbb G}_{\rm m})$ which implies $\Lbar_{P(C_P)}(\lambda_{\alpha})\cong \Lbar_{P(C_P)}(\lambda_{\alpha,P(C_P)})\otimes (\lambda_\alpha-\lambda_{\alpha,P(C_P)})$. Thus $\mu_1-f\sum_{\alpha\in S(P(C_P))}(\lambda_\alpha-\lambda_{\alpha,P(C_P)})$ is a weight of
\[\bigotimes_{\gKQ}\Big(\bigotimes_{\alpha\in S(P(C_P))}\Lbar_{P(C_P)}(\lambda_{\alpha,P(C_P)})\Big)=\LLbar_{P(C_P)},\]
or in other terms:
\[C'_{P(C_P),P}\cong C_{P(C_P),P}\ \otimes \bigotimes_{\gKQ}\bigg(\sum_{\alpha\in S(P(C_P))}\!\!(\lambda_\alpha-\lambda_{\alpha,P(C_P)})\bigg),\]
where \ \ $C_{P(C_P),P}$ \ is \ \ the \ isotypic \ \ component \ of ${\overline L}^{\otimes}_{P(C_P)}\vert_{Z_{M_{P}}}$ \ associated \ to \ $\big(\lambda-f\sum_{\alpha \in S\backslash S(P(C_P))}\lambda_\alpha - f\sum_{\alpha\in S(P(C_P))}(\lambda_\alpha-\lambda_{\alpha,P(C_P)})\big)\vert_{Z_{M_P}}$. But by (\ref{thetaP}):
\[\sum_{\alpha \in S\backslash S(P(C_P))}\!\!\!\!\!\lambda_\alpha \ \ + \!\!\!\ \sum_{\alpha\in S(P(C_P))}\!\!\!\!(\lambda_\alpha-\lambda_{\alpha,P(C_P)}) =\ \ \theta_G-\!\!\!\!\!\!\!\sum_{\alpha\in S(P(C_P))}\!\!\!\!\lambda_{\alpha,P(C_P)}\ \ =\ \ \theta^{P(C_P)},\]
so together with (\ref{mu1}) we are done.
\end{proof}

\begin{rem}
The character $w^{-1}(\theta^{P(C_P)})$ of $M_P$ doesn't depend on $w\in W(C_P)$, as follows from Lemma \ref{inclusion} and Lemma \ref{utile} (the latter applied with $P$ there being $P(C_P)$). In particular, by (\ref{decompfort}) we see that the representation $w^{-1}\big(C_{P(C_P),{}^wP}\big)$ of $M_{P}^{\gKQ}$ is also independent of $w\in W(C_P)$.
\end{rem}

When $C_P$ is as in Proposition \ref{minimal}, its underlying $M_P^{\gKQ}$-representation looks like $\LLbar$ but for the reductive group $M_P$ instead of $G$. 

\begin{cor}\label{decomp}
Let $C_P$ be an isotypic component of $\LLbar\vert_{Z_{M_{P}}}$ such that $P(C_P)={}^w P$ for some {\upshape(}unique{\upshape)} $w\in W$ such that $w(S(P))\subseteq S$. Then there is an isomorphism
\[C_P\cong {\overline L}^{\otimes}_P\otimes \big(\underbrace{w^{-1}(\theta^{{}^wP})\otimes \cdots \otimes w^{-1}(\theta^{{}^wP})}_{\gKQ}\big)\]
of algebraic representations of $M_P^{\gKQ}$ over $\F$.
\end{cor}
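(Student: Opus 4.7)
The plan is to deduce this as a direct specialization of Theorem \ref{decompstrong} applied to the given element $w$, noting that the hypothesis $P(C_P)={}^wP$ is exactly the situation of Proposition \ref{minimal}, and that Lemma \ref{single} then guarantees $W(C_P)=\{w\}$, so $w$ is an admissible choice to feed into Theorem \ref{decompstrong}. The whole exercise is to identify the piece $C_{P(C_P),{}^wP}$ appearing in \eqref{decompfort} with $\overline L^{\otimes}_{{}^wP}$ and then to transport it back to $M_P^{\gKQ}$ via $w^{-1}$ using \eqref{wlp}.

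First I would invoke Proposition \ref{minimal}: under the hypothesis $P(C_P)={}^wP$, the isotypic component $C_P$ corresponds to the character $f w^{-1}(\theta_G)\vert_{Z_{M_P}}$, so we may take $\lambda\defeq fw^{-1}(\theta_G)$, which gives $w(\lambda)=f\theta_G$. Then Theorem \ref{decompstrong} applied to this $w\in W(C_P)$ and to $\lambda$ yields
\[
C_P\cong w^{-1}\bigl(C_{{}^wP,{}^wP}\bigr)\otimes \bigl(\underbrace{w^{-1}(\theta^{{}^wP})\otimes\cdots\otimes w^{-1}(\theta^{{}^wP})}_{\gKQ}\bigr),
\]
where $C_{{}^wP,{}^wP}$ is the isotypic component of $\overline L^{\otimes}_{{}^wP}\vert_{Z_{M_{{}^wP}}}$ associated to $(w(\lambda)-f\theta^{{}^wP})\vert_{Z_{M_{{}^wP}}}$. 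Using $w(\lambda)=f\theta_G$ and the definition $\theta^{{}^wP}=\theta_G-\theta_{{}^wP}$ from \eqref{thetaP}, this character reduces to $f\theta_{{}^wP}\vert_{Z_{M_{{}^wP}}}$.

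Next I would show that the piece $C_{{}^wP,{}^wP}$ is in fact the whole representation $\overline L^{\otimes}_{{}^wP}$. This is because, by the construction \eqref{lbarP} of $\overline L^{\otimes}_{{}^wP}$ as a tensor product over $\gKQ$ of the fundamental representations $\overline L_{{}^wP}(\lambda_{\alpha,{}^wP})$ of $M_{{}^wP}$, the center $Z_{M_{{}^wP}}$ acts on each factor by $\lambda_{\alpha,{}^wP}\vert_{Z_{M_{{}^wP}}}$, hence on the whole $\overline L^{\otimes}_{{}^wP}$ by the single character $f\sum_{\alpha\in S({}^wP)}\lambda_{\alpha,{}^wP}\vert_{Z_{M_{{}^wP}}}=f\theta_{{}^wP}\vert_{Z_{M_{{}^wP}}}$. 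So $\overline L^{\otimes}_{{}^wP}\vert_{Z_{M_{{}^wP}}}$ is a single $Z_{M_{{}^wP}}$-isotypic component, and this component is precisely $C_{{}^wP,{}^wP}$.

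Finally I would substitute and use \eqref{wlp}, which reads $w(\overline L^{\otimes}_P)=\overline L^{\otimes}_{{}^wP}$, equivalently $w^{-1}(\overline L^{\otimes}_{{}^wP})=\overline L^{\otimes}_P$ as representations of $M_P^{\gKQ}=w^{-1}M_{{}^wP}^{\gKQ}w$. Inserting this into the displayed formula above gives the desired isomorphism. There is no real obstacle here: Theorem \ref{decompstrong} does all the work, and Corollary \ref{decomp} is essentially a book-keeping statement made possible by the coincidence, under the hypothesis $P(C_P)={}^wP$, of the twist $w(\lambda)-f\theta^{{}^wP}$ with the central character of the full $\overline L^{\otimes}_{{}^wP}$.
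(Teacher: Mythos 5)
Your proof is correct and takes the same route as the paper: apply Proposition \ref{minimal} to identify $\lambda = fw^{-1}(\theta_G)$, feed $w$ into Theorem \ref{decompstrong}, observe that $\overline L^{\otimes}_{{}^wP}\vert_{Z_{M_{{}^wP}}}$ is a single isotypic component with central character $f\theta_{{}^wP}\vert_{Z_{M_{{}^wP}}}$, and conjugate back via \eqref{wlp}. The only (cosmetic) difference is that you spell out why that central character is unique, whereas the paper simply asserts it.
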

\begin{proof}
If $P(C_P)={}^w P$, then ${\overline L}^{\otimes}_{P(C_P)}\vert_{Z_{M_{{}^wP}}}={\overline L}^{\otimes}_{{}^wP}\vert_{Z_{M_{{}^wP}}}$ has only one isotypic component, corresponding to $f\theta_{{}^wP}\vert_{Z_{M_{{}^wP}}}$. So the corollary follows from Theorem \ref{decompstrong} together with (\ref{wlp}). Note that, by Proposition \ref{minimal}, $C_P$ corresponds to $\lambda=fw^{-1}(\theta_G)$, which is consistent with Theorem \ref{decompstrong} since
\begin{multline*}
(w(\lambda)-f\theta^{P(C_P)})\vert_{Z_{M_{{}^wP}}}=\big(w(fw^{-1}(\theta_G))-f\theta^{{}^wP}\big)\vert_{Z_{M_{{}^wP}}}=f(\theta_G - \theta^{{}^wP})\vert_{Z_{M_{{}^wP}}}\\
=f\theta_{{}^wP}\vert_{Z_{M_{{}^wP}}}.
\end{multline*}
\end{proof}

\begin{rem}\label{gln2}
In this remark, we use that we are working with $G=\GL_n$. We write $M_{P(C_P)}={\rm diag}(M_1,\ldots,M_d)$ for some $d>0$ with $M_i\cong \GL_{n_i}$, and correspondingly $T={\rm diag}(T_1,\ldots,T_d)$, where $T_i$ is the diagonal torus of $\GL_{n_i}$, so that we have $X(T)=\oplus_{i=1}^d X(T_i)$ and $S(P(C_P))=\amalg_{i=1}^d S(M_i)$, where $X(T_i)\defeq \Hom_{\rm Gr}(T_i,{\mathbb G}_{\rm m})$ and $S(M_i)\defeq S(P(C_P))\cap X(T_i)$ is the set of simple roots of $M_i$ (for the Borel subgroup of upper-triangular matrices). Note that $S(M_i)=\emptyset$ if $M_i\cong \GL_1$. For $i\in \{1,\ldots,d\}$ such that $n_i>1$, one easily checks that $\lambda_{\alpha,P(C_P)}\in X(T_i)\subseteq X(T)$ if $\alpha\in S(M_i)$ and that the $\lambda_{\alpha,P(C_P)}\in X(T_i)$ for $\alpha\in S(M_i)$ are fundamental weights for the reductive group $M_i$. For $i\in \{1,\ldots,d\}$ and $\lambda_i\in X(T_i)$, we define $\Lbar_{M_i}(\lambda_i)$ as in (\ref{algbar}) but for the reductive group $M_i$ instead of $G$. When $n_i=1$, we define $\LLbar_i$ to be the trivial representation of $M_i^{\gKQ}\cong {\mathbb G}_{\rm m}^{\gKQ}$, and when $n_i>1$, we define as in (\ref{lbar}) the algebraic representation of $M_i^{\gKQ}$ over $\F$ (seeing $\lambda_{\alpha,P(C_P)}$ in $X(T_i)$):
\begin{equation}\label{lbari}
\LLbar_i\defeq \bigotimes_{\gKQ}\Big(\bigotimes_{\alpha\in S(M_i)}\Lbar_{M_i}(\lambda_{\alpha,P(C_P)})\Big).
\end{equation}
We then clearly have ${\overline L}^{\otimes}_{P(C_P)}\!\cong \bigotimes_{i=1}^d \!\LLbar_i$. Likewise, we have $\theta^{P(C_P)}=\otimes_{i=1}^d(\theta^{P(C_P)})_i$, where $(\theta^{P(C_P)})_i\in X(T_i)$ extends to $\Hom_{\rm Gr}(M_i,{\mathbb G}_{\rm m})$ and where we denote by $\mu_i$ the image in $X(T_i)$ of a character $\mu\in X(T)$.

For any $w\in W(C_P)$, we define $({}^wP)_i$ as the standard parabolic subgroup of $M_i$ which is the image of ${}^wP$ under
\[{}^wP\hookrightarrow P(C_P)\twoheadrightarrow M_{P(C_P)}\twoheadrightarrow M_i\]
(in particular its Levi $M_{({}^wP)_i}$ is the image of $M_{{}^wP}$ under $M_{{}^wP}\hookrightarrow M_{P(C_P)}\twoheadrightarrow M_i$). Applying $w$ to (\ref{decompfort}), it is not difficult to deduce from Theorem \ref{decompstrong} an isomorphism of algebraic representations of $M_{{}^wP}^{\gKQ}\cong \prod_{i=1}^dM_{({}^wP)_i}^{\gKQ}$ over $\F$:
\begin{equation}\label{facteursbis}
w(C_P)\cong \bigotimes_{i=1}^d \Big(C_{w,i}\otimes \big(\underbrace{(\theta^{P(C_P)})_i\otimes \cdots \otimes (\theta^{P(C_P)})_i}_{\gKQ}\big)\Big),
\end{equation}
where \ $C_{w,i}$ \ is \ the \ isotypic \ component \ of \ $\LLbar_i\vert_{Z_{M_{({}^wP)_i}}}$ \ associated \ to $(w(\lambda)-f\theta^{P(C_P)})_i\vert_{Z_{M_{({}^wP)_i}}}$ (thus an $M_{({}^wP)_i}^{\gKQ}$-representation, note that $C_{w,i}$ is trivial if $n_i=1$). If $w'$ is another element in $W(C_P)$, writing $w'=w_{P(C_P)}w$ with $w_{P(C_P)}\in W(P(C_P))$ (Lemma \ref{inclusion}), we have $M_{{}^{w'}\!P}=w_{P(C_P)}M_{{}^wP}w_{P(C_P)}^{-1}$, and thus $w'(C_p)\cong w_{P(C_P)}(w(C_P))$ and $C_{P(C_P),{}^{w'}\!P}\cong w_{P(C_P)}\big(C_{P(C_P),{}^{w}P}\big)$ (as the twist by $\theta^{P(C_P)}\otimes \cdots \otimes \theta^{P(C_P)}$ doesn't involve the choice of $w$). Since $w_{P(C_P)}M_iw_{P(C_P)}^{-1}=M_i$ for all $i$, we get $M_{({}^{w'}\!P)_i}= w_{P(C_P)}M_{({}^wP)_i}w_{P(C_P)}^{-1}$ (inside $M_i$) and deduce for $i\in \{1,\ldots,d\}$ an isomorphism of algebraic representations of $M_{({}^{w'}\!P)_i}^{\gKQ}$ over $\F$ (with notation similar to (\ref{wact})):
\begin{equation}\label{isowi}
C_{w',i}\cong w_{P(C_P)}(C_{w,i}).
\end{equation}
We will avoid applying $w^{-1}$ to $C_{w,i}$ since $w^{-1}M_{P(C_P)}w$ is not in general the Levi subgroup of a standard parabolic subgroup of $G$ (see Remark \ref{ctrex}(ii)), although it indeed contains $M_P$.
\end{rem}

\subsubsection{From one isotypic component to another}

We let $P$ be a standard parabolic subgroup of $G$. We show that, if $C_P$ is an isotypic component of $\LLbar\vert_{Z_{M_{P}}}$, then one can associate to $C_P$ in a natural way another isotypic component $w\cdot C_P$ of $\LLbar\vert_{Z_{M_{P}}}$ for any $w\in W$ such that $w\big(S(P(C_P))\big)\subseteq S$ (see Proposition \ref{w(c)}). Note that, on the contrary to $w(C_P)$, $w\cdot C_P$ is an isotypic component of $\LLbar\vert_{Z_{M_{P}}}$ for the {\it same} standard parabolic subgroup $P$ as $C_P$.

\begin{lem}\label{dom}
Let $\mu\in X(T)$ be a dominant weight. Then $\mu$ occurs in $\LLbar\vert_T$ {\upshape(}for the diagonal embedding of $T$ analogous to {\upshape(\ref{embed}))} if and only if $\mu\leq f\theta_G$ in $X(T)$.
\end{lem}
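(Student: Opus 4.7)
The plan is to argue the two implications separately, both reducing to standard facts about weights of costandard modules. The ``only if'' direction is easy: each $\Lbar(\lambda_\alpha)=(\mathrm{ind}_{B^-}^G\lambda_\alpha)_{/\Z}\otimes_\Z\F$ is a costandard module, so its formal $T$-character is given by the Weyl character formula and every weight $\nu$ of $\Lbar(\lambda_\alpha)\vert_T$ satisfies $\nu\leq\lambda_\alpha$. If $\mu$ occurs in $\LLbar\vert_T$ (via the diagonal embedding $T\hookrightarrow T^{\gKQ}$), write
\[
\mu=\sum_{\sigma\in\gKQ}\sum_{\alpha\in S}\mu_{\sigma,\alpha}
\]
with $\mu_{\sigma,\alpha}$ a weight of $\Lbar(\lambda_\alpha)\vert_T$. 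Summing the $f\cdot|S|$ inequalities $\mu_{\sigma,\alpha}\leq\lambda_\alpha$ over all pairs $(\sigma,\alpha)$ gives $\mu\leq f\sum_{\alpha\in S}\lambda_\alpha=f\theta_G$.

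For the converse, I reduce to characteristic zero. Viewing $\LLbar$ as a $G$-representation through the diagonal embedding $G\hookrightarrow G^{\gKQ}$, each tensor factor $\Lbar(\lambda_\alpha)$ carries a natural $\Z$-structure from the definition (\ref{algbar}); since costandard modules possess good filtrations in every characteristic, the formal $T$-character of $\LLbar$ is independent of the coefficient field. It therefore suffices to realize the dominant weight $\mu\leq f\theta_G$ as a $T$-weight of the analogous object
\[
\bigotimes_{\gKQ}\Big(\bigotimes_{\alpha\in S}L_E(\lambda_\alpha)\Big)
\]
computed over a characteristic-zero field $E$, where $L_E(\lambda_\alpha)$ denotes the irreducible $G$-representation of highest weight $\lambda_\alpha$. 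As a $G$-module over $E$ this tensor product is semisimple and contains $L_E(f\theta_G)$ as a direct summand with multiplicity one (the unique maximal weight $f\theta_G$ occurs with multiplicity one in $\LLbar\vert_T$, coming from the tensor of the highest-weight lines of the factors). The result then follows from the classical fact that every dominant weight $\mu\leq\lambda$ is a $T$-weight of the irreducible $G$-representation $L_E(\lambda)$ in characteristic zero.

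The proof is essentially formal; the only mild point is the base-change invariance of the formal $T$-character of $\LLbar$, which is immediate from the good filtration property of costandard modules (equivalently, from the flatness over $\Z$ of the algebraic induction appearing in (\ref{algbar})).
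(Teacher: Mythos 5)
Your proof is correct and follows essentially the same route as the paper: both reduce to characteristic zero using the $\Z$-flatness of the costandard modules, then reduce to the classical characterization of the weights of the irreducible $L_E(f\theta_G)$ in terms of the dominance order. The paper organizes this slightly differently (it cites \cite[Lemma 2.2.3]{BH} and \cite[Prop.\ 21.3]{Hum} rather than arguing the two directions separately), but the mathematical content is the same.
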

\begin{proof}
Since this statement only concerns weights, we can work in characteristic $0$, i.e.\ with $\LL\defeq \bigotimes_{\gKQ}\big(\bigotimes_{\alpha\in S}L(\lambda_{\alpha})\big)$, where $L(\lambda_\alpha)\defeq \big({\rm ind}_{B^-}^G\lambda_\alpha\big)_{/\Z}\otimes_{\Z}E$ (see (\ref{algbar})). Arguing as in the proof of \cite[Lemma 2.2.3]{BH}, it is equivalent to prove that $\mu$ is a weight of the algebraic representation $L(f\theta_G)$ of $G$. The result then follows from the inequalities $w(\mu)\leq \mu\leq f\theta_G$ for all $w\in W$ (the left ones hold since $\mu$ is dominant and the right ones since $f\theta_G$ is the highest weight) combined with \cite[Prop.21.3]{Hum}.
\end{proof}

\begin{prop}\label{w(c)}
Let $\lambda_P\in X(Z_{M_P})$ be a character of $Z_{M_P}$ which occurs in $\LLbar\vert_{Z_{M_P}}$ {\upshape(}for the diagonal embedding, as usual{\upshape)} with associated isotypic component $C_P$ of $\LLbar\vert_{Z_{M_{P}}}$, and let $w\in W$ such that $w\big(S(P(C_P))\big)\subseteq S$.
\begin{enumerate}
\item For $w_{C_P}\in W(C_P)$ the character of $Z_{M_{P}}$:
  \begin{eqnarray}\label{penible}
    \lambda_P - \big(fw_{C_P}^{-1}(\theta_{ G})+f(ww_{C_P})^{-1}(\theta_{G})\big)\vert_{Z_{M_{P}}}
    \end{eqnarray}
    doesn't depend on $w_{C_P}$.
  \item The character {\upshape(\ref{penible})} corresponds to an isotypic component $w\cdot C_P$ of $\LLbar\vert_{Z_{M_{P}}}$, i.e.\ occurs in $\LLbar\vert_{Z_{M_{P}}}$.
  \item We have $P(w\cdot C_P)={}^{w} P(C_P)$.
  \end{enumerate}
\end{prop}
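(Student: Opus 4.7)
The plan is to prove (i), (ii), (iii) in turn, with (i) being the technical heart of the argument.

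For (i), given $w_{C_P}, w'_{C_P}\in W(C_P)$, I would use Lemma \ref{inclusion} to write $w'_{C_P}=uw_{C_P}$ for some $u\in W(P(C_P))$. The variation of the expression under this change is then $fw_{C_P}^{-1}$ applied to a combination of $(1-u^{-1})(\theta_G)$ and $(1-u^{-1})(w^{-1}(\theta_G))$. By Lemma \ref{utile} (applied to $P(C_P)$), $\theta_G-u^{-1}(\theta_G)=\theta_{P(C_P)}-u^{-1}(\theta_{P(C_P)})$, which lies in $\sum_{\alpha\in S(P(C_P))}\Z\alpha$. The hypothesis $w(S(P(C_P)))\subseteq S$ ensures that $u'\defeq wuw^{-1}\in W({}^wP(C_P))$, so Lemma \ref{utile} (applied to ${}^wP(C_P)$) rewrites the second half as $w^{-1}(\theta_{{}^wP(C_P)}-u'^{-1}(\theta_{{}^wP(C_P)}))$. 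The elementary identity $w^{-1}(\theta_{{}^wP(C_P)})=\theta_{P(C_P)}$, which holds because $w^{-1}$ pulls back the fundamental weights $\lambda_{w(\alpha),{}^wP(C_P)}$ to $\lambda_{\alpha,P(C_P)}$ for $\alpha\in S(P(C_P))$, links the two halves. Restricting everything to $Z_{M_P}$, the constraint $w'_{C_P}\in W(C_P)$ (rather than just $u\in W(P(C_P))$) is what forces the combined contribution to vanish: it restricts $u$ to a subgroup compatible with $w_{C_P}(S(P))\subseteq S(P(C_P))$ (via $u(w_{C_P}(S(P)))\subseteq S$), and Lemma \ref{known} identifies this subgroup as generated by reflections orthogonal to $w_{C_P}(\lambda'_P)$, which ensures $w_{C_P}^{-1}$ sends the root combination into $\sum_{\alpha\in S(P)}\Z\alpha$.

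For (ii), I would exhibit an explicit weight $\mu\in X(T)$ lifting the character in (\ref{penible}) and verify that $\mu$ occurs in $\LLbar|_T$ by applying Lemma \ref{dom}. The natural choice is $\mu\defeq\lambda-fw_{C_P}^{-1}(\theta_G)-f(ww_{C_P})^{-1}(\theta_G)$ for some lift $\lambda\in X(T)$ of $\lambda_P$. To check that a dominant Weyl conjugate of $\mu$ lies $\leq f\theta_G$, I would apply the element $ww_{C_P}$ already at hand and use the dominance of $w_{C_P}(\lambda'_P)$ (from the definition of $W(C_P)$) together with Proposition \ref{parabolicprop}(ii), tracking the support under $w$-conjugation.

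For (iii), I apply Proposition \ref{parabolicprop}(ii) to $w\cdot C_P$: I show that $ww_{C_P}^{-1}$ (for any $w_{C_P}\in W(C_P)$) maps the $W(P)$-average $(w\cdot C_P)'$ to a dominant weight, so belongs to $W(w\cdot C_P)$. The support of $f\theta_G$ minus this dominant weight is then, by direct computation, $w$ applied to the support of $f\theta_G-w_{C_P}(\lambda'_P)$. By Proposition \ref{parabolicprop}(ii) applied to $C_P$, this latter support is $S(P(C_P))$, whence $S(P(w\cdot C_P))=w(S(P(C_P)))=S({}^wP(C_P))$.

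The main obstacle is the final step of (i): showing that $fw_{C_P}^{-1}(\theta_{P(C_P)}-u^{-1}(\theta_{P(C_P)}))$ (and its $w$-conjugate) restricts trivially to $Z_{M_P}$. The essential input is that the set of admissible $u\in W(P(C_P))$ is genuinely smaller than all of $W(P(C_P))$, being controlled both by the stabilizer of the dominant $w_{C_P}(\lambda'_P)$ (via Lemma \ref{known}) and by the requirement $u(w_{C_P}(S(P)))\subseteq S$. Carefully unwinding these two constraints via the structure of the parabolic sub-root-system $w_{C_P}(R(P))\subseteq R(P(C_P))$ is the delicate part; once this is done, (ii) and (iii) follow from structural computations building on Theorem \ref{decompstrong} and Proposition \ref{parabolicprop}.
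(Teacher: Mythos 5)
Your reduction for (i) is on the right track up to the last step: via Lemma~\ref{utile}, the identity $w^{-1}(\theta_{{}^wP(C_P)})=\theta_{P(C_P)}$, and $u'=wuw^{-1}\in W({}^wP(C_P))$, you correctly show that both $(1-u^{-1})(\theta_G)$ and $(1-u^{-1})(w^{-1}(\theta_G))$ equal $\theta_{P(C_P)}-u^{-1}(\theta_{P(C_P)})$. But the final claim --- that the constraint $w'_{C_P}\in W(C_P)$ forces $w_{C_P}^{-1}\big(\theta_{P(C_P)}-u^{-1}(\theta_{P(C_P)})\big)$ to land in $\sum_{\alpha\in S(P)}\Z\alpha$ --- is false, and the appeal to Lemma~\ref{known} does not deliver it. Take $n=3$, $K=\Qp$, $P=B$ and $C_P$ the isotypic component of $\det$ (Example~\ref{exemples}(ii)), so $P(C_P)=G$, $W(C_P)=W$ and $Z_{M_P}=T$; with $w_{C_P}=\Id$, $w'_{C_P}=u=s_{e_1-e_2}$ (and $w=\Id$, as forced), the quantity $\theta_G-u^{-1}(\theta_G)=e_1-e_2$ is a nonzero character of $T$ because $\theta_G$ is regular. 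So with the ``$+$'' inside the parentheses of (\ref{penible}), the expression genuinely depends on $w_{C_P}$ and (i) as written is false.

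What you have in fact found is a sign typo in (\ref{penible}): the ``$+$'' should be a ``$-$'', so the character reads $\lambda_P - fw_{C_P}^{-1}(\theta_G)\vert_{Z_{M_P}}+f(ww_{C_P})^{-1}(\theta_G)\vert_{Z_{M_P}}$. (This is what Remark~\ref{minimalbis} forces, and what the computation at the end of the paper's proof of (iii) uses.) With that sign the variation you computed is the \emph{difference} of your two equal expressions, hence identically zero as an element of $X(T)$ --- no restriction to $Z_{M_P}$ and no constraint on $u$ beyond $u\in W(P(C_P))$ is required. Thus your Lemma~\ref{utile}-based reduction, once the sign is corrected, gives a valid alternative to the paper's proof, which instead observes directly that $w^{-1}(\theta_G)-\theta_G$ is $W(P(C_P))$-invariant because $\langle w^{-1}(\theta_G)-\theta_G,\alpha\rangle=0$ for all $\alpha\in S(P(C_P))$ (using $w(\alpha)\in S$), and then applies Lemma~\ref{inclusion}. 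Your sketches for (ii) and (iii) inherit the sign error in $\mu$, use $ww_{C_P}^{-1}$ where the paper uses $ww_{C_P}$, and leave the dominance and $\le f\theta_G$ verifications unaddressed (the paper conjugates further by a suitable $w_{P(C_P)}\in W(P(C_P))$ for the former), but the overall plan --- applying $ww_{C_P}$ and invoking Lemma~\ref{dom} and Proposition~\ref{parabolicprop}(ii) --- agrees with the paper's.
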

\begin{proof}
(i) For any $\alpha\in S(P(C_P))$ we have (since $w(\alpha)$ is still in $S$)
\begin{eqnarray}\label{0}
\langle w^{-1}(\theta_{G})-\theta_{G},\alpha\rangle=\langle \theta_{G},w(\alpha)\rangle-\langle \theta_{G},\alpha\rangle=1-1=0
\end{eqnarray}
which implies $s_\alpha(w^{-1}(\theta_{ G})-\theta_{ G})=w^{-1}(\theta_{ G})-\theta_{ G}$, and thus for all $w'\in W( P(C_P))$:
\begin{equation}\label{invariant}
w'(w^{-1}(\theta_{ G})-\theta_{ G})=w^{-1}(\theta_{ G})-\theta_{ G}. 
\end{equation}
Let $w'_{C_P}\in W(C_P)$, by Lemma \ref{inclusion} we have $w'_{C_P}w_{C_P}^{-1}\in W( P(C_P))$ and thus by (\ref{invariant}):
\[(w'_{C_P}w_{C_P}^{-1})(w^{-1}(\theta_{G})-\theta_{G})=w^{-1}(\theta_{G})-\theta_{ G}.\]
Applying ${w'_{C_P}}^{\!\!\!\!\!-1}$ we get in particular
\[\big(w_{C_P}^{-1}(w^{-1}(\theta_{ G})-\theta_{ G})\big)\vert_{Z_{M_{ P}}}=\big({w'_{C_P}}^{\!\!\!\!\!-1}(w^{-1}(\theta_{ G})-\theta_{ G})\big)\vert_{Z_{M_{P}}}\]
from which (i) follows.\\
(ii) Let $\lambda\in X(T)$ such that $\lambda\vert_{Z_{M_{P}}}=\lambda_P$. Applying $ww_{C_P}$ to (\ref{penible}), it is sufficient to prove that $f\theta_G-w\big(f\theta_G-w_{C_P}(\lambda)\big)$ occurs in $\LLbar\vert_T$ (since $\LLbar\vert_T$ is acted on by the diagonal action of $W\hookrightarrow W^{\gKQ}$). Recall from Lemma \ref{parabolic}(ii) (and the definition of $P(C_P)$) that
\begin{equation}\label{positive}
f\theta_G-w_{C_P}(\lambda)\ \ \in \sum_{\alpha\in S(P(C_P))}\!\!\!\Z_{\geq 0} \alpha.
\end{equation}
For $\beta=w(\alpha)\in w(S(P(C_P)))$ and {\it any} $w'\in W$, we have
\begin{eqnarray}\label{dominant2}
\langle f\theta_G-w(f\theta_G-w'(\lambda)),\beta\rangle&=&\langle ww'(\lambda),\beta\rangle+f\langle \theta_{ G}-w(\theta_{ G}),\beta\rangle\\
\nonumber &=&\langle ww'(\lambda),\beta\rangle+f\langle w^{-1}(\theta_{ G})-\theta_{ G},\alpha\rangle\\
\nonumber &=&\langle ww'(\lambda),\beta\rangle,
\end{eqnarray}
where the last equality follows from (\ref{0}). This can be rewritten as
\begin{eqnarray}\label{dominant2'}
s_\beta\big(f\theta_G-w(f\theta_G-w'(\lambda))\big)\!\!&=&\!\! f\theta_G-w(f\theta_G-w'(\lambda))-\langle ww'(\lambda),\beta\rangle\beta\\
\nonumber \!\!&=&\!\! f\theta_G-w(f\theta_G-s_\alpha w'(\lambda)).
\end{eqnarray}
Iterating (\ref{dominant2'}), we see that for any $w_{P(C_P)}\in W(P(C_P))$, we have for $w'\in W$ that
\begin{equation}\label{dominant2''}
ww_{P(C_P)}w^{-1}\big(f\theta_G-w(f\theta_G-w'(\lambda))\big)= f\theta_G-w(f\theta_G-w_{P(C_P)}w'(\lambda)).
\end{equation}
Choose $w_{P(C_P)}\in W(P(C_P))$ such that $w_{P(C_P)}(w_{C_P}(\lambda))$ is dominant for the root subsystem generated by $S(P(C_P))$, equivalently
\begin{equation}\label{dominant3}
\langle ww_{P(C_P)}w_{C_P}(\lambda),\beta\rangle\geq 0\ \ \ \forall\ \beta\in w(S(P(C_P))).
\end{equation}
As $\lambda$ occurs in $\LLbar\vert_T$, we get that $w_{P(C_P)}(w_{C_P}(\lambda))\in w_{C_P}(\lambda) + \sum_{\alpha\in S(P(C_P))}\Z \alpha$ occurs in $\LLbar\vert_T$ ($\LLbar$ is stable under $W$), and thus $w_{P(C_P)}(w_{C_P}(\lambda))\leq f\theta_G$. Since on the other hand by (\ref{positive}):
\[f\theta_G-w_{P(C_P)}(w_{C_P}(\lambda))=(f\theta_G-w_{C_P}(\lambda))\ \ +\!\!\sum_{\alpha\in S(P(C_P))}\!\!\!\Z \alpha \ \ \ \in \sum_{\alpha\in S(P(C_P))}\!\!\!\Z \alpha,\]
we see that we must have
\begin{equation}\label{positivebis}
f\theta_G-w_{P(C_P)}w_{C_P}(\lambda)\ \ \ \in \sum_{\alpha\in S(P(C_P))}\!\!\!\Z_{\geq 0} \alpha.
\end{equation}
Since $w(S(P(C_P)))\subseteq S$, we deduce $\langle w(f\theta_{ G}-w_{P(C_P)}w_{C_P}(\lambda)),\beta\rangle\leq 0$ for $\beta\in S\backslash w(S( P(C_P)))$. In particular we have for such $\beta$:
\begin{eqnarray}\label{dominant1}
\langle f\theta_G-w(f\theta_G-w_{P(C_P)}w_{C_P}(\lambda)),\beta\rangle\!&=&\!f-\langle w(f\theta_{G}-w_{P(C_P)}w_{C_P}(\lambda)),\beta\rangle\\
\nonumber \!&\geq &\! f.
\end{eqnarray}
Combining (\ref{dominant2}) for $w'=w_{P(C_P)}w_{C_P}$ with (\ref{dominant3}) and (\ref{dominant1}), we obtain that $f\theta_G-w(f\theta_G-w_{P(C_P)}w_{C_P}(\lambda))$ is a dominant weight. Applying $w$ to (\ref{positivebis}), we also get since $w(S(P(C_P)))\subseteq S$:
\[f\theta_G-w(f\theta_G-w_{P(C_P)}w_{C_P}(\lambda)) \leq f\theta_G.\]
Lemma \ref{dom} then implies that $f\theta_G-w(f\theta_G-w_{P(C_P)}w_{C_P}(\lambda))$ occurs in $\LLbar\vert_T$. By (\ref{dominant2''}) applied with $w'=w_{C_P}$, we finally deduce that $f\theta_G-w(f\theta_G-w_{C_P}(\lambda))$ also occurs in $\LLbar\vert_T$.\\
(iii) By definition $S(P(w\cdot C_P))\subseteq S$ is the union of $w'(S(P))$ and of the support of
\begin{equation}\label{support}
f\theta_G - w'\big(\lambda - fw_{C_P}^{-1}(\theta_{ G})+f(ww_{C_P})^{-1}(\theta_{G})\big)
\end{equation}
for any $w'\in W$ such that $w'(S(P))\subseteq S$ and $w'\big(\lambda - fw_{C_P}^{-1}(\theta_{ G})+f(ww_{C_P})^{-1}(\theta_{G})\big)$ is the restriction to $Z_{M_{{}^{w'}\!\!P}}$ of a dominant weight of $X( T)\otimes_{\Z}\Q$. Consider the case $w'\defeq ww_{C_P}$, since $w_{C_P}(S(P))\subseteq S(P(C_P))$ and $w(S(P(C_P)))\subseteq S$, we get $w'(S(P))\subseteq S$. Let us check that
\[w'\big(\lambda - fw_{C_P}^{-1}(\theta_{ G})+f(ww_{C_P})^{-1}(\theta_{G})\big)=ww_{C_P}(\lambda)-fw(\theta_{ G})+f\theta_{ G}\]
is the restriction to $Z_{M_{{}^{w'}\!\!P}}$ of a dominant weight of $X( T)\otimes_{\Z}\Q$. Let $\lambda'$ as in (\ref{l'}), since $\lambda\vert_{Z_{M_{ P}}}=\lambda'\vert_{Z_{M_{ P}}}$, we have $w'(\lambda)\vert_{Z_{M_{{}^{w'}\!\!P}}}=w'(\lambda')\vert_{Z_{M_{{}^{w'}\!\!P}}}$ and it is enough to prove that $ww_{C_P}(\lambda')-fw(\theta_{ G})+f\theta_{ G}$ is dominant. As in (\ref{dominant2}) we have if $\alpha\in w(S(P(C_P)))$:
\begin{eqnarray*}
\langle ww_{C_P}(\lambda')-fw(\theta_{ G})+f\theta_{ G},\alpha\rangle&=&\langle ww_{C_P}(\lambda'),\alpha\rangle+f\langle \theta_{ G}-w(\theta_{ G}),\alpha\rangle\\
&=&\langle w_{C_P}(\lambda'),w^{-1}(\alpha)\rangle \geq 0
\end{eqnarray*}
since $w_{C_P}(\lambda')$ is dominant in $X(T)\otimes_{\Z}\Q$ by Proposition \ref{parabolicprop}(i), and as in (\ref{dominant1}) we have if $\alpha\in S\backslash w(S( P(C_P)))$:
\begin{equation*}
\langle ww_{C_P}(\lambda')-fw(\theta_{ G})+f\theta_{ G},\alpha\rangle = f-\langle w(f\theta_{ G}-w_{C_P}(\lambda')),\alpha\rangle\geq f
\end{equation*}
since $w\big(f\theta_G-w_{C_P}(\lambda')\big)\in \sum_{\beta\in S(P(C_P))}\Q_{\geq 0} w(\beta)$ from Proposition \ref{parabolicprop}(ii). Now all that remains is to compute (\ref{support}) for $w'=ww_{C_P}$, which gives $w(f\theta_G-w_{C_P}(\lambda))$, the support of which is $w({\rm support}(f\theta_G-w_{C_P}(\lambda)))$. Therefore we obtain
\[S(P(w\cdot C_P))=w\Big(w_{C_P}(S(P)) \cup {\rm support}\big(f\theta_G-w_{C_P}(\lambda)\big)\Big)=w\big(S(P(C_P))\big)\]
which finishes the proof.
\end{proof}

\begin{rem}\label{minimalbis}
If $C_P$ is one of the isotypic components of Proposition \ref{minimal}, say associated to $fw_{C_P}^{-1}(\theta_G)\vert_{Z_{M_P}}$ for some $w_{C_P}\in W$ such that $w_{C_P}(S(P))\subseteq S$, and if $w\in W$ is such that $w(S(P(C_P)))\subseteq S$, i.e.\ $ww_{C_P}(S(P))\subseteq S$, we see from (\ref{penible}) that $w\cdot C_P$ is the isotypic component associated to $f(ww_{C_P})^{-1}(\theta_G)\vert_{Z_{M_P}}$.
\end{rem}

\begin{ex}\label{exemplesbis}
Let us consider Example \ref{exemples}(ii) (Example \ref{exemples}(i) only provides components $C_P$ which are either as in Remark \ref{minimalbis} or such that $P(C_P)=G$). If $P=B$ and $C_P$ is associated to $\lambda_{\Id}=\theta_G$, then $w\cdot C_P$ for $w\in {\mathcal S}_3$ gives the isotypic component associated to $\lambda_w$ (and there is no $w\cdot C_P\ne C_P$ if $C_P$ corresponds to det since $P(C_P)$ is the whole $G$). If $M_P=\GL_2\times \GL_1$, consider $C_P$ associated to $\lambda_0$ and $w\in {\mathcal S}_3$ the unique permutation $e_1\mapsto e_2$, $e_2\mapsto e_3$, $e_3\mapsto e_1$ (so that $w(S(P(C_P)))=w(e_1-e_2)\subseteq S$). Then $w\cdot C_P$ is the isotypic component associated to $\lambda_2$ (here again, there is no $w\cdot C_P\ne C_P$ for $C_P$ corresponding to $\lambda_1$).
\end{ex}

\subsection{Good conjugates of \texorpdfstring{$\rhobar$}{rhobar}}\label{goodconjugate}

Following and generalizing the mod $p$ variant of \cite[\S 3.2]{BH}, we define and study {\it good conjugates} of a continuous $\rhobar:\gK\rightarrow G(\F)$ under a mild assumption on $\rhobar$ (see Definition \ref{gooddef}) and still assuming $K$ unramified. Though some of the results might hold for more general split reductive groups, we use here in the proofs that we work with $\GL_n$.

\subsubsection{Some preliminaries}

We start with a few group-theoretic preliminaries. 

We fix a standard parabolic subgroup $P$ of $G$. Recall that a subset $C\subseteq R^+$ is closed if $\alpha\in C$, $\beta\in C$ with $\alpha+\beta\in R^+$ implies $\alpha+\beta\in C$. For instance $R(P)^+\subseteq R^+$ is closed.

\begin{definit}\label{closedP}
A subset $X\subseteq R^+$ is a {\it closed subset relative to $P$} if it satisfies the following three conditions:
\begin{enumerate}
\item it contains $R(P)^+$;
\item $X\backslash R(P)^+$ is a closed subset of $R^+$;
\item for any $w\in W(P)$, $w(X\backslash R(P)^+)=X\backslash R(P)^+$.
\end{enumerate}
\end{definit}

Note that a closed subset relative to $B$ is the same thing as a closed subset and that $R^+$ is the only closed subset relative to $G$.

\begin{lem}\label{gln}
Let $X\subseteq R^+$ be a closed subset relative to $P$. Then $X$ is a closed subset of $R^+$.
\end{lem}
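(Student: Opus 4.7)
The strategy is to verify the closedness condition directly by splitting into cases depending on which of $R(P)^+$ or $X \setminus R(P)^+$ contains each of $\alpha, \beta$. The only substantive work is the mixed case, where condition (iii) together with the explicit combinatorics of $\GL_n$ comes in.

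First I would write $M_P = \GL_{n_1} \times \cdots \times \GL_{n_d}$ and think of $\{1,\dots,n\}$ as partitioned into consecutive blocks $B_1,\dots,B_d$ of sizes $n_1,\dots,n_d$, so that $R(P)^+ = \{e_i - e_j : i<j,\ i,j \in B_m \text{ for some } m\}$ and $R^+ \setminus R(P)^+ = \{e_i - e_j : i \in B_k,\ j \in B_l,\ k<l\}$. Under this identification $W(P) \cong \mathcal{S}_{n_1} \times \cdots \times \mathcal{S}_{n_d}$ permutes indices within each block, so for $\beta = e_i - e_j \in R^+ \setminus R(P)^+$ with $i \in B_k$, $j \in B_l$, the $W(P)$-orbit of $\beta$ is exactly $\{e_{i'} - e_{j'} : i' \in B_k,\ j' \in B_l\}$.

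Now let $\alpha, \beta \in X$ with $\alpha + \beta \in R^+$. If $\alpha, \beta \in R(P)^+$, then $\alpha + \beta \in R(P)^+ \subseteq X$ since $R(P)^+$ is closed. If $\alpha, \beta \in X \setminus R(P)^+$, then $\alpha + \beta \in X \setminus R(P)^+ \subseteq X$ by condition (ii). In the mixed case, say $\alpha = e_a - e_b \in R(P)^+$ (so $a,b$ in some common block $B_m$) and $\beta = e_i - e_j \in X \setminus R(P)^+$ (with $i \in B_k$, $j \in B_l$, $k<l$). For the sum to be a root one must have either $b = i$ or $a = j$. If $b=i$, then $B_m = B_k$, so $a \in B_k$; the sum $e_a - e_j$ has source in $B_k$ and target in $B_l$, so it lies in the $W(P)$-orbit of $\beta$ and therefore in $X \setminus R(P)^+$ by (iii). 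If $a=j$, then $B_m = B_l$, so $b \in B_l$; the sum $e_i - e_b$ has source in $B_k$ and target in $B_l$, and again lies in the $W(P)$-orbit of $\beta$, hence in $X \setminus R(P)^+$ by (iii). In every case $\alpha + \beta \in X$, so $X$ is closed.

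The only potential pitfall is the mixed case, which is why condition (iii) is actually needed (conditions (i) and (ii) alone would be insufficient). The argument relies on the fact that a root $e_a - e_b \in R(P)^+$ can only be "glued" to $e_i - e_j \in R^+ \setminus R(P)^+$ when the endpoint inside a common block is shared, and the $W(P)$-orbit in $R^+\setminus R(P)^+$ is large enough to contain the resulting sum.
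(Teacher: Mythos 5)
Your proof is correct and takes essentially the same approach as the paper: reduce to the mixed case (one root in $R(P)^+$, one in $X\setminus R(P)^+$) and apply condition (iii). The paper condenses your block-by-block analysis into the single observation that $\alpha+\beta=s_\alpha(\beta)$ with $s_\alpha\in W(P)$, which is exactly what your two subcases ($b=i$ and $a=j$) verify by hand.
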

\begin{proof}
Since we already know that both $R(P)^+$ and $X\backslash R(P)^+$ are closed, it remains to show that if $\alpha\in R(P)^+$ and $\beta\in X\backslash R(P)^+$ are such that $\alpha+\beta\in R^+$, then $\alpha+\beta\in X$. We work with $\GL_n$, and it is then easy to check that $\alpha + \beta=s_\alpha(\beta)$. Since $s_\alpha\in W(P)$, we have $\alpha+\beta\in X\backslash R(P)^+\subseteq X$ by Definition \ref{closedP}(iii).
\end{proof}

\begin{rem}
Note that Lemma \ref{gln} doesn't hold for an arbitrary split connected reductive algebraic group (for instance it doesn't work for ${\rm GSp}_4$). An alternative definition would be to consider closed subsets $Y$ of $R^+\backslash R(P)^+$ such that $Y\cup R(P)$ is also closed.
\end{rem}

If $X\subseteq R^+$ is any closed subset, we let $N_X\subseteq N$ be the Zariski closed algebraic subgroup generated by the root subgroups $N_\alpha$ for $\alpha\in X$ (see \cite[\S II.1.7]{Ja}). Thanks to Lemma \ref{gln}, we can thus consider $N_X$ for any $X\subseteq R^+$ closed relative to $P$.

\begin{lem}\label{closedX}\

  \begin{enumerate}
  \item Let $X$ be a closed subset of $R^+$ relative to $P$. Then $M_PN_X$ is a Zariski closed algebraic subgroup of $P$ containing $M_P$.
  \item Let $\widetilde P\subseteq P$ be a Zariski closed algebraic subgroup containing $M_P$. Then
    there exists a unique closed subset $X$ relative to $P$ such that $\widetilde P=M_P N_X$.
  \end{enumerate}
\end{lem}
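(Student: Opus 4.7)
The plan is to exploit the semidirect decomposition $P = M_P \ltimes N_P$, where $N_P \defeq N_{R^+\setminus R(P)^+}$ is the unipotent radical of $P$. Since $N_{R(P)^+} \subseteq M_P$, the subgroup-theoretic content of both parts concerns closed subgroups of $N_P$ normalized by $M_P$: once $M_P$ normalizes $N_{X\setminus R(P)^+}$, one has $M_P N_X = M_P \cdot N_{X\setminus R(P)^+}$.

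For (i), it thus suffices to show that $M_P$ normalizes $N_{X\setminus R(P)^+}$. Working with $G = \GL_n$, write $M_P \cong \GL_{n_1}\times\cdots\times\GL_{n_d}$; then $\Lie(N_P)$ decomposes as an $M_P$-module into the irreducible $\GL_{n_k}\times\GL_{n_\ell}$-bimodules $\Mat_{n_k\times n_\ell}$ for $k<\ell$, whose $T$-weights are precisely the $W(P)$-orbits in $R^+\setminus R(P)^+$. Hence the $M_P$-stable subspaces of $\Lie(N_P)$ which are direct sums of root spaces are exactly those indexed by unions of $W(P)$-orbits. Since $X\setminus R(P)^+$ is closed in $R^+$ by Definition~\ref{closedP}(ii), the subgroup $N_{X\setminus R(P)^+}$ has Lie algebra $\bigoplus_{\alpha \in X\setminus R(P)^+}\mathfrak{g}_\alpha$; and since this set is $W(P)$-stable by Definition~\ref{closedP}(iii), the Lie algebra is $M_P$-stable, so $N_{X\setminus R(P)^+}$ is normalized by $M_P$.

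For (ii), set $U\defeq \widetilde P\cap N_P$. Using $M_P\cap N_P=\{1\}$ and $M_P\subseteq\widetilde P$, every element of $\widetilde P\subseteq M_P\ltimes N_P$ decomposes as $mn$ with $m\in M_P$ and $n = m^{-1}(mn) \in U$, so $\widetilde P = M_P\cdot U$. The subgroup $U\subseteq N_P$ is closed and normalized by $M_P$ (as both $\widetilde P$ and $N_P$ are), hence in particular by $T$; by the standard correspondence between $T$-stable closed subgroups of $N_P$ and closed subsets of $R^+\setminus R(P)^+$ (transparent for $\GL_n$ from the block-matrix description of $N_P$, cf.\ \cite[\S II.1.7]{Ja}), we have $U = N_Y$ for some closed $Y\subseteq R^+\setminus R(P)^+$. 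The $M_P$-stability of $U$ forces $Y$ to be a union of $W(P)$-orbits (lifting elements of $W(P)$ to $N_G(T)\cap M_P$), so $X\defeq Y \cup R(P)^+$ satisfies the three conditions of Definition~\ref{closedP} and $\widetilde P = M_P N_X$. Uniqueness follows by intersecting the equality $M_P N_X = M_P N_{X'}$ with $N_P$: using $M_P\cap N_P=\{1\}$ we obtain $N_{X\setminus R(P)^+} = N_{X'\setminus R(P)^+}$, whence (reading off Lie algebras) $X\setminus R(P)^+ = X'\setminus R(P)^+$, and so $X = X'$.

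The main potential obstacle is ensuring the $T$-stable subgroup classification in (ii) covers all Zariski closed algebraic subgroups without connectedness hypotheses; in characteristic~$0$ this is a direct consequence of the Lie algebra correspondence, and for $G = \GL_n$ it is clear from the explicit matrix description of $N_P$.
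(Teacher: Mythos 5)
Your proof of part (i) takes a genuinely different route from the paper's. The paper argues directly at the group level: it first observes $M_P N_X = M_P N_{X\setminus R(P)^+}$, then shows $N_{X\setminus R(P)^+}$ is normalized by each simple root subgroup $N_\alpha$ ($\alpha\in R(P)^+$) via the Chevalley commutator formula $n_\alpha n_\beta n_\alpha^{-1} = \big(\prod_{i,j>0} n_{i\alpha+j\beta}\big)n_\beta$, then handles the torus and the Weyl group $W(P)$ separately, and finally invokes the Bruhat decomposition of the reductive group $M_P$ to conclude. You instead argue at the level of the $M_P$-module $\Lie(N_P)=\bigoplus_{k<\ell}\Mat_{n_k\times n_\ell}$. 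This is a cleaner picture for $\GL_n$, but as written it has a gap: the inference ``the Lie algebra $\bigoplus_{\alpha\in X\setminus R(P)^+}\mathfrak g_\alpha$ is $M_P$-stable, \emph{so} $N_{X\setminus R(P)^+}$ is normalized by $M_P$'' is not a valid general implication over $\F$ of characteristic $p$, where a closed subgroup is not determined by (nor does it biject with) its Lie algebra. You flag this issue yourself at the end but then appeal to characteristic $0$, which is not the relevant case here. The fix is easy and stays close to your intuition, but you should phrase it without Lie algebras: Definition~\ref{closedP}(iii) says $X\setminus R(P)^+$ is a union of $W(P)$-orbits, each orbit is the full set of entries in some block $\Mat_{n_k\times n_\ell}$, hence $N_{X\setminus R(P)^+}$ is a product of whole blocks as a subvariety of $N_P$, and $M_P$-conjugation $A_{k\ell}\mapsto g_k A_{k\ell}g_\ell^{-1}$ preserves each block, so $N_{X\setminus R(P)^+}$ is normalized. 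Said this way the argument is direct and characteristic-free.

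For part (ii) your structure matches the paper's closely: decompose $\widetilde P=M_P\cdot(\widetilde P\cap N_P)$, classify the $T$-stable closed subgroup $\widetilde P\cap N_P$ by a closed subset of roots, and use $M_P$-normalization to get $W(P)$-stability. The one weakness is the citation: $[Ja, \S\text{II}.1.7]$ only constructs $N_Y$ from a closed subset $Y$; the converse (every $T$-stable closed subgroup of $N$ is of this form) is a separate nontrivial statement in characteristic $p$, and it is exactly what the paper's reference $[BH,\ \text{Lemma}~3.4.1]$ supplies (applied there to $\widetilde P\cap B\subseteq B$, with the inclusion $R(P)^+\subseteq X$ coming from $M_P\cap N\subseteq\widetilde P\cap N$, rather than to $\widetilde P\cap N_P$ directly as you do). You should cite a result of that type rather than call it ``transparent.'' Modulo these two points your argument is sound and recovers the statement.
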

\begin{proof}
(i) Since $M_P N_X=M_P N_{X\backslash R(P)^+}$, it is enough to prove that $M_P$ normalizes $N_{X\backslash R(P)^+}$. Let $\alpha\in R(P)^+$, $\beta\in {X\backslash R(P)^+}$ and let $n_\alpha\in N_\alpha$, $n_\beta\in N_\beta$. Then
\begin{eqnarray}\label{conjn}
n_\alpha n_{\beta}n_\alpha^{-1}=\big(\prod_{i,j>0}n_{i\alpha+j\beta}\big)n_\beta,
\end{eqnarray}
where the product is over all integers $i,j>0$ such that $i\alpha+j\beta\in R^+$ (see \cite[\S II.1.2]{Ja}). Since $X\subseteq R^+$ is closed, all these $i\alpha+j\beta$ are in $X$, and since $\beta\notin R(P)^+$, they are all in $X\backslash R(P)^+$. Therefore $n_\alpha n_{\beta}n_\alpha^{-1}\in N_{X\backslash R(P)^+}$. Let $w\in W(P)$, $\beta\in X\backslash R(P)^+$ and $n_\beta\in N_\beta$. Then $w(\beta)\in X\backslash R(P)^+$ implies $w n_\beta w^{-1}\in N_{X\backslash R(P)^+}$. The Bruhat decomposition for the reductive group $M_P$ then shows that $M_P$ normalizes $N_{X\backslash R(P)^+}$.\\
(ii) Let $\widetilde P\subseteq P$ be a closed algebraic subgroup containing $M_P$. Then $\widetilde P=M_P(\widetilde P\cap B)=M_P(\widetilde P\cap N)$ (since $T\subseteq M_P\subseteq \widetilde P$). By \cite[Lemma 3.4.1]{BH} applied to $\widetilde P\cap B\subseteq B$, we deduce $\widetilde P\cap N=N_X$ for a (unique) closed subset $X\subseteq R^+$. Since $M_P\cap N\subseteq \widetilde P\cap N$, the set $X$ contains $R(P)^+$. Since $\widetilde P\cap N_P=N_{X\backslash R(P)^+}$, the set $X\backslash R(P)^+$ is closed, and moreover $\widetilde P=M_PN_{X\backslash R(P)^+}$. Since $M_P$ normalizes $N_P$ and $\widetilde P$, it normalizes $\widetilde P\cap N_P=N_{X\backslash R(P)^+}$, from which Definition \ref{closedP}(iii) easily follows.
\end{proof}

\begin{rem}\label{wrP}
(i) The sets $R(P)^+$ and $R^+$ are closed with respect to $P$ (they correspond respectively to $\widetilde P=M_P$ and $\widetilde P=P$ in Lemma \ref{closedX}). In particular, if $X$ is closed with respect to $P$, from $w(R^+\backslash R(P)^+)=R^+\backslash R(P)^+$ and $w(X\backslash R(P)^+)=X\backslash R(P)^+$, we also get $w(R^+\backslash X)=R^+\backslash X$ for all $w\in W(P)$.\\
(ii) If $X\subseteq R^+$ is a closed subset relative to $P$, it follows from the proof of Lemma \ref{closedX}(i) that $M_P$ normalizes $N_{X\backslash R(P)^+}$.
\end{rem}

\begin{lem}\label{orderw}
Let $X\subseteq R^+$ be a closed subset relative to $P$. Then there are roots $\alpha_1,\ldots,\alpha_m\in R^+\backslash X$ such that we have a partition
\[R^+=X\amalg \{w(\alpha_1) : w\in W(P)\}\amalg \cdots \amalg \{w(\alpha_m) : w\in W(P)\}\]
and such that, for all $i$, $\alpha_i$ is not in the smallest closed subset relative to $P$ containing $X$ and the $\alpha_{j}$ for $1\leq j\leq i-1$.
\end{lem}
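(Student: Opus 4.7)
The plan is to proceed by induction on $|R^+\setminus X|$, the case $X=R^+$ being trivial (take $m=0$). For the inductive step, the only real content is the following claim: if $X\subsetneq R^+$ is closed relative to $P$, there exists $\alpha\in R^+\setminus X$ such that $X\cup\{w(\alpha):w\in W(P)\}$ is still closed relative to $P$. Granting this, set $\alpha_1\defeq\alpha$ and $X_1\defeq X\cup W(P)\cdot\alpha_1$; apply the inductive hypothesis to $X_1$ to obtain $\alpha_2,\ldots,\alpha_m$. The required disjoint decomposition follows immediately from the construction $X_i=X\sqcup W(P)\cdot\alpha_1\sqcup\cdots\sqcup W(P)\cdot\alpha_i$. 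The stated minimality property holds because the smallest closed-relative-to-$P$ subset containing $X$ and $\alpha_1,\ldots,\alpha_{i-1}$ must automatically contain each full $W(P)$-orbit $W(P)\cdot\alpha_j$ by condition (iii) of Definition \ref{closedP}, hence contains $X_{i-1}$; conversely $X_{i-1}$ is closed relative to $P$ by construction, so the two sets coincide, and $\alpha_i\in R^+\setminus X_{i-1}$ by choice.

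\textbf{Reduction to a combinatorial statement.} The main obstacle is the claim, where I would exploit that $G=\GL_n$. Write $\{1,\ldots,n\}=B_1\amalg\cdots\amalg B_k$ for the block decomposition associated to $M_P\cong\GL_{n_1}\times\cdots\times\GL_{n_k}$ (each $B_a$ an interval), so that $W(P)\cong S_{n_1}\times\cdots\times S_{n_k}$. The $W(P)$-orbits on $R^+\setminus R(P)^+$ are indexed by pairs $(a,b)$ with $1\le a<b\le k$, namely $O_{(a,b)}\defeq\{e_i-e_j:i\in B_a,\,j\in B_b\}$. A short case analysis, using that the blocks are intervals, shows that no two roots inside a single orbit $O_{(a,b)}$ sum to a root, while a sum of roots in $O_{(a,b)}$ and $O_{(c,d)}$ which is itself a root lies in $O_{(a,d)}$ if $b=c$, in $O_{(c,b)}$ if $a=d$, and does not exist otherwise. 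Since $X$ is forced to be a union of full orbits above $R(P)^+$, it has the form $X=R(P)^+\sqcup\bigsqcup_{(a,b)\in T}O_{(a,b)}$ for some $T=T_X$, and Definition \ref{closedP}(ii) then translates exactly into transitivity of $T$ viewed as a relation on $\{1,\ldots,k\}$ (with pairs going from smaller to larger indices). The claim therefore reduces to the following: given a transitive $T\subsetneq\{(a,b):1\le a<b\le k\}$, find a pair $(a,b)\notin T$ such that $T\cup\{(a,b)\}$ is still transitive.

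\textbf{The combinatorial step.} I would choose $b$ to be maximal with the property that some pair $(a',b)\notin T$ exists with $a'<b$, and then choose $a$ minimal with $(a,b)\notin T$. Transitivity of $T\cup\{(a,b)\}$ reduces to checking the two compositions involving the new pair: for any $(b,e)\in T$ one has $e>b$, so by maximality of $b$ every pair $(\cdot,e)$ with smaller first entry lies in $T$; in particular $(a,e)\in T$. For any $(c,a)\in T$ one has $c<a$, so by minimality of $a$ the pair $(c,b)$ lies in $T$. Both transitivity obligations are therefore met, which yields the required orbit and completes the induction.
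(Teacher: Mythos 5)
Your proof is correct, and it takes a genuinely different route from the paper's. The paper argues by heights: for each $W(P)$-orbit in $R^+\setminus X$ it picks a representative $\alpha_i$ of maximal height within the orbit, orders the $\alpha_i$ so that heights are nonincreasing, and then shows that each running union $X\amalg W(P)\cdot\alpha_1\amalg\cdots\amalg W(P)\cdot\alpha_i$ is closed, since otherwise some $w(\alpha_k)$ with $k>i$ would decompose as a sum of positive roots one of which is an $\alpha_j$ with $j\le i$, forcing $h(w(\alpha_k))>h(\alpha_j)\ge h(\alpha_k)\ge h(w(\alpha_k))$. That argument is root-system-general and does not exploit the $\GL_n$ block combinatorics. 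You instead make the $\GL_n$ structure explicit, identifying the $W(P)$-orbits in $R^+\setminus R(P)^+$ with ordered pairs of blocks, translating closure relative to $P$ into transitivity of the corresponding relation on $\{1,\dots,k\}$, and then extending a transitive relation greedily by adding the pair $(a,b)$ with $b$ maximal then $a$ minimal. Your greedy check is complete (the only compositions involving the new pair are $(a,b)\circ(b,e)$ and $(c,a)\circ(a,b)$, both handled), and the final step — that the minimal closed-relative-to-$P$ set containing $X$ and $\alpha_1,\dots,\alpha_{i-1}$ is exactly $X_{i-1}$ — is justified correctly. The trade-off: the height argument is shorter and generalizes beyond type $A$; your reduction to transitivity is longer to set up but makes the structure of closed-relative-to-$P$ subsets completely transparent (and incidentally shows that in type $A$ these sets are in bijection with transitive relations on the blocks, which the paper's proof does not reveal).
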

\begin{proof}
Since $w(R^+\backslash X)=R^+\backslash X$ for all $w\in W(P)$ (Remark \ref{wrP}(i)), we have a partition $R^+=X\amalg \{w(\alpha_1) : w\in W(P)\}\amalg \cdots \amalg \{w(\alpha_m) : w\in W(P)\}$ for some $\alpha_1,\ldots,\alpha_m\in R^+\backslash X$.
Denote by $h(\cdot)$ the height of a positive root (see e.g.\ \cite[Rem.2.5.3]{BH}). Replacing each $\alpha_i$ by a suitable $w(\alpha_i)$ for $w\in W(P)$, we can assume $h(\alpha_i)$ maximal among the $h(w(\alpha_i))$, $w\in W(P)$. Permuting the $\alpha_i$ if necessary, we can assume $h(\alpha_1)\geq h(\alpha_2)\geq\cdots \geq h(\alpha_m)$. It is enough to prove that each set $X\amalg \{w(\alpha_1) : w\in W(P)\}\amalg \cdots \amalg \{w(\alpha_i) : w\in W(P)\}$ for $1\leq i\leq m$ is closed relative to $P$, or equivalently that $X_i\defeq (X\backslash R(P)^+)\amalg \{w(\alpha_1) : w\in W(P)\}\amalg \cdots \amalg \{w(\alpha_i) : w\in W(P)\}$ satisfies conditions (ii) and (iii) in Definition \ref{closedP} for $1\leq i\leq m$. Since (iii) is clear, let us prove (ii), i.e.\ that each of the $X_i$ is closed in $R^+$.\\
This is obvious if $i=m$ since $R^+\backslash R(P)^+$ is closed, so we can assume $i<m$. If $X_i$ is not closed for some $i<m$, then its complement in $R^+$ contains an element $x$ which is the sum of at least two roots of $X_i$, at least one being in $\{w'(\alpha_j) : w'\in W(P), 1\leq j\leq i\}$ (since $R^+\backslash R(P)^+$ is closed). Such an element $x$ is in $R(P)^+\amalg \{w(\alpha_j) : w\in W(P), i+1\leq j\leq m\}$ and, since $w'(X_i)=X_i$ for $w'\in W(P)$, it also satisfies $w'(x)\in R^+$ for any $w'\in W(P)$. In particular $x$ can't be in $R(P)^+$, and is thus of the form $x=w(\alpha_k)$ for some $k\in \{i+1,\ldots,m\}$ and some $w\in W(P)$. Thus $w(\alpha_k)$ is the sum of at least two roots of $X_i$, one at least being in $\{w'(\alpha_j) : w'\in W(P), 1\leq j\leq i\}$. Applying a convenient $w'\in W(P)$ and using again $w'(X_i)=X_i$, we can modify $w$ if necessary and assume that $\alpha_j$ for some $j\in \{1,\ldots,i\}$ appears in the sum of $w(\alpha_k)$. This implies in particular $h(w(\alpha_{k}))> h(\alpha_j)$ for some $j\leq i$ (see the argument in the proof of \cite[Lemma 3.2.1]{BH}), which is impossible since by assumption $h(w(\alpha_{k}))\leq h(\alpha_{k})\leq h(\alpha_{j})$. Hence $X_i$ is closed for all $i$.
\end{proof}

\begin{lem}\label{w(X)}
Let $X\subseteq R^+$ be a closed subset relative to $P$, $\widetilde P\defeq M_PN_X$ and let $w\in W$ such that $w(S(P))\subseteq S$. Then the following assertions are equivalent:
\begin{enumerate}
\item $w\widetilde Pw^{-1}$ is contained in ${}^{w}P$;
\item $w(X\backslash R(P)^+)\subseteq R^+$.
\end{enumerate}
\end{lem}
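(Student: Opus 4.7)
The plan is to reduce the equivalence to a statement about root subgroups by decomposing $\widetilde P = M_P N_X$ according to Lemma \ref{closedX}(ii).

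First I would observe that since $w(S(P))\subseteq S$, we have $wM_Pw^{-1}=M_{{}^wP}\subseteq {}^wP$, so $w\widetilde Pw^{-1}\subseteq{}^wP$ if and only if $wN_Xw^{-1}\subseteq{}^wP$. Next, since $N_X$ is generated (as a Zariski closed subgroup of $G$) by the root subgroups $N_\alpha$ for $\alpha\in X$, the subgroup $wN_Xw^{-1}$ is generated by the root subgroups $N_{w(\alpha)}$ for $\alpha\in X$. Since ${}^wP$ is an algebraic subgroup, containment of $wN_Xw^{-1}$ in ${}^wP$ is equivalent to containment of each generator, i.e.\ to the condition that $N_{w(\alpha)}\subseteq{}^wP$ for every $\alpha\in X$.

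Now I would recall that the roots occurring in the standard parabolic ${}^wP$ are exactly $R^+\cup(-R({}^wP)^+)=R^+\cup(-w(R(P)^+))$. I would then split the analysis according to whether $\alpha$ lies in $R(P)^+$ or in $X\setminus R(P)^+$. If $\alpha\in R(P)^+$, then $w(\alpha)\in w(R(P)^+)=R({}^wP)^+\subseteq R^+$, so $N_{w(\alpha)}\subseteq{}^wP$ automatically. If $\alpha\in X\setminus R(P)^+$, then $\alpha\in R^+$ forces $-\alpha\notin R^+$, so in particular $-\alpha\notin R(P)^+$; this rules out the possibility $w(\alpha)\in -w(R(P)^+)$. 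Therefore $N_{w(\alpha)}\subseteq{}^wP$ if and only if $w(\alpha)\in R^+$.

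Combining these two cases, $w\widetilde Pw^{-1}\subseteq{}^wP$ is equivalent to $w(\alpha)\in R^+$ for every $\alpha\in X\setminus R(P)^+$, which is precisely condition (ii). I do not expect any real obstacle here: the argument is a direct root-theoretic unwinding once the decomposition $\widetilde P=M_PN_X$ is used, and the only subtle point is the observation that $\alpha\in R^+\setminus R(P)^+$ prevents $w(\alpha)$ from being in $-R({}^wP)^+$, which is what makes the equivalence clean rather than giving merely an implication.
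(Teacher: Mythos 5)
Your proof is correct and follows essentially the same route as the paper's: reduce to the containment of the unipotent part $wN_Xw^{-1}$ in ${}^wP$ by using $wM_Pw^{-1}=M_{{}^wP}\subseteq{}^wP$, and then compare the root subgroups. The only organizational difference is that the paper works directly with $N_{X\setminus R(P)^+}$ (using $\widetilde P=M_PN_{X\setminus R(P)^+}$ from Lemma \ref{closedX}) rather than $N_X$, while you make explicit the observation — left implicit in the paper's terse ``we deduce'' — that for $\alpha\in X\setminus R(P)^+\subseteq R^+$ one cannot have $w(\alpha)\in -R({}^wP)^+$, which is what turns the implication into an equivalence.
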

\begin{proof}
We have
\[w\widetilde Pw^{-1}=(wM_Pw^{-1})(wN_{X\backslash R(P)^+}w^{-1})=(wM_Pw^{-1})N_{w(X\backslash R(P)^+)}.\]
As ${}^wP\!=\!(wM_Pw^{-1})N$, we deduce $w\widetilde Pw^{-1}\subseteq {}^{w}P$ if and only if $w(X\backslash R(P)^+)\subseteq R^+$.
\end{proof}

\subsubsection{Good conjugates of a generic \texorpdfstring{$\rhobar$}{rhobar}}\label{goodconjugatesub}

We define good conjugates of a $\gK$-representation $\rhobar$ under a mild genericity assumption and show how two good conjugates are related (Theorem \ref{choicegood}). The intuitive idea is that conjugating a good conjugate of $\rhobar$ can only increase the image in $G(\F)$.

We fix a continuous homomorphism
\begin{eqnarray}\label{morprho}
\rhobar:\gK\longrightarrow P_{\rhobar}(\F)\subseteq G(\F),
\end{eqnarray}
where $P_{\rhobar}\subseteq G$ is a standard parabolic subgroup. We consider
\[\rhobar^{P_{\rhobar}-\rm ss}:\gK\buildrel \rhobar\over \longrightarrow P_{\rhobar}(\F) \twoheadrightarrow M_{P_{\rhobar}}(\F),\]
and assume that the image of $\rhobar^{P_{\rhobar}-\rm ss}$ is {\it not} contained in the $\F$-points of a proper (not necessarily standard) parabolic subgroup of $M_{P_{\rhobar}}$. This implies in particular that $P_{\rhobar}$ is uniquely determined by the homomorphism $\rhobar$. Finally we let $\rhobar^{\rm ss}$ be the homomorphism $\gK\rightarrow G(\F)$ obtained by composing $\rhobar^{P_{\rhobar}-\rm ss}$ with the inclusion $M_{P_{\rhobar}}(\F)\subseteq G(\F)$ (so $\rhobar^{\rm ss}$ is the usual semisimplification of $\rhobar$). We let $X_{\rhobar}$ be the smallest closed subset of $R^+$ relative to $P_{\rhobar}$ such that $\widetilde P_{\rhobar}(\F)\defeq M_{P_{\rhobar}}(\F) N_{X_{\rhobar}}(\F)$ contains all the $\rhobar(g)$, $g\in \gK$. By Lemma \ref{closedX}, $\widetilde P_{\rhobar}$ is the smallest closed algebraic subgroup of $P_{\rhobar}$ containing $M_{P_{\rhobar}}$ such that $\rhobar$ takes values in $\widetilde P_{\rhobar}(\F)$, i.e.\ $\rhobar:\gK\rightarrow \widetilde P_{\rhobar}(\F)\hookrightarrow P_{\rhobar}(\F)\hookrightarrow G(\F)$. Note that $X_{\rhobar^{\rm ss}}=R(P)^+$ and $\widetilde P_{\rhobar^{P_{\rhobar}-\rm ss}}=M_{P_{\rhobar}}$.

\begin{lem}\label{smallest}
Assume that the irreducible constituents of $\rhobar^{\rm ss}$ of dimension $1$ {\upshape(}i.e.\ the characters of $\gK$ occurring in $\rhobar^{\rm ss}${\upshape)} are all distinct. Let $\alpha\in R^+\backslash X_{\rhobar}$ and $n_\alpha\in N_\alpha(\F)\backslash \{1\}$. Then $X_{n_\alpha\rhobar n_\alpha^{-1}}$ is the smallest closed subset relative to $P_{\rhobar}$ containing $X_{\rhobar}$ and $\alpha$.
\end{lem}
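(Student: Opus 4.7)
The strategy is to prove both inclusions between $Y \defeq X_{n_\alpha \rhobar n_\alpha^{-1}}$ and the set $W$ defined as the smallest closed subset relative to $P_{\rhobar}$ containing $X_{\rhobar}$ and $\alpha$. Note that $W$ is closed in $R^+$ by Lemma \ref{gln}, so $N_W \subseteq N$ is a closed subgroup.

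I would first establish the upper bound $Y \subseteq W$ by decomposing $\rhobar(g) = m(g) n(g)$ with $m(g) \in M_{P_{\rhobar}}(\F)$ and $n(g) \in N_{X_{\rhobar} \setminus R(P_{\rhobar})^+}(\F)$, and rewriting
\[n_\alpha \rhobar(g) n_\alpha^{-1} = m(g) \cdot \bigl(m(g)^{-1} n_\alpha m(g) n_\alpha^{-1}\bigr) \cdot \bigl(n_\alpha n(g) n_\alpha^{-1}\bigr).\]
The middle factor lies in $N_{O_\alpha} \subseteq N_W$, where $O_\alpha$ is the $W(P_{\rhobar})$-orbit of $\alpha$ (contained in $W$ by closedness relative to $P_{\rhobar}$), and the rightmost factor lies in $N_W$ by Chevalley's commutator formula together with the closedness of $W$ in $R^+$. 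Since the Levi projection of $n_\alpha \rhobar n_\alpha^{-1}$ coincides with $\rhobar^{P_{\rhobar}-\rm ss}$, the genericity hypothesis yields $P_{n_\alpha \rhobar n_\alpha^{-1}} = P_{\rhobar}$, so $Y \subseteq W$.

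The crux is then to show $\alpha \in Y$. Write $\alpha = e_i - e_j$ with $i,j$ in distinct Levi blocks $I,J$, and observe that $\alpha \notin X_{\rhobar}$ forces the whole $(I,J)$-block of every $\rhobar(g)$ to vanish. A direct matrix computation with $n_\alpha = \Id + x E_{ij}$ ($x \ne 0$) then shows that the $(I,J)$-block of $n_\alpha \rhobar(g) n_\alpha^{-1}$ has entries $x(\rho_J(g)_{jj} - \rho_I(g)_{ii})$ at position $(i,j)$, $x \rho_J(g)_{jl}$ at $(i,l)$ for $l \in J \setminus \{j\}$, $-x \rho_I(g)_{ki}$ at $(k,j)$ for $k \in I \setminus \{i\}$, and zero elsewhere; here $\rho_I, \rho_J$ denote the Levi factors, which are irreducible by the genericity hypothesis. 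I would then distinguish three cases. If $\dim \rho_J > 1$, irreducibility of $\rho_J^\vee$ prevents $\rho_J(g)_{jl}$ from being identically zero in $g$ for all $l \ne j$ (otherwise the $j$-th coordinate functional would span a proper subrepresentation of $\rho_J^\vee$), so some $(i,l)$-entry above is nonzero for some $g$; symmetrically if $\dim \rho_I > 1$; and if both $\rho_I, \rho_J$ are one-dimensional characters $\chi_I, \chi_J$, the distinct-characters hypothesis yields $\chi_I \ne \chi_J$, making the $(i,j)$-entry $x(\chi_J(g) - \chi_I(g))$ nonzero for some $g$. In every case the $(I,J)$-block of $n_\alpha \rhobar n_\alpha^{-1}$ is not identically zero, so $\alpha \in Y$.

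To conclude, apply the upper bound from the second paragraph to the further conjugation of $n_\alpha \rhobar n_\alpha^{-1}$ by $n_\alpha^{-1} \in N_\alpha(\F)$: this gives $X_{\rhobar}$ contained in the smallest closed subset relative to $P_{\rhobar}$ containing $Y \cup \{\alpha\}$, which equals $Y$ by the previous step. Therefore $Y$ is a closed subset relative to $P_{\rhobar}$ containing $X_{\rhobar}$ and $\alpha$, so $W \subseteq Y$ by minimality of $W$, and combined with the upper bound this gives $W = Y$. The main obstacle is the case analysis in the third paragraph, in particular verifying non-vanishing of the appropriate matrix coefficients using irreducibility of the Levi factors combined with the distinct-characters hypothesis when both blocks at $i$ and $j$ are one-dimensional.
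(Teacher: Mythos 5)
Your proof is correct, and it diverges from the paper's argument in one genuinely useful way. The paper introduces the auxiliary set $\widetilde X_{\rhobar}\subseteq X_{\rhobar}$ of roots that are not sums of at least two roots of $W$, proves $\widetilde X_{\rhobar}\subseteq Y$ by tracking which unipotent coordinates are left untouched by conjugation, separately proves that some $w(\alpha)$ (hence $\alpha$ itself, by $W(P_{\rhobar})$-invariance of $Y\setminus R(P_{\rhobar})^+$) lies in $Y$, and then deduces $W\subseteq Y$ from these two facts. You instead prove $\alpha\in Y$ directly and then conjugate back by $n_\alpha^{-1}\in N_\alpha(\F)\subseteq N_Y(\F)$; since $N_Y$ is normal in $M_{P_{\rhobar}}N_Y$, this immediately gives $X_{\rhobar}\subseteq Y$, and minimality of $W$ finishes the argument. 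Your back-conjugation step cleanly replaces the paper's $\widetilde X_{\rhobar}$ bookkeeping. Your explicit $(I,J)$-block computation for $\alpha\in Y$ is the same phenomenon the paper analyses (the paper splits on whether the $W(P_{\rhobar})$-orbit of $\alpha$ is a singleton, which is equivalent to your split on $\dim\rho_I,\dim\rho_J$), but your version computes the block entries outright rather than arguing via the decomposition into $m_\beta(g)$'s, and so is more transparent. One small point you left implicit: for the matrix computation to be contaminated only by the Levi contribution, you need that no products of roots from $X_{\rhobar}\setminus R(P_{\rhobar})^+$ together with $\alpha$ can land back in the orbit $O_\alpha$; this is exactly what makes the $(I,J)$-block of $\rhobar(g)$ itself vanish and is handled correctly in your second paragraph, but when you invoke the three cases in the third paragraph you are tacitly using that the unipotent part contributes nothing new to that block — which follows from your own zero-block observation, so the logic is sound.
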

\begin{proof}
The proof of this lemma is quite technical, but is no more than simple computations in $\GL_n$. We denote by $X_{\rhobar,\alpha}\subseteq R^+$ the smallest closed subset relative to $P_{\rhobar}$ containing $X_{\rhobar}$ and $\alpha$ and by $\widetilde X_{\rhobar}\subseteq X_{\rhobar}$ the subset of roots which are {\it not} the sum of at least two roots of $X_{\rhobar,\alpha}$. For $g\in \gK$ we can write
\begin{eqnarray}\label{nconj0}
\rhobar(g)=\rhobar^{P_{\rhobar}-\rm ss}(g)\prod_{\beta\in X_{\rhobar}\backslash R(P_{\rhobar})^+}n_\beta(g),
\end{eqnarray}
where $\rhobar^{P_{\rhobar}-\rm ss}(g)\in M_{P_{\rhobar}}(\F)$ and $n_\beta(g)\in N_\beta(\F)$.
Using (\ref{conjn}), we see that
\begin{eqnarray}\label{nconj1}
n_\alpha \Big(\prod_{\beta\in X_{\rhobar}\backslash R(P_{\rhobar})^+}\!\!\!\!\!n_\beta(g)\Big)n_\alpha^{-1}\in \prod_{\gamma}N_\gamma(\F),
\end{eqnarray}
where $\gamma$ runs among the roots in $R^+$ of the form $\Z_{\geq 0}\alpha + \Z_{> 0}\beta_1 + \cdots + \Z_{> 0}\beta_s$ for $s\geq 1$ and $\beta_i\in X_{\rhobar}\backslash R(P_{\rhobar})^+$. This clearly implies $X_{n_\alpha\rhobar n_\alpha^{-1}}\subseteq X_{\rhobar,\alpha}$. To prove the reverse inclusion, it is enough to prove $\widetilde X_{\rhobar}\subseteq X_{n_\alpha\rhobar n_\alpha^{-1}}$ and $w(\alpha)\in X_{n_\alpha\rhobar n_\alpha^{-1}}$ for some $w\in W(P_{\rhobar})$ (as then $\alpha\in X_{n_\alpha\rhobar n_\alpha^{-1}}$ by Remark \ref{wrP}(i)).\\
An easy explicit matrix computation in $\GL_n$ (that we leave to the reader) gives that $n_\alpha\rhobar^{P_{\rhobar}-\rm ss}(g) n_\alpha^{-1}$ is of the form in $\GL_n(\F)$:
\begin{eqnarray}\label{nconj2}
n_\alpha\rhobar^{P_{\rhobar}-\rm ss}(g) n_\alpha^{-1}\in \rhobar^{P_{\rhobar}-\rm ss}(g)\!\!\prod_{\beta\in \{w(\alpha) : w\in W(P_{\rhobar})\}}m_{\beta}(g)
\end{eqnarray}
with $m_\beta(g)\in N_\beta(\F)$ (note that, as $w\in W(P_{\rhobar})$, $w(\alpha)$ is of the form $\alpha + n_1 \alpha_1 + \cdots + n_t \alpha_t$ for some $t\geq 0$, $\alpha_i\in S(P_{\rhobar})$, $n_i\in \Z$). It then follows from (\ref{nconj1}) and (\ref{nconj2}) that, for $\beta\in \widetilde X_{\rhobar}\backslash (\widetilde X_{\rhobar}\cap R(P_{\rhobar})^+)$, the entry $n_\beta(g)$ in (\ref{nconj0}) is not affected by the conjugation by $n_\alpha$. In particular, we have $\widetilde X_{\rhobar}\subseteq X_{n_\alpha\rhobar n_\alpha^{-1}}$.\\
We now prove that $w(\alpha)\in X_{n_\alpha\rhobar n_\alpha^{-1}}$ for some $w\in W(P_{\rhobar})$. We first claim that none of the roots $\gamma$ in (\ref{nconj1}) are in $\{w(\alpha) : w\in W(P_{\rhobar})\}$. Indeed, assume $w(\alpha)=m \alpha + m_1\beta_1 + \cdots + m_s\beta_s$ for some $s\geq 0$, $m\geq 0$, $\beta_i\in X_{\rhobar}\backslash R(P_{\rhobar})^+$, $m_i>0$. If $m=0$, then we get $w(\alpha)=m_1\beta_1 + \cdots + m_s\beta_s\in X_{\rhobar}\backslash R(P_{\rhobar})^+$ since $X_{\rhobar}\backslash R(P_{\rhobar})^+$ is closed in $R^+$, which implies $\alpha\in X_{\rhobar}\backslash R(P_{\rhobar})^+$ by Definition \ref{closedP}(iii), a contradiction. If $m>0$, then we get $(m-1)\alpha + m_1\beta_1 + \cdots + m_s\beta_s = n_1 \alpha_1 + \cdots + n_t \alpha_t$ (writing $w(\alpha)$ as in the above form), which implies in particular all $\beta_i\in R(P_{\rhobar})^+$, a contradiction. We deduce from this that for all $g\in \gK$:
\[n_\alpha\rhobar(g) n_\alpha^{-1}\in n_\alpha\rhobar^{P_{\rhobar}-\rm ss}(g) n_\alpha^{-1}\prod_{\gamma}N_\gamma(\F)\]
with $\gamma$ in $R^+\backslash \big(R(P_{\rhobar})^+\amalg \{w(\alpha) : w\in W(P_{\rhobar})\}\big)$.\\
We can see $\rhobar^{P_{\rhobar}-\rm ss}(g)$ as a block matrix ${\rm diag}(\rhobar_1(g),\ldots,\rhobar_d(g))$, where $\rhobar_i:\gK\rightarrow \GL_{n_i}(\F)$ is irreducible. Assume that $\{w(\alpha) : w\in W(P_{\rhobar})\}\supsetneq \{\alpha\}$. Then using that, for fixed $i$, the $\rhobar_i(g)$ for $g\in \gK$ do not take all values in the $\F$-points of a strict (not necessarily standard) parabolic subgroup of $\GL_{n_i}$, one can check that at least one $m_\beta(g)$ in (\ref{nconj2}) is nontrivial for some $g\in \gK$. If $\{w(\alpha) : w\in W(P_{\rhobar})\}=\{\alpha\}$, then there are integers $1\leq i<j\leq d$ such that $n_i=n_j=1$ and the non-diagonal entry in $m_\alpha(g)$ is $(\rhobar_i(g)-\rhobar_j(g))x_\alpha$, where $x_\alpha\in \F^\times$ is the non-diagonal entry in $n_\alpha$. By assumption, there is at least one $g\in \gK$ such that $\rhobar_i(g)\ne \rhobar_j(g)$, which implies $m_\alpha(g)\ne 1$ for that $g$.\\
Hence we finally deduce that
\[n_\alpha\rhobar(g) n_\alpha^{-1}\in \rhobar^{P_{\rhobar}-\rm ss}(g)\bigg(\prod_{\beta\in \{w(\alpha) : w\in W(P_{\rhobar})\}}\!\!\!\!\!m_{\beta}(g)\bigg)\prod_{\gamma}N_\gamma(\F)\]
with $\gamma$ in $R^+\backslash \big(R(P_{\rhobar})^+\amalg \{w(\alpha) : w\in W(P_{\rhobar})\}\big)$ and at least one $m_\beta(g)$ being nontrivial for some $g\in \gK$ and some $\beta\in \{w(\alpha) : w\in W(P_{\rhobar})\}$. This implies that this $\beta$ is in $X_{n_\alpha\rhobar n_\alpha^{-1}}$ and finishes the proof.
\end{proof}

\begin{prop}\label{minP}
Let $\rhobar:\gK\rightarrow P_{\rhobar}(\F)$ and $X_{\rhobar}$ as below {\upshape(\ref{morprho})}, and assume that the irreducible constituents of $\rhobar^{\rm ss}$ of dimension $1$ are all distinct. Then there is $h_0\in P_{\rhobar}(\F)$ {\upshape(}non unique in general{\upshape)} such that $X_{h_0\rhobar h_0^{-1}}\subseteq X_{h\rhobar h^{-1}}$ for all $h\in P_{\rhobar}(\F)$.
\end{prop}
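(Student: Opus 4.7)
The strategy is to choose $h_0 \in P_{\rhobar}(\F)$ that minimizes the cardinality $|X_{h_0 \rhobar h_0^{-1}}|$ and then show this $h_0$ enjoys the stated universal property. First I would observe that conjugation by $M_{P_{\rhobar}}(\F)$ preserves the set $X_{\rhobar}$: indeed, $M_{P_{\rhobar}}$ normalizes every $N_{X\setminus R(P_{\rhobar})^+}$ for $X$ closed relative to $P_{\rhobar}$ (by Remark \ref{wrP}(ii)), hence it normalizes every $\widetilde{P}_X=M_{P_{\rhobar}}N_{X\setminus R(P_{\rhobar})^+}$, so $X_{m\rhobar' m^{-1}}=X_{\rhobar'}$ for any $m\in M_{P_{\rhobar}}(\F)$ and any $\rhobar'$ valued in $P_{\rhobar}(\F)$. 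Using the Levi decomposition $P_{\rhobar}=M_{P_{\rhobar}}N_{P_{\rhobar}}$, we may restrict attention to conjugation by elements of $N_{P_{\rhobar}}(\F)$.

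Since there are only finitely many closed subsets of $R^+$ relative to $P_{\rhobar}$, the set $\{X_{h\rhobar h^{-1}} : h\in P_{\rhobar}(\F)\}$ is finite, and I pick $h_0$ with $|X_{h_0\rhobar h_0^{-1}}|$ minimal. Writing $\rhobar_0\defeq h_0\rhobar h_0^{-1}$ and $X_0\defeq X_{\rhobar_0}$, the claim becomes: $X_0\subseteq X_{n\rhobar_0 n^{-1}}$ for every $n\in N_{P_{\rhobar}}(\F)$.

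To prove this, I would apply Lemma \ref{orderw} with $X=X_0$ to obtain a filtration $X_0\subsetneq X_1\subsetneq\cdots\subsetneq X_m=R^+$ of closed subsets relative to $P_{\rhobar}$, with $X_i=X_{i-1}\sqcup O_i$ and $O_i=\{w(\alpha_i):w\in W(P_{\rhobar})\}$ a single $W(P_{\rhobar})$-orbit. A direct case analysis in $\GL_n$ on the block structure of $M_{P_{\rhobar}}$ shows that each $O_i$ is closed (no two of its roots sum to a root) and that $N_{X_{i-1}\setminus R(P_{\rhobar})^+}$ is normal in $N_{X_i\setminus R(P_{\rhobar})^+}$ with abelian quotient $N_{O_i}$. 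This yields a set-theoretic decomposition
\[N_{P_{\rhobar}}(\F) = N_{X_0\setminus R(P_{\rhobar})^+}(\F)\cdot N_{O_1}(\F)\cdots N_{O_m}(\F),\]
so every $n\in N_{P_{\rhobar}}(\F)$ factors as $n=n_0 v_1\cdots v_m$ with $n_0\in N_{X_0\setminus R(P_{\rhobar})^+}(\F)\subseteq\widetilde{P}_{\rhobar_0}(\F)$ and $v_i\in N_{O_i}(\F)$.

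Finally, I would analyze the successive conjugations in
\[n\rhobar_0 n^{-1}=n_0(v_1\cdots v_m)\rhobar_0(v_1\cdots v_m)^{-1}n_0^{-1}\]
using Lemma \ref{smallest}: each nontrivial $v_i$ conjugation, when applied to a representation $\rhobar'$ whose $X$-set does not yet contain $O_i$, enlarges the $X$-set to the smallest closed set relative to $P_{\rhobar}$ containing the current $X$-set and $O_i$ (combining Lemma \ref{smallest} with Definition \ref{closedP}(iii), since the elements of $O_i$ lying in $v_i$ commute pairwise because $O_i$ has no internal root sums); conjugation by any element of $\widetilde{P}_X(\F)$ leaves the new $X$-set contained in $X$; and the minimality of $|X_0|$ forces any such intermediate $X$-set to contain $X_0$ (otherwise we would produce a conjugate of $\rhobar$ with strictly smaller $|X|$). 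A careful induction along the filtration then gives $X_0\subseteq X_{n\rhobar_0 n^{-1}}$. The main obstacle is the last induction: the a priori delicate point is that a ``shrinking'' conjugation step (by some $n_\alpha$ with $\alpha$ in the current $X$-set) could in principle push the $X$-set to a closed subset of equal cardinality but not containing $X_0$; ruling this out requires the combinatorial structure of the filtration from Lemma \ref{orderw} together with the minimality of $|X_0|$.
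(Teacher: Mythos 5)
Your normalization step is a valid alternative to the paper's: the paper iteratively conjugates by $N_{X_{\rhobar}\setminus R(P_{\rhobar})^+}(\F)$ to arrange $X_{h\rhobar h^{-1}} = X_{\rhobar}$ for all such $h$, whereas you minimize the cardinality of $X_{h_0\rhobar h_0^{-1}}$ globally over $P_{\rhobar}(\F)$. Your choice in fact produces exactly the stability the paper needs: for $h\in N_{X_0\setminus R(P_{\rhobar})^+}(\F)$ one already has $X_{h\rhobar_0 h^{-1}}\subseteq X_0$, so cardinality-minimality forces equality. Up to here your proposal lines up with, and mildly streamlines, the paper's first two steps, so the key question is whether you can close the induction.

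This is where the argument breaks down, and the cause is the order of your decomposition. Writing $n = n_0 v_1\cdots v_m$ makes the innermost conjugation in $n\rhobar_0 n^{-1}$ the one by $v_m$, then $v_{m-1}$, down to $v_1$, with $n_0$ applied last. But the smallest closed subset relative to $P_{\rhobar}$ containing $X_0$ and $\alpha_m$ is not in general one of the $X_i$ of Lemma \ref{orderw} and may already contain some $\alpha_j$ with $j<m$; for instance in $\GL_4$ with $P_{\rhobar}=B$ and $X_0=\{e_1-e_3\}$ the height ordering gives $\alpha_1=e_1-e_4$, $\alpha_2=e_2-e_4$, $\alpha_3=e_1-e_2$, $\alpha_4=e_2-e_3$, $\alpha_5=e_3-e_4$, and after conjugating by $v_5,v_4,v_3$ the current $X$-set is already all of $R^+$, so $\alpha_2,\alpha_1$ lie inside it, Lemma \ref{smallest} no longer applies, and the $X$-set can shrink. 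This is exactly the obstacle you flag at the end, and it is self-inflicted: if you factor $n = v_m\cdots v_1 n_0$ instead (also a valid unipotent-group decomposition, with different factors), the innermost conjugation is by $n_0$, which leaves $X_0$ fixed by your normalization, and then at the $i$-th stage the current $X$-set is contained in $X_{i-1}$ while $\alpha_i$ lies outside $X_{i-1}$ by the defining condition of Lemma \ref{orderw}; Lemma \ref{smallest} then applies at every step, the $X$-set can only grow, and $X_0\subseteq X_{n\rhobar_0 n^{-1}}$ follows with no further appeal to minimality. That is the paper's argument. As written, your last sentence names the obstacle (``ruling this out requires...'') without ruling it out, and cardinality-minimality alone does not prevent the $X$-set from jumping to an incomparable closed set of the same size, so the proof is incomplete.
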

\begin{proof}
The proof is modelled on that of \cite[Prop.3.2.3]{BH}. Since $M_{P_{\rhobar}}$ normalizes $N_{X_{\rhobar}\backslash R(P_{\rhobar})^+}$ (Remark \ref{wrP}(ii)), it is enough to prove the same statement with $h_0,h\in N_{P_{\rhobar}}(\F)$. Using that $\rhobar^{P_{\rhobar}-\rm ss}(g)^{-1}h\rhobar^{P_{\rhobar}-\rm ss}(g)\in N_{X_{\rhobar}\backslash R(P_{\rhobar})^+}(\F)$ for $h\in N_{X_{\rhobar}\backslash R(P_{\rhobar})^+}(\F)\subseteq N_{P_{\rhobar}}(\F)$ by Remark \ref{wrP}(ii) again, and that $N_{X_{\rhobar}\backslash R(P_{\rhobar})^+}(\F)$ is a group, we deduce $X_{h\rhobar h^{-1}}\subseteq X_{\rhobar}$ for all $h\in N_{X_{\rhobar}\backslash R(P_{\rhobar})^+}(\F)$. Replacing $\rhobar$ by a suitable conjugate $h_0\rhobar h_0^{-1}$ with $h_0\in N_{X_{\rhobar}\backslash R(P_{\rhobar})^+}(\F)$, we can assume $X_{h\rhobar h^{-1}}=X_{\rhobar}$ for all $h\in N_{X_{\rhobar}\backslash R(P_{\rhobar})^+}(\F)$. It is enough to prove $X_{\rhobar} \subseteq X_{h\rhobar h^{-1}}$ for all $h\in N_{P_{\rhobar}}(\F)$. Choosing roots $\alpha_1,\ldots,\alpha_m\in R^+\backslash X_{\rhobar}$ as in Lemma \ref{orderw} (for $P=P_{\rhobar}$ and $X=X_{\rhobar}$), we can write any $h\in N_{P_{\rhobar}}(\F)$ as $h=h_mh_{m-1}\cdots h_1h_{\rhobar}$, where $h_i\in \prod_{\beta\in \{w(\alpha_i) : w\in W(P_{\rhobar})\}}N_{\beta}(\F)$ and $h_{\rhobar}\in N_{X_{\rhobar}\backslash R(P_{\rhobar})^+}(\F)$. We have $X_{h_{\rhobar}\rhobar h_{\rhobar}^{-1}}=X_{\rhobar}$ and a straightforward induction applying successively Lemma \ref{smallest} to $X_{h_{\rhobar}\rhobar h_{\rhobar}^{-1}}$ and $\alpha=\alpha_1$, $X_{h_1h_{\rhobar}\rhobar (h_1h_{\rhobar})^{-1}}$ and $\alpha=\alpha_2$, etc. (which we can do thanks to Lemma \ref{orderw}) gives that $X_{h\rhobar h^{-1}}$ is the smallest closed subset of $R^+$ relative to $P_{\rhobar}$ containing $X_{\rhobar}$ and the $\alpha_i$, $i=1,\ldots,m$. In particular $X_{\rhobar}\subseteq X_{h\rhobar h^{-1}}$ for all $h\in N_{P_{\rhobar}}(\F)$.
\end{proof}

\begin{definit}\label{gooddef}
Let $\rhobar:\gK\longrightarrow G(\F)$ be a continuous homomorphism such that the irreducible constituents of $\rhobar^{\rm ss}$ of dimension $1$ are all distinct. A {\it good conjugate} of $\rhobar$ is a conjugate $\rhobar'$ of $\rhobar$ in $G(\F)$ which satisfies the two conditions:
\begin{enumerate}
\item it is of the form $\rhobar':\gK\rightarrow P_{\rhobar'}(\F)\subseteq G(\F)$ for a standard parabolic subgroup $P_{\rhobar'}$ of $G$ such that the image of ${\rhobar'}^{P_{\rhobar'}-\rm ss}:\gK\buildrel \rhobar'\over \rightarrow P_{\rhobar'}(\F) \twoheadrightarrow M_{P_{\rhobar'}}(\F)$ is not contained in the $\F$-points of a proper parabolic subgroup of $M_{P_{\rhobar'}}$;
\item $X_{\rhobar'}\subseteq X_{h\rhobar' h^{-1}}$ for all $h\in P_{\rhobar'}(\F)$.
\end{enumerate}
\end{definit}

From Proposition \ref{minP}, we easily deduce that good conjugates always exist. If $\rhobar$ is irreducible, then any conjugate of $\rhobar$ in $G(\F)$ is a good conjugate.

For $\rhobar:\gK\longrightarrow \widetilde P_{\rhobar}(\F)\subseteq P_{\rhobar}(\F)$ as in (\ref{morprho}), set
\begin{equation}
\begin{array}{lll}\label{wrhobar}
W_{\rhobar}&\defeq &\{w\in W :  w(S(P_{\rhobar}))\subseteq S\ {\rm and}\ w(X_{\rhobar}\backslash R(P_{\rhobar})^+)\subseteq R^+\}\\
&=&\{w\in W : w(S(P_{\rhobar}))\subseteq S\ {\rm and}\ w\widetilde P_{\rhobar}w^{-1}\subseteq {}^{w}P_{\rhobar}\},
\end{array}
\end{equation}
where the second equality follows from Lemma \ref{w(X)}. Using the definition of $X_{\rhobar}$ we see that, for any $w\in W_{\rhobar}$, we have $X_{w\rhobar w^{-1}}=w(X_{\rhobar})$, where
\[w\rhobar w^{-1}:\gK\longrightarrow w\widetilde P_{\rhobar}(\F)w^{-1}=\widetilde P_{w\rhobar w^{-1}}(\F)\subseteq ({}^{w}P_{\rhobar})(\F).\]
(and note that the set $X_{w\rhobar w^{-1}}$ is relative to ${}^{w}P_{\rhobar}$, while the set $X_{\rhobar}$ is relative to $P_{\rhobar}$).

\begin{lem}\label{goodw}
Let $\rhobar : \gK\rightarrow G(\F)$ as in Definition \ref{gooddef} and $\rhobar':\gK \rightarrow \widetilde P_{\rhobar'}(\F)\subseteq P_{\rhobar'}(\F)$ a good conjugate of $\rhobar$ {\upshape(}where $\widetilde P_{\rhobar'}\defeq M_{P_{\rhobar'}}N_{X_{\rhobar'}}=M_{P_{\rhobar'}}N_{X_{\rhobar'}\backslash R(P_{\rhobar'})^+}${\upshape)}. Then any $h\rhobar' h^{-1}$ for $h\in \widetilde P_{\rhobar'}(\F)$ and any $w\rhobar' w^{-1}$ for $w\in W_{\rhobar'}$ is a good conjugate of $\rhobar$. Moreover we have $X_{h\rhobar'h^{-1}}=X_{\rhobar'}$ and $X_{w\rhobar'w^{-1}}=w(X_{\rhobar'})$.
\end{lem}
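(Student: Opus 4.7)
The plan is to verify the two defining properties of a good conjugate (Definition \ref{gooddef}) for each of the two types of conjugation, treating the asserted equalities $X_{h\rhobar'h^{-1}}=X_{\rhobar'}$ and $X_{w\rhobar'w^{-1}}=w(X_{\rhobar'})$ as organizing facts that essentially follow from earlier material.

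For conjugation by $h\in\widetilde P_{\rhobar'}(\F)$: since $\widetilde P_{\rhobar'}=M_{P_{\rhobar'}}N_{X_{\rhobar'}\setminus R(P_{\rhobar'})^+}$ is a group (Lemma \ref{closedX}(i)), the conjugate $h\rhobar'h^{-1}$ still takes values in $\widetilde P_{\rhobar'}(\F)\subseteq P_{\rhobar'}(\F)$, whence $X_{h\rhobar'h^{-1}}\subseteq X_{\rhobar'}$. Conjugation by $h$ descends to conjugation by the image of $h$ in $M_{P_{\rhobar'}}(\F)$ on the $P_{\rhobar'}$-semisimplification, hence preserves Definition \ref{gooddef}(i) and forces $P_{h\rhobar'h^{-1}}=P_{\rhobar'}$. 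The reverse inclusion $X_{\rhobar'}\subseteq X_{h\rhobar'h^{-1}}$ is precisely Definition \ref{gooddef}(ii) for $\rhobar'$ applied to $h\in P_{\rhobar'}(\F)$. For (ii) on the new representative: for $h'\in P_{\rhobar'}(\F)$ one has $X_{h'(h\rhobar'h^{-1})h'^{-1}}=X_{(h'h)\rhobar'(h'h)^{-1}}\supseteq X_{\rhobar'}=X_{h\rhobar'h^{-1}}$ by (ii) for $\rhobar'$.

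For conjugation by $w\in W_{\rhobar'}$: the paragraph before the lemma records that $w\rhobar'w^{-1}$ takes values in $w\widetilde P_{\rhobar'}w^{-1}(\F)\subseteq({}^wP_{\rhobar'})(\F)$ and that $X_{w\rhobar'w^{-1}}=w(X_{\rhobar'})$, both built into the definition of $W_{\rhobar'}$ via Lemma \ref{w(X)}. Condition (i) for $w\rhobar'w^{-1}$ follows from (i) for $\rhobar'$ because conjugation by $w$ is an isomorphism $M_{P_{\rhobar'}}\simeq M_{{}^wP_{\rhobar'}}$ sending parabolic subgroups to parabolic subgroups, so any proper parabolic of $M_{{}^wP_{\rhobar'}}$ containing the image of the semisimplification of $w\rhobar'w^{-1}$ would pull back to a proper parabolic of $M_{P_{\rhobar'}}$ containing the image of $\rhobar'^{P_{\rhobar'}-\mathrm{ss}}$, contradicting (i) for $\rhobar'$; in particular $P_{w\rhobar'w^{-1}}={}^wP_{\rhobar'}$.

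The main obstacle is condition (ii) for $\rhobar''\defeq w\rhobar'w^{-1}$, because the $P_{\rhobar''}(\F)$-orbit of $\rhobar''$ is not directly identified with the $P_{\rhobar'}(\F)$-orbit of $\rhobar'$ under $w$-conjugation. My approach is to apply Proposition \ref{minP} to $\rhobar''$, obtaining a minimizer $h_0\in P_{\rhobar''}(\F)$ with $X_{h_0\rhobar''h_0^{-1}}\subseteq w(X_{\rhobar'})$, and to argue that equality holds. Setting $Y\defeq w^{-1}(X_{h_0\rhobar''h_0^{-1}})\subseteq X_{\rhobar'}$, I would check that $Y$ is a closed subset of $R^+$ relative to $P_{\rhobar'}$: containment of $R(P_{\rhobar'})^+$ is immediate, closedness of $Y\setminus R(P_{\rhobar'})^+$ in $R^+$ follows from Lemma \ref{gln} combined with the $\GL_n$-specific observation that if $\alpha,\beta,\alpha+\beta\in R^+$ with $w(\alpha),w(\beta)\in R^+$, then $w(\alpha+\beta)\in R^+$ (via the chain description $\alpha=e_i-e_k,\ \beta=e_k-e_j$ with $i<k<j$), and $W(P_{\rhobar'})$-stability follows from the identification $wW(P_{\rhobar'})w^{-1}=W(P_{\rhobar''})$. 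Then $\rhobar_3\defeq w^{-1}(h_0\rhobar''h_0^{-1})w=(w^{-1}h_0w)\rhobar'(w^{-1}h_0w)^{-1}$ lands in $M_{P_{\rhobar'}}(\F)N_Y(\F)\subseteq P_{\rhobar'}(\F)$, so $X_{\rhobar_3}\subseteq Y$. The final and most delicate step is to identify $\rhobar_3$ with a $P_{\rhobar'}(\F)$-conjugate of $\rhobar'$---the element $w^{-1}h_0w$ lies in $G(\F)$ but generally not in $P_{\rhobar'}(\F)$, so I would exploit the genericity assumption on $\rhobar'^{\mathrm{ss}}$ (distinctness of $1$-dimensional constituents together with the $M_{P_{\rhobar'}}$-irreducibility in (i)) to see that the $G(\F)$-centralizer of $\rhobar'^{\mathrm{ss}}$ lies in $P_{\rhobar'}(\F)$, so that $\rhobar_3=h'\rhobar'h'^{-1}$ for some $h'\in P_{\rhobar'}(\F)$. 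Condition (ii) for $\rhobar'$ then yields $X_{\rhobar'}\subseteq X_{\rhobar_3}\subseteq Y\subseteq X_{\rhobar'}$, giving equality throughout and proving (ii) for $\rhobar''$.
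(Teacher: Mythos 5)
Your first case ($h\in\widetilde P_{\rhobar'}(\F)$) is correct and matches the paper's (terse) argument. For the second case ($w\in W_{\rhobar'}$) you take a genuinely different and more roundabout route that has a real gap.

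The paper invokes the \emph{reduction} buried in the proof of Proposition~\ref{minP}: to verify Definition~\ref{gooddef}(ii) for $\rhobar''\defeq w\rhobar'w^{-1}$ it suffices to show $X_{h\rhobar''h^{-1}}=X_{\rhobar''}$ for $h$ ranging only over the \emph{small} group $N_{X_{\rhobar''}\setminus R(P_{\rhobar''})^+}(\F)=N_{w(X_{\rhobar'}\setminus R(P_{\rhobar'})^+)}(\F)$ (the rest follows from Lemmas~\ref{orderw} and~\ref{smallest} as in that proof). For such an $h$, one writes $h\rhobar''h^{-1}=w\big((w^{-1}hw)\rhobar'(w^{-1}h^{-1}w)\big)w^{-1}$, notes $w^{-1}hw\in N_{X_{\rhobar'}\setminus R(P_{\rhobar'})^+}(\F)\subseteq\widetilde P_{\rhobar'}(\F)$, applies the already-settled $\widetilde P_{\rhobar'}(\F)$-case to get $X_{(w^{-1}hw)\rhobar'(w^{-1}h^{-1}w)}=X_{\rhobar'}$, and then applies $w$ to conclude. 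That is the whole argument.

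You instead apply Proposition~\ref{minP} to $\rhobar''$ to produce a minimizer $h_0\in P_{\rhobar''}(\F)$, set $Y=w^{-1}(X_{h_0\rhobar''h_0^{-1}})$ and $\rhobar_3=(w^{-1}h_0w)\rhobar'(w^{-1}h_0w)^{-1}$, and then need to show $\rhobar_3$ is $P_{\rhobar'}(\F)$-conjugate to $\rhobar'$. The step ``the $G(\F)$-centralizer of $\rhobar'^{\rm ss}$ lies in $P_{\rhobar'}(\F)$, so that $\rhobar_3=h'\rhobar'h'^{-1}$ for some $h'\in P_{\rhobar'}(\F)$'' is a non sequitur. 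The centralizer controls the \emph{ambiguity} among intertwiners once one in $P_{\rhobar'}(\F)$ exists; it does not by itself produce one. The natural way to upgrade ``$\rhobar_3=g\rhobar'g^{-1}$ for some $g\in G(\F)$, both land in $P_{\rhobar'}(\F)$ with the same $P_{\rhobar'}$-semisimplification'' into ``$g$ may be taken in $P_{\rhobar'}(\F)$'' is to show that the $\rhobar_3$-stable flag of type $P_{\rhobar'}$ is unique and hence preserved by $g$. That uniqueness requires the irreducible constituents of $\rhobar'^{\rm ss}$ to be pairwise non-isomorphic. But the paper's standing hypothesis (Definition~\ref{gooddef} and the discussion in \S\ref{goodconjugatesub}) is strictly weaker: only the \emph{one-dimensional} constituents are assumed distinct, and a higher-dimensional irreducible can a priori recur. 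For example $\rhobar'^{\rm ss}=\sigma\oplus\sigma$ with $\sigma$ a $2$-dimensional irreducible, sitting in $M_{P_{\rhobar'}}=\GL_2\times\GL_2$, satisfies all of the paper's genericity hypotheses, yet the $\rhobar_3$-stable $\GL_2\times\GL_2$-flag need not be unique. The paper's direct computation sidesteps this entirely and works under the weaker hypothesis. You should replace the centralizer argument by the paper's reduction-plus-computation.
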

\begin{proof}
Again, the proof is formally the same as that of \cite[Lemma 3.2.5]{BH}. The statement is obvious for $h\in \widetilde P_{\rhobar'}(\F)$ (as $h N_{X\backslash R(P)^+} h^{-1}=N_{X\backslash R(P)^+}$ for any $X$ closed subset relative $P$ and any $h\in N_{X\backslash R(P)^+}$) and the very last equality follows from the discussion just above. Following the argument in the proof of Proposition \ref{minP}, it is enough to check
\[X_{h(w\rhobar' w^{-1}){h}^{-1}}=X_{w\rhobar'w^{-1}}\]
for all $h\in N_{X_{w\rhobar' w^{-1}}\backslash R(P_{w\rhobar' w^{-1}})^+}(\F)=N_{w(X_{\rhobar'}\backslash R(P_{\rhobar'})^+)}(\F)$. We have
\[h(w\rhobar' w^{-1}){h}^{-1}=w(w^{-1}hw)\rhobar'(w^{-1}h^{-1}w)w^{-1}.\]
Since $w^{-1}hw\in N_{X_{\rhobar'}\backslash R(P_{\rhobar'})^+}(\F)$, we have $X_{(w^{-1}hw)\rhobar'(w^{-1}h^{-1}w)}\subseteq X_{\rhobar'}$ and since $\rhobar'$ is a good conjugate, we have $X_{\rhobar'}\subseteq X_{(w^{-1}hw)\rhobar'(w^{-1}h^{-1}w)}$, hence $X_{\rhobar'}=X_{(w^{-1}hw)\rhobar'(w^{-1}h^{-1}w)}$. Applying the discussion just before this lemma to $(w^{-1}hw)\rhobar'(w^{-1}h^{-1}w)$ and then to $\rhobar'$, we thus get $X_{h(w\rhobar' w^{-1}){h}^{-1}}=w(X_{(w^{-1}hw)\rhobar'(w^{-1}h^{-1}w)})=w(X_{\rhobar'})=X_{w\rhobar' w^{-1}}$.
\end{proof}

We now state and prove the main result of this section (see \cite[Prop.3.2.6]{BH}).

\begin{thm}\label{choicegood}
Let $\rhobar : \gK\rightarrow G(\F)$ be a continuous homomorphism such that the irreducible constituents of $\rhobar^{\rm ss}$ of dimension $1$ are all distinct. Let $\rhobar'$ and $\rhobar''$ be two good conjugates of $\rhobar$. Then there exist $h\in \widetilde P_{\rhobar'}(\F)$ and $w\in W_{\rhobar'}$ such that $\rhobar''=w(h\rhobar'h^{-1})w^{-1}$. In particular we have $X_{\rhobar''}=w(X_{\rhobar'})$.
\end{thm}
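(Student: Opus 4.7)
The plan is to take any $g_0\in G(\F)$ with $\rhobar''=g_0\rhobar'g_0^{-1}$ and factor it as $g_0=\dot w\, h$ with $w\in W_{\rhobar'}$ and $h\in\widetilde P_{\rhobar'}(\F)$, using the parabolic Bruhat decomposition together with the minimality properties built into the good conjugate condition.

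I begin by applying the decomposition $G(\F)=\coprod_{w}P_{\rhobar''}(\F)\dot w P_{\rhobar'}(\F)$, with $w$ ranging over the set of minimal length representatives of $W(P_{\rhobar''})\backslash W/W(P_{\rhobar'})$, to write $g_0=p_2\dot w p_1$. The identity $p_2^{-1}\rhobar'' p_2=\dot w(p_1\rhobar' p_1^{-1})\dot w^{-1}$ has its $P_{\rhobar''}$-semisimplification on the left a conjugate of $\rhobar''^{P_{\rhobar''}-\rm ss}$, hence by Definition \ref{gooddef}(i) not contained in any proper parabolic of $M_{P_{\rhobar''}}$, while on the right it is valued in $\dot w M_{P_{\rhobar'}}\dot w^{-1}(\F)$. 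A Levi-comparison argument forces $\dot w M_{P_{\rhobar'}}\dot w^{-1}=M_{P_{\rhobar''}}$, equivalently $w(S(P_{\rhobar'}))=S(P_{\rhobar''})\subseteq S$ and ${}^wP_{\rhobar'}=P_{\rhobar''}$. Combined with the minimality of $w$ (which gives $\dot w^{-1}N_{P_{\rhobar''}}\dot w\subseteq N$ and $\dot w N_{P_{\rhobar'}}\dot w^{-1}\subseteq N$), one deduces $\dot w^{-1}P_{\rhobar''}\dot w=P_{\rhobar'}$ and $\dot w P_{\rhobar'}\dot w^{-1}=P_{\rhobar''}$; in particular $g_0=\dot w(\dot w^{-1}p_2\dot w)p_1=\dot w\, h$ for some $h\in P_{\rhobar'}(\F)$, and one also deduces $\dot w\widetilde P_{\rhobar'}\dot w^{-1}=M_{P_{\rhobar''}}N_{w(X_{\rhobar'}\setminus R(P_{\rhobar'})^+)}\subseteq P_{\rhobar''}$, i.e.\ $w(X_{\rhobar'}\setminus R(P_{\rhobar'})^+)\subseteq R^+$, so $w\in W_{\rhobar'}$ by Lemma \ref{w(X)}.

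It remains to show $h\in\widetilde P_{\rhobar'}(\F)$. Consider the auxiliary representation $\rhobar''_0\defeq\dot w\rhobar'\dot w^{-1}$: since $\dot w\widetilde P_{\rhobar'}\dot w^{-1}\subseteq P_{\rhobar''}$, its image sits in $P_{\rhobar''}(\F)$, and the minimal $X$-set of $\rhobar''_0$ relative to $P_{\rhobar''}$ equals $w(X_{\rhobar'})$. Moreover $\rhobar''=(\dot w h\dot w^{-1})\rhobar''_0(\dot w h\dot w^{-1})^{-1}$ with $\dot w h\dot w^{-1}\in\dot w P_{\rhobar'}\dot w^{-1}=P_{\rhobar''}$, so $\rhobar''_0$ is a $P_{\rhobar''}(\F)$-conjugate of $\rhobar''$; Definition \ref{gooddef}(ii) applied to $\rhobar''$ then yields $X_{\rhobar''}\subseteq X_{\rhobar''_0}=w(X_{\rhobar'})$. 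Conjugating back by $\dot w^{-1}$, the image of $h\rhobar' h^{-1}=\dot w^{-1}\rhobar''\dot w$ lies in $M_{P_{\rhobar'}}N_{X_{\rhobar'}\setminus R(P_{\rhobar'})^+}=\widetilde P_{\rhobar'}(\F)$, giving $X_{h\rhobar' h^{-1}}\subseteq X_{\rhobar'}$; the reverse inclusion comes from goodness of $\rhobar'$, so $X_{h\rhobar' h^{-1}}=X_{\rhobar'}$. The iterated application of Lemma \ref{smallest} in the proof of Proposition \ref{minP} (using Lemma \ref{orderw} to order the relevant roots) then forces $h\in\widetilde P_{\rhobar'}(\F)$.

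The concluding equality $X_{\rhobar''}=w(X_{\rhobar'})$ now follows from Lemma \ref{goodw}. The main obstacle is the Levi-comparison in the first step, where one must convert the genericity hypothesis (the "no proper parabolic" condition of Definition \ref{gooddef}(i)) into the precise statement that the Bruhat representative $w$ satisfies ${}^wP_{\rhobar'}=P_{\rhobar''}$; this, together with the careful choice of $w$ as a minimal length double-coset representative, is what allows the auxiliary representation $\rhobar''_0$ to live legitimately inside $P_{\rhobar''}(\F)$ and supply the key upper bound on $X_{\rhobar''}$ through the goodness of $\rhobar''$.
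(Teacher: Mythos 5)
Your strategy — Bruhat decomposition $g_0=p_2\dot w p_1$, Levi comparison via the genericity condition, then use goodness of $\rhobar''$ against the auxiliary good conjugate $\dot w\rhobar'\dot w^{-1}$ — is structurally close to the paper's, but there is a genuine gap in the middle. You assert that minimality of $w$ in the double coset gives $\dot w^{-1}N_{P_{\rhobar''}}\dot w\subseteq N$ and $\dot w N_{P_{\rhobar'}}\dot w^{-1}\subseteq N$, and conclude $\dot w P_{\rhobar'}\dot w^{-1}=P_{\rhobar''}$. This is false. A minimal double-coset representative only satisfies $w(R(P_{\rhobar'})^+)\subseteq R^+$ and $w^{-1}(R(P_{\rhobar''})^+)\subseteq R^+$; it can and typically does send roots in $R^+\setminus R(P_{\rhobar'})^+$ to negative roots. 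Already for $\GL_2$ with $P_{\rhobar'}=P_{\rhobar''}=B$ (a split reducible $\rhobar$) and $w=s$ the simple reflection — which is trivially minimal in its double coset — one has $sBs^{-1}=B^-\neq B$. Indeed, the paper explicitly flags this pitfall when it introduces ${}^wP$ as the \emph{standard} parabolic with simple roots $w(S(P))$ and warns that ${}^wP\ne wPw^{-1}$ for $w\neq 1$. Your Levi comparison legitimately yields ${}^wP_{\rhobar'}=P_{\rhobar''}$, but not $wP_{\rhobar'}w^{-1}=P_{\rhobar''}$.

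This false identity enters your argument at two load-bearing points. First, to absorb $p_2$ and rewrite $g_0=\dot w h$ with $h=(\dot w^{-1}p_2\dot w)p_1\in P_{\rhobar'}(\F)$; this requires $\dot w^{-1}P_{\rhobar''}\dot w\subseteq P_{\rhobar'}$. Second, to place $\dot w h\dot w^{-1}$ inside $P_{\rhobar''}(\F)$ so that goodness of $\rhobar''$ can be applied, giving the crucial bound $X_{\rhobar''}\subseteq w(X_{\rhobar'})$; this requires $\dot w P_{\rhobar'}\dot w^{-1}\subseteq P_{\rhobar''}$. The paper's proof avoids the claim entirely: it keeps both factors $h'\in P_{\rhobar'}(\F)$ and $h''\in P_{\rhobar''}(\F)$, and in its Step 2 it decomposes $h'$ according to the root ordering of Lemma \ref{orderw} and applies Lemma \ref{smallest} inductively to show that the only factors $h'_i$ of $h'$ that are nontrivial sit in root groups corresponding to roots $ww'(\alpha'_i)$ that do land in $R^+$. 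Only after that work does the full conjugating element $h''(h_{m'}\cdots h_1)$ sit in $P_{\rhobar''}(\F)$, which is precisely the fact your argument needs. In other words, that $w$ carries the unipotent support of the conjugating element into $R^+$ is not a consequence of Bruhat minimality; it is a non-trivial conclusion that hinges on the goodness of $\rhobar'$ together with Lemmas \ref{orderw} and \ref{smallest}, and that is the step your proposal omits.
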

\begin{proof}
By assumption there is $x\in G(\F)$ such that $\rhobar''(g)=x\rhobar'(g)x^{-1}$ for all $g\in \gK$. We can write $x=h''wh'$ with $h'\in P_{\rhobar'}(\F)$, $h''\in P_{\rhobar''}(\F)$ and $w\in W$ such that $w(R(P_{\rhobar'})^+)\subseteq R^+$.

\noindent
Step 1: We prove that $w(S(P_{\rhobar'}))=S(P_{\rhobar''})$. We have $wh'\rhobar'(g){h'}^{-1}w^{-1}\in P_{\rhobar''}(\F)$ for all $g\in \gK$, which implies $h'\rhobar'(g){h'}^{-1}\in (w^{-1}P_{\rhobar''}w\cap P_{\rhobar'})(\F)\subseteq P_{\rhobar'}(\F)$ for all $g\in \gK$. In particular, using for instance \cite[Prop.2.1(iii)]{DM}, the image of $h'\rhobar'{h'}^{-1}$ in $M_{P_{\rhobar'}}(\F)$ is contained in the $\F$-points of the parabolic subgroup $w^{-1}P_{\rhobar''}w\cap M_{P_{\rhobar'}}$ of $M_{P_{\rhobar'}}$. But since $(h'\rhobar'{h'}^{-1})^{P_{\rhobar'}-\rm ss}$ is conjugate to ${\rhobar'}^{P_{\rhobar'}-\rm ss}$ (recall $h'\in P_{\rhobar'}(\F)$), the image of $h'\rhobar'{h'}^{-1}$ in $M_{P_{\rhobar'}}(\F)$ is not contained in the $\F$-points of a proper parabolic subgroup of $M_{P_{\rhobar'}}$. Thus we must have $w^{-1}P_{\rhobar''}w\cap M_{P_{\rhobar'}}=M_{P_{\rhobar'}}$ which implies $M_{P_{\rhobar'}}\subseteq w^{-1}M_{P_{\rhobar''}}w$. The same argument starting with $w^{-1}{h''}^{-1}\rhobar''(g){h''}w\in P_{\rhobar'}(\F)$ yields $M_{P_{\rhobar''}}\subseteq wM_{P_{\rhobar'}}w^{-1}$, i.e.\ we have $M_{P_{\rhobar'}}=w^{-1}M_{P_{\rhobar''}}w$. Since by assumption $w(R(P_{\rhobar'})^+)\subseteq R^+$, this forces $w(S(P_{\rhobar'}))=S(P_{\rhobar''})$ (and thus $w(R(P_{\rhobar'})^+)=R(P_{\rhobar''})^+$).

\noindent
Step 2: We choose roots $\alpha'_1,\ldots,\alpha'_{m'}\in R^+\backslash X_{\rhobar'}$ as in Lemma \ref{orderw} (for $P=P_{\rhobar'}$ and $X=X_{\rhobar'}$) and we write $h'=h'_{m'}h'_{m'-1}\cdots h'_1h'_{\rhobar}$, where $h'_i\in \!\prod_{\beta\in \{w'(\alpha'_i) : w'\in W(P_{\rhobar'})\}}\!N_{\beta}(\F)$ and $h'_{\rhobar'}\in \widetilde P_{\rhobar'}(\F)$. By Lemma \ref{goodw}, we can replace $\rhobar'$ by $h'_{\rhobar'}\rhobar'{{h'}^{-1}_{\rhobar'}}$ and thus assume $h'_{\rhobar'}=1$. By Lemma \ref{smallest} and an induction as in the proof of Proposition \ref{minP}, $X_{h'\rhobar' {h'}^{-1}}$ is the smallest closed subset relative to $P_{\rhobar'}$ containing $X_{\rhobar'}$ and those $\alpha'_i$ such that $h'_i\ne 1$. Since $w(h'\rhobar'{h'}^{-1})w^{-1}$ takes values in $P_{\rhobar''}(\F)$ and $w(R(P_{\rhobar'}))=R(P_{\rhobar''})$ (by Step 1), we must also have $w(X_{h'\rhobar' {h'}^{-1}}\backslash R(P_{\rhobar'})^+) \subseteq R^+\backslash R(P_{\rhobar''})^+$. This implies $ww'(\alpha'_i)\in R^+$ if $w'\in W(P_{\rhobar'})$ and $h'_i\ne 1$, and $w(X_{\rhobar'}\backslash R(P_{\rhobar'})^+) \subseteq R^+$. In particular $w\in W_{\rhobar'}$ together with Step 1.

\noindent
Step 3: We prove that $X_{\rhobar''}=w(X_{\rhobar'})$. Setting
\[h_i\defeq wh'_iw^{-1}\ \ \in \!\!\!\prod_{\beta\in \{ww'(\alpha'_i) : w'\in W(P_{\rhobar'})\}}\!\!\!\!N_{\beta}(\F)\ \ \subseteq \ \ P_{\rhobar''}(\F)\] (we proved $ww'(\alpha'_i)\in R^+$ in Step 2), we have
\begin{eqnarray}\label{newconj}
\rhobar''= h''(h_{m'}\cdots h_1)(w\rhobar'w^{-1})(h_1^{-1}\cdots h_{m'}^{-1}){h''}^{-1},
\end{eqnarray}
where $h''h_{m'}\cdots h_1\in P_{\rhobar''}(\F)$ and where $\rhobar''$ and $w\rhobar'w^{-1}$ are good conjugates of $\rhobar$ (the latter by Lemma \ref{goodw}). Applying Definition \ref{gooddef} to both $\rhobar''$ and $w\rhobar'w^{-1}$, we get $X_{\rhobar''}=X_{w\rhobar'w^{-1}}=w(X_{\rhobar'})$ (and thus $w^{-1}\widetilde P_{\rhobar''}w=\widetilde P_{\rhobar'}$).

\noindent
Step 4 : We complete the proof. We choose again roots $\alpha''_1,\ldots,\alpha''_{m''}\in R^+\backslash X_{w\rhobar'w^{-1}}$ as in Lemma \ref{orderw} for $P=P_{w\rhobar'w^{-1}}=P_{\rhobar''}$ (this latter equality from Remark \ref{zariski}) and $X=X_{w\rhobar'w^{-1}}=X_{\rhobar''}$ and we write
\[h''(h_{m'}\cdots h_1)=h''_{m''}h''_{m''-1}\cdots h''_1h''_{X_{\rhobar''}},\]
where $h''_i\in \prod_{\beta\in \{w''(\alpha''_i) : w''\in W(P_{\rhobar''})\}}N_{\beta}(\F)$ and $h''_{X_{\rhobar''}}\in \widetilde P_{w\rhobar'w^{-1}}(\F)=\widetilde P_{\rhobar''}(\F)$. From (\ref{newconj}) and Lemma \ref{smallest}, we see that we must have $h''_i=1$ for all $i\in \{1,\ldots,m''\}$ otherwise $X_{\rhobar''}$ would be strictly bigger that $X_{w\rhobar'w^{-1}}$. Thus we deduce
\[\rhobar''=h''_{X_{\rhobar''}}w\rhobar'w^{-1}{h''}^{-1}_{\!\!\!\!X_{\rhobar''}}=w(w^{-1}h''_{X_{\rhobar''}}w)\rhobar'(w^{-1}{h''}^{-1}_{\!\!\!\!X_{\rhobar''}}w)w^{-1}.\]
Setting $h\defeq w^{-1}h''_{X_{\rhobar''}}w\in w^{-1}\widetilde P_{\rhobar''}(\F)w=\widetilde P_{\rhobar'}(\F)$, this finishes the proof.
\end{proof}

\subsection{The definition of compatibility}\label{formalism}

Given a sufficiently generic $n$-dimensional representation of $\gK$ over $\F$ (where $K=\Qpf$ is still unramified) and a good conjugate $\rhobar$ of this representation as in Definition \ref{gooddef}, we define what it means for a smooth representation of $G(K)$ over $\F$ to be {\it compatible with $\widetilde P_{\rhobar}$} (Definition \ref{compatible1}, see the beginning of \S\ref{goodconjugatesub} for $\widetilde P_{\rhobar}$) and to be {\it compatible with ${\rhobar}$} (Definition \ref{compatible2}).

\subsubsection{Compatibility with \texorpdfstring{$\widetilde P$}{\textbackslash tilde P}}\label{compatible1sec}

We first define what it means for a smooth representation of $G(K)$ over $\F$ to be compatible with a Zariski closed subgroup $\widetilde P$ of a standard parabolic subgroup $P$ as in Definition \ref{goodsubqt}. We keep the notation of \S\S\ref{goodcomponent},~\ref{goodconjugate}. 

We fix a Zariski closed algebraic subgroup $\widetilde P$ of a standard parabolic subgroup $P$ of $G$ as in Definition \ref{goodsubqt} (by Remark \ref{zariski}, $P$ is in fact determined by $\widetilde P$). We let $X$ be the unique closed subset of $R^+$ relative to $P$ such that $\widetilde P=M_PN_X$ (Lemma \ref{closedX}) and define
\[W_{\widetilde P}\defeq \{w\in W : w(S(P))\subseteq S,\ w(X\backslash R(P)^+)\subseteq R^+\}.\]
Note that $W_{\widetilde P}$ is analogous to $W_{\rhobar}$ in (\ref{wrhobar}) with $\widetilde P_{\rhobar}$ replaced by $\widetilde P$.

Let $Q$ be a parabolic subgroup containing ${}^{w_{\widetilde P}}\!P$ for some $w_{\widetilde P}\in W_{\widetilde P}$, $w_Q$ an element of $W$ such that $w_Q(S(Q))\subseteq S$ and $Q'$ a parabolic subgroup containing ${}^{w_Q} Q$ (note that both $Q$ and $Q'$ are standard). So we have inclusions of standard parabolic subgroups ${}^{w_Qw_{\widetilde P}}\!P\subseteq {}^{w_Q} Q\subseteq Q'$ and likewise for the Levi subgroups
\[M_{{}^{w_Qw_{\widetilde P}}\!P}=w_Qw_{\widetilde P}M_P(w_Qw_{\widetilde P})^{-1}\subseteq M_{{}^{w_Q} Q}=w_QM_Qw_Q^{-1}\subseteq M_{Q'}.\]
Using that we work with $\GL_n$, we write
\[M_{Q'}={\rm diag}(M_1,\ldots ,M_d)\]
with $M_i\cong \GL_{n_i}$ and we define the standard parabolic subgroup $({}^{w_Q}Q)_i$ of $M_i$ as
\[({}^{w_Q}Q)_i\defeq \mathrm{Im}\big({}^{w_Q}Q\hookrightarrow Q'\twoheadrightarrow M_{Q'}\twoheadrightarrow M_i\big).\]
We define a standard parabolic subgroup $({}^{w_Qw_{\widetilde P}}\!P)_{Q}$ of $M_{{}^{w_Q}Q}$, resp.\ a standard parabo\-lic subgroup $({}^{w_Qw_{\widetilde P}}\!P)_{Q,i}$ of $M_{({}^{w_Q}Q)_i}$, as the image of ${}^{w_Qw_{\widetilde P}}\!P$ via ${}^{w_Qw_{\widetilde P}}\!P\subseteq{}^{w_Q}Q\twoheadrightarrow M_{{}^{w_Q}Q}$, resp.\ via ${}^{w_Qw_{\widetilde P}}\!P\subseteq{}^{w_Q}Q\twoheadrightarrow M_{{}^{w_Q}Q}\twoheadrightarrow M_{({}^{w_Q}Q)_i}$. Equivalently,
\begin{eqnarray*}
({}^{w_Qw_{\widetilde P}}\!P)_{Q}&=&w_Q({}^{w_{\widetilde P}}\!P\cap M_Q)w_Q^{-1}\subseteq w_QM_Qw_Q^{-1}=M_{{}^{w_Q}Q}\\
({}^{w_Qw_{\widetilde P}}\!P)_{Q,i}&=&\mathrm{Im}\big(w_Q({}^{w_{\widetilde P}}\!P\cap M_Q)w_Q^{-1}\subseteq M_{{}^{w_Q}Q}\twoheadrightarrow M_{({}^{w_Q}Q)_i}\big).
\end{eqnarray*}
Note that
\[M_{({}^{w_Qw_{\widetilde P}}\!P)_{Q}}=w_QM_{({}^{w_{\widetilde P}}\!P)\cap M_Q}w_Q^{-1}=w_Qw_{\widetilde P}M_P(w_Qw_{\widetilde P})^{-1}.\]
We finally define a Zariski closed algebraic subgroup $({}^{w_Qw_{\widetilde P}}\!\widetilde P)_{Q}$ of $({}^{w_Qw_{\widetilde P}}\!P)_{Q}$ containing $M_{({}^{w_Qw_{\widetilde P}}\!P)_{Q}}$, resp.\ a Zariski closed algebraic subgroup $({}^{w_Qw_{\widetilde P}}\!\widetilde P)_{Q,i}$ of $({}^{w_Qw_{\widetilde P}}\!P)_{Q,i}$ containing $M_{({}^{w_Qw_{\widetilde P}}\!P)_{Q,i}}$, as
\begin{eqnarray*}
({}^{w_Qw_{\widetilde P}}\!\widetilde P)_{Q}\!&\!\defeq \!&\!w_Q\big((w_{\widetilde P}\widetilde Pw_{\widetilde P}^{-1})\cap M_Q\big)w_Q^{-1}\subseteq w_Q({}^{w_{\widetilde P}}\!P\cap M_Q)w_Q^{-1}=({}^{w_Qw_{\widetilde P}}\!P)_{Q}\\
({}^{w_Qw_{\widetilde P}}\!\widetilde P)_{Q,i}\!&\!\defeq \!&\!\mathrm{Im}\Big(w_Q\big((w_{\widetilde P}\widetilde Pw_{\widetilde P}^{-1})\cap M_Q\big)w_Q^{-1}\subseteq M_{{}^{w_Q}Q}\twoheadrightarrow M_{({}^{w_Q}Q)_i}\Big).
\end{eqnarray*}
We also define the continuous group homomorphism
\begin{eqnarray*}
\omega^{-1}\circ\theta^{Q'}&:&{Q'}^-(K)\longrightarrow M_{Q'}(K)\buildrel\theta^{Q'}\over\longrightarrow K^\times \buildrel \omega^{-1}\over \longrightarrow \Fp^\times\hookrightarrow \F^\times,
\end{eqnarray*}
where $\theta^{Q'}$ is defined in (\ref{thetaP}) (applied with $P=Q'$).

We need a quite formal and easy lemma.
 
\begin{lem}\label{cons1}
Let $\Pi$ be a smooth representation of a $p$-adic analytic group over $\F$ which has finite length and distinct absolutely irreducible constituents. Let $H$ be a split connected reductive algebraic group over $\Z$, $P_H\subseteq H$ a parabolic subgroup with Levi $M_{P_H}$, $\widetilde P_H\subseteq P_H$ a Zariski closed algebraic subgroup containing $M_{P_H}$ and $R$ a {\upshape(}finite-dimensional{\upshape)} algebraic representation of $P_H^{\gKQ}$ over $\F$. Assume that there exist
\begin{enumerate}[(a)]
\item a filtration on $R$ by good subrepresentations for the $P_H^{\gKQ}$-action {\upshape(}see Definition \ref{goodsubqt}{\upshape)} such that the graded pieces exhaust the isotypic components of $R\vert_{Z_{M_{P_H}}}$;
\item a bijection $\Phi$ of partially ordered finite sets between the set of subre\-presentations of $\Pi$ and the set of good subrepresentations of $R\vert_{{\widetilde P_H}^{\gKQ}}$ {\upshape(}both being ordered by inclusion{\upshape)}.
\end{enumerate}
Then the following hold:
\begin{enumerate}
\item The bijection $\Phi$ uniquely extends to bijections between subquotients of $\Pi$ and good subquotients of $R\vert_{{\widetilde P_H}^{\gKQ}}$, and between irreducible constituents of $\Pi$ and isotypic components of $R\vert_{Z_{M_{P_H}}}$.
\item If $\Pi'$ is a subquotient of $\Pi$, then $\Phi$ induces a bijection of partially ordered finite
  sets between the set of subre\-presentations of $\Pi'$ and the set of good subrepresentations of
  $\Phi(\Pi')\vert_{{\widetilde P_H}^{\gKQ}}$.
\end{enumerate}
\end{lem}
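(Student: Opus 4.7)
The proof is a formal lattice-theoretic argument; I sketch the main steps.

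First, since $\Phi$ is a poset isomorphism between two finite lattices---namely $\mathcal{L}(\Pi)$, the lattice of subrepresentations of $\Pi$ under sum and intersection, and $\mathcal{L}$, the lattice of good subrepresentations of $R\vert_{\widetilde{P}_H^{\gKQ}}$ (one checks directly from Definition \ref{goodsubqt} that good subrepresentations are closed under $\cap$ and $+$, since the isotypic components $C_i$ of $R\vert_{Z_{M_{P_H}}}$ are distinct $Z_{M_{P_H}}$-weight spaces)---it is automatically a lattice isomorphism, and in particular preserves covering relations.

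Second, I would show that both lattices are distributive. For $\mathcal{L}(\Pi)$ this uses only the distinct-constituents hypothesis: a direct multiplicity count in the exact sequence $0 \to X \cap Y \to X \oplus Y \to X+Y \to 0$ shows $\mathrm{supp}(X \cap Y) = \mathrm{supp}(X) \cap \mathrm{supp}(Y)$ and $\mathrm{supp}(X+Y) = \mathrm{supp}(X) \cup \mathrm{supp}(Y)$, so $X \mapsto \mathrm{supp}(X)$ embeds $\mathcal{L}(\Pi)$ as a sublattice of a Boolean lattice. For $\mathcal{L}$, any good subrepresentation $X$ equals $\bigoplus_{i \in I(X)} C_i$ as $\F$-subspace of $R$ (where $I(X)$ records which isotypic-component characters appear in $X$), so $X \mapsto I(X)$ is an injection into the Boolean lattice compatible with $\cap$ and $+$, and the filtration in assumption (a) guarantees that each isotypic component is actually achieved as the graded piece of a cover, so both lattices have $r$ join-irreducibles. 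By Birkhoff's representation theorem, the join-irreducibles of $\mathcal{L}(\Pi)$ (resp.\ $\mathcal{L}$) are then in canonical bijection with the irreducible constituents of $\Pi$ (resp.\ the isotypic components of $R\vert_{Z_{M_{P_H}}}$), and since a lattice isomorphism restricts to a bijection on join-irreducibles, $\Phi$ induces the desired bijection between irreducible constituents and isotypic components, proving one half of (i).

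Third, the bijection on subquotients is obtained by extending $\Phi$ to intervals via $[A,B] \mapsto [\Phi(A),\Phi(B)]$, viewing each subquotient as such an interval. This is clearly a bijection on intervals, and descends well-definedly to isomorphism classes because, on both sides, the isomorphism class of a subquotient $B/A$ is determined by the sublattice $[A,B]$ together with the labeling of its covers by irreducible constituents (resp.\ isotypic components)---and both pieces of data are preserved by the lattice isomorphism $\Phi$. Uniqueness of the extension is forced by the values of $\Phi$ on the two endpoints. Finally, for (ii), subrepresentations of $\Pi' = B/A$ correspond to elements $X$ of the sublattice $[A,B] \subseteq \mathcal{L}(\Pi)$, and $\Phi$ maps these bijectively to elements $Y$ of $[\Phi(A),\Phi(B)]$; one checks from Definition \ref{goodsubqt} that $Y/\Phi(A)$ is a good subrepresentation of $\Phi(\Pi') = \Phi(B)/\Phi(A)$ exactly when $Y$ is itself good in $R\vert_{\widetilde{P}_H^{\gKQ}}$, since both conditions amount to the same requirement on $Z_{M_{P_H}}$-restrictions. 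The main (mild) obstacle throughout is the verification of distributivity for $\mathcal{L}$, where one must combine the $Z_{M_{P_H}}$-weight-space decomposition with the filtration in (a) to realize each isotypic component as a bona fide join-irreducible.
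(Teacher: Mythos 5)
The paper's own proof is simply the sentence ``Formal and left to the reader,'' so there is no written argument to compare against; your proposal is a correct and reasonably detailed concretization of it. The key structural points are all in place: a good subrepresentation is literally $\bigoplus_{i\in I}C_i$ as an $\F$-subspace, so the poset $\mathcal{L}$ of good subrepresentations is a sublattice of the Boolean lattice on the set of isotypic components; the distinct-constituents hypothesis makes the support map $X\mapsto\mathrm{JH}(X)$ an injective lattice morphism (the multiplicity count gives additivity of the support on $\cap$ and $+$, and the same count applied to $X\cap Y\subseteq X$ gives injectivity), so $\mathcal{L}(\Pi)$ is distributive as well; an order isomorphism between lattices automatically preserves joins and meets; assumption (a) supplies a maximal chain in $\mathcal{L}$ of length equal to the number of isotypic components, forcing $\mathcal{L}$ to have full height (so every cover in $\mathcal{L}$ adds exactly one isotypic component and every component is reached); and Birkhoff's theorem then identifies join-irreducibles of $\mathcal{L}(\Pi)$ with constituents and join-irreducibles of $\mathcal{L}$ with isotypic components, giving the bijection in (i). The extension to subquotients by $[A,B]\mapsto[\Phi(A),\Phi(B)]$, the use of the second isomorphism theorem (perspectivity in a distributive lattice) for well-definedness on isomorphism classes, and the argument for (ii) that good subrepresentations of $\Phi(B)/\Phi(A)$ are exactly those $Y/\Phi(A)$ with $Y$ good and $\Phi(A)\subseteq Y\subseteq\Phi(B)$ (because goodness of $Y/\Phi(A)$ together with goodness of $\Phi(A)$ and semisimplicity over the torus forces $Y$ to be good) are all sound.

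One phrase is stated too strongly: the isomorphism class of an abstract finite-length module is \emph{not} determined by its labeled lattice of subobjects (non-isomorphic extensions of the same two simples have the same labeled lattice). What is true, and what you actually need and use, is that for two subquotients $B/A$ and $B'/A'$ of a \emph{single} finite-length $\Pi$ with distinct constituents and the same support, the perspectivity maps $X\mapsto X+A'$ etc.\ supply a canonical isomorphism $B/A\cong B'/A'$; so the support set already determines the subquotient up to canonical isomorphism on both sides, and $\Phi$ intertwines the two support functions through the join-irreducible bijection. With that reading the argument is complete.
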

\begin{proof}
Formal and left to the reader.
\end{proof}

\begin{rem}\label{applied}
(i) Let $\Pi$ and $\Phi$ as in Lemma \ref{cons1}, $\Pi'$ a subquotient of $\Pi$ and $\Pi''\subseteq \Pi'$ a subrepresentation. Then the bijection $\Phi$ also induces a short exact sequence $0\rightarrow \Phi(\Pi'')\rightarrow \Phi(\Pi')\rightarrow \Phi(\Pi'/\Pi'')\rightarrow 0$ of algebraic representation of $\widetilde P_H^{\gKQ}$ over $\F$.\\
(ii) By Lemma \ref{filtr} applied with $P$ there being the parabolic ${}^{w_{\widetilde P}}\!P$ above, we see that Lemma \ref{cons1} can be applied with $H=G$, $P_H={}^{w_{\widetilde P}}\!P$, $\widetilde P_H=w_{\widetilde P}\widetilde Pw_{\widetilde P}^{-1}$ and $R=\LLbar$. Using moreover Lemma \ref{Q}, one easily sees that Lemma \ref{cons1} can also be applied with $H=M_Q$, $P_H={}^{w_{\widetilde P}}\!P\cap M_Q$, $\widetilde P_H=(w_{\widetilde P}\widetilde Pw_{\widetilde P}^{-1})\cap M_Q$ and $R$ any isotypic component $C_Q$ of $\LLbar\vert_{Z_{M_Q}}$ (recall from (the proof of) Lemma \ref{filtr} applied with $P$ there being $Q$ that the action of $Q^{\gKQ}$ on the subquotient $C_Q$ of $\LLbar\vert_{Q^{\gKQ}}$ factors through $Q^{\gKQ}\twoheadrightarrow M_Q^{\gKQ}$).\\
(iii) Let $Q$ as above, $C_Q$ an isotypic component of $\LLbar\vert_{Z_{M_Q}}$, $Q'\defeq P(C_Q)$ (see \S\ref{assocparabol}) and $w_Q\in W(C_Q)$ (see (\ref{wCP}) and note that ${}^{w_Q}Q\subseteq Q'$ by (\ref{wPinP(CP)})). Lemma \ref{cons1} can also be applied with $H=M_{({}^{w_Q}Q)_i}$, $P_H=({}^{w_Qw_{\widetilde P}}\!P)_{Q,i}$, $\widetilde P_H=({}^{w_Qw_{\widetilde P}}\!\widetilde P)_{Q,i}$ and $R=C_{w_Q,i}$, where $C_{w_Q,i}$ is the algebraic representation of $M_{({}^{w_Q}Q)_i}^{\gKQ}$ defined in Remark \ref{gln2} with $P$ there being $Q$ (it is an isotypic component of $\LLbar_i\vert_{Z_{M_{({}^{w_Q}Q)_i}}}$). To prove that assumption (a) of Lemma \ref{cons1} is satisfied in that case, note that $C_{w_Q,i}$ is a good subquotient of $\LLbar_i\vert_{({}^{w_Q}Q)_{i}^{\gKQ}}$, and thus {\it a fortiori} a good subquotient of $\LLbar_i\vert_{({}^{w_Qw_{\widetilde P}}\!P)_{Q',i}^{\gKQ}}$ (Lemma \ref{Q}), where $({}^{w_Qw_{\widetilde P}}\!P)_{Q',i}\subseteq ({}^{w_Q}Q)_{i}\subseteq M_i$ is the standard parabolic subgroup of $M_i$ with the same Levi as $({}^{w_Qw_{\widetilde P}}\!P)_{Q,i}$. We have
\[({}^{w_Qw_{\widetilde P}}\!\widetilde P)_{Q,i}\subseteq ({}^{w_Qw_{\widetilde P}}\!P)_{Q,i}\subseteq ({}^{w_Qw_{\widetilde P}}\!P)_{Q',i}\subseteq M_{i}\]
and $({}^{w_Qw_{\widetilde P}}\!\widetilde P)_{Q,i}$ is a closed algebraic subgroup of $({}^{w_Qw_{\widetilde P}}\!P)_{Q',i}$ containing $M_{({}^{w_Qw_{\widetilde P}}\!P)_{Q',i}}=M_{({}^{w_Qw_{\widetilde P}}\!P)_{Q,i}}$. One then applies Lemma \ref{filtr} with $\LLbar_i$ and with
\[({}^{w_Qw_{\widetilde P}}\!\widetilde P)_{Q,i}\subseteq ({}^{w_Qw_{\widetilde P}}\!P)_{Q',i}\subseteq M_i\]
instead of $\widetilde P\subseteq P\subseteq G$, which implies that there is a filtration on $C_{w_Q,i}\vert_{({}^{w_Qw_{\widetilde P}}\!\widetilde P)_{Q,i}^{\gKQ}}$ (or on $C_{w_Q,i}\vert_{({}^{w_Qw_{\widetilde P}}\!P)_{Q',i}^{\gKQ}}$, and thus on $C_{w_Q,i}\vert_{({}^{w_Qw_{\widetilde P}}\!P)_{Q,i}^{\gKQ}}$) by good subrepresentations \ such \ that \ the \ graded \ pieces \ exhaust \ the \ isotypic \ components \ of $C_{w_Q,i}\vert_{Z_{M_{({}^{w_Qw_{\widetilde P}}\!P)_{Q,i}}}}=C_{w_Q,i}\vert_{Z_{M_{({}^{w_Qw_{\widetilde P}}\!P)_{Q',i}}}}$.
\end{rem}

\begin{lem}\label{cons1bis}
Let $\widetilde P\subseteq P$, $w_{\widetilde P}\in W_{\widetilde P}$ and $Q$ containing ${}^{w_{\widetilde P}}\!P$ as above. Let $C_Q$ be an isotypic component of $\LLbar\vert_{Z_{M_Q}}$ and $Q'\defeq P(C_Q)$.
\begin{enumerate}
\item For any $w_Q\in W(C_Q)$, there is a canonical bijection of partially ordered finite sets
  between the set of good subrepresentations of
  \[C_Q\vert_{(w_{\widetilde P}\widetilde Pw_{\widetilde P}^{-1})^{\gKQ}}=C_Q\vert_{((w_{\widetilde
      P}\widetilde Pw_{\widetilde P}^{-1})\cap M_Q)^{\gKQ}}\]
  {\upshape(}where the equality follows from Remark \ref{applied}(ii){\upshape)} and the set of good subrepresentations of $w_Q(C_Q)\vert_{({}^{w_Qw_{\widetilde P}}\!\widetilde P)_{Q}^{\gKQ}}$.
\item For any $w_Q,w'_Q\in W(C_Q)$ and $i\in \{1,\ldots,d\}$, there is a canonical bijection of
  partially \ ordered \ finite \ sets \ between \ the \ set \ of \ good \ subrepresentations of
  $C_{w_Q,i}\vert_{({}^{w_Qw_{\widetilde P}}\!\widetilde P)_{Q,i}^{\gKQ}}$ and the set of good
  subrepresentations of $C_{w_Q',i}\vert_{({}^{w'_Qw_{\widetilde P}}\!\widetilde P)_{Q,i}^{\gKQ}}$.
\end{enumerate}
\end{lem}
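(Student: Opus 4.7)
My plan is to prove both parts by transport of structure along the diagonal action of $W$ (via $W\hookrightarrow W^{\gKQ}$) on $\LLbar$, viewing each pair (subgroup, representation) on either side as conjugate pairs inside $M_{Q'}^{\gKQ}$.

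For (i), I would first observe that the identity map on the underlying $\F$-vector space gives an $\F$-linear isomorphism $C_Q\xrightarrow{\sim}w_Q(C_Q)$ that intertwines the $M_Q^{\gKQ}$-action on the source with the $M_{{}^{w_Q}Q}^{\gKQ}$-action on the target along the group isomorphism $m\mapsto w_Q m w_Q^{-1}$ (applied diagonally in each of the $f$ factors, using the convention of (\ref{wact})). By the very definition of $({}^{w_Qw_{\widetilde P}}\widetilde P)_{Q}=w_Q((w_{\widetilde P}\widetilde Pw_{\widetilde P}^{-1})\cap M_Q)w_Q^{-1}$, this group isomorphism identifies $((w_{\widetilde P}\widetilde Pw_{\widetilde P}^{-1})\cap M_Q)^{\gKQ}$ with $({}^{w_Qw_{\widetilde P}}\widetilde P)_Q^{\gKQ}$, producing a bijection between subrepresentations on the two sides which respects inclusions. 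The same conjugation carries the diagonal embedding $Z_{M_Q}\hookrightarrow Z_{M_Q}^{\gKQ}$ onto $Z_{M_{{}^{w_Q}Q}}\hookrightarrow Z_{M_{{}^{w_Q}Q}}^{\gKQ}$, so isotypic components correspond and the bijection restricts to one between good subrepresentations.

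For (ii), the key input is Lemma \ref{inclusion} applied to $C_Q$ (recalling $P(C_Q)=Q'$), which yields $w_Q'w_Q^{-1}\in W(Q')$. Since $M_{Q'}=\mathrm{diag}(M_1,\dots,M_d)$, we have $W(Q')=\prod_{i=1}^d W(M_i)$, so $w_Q'w_Q^{-1}$ decomposes as $(w_1,\dots,w_d)$ with $w_i$ in the Weyl group of $M_i$. Conjugation by $w_i$ inside $M_i$ (applied diagonally in the $f$ factors) then induces a group isomorphism $({}^{w_Qw_{\widetilde P}}\widetilde P)_{Q,i}^{\gKQ}\xrightarrow{\sim}({}^{w_Q'w_{\widetilde P}}\widetilde P)_{Q,i}^{\gKQ}$, and via the isomorphism (\ref{isowi}) of Remark \ref{gln2} it carries $C_{w_Q,i}$ onto $C_{w_Q',i}$. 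Exactly as in (i), $w_i Z_{M_{({}^{w_Q}Q)_i}}w_i^{-1}=Z_{M_{({}^{w_Q'}Q)_i}}$, so the resulting bijection on subrepresentations preserves the \emph{good} property.

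The main obstacle I anticipate is purely bookkeeping: one must carefully verify that each subgroup identification and each center identification claimed above follows from the definitions introduced just before the lemma, so that the two sides of each purported equality are literally interchanged by the specified conjugation. The underlying algebraic content, namely that both bijections amount to acting by a diagonal Weyl element, is transparent, but pinning this down rigorously through the dense notation requires some care.
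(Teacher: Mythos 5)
Your proof is correct and takes essentially the same route as the paper's: part (i) is the conjugation-by-$w_Q$ transport implicit in (\ref{wact}) together with the definition of $({}^{w_Qw_{\widetilde P}}\!\widetilde P)_Q$, and part (ii) uses Lemma \ref{inclusion} to produce $w_Q'w_Q^{-1}\in W(Q')$, factors it through $\prod_i W(M_i)$, and invokes (\ref{isowi}). Your version spells out the intertwining of centers and the decomposition into $W(M_i)$-components a bit more explicitly than the paper, but the argument is the same.
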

\begin{proof}
(i) follows from the definition of $w_Q(C_Q)$ in (\ref{wact}) and the fact that $({}^{w_Qw_{\widetilde P}}\!\widetilde P)_{Q}=w_Q\big((w_{\widetilde P}\widetilde Pw_{\widetilde P}^{-1})\cap M_Q\big)w_Q^{-1}$.\\
(ii) We have $w_Q'=w_{Q'}w_Q$ with $w_{Q'}\in W(P(C_Q))=W(Q')$ by Lemma \ref{inclusion} (applied with $P$ there being $Q$). In particular $w_{Q'}(w_Q(S(Q)))\subseteq S$ which implies $({}^{w'_Qw_{\widetilde P}}\!\widetilde P)_{Q,i}=w_{Q'}({}^{w_Qw_{\widetilde P}}\!\widetilde P)_{Q,i}w_{Q'}^{-1}$ inside $M_{({}^{w'_Q}\!Q)_{i}}=w_{Q'}M_{({}^{w_Q}Q)_{i}}w_{Q'}^{-1}$ (viewing $w_{Q'}$ as an element in $W(M_i)$ by abuse of notation). By (\ref{isowi}) (applied with $P$ there being $Q$) we have $C_{w_Q',i}=w_{Q'}(C_{w_Q,i})$, where the conjugation by $w_{Q'}^{-1}$ intertwines the actions of $({}^{w'_Qw_{\widetilde P}}\!\widetilde P)_{Q,i}$ and of $({}^{w_Qw_{\widetilde P}}\!\widetilde P)_{Q,i}$. The result follows.
\end{proof}

\begin{rem}\label{subquotient}
The bijections in Lemma \ref{cons1bis} all extend to bijections between good subquotients or isotypic components on both sides, as for Lemma \ref{cons1}.
\end{rem}

Let $\Pi$, $H$, $P_H$, $\widetilde P_H$, $R$ and $\Phi$ be as in Lemma \ref{cons1}. For any $w_{H}\in W_H$ (the Weyl group of $H$) such that $w_{H}\widetilde P_{H}w_{H}^{-1}$ is contained in a standard parabolic subgroup of $H$, we can define another bijection $w_{H}(\Phi)$ between the set of subquotients of $\Pi$ and the set of good subquotients of $R\vert_{(w_{H}\widetilde P_H w_{H}^{-1})^{\gKQ}}$ as follows: $w_{H}(\Phi)(\Pi')$ is the algebraic representation $w_{H}\big(\Phi(\Pi')\big)$ of $(w_{H}\widetilde P_{H}w_{H}^{-1})^{\gKQ}$, where $w_{H}\big( \Phi(\Pi')\big)(g)\defeq \Phi(\Pi')(w_{H}^{-1}gw_{H})$ if $g\in (w_{H}\widetilde P_{H}w_{H}^{-1})^{\gKQ}$, see (\ref{wact}).

Here is now the first crucial definition. 

\begin{definit}\label{compatible1}
An admissible smooth representation $\Pi$ of $G(K)$ over $\F$ which has finite length and distinct absolutely irreducible constituents is {\it compatible with $\widetilde P$} if there exists a bijection $\Phi$ of partially ordered finite sets between the set of subre\-presentations of $\Pi$ and the set of good subrepresentations of $\LLbar\vert_{{\widetilde P}^{\gKQ}}$ (both being ordered by inclusion) which satisfies the following conditions (once extended to all subquotients as in Lemma \ref{cons1}):
\begin{enumerate}
\item (\textbf{form of subquotients}) for any $w_{\widetilde P}\in W_{\widetilde P}$, any parabolic subgroup $Q$ containing ${}^{w_{\widetilde P}}\!P$ and any isotypic component $C_Q$ of $\LLbar\vert_{Z_{M_{ Q}}}$, writing $M_{P(C_Q)}= M_1\times \cdots \times M_d$ with $M_i\cong \GL_{n_i}$ we have
\begin{equation}\label{forme}
w_{\widetilde P}(\Phi)^{-1}(C_Q)\cong \Ind_{P(C_Q)^-(K)}^{G(K)}\big(\pi(C_Q)\otimes (\omega^{-1}\circ\theta^{P(C_Q)})\big),
\end{equation}
where $P(C_Q)$ is defined in \S\ref{assocparabol}, $\theta^{P(C_Q)}$ is defined in (\ref{thetaP}) and where $\pi(C_Q)$ is a $M_{P(C_Q)}$-representation of the form $\pi(C_Q)\cong \pi_1(C_Q)\otimes \cdots \otimes \pi_d(C_Q)$ for some (finite length) admissible smooth representations $\pi_i(C_Q)$ of $M_i(K)$ over $\F$;\\
\item (\textbf{compatibility between subquotients}) for any $w_{\widetilde P}\in W_{\widetilde P}$, any parabolic subgroup $Q$ containing ${}^{w_{\widetilde P}}\!P$, any isotypic component $C_Q$ of $\LLbar\vert_{Z_{M_{ Q}}}$ and any $w\in W$ such that $w\big(S(P(C_Q))\big)\subseteq S$, \ let \ $w(\pi(C_Q))$ \ be \ the \ representation \ of \ $M_{{}^{w}P(C_Q)}(K)={w}M_{P(C_Q)}(K){w}^{-1}$ defined by
\[w(\pi(C_Q))(g) \defeq  \pi(C_Q)(w^{-1}gw)\]
for $\pi(C_Q)$ as in (\ref{forme}) and $g\in M_{{}^{w}P(C_Q)}(K)$. Then we have
\[\pi\big(w\cdot C_Q\big) \cong w\big(\pi(C_Q)\big),\]
where $w\cdot C_Q$ is the isotypic component of $\LLbar\vert_{Z_{M_{ Q}}}$ in Proposition \ref{w(c)}(ii) (applied with $P$ there being $Q$) and where $\pi(w\cdot C_Q)$ is as in (\ref{forme}) for the isotypic component $w\cdot C_Q$ instead of $C_Q$ (note that $P(w\cdot C_Q)={}^wP(C_Q)$ by Proposition \ref{w(c)}(iii));\\
\item (\textbf{product structure}) for any $w_{\widetilde P}\in W_{\widetilde P}$, any parabolic subgroup $Q$ containing ${}^{w_{\widetilde P}}\!P$, any isotypic component $C_Q$ of $\LLbar\vert_{Z_{M_{ Q}}}$, and one, or equivalently any by Lemma \ \ref{cons1bis}(ii), \ element \ $w_Q\in W(C_Q)$, \ writing \ $M_{P(C_Q)}= {\rm diag}(M_1,\ldots ,M_d)$ with $M_i\cong \GL_{n_i}$, the restriction of $w_{\widetilde P}(\Phi)$ to the set of subquotients of $w_{\widetilde P}(\Phi)^{-1}(C_Q)$ comes from $d$ bijections $w_{\widetilde P}(\Phi)_{w_Q,i}$ of partially ordered sets between the set of $M_i(K)$-subrepresentations of $\pi_i(C_Q)$ (where $\pi_i(C_Q)$ is as in (i)) and the set of good subrepresentations of $C_{w_Q,i}\vert_{({}^{w_Qw_{\widetilde P}}\!\widetilde P)_{Q,i}^{\gKQ}}$ (where $C_{w_Q,i}$ is the isotypic component of $\LLbar_i\vert_{Z_{M_{({}^{w_Q}\!Q)_i}}}$ with its $M_{({}^{w_Q}\!Q)_i}^{\gKQ}$-action in (\ref{facteursbis}) applied with $P$ there being $Q$) in the following sense: for any subquotient $\Pi'$ of $\Phi^{-1}(C_Q)$ of the form
\[\Pi'\cong \Ind_{P(C_Q)^-(K)}^{G(K)}\big((\pi'_1\otimes \cdots\otimes\pi'_d)\otimes (\omega^{-1}\circ\theta^{P(C_Q)})\big)\]
with $\pi'_i$ a subquotient of $\pi_i(C_Q)$, the good subquotient $w_{\widetilde P}(\Phi)(\Pi')$ of
\[C_Q\vert_{(w_{\widetilde P}\widetilde Pw_{\widetilde P}^{-1})^{\gKQ}}=C_Q\vert_{((w_{\widetilde P}\widetilde Pw_{\widetilde P}^{-1})\cap M_Q)^{\gKQ}}\]
corresponds via Lemma \ref{cons1bis}(i) and Remark \ref{subquotient} to the following algebraic representation of $({}^{w_Qw_{\widetilde P}}\!\widetilde P)_{Q}^{\gKQ}=\prod_{i=1}^d({}^{w_Qw_{\widetilde P}}\!\widetilde P)_{Q,i}^{\gKQ}$:
\[\bigotimes_{i=1}^d \Big(w_{\widetilde P}(\Phi)_{w_Q,i}(\pi'_i)\otimes \!\big(\underbrace{(\theta^{P(C_Q)})_i\otimes \cdots\otimes (\theta^{P(C_Q)})_i}_{\gKQ}\big)\Big);\]
\item (\textbf{supersingular}) for any isotypic component $C_P$ of $\LLbar\vert_{Z_{M_{P}}}$, the (absolutely irreducible) $M_{P(C_P)}(K)$-repre\-sentation $\pi(C_P)$ of (\ref{forme}) is supersingular (cf.\ \cite[Def.4.7,\ Def.9.12,\ Cor.9.13]{He2}).
\end{enumerate}
\end{definit}

If $(\Pi,\Phi)$ is as in Definition \ref{compatible1}, then we have in particular $\Phi(\Pi)=\LLbar$ and $w_{\widetilde P}(\Phi)_{w_Q,i}(\pi_i(C_Q))=C_{w_Q,i}$. If $\widetilde P=G$, then $\Pi$ is compatible with $\widetilde P$ if and only if $\Pi$ is absolutely irreducible supersingular. Also it is clear from Definition \ref{compatible1} that, for a fixed $w_{\widetilde P}\in W_{\widetilde P}$, $\Pi$ is compatible with $\widetilde P$ if and only if $\Pi$ is compatible with $w_{\widetilde P}{\widetilde P}w_{\widetilde P}^{-1}$ (replace $\Phi$ by $w_{\widetilde P}(\Phi)$).\\

\begin{rem}\label{comments}
(i) In Definition \ref{compatible1}, we have used Lemma \ref{cons1} everywhere (see Remark \ref{applied}(ii)(iii)). In Definition \ref{compatible1}(iii), we have used Remark \ref{subquotient}. Also, Definition \ref{compatible1} is somewhat redundant since a parabolic subgroup $Q$ can contain ${}^{w_{\widetilde P}}\!P$ for several $w_{\widetilde P}\in W_{\widetilde P}$, but we found it too tedious to make it ``non-redundant''.\\
(ii) The representations $\pi(C_Q)$ and $\pi_i(C_Q)$ in Definition \ref{compatible1}(i) are uniquely defined since there are no nontrivial intertwinings between parabolic inductions (by \cite{emerton-ordI}).\\
(iii) When $Q={}^{w_{\widetilde P}}\!P$, $\pi(C_{{}^{w_{\widetilde P}}\!P})$ in (\ref{forme}) is absolutely irreducible, and is thus automatically of the form $\pi(C_{{}^{w_{\widetilde P}}\!P})\cong \pi_1(C_{{}^{w_{\widetilde P}}\!P})\otimes \cdots \otimes \pi_d(C_{{}^{w_{\widetilde P}}\!P})$. It is then not difficult to deduce from this, together with Lemma \ref{inclusionP} and \cite{emerton-ordI} (and the properties of $\Phi$), that each $\pi_i(C_Q)$ as in (\ref{forme}) has distinct (absolutely) irreducible constituents and that each irreducible constituent of (\ref{forme}) is of the form $\Ind_{P(C_Q)^-(K)}^{G(K)}\big((\pi'_1\otimes \cdots\otimes\pi'_d)\otimes (\omega^{-1}\circ\theta^{P(C_Q)})\big)$, where $\pi'_i$ is an irreducible constituent of $\pi_i(C_Q)$. This also justifies the terminology ``comes from $d$ bijections $w_{\widetilde P}(\Phi)_{w_Q,i}$'' in Definition \ref{compatible1}(iii).\\
(iv) It is in fact possible that Definition \ref{compatible1}(i) for parabolic subgroups $Q$ {\it strictly} containing some ${}^{w_{\widetilde P}}\!P$ and Definition \ref{compatible1}(iii) both {\it automatically follow} from the other conditions in Definition \ref{compatible1}. See for instance how the results of \cite{Ha2} are used in Example $2$, Example $4$, Example $5$ and Example $6$ of \S\ref{exemples5} below to show that several conditions of Definition \ref{compatible1} are automatic in special cases.\\
(v) In Definition \ref{compatible1}(iii), we have to use some element $w_Q$ of $W(C_Q)$ and ``pass through $w_Q(C_Q)$'' because of Remark \ref{ctrex}(ii) (see also the end of Remark \ref{gln2}). Nothing in here and what follows depends on the choice of such a $w_Q$.\\
(vi) For a given $\Pi$ compatible with $\widetilde P$, a bijection $\Phi$ as in Definition \ref{compatible1} is not unique in general (consider the case $\widetilde P=M_P$).\\
(vii) In Definition \ref{compatible1}, it is necessary in general to consider {\it all} elements $w_{\widetilde P}\in W_{\widetilde P}$, note just $w_{\widetilde P}=1$, otherwise one misses some condition, see for instance (\ref{wptildenon1}) below (note that this is also quite natural in view of Theorem \ref{choicegood}).
\end{rem}

\begin{ex}\label{exemplester}
Let us consider the case $n=3$, $K=\Qp$ and $\widetilde P=P$ with $M_P=\GL_2\times \GL_1$ in the last part of Example \ref{exemples}(ii) (see also Example \ref{exemplesbis}). We denote by $P'$ the standard parabolic subgroup of Levi $\GL_1\times \GL_2$. Then $\Pi$ is compatible with $\widetilde P$ if and only $\Pi$ has $3$ irreducible constituents and the following form (a line means a {\it nonsplit} extension of length $2$ as a subquotient and the constituent on the left-hand side is the {\it socle}):
\[\begin{xy}(0,0)*+{\Ind_{P^-(\Qp)}^{\GL_3(\Qp)}\big(\pi\cdot (\omega^{-1}\circ{\det})\otimes \chi\big)}; (41,0)*+{\rm SS} **\dir{-} ; (74,0)*+{\Ind_{{P'}^-(\Qp)}^{\GL_3(\Qp)}\big(\chi\omega^{-2}\otimes \pi\big)}**\dir{-} \end{xy}\]
where $\chi:\Qp^\times \rightarrow \F^\times$ is a smooth character, $\pi$ is a supersingular representation of $\GL_2(\Qp)$ and SS is a supersingular representation of $\GL_3(\Qp)$. The case $\widetilde P=M_P$ is analogous but with a semisimple $\Pi$ (instead of nonsplit extensions). See also \S\ref{exemples5} below for more examples.
\end{ex}

The following proposition shows that a representation $\Pi$ as in Definition \ref{compatible1} has internal symmetries.

\begin{prop}\label{cons2}
Assume $\Pi$ is compatible with $\widetilde P$ and let $\Phi$ be a bijection as in Definition \ref{compatible1}. Let $w_{\widetilde P}\in W_{\widetilde P}$, $Q$ a parabolic subgroup containing ${}^{w_{\widetilde P}}\!P$ and $C_Q$ an isotypic component of $\LLbar\vert_{Z_{M_{Q}}}$ such that $P(C_Q)={}^{w_Q}Q$ for some {\upshape(}unique{\upshape)} $w_Q\in W$ with $w_Q(S(Q))\subseteq S$. Then $\pi_i(C_Q)$ is compatible with $({}^{w_Qw_{\widetilde P}}\!\widetilde P)_{Q,i}$ for $i\in \{1,\ldots,d\}$, where $\pi_i(C_Q)$ is as in Definition \ref{compatible1}(i).
\end{prop}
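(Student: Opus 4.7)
The plan is to take as bijection for $\pi_i(C_Q)$ the map $\Phi_i\defeq w_{\widetilde P}(\Phi)_{w_Q,i}$ provided by Definition \ref{compatible1}(iii). Since $P(C_Q)={}^{w_Q}Q$, Lemma \ref{single} gives $W(C_Q)=\{w_Q\}$, and Corollary \ref{decomp} identifies $C_{w_Q,i}$ with $\LLbar_i\otimes\bigotimes_{\gKQ}(\theta^{{}^{w_Q}Q})_i$ as an $M_i^{\gKQ}$-representation. So $\Phi_i$ is a bijection between subrepresentations of $\pi_i(C_Q)$ and good subrepresentations of $\LLbar_i\vert_{({}^{w_Qw_{\widetilde P}}\!\widetilde P)_{Q,i}^{\gKQ}}$ (up to the harmless twist, which can be folded in), which is exactly the type of bijection required by Definition \ref{compatible1} for $\pi_i(C_Q)$ and $({}^{w_Qw_{\widetilde P}}\!\widetilde P)_{Q,i}$.

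The heart of the argument is a dictionary between data in $M_i$ and data in $G$: given $w_i\in W_{({}^{w_Qw_{\widetilde P}}\!\widetilde P)_{Q,i}}$, a parabolic $R_i\supseteq {}^{w_i}(({}^{w_Qw_{\widetilde P}}\!P)_{Q,i})$ of $M_i$, and an isotypic component $C'_i$ of $\LLbar_i\vert_{Z_{M_{R_i}}}$, set $\widetilde w\defeq w_Q^{-1}(\tld w_i)w_Q\cdot w_{\widetilde P}\in W$ where $\tld w_i\in W(M_{P(C_Q)})$ is $w_i$ on $M_i$ and identity on the other $M_j$, and let $Q'$ be the standard parabolic of $G$ whose Levi has block $R_i$ in position $i$ and $M_j$ in every other position. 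One then checks from the definitions that $\widetilde w\in W_{\widetilde P}$, that ${}^{\widetilde w}P\subseteq Q'$, and that there is an isotypic component $C_{Q'}$ of $\LLbar\vert_{Z_{M_{Q'}}}$ contained in $C_Q\vert_{Z_{M_{Q'}}}$ which corresponds to $C'_i$ via the tensor decomposition $\overline L^{\otimes}_{P(C_Q)}\cong\bigotimes_j\LLbar_j$ (together with the twists by $\theta^{P(C_Q)}$). Moreover $P(C_{Q'})$ has the same Levi as $R_i$ on the $i$-th block and $M_j$ on the other blocks, so $P(C_{Q'})^-=R_i^-\times\prod_{j\ne i}M_j$ sits inside $P(C_Q)^-\subseteq Q'^-$.

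With this dictionary in hand, each of the four conditions of Definition \ref{compatible1} for $(\pi_i(C_Q),\Phi_i)$ follows from the corresponding condition for $(\Pi,\Phi)$ applied to $(\widetilde w,Q',C_{Q'})$. Concretely: condition (i) is obtained by rewriting $\Phi^{-1}(C_{Q'})\cong \Ind_{P(C_{Q'})^-(K)}^{G(K)}\!\bigl(\pi(C_{Q'})\otimes(\omega^{-1}\!\circ\theta^{P(C_{Q'})})\bigr)$ via transitivity of parabolic induction $\Ind_{P(C_{Q'})^-}^{G}=\Ind_{Q'^-}^{G}\circ\Ind_{(P(C_{Q'})\cap M_{Q'})^-}^{M_{Q'}}$, and then extracting the $i$-th tensor factor using the product decomposition $\pi(C_{Q'})\cong\pi_1(C_{Q'})\otimes\cdots\otimes\pi_d(C_{Q'})$ from Definition \ref{compatible1}(i); since for $j\ne i$ the $j$-th block of $R_i$ is $M_j$, the factor $\pi_j(C_{Q'})$ is an irreducible constituent of $\pi_j(C_Q)$, and Remark \ref{comments}(ii) (unique intertwinings) forces $\pi_i(C_{Q'})$ to be the required induction $\Ind_{P(C'_i)^-(K)}^{M_i(K)}\!\bigl(\pi(C'_i)\otimes(\omega^{-1}\!\circ\theta^{P(C'_i)})\bigr)$. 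Condition (ii) follows by the same transport of structure, using Proposition \ref{w(c)} and the compatibility between $w\cdot C_{Q'}$ and $w_i\cdot C'_i$ under the dictionary. Condition (iii) is a tautological consequence of (iii) for $\Phi$: refining $R_i$ inside $M_i$ corresponds to refining $Q'$ inside $G$ along the $i$-th block, and the product bijection for $\Phi$ at $C_{Q'}$ decomposes as the product of $\Phi_i$'s product bijection at $C'_i$ and the identity bijections on the remaining irreducible factors. Condition (iv) is immediate: an isotypic component $C'_{R_i}$ of $\LLbar_i\vert_{Z_{M_{R_i}}}$ with $P(C'_{R_i})=M_{R_i}$ pulls back to an isotypic component $C_{Q'}$ of $\LLbar\vert_{Z_{M_{Q'}}}$ with $P(C_{Q'})=M_{Q'}$, and the supersingularity of the $i$-th factor of $\pi(C_{Q'})$ is exactly condition (iv) for $\Pi$.

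The main obstacle is purely combinatorial/bookkeeping: verifying that the dictionary in the second paragraph respects all the structures (choices of $W(C_{Q'})$, the twisting characters $\theta^{P(C_{Q'})}$ versus $\theta^{P(C'_i)}$, and the Levi factorisations $({}^{w}\!\widetilde P)_{Q',i}\cap M_i$ versus the $M_i$-side group ${}^{w_i}(({}^{w_Qw_{\widetilde P}}\!\widetilde P)_{Q,i})$). These are all consequences of Corollary \ref{decomp}, Remark \ref{gln2} and Lemma \ref{cons1bis}, but need to be chased carefully through the notation.
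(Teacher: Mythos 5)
Your proposal is correct and follows essentially the same route as the paper's proof: after reducing to $w_{\widetilde P}=\Id$ (and $i=1$), the paper sets up exactly the dictionary you describe — lifting $w_1\in W_{\widetilde P_1}$ to $w_Q^{-1}(w_1\times\Id\times\cdots\times\Id)w_Q\in W_{\widetilde P}$ and forming the standard parabolic $Q_{(1)}$ with Levi $M_{Q_1}\times M_2\times\cdots\times M_d$ together with the matching isotypic component $C_{Q_{(1)}}\subseteq C_Q$ — and then verifies condition (i) of Definition \ref{compatible1} by the same transitivity-of-induction computation, using the relation $\theta^{P(C_{Q_{(1)}})}=\theta^{P(C_Q)}+\theta^{P(C_{Q_1})}$ and the product bijections from condition (iii), leaving conditions (ii)–(iv) to the reader just as you do.
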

\begin{proof}
The proof is long but essentially formal. Replacing $\widetilde P$ by ${w_{\widetilde P}}\widetilde Pw_{\widetilde P}^{-1}$ and $\Phi$ by $w_{\widetilde P}(\Phi)$ (see the discussion following Definition \ref{compatible1}), we can assume $w_{\widetilde P}=\Id$. We write for simplicity $w$ instead of $w_Q$. Recall \ from \ Proposition \ \ref{minimal} \ that \ $C_Q$ \ is \ the \ isotypic \ component of $fw^{-1}(\theta_G)\vert_{Z_{M_Q}}$ in $\LLbar\vert_{Z_{M_Q}}$. More precisely, by (\ref{wlp}), Corollary \ref{decomp} and Remark \ref{gln2} (especially (\ref{facteursbis})), we have an isomorphism of algebraic representations of $M_{{}^wQ}^{\gKQ}\cong\prod_{i=1}^dM_i^{\gKQ}\cong \prod_{i=1}^d\GL_{n_i}^{\gKQ}$:
\begin{equation}\label{rappel}
w(C_Q)\cong \LLbar_{{}^wQ}\otimes \big(\theta^{{}^wQ}\otimes \cdots \otimes \theta^{{}^wQ}\big)\cong \bigotimes_{i=1}^d\Big(\LLbar_i\otimes \big((\theta^{{}^wQ})_i\otimes \cdots \otimes (\theta^{{}^wQ})_i\big)\Big).
\end{equation}
Thus the map $\Phi_{w,i}$ in Definition \ref{compatible1}(iii) (recall $w_{\widetilde P}=\Id$ and $w=w_Q$) is a bijection of partially ordered sets between the set of $M_i(K)$-subrepresentations of $\pi_i(C_Q)$ and the set of good subrepresentations of $C_{w,i}\vert_{({}^w\widetilde P)_{Q,i}^{\gKQ}}=\LLbar_i\vert_{({}^w\widetilde P)_{Q,i}^{\gKQ}}$ (recall that $({}^wP)_{Q,i}$ is here a standard parabolic subgroup of $M_i$ and $({}^w\widetilde P)_{Q,i}$ a Zariski closed subgroup of $({}^wP)_{Q,i}$ containing $M_{({}^wP)_{Q,i}}$). We have to check that $\Phi_{w,i}$ satisfies conditions (i) to (iv) in Definition \ref{compatible1} (with $M_i$ instead of $G$ and $({}^w\widetilde P)_{Q,i}$ instead of $\widetilde P$). We will only check condition (i) below, leaving the others, which are again essentially formal, to the (motivated) reader.

We can assume $i=1$. Let $P_1\defeq ({}^wP)_{Q,1}$, $\widetilde P_1\defeq ({}^w\widetilde P)_{Q,1}$ (so $M_{P_1}\subseteq \widetilde P_1\subseteq P_1\subseteq M_1\subseteq M_{{}^wQ}$) and recall that $T_1$ is the torus of diagonal matrices in $M_1$. Let $w_{\widetilde P_1}\in W_{\widetilde P_1}\subseteq W(M_1)$, $Q_1$ a parabolic subgroup of $M_1$ containing ${}^{w_{\widetilde P_1}}\!P_1$ and $C_{Q_1}$ an isotypic component of $\LLbar_1\vert_{Z_{M_{Q_1}}}$, we have to prove that $w_{\widetilde P_1}(\Phi_{w,1})^{-1}(C_{Q_1})$ is of the form (\ref{forme}).

\noindent
Step 1: Let $\widetilde w_1\defeq w_{\widetilde P_1}\times \Id \times \cdots \times \Id\in W(M_1)\times \cdots \times W(M_d)=W({}^wQ)\subseteq W$ and set $w_{\widetilde P}\defeq w^{-1}\widetilde w_1w\in W(Q)$. Then $w_{\widetilde P}\in W_{\widetilde P}$ and $Q$ contains ${}^{w_{\widetilde P}}\!P$. Indeed, since $w_{\widetilde P_1}\in W_{\widetilde P_1}$ and the simple roots of $P_1$ are contained in $w(S(Q))\subseteq S$, we see that $w_{\widetilde P}=w^{-1}\widetilde w_1w$ sends the simple (resp.\ positive) roots of $\widetilde P\cap M_Q$ to simple (resp.\ positive) roots of $M_Q$ and the roots of $\widetilde P\cap N_Q$ to positive roots (using that $W(Q)$ normalizes $N_Q$). Moreover, one easily checks that ${}^{w_{\widetilde P_1}}\!P_1=({}^{ww_{\widetilde P}}\!P)_{Q,1}=({}^{w}({}^{w_{\widetilde P}}\!P))_{Q,1}$. Replacing $P$ by ${}^{w_{\widetilde P}}\!P$ and $\Phi$ by $w_{\widetilde P}(\Phi)$, we can thus assume $w_{\widetilde P_1}=\Id$.

\noindent
Step 2: Let $\lambda_1\in X(T_1)$ be a weight of $\LLbar_1\vert_{T_1}$ such that $C_{Q_1}$ is the isotypic component of $\lambda_1\vert_{Z_{M_{Q_1}}}$ and recall that $\lambda_1\vert_{Z_{M_1}}=f\theta_{M_1}\vert_{Z_{M_1}}=f\theta_{{}^wQ}\vert_{Z_{M_1}}$, where $\theta_{M_i}$ for $i\in \{1,\dots,d\}$ is defined as in (\ref{thetag}) replacing $G=\GL_n$ by $M_i=\GL_{n_i}$. Let $\lambda_{{}^wQ}\in X(T)$ be the unique character such that $\lambda_{{}^wQ}\vert_{T_1}=\lambda_1$ and $\lambda_{{}^wQ}\vert_{T_i}=f\theta_{M_i}=f\theta_{{}^wQ}\vert_{T_i}$ if $i\in \{2,\ldots,d\}$ (here, we use the convention in Remark \ref{gln2} and recall that $\theta_{M_i}$ is trivial if $M_i=\GL_1$). Then $\lambda_{{}^wQ}$ is a weight of $\bigotimes_{i=1}^d\LLbar_i\vert_{T_i}$. We set
\[\lambda\defeq  \lambda_{{}^wQ}+f\theta^{{}^wQ}\in X(T)\]
which is a weight of $\LLbar\vert_T$ (use (\ref{rappel})). We have
\begin{multline}\label{restr1}
\lambda\vert_{Z_{M_1}}=\lambda_1\vert_{Z_{M_1}}+f\theta^{{}^wQ}\vert_{Z_{M_1}}=f\theta_{M_1}\vert_{Z_{M_1}}+f\theta^{{}^wQ}\vert_{Z_{M_1}}\\
=f(\theta_{{}^wQ}+ \theta^{{}^wQ})\vert_{Z_{M_1}}=f\theta_G\vert_{Z_{M_1}}
\end{multline}
and if $i\geq 2$:
\begin{equation}\label{restrti}
\lambda\vert_{T_i}=f\theta_{M_i}+ f\theta^{{}^wQ}\vert_{T_i}= f(\theta_{{}^wQ}+ \theta^{{}^wQ})\vert_{T_i}=f\theta_G\vert_{T_i}.
\end{equation}
In particular $\lambda\vert_{Z_{M_{{}^wQ}}}=f\theta_G\vert_{Z_{M_{{}^wQ}}}$ and thus
\begin{equation}\label{deflambda}
w^{-1}(\lambda)\vert_{Z_{M_{Q}}}=fw^{-1}(\theta_G)\vert_{Z_{M_{Q}}}.
\end{equation}
Let $Q_{(1)}\subseteq {Q}$ be the standard parabolic subgroup of $G$ such that ${}^wQ_{(1)}\subseteq {}^wQ$ has Levi $M_{Q_1}\times M_2\times \cdots \times M_d$. As $P_1\subseteq Q_1$ by Step 1, we note that ${}^wQ_{(1)}$ contains ${}^wP$ and hence $Q_{(1)}$ contains $P$, $W({}^wQ_{(1)})=W(Q_1)\times W(M_2)\times \cdots \times W(M_d)$ and $w(S(Q_{(1)}))=S(Q_1)\amalg S(M_2)\amalg \cdots \amalg S(M_d)$. Let $C_{Q_{(1)}}$ be the isotypic component of $\LLbar\vert_{Z_{M_{Q_{(1)}}}}$ associated to $w^{-1}(\lambda)\vert_{Z_{M_{Q_{(1)}}}}$. From (\ref{deflambda}) we get $C_{Q_{(1)}}\subseteq C_Q$ (inside $\LLbar\vert_{Z_{M_{Q_{(1)}}}}$) and from (\ref{restr1}), (\ref{restrti}) an isomorphism of algebraic representations of $M_{Q_1}^{\gKQ}\otimes \prod_{i=2}^dM_i^{\gKQ}$:
\begin{equation}\label{decompc}
w(C_{Q_{(1)}})\cong \Big(C_{Q_1}\otimes \big((\theta^{{}^wQ})_1\otimes \cdots \otimes (\theta^{{}^wQ})_1\big)\Big)\otimes \bigotimes_{i=2}^d\Big(\LLbar_i\otimes \big((\theta^{{}^wQ})_i\otimes \cdots \otimes (\theta^{{}^wQ})_i\big)\Big).
\end{equation}

\noindent
Step 3: Define $\lambda'$, $\lambda'_{{}^wQ}$ and $\theta_G'$ by the formula (\ref{l'}) for $P={}^wQ_{(1)}$ and the respective characters $\lambda$, $\lambda_{{}^wQ}$ and $\theta_G$. Set $\lambda'_1\defeq \frac{1}{\vert W(Q_1)\vert}\sum_{w_1'\in W(Q_1)}w_1'(\lambda_1)\in (X(T_1)\otimes_{\Z}\Q)^{W(Q_1)}$. From (the proof of) Lemma \ref{utile}, we easily get $\lambda'=\lambda_{{}^wQ}'+f\theta^{{}^wQ}$ with $\lambda_{{}^wQ}'\vert_{T_1}=\lambda'_1$. Let $w_1\in W(M_1)$ such that $w_1(S(Q_1))\subseteq S(M_1)$ and $w_1(\lambda'_1)$ is dominant ($w_1$ exists by Proposition \ref{parabolicprop}(i)). We prove that $w_1(\lambda')=w_1(\lambda'_{{}^wQ})+f\theta^{{}^wQ}$ is also dominant (we consider here $w_1$ as an element of $W({}^wQ)$ in the obvious way and use that $W({}^wQ)$ acts trivially on $\theta^{{}^wQ}$). From (\ref{restrti}) we easily get $\lambda'\vert_{T_i}=f\theta'_G\vert_{T_i}$ if $i\geq 2$. But $\theta'_G$ is dominant since $\theta_G$ is (see the proof of Lemma \ref{parabolic}(i)), thus $\langle w_1(\lambda'),\alpha\rangle=\langle \lambda',\alpha\rangle= \langle f\theta'_G,\alpha\rangle\geq 0$ if $\alpha\in \{e_j-e_{j+1} : n_1+1\leq j\leq n-1\}$. Since $w_1(\lambda'_{{}^wQ})\vert_{T_1}=w_1(\lambda'_1)$ is dominant by assumption and $\langle f\theta^{{}^wQ},\alpha\rangle=0$ if $\alpha\in \{e_j-e_{j+1} : 1\leq j\leq n_1-1\}$ (see after (\ref{thetaP})), we are left to check that $\langle w_1(\lambda'),e_{n_1}-e_{n_1+1}\rangle\geq 0$. But an explicit computation gives
\begin{eqnarray*}
\langle w_1(\lambda'),e_{n_1}-e_{n_1+1}\rangle&=& \langle w_1(\lambda'_{{}^wQ}),e_{n_1}-e_{n_1+1}\rangle + \langle f\theta^{{}^wQ},e_{n_1}-e_{n_1+1}\rangle\\
&=&\langle w_1(\lambda'_{{}^wQ}),e_{n_1}\rangle-\langle w_1(\lambda'_{{}^wQ}),e_{n_1+1}\rangle+fn_2\\
&=&\langle w_1(\lambda'_{{}^wQ}),e_{n_1}\rangle -f\frac{n_2-1}{2}+ fn_2\\
&\geq & f\frac{n_2+1}{2},
\end{eqnarray*}
where the last inequality follows $\langle w_1(\lambda'_{{}^wQ}),e_{n_1}\rangle\geq 0$ by Remark \ref{explicit}(ii) applied to $\LLbar_1\vert_{T_1}$ (instead of $\LLbar\vert_T$) together with formula (\ref{l'}).

\noindent
Step 4: By definition, $S(P(C_{Q_1}))$ is the support of $f\theta_{M_1}-w_1(\lambda'_1)$ (see Proposition \ref{parabolicprop}(ii)). By Remark \ref{lambda'}(ii) we have $w^{-1}(\lambda')=(w^{-1}(\lambda))'$ in $(X(T)\otimes_{\Z}\Q)^{W(Q_{(1)})}$, where the latter is given by (\ref{l'}) applied to the parabolic $Q_{(1)}$ and the character $w^{-1}(\lambda)$. Since $w_1w(S(Q_{(1)}))\subseteq S$ and $w_1w((w^{-1}(\lambda))')=w_1w(w^{-1}(\lambda'))=w_1(\lambda')$ is dominant (Step $4$), $S(P(C_{Q_{(1)}}))$ is by definition the support of
\begin{multline}\label{support1}
f\theta_G-w_1(\lambda')=f\theta_G-\big(w_1(\lambda'_{{}^wQ})+f\theta^{{}^wQ}\big)=f\theta_{{}^wQ}-w_1(\lambda'_{{}^wQ})\\
=(f\theta_{M_1}-w_1(\lambda'_1))+\sum_{i=2}^d(f\theta_{M_i}-f\theta'_{M_i}),
\end{multline}
where $\theta'_{M_i}$ is defined by (\ref{l'}) applied to $P=M_i=G$ and the character $\theta_{M_i}$ of $T_i$. In fact, $\theta'_{M_i}$ is the character ${\det}^{\frac{n_i-1}{2}}$ of $T_i$, from which we easily see that the support of (\ref{support1}) is exactly $S(P(C_{Q_1}))\amalg S(M_2)\amalg \cdots \amalg S(M_d)$. This implies
\begin{equation}\label{levip}
M_{P(C_{Q_{(1)}})}={\rm diag}(M_{P(C_{Q_1})},M_2,\ldots , M_d).
\end{equation}

\noindent
Step 5: We now finally prove that $\Phi_{w,1}$ satisfies condition (i) in Definition \ref{compatible1}. Write $M_{P(C_{Q_1})}=M_{1,1}\times \cdots \times M_{1,d_1}$ (for some $d_1\geq 1$), by condition (i) in Definition \ref{compatible1} for the map $\Phi$ we have using (\ref{levip}):
\begin{equation}\label{phi1}
\Phi^{-1}(C_{Q_{(1)}})\cong \Ind_{P(C_{Q_{(1)}})^-(K)}^{G(K)}\Big(\big(\pi_1(C_{Q_{(1)}})\otimes \cdots \otimes \pi_d(C_{Q_{(1)}})\big)\otimes (\omega^{-1}\circ\theta^{P(C_{Q_{(1)}})})\Big),
\end{equation}
where $\pi_1(C_{Q_{(1)}})=\pi_{1,1}(C_{Q_{(1)}})\otimes \cdots \otimes \pi_{1,d_1}(C_{Q_{(1)}})$ (with obvious notation). Let
\begin{equation}\label{pi'1}
\pi'_1\defeq \Ind_{P(C_{Q_1})^-(K)}^{M_{1}(K)}\big(\pi_1(C_{Q_{(1)}})\otimes (\omega^{-1}\circ\theta^{P(C_{Q_{1}})})\big),
\end{equation}
it is enough to prove that $\pi'_1$ is a subquotient of $\pi_1(C_Q)$ and that
\[\Phi_{w,1}(\pi'_1)=C_{Q_1}\vert_{{\widetilde P_1}^{\gKQ}}\ (=C_{Q_1}\vert_{({\widetilde P_1}\cap M_{Q_1})^{\gKQ}}).\]
Note first that
\begin{equation}\label{theta1}
\theta^{P(C_{Q_{(1)}})}=\theta^{P(C_Q)} + \theta^{P(C_{Q_{1}})},
\end{equation}
where we view $\theta^{P(C_{Q_{1}})}$ as a character of $T$ (not just $T_1$) by sending the coordinates in $T_i$ to $1$ for $i\geq 2$ (this is straightforward to check from (\ref{thetaP})). From (\ref{phi1}), (\ref{pi'1}) and (\ref{theta1}), we get
\begin{equation*}
\Phi^{-1}(C_{Q_{(1)}})\cong \Ind_{P(C_{Q})^-(K)}^{G(K)}\Big(\big(\pi'_1\otimes \pi_2(C_{Q_{(1)}})\otimes \cdots \otimes \pi_d(C_{Q_{(1)}})\big)\otimes (\omega^{-1}\circ\theta^{P(C_{Q})})\Big).
\end{equation*}
Since $C_{Q_{(1)}}$ is a subquotient of $C_Q$ (both being good subquotients of $\LLbar\vert_{\widetilde P^{\gKQ}}$), $\Phi^{-1}(C_{Q_{(1)}})$ is a subquotient of $\Phi^{-1}(C_{Q})$. This implies in particular (using the ordinary functor of \cite{emerton-ordI} together with Remark \ref{comments}(iii)) that $\pi'_1$ (resp.\ $\pi_i(C_{Q_{(1)}})$ for $i\geq 2$) is a subquotient of $\pi_1(C_Q)$ (resp.\ of $\pi_i(C_{Q})$ for $i\geq 2$). By condition (iii) for $\Phi$ (in Definition \ref{compatible1}) applied to $\Pi'=\Phi^{-1}(C_{Q_{(1)}})$ (together with $P(C_Q)={}^wQ$), we also get an isomorphism of algebraic representations of $\prod_{i=1}^d({}^w\widetilde P)_{Q,i}^{\gKQ}$ over $\F$:
\begin{multline}\label{decompw}
w(C_{Q_{(1)}})=\Big(\Phi_{w,1}(\pi'_1)\otimes \!\big((\theta^{{}^wQ})_1\otimes \cdots\otimes (\theta^{{}^wQ})_1\big)\Big)\otimes \\
\bigotimes_{i=2}^d \Big(\Phi_{w,i}(\pi_i(C_{Q_{(1)}}))\otimes \!\big((\theta^{{}^wQ})_i\otimes \cdots\otimes (\theta^{{}^wQ})_i\big)\Big),
\end{multline}
where $\Phi_{w,1}(\pi'_1)$ and $\Phi_{w,i}(\pi_i(C_{Q_{(1)}}))$ ($i\geq 2$) are good subquotients of $\LLbar_i\vert_{({}^w\widetilde P)_{Q,i}^{\gKQ}}$. Since we have good subquotients of $\LLbar_i\vert_{({}^w\widetilde P)_{Q,i}^{\gKQ}}$ in each factor of (\ref{decompc}) and (\ref{decompw}), (\ref{decompc}) and (\ref{decompw}) imply $\Phi_{w,1}(\pi'_1)=C_{Q_1}\vert_{{\widetilde P_1}^{\gKQ}}$ and $\Phi_{w,i}(\pi_i(C_{Q_{(1)}}))=\LLbar_i\vert_{({}^w\widetilde P)_{Q,i}^{\gKQ}}$ for $i\geq 2$ (recall isotypic components of $\LLbar_i\vert_{M_{({}^w P)_{Q,i}}^{\gKQ}}$ tautology occur with multiplicity $1$, so there is no multiplicity issue). This finishes the proof of condition (i) in Definition \ref{compatible1} for $\Phi_{w,1}$.
\end{proof}

\begin{rem}
When $P(C_Q)$ is strictly bigger than ${}^{w_Q}Q$ for one, or equivalently any by Lemma \ref{single}, $w_Q\in W(C_Q)$, there is no real analogue of Proposition \ref{cons2} since $\LLbar_i$ has to be replaced by $C_{w_Q,i}$ in (\ref{facteursbis}) which is not $\LLbar_i$ in general.
\end{rem}

\subsubsection{Compatibility with \texorpdfstring{$\rhobar$}{rhobar}}\label{compatibilityrhobar}

We define what it means for a representation of $G(K)$ over $\F$ to be compatible with a good conjugate $\rhobar:\gK\rightarrow \widetilde P_{\rhobar}(\F)$ as in \S\ref{goodconjugatesub}. Essentially, an admissible smooth representation $\Pi$ is compatible with $\rhobar$ if it is compatible with $\widetilde P_{\rhobar}$ in the sense of Definition \ref{compatible1} and if the bijection $\Phi$ of {\it loc.cit.}\ satisfies some natural compatibilities with the functor $V_G$ in (\ref{VG}) (see Definition \ref{compatible2}).

We now fix a continuous homomorphism
\[\rhobar:\gK\longrightarrow G(\F)\]
and recall that $\rhobar^{\rm ss}$ denotes the semisimplification of the associated representation of $\gK$ (see \S\ref{goodconjugatesub}). We assume that $\rhobar$ is {\it generic} in the following sense: 
\begin{enumerate}[(a)]
\item $\rhobar^{\rm ss}$ has distinct irreducible constituents;
\item the ratio of any two irreducible constituents of $\rhobar^{\rm ss}$ of dimension $1$ is not in $\{\omega,\omega^{-1}\}$.
\end{enumerate}
By Proposition \ref{minP}, conjugating $\rhobar$ by an element of $G(\F)$ if necessary, we can assume that $\rhobar$ is a good conjugate in the sense of Definition \ref{gooddef}, that is we have
\[\rhobar:\gK\longrightarrow \widetilde P_{\rhobar}(\F)\subseteq P_{\rhobar}(\F)\subseteq G(\F),\]
where $P_{\rhobar}$ is a standard parabolic subgroup of $G$ such that $\rhobar^{\rm ss}$ is given by the composition $\gK\buildrel \rhobar\over \longrightarrow P_{\rhobar}(\F)\twoheadrightarrow M_{P_{\rhobar}}(\F)$ (see (\ref{morprho})), $\widetilde P_{\rhobar}\subseteq P_{\rhobar}$ is the smallest closed algebraic subgroup of $P_{\rhobar}$ containing $M_{P_{\rhobar}}$ and the $\rhobar(g)$ for $g\in \gK$ (in its $\F$-points), and where, for any $h\in P_{\rhobar}(\F)$, if we define $\widetilde P_{h\rhobar h^{-1}}\subseteq P_{\rhobar}$ as for $\rhobar$, then we have $\widetilde P_{\rhobar}\subseteq \widetilde P_{h\rhobar h^{-1}}$. Good conjugates are not unique, see Theorem \ref{choicegood}, but we fix such a good conjugate $\rhobar$ (and the associated pair $(\widetilde P_{\rhobar}, P_{\rhobar})$) for the moment. 

For any $\widetilde w\in W_{\rhobar}=W_{\widetilde P_{\rhobar}}$ (see (\ref{wrhobar})) and any parabolic subgroup $Q$ containing ${}^{\widetilde w}\!P_{\rhobar}$, we define the $Q$-semisimplification $\rhobar^{Q-\rm ss}$ of $\rhobar$ as the continuous homomorphism
\[\rhobar^{Q-\rm ss}:\gK\buildrel{\widetilde w}\rhobar{\widetilde w}^{-1}\over\longrightarrow {}^{\widetilde w}\!P_{\rhobar}(\F)\hookrightarrow Q(\F)\twoheadrightarrow M_{Q}(\F)\]
(strictly speaking, it also depends on $\widetilde w$). More generally, for any $w\in W$ such that $w(S(Q))\subseteq S$, we define the continuous homomorphisms
\begin{equation*}
w(\rhobar^{Q-{\rm ss}}):\gK\buildrel{w}\rhobar^{Q-\rm ss}{w}^{-1}\over\longrightarrow {w}M_{Q}(\F){w}^{-1}=M_{{}^wQ}(\F)
\end{equation*}
and note that $w(\rhobar^{Q-\rm ss})$ actually takes values in
\[({}^{w\widetilde w}\!\widetilde P_{\rhobar})_{Q}(\F)\subseteq ({}^{w\widetilde w}\!P_{\rhobar})_{Q}(\F)\subseteq M_{{}^wQ}(\F)\]
(recall from the beginning of \S\ref{compatible1sec} that $({}^{w\widetilde w}\!P_{\rhobar})_{Q}=w ({}^{\widetilde w}\!P_{\rhobar}\cap M_Q)w^{-1}$ and $({}^{w\widetilde w}\!\widetilde P_{\rhobar})_{Q}=w \big((\widetilde w\widetilde P_{\rhobar}\widetilde w^{-1})\cap M_Q\big)w^{-1}$). 

Let $\widetilde w\in W_{\rhobar}$, $Q$ a parabolic subgroup containing ${}^{\widetilde w}\!P_{\rhobar}$, $w\in W$ such that $w(S(Q))\subseteq S$ and $Q'$ a parabolic subgroup containing ${}^w Q$. We write $M_{Q'}={\rm diag}(M_1,\ldots ,M_d)$ with $M_i\cong \GL_{n_i}$ and we set for $i\in \{1,\ldots,d\}$:
\begin{equation*}
w(\rhobar^{Q-{\rm ss}})_i:\gK\buildrel w(\rhobar^{Q-{\rm ss}}) \over\longrightarrow M_{{}^wQ}(\F)\hookrightarrow M_{Q'}(\F)\twoheadrightarrow M_{i}(\F).
\end{equation*}
We also have (recall from \S\ref{compatible1sec} that $({}^wQ)_i$ is a standard parabolic subgroup of $M_i$):
\begin{equation}\label{facteuri}
w(\rhobar^{Q-{\rm ss}})_i:\gK\buildrel w(\rhobar^{Q-{\rm ss}}) \over\longrightarrow M_{{}^wQ}(\F)\twoheadrightarrow M_{({}^wQ)_i}(\F)\hookrightarrow M_{i}(\F).
\end{equation}
Composing $w(\rhobar^{Q-{\rm ss}})_i$ with $M_i(\F)\twoheadrightarrow (M_i/M^{\rm der}_i)(\F)\cong \F^\times$, we obtain by class field theory for $K$ a continuous group homomorphism
\begin{equation}\label{deti}
{\det}(w(\rhobar^{Q-\rm ss})_i):K^\times \longrightarrow \F^\times.
\end{equation}

\begin{lem}\label{det}
Let $\rhobar$, $Q$ as above, $C_Q$ an isotypic component of $\LLbar\vert_{Z_{M_Q}}$ and $Q'\defeq P(C_Q)$. Then the characters {\upshape(\ref{deti})} for $i\in \{1,\ldots,d\}$ and $w\in W(C_Q)$ {\upshape(}see {\upshape(\ref{wCP}))} don't depend on the choice of $w\in W(C_Q)$. Moreover, we have $\prod_{i=1}^d{\det}(w(\rhobar^{Q-\rm ss})_i)=\det(\rhobar)$.
\end{lem}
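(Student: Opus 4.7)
The plan is to verify both assertions by direct computations, as the content of the lemma is essentially that two sources of determinants — one from the Weyl conjugation, one from projecting to each block — agree with the overall determinant of $\rhobar$.

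For the independence on $w \in W(C_Q)$, I would start by invoking Lemma \ref{inclusion} to write any two elements $w, w' \in W(C_Q)$ as $w' = w_{Q'} w$ with $w_{Q'} \in W(P(C_Q)) = W(Q')$. Since $W(Q') \subseteq W(M_{Q'}) = \prod_{i=1}^d W(M_i)$ (a direct product because $M_{Q'} = {\rm diag}(M_1,\dots,M_d)$), one can decompose $w_{Q'} = (w_{Q',1},\dots,w_{Q',d})$. The projection $M_{Q'} \twoheadrightarrow M_i$ is equivariant for conjugation by $w_{Q'}$ in the obvious way (it intertwines it with conjugation by $w_{Q',i}$), so from $w'(\rhobar^{Q-\rm ss}) = w_{Q'} \cdot w(\rhobar^{Q-\rm ss}) \cdot w_{Q'}^{-1}$ one obtains the equality $w'(\rhobar^{Q-\rm ss})_i = w_{Q',i} \cdot w(\rhobar^{Q-\rm ss})_i \cdot w_{Q',i}^{-1}$ inside $M_i(\F)$. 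Taking determinants, which are invariant under conjugation, then gives $\det(w'(\rhobar^{Q-\rm ss})_i) = \det(w(\rhobar^{Q-\rm ss})_i)$ as characters of $\gK$, i.e.\ the same characters \eqref{deti} of $K^\times$ via local class field theory.

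For the product formula, I would first argue that $\det(\rhobar^{Q-\rm ss}) = \det(\rhobar)$ as characters $\gK \to \F^\times$. This is because $\rhobar^{Q-\rm ss}(g)$ is by definition the image in $M_Q(\F)$ of $\widetilde w \rhobar(g) \widetilde w^{-1} \in {}^{\widetilde w}P_{\rhobar}(\F) \subseteq Q(\F)$; for any block upper-triangular matrix in $Q(\F)$ the determinant coincides with that of its block-diagonal projection to $M_Q(\F)$, so $\det(\rhobar^{Q-\rm ss}(g)) = \det(\widetilde w \rhobar(g) \widetilde w^{-1}) = \det(\rhobar(g))$. Conjugation by $w$ also preserves the determinant, whence $\det(w(\rhobar^{Q-\rm ss})) = \det(\rhobar)$.

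Finally, viewing $w(\rhobar^{Q-\rm ss})$ as landing in $M_{{}^wQ}(\F) \subseteq M_{Q'}(\F) \subseteq G(\F)$ and using the product decomposition $M_{Q'} = \prod_{i=1}^d M_i$, the determinant of an element of $M_{Q'}(\F)$ is the product of the determinants of its projections to each $M_i(\F)$. Applied to $w(\rhobar^{Q-\rm ss})(g)$ this yields $\det(w(\rhobar^{Q-\rm ss})) = \prod_{i=1}^d \det(w(\rhobar^{Q-\rm ss})_i)$, and combining with the previous paragraph gives $\prod_{i=1}^d \det(w(\rhobar^{Q-\rm ss})_i) = \det(\rhobar)$. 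There is no real obstacle here beyond bookkeeping; the only mildly delicate point is invoking Lemma \ref{inclusion} to reduce the well-definedness question to a $W(Q')$-conjugation that preserves each factor $M_i$, which is where the product structure of $W(Q')$ inside $W(M_{Q'})$ is genuinely used.
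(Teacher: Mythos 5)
Your proof is correct and takes essentially the same route as the paper: both invoke Lemma \ref{inclusion} to write $w' = w_{Q'}w$ with $w_{Q'} \in W(Q')$, and then exploit that conjugation by $W(Q')$ (embedded in $M_{Q'}(\F)$) does not change determinants — the paper phrases this as triviality of the conjugation action on $M_{Q'}/M_{Q'}^{\rm der}$, while you unwind it via the blockwise decomposition $W(Q') = \prod_i W(M_i)$, but these are the same observation.
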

\begin{proof}
This follows from Lemma \ref{inclusion} (applied to $P=Q$) together with the fact that conjugation by $W(P(C_Q))$ (seen in $M_{P(C_Q)}(\F)$) is trivial on $M_{P(C_Q)}/M^{\rm der}_{P(C_Q)}$, and thus on each $M_i/M^{\rm der}_i$. The last assertion is obvious.
\end{proof}

As previously, $w(\rhobar^{Q-{\rm ss}})_i$ in (\ref{facteuri}) takes values in
\[({}^{w\widetilde w}\widetilde P_{\rhobar})_{Q,i}(\F)\subseteq ({}^{w\widetilde w}P_{\rhobar})_{Q,i}(\F)\subseteq M_{({}^wQ)_i}(\F)\subseteq M_i(\F)\cong \GL_{n_i}(\F)\]
(recall from the beginning of \S\ref{compatible1sec} that $({}^{w\widetilde w}\!P_{\rhobar})_{Q,i}$ is a standard parabolic subgroup of $M_{({}^wQ)_i}$ and that $({}^{w\widetilde w}\!\widetilde P_{\rhobar})_{Q,i}$ is a Zariski closed algebraic subgroup of $({}^{w\widetilde w}\!P_{\rhobar})_{Q,i}$ containing $M_{({}^{w\widetilde w}\!P_{\rhobar})_{Q,i}}$).

\begin{prop}\label{consgood}
Let $\rhobar$, $Q$ as above, $w\in W$ such that $w(S(Q))\subseteq S$ and $Q'\defeq {}^wQ$. Then $w(\rhobar^{Q-{\rm ss}})_i: \gK\longrightarrow M_i(\F)$ is a good conjugate with values in $({}^{w\widetilde w}\!\widetilde P_{\rhobar})_{Q,i}(\F)$ for $i\in \{1,\ldots,d\}$.
\end{prop}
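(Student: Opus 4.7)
The plan is to verify the two conditions of Definition~\ref{gooddef} for $\sigma_i\defeq w(\rhobar^{Q-{\rm ss}})_i$, after first establishing the containment of values. For the latter, $\rhobar$ takes values in $\widetilde P_{\rhobar}(\F)$, so $\widetilde w\rhobar\widetilde w^{-1}$ takes values in $(\widetilde w\widetilde P_{\rhobar}\widetilde w^{-1})(\F)\subseteq{}^{\widetilde w}P_{\rhobar}(\F)\subseteq Q(\F)$ by Lemma~\ref{w(X)} applied to $\widetilde w\in W_{\rhobar}=W_{\widetilde P_{\rhobar}}$. The key point, using the structure $\widetilde w\widetilde P_{\rhobar}\widetilde w^{-1}=M_{{}^{\widetilde w}P_{\rhobar}}\cdot N_{\widetilde w(X_{\rhobar}\setminus R(P_{\rhobar})^+)}$ and the fact that the roots $\widetilde w(X_{\rhobar}\setminus R(P_{\rhobar})^+)$ lie in $R^+$, is that the projection $Q\twoheadrightarrow M_Q$ maps $\widetilde w\widetilde P_{\rhobar}\widetilde w^{-1}$ onto $(\widetilde w\widetilde P_{\rhobar}\widetilde w^{-1})\cap M_Q$; conjugating by $w$ and projecting to the $i$-th factor of $M_{Q'}=M_{{}^wQ}$ then gives values in $({}^{w\widetilde w}\widetilde P_{\rhobar})_{Q,i}(\F)$.

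For condition~(i) of Definition~\ref{gooddef}, I would observe that $\sigma_i^{\rm ss}$ is obtained from $\rhobar^{\rm ss}$ by conjugation by $w\widetilde w$ followed by projection to the appropriate Levi factor of $M_i$. Writing $M_{P_{\rhobar}}$ as a product of general linear factors and $\rhobar^{\rm ss}$ as the corresponding external tensor product of irreducible representations (by the good-conjugate hypothesis on $\rhobar$), the induced map $M_{P_{\rhobar}}\to M_{({}^{w\widetilde w}P_{\rhobar})_{Q,i}}$ identifies the target with the product of those factors of $M_{P_{\rhobar}}$ which land in $M_i$, and $\sigma_i^{\rm ss}$ with the tensor product of the corresponding irreducible constituents of $\rhobar^{\rm ss}$. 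The genericity of $\rhobar$ (pairwise non-isomorphic 1-dimensional constituents) guarantees that $\sigma_i^{\rm ss}$ does not factor through a proper parabolic of $M_{({}^{w\widetilde w}P_{\rhobar})_{Q,i}}$.

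For condition~(ii), given $h_i\in({}^{w\widetilde w}P_{\rhobar})_{Q,i}(\F)$, I would lift it to an element of $P_{\rhobar}(\F)$ as follows: set $h\defeq(1,\dots,1,h_i,1,\dots,1)\in M_{Q'}(\F)$, which lies in $({}^{w\widetilde w}P_{\rhobar})_Q(\F)$; then $h'\defeq w^{-1}hw\in({}^{\widetilde w}P_{\rhobar}\cap M_Q)(\F)\subseteq{}^{\widetilde w}P_{\rhobar}(\F)$; and finally $\widetilde h\defeq\widetilde w^{-1}h'\widetilde w\in P_{\rhobar}(\F)$. A direct calculation, using that $h'\in M_Q$ commutes with the projection $Q\twoheadrightarrow M_Q$ and that conjugation commutes with projection to a product factor, gives
\[ w\big((\widetilde h\rhobar\widetilde h^{-1})^{Q-{\rm ss}}\big)_i=h_i\sigma_ih_i^{-1}. \]
Since $\rhobar$ is a good conjugate, $\widetilde P_{\rhobar}\subseteq\widetilde P_{\widetilde h\rhobar\widetilde h^{-1}}$, equivalently $X_{\rhobar}\subseteq X_{\widetilde h\rhobar\widetilde h^{-1}}$. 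Writing each $\rhobar(g)$ and $\widetilde h\rhobar\widetilde h^{-1}(g)$ as a Levi element multiplied by root-subgroup elements, and tracking how conjugation by $w\widetilde w$ followed by projection to $M_i$ preserves exactly those roots $w\widetilde w\alpha$ lying in $R(M_i)^+$, one concludes $X_{\sigma_i}\subseteq X_{h_i\sigma_ih_i^{-1}}$ inside $R^+(M_i)$.

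The main obstacle will be this last step: transferring the inclusion of closed subsets of $R^+$ relative to $P_{\rhobar}$ to an inclusion of closed subsets of $R^+(M_i)$ relative to $P_{\sigma_i}=({}^{w\widetilde w}P_{\rhobar})_{Q,i}$ under two successive projections. A priori, adding roots to $X_{\rhobar}$ need not correspond cleanly to adding roots to $X_{\sigma_i}$, because the latter is the smallest closed subset containing the image, rather than the image itself. An explicit root-by-root analysis in the spirit of Lemma~\ref{smallest} and Proposition~\ref{minP}---writing out the unipotent components of $\widetilde h\rhobar\widetilde h^{-1}$ and checking which survive under conjugation by $w\widetilde w$ and projection to $M_i$---should settle this cleanly, since the whole setup is that of $\GL_n$ where these bookkeeping arguments are tractable.
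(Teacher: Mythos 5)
Your forward verification of Definition~\ref{gooddef}(ii) is a genuinely different organization from the paper's proof, which proceeds by contradiction: assume $\rhobar_i$ is not good, invoke Proposition~\ref{minP} to produce $h_i\in({}^{\widetilde w}\!P_{\rhobar})_{Q,i}(\F)$ with $X_{h_i\rhobar_ih_i^{-1}}\subsetneq X_{\rhobar_i}$, pick a single root $\alpha_i$ in the difference, lift $h_i$ to the block-diagonal $h\in M_Q(\F)$ exactly as you do, and then show that $\alpha_i\in X_{\widetilde w\rhobar\widetilde w^{-1}}$ while $\alpha_i\notin X_{h\widetilde w\rhobar\widetilde w^{-1}h^{-1}}$, contradicting Lemma~\ref{goodw}. (The paper also reduces to $w=\Id$ at the outset, since $w$ is a permutation of the blocks; your $w$-bookkeeping handles the same point on the fly.) The contradiction route is more economical precisely because it only has to track membership of one root, not an inclusion of entire closed subsets.

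The gap you correctly identify is real but closable, and the key to closing it is the $\GL_n$-specific observation that the paper states explicitly: a positive root of $\GL_{n_i}$, viewed in $R^+$ and written as a sum of positive roots of $\GL_n$, necessarily uses only positive roots of $\GL_{n_i}$ (consecutive-interval structure of the blocks). Combined with $W({}^{\widetilde w}P_{\rhobar})$ decomposing as a product over blocks, this implies that for any closed subset $X\subseteq R^+$ relative to ${}^{\widetilde w}P_{\rhobar}$, the intersection $X\cap R^+(M_i)$ is closed relative to $({}^{\widetilde w}P_{\rhobar})_{Q,i}$ and that closure operations starting from roots of $M_i$ never escape $R^+(M_i)$. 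From this one gets $X_{\rhobar_i}=X_{\widetilde w\rhobar\widetilde w^{-1}}\cap R^+(M_i)$ and $X_{h_i\rhobar_ih_i^{-1}}=X_{h'\widetilde w\rhobar\widetilde w^{-1}h'^{-1}}\cap R^+(M_i)$, and the inclusion you want then follows by intersecting $X_{\widetilde w\rhobar\widetilde w^{-1}}\subseteq X_{h'\widetilde w\rhobar\widetilde w^{-1}h'^{-1}}$ (Lemma~\ref{goodw} and $h'\in{}^{\widetilde w}P_{\rhobar}(\F)$) with $R^+(M_i)$. So your plan goes through, but it needs that lemma made precise rather than asserted; the paper sidesteps the need for the full equalities by only chasing $\alpha_i$.
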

\begin{proof}
Note that ${\widetilde w}\rhobar \widetilde w^{-1}$ is a good conjugate (with values in $\widetilde w\widetilde P_{\rhobar}(\F)\widetilde w^{-1}\subseteq {}^{\widetilde w}\!P_{\rhobar}(\F)$) by Lemma \ref{goodw}. Since $w(\rhobar^{Q-{\rm ss}})$ is obtained from $\rhobar^{Q-{\rm ss}}$ by permuting the blocs $M_i\cong \GL_{n_i}$ of $M_Q$, it is equivalent to prove the statement for $w=\Id$. Assume that $\rhobar_i\defeq (\rhobar^{Q-{\rm ss}})_i: \gK\rightarrow M_i(\F)$ is {\it not} a good conjugate. Then it follows from Proposition \ref{minP} that there is $h_i\in ({}^{\widetilde w}\!P_{\rhobar})_{Q,i}(\F)$ such that $h_i \rhobar_i h_i^{-1}$ is a good conjugate, and thus $X_{h_i \rhobar_i h_i^{-1}}\subsetneq X_{\rhobar_i}$ (with the notation of \S\ref{goodconjugatesub}). Let $\alpha_i$ be a positive root of $\GL_{n_i}$ in $X_{\rhobar_i}\backslash X_{h_i\rhobar_ih_i^{-1}}$ and note that, if $\alpha_i$ is a sum of roots in $R^+$ (viewing $\alpha_i$ in $R^+$), then all of these roots are positive roots of $\GL_{n_i}$. Set $h_j\defeq \Id_{\GL_{n_j}}\in \GL_{n_j}(\F)$ if $j\ne i$ and define $h=(h_1,\ldots,h_d)\in {\rm diag}(M_1,\ldots ,M_d)=M_Q(\F)\subseteq Q(\F)$. If we had $\alpha_i\in X_{h\widetilde w\rhobar \widetilde w^{-1}h^{-1}}$, then from what we just said necessarily we would have $\alpha_i\in X_{(h\rhobar^{Q-{\rm ss}}h^{-1})_i}=X_{h_i\rhobar_ih_i^{-1}}$ which is impossible. Therefore $\alpha_i\notin X_{h\widetilde w\rhobar \widetilde w^{-1}h^{-1}}$. But since $\alpha_i\in X_{\rhobar_i}\subseteq X_{\widetilde w\rhobar\widetilde w^{-1}}$ (viewing the positive roots of $\GL_{n_i}$ as a subset of $R^+$) we deduce $X_{h\widetilde w\rhobar\widetilde w^{-1} h^{-1}}\subsetneq X_{\widetilde w\rhobar\widetilde w^{-1}}$ which is impossible as $\widetilde w\rhobar\widetilde w^{-1}$ is a good conjugate.
\end{proof}

For $\sigma\in \gKQ= {\rm Gal}(\Qpf/\Qp)$ consider
\[\rhobar^{\sigma}:\gK\rightarrow \widetilde P_{\rhobar}(\F)\subseteq P_{\rhobar}(\F)\subseteq G(\F),\]
where $\rhobar^{\sigma}(g)\defeq \rhobar(\sigma g \sigma^{-1})$. Here $g\in \gK$ and $ \sigma$ is any lift of $\sigma$ in $\gp$. Since $\gK$ is normal in $\gp$, $\rhobar^{\sigma}(g)$ is well defined up to conjugation (by elements in $\widetilde P_{\rhobar}(\F)$). If $C$ is a good subquotient of $\LLbar\vert_{{\widetilde P_{\rhobar}}^{\gKQ}}$ (Definition \ref{goodsubqt}), we can view in particular $C$ as a continuous homomorphism
\begin{equation}
\label{eq:Ccont}
\underbrace{\widetilde P_{\rhobar}(\F)\times \cdots \times \widetilde P_{\rhobar}(\F)}_{\gKQ}\longrightarrow {\rm Aut}\big(C(\F)\big)
\end{equation}
(denoting by $C(\F)$ the underlying $\F$-vector space of the algebraic representation $C$) and define a $\gK$-representation $C(\rhobar)$ as
\[\gK \ \buildrel {\prod \rhobar^{\sigma}}\over \longrightarrow \ \widetilde P_{\rhobar}(\F)\times \cdots \times \widetilde P_{\rhobar}(\F)\ \buildrel C\over\longrightarrow\ {\rm Aut}\big(C(\F)\big),\]
where, in the first arrow, we choose any order on the elements $\sigma$ of $\gKQ$.

\begin{lem}\label{galqp}
The $\gK$-representation $C(\rhobar)$ is well-defined up to isomorphism and canonically extends to a $\gp$-representation.
\end{lem}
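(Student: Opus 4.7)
I would proceed as follows. First, for well-definedness of $C(\rhobar)$ as an isomorphism class of $\gK$-representations, observe that two lifts $\tilde\sigma, \tilde\sigma'$ of the same $\sigma\in \gKQ$ differ by some $h\in\gK$ on the right, and a direct computation gives
\[\rhobar^{\sigma}_{\tilde\sigma'}(g)=\rhobar(\tilde\sigma h g h^{-1}\tilde\sigma^{-1})=\rhobar^{\sigma}_{\tilde\sigma}(h)\cdot\rhobar^{\sigma}_{\tilde\sigma}(g)\cdot\rhobar^{\sigma}_{\tilde\sigma}(h)^{-1}.\]
Thus the tuple $(\rhobar^{\sigma}_{\tilde\sigma'}(g))_{\sigma}$ differs from $(\rhobar^{\sigma}_{\tilde\sigma}(g))_{\sigma}$ by conjugation by the element of $\widetilde P_{\rhobar}(\F)^{\gKQ}$ equal to $\rhobar^{\sigma}_{\tilde\sigma}(h)$ in the $\sigma$-slot and $1$ elsewhere; applying $C$ yields a $\gK$-intertwiner between the two candidate representations. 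The same observation (the slots of $\widetilde P_{\rhobar}(\F)^{\gKQ}$ commute pairwise) handles independence of the ordering on $\gKQ$ used to form the product, completing Step~1.

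For the extension to $\gp$, the crucial structural input is that $\LLbar=\bigotimes_{\sigma\in\gKQ}(\bigotimes_{\alpha\in S}\Lbar(\lambda_{\alpha}))$ carries a natural action of the semidirect product $\widetilde P_{\rhobar}^{\gKQ}\rtimes\gKQ$, where $\gKQ$ permutes the tensor factors through its left-regular action on itself. This permutation action commutes with the diagonal embedding $Z_{M_{P_{\rhobar}}}\hookrightarrow Z_{M_{P_{\rhobar}}}^{\gKQ}$, so the isotypic decomposition of $\LLbar|_{Z_{M_{P_{\rhobar}}}}$ is $\gKQ$-stable, as is the filtration of Lemma~\ref{filtr}; hence every good subquotient $C$ inherits an action of $\widetilde P_{\rhobar}^{\gKQ}\rtimes\gKQ$. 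This is precisely the additional piece of structure on $C$ required to run the tensor-induction construction of \cite{Coll}, \cite[\S13]{Curtis-Reiner1}.

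The plan is then to fix once and for all a section $\sigma\mapsto\tilde\sigma$ of $\gp\twoheadrightarrow\gKQ$, and for $\tau\in\gp$ with image $\bar\tau\in\gKQ$ and each $\sigma\in\gKQ$ define $h_{\sigma}(\tau)\in\gK$ by
\[\tau\tilde\sigma\;=\;\widetilde{\bar\tau\sigma}\cdot h_{\sigma}(\tau),\]
and set, for $v\in C(\F)$,
\[\tau\cdot v\;\defeq\;C\bigl((\rhobar(h_{\sigma}(\tau)))_{\sigma\in\gKQ}\bigr)\circ\bar\tau\cdot v,\]
where $\bar\tau$ acts via the permutation structure above. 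That this gives a group action reduces to the cocycle identity $h_{\sigma}(\tau\tau')=h_{\bar\tau'\sigma}(\tau)\cdot h_{\sigma}(\tau')$ in $\gK$ (obtained by expanding $\tau\tau'\tilde\sigma$ in two ways) combined with the semidirect-product relation $C(\bar\tau)\circ C((g_{\sigma})_{\sigma})=C((g_{\bar\tau^{-1}\sigma})_{\sigma})\circ C(\bar\tau)$ inside $\Aut(C(\F))$. Compatibility with the $\gK$-action of Step~1 is the computation that for $g\in\gK$ one has $\bar g=1$ and $h_{\sigma}(g)=\tilde\sigma^{-1}g\tilde\sigma$, so that $\rhobar(h_{\sigma}(g))=\rhobar^{\sigma^{-1}}(g)$, which matches $\rhobar^{\sigma}(g)$ after reindexing $\sigma\mapsto\sigma^{-1}$ (an inner change of ordering handled by Step~1).

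Finally, canonicity: replacing the section $\sigma\mapsto\tilde\sigma$ by $\sigma\mapsto\tilde\sigma k_{\sigma}$ with $k_{\sigma}\in\gK$ replaces each operator $T_{\tau}$ by its conjugate by $C((\rhobar(k_{\sigma}))_{\sigma})$, and these conjugations assemble into a $\gp$-equivariant isomorphism between the two resulting representations; so the $\gp$-structure is canonical up to unique isomorphism. The main obstacle I expect is the bookkeeping in the cocycle identity and in verifying its compatibility with the permutation action on $C$, where opposite sign conventions on left vs.\ right cosets are easy to mix up; the cleanest way to bypass this is to recognize that the above recipe identifies $C(\rhobar)$ with a subquotient of the classical tensor induction $\ind_K^{\otimes\Q_p}\bigl(\bigotimes_\alpha \Lbar(\lambda_\alpha)(\rhobar)\bigr)$, from which the $\gp$-action can be imported together with the verification that the construction indeed descends to the good subquotient $C$.
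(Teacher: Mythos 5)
Your proposal takes essentially the same route as the paper: both proofs hinge on the observation that permuting the $\gKQ$-tensor factors preserves each $Z_{M_{P_{\rhobar}}}$-isotypic subspace of $\LLbar$ (because $Z_{M_{P_{\rhobar}}}$ sits diagonally), hence preserves every good subquotient since isotypic components occur with multiplicity one; and both then obtain the $\gp$-action by restricting the standard tensor-induction action on $\LLbar(\rhobar)$. Your closing suggestion to ``import'' the $\gp$-action from $\ind_K^{\otimes\Q_p}(\bigotimes_\alpha\Lbar(\lambda_\alpha)(\rhobar))$ is exactly what the paper does.

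One concrete fix: in your explicit recipe, $\tau\cdot v\defeq C((\rhobar(h_\sigma(\tau)))_\sigma)\circ\bar\tau\cdot v$ is not a group action. With the left-coset convention $\tau\tilde\sigma=\widetilde{\bar\tau\sigma}h_\sigma(\tau)$ and the left-regular permutation $\bar\tau(\otimes_\sigma v_\sigma)=\otimes_\sigma v_{\bar\tau^{-1}\sigma}$, the relation $\bar\tau\circ C((g_\sigma)_\sigma)=C((g_{\bar\tau^{-1}\sigma})_\sigma)\circ\bar\tau$ forces the order $\tau\cdot v\defeq\bar\tau\circ C((\rhobar(h_\sigma(\tau)))_\sigma)\cdot v$ if the cocycle identity $h_\sigma(\tau\tau')=h_{\bar\tau'\sigma}(\tau)h_\sigma(\tau')$ is to close up the action; with your ordering the $\sigma$-slots of $(\tau\tau')\cdot v$ and $\tau\cdot(\tau'\cdot v)$ come out as $\rhobar(h_{\bar\tau'\sigma}(\tau))\rhobar(h_\sigma(\tau'))$ versus $\rhobar(h_\sigma(\tau))\rhobar(h_{\bar\tau^{-1}\sigma}(\tau'))$, which differ. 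You correctly anticipated this sort of left/right mismatch, and the bypass you propose is the right one. Also, in Step~1 the phrase ``the slots commute pairwise'' is not really what handles reindexing; the composition $\gK\to\widetilde P_{\rhobar}(\F)^{\gKQ}\buildrel C\over\to\Aut(C(\F))$ is intrinsic (indexed by $\gKQ$, not by $\{1,\dots,f\}$), so no ordering enters, and the genuinely substantive permutation-invariance is the one you establish in Step~2.
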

\begin{proof}
The algebraic representation $C$ of $\widetilde P_{\rhobar}^{\gKQ}$ over $\F$ doesn't depend up to isomorphism on the order of the copies of $\widetilde P_{\rhobar}$, i.e.\ any permutation of the $\widetilde P_{\rhobar}$'s yields an algebraic representation which is conjugate by an element of ${\rm Aut}(C(\F))$. Indeed, this clearly holds when $C$ is an isotypic component of $\LLbar\vert_{Z_{M_{P_{\rhobar}}}}$ as $Z_{M_{P_{\rhobar}}}$ embeds diagonally into $\widetilde P_{\rhobar}^{\gKQ}$. Thus, for a general good subquotient $C$, any permutation of the $\widetilde P_{\rhobar}$'s gives a representation $C'$ which contains the same isotypic components of $\LLbar\vert_{Z_{M_{P_{\rhobar}}}}$ as those of $C$. Assume now that $C$ is a good subrepresentation of $\LLbar\vert_{{\widetilde P_{\rhobar}}^{\gKQ}}$. Then $C'$ must be isomorphic to $C$ since isotypic components of $\LLbar\vert_{Z_{M_{P_{\rhobar}}}}$ tautologically occur with multiplicity $1$. In general, one writes $C$ as the quotient of two good subrepresentations of $\LLbar\vert_{{\widetilde P_{\rhobar}}^{\gKQ}}$. All this implies that $C(\rhobar)$ is well-defined.\\
We now prove that it extends to $\gp$. First, if $C=\LLbar\vert_{{\widetilde P_{\rhobar}}^{\gKQ}}$, then $C(\rhobar)$ is the tensor induction (\ref{tensorind}) and thus canonically extends to $\gp$. Let us recall explicitly how it extends. Fix $\sigma_1,\ldots,\sigma_f$ some representatives in $\gp$ of the elements of $\gKQ={\rm Gal}(\Q_{p^f}/\Qp)$ and define permutations $w_1,\ldots,w_f$ on $\{1,\ldots,f\}$ by $\sigma_i\sigma_j^{-1}=\sigma_{w_i(j)}^{-1}h_{i,j}$, where $h_{i,j}\in \gKQ$. The underlying $\F$-vector space $\LLbar(\F)$ of $\LLbar$ is
\[\bigotimes_{i=1}^f\Big(\big(\bigotimes_{\alpha\in S}\Lbar(\lambda_{\alpha})\big)(\F)\Big),\]
where $\big(\bigotimes_{\alpha\in S}\Lbar(\lambda_{\alpha})\big)(\F)$ is the underlying vector space of $\bigotimes_{\alpha\in S}\Lbar(\lambda_{\alpha})$, and the action of $\sigma_i$ then sends $v_1\otimes v_2\otimes \cdots \otimes v_f\in \LLbar(\F)$ to $u_1\otimes u_2\otimes \cdots \otimes u_f$, where: \begin{equation}\label{tensoract}
u_{w_i(j)}\defeq \Big(\Big(\bigotimes_{\alpha\in S}\Lbar(\lambda_{\alpha})\Big)(\rhobar(h_{i,j}))\Big)(v_j).
\end{equation}
This yields an action of $\gp$ which doesn't depend on any choice (up to isomorphism). It is enough to prove that this action of $\gp$ preserves the subspaces $C(\F)\subseteq \LLbar(\F)$, where $C$ is any good subrepresentation of $\LLbar\vert_{{\widetilde P_{\rhobar}}^{\gKQ}}$. But this is clear from (\ref{tensoract}) since $C(\F)$ is preserved by the action of $\gK$ {\it and} by any permutation of the $v_i$ (as we have seen at the beginning).
\end{proof} 

\begin{rem}
One could also use $L$-groups as in \S\ref{Cgroup} in order to have more intrinsic definitions (see Remark \ref{explicit}(i)). However the above pedestrian approach will be sufficient for our purpose. 
\end{rem}

The following lemma is in the same spirit as Lemma \ref{det}.

\begin{lem}\label{d}
Let $\rhobar$, $Q$ as above, $C_Q$ an isotypic component of $\LLbar\vert_{Z_{M_Q}}$ and $Q'\defeq P(C_Q)$. For $w\in W(C_Q)$ and $i\in \{1,\ldots,d\}$, let
\begin{itemize}
\item[$\bullet$]$C_{w,i}$ be the isotypic component of $\LLbar_i\vert_{Z_{M_{({}^wQ)_i}}}$ defined in {\upshape(\ref{facteursbis}) (}applied with $P$ there being $Q${\upshape)};
\item[$\bullet$]$w(\rhobar^{Q-{\rm ss}})_i$ the representation of $\gK$ with values in $M_{({}^wQ)_i}(\F)$ defined in {\upshape(\ref{facteuri}) (}applied to $Q'=P(C_Q)${\upshape)};
\item[$\bullet$]$C_{w,i}\big(w(\rhobar^{Q-{\rm ss}})_i\big)$ the representation of $\gp$ defined in Lemma \ref{galqp} {\upshape(}applied to $\rhobar=w(\rhobar^{Q-{\rm ss}})_i$, $\LLbar_i$ and $C=C_{w,i}${\upshape)}.
\end{itemize}
Then the $\gp$-representation $C_{w,i}\big(w(\rhobar^{Q-{\rm ss}})_i\big)$ doesn't depend on $w\in W(C_Q)$.
\end{lem}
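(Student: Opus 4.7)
The plan is to exploit the explicit description of $W(C_Q)$ modulo $W(P(C_Q))$ provided by Lemma \ref{inclusion}: any two $w,w'\in W(C_Q)$ differ by left multiplication by some $w_{Q'}\in W(P(C_Q))=W(Q')$. Since $M_{Q'}=M_1\times\cdots\times M_d$, we have $W(Q')=W(M_1)\times\cdots\times W(M_d)$, so $w_{Q'}$ preserves each $M_i$ and projects to an element $w_{Q',i}\in W(M_i)$ (as recalled in Remark \ref{gln2}). I would fix a lift of $w_{Q',i}$ in $N_{M_i}(T_i)(\F)$, still denoted $w_{Q',i}$; none of the statements below depend on this choice.

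The first step is to compare the two Galois representations. Writing $w'=w_{Q'}w$, the definition $w'(\rhobar^{Q-{\rm ss}})=w'\rhobar^{Q-{\rm ss}}{w'}^{-1}$ together with the fact that $w_{Q'}$ commutes with the projection to $M_i$ immediately yields, for every $g\in\gK$,
\[
w'(\rhobar^{Q-{\rm ss}})_i(g)=w_{Q',i}\cdot w(\rhobar^{Q-{\rm ss}})_i(g)\cdot w_{Q',i}^{-1}.
\]
The second step is to compare the two algebraic representations via (\ref{isowi}), which gives an identification $C_{w',i}\cong w_{Q',i}(C_{w,i})$ as algebraic representations of $M_{({}^{w'}Q)_i}^{\gKQ}=(w_{Q',i}M_{({}^{w}Q)_i}w_{Q',i}^{-1})^{\gKQ}$, explicitly
\[
C_{w',i}(g_1,\ldots,g_f)=C_{w,i}(w_{Q',i}^{-1}g_1w_{Q',i},\ldots,w_{Q',i}^{-1}g_fw_{Q',i}).
\]

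The third step is the cancellation. For $g\in\gK$, one computes
\[
C_{w',i}\bigl(w'(\rhobar^{Q-{\rm ss}})_i\bigr)(g)=C_{w',i}\Bigl(\bigl(w'(\rhobar^{Q-{\rm ss}})_i(\sigma g \sigma^{-1})\bigr)_{\sigma\in\gKQ}\Bigr),
\]
and substituting the formulas from Steps~1 and~2, the conjugations by $w_{Q',i}$ in each tensor factor introduced by the twist of $C_{w,i}$ exactly cancel those appearing in the twisted Galois representation. This produces the literal equality of continuous maps $\gK\to\Aut(C_{w,i}(\F))$:
\[
C_{w',i}\bigl(w'(\rhobar^{Q-{\rm ss}})_i\bigr)=C_{w,i}\bigl(w(\rhobar^{Q-{\rm ss}})_i\bigr).
\]

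The last step is to verify that this equality of $\gK$-representations is compatible with the canonical $\gp$-extension provided by Lemma \ref{galqp}. This is automatic from the construction: the $\gp$-action defined in (\ref{tensoract}) depends only on the choice of coset representatives $\sigma_1,\dots,\sigma_f$ of $\gKQ$ in $\gp$ (and the induced permutations $w_i$ and cocycle $h_{i,j}$) together with the underlying action of $\gK$ on $C_{w,i}(\F)$. Since we have just shown that the latter agrees on the nose when passing from $w$ to $w'$, and since the lift of $w_{Q',i}$ is a fixed element independent of $g\in\gK$ that therefore drops out of the tensor induction formula, the two $\gp$-actions coincide. The only subtle point in the argument is the careful bookkeeping of the $W(Q')$-twist through the tensor induction; once one notices that the twisting is ``pointwise'' in $\sigma\in\gKQ$ and by the same element $w_{Q',i}$ in each factor, the cancellation is immediate.
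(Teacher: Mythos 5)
Your proposal is correct and follows essentially the same route as the paper's proof: factor $w'=w_{P(C_Q)}w$ via Lemma \ref{inclusion}, use that $w_{P(C_Q)}\in W(P(C_Q))$ preserves each $M_i$ to get $w'(\rhobar^{Q-\rm ss})_i=w_{P(C_Q)}\,w(\rhobar^{Q-\rm ss})_i\,w_{P(C_Q)}^{-1}$, and then invoke (\ref{isowi}) so that the conjugation on the algebraic side exactly cancels the conjugation on the Galois side. You spell out the cancellation and the compatibility with the $\gp$-extension from Lemma \ref{galqp} in more detail than the paper (which leaves these as immediate), but the argument is the same.
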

\begin{proof}
Let $w'$ be another element in $W(C_Q)$. Then $w'=w_{P(C_Q)}w$ with $w_{P(C_Q)}\in W(P(C_Q))$ by Lemma \ref{inclusion} (with $P$ there being $Q$). Since $w_{P(C_Q)}$ respects $M_i$, we have
\[w'(\rhobar^{Q-{\rm ss}})_i=w_{P(C_Q)}w(\rhobar^{Q-{\rm ss}})_iw_{P(C_Q)}^{-1}.\]
The result then follows from (\ref{isowi}) (applied with $P=Q$).
\end{proof}

\begin{rem}\label{dd}
Lemma \ref{d} still holds replacing $C_{w,i}$ by any good subquotient of $C_{w,i}\vert_{({}^{w\widetilde w}\!\widetilde P_{\rhobar})_{Q,i}^{\gKQ}}$ and using the proof of Lemma \ref{cons1bis}(ii) and Remark \ref{subquotient} to compare with the corresponding good subquotient of $C_{w',i}\vert_{({}^{w'\widetilde w}\!\widetilde P_{\rhobar})_{Q,i}^{\gKQ}}$. The proof is the same as for Lemma \ref{d} using that $w(\rhobar^{Q-{\rm ss}})_i$ takes values in $({}^{w\widetilde w}\!\widetilde P_{\rhobar})_{Q,i}(\F)$.
\end{rem}

We now state the second crucial definition. We use the functor $V_H$ defined in \S\ref{covariant} in the case $H=\GL_m$, $m\geq 1$ (with the convention of Example \ref{exdelta}). If a smooth representation $\pi$ of $H(K)$ has a central character, we denote it by $Z(\pi)$ (so writing $Z(\pi)$ in the sequel implicitly means that $\pi$ has a central character). We also define
\begin{equation}\label{detmi}
\omega^{-1}\circ\theta_{M_i}:Z_{M_i}(K)=K^\times\buildrel\theta_{M_i}\vert_{Z_{M_i}}\over\longrightarrow K^\times \buildrel \omega^{-1}\over \longrightarrow \Fp^\times\hookrightarrow \F^\times
\end{equation}
($\theta_{M_i}$ as in (\ref{thetag}) replacing $G$ by $M_i$).

\begin{definit}\label{compatible2}
An admissible smooth representation $\Pi$ of $G(K)$ over $\F$ which has finite length and distinct absolutely irreducible constituents is {\it compatible with $\rhobar$} if there exists a bijection $\Phi$ as in Definition \ref{compatible1} for $\widetilde P=\widetilde P_{\rhobar}$ (in particular $\Pi$ is compatible with $\widetilde P_{\rhobar}$) which satisfies the following extra conditions:
\begin{enumerate}
\item for any subquotient $\Pi'$ of $\Pi$, we have an isomorphism of $\gp$-representa\-tions over $\F$:
\begin{equation}\label{DD}
V_G(\Pi')\cong \Phi(\Pi')(\rhobar),
\end{equation}
where $\Phi(\Pi')(\rhobar)$ is the associated representation of $\gp$ defined in Lem\-ma \ref{galqp};\\
\item for any $\widetilde w\in W_{\rhobar}$, any parabolic subgroup $Q$ containing ${}^{\widetilde w}\!P_{\rhobar}$ and any isotypic component $C_Q$ of $\LLbar\vert_{Z_{M_{Q}}}$, writing $M_{P(C_Q)}= {\rm diag}(M_1,\ldots, M_d)$ with $M_i\cong \GL_{n_i}$ we have for one, or equivalently any, element $w\in W(C_Q)$ and for any subquotient $\pi'_i$ of $\pi_i(C_Q)$:
\begin{eqnarray}\label{coeur}
Z\big(\pi'_i\big) &\cong & \det(w(\rhobar^{Q-{\rm ss}})_i)\cdot \omega^{-1}\circ\theta_{M_i}\\
\nonumber V_{M_i}\big(\pi'_i\big) &\cong &\widetilde w(\Phi)_{w,i}(\pi'_i)\big(w(\rhobar^{Q-{\rm ss}})_i\big),
\end{eqnarray}
where
\begin{itemize}
\item $\pi_i(C_Q)$ is the admissible smooth representation of $M_i(K)$ over $\F$ in Definition \ref{compatible1}(i);
\item $\det(w(\rhobar^{Q-{\rm ss}})_i)$ (resp.\ $\omega^{-1}\circ\theta_{M_i}$) is the character of $K^\times$ defined in (\ref{deti}) (resp.\ in (\ref{detmi}));
\item $\widetilde w(\Phi)_{w,i}(\pi'_i)$ is the good subquotient of $C_{w,i}\vert_{({}^{w\widetilde w}\!\widetilde P_{\rhobar})_{Q,i}}$ defined in Definition \ref{compatible1}(iii);
\item $w(\rhobar^{Q-{\rm ss}})_i$ is the representation of $\gK$ with values in $({}^{w\widetilde w}\!\widetilde P_{\rhobar})_{Q,i}(\F)\!\subseteq M_{({}^wQ)_i}(\F)$ defined in (\ref{facteuri}) (applied to $Q'=P(C_Q)$);
\item $\widetilde w(\Phi)_{w,i}(\pi'_i)\big(w(\rhobar^{Q-{\rm ss}})_i\big)$ \ is \ the \ representation \ of \ $\gp$ \ defined in Lemma \ref{galqp} (applied to $\rhobar=w(\rhobar^{Q-{\rm ss}})_i$, $\LLbar_i$ and $C=\widetilde w(\Phi)_{w,i}(\pi'_i)$).
\end{itemize}
\end{enumerate}
\end{definit}

If \ $\Pi$ \ is \ compatible \ with \ $\rhobar$, \ then \ we \ have \ in \ particular \ $V_G(\Pi)\cong \LLbar(\rhobar)$ and $V_{M_i}\big(\pi_i(C_Q)\big) \cong C_{w,i}\big(w(\rhobar^{Q-{\rm ss}})_i\big)$ for $Q,w,i$ as in Definition \ref{compatible2}(ii) (recall that $V_{M_i}(\pi_i(C_Q))$ is always the trivial representation of $\gp$ when $n_i=1$). If $\rhobar$ is (absolutely) irreducible, then $\widetilde P_{\rhobar}=P_{\rhobar}=G$, $W_{\rhobar}=\{\Id\}$ and $\Pi$ is compatible with ${\rhobar}$ if and only if $\Pi$ is absolutely irreducible supersingular, $Z(\Pi)\cong \det(\rhobar)\cdot \omega^{-1}\circ(\theta_{G}\vert_{Z_G})$ and $V_G(\Pi)\cong \LLbar(\rhobar)$.

\begin{rem}\label{listrem}
(i) The isomorphisms in (\ref{coeur}) are consistent with Lemma \ref{det}, Lemma \ref{d} and Remark \ref{dd} since their left-hand sides don't depend on $w\in W(C_Q)$.\\
(ii) Let $\Pi$ be compatible with $\rhobar$. From (\ref{forme}) applied with $w_{\widetilde P}=1$ and $Q=P$, (\ref{coeur}) applied with $\widetilde w=1$ and $Q=P_{\rhobar}$, the last assertion in Lemma \ref{det}, and from
\[\theta_G\vert_{Z_G}=\theta^{P(C_{Q})}\vert_{Z_{G}}\theta_{P(C_{Q})}\vert_{Z_{G}}=\theta^{P(C_{Q})}\vert_{Z_{G}}\Big(\prod_{i=1}^d\theta_{M_i}\vert_{Z_{M_i}}\Big)\] 
(which follows from (\ref{thetaP})), we deduce that each irreducible constituent $\Pi'$ of $\Pi$ is such that $Z(\Pi')=\det(\rhobar)\cdot \omega^{-1}\circ(\theta_{G}\vert_{Z_G})$. Since these irreducible constituents are all distinct by assumption, we obtain that $\Pi$ has a central character $Z(\Pi)=\det(\rhobar)\cdot \omega^{-1}\circ(\theta_{G}\vert_{Z_G})=\det(\rhobar)\cdot \omega^{\frac{-n(n-1)}{2}}$.\\
(iii) Let $\Pi$ be compatible with $\rhobar$, $\Pi'$ a subquotient of $\Pi$ and $\Pi''\subseteq \Pi'$ a subrepresentation. Then from Remark \ref{applied}(i) we have an exact sequence of $\gp$-representations:
\[0\longrightarrow \Phi(\Pi'')(\rhobar)\longrightarrow \Phi(\Pi')(\rhobar)\longrightarrow \Phi(\Pi'/\Pi'')(\rhobar)\longrightarrow 0.\]
Thus (\ref{DD}) implies that the sequence $0\rightarrow V_G(\Pi'')\rightarrow V_G(\Pi')\rightarrow V_G(\Pi'/\Pi'')\rightarrow 0$ is exact. In other terms, when applied to $\Pi$ and its subquotients $V_G$ behaves like an exact functor.\\
(iv) Let $\chi: K^\times\rightarrow \F^\times$ be a smooth character. Then it easily follows from Remark \ref{trivial}(ii) that $\Pi$ is compatible with $\rhobar$ if and only if $\Pi\otimes (\chi\circ{\det})$ is compatible with $\rhobar\otimes \chi$.\\
(v) For a given $\Pi$ compatible with ${\rhobar}$, a bijection $\Phi$ as in Definition \ref{compatible2} is still not unique in general. For instance consider the case $n=4$, $K=\Qp$, $\widetilde P_{\rhobar}=M_{P_{\rhobar}}={\rm diag}(\GL_2,\GL_2)$ and $\rhobar = \rhobar_1 \oplus \rhobar_2$ with $\rhobar_i:\gp\rightarrow \GL_2(\F)$ absolutely irreducible distinct for $i=1,2$ but such that $\wedge^2_{\F}\rhobar_1\cong \wedge^2_{\F}\rhobar_2$.
\end{rem}

Definition \ref{compatible2} doesn't depend on the choice of a good conjugate.

\begin{prop}\label{any}
If $\rhobar':\gK\rightarrow \widetilde P_{\rhobar'}(\F)\subseteq P_{\rhobar'}(\F)$ is another good conjugate of $\rhobar$, then $\Pi$ is compatible with ${\rhobar}$ if and only if $\Pi$ is compatible with ${\rhobar'}$.
\end{prop}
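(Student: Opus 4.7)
By Theorem \ref{choicegood}, we may write $\rhobar' = w(h\rhobar h^{-1})w^{-1}$ for some $h \in \widetilde P_{\rhobar}(\F)$ and $w \in W_{\rhobar}$, so it suffices to treat separately the two cases (a) $\rhobar' = h\rhobar h^{-1}$ with $h \in \widetilde P_{\rhobar}(\F)$ and (b) $\rhobar' = w\rhobar w^{-1}$ with $w \in W_{\rhobar}$.

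For case (a), note that $\widetilde P_{\rhobar'} = \widetilde P_{\rhobar}$, $P_{\rhobar'} = P_{\rhobar}$, and $W_{\rhobar'} = W_{\rhobar}$, so the plan is to use the \emph{same} bijection $\Phi$ from the compatibility of $\Pi$ with $\rhobar$. All conditions of Definition \ref{compatible1} depend only on $\widetilde P_{\rhobar}$ and $\Phi$ and so carry over unchanged. For the Galois conditions in Definition \ref{compatible2}, the key observation is that for every $\sigma \in \gKQ$ we have $(h\rhobar h^{-1})^\sigma = h\rhobar^\sigma h^{-1}$, so that for any good subquotient $C$ of $\LLbar|_{\widetilde P_{\rhobar}^{\gKQ}}$, the diagonal element $(h,\dots,h) \in \widetilde P_{\rhobar}(\F)^{\gKQ}$ provides an $\F$-linear isomorphism $C(\rhobar) \simto C(h\rhobar h^{-1})$; this isomorphism commutes with the permutation action of $\gp$ in Lemma \ref{galqp} because $(h,\dots,h)$ is diagonal, hence is $\gp$-equivariant. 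For the semisimplifications appearing in Definition \ref{compatible2}(ii), $(h\rhobar h^{-1})^{Q-\mathrm{ss}}$ differs from $\rhobar^{Q-\mathrm{ss}}$ by $M_Q(\F)$-conjugation by the image of $\widetilde w h \widetilde w^{-1}$, so the required isomorphisms on each factor $M_i$ follow by the same diagonal conjugation argument.

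For case (b), since $w \in W_{\rhobar}$ we have $\widetilde P_{\rhobar'} = w\widetilde P_{\rhobar} w^{-1}$, $P_{\rhobar'} = {}^w\! P_{\rhobar}$, and the map $\widetilde w \mapsto \widetilde w w^{-1}$ induces a bijection $W_{\rhobar} \congto W_{\rhobar'}$; moreover, for each $\widetilde w \in W_{\rhobar}$ the parabolic subgroups containing ${}^{\widetilde w} P_{\rhobar}$ coincide with those containing ${}^{\widetilde w w^{-1}}\! P_{\rhobar'}$. The plan is to take $\Phi' \defeq w(\Phi)$, the bijection sending a subquotient $\Pi'$ of $\Pi$ to the algebraic representation $w\bigl(\Phi(\Pi')\bigr)$ of $(w\widetilde P_{\rhobar}w^{-1})^{\gKQ}$ defined by (\ref{wact}) (applied componentwise). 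Then tautologically $\widetilde w' (\Phi') = \widetilde w(\Phi)$ whenever $\widetilde w' = \widetilde w w^{-1}$, so conditions (i)--(iv) of Definition \ref{compatible1} for $(\Pi, \Phi', \widetilde P_{\rhobar'})$ reduce to the same conditions for $(\Pi, \Phi, \widetilde P_{\rhobar})$. The crucial observation on the Galois side is that $\widetilde w' \rhobar' (\widetilde w')^{-1} = \widetilde w w^{-1} \cdot w \rhobar w^{-1} \cdot w \widetilde w^{-1} = \widetilde w \rhobar \widetilde w^{-1}$, so that $(\rhobar')^{Q-\mathrm{ss}} = \rhobar^{Q-\mathrm{ss}}$ as homomorphisms into $M_Q(\F)$ for any $Q$ containing ${}^{\widetilde w'} P_{\rhobar'} = {}^{\widetilde w} P_{\rhobar}$. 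Hence $\det(w_Q((\rhobar')^{Q-\mathrm{ss}})_i) = \det(w_Q(\rhobar^{Q-\mathrm{ss}})_i)$ and Definition \ref{compatible2}(ii) for $(\Pi, \Phi', \rhobar')$ is literally the same statement as Definition \ref{compatible2}(ii) for $(\Pi, \Phi, \rhobar)$. It remains to verify condition (i) of Definition \ref{compatible2}, namely $V_G(\Pi') \cong \Phi'(\Pi')(\rhobar') = w(\Phi(\Pi'))(w\rhobar w^{-1})$. Lifting $w$ to an element $\dot w \in N_G(T)(\F)$, the $\F$-linear automorphism of $\LLbar(\F)$ induced by $\dot w$ intertwines the actions of $w(\Phi(\Pi'))$ and $\Phi(\Pi')$ on opposite sides, hence defines an isomorphism $\Phi(\Pi')(\rhobar) \simto w(\Phi(\Pi'))(w\rhobar w^{-1})$; this is again $\gp$-equivariant for the tensor-induction formula (\ref{tensoract}) because the ``twisting'' element $\dot w$ is the same in every slot. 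The main technical point throughout is this $\gp$-equivariance for the tensor-induced Galois representations under diagonal conjugation, but as we have just explained, it follows directly from the explicit formula in the proof of Lemma \ref{galqp}.

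Combining cases (a) and (b) yields the proposition.
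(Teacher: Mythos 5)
Your proof is correct and follows essentially the same route as the paper's (much terser) argument: decompose $\rhobar' = w(h\rhobar h^{-1})w^{-1}$ via Theorem \ref{choicegood} and handle the $\widetilde P_{\rhobar}(\F)$-conjugation and the $W_{\rhobar}$-conjugation separately, noting that the Galois-side conditions in Definition \ref{compatible2} are invariant because the relevant $Q$-semisimplifications change only by conjugation inside $({}^{w\widetilde w}\widetilde P_{\rhobar})_{Q,i}(\F)$ and because $\gp$-equivariance of the diagonal conjugation isomorphisms is visible directly from formula (\ref{tensoract}). One small simplification you could make: in case (b), with $\Phi' = w(\Phi)$ and the same choice of lifts $\sigma$ used to define $\rhobar^\sigma$ and $(\rhobar')^\sigma$, one has $w^{-1}(\rhobar')^\sigma(g)w = \rhobar^\sigma(g)$ for all $g$, so $\Phi'(\Pi')(\rhobar')$ and $\Phi(\Pi')(\rhobar)$ are \emph{equal} as maps $\gK \to \mathrm{Aut}(\Phi(\Pi')(\F))$ (recall $\Phi'(\Pi')$ has the same underlying vector space as $\Phi(\Pi')$), and likewise for the extension to $\gp$ via (\ref{tensoract}); no intertwiner $\dot w$ is needed. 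This is precisely why the paper contents itself with "it is clear from Definition \ref{compatible2}" at this step.
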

\begin{proof}
From Theorem \ref{choicegood}, we have $\rhobar'=wh\rhobar h^{-1}w^{-1}$ for some $h\in \widetilde P_{\rhobar}(\F)$ and some $w\in W_{\rhobar}$. By symmetry, it is enough to prove that $\Pi$ compatible with ${\rhobar}$ implies $\Pi$ compatible with $\rhobar'$. We have first that $\Pi$ is compatible with $h\rhobar h^{-1}$. Indeed, $\widetilde P_{h\rhobar h^{-1}}=\widetilde P_{\rhobar}$ and the conditions in Definition \ref{compatible2} for $h\rhobar h^{-1}$ follow from the conditions for $\rhobar$ since $w(\rhobar^{Q-{\rm ss}})_i$ and $w((h\rhobar h^{-1})^{Q-{\rm ss}})_i$ are conjugate in $({}^{w\widetilde w}\!\widetilde P_{\rhobar})_{Q,i}(\F)$ (with $\widetilde w, w$ here as in Definition \ref{compatible2}). Thus we can assume $h=\Id$. But then, it is clear from Definition \ref{compatible2} that $\Pi$ is compatible with $\rhobar'=w\rhobar w^{-1}$.
\end{proof}

Just as some statements in Definition \ref{compatible1} should follow from others (see Remark \ref{comments}(iv)), we expect the isomorphisms (\ref{DD}) to follow in many cases from the isomorphisms (\ref{coeur}):

\begin{prop}\label{prop:cmpt2}
Assume $\Pi$ is compatible with $\rhobar$ and let $\Phi$ be a bijection as in Definition \ref{compatible2}. Let $\widetilde w\in W_{\rhobar}$, $Q$ a parabolic subgroup containing ${}^{\widetilde w}\!P_{\rhobar}$, $C_Q$ an isotypic component of $\LLbar\vert_{Z_{M_{Q}}}$ and $\Pi'$ a subquotient of $\widetilde w(\Phi)^{-1}(C_Q)$ of the form
\[\Pi'\cong \Ind_{P(C_Q)^-(K)}^{G(K)}\big((\pi'_1\otimes \cdots\otimes\pi'_d)\otimes (\omega^{-1}\circ\theta^{P(C_Q)})\big),\]
where $\pi'_i$ is a subquotient of the representation $\pi_i(C_Q)$ of $M_i(K)$ over $\F$ defined in Definition \ref{compatible1}(i) {\upshape(}so that $\widetilde w(\Phi)(\Pi')$ is a good subquotient of $C_Q\vert_{{}^{\widetilde w}\!\widetilde P_{\rhobar}^{\gKQ}}=C_Q\vert_{(({\widetilde w}\widetilde P_{\rhobar}{\widetilde w}^{-1})\cap M_Q)^{\gKQ}}${\upshape)}. Assume that $V_{M_{P(C_Q)}}(\pi'_1\otimes \cdots\otimes\pi'_d)\cong \bigotimes_{i=1}^dV_{M_{P(C_Q)},i}(\pi'_i)$ {\upshape(}with the notation used in Lemma \ref{parabtensor}{\upshape)}. Then the isomorphism {\upshape(\ref{DD})} for $\Pi'$ follows from the isomorphisms {\upshape(\ref{coeur})}.
\end{prop}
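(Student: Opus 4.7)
The plan is to compute both sides of \eqref{DD} for $\Pi'$ explicitly and match. The key observation is that since $\Phi(\Pi')(\rhobar) \cong \widetilde w(\Phi)(\Pi')(\widetilde w\rhobar\widetilde w^{-1})$ by Lemma \ref{galqp} (the two good subquotients differ by conjugation by $\widetilde w \in W_\rhobar$, which preserves the underlying $\gp$-representation up to isomorphism), we may after replacing $\widetilde P_\rhobar$ by $\widetilde w\widetilde P_\rhobar \widetilde w^{-1}$ and $\Phi$ by $\widetilde w(\Phi)$ reduce to the case $\widetilde w = \Id$. Then $Q \supseteq P_\rhobar$ and we just need to show $V_G(\Pi') \cong \Phi(\Pi')(\rhobar)$.

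On the $G(K)$-side, I would apply Lemma \ref{parabtensor} to the induction
$\Pi' \cong \Ind_{P(C_Q)^-(K)}^{G(K)}\bigl(\bigotimes_{i=1}^d(\pi'_i \otimes (\omega^{-1}\circ (\theta^{P(C_Q)})_i))\bigr)$,
using the tensor product hypothesis on $V_{M_{P(C_Q)}}$ and Remark \ref{trivial}(ii) to absorb each twist $\omega^{-1}\circ(\theta^{P(C_Q)})_i$ through the determinant. This expresses $V_G(\Pi') \otimes \delta_G^{-1}$ as an explicit tensor product over $i$ whose $i$-th factor is $V_{M_i}(\pi'_i)$ twisted by $\delta_{M_i}^{-1}$, by a power of the central character $Z(\pi'_i)$ determined by Lemma \ref{parabtensor}, and by a power of $\omega^{-1}\circ(\theta^{P(C_Q)})_i$ determined by Remark \ref{trivial}(ii). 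Substituting the two identities in \eqref{coeur} then rewrites each factor purely in terms of $\widetilde w(\Phi)_{w,i}(\pi'_i)(w(\rhobar^{Q-\rm ss})_i)$ and characters built from $\det(w(\rhobar^{Q-\rm ss})_i)$, $\omega$, $\theta_{M_i}$ and $\theta^{P(C_Q)}$.

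On the Galois side, by Definition \ref{compatible1}(iii) combined with Lemma \ref{cons1bis}(i) and the decomposition \eqref{facteursbis}, applying the diagonal $w$-action produces an isomorphism of $({}^wP(C_Q))^{\gKQ}$-representations
\[
w(\Phi(\Pi')) \cong \bigotimes_{i=1}^d \Big(\widetilde w(\Phi)_{w,i}(\pi'_i) \otimes \bigl(\underbrace{(\theta^{P(C_Q)})_i\otimes \cdots \otimes (\theta^{P(C_Q)})_i}_{\gKQ}\bigr)\Big).
\]
Composing with $\prod_\sigma (w\rhobar^\sigma w^{-1})$ and unravelling the tensor induction in Lemma \ref{galqp} expresses $\Phi(\Pi')(\rhobar) = w(\Phi(\Pi'))(w\rhobar w^{-1})$ as a tensor product over $i$ whose $i$-th factor is $\widetilde w(\Phi)_{w,i}(\pi'_i)(w(\rhobar^{Q-\rm ss})_i)$ tensored with $\ind_K^{\otimes \Q_p}\bigl((\omega \circ (\theta^{P(C_Q)})_i\circ \det_{M_i})\circ w(\rhobar^{Q-\rm ss})_i\bigr)$, which by Lemma \ref{det} (applied to $\theta^{P(C_Q)}$ viewed as a character of $M_{P(C_Q)}/M^{\rm der}_{P(C_Q)}$) can be rewritten as a twist by powers of characters of $K^\times$ coming from the $Z_{M_i}$-parts of $\rhobar^{Q-\rm ss}$.

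The two tensor decompositions are then matched factor by factor. The main obstacle is purely the combinatorial bookkeeping of the twists: one has to check that the cyclotomic/character contributions from $\delta_G$, $\delta_{M_i}^{-1}$, the Lemma \ref{parabtensor} factor $(Z(\pi'_i)^{n-\sum_{j\le i} n_j})\vert_{\Q_p^\times}$ and the Remark \ref{trivial}(ii) factor add up to exactly the $(\theta^{P(C_Q)})_i^{\otimes \gKQ}$-twist that appears on the Galois side. This is a finite computation in characters of $\Q_p^\times$ (equivalently $\gp^{\rm ab}$) using local class field theory as normalized in \S\ref{intro} and the explicit expressions for $\xi_G$, $\theta_G$, $\xi_{M_i}$, $\theta_{M_i}$ in Example \ref{exdelta}, together with the identity $\theta_G = \theta^{P(C_Q)} + \sum_i \theta_{M_i}$ from \eqref{thetaP}; no further input from the representation theory of $G(K)$ is needed beyond what is provided by the compatibility \eqref{coeur}.
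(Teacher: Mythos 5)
Your proposal takes essentially the same approach as the paper: compute $V_G(\Pi')$ via Lemma \ref{parabtensor} and Remark \ref{trivial}(ii) (absorbing the $\omega^{-1}\circ\theta^{P(C_Q)}$-twist factor by factor), compute $\Phi(\Pi')(\rhobar)$ via Definition \ref{compatible1}(iii) and Lemma \ref{galqp}, substitute the two identities of \eqref{coeur}, and check that the accumulated character twists cancel. The only thing the paper makes explicit that you leave implicit is the final arithmetic check, which (using $(\theta^{P(C_Q)})_i=\det^{\,n-\sum_{j\le i}n_j}$ and the formulas for $\delta_G$, $\delta_{M_i}$ in Example \ref{exdelta}) reduces to the elementary identity $\sum_{j=1}^{n-1}j^2=\sum_{i=1}^d\sum_{j=1}^{n_i-1}j^2+\sum_{i=1}^dn_i\bigl(n-\sum_{j\le i}n_j\bigr)\bigl(n-1-\sum_{j<i}n_j\bigr)$, rather than directly to the relation $\theta_G=\theta^{P(C_Q)}+\theta_{P(C_Q)}$ you invoke.
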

\begin{proof}
For $i\in \{1,\ldots,d\}$, we have (easy computation):
\begin{equation}\label{thetai}
(\theta^{P(C_Q)})_i={\det}^{n-\sum_{j=1}^in_j}.
\end{equation}
Let $\pi_i''\defeq \pi'_i\otimes (\omega^{-1}\circ{\det})^{n-\sum_{j=1}^in_j}$, we have by Lemma \ref{parabtensor}, (\ref{thetai}) and Remark \ref{trivial}(ii):
\begin{eqnarray*}
V_G(\Pi')&=&\displaystyle{V_G\Big(\Ind_{P(C_Q)^-(K)}^{G(K)}(\otimes_{i=1}^d\pi'_i)\otimes (\omega^{-1}\circ\theta^{P(C_Q)})\Big)}\\
&\cong &\bigg( \bigotimes_{i=1}^d \Big(V_{M_i}(\pi''_i)\otimes \big(Z(\pi''_i)^{n-\sum_{j=1}^in_j}\big)\vert_{\Qp^\times}\delta_{M_i}^{-1}\Big)\bigg)\otimes \delta_G\\
&\cong & \bigg(\bigotimes_{i=1}^d \Big(V_{M_i}(\pi'_i)\otimes \Big(\big(Z(\pi'_i)\cdot \omega\circ \theta_{M_i}\big)^{n-\sum_{j=1}^in_j}\Big)\vert_{\Qp^\times}\Big)\bigg)\otimes \delta,
\end{eqnarray*}
where $\delta\defeq \big(\delta_G\prod_{i=1}^d\delta_{M_i}^{-1}\big)\ind_K^{\otimes\Qp}\!(\omega^{-\sum_{i=1}^dc_i})$ with (by an explicit computation):
\begin{eqnarray}\label{ci}
\nonumber c_i&=&n_i(n_i-1)\Big(n-\sum_{j=1}^in_j\Big)+ n_i\Big(n-\sum_{j=1}^in_j\Big)^2\\
\nonumber &=&n_i\Big(n-\sum_{j=1}^in_j\Big)\Big(n_i-1+n-\sum_{j=1}^in_j\Big)\\
&=&n_i\Big(n-\sum_{j=1}^in_j\Big)\Big(n-1-\sum_{j=1}^{i-1}n_j\Big).
\end{eqnarray}
Now, assuming (\ref{coeur}) we have for one, or equivalently any, $w$ of $W(C_Q)$:
\begin{eqnarray*}
\Phi(\Pi')(\rhobar)&\cong & \widetilde w(\Phi)(\Pi')(\rhobar)\\
&= & \displaystyle{\widetilde w(\Phi)(\Pi')(\rhobar^{Q-\rm ss})}\\
&\cong &\displaystyle{\bigotimes_{i=1}^d} \bigg(\widetilde w(\Phi)_{w,i}(\pi'_i)\big(w(\rhobar^{Q-{\rm ss}})_i\big)\!\otimes \\
&&\ \ \ \ \ \ \ \ \ \ \ \ \ \ \ \ \ \ \ \ \Big(\!\big((\theta^{P(C_Q)})_i\otimes \cdots\otimes (\theta^{P(C_Q)})_i\big)\circ \big(\oplus_\sigma (w(\rhobar^{Q-{\rm ss}})_i)^{\sigma}\big)\Big)\bigg)\\
&\cong &\displaystyle{\bigotimes_{i=1}^d \Big(V_{M_i}(\pi'_i)\otimes \Big(\big(\det(w(\rhobar^{Q-{\rm ss}})_i)\big)^{n-\sum_{j=1}^in_j}\Big)\vert_{\Qp^\times}\Big)}\\
&\cong &\displaystyle{\bigotimes_{i=1}^d \Big(V_{M_i}(\pi'_i)\otimes \Big(\big(Z(\pi'_i)\cdot \omega\circ \theta_{M_i}\big)^{n-\sum_{j=1}^in_j}\Big)\vert_{\Qp^\times}\Big)},
\end{eqnarray*}
where the first isomorphism follows from $\rhobar\cong \widetilde w\rhobar\widetilde w^{-1}$, the second equality is obvious ($\widetilde w(\Phi)(\Pi')$ being a representation of $M_Q^{\gKQ}$ as it is a subquotient of $C_Q$), the second isomorphism follows from Definition \ref{compatible1}(iii), and the last two isomorphisms from (\ref{coeur}), (\ref{thetai}) and local class field theory for $\Qp$. So we have to prove $(\delta_G\prod_{i=1}^d\delta_{M_i}^{-1})\ind_K^{\otimes\Qp}\!(\omega^{-\sum_{i=1}^dc_i})=1$, which amounts to checking the following explicit identity (using (\ref{ci}) and Example \ref{exdelta}):
\[\sum_{j=1}^{n-1}j^2 = \sum_{i=1}^d\sum_{j=1}^{n_i-1}j^2 + \sum_{i=1}^d \Big(n_i\big(n-\sum_{j=1}^in_j\big)\big(n-1-\sum_{j=1}^{i-1}n_j\big)\Big).\]
This follows easily by induction on $d$ using the case $d=2$ and the identity
\[(n-m)^2+(n-m+1)^2+\cdots +(n-1)^2 = 1 + 2^2 + \cdots + (m-1)^2 + m(n-m)(n-1)\]
for any integers $n\geq m\geq 1$.
\end{proof}

The following proposition is analogous to Proposition \ref{cons2}.

\begin{prop}\label{consbar}
Assume $\Pi$ is compatible with $\rhobar$ and let $\Phi$ be a bijection as in Definition \ref{compatible2}. Let $\widetilde w\in W_{\rhobar}$, $Q$ a parabolic subgroup containing ${}^{\widetilde w}\!P_{\rhobar}$ and $C_Q$ an isotypic component of $\LLbar\vert_{Z_{M_{Q}}}$ such that $P(C_Q)={}^wQ$ for some {\upshape(}unique{\upshape)} $w\in W$ with $w(S(Q))\subseteq S$. Then $\pi_i(C_Q)$ is compatible with $w(\rhobar^{Q-{\rm ss}})_i$ for $i\in \{1,\ldots,d\}$, where $\pi_i(C_Q)$ is as in Definition \ref{compatible1}(i) and $w(\rhobar^{Q-{\rm ss}})_i$ as in {\upshape(\ref{facteuri})}.
\end{prop}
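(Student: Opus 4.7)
The plan is to parallel the proof of Proposition \ref{cons2}, adding the Galois/central character verifications required by Definition \ref{compatible2}. As a first step, Proposition \ref{cons2} already gives that $\pi_i(C_Q)$ is compatible with $({}^{w\widetilde w}\!\widetilde P_{\rhobar})_{Q,i}$ via the bijection $\widetilde w(\Phi)_{w,i}$ constructed in Definition \ref{compatible1}(iii). By Proposition \ref{consgood}, $w(\rhobar^{Q-\rm ss})_i$ is a good conjugate with image in $({}^{w\widetilde w}\!\widetilde P_{\rhobar})_{Q,i}(\F)$, so it is a valid datum to use in Definition \ref{compatible2}. Only the two conditions (i) and (ii) of Definition \ref{compatible2} remain to be checked for the pair $(\pi_i(C_Q),w(\rhobar^{Q-\rm ss})_i)$ and the bijection $\widetilde w(\Phi)_{w,i}$.

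Condition (i) of Definition \ref{compatible2} for $\pi_i(C_Q)$ — that is, the isomorphism $V_{M_i}(\pi_i') \cong \widetilde w(\Phi)_{w,i}(\pi_i')(w(\rhobar^{Q-\rm ss})_i)$ for every subquotient $\pi_i'$ of $\pi_i(C_Q)$ — is literally the second isomorphism of (\ref{coeur}) in Definition \ref{compatible2}(ii) applied to $\Pi$, $\widetilde w$, $Q$ and $C_Q$. Likewise, the central character condition for all constituents is obtained from the first isomorphism of (\ref{coeur}) together with Lemma \ref{det} (the restriction of $\det(w(\rhobar^{Q-\rm ss})_i)\omega^{-1}\!\circ\theta_{M_i}$ to each factor of the eventual Levi decomposition).

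For condition (ii) of Definition \ref{compatible2}, suppose given $\widetilde w_1 \in W_{w(\rhobar^{Q-\rm ss})_i}$, a parabolic $Q_1 \subseteq M_i$ containing ${}^{\widetilde w_1}({}^{w\widetilde w}\!P_{\rhobar})_{Q,i}$, and an isotypic component $C_{Q_1}$ of $\LLbar_i|_{Z_{M_{Q_1}}}$. Following Step 1 and Step 2 of the proof of Proposition \ref{cons2}, I would extend $\widetilde w_1$ to $\widetilde w' \defeq \widetilde w_1 \times \Id \times \cdots \times \Id \in W({}^wQ) \subseteq W$, define a new parabolic $Q_{(1)} \subseteq Q$ of $G$ by requiring ${}^wQ_{(1)}$ to have Levi $M_{Q_1} \times \prod_{j\ne i} M_j$, and lift $C_{Q_1}$ to an isotypic component $C_{Q_{(1)}}$ of $\LLbar|_{Z_{M_{Q_{(1)}}}}$ (associated to the weight with $\lambda|_{T_i}$ matching a lift of the weight defining $C_{Q_1}$ and equal to $f\theta_{M_j}$ on the other factors). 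The identifications $P(C_{Q_{(1)}}) \supseteq P(C_Q) = {}^wQ$ with Levi decomposition $\mathrm{diag}(M_{P(C_{Q_1})},M_2,\ldots,M_d)$ (as in Step 4 of \emph{loc.~cit.}) show that the various $\pi_{i,j}(C_{Q_1})$ appearing when the conditions of Definition \ref{compatible1} are applied to $\pi_i(C_Q)$ are exactly the components $\pi_j(C_{Q_{(1)}})$ for $j \in \{1,\ldots,d_1\}$ of the factorization of $\pi_1(C_{Q_{(1)}})$ from Definition \ref{compatible1}(i) applied to $\Pi$; similarly, the representations $\widetilde w_1((w(\rhobar^{Q-\rm ss})_i)^{Q_1-\rm ss})_j$ match the first $d_1$ factors of $\widetilde w''(\rhobar^{Q_{(1)}-\rm ss})_j$ for a suitable lift $\widetilde w''$ of $\widetilde w_1 w$. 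Then the two isomorphisms of (\ref{coeur}) for $\pi_i(C_Q)$ with respect to $(\widetilde w_1,Q_1,C_{Q_1})$ follow directly from the corresponding isomorphisms of (\ref{coeur}) for $\Pi$ applied to $(\widetilde w'\widetilde w,Q_{(1)},C_{Q_{(1)}})$, together with Proposition \ref{consgood} ensuring the pieces on the right really are good conjugates.

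The main obstacle will be the bookkeeping in the last paragraph: one must verify carefully that the lifted parabolic $Q_{(1)}$, the lifted isotypic component $C_{Q_{(1)}}$, and the lifted Weyl element $\widetilde w''$ genuinely satisfy the hypotheses of Definition \ref{compatible2}(ii) for $\Pi$ (i.e., ${}^{\widetilde w'\widetilde w}\!P_{\rhobar} \subseteq Q_{(1)}$ and $\widetilde w'\widetilde w \in W_{\rhobar}$), and that the Galois-side factorization $\widetilde w''(\Phi)_{\widetilde w'',j}(\pi_j')$ matches $\widetilde w_1(\widetilde w(\Phi)_{w,i})_{\widetilde w_1,j}(\pi_j')$ under the identifications of Lemma \ref{cons1bis} and Remark \ref{gln2}, so that the equalities transport correctly. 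This is essentially a detailed but formal verification analogous to Step 5 of the proof of Proposition \ref{cons2}.
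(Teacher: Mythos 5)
Your proposal is correct and follows essentially the same route as the paper's proof: reduce to $\widetilde w = \Id$ (or handle $\widetilde w$ by twisting by $\widetilde w(\Phi)$), invoke Proposition \ref{cons2} and Proposition \ref{consgood} to make the statement well-posed, read condition (i) of Definition \ref{compatible2} directly from the second isomorphism of (\ref{coeur}), and then, for condition (ii), lift $(\widetilde w_1, Q_1, C_{Q_1})$ to the data $(Q_{(1)}, C_{Q_{(1)}})$ already constructed in Steps 1--2 of the proof of Proposition \ref{cons2} and transport the isomorphisms (\ref{coeur}) for $\Pi$ down to $\pi_1(C_Q)$. The paper in fact compresses the entire argument into a reference to Steps 1--5 of Proposition \ref{cons2} and does not bother to spell out the bookkeeping that you flag at the end; your explicit invocation of Lemma \ref{det} for the central-character piece is a harmless reorganization, since that lemma is already packaged inside condition (ii) of Definition \ref{compatible2} as applied to $\Pi$.
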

\begin{proof}
We use the notation in the proof of Proposition \ref{cons2}. Replacing $\rhobar$ by $\widetilde w\rhobar \widetilde w^{-1}$ and $\Phi$ by $\widetilde w(\Phi)$, we can assume $\widetilde w=\Id$. We have to prove that the map $\Phi_{w,i}$ satisfies conditions (i) and (ii) of Definition \ref{compatible2} with $M_i$ instead of $G$ and $w(\rhobar^{Q-{\rm ss}})_i$ instead of $\rhobar$. Note that this makes sense thanks to Proposition \ref{consgood}. We can assume $i=1$. Condition (i) clearly follows from the second equality in (\ref{coeur}) applied to $\pi_1'=\pi_1(C_Q)$. Arguing as in Step 1 of Lemma \ref{cons2}, we need only consider a standard parabolic subgroup $Q_1$ of $M_1$ containing $({}^wP_{\rhobar})_{Q,1}$ and $C_{Q_1}$ an isotypic component of $\LLbar_1\vert_{Z_{M_{Q_1}}}$. Let $C_{Q_{(1)}}$ be the isotypic component of $\LLbar\vert_{Z_{M_{Q_{(1)}}}}$ defined in Step 2 of the proof of Proposition \ref{cons2}. Then it is easy to check that condition (ii) for $M_1$, $w(\rhobar^{Q-{\rm ss}})_1$, $C_{Q_1}$ and an element $w_1\in W(C_{Q_1})$ follows from condition (ii) with $G$, $\rhobar$, $C_{Q_{(1)}}$ and $w_1w\in W(C_{Q_{(1)}})$ (see Step 3, Step 4 and Step 5 of the proof of Proposition \ref{cons2}).
\end{proof}

\subsubsection{Explicit examples}\label{exemples5}

We explicitly give the form of a representation $\Pi$ compatible with $\rhobar$ for various $\rhobar$.

In the examples below, as in Example \ref{exemplester}, a line means a nonsplit extension between two irreducible constituents, the constituent on the left being the subobject of the corresponding (length $2$) subquotient.

\noindent
\textbf{Example 1}\\
We start with $\GL_2(\Q_{p^f})$ and $\widetilde P_{\rhobar}=P_{\rhobar}=B$ as in Example \ref{exemples}(i), i.e.\ we have
\[\rhobar\cong \begin{pmatrix}\chi_{1} &*\\0 &\chi_2\end{pmatrix},\]
where $\chi_i$ are two smooth characters $\Q_{p^f}^\times \rightarrow \F^\times$ (via class field theory) with ratio $\ne 1,\omega^{\pm 1}$ (and where $*$ is nonsplit). Let $\Pi$ be compatible with $\rhobar$. Then $\Pi$ has $f+1$ irreducible constituents and the following form:
\[\begin{xy}(-60,0)*+{\Ind_{B^-(\Q_{p^f})}^{\GL_2(\Q_{p^f})}(\chi_1\omega^{-1}\otimes \chi_2)}="a"; (-28,0)*+{\rm SS_1}="b"; (-15,0)*+{\rm SS_2}="c"; (-2,0)*+{\cdots}="x"; (13,0)*+{{\rm SS}_{f-1}}="d"; (47,0)*+{\Ind_{B^-(\Q_{p^f})}^{\GL_2(\Q_{p^f})}(\chi_2\omega^{-1}\otimes \chi_1)}="e"; 
{\ar@{-}"a";"b"}; {\ar@{-}"b";"c"}; {\ar@{-}"c";"x"}; {\ar@{-}"x";"d"}; {\ar@{-}"d";"e"}
\end{xy}\]
where the ${\rm SS}_i$ for $i\in \{1,\ldots,f-1\}$ are distinct supersingular representations of $\GL_2(\Q_{p^f})$ over $\F$ such that $Z({\rm SS}_i)={\det}(\rhobar)\omega^{-1}$ and
\[V_G({\rm SS}_i)\cong \bigoplus_{\stackrel{I\subseteq \gKQ}{\vert I\vert=f-i}}\Big(\big(\bigotimes_{\sigma\in I}\chi_1^\sigma\big)\otimes \big(\bigotimes_{\sigma\notin I}\chi_2^\sigma\big)\Big)\]
(here $\chi_i^{\sigma}\defeq \chi_i(\sigma\cdot \sigma^{-1})$ and $V_G({\rm SS}_i)$ is immediately checked to be a representation of $\gp$). Moreover it follows from Example \ref{enlightening} that
\[V_G\Big(\Ind_{B^-(\Qp)}^{\GL_2(\Q_{p^f})}(\chi_1\omega^{-1}\otimes \chi_2)\Big)\cong \otimes_{\sigma\in \gKQ}\chi_1^\sigma\]
and likewise with $\Ind_{B^-(\Qp)}^{\GL_2(\Q_{p^f})}(\chi_2\omega^{-1}\otimes \chi_1)$. Finally the conditions in (\ref{DD}) imply that $V_G$ behaves as an exact functor on the (not necessarily irreducible) subquotients of $\Pi$ (see Remark \ref{listrem}(iii)).

\noindent
Still with $\GL_2(\Q_{p^f})$ but when $\widetilde P_{\rhobar}=T$, i.e.\ $\rhobar=\chi_1\oplus \chi_2$, then $\Pi$ (compatible with $\rhobar$) is semisimple, i.e.\ has the same form as above but with split extensions everywhere. This is consistent with the discussion at the end of \cite[\S19]{BP}. Note that, if we only require $\Pi$ to be compatible with $\widetilde P_{\rhobar}$ (Definition \ref{compatible1}), then $\Pi$ has the same form as above, but with arbitrary distinct supersingular representations of $\GL_2(\Q_{p^f})$ and arbitrary distinct irreducible principal series ${\Ind_{B^-(\Q_{p^f})}^{\GL_2(\Q_{p^f})}(\eta_1\omega^{-1}\otimes \eta_2)}$ and ${\Ind_{B^-(\Q_{p^f})}^{\GL_2(\Q_{p^f})}(\eta_2\omega^{-1}\otimes \eta_1)}$. See \cite[\S10.6]{HuWang2} and \S\ref{lcresults} for instances of representations $\Pi$ (coming from mod $p$ cohomology) satisfying (special cases of) the above properties.\\

\noindent
\textbf{Example 2}\\
We go on with $\GL_3(\Qp)$ as in Example \ref{exemples}(ii) and $\widetilde P_{\rhobar}=P_{\rhobar}=B$, i.e.\ we have
\[\rhobar\cong \begin{pmatrix}\chi_{1} & * &*\\0 &\chi_2 & *\\ 0 & 0 & \chi_3 \end{pmatrix},\]
where $\chi_i$ are three smooth characters $\Qp^\times \rightarrow \F^\times$ (via class field theory) of ratio $\ne 1,\omega^{\pm 1}$. For $\tau\in W\cong{\mathcal S}_3$, we define
\[{\rm PS}_{\chi_{\tau(1)},\chi_{\tau(2)},\chi_{\tau(3)}}\defeq \Ind_{B^-(\Qp)}^{\GL_3(\Qp)}(\chi_{\tau(1)}\omega^{-2}\otimes \chi_{\tau(2)}\omega^{-1}\otimes \chi_{\tau(3)}).\]
Let $\Pi$ be compatible with $\rhobar$. Then $\Pi$ has $7$ irreducible constituents and the following form:
\[\begin{xy}
(-40,0)*+{{\rm PS}_{\chi_1,\chi_2,\chi_3}}="a"; (-15,15)*+{{\rm PS}_{\chi_2,\chi_1,\chi_3}}="b" ; 
(-15,-15)*+{{\rm PS}_{\chi_1,\chi_3,\chi_2}}="d"; (10,0)*+{\rm SS}="e"; (35,15)*+{{\rm PS}_{\chi_2,\chi_3,\chi_1}}="f"; 
(35,-15)*+ {{\rm PS}_{\chi_3,\chi_1,\chi_2}}="h";(60,0)*+{{\rm PS}_{\chi_3,\chi_2,\chi_1}}="i"; 
{\ar@{-}"a";"b"}; {\ar@{-}"a";"d"}; {\ar@{-}"b";"e"}; {\ar@{-}"d";"e"}; {\ar@{-}"e";"f"};
{\ar@{-}"e";"h"}; {\ar@{-}"e";"f"}; {\ar@{-}"f";"i"}; {\ar@{-}"h";"i"};
\end{xy}\]
where SS is a supersingular representation of $\GL_3(\Qp)$ over $\F$ such that $Z({\rm SS})={\det}(\rhobar)\cdot \omega^{-3}$ and $V_G({\rm SS})\cong (\chi_1\chi_2\chi_3)^{\oplus 3}={\det(\rhobar)}^{\oplus 3}$. It follows from the proof of \cite[Thm.5.2.1]{Ha1}, or from \cite[Thm.1.4(i)]{Ha2}, combined with \cite[Cor.4.3.5]{emerton-ordI}, that the nonsplit extensions between two principal series in subquotient are {\it automatically} parabolic inductions as required in condition (i) of Definition \ref{compatible1} (looking at isotypic components of $\LLbar\vert_{Z_{M_{ Q}}}$ with $M_Q\in \{\GL_2\times \GL_1,\GL_1\times \GL_2\}$, see Example \ref{exemples}(ii)). Conditions (ii) to (iv) in Definition \ref{compatible1} are then easily checked. Concerning Definition \ref{compatible2}, the subquotients involving only principal series do satisfy (\ref{DD}) and (\ref{coeur}) by \cite[Rem.9.9]{breuil-foncteur}. The reader can then easily work out the remaining conditions in (\ref{DD}) which all involve the supersingular representation SS, and also work out the shape of a $\Pi$ which is compatible with $\widetilde P_{\rhobar}=B$ only (but not necessarily with $\rhobar$).\\

\noindent
\textbf{Example 3}\\
We stay with $\GL_3(\Qp)$ but where $\widetilde P_{\rhobar}=P_{\rhobar}=P$ with $M_P={\rm diag}(\GL_2,\GL_1)$, i.e.\ we have
\[\rhobar\cong \begin{pmatrix}\rhobar_{1} & *\\0 &\chi_2 \end{pmatrix},\]
where $\rhobar_1:\gp\rightarrow \GL_2(\F)$ is any absolutely irreducible representation and $\chi_2$ is any smooth character $\Qp^\times \rightarrow \F^\times$ (via class field theory). Note that such a $\rhobar$ is always generic (see the beginning of \S\ref{compatibilityrhobar}). Then $\Pi$ is compatible with ${\rhobar}$ if and only $\Pi$ has the same form as in Example \ref{exemplester}:
\[\begin{xy}(0,0)*+{\Ind_{P^-(\Qp)}^{\GL_3(\Qp)}\big(\pi_1\cdot (\omega^{-1}\circ{\det})\otimes \chi_2\big)}; (41,0)*+{\rm SS}**\dir{-} ; (74,0)*+{\Ind_{{P'}^-(\Qp)}^{\GL_3(\Qp)}\big(\chi_2\omega^{-2}\otimes \pi_1\big)}**\dir{-} \end{xy}\]
and where moreover
\begin{itemize}
\item $\pi_1$ is the supersingular representation of $\GL_2(\Qp)$ over $\F$ corresponding to $\rhobar_{1}$ by the mod $p$ local Langlands correspondence for $\GL_2(\Qp)$, i.e.\ we have $Z(\pi_1)={\det}(\rhobar_{1}) \omega^{-1}$ (via class field theory) and $V_{\GL_2}(\pi_1)\cong \rhobar_{1}$;\\
\item $Z({\rm SS})={\det}(\rhobar)\omega^{-3}$;\\
\item $V_G(\Pi)\cong \rhobar\otimes_{\F} \wedge^2_{\F}\rhobar$;\\
\item $V_G\Big(\begin{xy}(0,0)*+{\Ind_{P^-(\Qp)}^{\GL_3(\Qp)}\big(\pi_1\cdot (\omega^{-1}\circ{\det})\otimes \chi_2\big)}; (38,0)*+{\rm SS}**\dir{-}\end{xy}\Big)\cong \Ker(\rhobar\otimes_{\F} \wedge^2_{\F}\rhobar\twoheadrightarrow \chi_2^2\otimes \rhobar_1)$.
\end{itemize}
The properties of $V_G$ in \S\ref{covariant} (in particular Lemma \ref{parabtensor} which can be applied here thanks to Remark \ref{tensorprod}) then automatically give the remaining conditions in (\ref{DD}):
\begin{eqnarray*}
V_G\Big(\Ind_{P^-(\Qp)}^{\GL_3(\Qp)}\big(\pi_1\cdot (\omega^{-1}\circ{\det})\otimes \chi_2\big)\Big)&\cong &\rhobar_1\otimes_{\F} \wedge^2_{\F}\rhobar_1\cong \rhobar_1\otimes \det(\rhobar_1)\\
V_G\Big(\begin{xy}(0,0)*+{\rm SS}; (31,0)*+ {\Ind_{{P'}^-(\Qp)}^{\GL_3(\Qp)}\big(\chi_2\omega^{-2}\otimes \pi_1\big)}**\dir{-}\end{xy}\Big)&\cong & (\rhobar\otimes_{\F} \wedge^2_{\F}\rhobar)/(\rhobar_1\otimes_{\F} \wedge^2_{\F}\rhobar_1)\\
V_G\Big(\Ind_{{P'}^-(\Qp)}^{\GL_3(\Qp)}\big(\chi_2\omega^{-2}\otimes \pi_1\big)\Big)&\cong & \rhobar_1\otimes \chi_2^2\\
V_G({\rm SS})&\cong & (\rhobar_1^{\otimes 2}\otimes \chi_2)\oplus \det(\rhobar_1)\chi_2.
\end{eqnarray*}
The case $\widetilde P_{\rhobar}=M_P$, i.e.\ $\rhobar\cong \begin{pmatrix}\rhobar_{1} & 0\\0 &\chi_2 \end{pmatrix}$, is analogous and easier since $\Pi$ is then semisimple.\\

\noindent
\textbf{Example 4}\\
We consider $\GL_4(\Qp)$ and $\widetilde P_{\rhobar}=P_{\rhobar}=P$, where $M_P={\rm diag}(\GL_2,\GL_1,\GL_1)$, that is we have a good conjugate
\[\rhobar\cong \begin{pmatrix}\rhobar_{1} & * & *\\0 &\chi_2 & *\\ 0 & 0 & \chi_3 \end{pmatrix},\]
where $\rhobar_1:\gp\rightarrow \GL_2(\F)$ is any absolutely irreducible representation and $\chi_i$ two smooth characters $\Qp^\times \rightarrow \F^\times$ (via class field theory) of ratio $\ne 1,\omega^{\pm 1}$. If $1\leq i\leq 4$ and $\sum_{j=1}^in_j=4$ with $1\leq n_j\leq 4$, we write $P_{n_1,\ldots,n_i}$ for the standard parabolic subgroup of $\GL_4$ of Levi ${\rm diag}(\GL_{n_1},\ldots,\GL_{n_i})$ (so $P_{2,1,1}=P$, $P_{1,1,1,1}=B$, etc.). As in Example $3$ above, we let $\pi_1$ be the supersingular representation of $\GL_2(\Qp)$ over $\F$ corresponding to $\rhobar_{1}$ by the mod $p$ local Langlands correspondence for $\GL_2(\Qp)$ (so $Z(\pi_1)={\det}(\rhobar_{1})\cdot \omega^{-1}$ and $V_{\GL_2}(\pi_1)\cong \rhobar_{1}$). We define the following parabolic inductions:
\begin{eqnarray*}
{\rm PI}_{\pi_1,\chi_2,\chi_3}&\defeq &\Ind_{P_{2,1,1}^-(\Qp)}^{\GL_4(\Qp)}\big(\pi_1\cdot (\omega^{-2}\circ{\det})\otimes \chi_2\omega^{-1}\otimes \chi_3\big)\\
{\rm PI}_{\pi_1,\chi_3,\chi_2}&\defeq &\Ind_{P_{2,1,1}^-(\Qp)}^{\GL_4(\Qp)}\big(\pi_1\cdot (\omega^{-2}\circ{\det})\otimes \chi_3\omega^{-1}\otimes \chi_2\big)\\
{\rm PI}_{\chi_2,\pi_1,\chi_3}&\defeq &\Ind_{P_{1,2,1}^-(\Qp)}^{\GL_4(\Qp)}\big(\chi_2\omega^{-3}\otimes \pi_1\cdot (\omega^{-1}\circ\det)\otimes \chi_3\big)\\
{\rm PI}_{\chi_3,\pi_1,\chi_2}&\defeq &\Ind_{P_{1,2,1}^-(\Qp)}^{\GL_4(\Qp)}\big(\chi_3\omega^{-3}\otimes \pi_1\cdot (\omega^{-1}\circ{\det})\otimes \chi_2\big)\\
{\rm PI}_{\chi_2,\chi_3,\pi_1}&\defeq &\Ind_{P_{1,1,2}^-(\Qp)}^{\GL_4(\Qp)}\big(\chi_2\omega^{-3}\otimes \chi_3\omega^{-2}\otimes \pi_1\big)\\
{\rm PI}_{\chi_3,\chi_2,\pi_1}&\defeq &\Ind_{P_{1,1,2}^-(\Qp)}^{\GL_4(\Qp)}\big(\chi_3\omega^{-3}\otimes\chi_2\omega^{-2}\otimes \pi_1\big)
\end{eqnarray*}
and also, for ${\rm ss}_1, {\rm ss}_2$ two (not necessarily distinct) supersingular representations of $\GL_3(\Qp)$ over $\F$: 
\begin{eqnarray*}
{\rm PI}_{{\rm ss}_1,\chi_3}&\defeq &\Ind_{P_{3,1}^-(\Qp)}^{\GL_4(\Qp)}\big({\rm ss}_1\cdot (\omega^{-1}\circ{\det})\otimes \chi_3\big)\\
{\rm PI}_{{\rm ss}_2,\chi_2}&\defeq &\Ind_{P_{3,1}^-(\Qp)}^{\GL_4(\Qp)}\big({\rm ss}_2\cdot (\omega^{-1}\circ{\det})\otimes \chi_2\big)\\
{\rm PI}_{\chi_2,{\rm ss}_2}&\defeq &\Ind_{P_{1,3}^-(\Qp)}^{\GL_4(\Qp)}\big(\chi_2\omega^{-3}\otimes {\rm ss}_2\big)\\
{\rm PI}_{\chi_3,{\rm ss}_1}&\defeq &\Ind_{P_{1,3}^-(\Qp)}^{\GL_4(\Qp)}\big(\chi_3\omega^{-3}\otimes {\rm ss}_1\big).
\end{eqnarray*}
We then let ${\rm SS}_3, {\rm SS}_4, {\rm SS}_5, {\rm SS}_6$ be $4$ distinct supersingular representations of $\GL_4(\Qp)$ over $\F$. If $\Pi$ is compatible with $\rhobar$, then it has the following form:
\[\begin{xy}
(-40,0)*+{{\rm PI}_{\pi_1,\chi_2,\chi_3}}="a"; (-22,12)*+{{\rm PI}_{{\rm ss}_1,\chi_3}}="b" ;(-4,24)*+{{\rm PI}_{\chi_2,\pi_1,\chi_3}}="c"; 
(-22,-12)*+{{\rm PI}_{\pi_1,\chi_3,\chi_2}}="d"; (-4,0)*+{\rm SS_3}="e"; (14,12)*+{\rm SS_4}="f"; (32,24)*+ {{\rm PI}_{\chi_2,{\rm ss_2}}}="g";
(14,-12)*+ {{\rm PI}_{{\rm ss}_2,\chi_2}}="h";(32,0)*+{\rm SS_5}="i"; (50,12)*+{\rm SS_6}="j"; (68,24)*+{{\rm PI}_{\chi_2,\chi_3,\pi_1}}="k"; 
(50,-12)*+{{\rm PI}_{\chi_3,\pi_1,\chi_2}}="l"; (68,0)*+{{\rm PI}_{\chi_3,{\rm ss}_1}}="m" ;(86,12)*+{{\rm PI}_{\chi_3,\chi_2,\pi_1}}="n"; 
{\ar@{-}"a";"b"}; {\ar@{-}"a";"d"}; {\ar@{-}"b";"c"}; {\ar@{-}"b";"e"}; {\ar@{-}"d";"e"}; {\ar@{-}"e";"f"};{\ar@{-}"c";"f"};{\ar@{-}"f";"g"};
{\ar@{-}"e";"h"}; {\ar@{-}"e";"f"}; {\ar@{-}"f";"i"}; {\ar@{-}"f";"g"}; {\ar@{-}"g";"j"}; {\ar@{-}"h";"i"};{\ar@{-}"i";"j"};{\ar@{-}"j";"k"};
{\ar@{-}"i";"l"}; {\ar@{-}"i";"j"}; {\ar@{-}"j";"k"}; {\ar@{-}"j";"m"}; {\ar@{-}"k";"n"}; {\ar@{-}"l";"m"};{\ar@{-}"m";"n"}
\end{xy}\]
where we have
\begin{equation}\label{twohalves}
\begin{array}{lll}
\begin{xy}(-40,0)*+{{\rm PI}_{\pi_1,\chi_2,\chi_3}}="a"; (-20,0)*+{{\rm PI}_{{\rm ss}_1,\chi_3}}="b" ;(0,0)*+{{\rm PI}_{\chi_2,\pi_1,\chi_3}}="c";{\ar@{-}"a";"b"}; {\ar@{-}"b";"c"}\end{xy}&\cong & \Ind_{P_{3,1}^-(\Qp)}^{\GL_4(\Qp)}(\Pi_1\cdot (\omega^{-1}\circ{\det})\otimes \chi_3)\\
\begin{xy}(-40,0)*+{{\rm PI}_{\chi_3,\pi_1,\chi_2}}="l"; (-20,0)*+{{\rm PI}_{\chi_3,{\rm ss}_1}}="m" ;(0,0)*+{{\rm PI}_{\chi_3,\chi_2,\pi_1}}="n";{\ar@{-}"l";"m"};{\ar@{-}"m";"n"}\end{xy}&\cong &\Ind_{P_{1,3}^-(\Qp)}^{\GL_4(\Qp)}(\chi_3\omega^{-3}\otimes \Pi_1)
\end{array}
\end{equation}
for \ $\Pi_1\cong \begin{xy}(0,0)*+{\Ind_{P_{2,1}^-(\Qp)}^{\GL_3(\Qp)}\big(\pi_1\cdot (\omega^{-1}\circ{\det})\otimes \chi_2\big)}; (39,0)*+{\rm ss_1}**\dir{-} ; (70,0)*+{\Ind_{{P}_{1,2}^-(\Qp)}^{\GL_3(\Qp)}\big(\chi_2\omega^{-2}\otimes \pi_1\big),}**\dir{-}
\end{xy}$ \ and \ also
\begin{multline}\label{auto}
\begin{xy}(0,0)*+{{\rm PI}_{\pi_1,\chi_2,\chi_3}}; (22,0)*+{{\rm PI}_{\pi_1,\chi_3,\chi_2}}**\dir{-}\end{xy}\!\cong \!\\
\Ind_{P_{2,2}^-(\Qp)}^{\GL_4(\Qp)}\!\Big(\!\pi_1\!\cdot \!(\omega^{-2}\!\circ\!{\det})\!\otimes\! \big(\!\!\begin{xy}(30,0)*+{\Ind_{B^-(\Qp)}^{\GL_2(\Qp)}(\chi_2\omega^{-1}\!\otimes \!\chi_3)}; (77,0)*+{\Ind_{B^-(\Qp)}^{\GL_2(\Qp)}(\chi_3\omega^{-1}\!\otimes \!\chi_2)}**\dir{-}\end{xy}\!\!\big)\!\Big)
\end{multline}
and an analogous isomorphism for \!$\begin{xy}(0,0)*+{{\rm PI}_{\chi_2,\chi_3,\pi_1}}; (21,0)*+{{\rm PI}_{\chi_3,\chi_2,\pi_1}}**\dir{-}\end{xy}$\!. It actually easily follows from \cite[Thm.1.4(i)]{Ha2} (see also \cite[Thm.B(b)(ii)]{heyer}) together with \cite[Cor.4.3.5]{emerton-ordI} \ that \ the \ isomorphism \ in \ (\ref{auto}) \ and \ the \ analogous \ isomorphism \ with \!$\begin{xy}(0,0)*+{{\rm PI}_{\chi_2,\chi_3,\pi_1}}; (21,0)*+{{\rm PI}_{\chi_3,\chi_2,\pi_1}}**\dir{-}\end{xy}$\! hold (i.e.~are not conjectural). It also follows from \cite[Thm.1.2(ii)]{Ha2} and \cite[Thm.1.2(iii)]{Ha2} that we automatically have isomorphisms
\begin{eqnarray*}
\begin{xy}(-40,0)*+{{\rm PI}_{\pi_1,\chi_2,\chi_3}}="a"; (-20,0)*+{{\rm PI}_{{\rm ss}_1,\chi_3}}="b" ;{\ar@{-}"a";"b"}\end{xy}&\cong &\Ind_{P_{3,1}^-(\Qp)}^{\GL_4(\Qp)}\Big(\!\begin{xy}(0,0)*+{\Ind_{P_{2,1}^-(\Qp)}^{\GL_3(\Qp)}\big(\pi_1\cdot (\omega^{-1}\circ{\det})\otimes \chi_2\big)}; (39,0)*+{\rm ss_1}**\dir{-} \end{xy}\!\Big)\\
\begin{xy}(-40,0)*+{{\rm PI}_{{\rm ss}_1,\chi_3}}="b" ;(-20,0)*+{{\rm PI}_{\chi_2,\pi_1,\chi_3}}="c";{\ar@{-}"b";"c"}\end{xy}&\cong &\Ind_{P_{3,1}^-(\Qp)}^{\GL_4(\Qp)}\Big(\!\begin{xy}(0,0)*+{\rm ss_1}**\dir{-} ; (31,0)*+{\Ind_{{P}_{1,2}^-(\Qp)}^{\GL_3(\Qp)}\big(\chi_2\omega^{-2}\otimes \pi_1\big)}**\dir{-}\end{xy}\!\Big)
\end{eqnarray*}
and likewise with the two ``halves'' of $\!\begin{xy}(-40,0)*+{{\rm PI}_{\chi_3,\pi_1,\chi_2}}="l"; (-20,0)*+{{\rm PI}_{\chi_3,{\rm ss}_1}}="m" ;(0,0)*+{{\rm PI}_{\chi_3,\chi_2,\pi_1}}="n";{\ar@{-}"l";"m"};{\ar@{-}"m";"n"}\end{xy}\!$. It is likely that the full isomorphisms (\ref{twohalves}) are in fact also automatic.

We must have moreover $Z({\rm ss}_1)={\det}(\rhobar_1)\chi_2\omega^{-3}$, $Z({\rm ss}_2)={\det}(\rhobar_1)\chi_3\omega^{-3}$, $Z({\rm SS}_i)=\det(\rhobar)\omega^{-6}$ for $i\in \{3,4,5,6\}$ and 
\begin{eqnarray*}
V_{\GL_3}({\rm ss}_1)&\cong &(\rhobar_1^{\otimes 2}\otimes \chi_2)\oplus {\det}(\rhobar_1)\chi_2\\ 
V_{\GL_3}({\rm ss}_2)&\cong &(\rhobar_1^{\otimes 2}\otimes \chi_3)\oplus {\det}(\rhobar_1)\chi_3\\ 
V_{\GL_4}({\rm SS}_3)&\cong &\big(\rhobar_1^{\otimes 2}\otimes {\det(\rhobar_1)}\chi_2\chi_3\big)^{\oplus 3}\oplus \big({\det(\rhobar_1)}^2\chi_2\chi_3\big)^{\oplus 2}\\
V_{\GL_4}({\rm SS}_4)&\cong &\big(\rhobar_1\otimes {\det(\rhobar_1)}\chi_2^2\chi_3\big)^{\oplus 5}\oplus \big({\rhobar_1}^{\!\!\otimes 3}\otimes \chi_2^2\chi_3\big)\\
V_{\GL_4}({\rm SS}_5)&\cong &\big(\rhobar_1\otimes {\det(\rhobar_1)}\chi_2\chi_3^2\big)^{\oplus 5}\oplus \big({\rhobar_1}^{\!\!\otimes 3}\otimes \chi_2\chi_3^2\big)\\
V_{\GL_4}({\rm SS}_6)&\cong &\big(\rhobar_1^{\otimes 2}\otimes \chi_2^2\chi_3^2\big)^{\oplus 3}\oplus \big({\det(\rhobar_1)}\chi_2^2\chi_3^2\big)^{\oplus 2}.
\end{eqnarray*}
The reader can work out all the other conditions of Definition \ref{compatible2} (applying $V_G$ to subquotients of $\Pi$). Note that by Proposition \ref{consbar} the $\GL_3(\Qp)$-representation $\Pi_1$ is compatible with the subrepresentation $\smatr{\rhobar_1}{*}{0}{\chi_2}$ of $\rhobar$ (see the last part in Example $2$).\\

\noindent
\textbf{Example 5}\\
We stay with $\GL_4(\Qp)$ but where $P_{\rhobar}=P$ with $M_P={\rm diag}(\GL_1,\GL_2,\GL_1)$ and a good conjugate of the form
\[\rhobar\cong \begin{pmatrix}\chi_2 & * & *\\0 &\rhobar_{1} & 0\\ 0 & 0 & \chi_3 \end{pmatrix},\]
where the $*$ are nonzero, $\rhobar_1:\gp\rightarrow \GL_2(\F)$ is any absolutely irreducible representation and $\chi_i$ are two smooth characters $\Qp^\times \rightarrow \F^\times$ (via class field theory) of ratio $\ne 1,\omega^{\pm 1}$. One has (see (\ref{wrhobar})) $W_{\rhobar}=\{\Id, s_{e_2-e_3}s_{e_3-e_4}\}$ = the set of permutations of the last two blocks $\GL_2$ and $\GL_1$. Using the notation and conventions of the previous case, we can check that any $\Pi$ compatible with $\rhobar$ has the following form:
\[\begin{xy}
(-40,0)*+{{\rm PI}_{\chi_2,\pi_1,\chi_3}}="a"; (-12,10)*+{{\rm PI}_{{\rm ss}_1,\chi_3}}="b" ;(16,20)*+{{\rm PI}_{\pi_1,\chi_2,\chi_3}}="c"; 
(-40,-20)*+{{\rm PI}_{\chi_2,{\rm ss}_2}}="d"; (-12,-10)*+{\rm SS_4}="e"; (16,0)*+{\rm SS_3}="f"; (44,10)*+ {{\rm PI}_{\pi_1,\chi_3,\chi_2}}="g";
(-40,-40)*+ {{\rm PI}_{\chi_2,\chi_3,\pi_1}}="h";(-12,-30)*+{\rm SS_6}="i"; (16,-20)*+{\rm SS_5}="j"; (44,-10)*+{{\rm PI}_{{\rm ss}_2,\chi_2}}="k"; 
(-12,-50)*+{{\rm PI}_{\chi_3,\chi_2,\pi_1}}="l"; (16,-40)*+{{\rm PI}_{\chi_3,{\rm ss}_1}}="m" ;(44,-30)*+{{\rm PI}_{\chi_3,\pi_1,\chi_2}}="n"; 
{\ar@{-}"a";"b"}; {\ar@{-}"a";"e"}; {\ar@{-}"b";"c"}; {\ar@{-}"b";"f"}; {\ar@{-}"d";"e"};{\ar@{-}"d";"i"}; 
{\ar@{-}"c";"g"};{\ar@{-}"f";"g"}; {\ar@{-}"e";"j"};{\ar@{-}"f";"k"};
{\ar@{-}"e";"f"}; {\ar@{-}"h";"i"};{\ar@{-}"h";"l"};{\ar@{-}"i";"j"};{\ar@{-}"j";"k"};
{\ar@{-}"i";"m"}; {\ar@{-}"j";"n"}; {\ar@{-}"l";"m"};{\ar@{-}"m";"n"}
\end{xy}\]
(recall the socle is the first layer on the left), where condition (i) in Definition \ref{compatible1} yields, when applied to a suitable $C_Q$ with $M_Q={\rm diag}(\GL_3,\GL_1)$:
\begin{equation}\label{halves}
\begin{array}{lll}
\begin{xy}(-40,0)*+{{\rm PI}_{\chi_2,\pi_1,\chi_3}}="a"; (-20,0)*+{{\rm PI}_{{\rm ss}_1,\chi_3}}="b" ;(0,0)*+{{\rm PI}_{\pi_1,\chi_2,\chi_3}}="c";{\ar@{-}"a";"b"}; {\ar@{-}"b";"c"}\end{xy}\!&\cong & \Ind_{P_{3,1}^-(\Qp)}^{\GL_4(\Qp)}(\Pi_1\cdot (\omega^{-1}\circ{\det})\otimes \chi_3)\\
\begin{xy}(-40,0)*+{{\rm PI}_{\chi_3,\chi_2,\pi_1}}="l"; (-20,0)*+{{\rm PI}_{\chi_3,{\rm ss}_1}}="m" ;(0,0)*+{{\rm PI}_{\chi_3,\pi_1,\chi_2}}="n";{\ar@{-}"l";"m"};{\ar@{-}"m";"n"}\end{xy}\!&\cong &\Ind_{P_{1,3}^-(\Qp)}^{\GL_4(\Qp)}(\chi_3\omega^{-3}\otimes \Pi_1)
\end{array}
\end{equation}
for \ $\Pi_1\cong \begin{xy}(0,0)*+{\Ind_{{P}_{1,2}^-(\Qp)}^{\GL_3(\Qp)}\big(\chi_2\omega^{-2}\otimes \pi_1\big)}; (36,0)*+{\rm ss_1}**\dir{-} ; (80,0)*+{\Ind_{P_{2,1}^-(\Qp)}^{\GL_3(\Qp)}\big(\pi_1\cdot (\omega^{-1}\circ{\det})\otimes \chi_2\big),}**\dir{-}
\end{xy}$
and yields, when applied to a suitable $C_Q$ with $M_Q={\rm diag}(\GL_2,\GL_2)$ (that is, ${}^{s_{e_2-e_3}s_{e_3-e_4}}P_{\rhobar}\subseteq Q$, note that here $P_{\rhobar}\not\subseteq Q$, see Remark \ref{comments}(vii)):
\begin{multline}\label{wptildenon1}
\begin{xy}(0,0)*+{{\rm PI}_{\chi_2,\chi_3,\pi_1}}; (23,0)*+{{\rm PI}_{\chi_3,\chi_2,\pi_1}}**\dir{-}\end{xy}\cong \\
\Ind_{P_{2,2}^-(\Qp)}^{\GL_4(\Qp)}\!\Big(
\!\!\begin{xy}(-25,0)*+{\big(\!\Ind_{B^-(\Qp)}^{\GL_2(\Qp)}(\chi_2\omega^{-3}\otimes \chi_3\omega^{-2})}; (38,0)*+{\Ind_{B^-(\Qp)}^{\GL_2(\Qp)}(\chi_3\omega^{-3}\otimes \chi_2\omega^{-2})\big)\!\otimes \pi_1}**\dir{-}\end{xy}\!\!\Big)
\end{multline}
and an analogous isomorphism for $\begin{xy}(0,0)*+{{\rm PI}_{\pi_1,\chi_2,\chi_3}}; (23,0)*+{{\rm PI}_{\pi_1,\chi_3,\chi_2}}**\dir{-}\end{xy}$. As in Example $4$, it follows from \cite[Thm.1.4(i)]{Ha2} that (\ref{wptildenon1}) and the analogous isomorphism are automatic, and from \cite[Thm.1.2(ii)]{Ha2}, \cite[Thm.1.2(iii)]{Ha2} that isomorphisms as in (\ref{halves}) but for every ``half'' only of the extensions on the left are also automatic. 

One can again work out all the conditions of Definition \ref{compatible2} (conditions on $Z({\rm ss}_i)$, $Z({\rm SS}_i)$ and on $V_{\GL_3}({\rm ss}_i)$, $V_{\GL_4}({\rm SS}_i)$ are the same as in Example $4$).\\ \\

\noindent
\textbf{Example 6}\\
We consider $\GL_3(\Q_{p^2})$ and $\widetilde P_{\rhobar}=P_{\rhobar}=B$, i.e.
\[\rhobar\cong \begin{pmatrix}\chi_{1} & * &*\\0 &\chi_2 & *\\ 0 & 0 & \chi_3 \end{pmatrix},\]
where $\chi_i$ are three smooth characters $\Q_{p^2}^\times \rightarrow \F^\times$ (via class field theory) of ratio $\ne 1,\omega^{\pm 1}$. We let ${\rm ss}_1, {\rm ss}_2, {\rm ss}_3$ be 3 (not necessarily distinct) supersingular representations of $\GL_2(\Q_{p^2})$ over $\F$ and ${\rm SS}_i$, $i\in \{4,\ldots,10\}$ be $7$ distinct supersingular representations of $\GL_3(\Q_{p^2})$ over $\F$. We use without comment notation for $\GL_3(\Q_{p^2})$ analogous to the ones in Example $2$, Example $4$ and Example $5$ to denote principal series and parabolic inductions. If $\Pi$ is compatible with $\rhobar$, then it has the following form:
\[\begin{xy}
(-40,0)*+{{\rm PS}_{\chi_1,\chi_2,\chi_3}}="a"; (-24,12)*+{{\rm PI}_{{\rm ss_1},\chi_3}}="b" ;(-8,24)*+{{\rm PS}_{\chi_2,\chi_1,\chi_3}}="c"; 
(-24,-12)*+{{\rm PI}_{\chi_1,{\rm ss_2}}}="d"; (-8,0)*+{\rm SS_4}="e"; (8,12)*+{\rm SS_5}="f"; (24,24)*+ {{\rm PI}_{\chi_2,{\rm ss_3}}}="g";
(-8,-24)*+ {{\rm PS}_{\chi_1,\chi_3,\chi_2}}="h'";(8,-12)*+ {{\rm SS}_{6}}="h";(24,0)*+{\rm SS_7}="i"; (40,12)*+{\rm SS_8}="j"; (56,24)*+{{\rm PS}_{\chi_2,\chi_3,\chi_1}}="k"; 
(24,-24)*+ {{\rm PI}_{{\rm ss_3},\chi_2}}="l'";(40,-12)*+{{\rm SS}_{9}}="l"; (56,0)*+{{\rm SS}_{10}}="m" ;(72,12)*+{{\rm PI}_{{\rm ss_2},\chi_1}}="n"; 
(56,-24)*+{{\rm PS}_{\chi_3,\chi_1,\chi_2}}="o"; (72,-12)*+{{\rm PI}_{\chi_3,{\rm ss_1}}}="p" ;(88,0)*+{{\rm PS}_{\chi_3,\chi_2,\chi_1}}="q"; 
{\ar@{-}"a";"b"}; {\ar@{-}"a";"d"}; {\ar@{-}"b";"c"}; {\ar@{-}"b";"e"}; {\ar@{-}"d";"e"}; {\ar@{-}"e";"f"};{\ar@{-}"c";"f"};{\ar@{-}"f";"g"};
{\ar@{-}"e";"h"}; {\ar@{-}"e";"f"}; {\ar@{-}"f";"i"}; {\ar@{-}"f";"g"}; {\ar@{-}"g";"j"}; {\ar@{-}"h";"i"};{\ar@{-}"i";"j"};{\ar@{-}"j";"k"};
{\ar@{-}"i";"l"}; {\ar@{-}"i";"j"}; {\ar@{-}"j";"k"}; {\ar@{-}"j";"m"}; {\ar@{-}"k";"n"}; {\ar@{-}"l";"m"};{\ar@{-}"m";"n"};
{\ar@{-}"d";"h'"}; {\ar@{-}"h";"h'"}; {\ar@{-}"h";"l'"}; {\ar@{-}"l'";"l"}; {\ar@{-}"l";"o"}; {\ar@{-}"m";"p"};{\ar@{-}"n";"q"}; {\ar@{-}"o";"p"}; {\ar@{-}"p";"q"}; 
\end{xy}\]
where we have
\begin{equation}\label{autohalf}
\begin{array}{lll}
\begin{xy}(-40,0)*+{{\rm PS}_{\chi_1,\chi_2,\chi_3}}="a"; (-20,0)*+{{\rm PI}_{{\rm ss}_1,\chi_3}}="b" ;(0,0)*+{{\rm PS}_{\chi_2,\chi_1,\chi_3}}="c";{\ar@{-}"a";"b"}; {\ar@{-}"b";"c"}\end{xy}&\cong & \Ind_{P_{2,1}^-(\Q_{p^2})}^{\GL_3(\Q_{p^2})}\big(\Pi_1\cdot (\omega^{-1}\circ{\det})\otimes \chi_3\big)\\
\begin{xy}(-40,0)*+{{\rm PS}_{\chi_3,\chi_1,\chi_2}}="a"; (-20,0)*+{{\rm PI}_{\chi_3,{\rm ss}_1}}="b" ;(0,0)*+{{\rm PS}_{\chi_3,\chi_2,\chi_1}}="c";{\ar@{-}"a";"b"}; {\ar@{-}"b";"c"}\end{xy}&\cong & \Ind_{P_{1,2}^-(\Q_{p^2})}^{\GL_3(\Q_{p^2})}\big(\chi_3\omega^{-2}\otimes \Pi_1\big)\\
\begin{xy}(-40,0)*+{{\rm PS}_{\chi_2,\chi_3,\chi_1}}="a"; (-20,0)*+{{\rm PI}_{{\rm ss}_2,\chi_1}}="b" ;(0,0)*+{{\rm PS}_{\chi_3,\chi_2,\chi_1}}="c";{\ar@{-}"a";"b"}; {\ar@{-}"b";"c"}\end{xy}&\cong & \Ind_{P_{2,1}^-(\Q_{p^2})}^{\GL_3(\Q_{p^2})}\big(\Pi_2\cdot (\omega^{-1}\circ{\det})\otimes \chi_1\big)\\
\begin{xy}(-40,0)*+{{\rm PS}_{\chi_1,\chi_2,\chi_3}}="a"; (-20,0)*+{{\rm PI}_{\chi_1,{\rm ss}_2}}="b" ;(0,0)*+{{\rm PS}_{\chi_1,\chi_3,\chi_2}}="c";{\ar@{-}"a";"b"}; {\ar@{-}"b";"c"}\end{xy}&\cong & \Ind_{P_{1,2}^-(\Q_{p^2})}^{\GL_3(\Q_{p^2})}\big(\chi_1\omega^{-2}\otimes \Pi_2\big)
\end{array}
\end{equation}
for
\begin{eqnarray*}
\Pi_1&\cong &\begin{xy}(0,0)*+{\Ind_{B^-(\Q_{p^2})}^{\GL_2(\Q_{p^2})}\big(\chi_1\omega^{-1}\otimes \chi_2\big)}; (33,0)*+{\rm ss_1}**\dir{-} ; (66,0)*+{\Ind_{B^-(\Q_{p^2})}^{\GL_2(\Q_{p^2})}\big(\chi_2\omega^{-1}\otimes \chi_1\big)}**\dir{-}\end{xy}\\
\Pi_2&\cong &\begin{xy}(0,0)*+{\Ind_{B^-(\Q_{p^2})}^{\GL_2(\Q_{p^2})}\big(\chi_2\omega^{-1}\otimes \chi_3\big)}; (33,0)*+{\rm ss_2}**\dir{-} ; (66,0)*+{\Ind_{B^-(\Q_{p^2})}^{\GL_2(\Q_{p^2})}\big(\chi_3\omega^{-1}\otimes \chi_2\big).}**\dir{-}\end{xy}
\end{eqnarray*}
By a straightforward induction, it follows from \cite[Thm.1.3]{Ha2} combined with \cite[Cor.4.3.5]{emerton-ordI} that all isomorphisms (\ref{autohalf}) are actually true!

We must have moreover $Z({\rm ss}_1)=\chi_1\chi_2\omega^{-1}$, $Z({\rm ss}_2)=\chi_2\chi_3\omega^{-1}$, $Z({\rm ss}_3)=\chi_1\chi_3\omega^{-1}$, $Z({\rm SS}_i)=\det(\rhobar)\omega^{-3}$ for $i\in \{4,\ldots,10\}$ and, denoting by $\sigma$ the only nontrivial element of ${\rm Gal}(\Q_{p^2}/\Qp)$:
\begin{eqnarray*}
V_{\GL_2}({\rm ss}_1)&\cong &\chi_1\chi_2^\sigma\oplus \chi_1^\sigma\chi_2\\ 
V_{\GL_2}({\rm ss}_2)&\cong &\chi_2\chi_3^\sigma\oplus \chi_3^\sigma\chi_2\\ 
V_{\GL_2}({\rm ss}_3)&\cong &\chi_1\chi_3^\sigma\oplus \chi_3^\sigma\chi_1\\ 
V_{\GL_3}({\rm SS}_4)&\cong &\Big(\chi_1^2\chi_2\det(\rhobar)^{\sigma}\oplus (\chi_1^2\chi_2)^\sigma\det(\rhobar)\Big)^{\oplus 3}\oplus \Big(\chi_1^2\chi_3(\chi_2^2\chi_1)^\sigma\oplus (\chi_1^2\chi_3)^\sigma\chi_2^2\chi_1\Big)\\
V_{\GL_3}({\rm SS}_i)&\cong &{\rm analogous\ for}\ i\in \{5,6,8,9,10\}{\rm \ \ (left\ to\ reader)}\\
V_{\GL_3}({\rm SS}_7)&\cong &\big(\det(\rhobar)\det(\rhobar)^\sigma\big)^{\oplus 9}\oplus \Big(\chi_1^2\chi_2(\chi_3^2\chi_2)^\sigma\oplus (\chi_1^2\chi_2)^\sigma\chi_3^2\chi_2\Big) \oplus \\
&&\ \ \ \ \ \ \ \Big(\chi_1^2\chi_3(\chi_2^2\chi_3)^\sigma\oplus (\chi_1^2\chi_3)^\sigma\chi_2^2\chi_3\Big)\oplus \Big(\chi_2^2\chi_1(\chi_3^2\chi_1)^\sigma\oplus (\chi_2^2\chi_1)^\sigma\chi_3^2\chi_1\Big)
\end{eqnarray*}
(all obviously representations of $\gp$ over $\F$). The reader can then work out the conditions in (\ref{DD}) involving the various subquotients of $\Pi$. Finally, by Proposition \ref{consbar} the $\GL_2(\Q_{p^2})$-representation $\Pi_1$ (resp.\ $\Pi_2$) is compatible with the subrepresentation $\smatr{\chi_1}{*} {0} {\chi_2}$ (resp.\ with the quotient $\smatr{\chi_2}{*} {0} {\chi_3})$ of $\rhobar$ (see Example $1$).

\noindent
\textbf{Example 7}\\
We end up with $\GL_4(\Q_{p})$ and $\widetilde P_{\rhobar}=P_{\rhobar}=B$, i.e.
\[\rhobar\cong \begin{pmatrix}\chi_{1} & * &* & *\\0 &\chi_2 & * &*\\ 0 & 0 & \chi_3 &*\\ 0&0&0&\chi_4 \end{pmatrix},\]
where $\chi_i$ are four smooth characters $\Q_{p}^\times \rightarrow \F^\times$ of ratio $\ne 1,\omega^{\pm 1}$. The structure of a $\Pi$ compatible with $\rhobar$ is given in the next 3D diagram. Just like the previous 2D diagrams look like stacked squares, this 3D diagram looks like stacked cubes: there are 8 cubes, one being entirely ``behind''. As before, each vertex is an irreducible constituent with PS (in green) meaning principal series, SS (in red) meaning supersingular and PI${}_1$ (resp.\ PI${}_2$) (in blue) meaning parabolic induction from the standard parabolic subgroup of Levi $\GL_3\times \GL_1$ (resp.\ of Levi $\GL_1\times \GL_3$). The socle is the principal series at the very bottom and the cosocle is the principal series at the very top. Like previously, each edge is a nonsplit extension between two irreducible constituents, the dashed edges being those which are ``behind'' in the 3D picture. Near each vertex we write the value of $V_{\GL_4}$ applied to the corresponding irreducible constituent. 

The interested reader can then check all the other conditions and compatibilities in Definition \ref{compatible1} and Definition \ref{compatible2}, for instance the two left faces on the bottom correspond to the parabolic induction PI${}_1$ of Example $2$ tensored by the character $\chi_4$.

\begin{tikzpicture}[scale=3.75]

\tdplotsetrotatedcoords{0}{0}{-3}

\draw[tdplot_rotated_coords](0,0,0) node[above left=-.5pc]{\tiny{$\color{mygreen}\mathrm{PS}$}};

\draw[tdplot_rotated_coords](1,0,0) node[above left=-.5pc]{\tiny{$\color{mygreen}\mathrm{PS}$}};
\draw[tdplot_rotated_coords](0,0,-1) node[xshift=.2cm, yshift=0.1cm]{\tiny{$\color{mygreen}\mathrm{PS}$}};
\draw[tdplot_rotated_coords](0,1,0) node[xshift=.1cm, yshift=.1cm]{\tiny{$\color{mygreen}\mathrm{PS}$}};

\draw[tdplot_rotated_coords](1,0,-1) node[above left=-.5pc]{\tiny{$\color{blue}\mathrm{PI1}$}};
\draw[tdplot_rotated_coords](0,1,-1) node[xshift=-.3cm, yshift=-.0cm]{\tiny{$\color{blue}\mathrm{PI2}$}};
\draw[tdplot_rotated_coords](1,1,0) node[xshift=.0cm, yshift=.2cm]{\tiny{$\color{mygreen}\mathrm{PS}$}};

\draw[tdplot_rotated_coords](1,1,-1) node[xshift=-.15cm, yshift=.2cm]{\tiny{$\color{red}\mathrm{SS}$}};
\draw[tdplot_rotated_coords](0,2,-1) node[xshift=.0cm, yshift=.15cm]{\tiny{$\color{mygreen}\mathrm{PS}$}};
\draw[tdplot_rotated_coords](0,1,-2) node[xshift=.2cm, yshift=.1cm]{\tiny{$\color{mygreen}\mathrm{PS}$}};
\draw[tdplot_rotated_coords](2,0,-1) node[above left=-.5pc]{\tiny{$\color{mygreen}\mathrm{PS}$}};
\draw[tdplot_rotated_coords](1,0,-2) node[xshift=.2cm, yshift=.1cm]{\tiny{$\color{mygreen}\mathrm{PS}$}};

\draw[tdplot_rotated_coords](1,2,-1) node[xshift=.0cm, yshift=.2cm]{\tiny{$\color{blue}\mathrm{PI1}$}};
\draw[tdplot_rotated_coords](2,1,-1) node[xshift=.0cm, yshift=.2cm]{\tiny{$\color{blue}\mathrm{PI2}$}};
\draw[tdplot_rotated_coords](0,2,-2) node[xshift=.2cm, yshift=.15cm]{\tiny{$\color{mygreen}\mathrm{PS}$}};
\draw[tdplot_rotated_coords](2,0,-2) node[above left=-.5pc]{\tiny{$\color{mygreen}\mathrm{PS}$}};
\draw[tdplot_rotated_coords](1,1,-2) node[xshift=.3cm, yshift=.05cm]{\tiny{$\color{red}\mathrm{SS}$}};

\draw[tdplot_rotated_coords](1,2,-2) node[xshift=-.3cm, yshift=-.025cm]{\tiny{$\color{red}\mathrm{SS}$}};
\draw[tdplot_rotated_coords](2,1,-2) node[xshift=-.3cm, yshift=-.025cm]{\tiny{$\color{red}\mathrm{SS}$}};
\draw[tdplot_rotated_coords](2,2,-1) node[xshift=-.2cm, yshift=-.1cm]{\tiny{$\color{mygreen}\mathrm{PS}$}};
\draw[tdplot_rotated_coords](1,1,-3) node[xshift=.25cm, yshift=.1cm]{\tiny{$\color{mygreen}\mathrm{PS}$}};

\draw[tdplot_rotated_coords](2,2,-2) node[xshift=-.3cm, yshift=-.025cm]{\tiny{$\color{red}\mathrm{SS}$}};
\draw[tdplot_rotated_coords](1,3,-2) node[xshift=-.0cm, yshift=.15cm]{\tiny{$\color{mygreen}\mathrm{PS}$}};
\draw[tdplot_rotated_coords](3,1,-2) node[xshift=-.1cm, yshift=.1cm]{\tiny{$\color{mygreen}\mathrm{PS}$}};
\draw[tdplot_rotated_coords](1,2,-3) node[xshift=.1cm, yshift=-.2cm]{\tiny{$\color{blue}\mathrm{PI1}$}};
\draw[tdplot_rotated_coords](2,1,-3) node[xshift=.1cm, yshift=-.2cm]{\tiny{$\color{blue}\mathrm{PI2}$}};

\draw[tdplot_rotated_coords](2,2,-3) node[xshift=-.3cm, yshift=-.01cm]{\tiny{$\color{red}\mathrm{SS}$}};
\draw[tdplot_rotated_coords](1,3,-3) node[xshift=.2cm, yshift=.1cm]{\tiny{$\color{mygreen}\mathrm{PS}$}};
\draw[tdplot_rotated_coords](2,3,-2) node[xshift=.0cm, yshift=.2cm]{\tiny{$\color{mygreen}\mathrm{PS}$}};
\draw[tdplot_rotated_coords](3,1,-3) node[xshift=.1cm, yshift=-.2cm]{\tiny{$\color{mygreen}\mathrm{PS}$}};
\draw[tdplot_rotated_coords](3,2,-2) node[xshift=.0cm, yshift=.2cm]{\tiny{$\color{mygreen}\mathrm{PS}$}};

\draw[tdplot_rotated_coords](2,3,-3) node[xshift=-.4cm, yshift=-.0cm]{\tiny{$\color{blue}\mathrm{PI2}$}};
\draw[tdplot_rotated_coords](3,2,-3) node[xshift=-.25cm, yshift=-.05cm]{\tiny{$\color{blue}\mathrm{PI1}$}};
\draw[tdplot_rotated_coords](2,2,-4) node[xshift=-.0cm, yshift=-.2cm]{\tiny{$\color{mygreen}\mathrm{PS}$}};

\draw[tdplot_rotated_coords](2,3,-4) node[xshift=.2cm, yshift=.1cm]{\tiny{$\color{mygreen}\mathrm{PS}$}};
\draw[tdplot_rotated_coords](3,3,-3) node[xshift=-.0cm, yshift=.15cm]{\tiny{$\color{mygreen}\mathrm{PS}$}};
\draw[tdplot_rotated_coords](3,2,-4) node[above left=-.5pc]{\tiny{$\color{mygreen}\mathrm{PS}$}};

\draw[tdplot_rotated_coords](3,3,-4) node[xshift=-.2cm, yshift=.25cm]{\tiny{$\color{mygreen}\mathrm{PS}$}};

\draw[tdplot_rotated_coords](0,0,0) node[xshift=.65cm, yshift=.05cm]{\tiny{$\color{mygreen}\chi_4^3\chi_3^2\chi_2$}};

\draw[tdplot_rotated_coords] (1,0,0) node[xshift=-.5cm, yshift=-.2cm]{\tiny{$\color{mygreen}\chi_3^3\chi_4^2\chi_2$}};
\draw[tdplot_rotated_coords] (0,0,-1) node[xshift=-.5cm, yshift=.1cm]{\tiny{$\color{mygreen}\chi_4^3\chi_2^2\chi_3$}};
\draw[tdplot_rotated_coords] (0,1,0) node[below right=-.5pc]{\tiny{$\color{mygreen}\chi_4^3\chi_3^2\chi_1$}};

\draw[tdplot_rotated_coords] (1,0,-1) node[xshift=1.15cm, yshift=.05cm]{\tiny{$\color{blue}[(\chi_2\chi_3\chi_4)^2]^{\oplus3}$}};
\draw[tdplot_rotated_coords] (0,1,-1) node[xshift=1cm, yshift=.1cm]{\tiny{$\color{blue}[\chi_4^3(\chi_1\chi_2\chi_3)]^{\oplus3}$}};
\draw[tdplot_rotated_coords] (1,1,0) node[below right=-.5pc]{\tiny{$\color{mygreen}\chi_3^3\chi_4^2\chi_1$}};

\draw[tdplot_rotated_coords](1,1,-1) node[xshift=1.25cm, yshift=.05cm]{\tiny{$\color{red}[(\chi_3\chi_4)^2(\chi_1\chi_2)]^{\oplus 8}$}};
\draw[tdplot_rotated_coords] (0,2,-1) node[below right=-.5pc]{\tiny{$\color{mygreen}\chi_4^3\chi_1^2\chi_3$}};
\draw[tdplot_rotated_coords] (0,1,-2) node[xshift=.1cm, yshift=-.3cm]{\tiny{$\color{mygreen}\chi_4^3\chi_2^2\chi_1$}};
\draw[tdplot_rotated_coords] (2,0,-1) node[xshift=.75cm, yshift=.1cm]{\tiny{$\color{mygreen}\chi_3^3\chi_2^2\chi_4$}};
\draw[tdplot_rotated_coords] (1,0,-2) node[xshift=-.5cm, yshift=.1cm]{\tiny{$\color{mygreen}\chi_2^3\chi_4^2\chi_3$}};

\draw [tdplot_rotated_coords](1,2,-1) node[below right=-.5pc]{\tiny{$\color{blue}[(\chi_1\chi_3\chi_4)^2]^{\oplus3}$}};
\draw[tdplot_rotated_coords](2,1,-1) node[xshift=1.25cm, yshift=.05cm]{\tiny{$\color{blue}[\chi_3^3(\chi_1\chi_2\chi_4)]^{\oplus3}$}};
\draw[tdplot_rotated_coords] (0,2,-2) node[below right=-.5pc]{\tiny{$\color{mygreen}\chi_4^3\chi_1^2\chi_2$}};
\draw[tdplot_rotated_coords] (2,0,-2) node[xshift=-.25cm, yshift=-.2cm]{\tiny{$\color{mygreen}\chi_2^3\chi_3^2\chi_4$}};
\draw[tdplot_rotated_coords] (1,1,-2) node[xshift=-1.245cm, yshift=-.05cm]{\tiny{$\color{red}[(\chi_2\chi_4)^2(\chi_1\chi_3)]^{\oplus 8}$}};

\draw[tdplot_rotated_coords] (1,2,-2) node[xshift=1.40cm, yshift=.05cm]{\tiny{$\color{red}[(\chi_1\chi_4)^2(\chi_2\chi_3)]^{\oplus 8}$}};
\draw[tdplot_rotated_coords] (2,1,-2) node[xshift=1.2cm, yshift=.05cm]{\tiny{$\color{red}[(\chi_2\chi_3)^2(\chi_1\chi_4)]^{\oplus 8}$}};
\draw[tdplot_rotated_coords] (2,2,-1) node[below right=-.5pc]{\tiny{$\color{mygreen}\chi_3^3\chi_1^2\chi_4$}};
\draw[tdplot_rotated_coords] (1,1,-3) node[xshift=.1cm, yshift=-.25cm]{\tiny{$\color{mygreen}\chi_2^3\chi_4^2\chi_1$}};

\draw[tdplot_rotated_coords] (2,2,-2) node[xshift=1.25cm, yshift=.05cm]{\tiny{$\color{red}[(\chi_1\chi_3)^2(\chi_2\chi_4)]^{\oplus 8}$}};
\draw[tdplot_rotated_coords] (1,3,-2) node[below right=-.5pc]{\tiny{$\color{mygreen}\chi_1^3\chi_4^2\chi_3$}};
\draw[tdplot_rotated_coords] (3,1,-2) node[xshift=.65cm, yshift=.05cm]{\tiny{$\color{mygreen}\chi_3^3\chi_2^2\chi_1$}};
\draw[tdplot_rotated_coords] (1,2,-3) node[xshift=-1cm, yshift=-.075cm]{\tiny{$\color{blue}[(\chi_1\chi_2\chi_4)^2]^{\oplus3}$}};
\draw[tdplot_rotated_coords] (2,1,-3) node[xshift=-1cm, yshift=-.05cm]{\tiny{$\color{blue}[\chi_2^3(\chi_1\chi_3\chi_4)]^{\oplus3}$}};

\draw[tdplot_rotated_coords](2,2,-3) node[xshift=1.25cm, yshift=.05cm]{\tiny{$\color{red}[(\chi_1\chi_2)^2(\chi_3\chi_4)]^{\oplus 8}$}};
\draw[tdplot_rotated_coords] (1,3,-3) node[below right=-.5pc]{\tiny{$\color{mygreen}\chi_1^3\chi_4^2\chi_2$}};
\draw[tdplot_rotated_coords] (2,3,-2) node[below right=-.5pc]{\tiny{$\color{mygreen}\chi_1^3\chi_3^2\chi_4$}};
\draw[tdplot_rotated_coords] (3,1,-3) node[xshift=.75cm, yshift=.05cm]{\tiny{$\color{mygreen}\chi_2^3\chi_3^2\chi_1$}};
\draw[tdplot_rotated_coords] (3,2,-2) node[xshift=.6cm, yshift=-.15cm]{\tiny{$\color{mygreen}\chi_3^3\chi_1^2\chi_2$}};

\draw[tdplot_rotated_coords] (2,3,-3) node[below right=-.5pc]{\tiny{$\color{blue}[\chi_1^3(\chi_2\chi_3\chi_4)]^{\oplus3}$}};
\draw[tdplot_rotated_coords] (3,2,-3) node[xshift=1.15cm, yshift=.05cm]{\tiny{$\color{blue}[(\chi_1\chi_2\chi_3)^2]^{\oplus3}$}};
\draw[tdplot_rotated_coords] (2,2,-4) node[xshift=-.5cm, yshift=.05cm]{\tiny{$\color{mygreen}\chi_2^3\chi_1^2\chi_4$}};

\draw[tdplot_rotated_coords] (2,3,-4) node[below right=-.5pc]{\tiny{$\color{mygreen}\chi_1^3\chi_2^2\chi_4$}};
\draw[tdplot_rotated_coords] (3,3,-3) node[below right=-.5pc]{\tiny{$\color{mygreen}\chi_1^3\chi_3^2\chi_2$}};
\draw[tdplot_rotated_coords] (3,2,-4) node[xshift=-.3cm, yshift=-.15cm]{\tiny{$\color{mygreen}\chi_2^3\chi_1^2\chi_3$}};

\draw[tdplot_rotated_coords](3,3,-4) node[below right=-.5pc]{\tiny{$\color{mygreen}\chi_1^3\chi_2^2\chi_3$}};

\draw[tdplot_rotated_coords](0,0,0) -- (1,0,0)--(1,1,0)--(0,1,0)--cycle;
\draw[tdplot_rotated_coords](1,0,0)--(1,0,-1)--(1,1,-1)--(1,1,0)--cycle;
\draw[tdplot_rotated_coords](0,1,0)--(0,1,-1)--(1,1,-1)--(1,1,0)--cycle;
\draw[dashed,tdplot_rotated_coords]  (0,0,0)--(0,0,-1)--(0,1,-1)--(0,1,0)--cycle;
\draw[dashed,tdplot_rotated_coords]  (0,0,0)--(0,0,-1)--(1,0,-1)--(1,0,0)--cycle;

\draw[tdplot_rotated_coords](0,1,-1) -- (1,1,-1)--(1,2,0-1)--(0,2,0-1)--cycle;

\draw[tdplot_rotated_coords](0,2,0-1)--(0,2,-1-1)--(1,2,-1-1)--(1,2,0-1)--cycle;
\draw[dashed,tdplot_rotated_coords]  (0,1,0-1)--(0,1,-1-1)--(0,2,-1-1)--(0,2,0-1)--cycle;
\draw[dashed,tdplot_rotated_coords]  (0,1,0-1)--(0,1,-1-1)--(1,1,-1-1)--(1,1,0-1)--cycle;

\draw[tdplot_rotated_coords](1,0,-1) -- (1+1,0,0-1)--(1+1,1,0-1)--(0+1,1,0-1)--cycle;
\draw[tdplot_rotated_coords](1+1,0,0-1)--(1+1,0,-1-1)--(1+1,1,-1-1)--(1+1,1,0-1)--cycle;
\draw[tdplot_rotated_coords](0+1,1,0-1)--(1+1,1,0-1);
\draw[dashed,tdplot_rotated_coords]  (0+1,0,0-1)--(0+1,0,-1-1)--(0+1,1,-1-1)--(0+1,1,0-1)--cycle;
\draw[dashed,tdplot_rotated_coords]  (0+1,0,0-1)--(0+1,0,-1-1)--(1+1,0,-1-1)--(1+1,0,0-1)--cycle;

\draw[tdplot_rotated_coords](1,0+1,0-1) -- (1+1,0+1,0-1)--(1+1,1+1,0-1)--(0+1,1+1,0-1)--cycle;
\draw[tdplot_rotated_coords](1+1,0+1,0-1)--(1+1,0+1,-1-1)--(1+1,1+1,-1-1)--(1+1,1+1,0-1)--cycle;
\draw[tdplot_rotated_coords](0+1,1+1,0-1)--(0+1,1+1,-1-1)--(1+1,1+1,-1-1)--(1+1,1+1,0-1)--cycle;
\draw[dashed,tdplot_rotated_coords]  (0+1,0+1,0-1)--(0+1,0+1,-1-1)--(0+1,1+1,-1-1)--(0+1,1+1,0-1)--cycle;
\draw[dashed,tdplot_rotated_coords]  (0+1,0+1,0-1)--(0+1,0+1,-1-1)--(1+1,0+1,-1-1)--(1+1,0+1,0-1)--cycle;

\draw[tdplot_rotated_coords](0+1,0+2,0-2) -- (1+1,0+2,0-2)--(1+1,1+2,0-2)--(0+1,1+2,0-2)--cycle;
\draw[tdplot_rotated_coords](1+1,0+2,0-2)--(1+1,0+2,-1-2)--(1+1,1+2,-1-2)--(1+1,1+2,0-2)--cycle;
\draw[tdplot_rotated_coords](0+1,1+2,0-2)--(0+1,1+2,-1-2)--(1+1,1+2,-1-2)--(1+1,1+2,0-2)--cycle;
\draw[dashed,tdplot_rotated_coords]  (0+1,0+2,0-2)--(0+1,0+2,-1-2)--(0+1,1+2,-1-2)--(0+1,1+2,0-2)--cycle;
\draw[dashed,tdplot_rotated_coords]  (0+1,0+2,0-2)--(0+1,0+2,-1-2)--(1+1,0+2,-1-2)--(1+1,0+2,0-2)--cycle;

\draw[tdplot_rotated_coords](0+2,0+1,0-2) -- (1+2,0+1,0-2)--(1+2,1+1,0-2)--(0+2,1+1,0-2)--cycle;
\draw[tdplot_rotated_coords](1+2,0+1,0-2)--(1+2,0+1,-1-2)--(1+2,1+1,-1-2)--(1+2,1+1,0-2)--cycle;
\draw[tdplot_rotated_coords](0+2,1+1,0-2)--(0+2,1+1,-1-2)--(1+2,1+1,-1-2)--(1+2,1+1,0-2)--cycle;
\draw[dashed,tdplot_rotated_coords]  (0+2,0+1,0-2)--(0+2,0+1,-1-2)--(0+2,1+1,-1-2)--(0+2,1+1,0-2)--cycle;
\draw[dashed,tdplot_rotated_coords]  (0+2,0+1,0-2)--(0+2,0+1,-1-2)--(1+2,0+1,-1-2)--(1+2,0+1,0-2)--cycle;

\draw[tdplot_rotated_coords](0+2,0+2,0-3) -- (1+2,0+2,0-3)--(1+2,1+2,0-3)--(0+2,1+2,0-3)--cycle;
\draw[tdplot_rotated_coords](1+2,0+2,0-3)--(1+2,0+2,-1-3)--(1+2,1+2,-1-3)--(1+2,1+2,0-3)--cycle;
\draw[tdplot_rotated_coords](0+2,1+2,0-3)--(0+2,1+2,-1-3)--(1+2,1+2,-1-3)--(1+2,1+2,0-3)--cycle;
\draw[dashed,tdplot_rotated_coords]  (0+2,0+2,0-3)--(0+2,0+2,-1-3)--(0+2,1+2,-1-3)--(0+2,1+2,0-3)--cycle;
\draw[dashed,tdplot_rotated_coords]  (0+2,0+2,0-3)--(0+2,0+2,-1-3)--(1+2,0+2,-1-3)--(1+2,0+2,0-3)--cycle;

\draw[dashed,tdplot_rotated_coords](0+1,0+1,0-2) -- (1,0+1,-1-2);
\draw[dashed,tdplot_rotated_coords](1,0+1,-1-2) -- (1,0+1+1,-1-2);
\draw[dashed,tdplot_rotated_coords](1,0+1,-1-2) -- (1+1,0+1,-1-2);

\end{tikzpicture}

\subsection{Strong local-global compatibility conjecture}\label{slgc}

Back to the setting of \S\ref{global} but assuming that $F_v^+$ is unramified and that $\rbar_{\tilde{v}}$ (for ${\tilde{v}}\vert v$) is generic as at the beginning of \S\ref{compatibilityrhobar}, we conjecture that the $G(F_{\tilde{v}})$-representation $\Hom_{U^v}(\sigma^v,S(V^{v},\F)[\m^{\Sigma}])$ is a direct sum of copies of a $G(F_{\tilde{v}})$-representation which is (up to twist) compatible with any good conjugate of $\rbar_{\tilde{v}}$ (Definition \ref{compatible2}).

We consider exactly the same global setting as in \S\ref{somprel}. We fix $v\vert p$ in $F^+$ such that $F_v^+$ is an unramified extension of $\Qp$ and consider a continuous representation $\rbar:\gF\rightarrow {\GL}_n(\F)$ such that
\begin{enumerate}
\item $\rbar^c\cong \rbar^\vee\otimes\omega^{1-n}$ (recall $\rbar^c(g)=\rbar(cgc)$ for $g\in \gF$);
\item $\rbar$ is an absolutely irreducible representation of $\gF$;
\item $\rbar_{\tilde{v}}$ for ${\tilde{v}}\vert v$ has distinct irreducible constituents and the ratio of any two irreducible constituents of dimension $1$ is not in $\{\omega,\omega^{-1}\}$
\end{enumerate}
(note that condition (iii) doesn't depend on the place ${\tilde{v}}$ of $F$ dividing $v$ since $\rbar_{{\tilde{v}}^c}\cong \rbar_{\tilde{v}}^\vee\otimes\omega^{1-n}$).

The following is the main conjecture of this paper.

\begin{conj1}\label{theconj}
Let $\rbar:\gF\rightarrow {\GL}_n(\F)$ be a continuous homomorphism that satisfies conditions \emph{(}i\emph{)} to \emph{(}iii\emph{)} above and fix a place $v$ of $F^+$ which divides $p$ such that $F_v^+$ is unramified. Assume that there exist compact open subgroups $V^v\subseteq U^{v}\subseteq \GAv$ with $V^v$ normal in $U^v$, a finite-dimensional representation $\sigma^v$ of $U^v/V^v$ over $\F$ and a finite set $\Sigma$ of finite places of $F^+$ as in \S\ref{wlgc} such that $\Hom_{U^v}(\sigma^v,S(V^{v},\F)[\m^{\Sigma}])\ne 0$, where $\m^{\Sigma}$ is the maximal ideal of $\TT^\Sigma$ associated to $\rbar$. Let ${\tilde{v}}\vert v$ in $F$ and see $\Hom_{U^v}(\sigma^v,S(V^{v},\F)[\m^{\Sigma}])$ as a representation of $H(F_v^+)\cong \GL_n(F_{\tilde{v}})=G(F_{\tilde{v}})$ via $\iota_{\tilde{v}}$ {\upshape(}cf.\ \S\ref{somprel}{\upshape)}. Then there is an integer $d\in \Z_{>0}$ depending only on $v$, $U^v$, $V^v$, $\sigma^v$ and $\rbar$ and an admissible smooth representation $\Pi_{\tilde{v}}$ of $G(F_{\tilde{v}})$ over $\F$ {\upshape(}depending {\it a priori} on ${\tilde{v}}$, $U^{v}$, $V^v$, $\sigma^v$ and $\rbar${\upshape)} such that
\[\Hom_{U^v}(\sigma^v,S(V^{v},\F)[\m^{\Sigma}])\cong \big(\Pi_{\tilde{v}}\otimes (\omega^{n-1}\circ{\det})\big)^{\oplus d},\]
where $\Pi_{\tilde{v}}$ is compatible with one {\upshape(}equivalently any by Proposition \ref{any}{\upshape)} good conjugate of $\rbar_{\tilde{v}}$ in the sense of Definition \ref{gooddef}.
\end{conj1}

\begin{rem1}\label{bandebis}
(i) Conjecture \ref{theconj} implies in particular that the $G(F_{\tilde{v}})$-represen\-tation $\Hom_{U^v}(\sigma^v,S(V^{v},\F)[\m^{\Sigma}])$ is of finite length with all constituents of multiplicity $d$ (under assumptions (i) to (iii) on $\rbar$), which is already far from being known in general. See however \S\ref{gl2results} below for nontrivial evidence in the case of $\GL_2$. It also implies that $\Hom_{U^v}(\sigma^v,S(V^{v},\F)[\m^{\Sigma}])$ has a central character, but this is known (at least under some extra assumptions), see Lemma \ref{central}.\\
(ii) When $F_v^+$ is unramified and $\rbar_{\tilde{v}}$ is as in (iii) above, Conjecture \ref{theconj} of course implies (and is in fact much stronger than) Conjecture \ref{theconjbar}.\\
(iii) Assuming that $p$ is unramified in $F^+$ and that $\rbar_{\tilde{w}}$ is generic as in (iii) above for all $w\vert p$, an even stronger conjecture would be as follows.
\begin{conj1}\label{theconjbis}
For $U^p\subseteq \GAp$ such that $S(U^p,\F)[\m^{\Sigma}]\ne 0$ {\upshape(}where $\Sigma$ contains the set of places of $F^+$ that split in $F$ and divide $pN$, or at which $U^p$ is not unramified, or at which $\rbar$ ramifies, and where $S(U^p,\F)[\m^{\Sigma}]$ is defined as in \S\ref{somprel} replacing $U^v$ by $U^p${\upshape)} and for any $\tilde w\vert w$ in $F$ with $w\vert p$, there is an integer $d\in \Z_{>0}$ depending only on $p$, $U^p$ and $\rbar$ and admissible smooth representations $\Pi_{\tilde{w}}$ of $G(F_{\tilde{w}})$ over $\F$, where $\Pi_{\tilde{w}}$ is compatible with one {\upshape(}equivalently any{\upshape)} good conjugate of $\rbar_{\tilde{w}}$ such that
\[S(U^p,\F)[\m^{\Sigma}]\cong \Big(\bigotimes_{w\vert p}\big(\Pi_{\tilde{w}}\otimes (\omega^{n-1}\circ{\det})\big)\Big)^{\oplus d}.\]
\end{conj1}
\end{rem1}

As in \S\ref{wlgc}, we prove that Conjecture \ref{theconj} holds for ${\tilde{v}}$ if and only if it holds for ${\tilde{v}}^c$ (we do not need here extra assumptions). We start with two formal lemmas. We use the previous notation and denote by $w_0\in W$ the unique element with maximal length.

\begin{lem1}\label{goodconjdual}
Let $\rhobar:\gK\rightarrow \widetilde P_{\rhobar}(\F)\subseteq P_{\rhobar}(\F)\subseteq G(\F)$ be a good conjugate as in \S\ref{goodconjugatesub}. Then the continuous homomorphism $\gK\rightarrow G(\F)= \GL_n(\F)$ defined by
\begin{eqnarray}\label{dualtau}
g\longmapsto w_0\tau\big(\rhobar(g)\big)^{-1}w_0
\end{eqnarray}
is a good conjugate of the dual of the representation associated to $\rhobar$.
\end{lem1}
\begin{proof}
Denote by ${}^{w_0}\!P_{\rhobar}$ the standard parabolic subgroup of $G$ with set of simple roots $-w_0(S(P_{\rhobar}))\subseteq S$. Using that $W({}^{w_0}\!P_{\rhobar})=w_0W(P_{\rhobar})w_0$, one checks that $-w_0(X_{\rhobar})\subseteq R^+$ is a closed subset relative to ${}^{w_0}\!P_{\rhobar}$ (Definition \ref{closedP}) and thus corresponds to a Zariski-closed algebraic subgroup ${}^{w_0}\!\widetilde P_{\rhobar}\defeq w_0M_{P_{\rhobar}}w_0N_{-w_0(X_{\rhobar})}$ of ${}^{w_0}\!P_{\rhobar}$ (Lemma \ref{closedX}). Denote by $w_0\tau(\rhobar)^{-1}w_0$ the homomorphism (\ref{dualtau}), its associated representation is the dual of the representation associated to $\rhobar$. Moreover one has $\widetilde P_{w_0\tau(\rhobar)^{-1}w_0}={}^{w_0}\!\widetilde P_{\rhobar}$ and $X_{hw_0\tau(\rhobar)^{-1}w_0h^{-1}}=-w_0(X_{w_0\tau(h)^{-1}w_0\rhobar w_0\tau(h)w_0})$ for any $h\in {}^{w_0}\!P_{\rhobar}(\F)$ (note that $w_0\tau(h)^{-1}w_0\in P_{\rhobar}(\F)$). The result follows from Definition \ref{gooddef}.
\end{proof}

As in \S\ref{wlgc}, if $\pi$ is a smooth representation of $G(K)$ over $\F$ we denote by $\pi^\star$ the smooth representation of $G(K)$ with the same underlying vector space as $\pi$ but where $g\in G(K)= \GL_n(K)$ acts by $\tau(g)^{-1}$. 

\begin{lem1}\label{fgz2}
Let $\rhobar:\gK\rightarrow G(\F)$ be a continuous homomorphism such that $\rhobar^{\rm ss}$ has distinct irreducible constituents and the ratio of any two irreducible constituents of dimension $1$ is not in $\{\omega,\omega^{-1}\}$. Let $\Pi$ be a smooth representation of $G(K)$ over $\F$. Then $\Pi$ is compatible with one {\upshape(}equivalently any by Proposition \ref{any}{\upshape)} good conjugate of $\rhobar$ if and only if $\Pi^\star$ is compatible with one {\upshape(}ibid.{\upshape)} good conjugate of $\rhobar^\vee\otimes \omega^{n-1}$ {\upshape(}denoting by $\rhobar^\vee$ the dual of the representation associated to $\rhobar${\upshape)}.
\end{lem1}
\begin{proof}
We use the notation in the proof of Lemma \ref{goodconjdual}. Assuming $\rhobar$ is a good conjugate, it is enough to show that if $\Pi$ is compatible with $\rhobar$, then $\Pi^\star$ is compatible with $w_0\tau(\rhobar)^{-1}w_0\otimes \omega^{n-1}$. If $R$ is a (finite-dimensional) algebraic representation of $G^{\gKQ}$ over $\F$, let $R^\star$ be the algebraic representation where $g\in G^{\gKQ}$ acts by $\tau(g)^{-1}$ (inverse transpose on each factor). Then one checks that ${\LLbar}^\star\cong \LLbar\otimes ({\det}^{-(n-1)})^{\otimes [K:\Qp]}$. Let $\Phi$ be a bijection as in Definition \ref{compatible2} and define $\Phi^\star$ from the set of subquotients ${\Pi'}^\star$ of $\Pi^\star$ (where $\Pi'$ is a subquotient of $\Pi$) to the set of good subquotients of $\LLbar\vert_{({}^{w_0}\!\widetilde P_{\rhobar})^{\gKQ}}$ as follows: $\Phi^\star({\Pi'}^\star)$ is the algebraic representation of $({}^{w_0}\!\widetilde P_{\rhobar})^{\gKQ}$ given by $\Phi^\star({\Pi'}^\star)(g)\defeq \Phi(\Pi')(w_0\tau(g)^{-1}w_0){\det(g)}^{n-1}$ for $g\in ({}^{w_0}\!\widetilde P_{\rhobar})^{\gKQ}$ (with obvious notation). We leave to the reader the tedious but formal task to check that $\Phi^\star$ satisfies all conditions of Definitions \ref{compatible1} and \ref{compatible2} with ${}^{w_0}\!\widetilde P_{\rhobar}$ and $w_0\tau(\rhobar)^{-1}w_0\otimes \omega^{n-1}$ instead of $\widetilde P_{\rhobar}$ and $\rhobar$ using (for $Q$ any standard parabolic subgroup of $G$):
\[\Big(\Ind_{Q^-(K)}^{G(K)}(\pi_1\otimes \cdots\otimes\pi_d)\Big)^\star\cong \Ind_{({}^{w_0}Q)^-(K)}^{G(K)}({\pi_d}^{\!\star}\otimes \cdots\otimes{\pi_1}^{\!\star})\]
and Lemma \ref{fgz}.
\end{proof}

\begin{prop1}
Conjecture \ref{theconj} holds for ${\tilde{v}}$ if and only if it holds for ${\tilde{v}}^c$.
\end{prop1}
\begin{proof}
This follows from Lemma \ref{fgz2} together with $\rbar_{{\tilde{v}}^c}\cong \rbar_{\tilde{v}}^\vee\otimes\omega^{1-n}$, Remark \ref{listrem}(iv) and an easy computation.
\end{proof}

There is an obvious analogous statement with Conjecture \ref{theconjbis} instead of Conjecture \ref{theconj}.

\begin{rem1}\label{duality}
Let $\pi$ be an admissible smooth representation of $G(K)$ over $\F$ with a central character. In \cite[Cor.3.15]{Ko}, Kohlhaase associates higher smooth duals $S^i(\pi)$, $i\geq 0$ to $\pi$ which are also admissible (smooth) representations of $G(K)$ over $\F$ with a central character. In view of the results when $n=2$ (see condition (iii) in \S\ref{sec:length-of-pi} below and \cite[Thm.8.2]{HuWang2}), it is natural to expect that, when $K=F_{\tilde{v}}$ and $\Pi_{\tilde{v}}$ is as in Conjecture \ref{theconj}, we have $S^i(\Pi_{\tilde{v}})\ne 0$ if and only if $i=i_0\defeq [K:\Q_p]\frac{n(n-1)}{2}$ and that $S^{i_0}(\Pi_{\tilde{v}})$ is compatible with (a good conjugate of) $\rbar_{\tilde{v}}^\vee\otimes \omega^{n-1}$ (when $n=2$, this is indeed consistent with {\it loc.cit.}\ since $\rbar_{\tilde{v}}^\vee\cong \rbar_{\tilde{v}}\otimes {\det}(\rbar_{\tilde{v}})^{-1}$). It is also natural to ask if we have $S^{i_0}(\Pi_{\tilde{v}})\simeq \Pi_{\tilde{v}}^\star$ (see Lemma \ref{fgz2}).
\end{rem1}

From the results of \cite[\S4.4]{BH} and \cite{Enns2}, we can at least give some very weak evidence for Conjecture \ref{theconj}, more precisely for the stronger Conjecture \ref{theconjbis} in Remark \ref{bandebis}(iii), when $p$ is totally split in $F^+$ and $\rbar_{\tilde{w}}$ is upper-triangular sufficiently generic for all ${{w}}\vert p$ in $F^+$. 

If $\Pi$ is an admissible smooth representation of $G(K)$ over $\F$, we denote by $\Pi^{\rm ord}\subseteq \Pi$ the maximal $G(K)$-subrepresentation such that all its irreducible constituents are isomorphic to irreducible subquotients of principal series of $G(K)$ over $\F$. The following lemma is not difficult using Proposition \ref{minimal}, \cite[Thm.2.2.4]{BH} and the results of \cite[\S3.3]{BH}, \cite[\S3.4]{BH} (the proof is left to the reader).

\begin{lem1}
Assume $K=\Qp$ and let $\rhobar:\gp\rightarrow B(\F)\subseteq G(\F)$ be generic {\upshape(}as at the beginning of \S\ref{compatibilityrhobar}{\upshape)} and a good conjugate {\upshape(}as in Definition \ref{gooddef}{\upshape)}. Let $\Pi$ be compatible with $\rhobar$ {\upshape(}as in Definition \ref{compatible2}{\upshape)}. Then $\Pi^{\rm ord}\simeq \Pi(\rhobar)^{\rm ord}$, where $\Pi(\rhobar)^{\rm ord}$ is the representation of $G(\Qp)$ over $\F$ defined in {\upshape\cite[\S3.4]{BH}}.
\end{lem1}

Note that one can explicitly determine $V_G(\Pi(\rhobar)^{\rm ord})$ inside $\LLbar(\rhobar)$, see \cite[\S9]{breuil-foncteur}.

We let $S_p$ be the set of places of $F^+$ dividing $p$. Recall that an injection between two representations of a group is called {\it essential} if it induces an isomorphism on the respective socles.

\begin{thm1}[\cite{Enns2}]\label{enns}
Assume that $F/F^+$ is unramified at finite places, that $H$ is defined over $\oFF$ with $H\times_{\oFF}F^+$ quasi-split at finite places of $F^+$, and that $p$ is totally split in $F$. Assume that $\rbar:\gF\rightarrow {\GL}_n(\F)$ satisfies assumptions A1 to A6 of {\upshape\cite[\S3.1]{Enns2}}, let $v_1$ be a finite place of $F^+$ as in {\upshape\cite[Lemma 3.1.2]{Enns2}} and $\Sigma\defeq S_p\cup \{v_1\}$. Choose $\widetilde{v_1}\vert v_1$ in $F$ and let $U^p=\prod_{w\nmid p}U_{w}\subseteq \GAp$ such that $U_{w}=H({\mathcal O}_{F_{w}^+})$ if $w$ splits in $F$, $U_{w}$ is maximal hyperspecial in $H(F_{w}^+)$ if $w$ is inert in $F$ and $\iota_{\widetilde{v_1}}(U_{v_1})$ is the Iwahori subgroup of $\GL_n(F_{\widetilde{v_1}})$. Then for any $\tilde w\vert w$ in $F$ and any good conjugates $\rbar_{\tilde{w}}$ {\upshape(}where $w\in S_p${\upshape)}, we have an {\rm essential} injection of admissible smooth representations of $\prod_{w\vert p}H(F^+_w)$ over $\F$:
\[\Big(\bigotimes_{w\vert p}\big(\Pi(\rbar_{\tilde{w}})^{\rm ord}\otimes \omega^{n-1}\circ{\det}\big)\Big)^{\oplus n!}\hookrightarrow S(U^p,\F)[\m^{\Sigma}]^{\rm ord},\]
where $S(U^p,\F)[\m^{\Sigma}]^{\rm ord}\subseteq S(U^p,\F)[\m^{\Sigma}]$ is defined as $\Pi^{\rm ord}\subseteq \Pi$ above replacing $G(K)$ by $\prod_{w\vert p}H(F^+_w)$.
\end{thm1}
\begin{proof}
This follows from \cite[Thm.3.3.3]{Enns2} (which itself improves \cite[Thm.4.4.7]{BH}) and its proof (see just before \cite[Lemma 3.2.1]{Enns2} for the $n!$).
\end{proof}

\begin{rem1}
The cokernel of the injection in Theorem \ref{enns} is an admissible smooth representation of $\prod_{w\vert p}H(F^+_w)$ over $\F$, and its $\prod_{w\vert p}H(F^+_w)$-socle is by construction a direct sum of finitely many irreducible subquotients of principal series. If we could prove that all these irreducible subquotients are irreducible principal series which do not appear in the $\prod_{w\vert p}H(F^+_w)$-socle of $\bigotimes_{w\vert p}(\Pi(\rbar_{\tilde{w}})^{\rm ord}\otimes \omega^{n-1}\circ{\det})$, then it would follow from the mod $p$ version of \cite[Cor.1.4]{Ha3} that the essential injection in Theorem \ref{enns} is an isomorphism.
\end{rem1}

\newpage

\section{The case of \texorpdfstring{$\GL_2(\Q_{p^f})$}{GL\_2(Q\_\{p\^{}f\})}}\label{gl2}

We give evidence for Conjecture \ref{theconjbar} and Conjecture \ref{theconj} when $F^+_v$ is unramified and $G=\GL_2$. We now assume $K=\Q_{p^f}$ and $n=2$ till the end. We fix an embedding $\sigma_0 : \F_{p^f}=\Fq\hookrightarrow\F$ and let $\sigma_i\defeq \sigma_0\circ\varphi^i$ for $\varphi$ the arithmetic Frobenius and $i\geq 0$.

\subsection{\texorpdfstring{$(\varphi,\cO_K^\times)$}{{(phi,O\_K\^{}x)}}-modules and \texorpdfstring{$(\varphi,\Gamma)$}{(phi,Gamma)}-modules}\label{phigamma}

We associate \'etale $(\varphi,\cO_K^\times)$-modules to certain admissible smooth representations of $\GL_2(K)$ over $\F$ and relate them to the \'etale $(\varphi,\Gamma)$-modules of \S\ref{covariant}.

We assume $p>2$. We let $I\defeq \smatr {\cO_K^\times}{\cO_K}{p\cO_K}{\cO_K^\times}$ be the Iwahori subgroup of $\GL_2(\cO_K)$, $K_1\defeq \smatr {1+p\cO_K}{p\cO_K}{p\cO_K}{1+p\cO_K}$ the pro-$p$ radical of $\GL_2(\cO_K)$, $I_1\defeq \smatr {1+p\cO_K}{\cO_K}{p\cO_K}{1+p\cO_K}$ the pro-$p$ radical of $I$, $N_0\defeq \smatr {1}{\mathcal{O}_K}{0}{1}\subseteq I_1$, $N_0^-\defeq \smatr {1}{0}{p\mathcal{O}_K}{1}\subseteq I_1$ and $T_0\defeq \smatr {1+p\cO_K}{0}{0}{1+p\cO_K}\subseteq I_1$. We denote by $Z_1$ the center of $I_1$. If $C$ is a pro-$p$ group then $\F\bbra{C}$ denotes its Iwasawa algebra over $\F$, which is a local ring, and $\m_{C}$ the maximal ideal of $\F\bbra{C}$. If $R$ (resp.\ $M$) is a filtered ring (resp.\ filtered module) in the sense of \cite[\S I.2]{LiOy}, we denote by $F_nR$ (resp.\ $F_nM$) for $n\in \Z$ its ascending filtration and $\gr(R)\defeq \oplus_{n\in \Z}F_nR/F_{n-1}R$ (resp.\ with $M$) the associated graded ring (resp.\ module). When $R=\F\bbra{C}$, we set $F_nR\defeq \m_R^{-n}$ if $n\leq 0$ and $F_nR\defeq R$ if $n\geq 0$. If $M$ is an $R$-module, the filtration defined by $F_nM=\m_R^{-n}M$ if $n\leq 0$ and $F_nM=M$ if $n\geq 0$ is called \emph{the $\m_R$-adic filtration on $M$}.

\subsubsection{The ring \texorpdfstring{$A$}{A}}\label{ringA}

We describe some properties of a complete noetherian ring $A$ which will be a coefficient ring for some multivariable $(\psi,\cO_K^\times)$-modules and $(\varphi,\cO_K^\times)$-modules.

Let $v_{N_0}$ be the $\m_{N_0}$-adic valuation on the ring $\F\bbra{N_0}$ defined by the $\m_{N_0}$-adic filtration (i.e.~$F_n\F\bbra{N_0}=\{x\in \F\bbra{N_0} : v_{N_0}(x)\geq -n\}$ for $n\in \Z$). We use the same notation to denote the unique extension of $v_{N_0}$ to a valuation of the fraction field of $\F\bbra{N_0}$. For $i\in \{0,\dots,f-1\}$ let
\begin{equation}\label{elementsyi}
Y_i\defeq \sum_{a\in\F_q^\times}\sigma_0(a)^{-p^i}
  \begin{pmatrix}
    1 & \tld{a} \\ 0 & 1
  \end{pmatrix}\in \m_{N_0}\backslash \m_{N_0}^2
  \end{equation}
(where $\tilde a \in \cO_K^\times$ denotes the Teichm\"uller lift of $a$) and write $y_i\defeq \gr(Y_i)$ for the image of $Y_i$ in $ \m_{N_0}/ \m_{N_0}^2\subseteq \gr(\F\bbra{N_0})$. Then $\F\bbra{N_0}$ is isomorphic to the power series ring $\F\bbra{Y_0,\dots,Y_{f-1}}$ and $\gr(\F\bbra{N_0})$ to the polynomial algebra $\F[y_0,\dots,y_{f-1}]$. Let $S$ be the multiplicative subset of $\F\bbra{N_0}$ whose elements are the $(Y_0\cdots Y_{f-1})^n$ for $n\geq0$, $\F\bbra{N_0}_S$ the corresponding localization and $F_n\F\bbra{N_0}_S\defeq \{x\in \F\bbra{N_0}_S : v_{N_0}(x)\geq -n\}$. We define the ring $A$ as the completion of the filtered ring $\F\bbra{N_0}_S$ (\cite[\S I.3.4]{LiOy}). Note that $v_{N_0}$ extends to $A$, which is thus a complete filtered ring. As $A$ is complete, an element $x\in A$ is invertible in $A$ if and only if $\gr(x)$ is invertible in $\gr(A)$ (as is easily checked, here $\gr(x)$ is the ``principal part'' of $x$ as in \cite[\S I.4.2]{LiOy}).

Let $M$ be a filtered $\F\bbra{N_0}$-module. The tensor product $A\otimes_{\F\bbra{N_0}}M$ is then a filtered $A$-module for the
tensor product filtration as defined in \cite[p.57]{LiOy}. We let $A\widehat{\otimes}_{\F\bbra{N_0}}M$ be its completion. This filtered $A$-module can also be described as the completion of the localization $M_S$ endowed with the tensor product filtration associated to the isomorphism $M_S\simeq\F\bbra{N_0}_S\otimes_{\F\bbra{N_0}}M$.

\begin{lem}\label{lem:grlocal}
We have an isomorphism
  \begin{equation}\label{eq:tensorgr}
    \gr(A\widehat{\otimes}_{\F\bbra{N_0}}M)\simeq\gr(M_S)\simeq\gr(M)[(y_0\cdots
    y_{f-1})^{-1}].
\end{equation}
\end{lem}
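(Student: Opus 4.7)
The plan is to establish the two isomorphisms in (\ref{eq:tensorgr}) separately, each being essentially formal once the filtrations are set up correctly.

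For the first isomorphism $\gr(A\widehat{\otimes}_{\F\bbra{N_0}}M)\simeq \gr(M_S)$, I would appeal to the general fact (see \cite[\S I.3.4]{LiOy}) that for any filtered module $N$ over a filtered ring, the canonical map $N\to\widehat N$ into the completion with respect to the given filtration induces an isomorphism $F_n N/F_{n-1}N\simeq F_n\widehat N/F_{n-1}\widehat N$ for every $n\in\Z$. By the characterization of $A\widehat{\otimes}_{\F\bbra{N_0}}M$ recalled just before the lemma, this module is by definition the completion of $M_S$ equipped with the tensor product filtration transported through $M_S\simeq \F\bbra{N_0}_S\otimes_{\F\bbra{N_0}}M$, so this fact immediately yields the first isomorphism.

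For the second isomorphism $\gr(M_S)\simeq \gr(M)[(y_0\cdots y_{f-1})^{-1}]$, the crucial observation is that the multiplicative set $S$ is generated by the single element $Y_0\cdots Y_{f-1}\in F_{-f}\F\bbra{N_0}\setminus F_{-f-1}\F\bbra{N_0}$, whose principal symbol $y_0\cdots y_{f-1}$ is a nonzerodivisor in $\gr(\F\bbra{N_0})\simeq \F[y_0,\dots,y_{f-1}]$. First I would verify explicitly that the tensor product filtration on $M_S$ is given by
\[F_n M_S=\bigcup_{k\geq 0}(Y_0\cdots Y_{f-1})^{-k}\cdot F_{n-fk}M,\]
which is a direct unravelling of the definition of the tensor product filtration together with the analogous description of the filtration on $\F\bbra{N_0}_S$. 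Then I would define a map of graded $\gr(\F\bbra{N_0})$-modules
\[\gr(M)\big[(y_0\cdots y_{f-1})^{-1}\big]\longrightarrow \gr(M_S)\]
sending the class of $\gr(m)\cdot(y_0\cdots y_{f-1})^{-k}$ to the principal symbol of $m\cdot(Y_0\cdots Y_{f-1})^{-k}\in F_n M_S$ (with $n=fk-v_{N_0}(m)$). Surjectivity is then immediate from the description of $F_n M_S$ displayed above, while injectivity follows from the nonzerodivisor property: an element in the kernel can be cleared of denominators by multiplying through by a power of $y_0\cdots y_{f-1}$ and must then already vanish in $\gr(M)$.

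The main technical point is the explicit computation of the tensor product filtration on $M_S$; once this is in hand, everything else is a formal matching of symbols. No serious obstacle arises because we are only inverting a single element whose symbol is a nonzerodivisor, so no completeness, flatness, or limiting subtlety intervenes — the argument is in the spirit of the standard results on filtered rings and modules in \cite[Ch.~I]{LiOy}.
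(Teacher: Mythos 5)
Your overall strategy — handle the completion via the general principle that completion preserves associated graded pieces, then compare $\gr(M_S)$ with the localization $\gr(M)[(y_0\cdots y_{f-1})^{-1}]$ by a direct symbol calculation — is a genuinely different route from the paper's. The paper instead presents $M_S$ as the cokernel of multiplication by $(Y_0\cdots Y_{f-1})T-1$ on the filtered-free module $M[T]$, checks this map is strict by verifying that $(y_0\cdots y_{f-1})T-1$ acts injectively on $\gr(M)[T]$ (which is a \emph{formal} consequence of the $T$-grading, valid for any $\gr(M)$), and then applies the strictness theorem \cite[Thm.~I.4.2.4]{LiOy}. Your first isomorphism and your surjectivity argument for the second are sound, and your explicit description of $F_nM_S$ agrees with Remark~\ref{rem:on_filtered_Amod}(ii).

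However, your justification of injectivity has a genuine gap. You attribute it to ``the nonzerodivisor property'' and assert that after clearing denominators the element ``must then already vanish in $\gr(M)$.'' Neither of these is right as stated: the fact that $y_0\cdots y_{f-1}$ is a nonzerodivisor in $\gr(\F\bbra{N_0})$ says nothing about whether it acts injectively on $\gr(M)$, and indeed $\gr(M)$ can have plenty of $(y_0\cdots y_{f-1})$-torsion — this is exactly the phenomenon behind the paper's warning that the filtration on $M_S$ need not be separated even when that on $M$ is. Concretely, the natural map $\gr(M)\to\gr(M_S)$ is typically \emph{not} injective, so a cleared-of-denominators kernel element need not vanish in $\gr(M)$. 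What is actually true, and what your argument should say, is the following: if $\gr(m)\in\gr_{n_0}(M)$ has zero image in $\gr_{n_0}(M_S)$, then by the explicit formula $F_{n_0-1}M_S=\bigcup_j(Y_0\cdots Y_{f-1})^{-j}F_{n_0-1-fj}M$ and the universal property of localization one finds some $j\ge 0$ with $(y_0\cdots y_{f-1})^{j}\gr(m)=0$ in $\gr(M)$; hence $\gr(m)$ vanishes \emph{in the localization} $\gr(M)[(y_0\cdots y_{f-1})^{-1}]$, which is what injectivity demands. The paper's presentation-theoretic route is designed precisely to sidestep this torsion subtlety, whereas the hands-on route you propose works but forces you to confront it directly. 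A secondary, smaller issue: you define your comparison map by a formula on representatives $\gr(m)\cdot(y_0\cdots y_{f-1})^{-k}$ without checking well-definedness; it is cleaner (and removes that burden) to observe that $M\to M_S$ is a morphism of filtered modules, take $\gr$, and then note that the resulting map factors through the localization because $y_0\cdots y_{f-1}$ already acts invertibly on $\gr(M_S)$.
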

\begin{proof}
As $A\widehat{\otimes}_{\F\bbra{N_0}}M$ is the completion of $M_S$, it
is sufficient to prove that $\gr(M_S)\simeq\gr(M)[(y_0\cdots
    y_{f-1})^{-1}]$. Note that we have an
isomorphism of $\F\bbra{N_0}$-algebras
\!$\F\bbra{N_0}_S\simeq\F\bbra{N_0}[T]/((Y_0\cdots
Y_{f-1})T-1)$. Moreover if we endow the ring $\F\bbra{N_0}[T]$ with the
filtration
\[ F_n(\F\bbra{N_0}[T])=\sum_{k\geq0}\m_{N_0}^{kf-n}T^k\] (with the
convention $\m_{N_0}^i=\F\bbra{N_0}$ for $i\leq0$), the filtration on
$\F\bbra{N_0}_S$ is the quotient filtration via
$\F\bbra{N_0}[T]\twoheadrightarrow\F\bbra{N_0}_S$. Therefore the
filtration on $M_S$ is the quotient filtration of the tensor product
filtration on $M[T]\defeq \F\bbra{N_0}[T]\otimes_{\F\bbra{N_0}}M$.

As the filtered $\F\bbra{N_0}$-module $\F\bbra{N_0}[T]$ is
filtered-free by definition (see \cite[Def.I.6.1]{LiOy}), it follows from
\cite[Lemma I.6.14]{LiOy} that $\gr(M[T])\simeq\gr(M)[T]$ with
$\deg(T)=f$. We claim that the following sequence of filtered
modules is strict exact:
\[M[T]\xrightarrow{(Y_0\cdots Y_{f-1})T-1} M[T]\longrightarrow
M_S\longrightarrow0.\] Namely the exactness of the second arrow
follows from the definition of the quotient filtration. As
$(Y_0\cdots Y_{f-1})T$ and $1$ have degree $0$ in
$\F\bbra{N_0}[T]$, the multiplication by $(Y_0\cdots Y_{f-1})T-1$
induces the multiplication by $(y_0\cdots y_{f-1})T-1$ on
$\gr(M[T])\simeq\gr(M)[T]$ which is injective. It follows from
\cite[Thm.I.4.2.4(2)]{LiOy} (applied with $L=0$, $M=N=M[T]$,
$f=0$ and $g$ being the multiplication by $(Y_0\cdots Y_{f-1})T-1$) that
the multiplication by $(Y_0\cdots Y_{f-1})T-1$ is a strict map.

It then follows from \cite[Thm.I.4.2.4(1)]{LiOy} that the following sequence is exact:
  \begin{equation}\label{eq:gr_of_localization}
    \gr(M[T])\xrightarrow{(y_0\cdots
      y_{f-1})T-1}\gr(M[T])\longrightarrow\gr(M_S)\longrightarrow0.
   \end{equation}
Finally, since $\gr(M[T])\simeq\gr(M)[T]$, we have
$\gr(M_S)\simeq\gr(M)[(y_0\cdots y_{f-1})^{-1}]$.
\end{proof}

\begin{cor}\label{lem:grA}
We have an isomorphism $\gr(A)\simeq\F[y_0,\dots,y_{f-1},(y_0\cdots
y_{f-1})^{-1}]$. As a consequence the ring $A$ is a regular domain, i.e.~a noetherian domain which has a finite global dimension {\upshape(\cite[\S IV.D]{Serre-local-algebra})}.
\end{cor}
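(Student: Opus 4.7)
The plan is to deduce the corollary directly from Lemma \ref{lem:grlocal} together with standard transfer principles from the associated graded ring to the filtered ring itself.

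First, I would apply Lemma \ref{lem:grlocal} with $M = \F\bbra{N_0}$ equipped with its $\m_{N_0}$-adic filtration. Since $A \widehat{\otimes}_{\F\bbra{N_0}} \F\bbra{N_0} = A$ as filtered rings, the isomorphism \eqref{eq:tensorgr} yields
\[\gr(A) \simeq \gr(\F\bbra{N_0})[(y_0\cdots y_{f-1})^{-1}] \simeq \F[y_0,\dots,y_{f-1},(y_0\cdots y_{f-1})^{-1}],\]
which is the first assertion of the corollary.

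For the second assertion, the right-hand side above is a localization of a polynomial algebra in $f$ variables over a field, hence is a regular noetherian domain. I would then transfer these three properties (noetherian, domain, finite global dimension) from $\gr(A)$ to $A$ using the general theory of complete filtered rings developed in \cite{LiOy}. Concretely: noetherianness of $A$ follows from noetherianness of $\gr(A)$ via the standard lifting result for complete filtered rings (cf.~\cite[Ch.II]{LiOy}); the domain property is immediate because for nonzero $x,y \in A$, the principal parts satisfy $\gr(xy) = \gr(x)\gr(y) \neq 0$ in the domain $\gr(A)$, whence $xy \neq 0$; and the inequality $\gldim(A) \leq \gldim(\gr(A)) < \infty$ is a standard transfer principle (the global dimension of a complete filtered ring is bounded by that of its associated graded, provided the latter is noetherian).

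The main content of the proof lies in the first isomorphism, which is now a direct corollary of Lemma \ref{lem:grlocal}; the remaining assertions reduce to invoking well-known transfer principles. No serious obstacle is anticipated — the task is essentially a careful citation check within \cite{LiOy}.
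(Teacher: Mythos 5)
Your argument is correct and follows essentially the same path as the paper: the first isomorphism from Lemma \ref{lem:grlocal} with $M=\F\bbra{N_0}$, noetherianness and the domain property transferred from $\gr(A)$ via completeness and principal parts. The only (minor) variation is in the final step: you invoke the direct bound $\gldim(A)\le\gldim(\gr(A))$ for a Zariskian filtered ring, whereas the paper instead deduces that $A$ is Auslander regular from the regularity of $\gr(A)$ (via \cite[Thm.III.2.2.5]{LiOy}, after noting $A$ is Zariskian) and then concludes finite global dimension \emph{a fortiori} --- both routes are valid and rely on the same underlying machinery in \cite{LiOy}.
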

\begin{proof}
The first sentence is a direct consequence of Lemma \ref{lem:grlocal}
applied with $M=\F\bbra{N_0}$. This implies that the ring $\gr(A)$
is a noetherian domain. Then the noetherianity of $A$ follows from
\cite[Thm.I.5.7]{LiOy} applied to the ideals of $A$, and the fact that $A$ is a domain
follows easily from $\gr(x)\gr(y)=\gr(xy)$ if $x,y\in A\backslash\{0\}$ (using $\gr(x)\gr(y)\ne 0$). 
As $\gr(A)$ is a regular commutative ring, it follows from \cite[Thm.III.2.2.5]{LiOy} that $A$ is 
an Auslander regular ring (note that $A$ is Zariskian by \cite[Prop.II.2.2.1]{LiOy}) and a
fortiori has finite global dimension (\cite[Def.III.2.1.7]{LiOy}).
\end{proof}

\begin{rem}\label{rem:on_filtered_Amod}
(i) The ring $A$ can also be defined as the microlocalization of $\F\bbra{N_0}$ along the set
$\set{(y_0\cdots y_{f-1})^n, n\geq1}\subset\gr(\F\bbra{N_0})$ (see \cite[Cor.IV.1.20]{LiOy}). This
shows that the ring $A$ does not depend on our choice of elements
$Y_i$ but rather on the elements $y_i$.\\
(ii) If $M$ is a filtered $\F\bbra{N_0}$-module, the filtration on $M_S$ is given explicitly
by the following formula:
\[ F_n(M_S)=\sum_{k\geq0}(Y_0\cdots Y_{f-1})^{-k}F_{n-kf}(M), \quad
n\in\Z.\]
As $(Y_0\cdots Y_{f-1})^mF_n(M)\subset F_{n-mf}(M)$ for all
$n\in\Z$ and $m\in\NN$, we have
\[ (Y_0\cdots Y_{f-1})^{-k}F_{n-kf}(M)\subset (Y_0\cdots
Y_{f-1})^{-k-1}F_{n-(k+1)f}(M)\]
so that $F_n(M_S)$ can also be described as the increasing union
\[ F_n(M_S)=\bigcup_{k\geq0}(Y_0\cdots Y_{f-1})^{-k}F_{n-kf}(M).\]
Note that the filtration on $M_S$ is not necessarily separated even if the filtration on $M$ is separated.\\
(iii) The ring $A$ can also be defined as the set of series
\[A=\set*{\sum_{d\gg-\infty}\frac{P_{d}}{(Y_0\cdots Y_{f-1})^{n_d}}, 
P_d\in(Y_0,\dots,Y_{f-1})^{d+fn_d}, \ n_d\geq0, \ d+fn_d\geq0},\]
equivalently, $A$ is the set of infinite sums of monomials in the $Y_i$ with
$\F$-coefficients such that the total degree of the monomials tends to $+\infty$.
\end{rem}

Let $n\geq0$ be an integer and let $N_0^{p^n}\subset N_0$ be the
subgroup of $p^n$-th powers (which is $p^n\cO_K$ under the
identification $N_0\simeq\cO_K$). Let $S^{p^n}$ be the set of
$p^n$-th powers of $S$ and let $A^{p^n}$ be the completion of
$\F\bbra{N_0^{p^n}}_{S^{p^n}}$ for the filtration coming from the
valuation $v_{N_0}|_{\F\bbra{N_0^{p^n}}}=p^nv_{N_0^{p^n}}$. As the saturation of $S^{p^n}$ (see
 \cite[\S IV.1]{LiOy}) contains $S$, we have by \cite[Cor.IV.1.20]{LiOy}
\begin{equation}\label{Spn}
\F\bbra{N_0}_S=\F\bbra{N_0}_{S^{p^n}}\simeq\F\bbra{N_0^{p^n}}_{S^{p^n}}\otimes_{\F\bbra{N_0^{p^n}}}\F\bbra{N_0}.
\end{equation}
It is easy to check that $\F\bbra{N_0}$ is a
filtered free $\F\bbra{N_0^{p^n}}$-module with respect to the basis
$(Y_0^{i_0}\cdots Y_{f-1}^{i_{f-1}})_{\substack{0\leq i_j\leq p^n-1 \\
    0\leq j\leq f-1}}$. Hence, by \cite[Lemma I.6.15]{LiOy} and
(\ref{Spn}), we conclude that $\F\bbra{N_0}_S$ is a filtered free
$\F\bbra{N_0^{p^n}}_{S^{p^n}}$-module with respect to the same basis,
and thus by \cite[Lemma I.6.13(3)]{LiOy} that $A$ is a filtered free
$A^{p^n}$-module with respect to the same basis again. Moreover, by
\cite[Lemma I.6.14]{LiOy}, we have an isomorphism of graded modules
\begin{equation}\label{grApn}
\gr(A)\simeq\gr(A^{p^n})\otimes_{\gr(\F\bbra{N_0^{p^n}})}\gr(\F\bbra{N_0}).
\end{equation}

Note that the $p^n$-power Frobenius map
$x\mapsto x^{p^n}$ induces an isomorphism of filtered rings
$(\F\bbra{N_0}_S,v_{N_0})\congto(\F\bbra{N_0^{p^n}}_{S^{p^n}},v_{N_0^{p^n}})$
and thus, as $v_{N_0}|_{\F\bbra{N_0^{p^n}}}=p^{n}v_{N_0^{p^n}}$, an
isomorphism of topological rings
$(\F\bbra{N_0}_S,v_{N_0})\congto(\F\bbra{N_0^{p^n}}_{S^{p^n}},v_{N_0}|_{\F\bbra{N_0^{p^n}}})$. It induces an isomorphism
of complete topological rings
$A\congto A^{p^n}$ such that
the composite map
$A\congto A^{p^n}\hookrightarrow A$ is the
$p^n$-power Frobenius. This implies that the image of $A^{p^n}$ in $A$ is
the subring of $p^n$-th powers of $A$. 

The group $\cO_K^\times$ acts on the group $N_0$ via $a\cdot\left(
  \begin{smallmatrix}
    1&b\\0&1
  \end{smallmatrix}\right)=\left(
  \begin{smallmatrix}
    1&ab\\0&1
  \end{smallmatrix}\right)$ 
and thus on $\F\bbra{N_0}$,
preserving the valuation $v_{N_0}$, and hence the filtration. This induces an action of $\cO_K^\times$
on the graded ring $\gr(\F\bbra{N_0})$, where it is immediately checked that $1+p\cO_K$ acts trivially. 
Moreover if $a\in\F_q^\times$ and $0\leq i\leq f-1$, we have $\tld{a}\cdot y_i=\sigma_i(a)y_i$.

\begin{lem}\label{actionoK}
There is a unique continuous action of $\cO_K^\times$ on the ring
$A$ extending the action of $\cO_K^\times$ on $\F\bbra{N_0}$.
\end{lem}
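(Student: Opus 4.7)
The strategy is to extend the action first to the localization $\F\bbra{N_0}_S$ via the universal property, then to the completion $A$ by continuity, and finally to deduce uniqueness from the density of $\F\bbra{N_0}_S$ in $A$.

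First I would check that for each $a\in\cO_K^\times$, the element $a\cdot(Y_0\cdots Y_{f-1})$ is invertible in $A$. Writing $a=\tld{\bar a}\,u$ with $\bar a\in\F_q^\times$ and $u\in 1+p\cO_K$, the induced action on $\gr(\F\bbra{N_0})$ sends $y_i$ to $\sigma_i(\bar a)y_i$ (since, as noted in the text, $1+p\cO_K$ acts trivially on the graded ring). Hence the principal part of $a\cdot(Y_0\cdots Y_{f-1})$ is
\[
\Big(\prod_{i=0}^{f-1}\sigma_i(\bar a)\Big) y_0\cdots y_{f-1} = N_{\F_q/\F_p}(\bar a)\, y_0\cdots y_{f-1}.
\]
By Corollary \ref{lem:grA}, $\gr(A)\simeq \F[y_0,\dots,y_{f-1},(y_0\cdots y_{f-1})^{-1}]$, so this principal part is a unit. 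Using the criterion recalled just before Lemma \ref{lem:grlocal} (an element of $A$ is invertible iff its principal part is invertible in $\gr(A)$), we conclude that $a\cdot(Y_0\cdots Y_{f-1})$ is invertible in $A$.

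Next, by the universal property of the localization, the ring map $a:\F\bbra{N_0}\to A$ extends uniquely to a ring map $\F\bbra{N_0}_S\to A$. This extension preserves $v_{N_0}$: indeed, any element of $\F\bbra{N_0}_S$ is of the form $x(Y_0\cdots Y_{f-1})^{-k}$ for some $x\in\F\bbra{N_0}$ and $k\geq 0$, and the image under $a$ has the same $v_{N_0}$, because $v_{N_0}\bigl(a\cdot(Y_0\cdots Y_{f-1})\bigr)=v_{N_0}(Y_0\cdots Y_{f-1})=f$ (the principal part has degree $f$) and because $a$ preserves $v_{N_0}$ on $\F\bbra{N_0}$. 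Consequently this map is continuous for the filtration topologies and extends uniquely to a continuous ring endomorphism $\tld a:A\to A$.

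Finally, the assignment $a\mapsto\tld a$ is a group action: for $a,b\in\cO_K^\times$, both $\widetilde{ab}$ and $\tld a\circ\tld b$ are continuous endomorphisms of $A$ agreeing on $\F\bbra{N_0}$, hence on the dense subring $\F\bbra{N_0}_S$, hence on all of $A$; applying this with $b=a^{-1}$ shows moreover that each $\tld a$ is an automorphism. The same uniqueness-of-continuous-extension argument yields the unicity asserted in the lemma: any continuous action of $\cO_K^\times$ on $A$ extending the given one on $\F\bbra{N_0}$ must coincide with ours on $\F\bbra{N_0}_S$ (both being continuous ring maps sending $Y_0\cdots Y_{f-1}$ to the same unit of $A$) and therefore on $A$. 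No serious obstacle is anticipated; the only nonformal step is the invertibility of $a\cdot(Y_0\cdots Y_{f-1})$, which is immediate from Corollary \ref{lem:grA}.
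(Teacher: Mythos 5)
Your proof is correct and mirrors the paper's argument: extend to the localization $\F\bbra{N_0}_S$ by showing $a(S)$ consists of units of $A$ (via invertibility of principal parts in $\gr(A)$), check the extension is isometric for $v_{N_0}$, then pass to the completion and deduce uniqueness from density of $\F\bbra{N_0}_S$ in $A$. Your explicit computation that the principal part of $a\cdot(Y_0\cdots Y_{f-1})$ equals $N_{\F_q/\F_p}(\bar a)\,y_0\cdots y_{f-1}$ is if anything slightly more careful than the paper, which writes $\gr(a(S))=\gr(S)$ — an equality that only holds up to nonzero scalars, though that has no bearing on the invertibility conclusion.
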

\begin{proof}
As $\cO_K^\times$ acts by ring endomorphisms on $\F\bbra{N_0}$ and as
$\F\bbra{N_0}_S$ is dense in $A$, the uniqueness is clear.

For the existence, let $a\in\cO_K^\times$ and consider the composition
$\F\bbra{N_0}\xrightarrow{a}\F\bbra{N_0}\subset A$ which extends to
a ring homomorphism $\F\bbra{N_0}_S\rightarrow A$ since the elements
of $a(S)$ are invertible in $A$ (because they are invertible in
$\gr(A)$ as $\gr(a(S))=\gr(S)$). The precomposition of the valuation
$v_{N_0}$ on $A$ with this map is a valuation on $\F\bbra{N_0}_S$
which coincides with $v_{N_0}$ on $\F\bbra{N_0}$ since the
multiplication by $a$ preserves the valuation on
$\F\bbra{N_0}$. Therefore the map $\F\bbra{N_0}_S\rightarrow A$ is
isometric and extends to a filtered ring homomorphism $A\rightarrow A$
(\cite[Thm.I.3.4.5]{LiOy}). This defines an action of $\cO_K^\times$
on $A$.
\end{proof}

We recall that $\xi$ is the cocharacter $x\mapsto \left(
  \begin{smallmatrix}
    x&0\\0&1
  \end{smallmatrix}\right)$ of $\GL_2$. The conjugation by the matrix
$\xi(p)$ in $\GL_2(K)$ induces a group endomorphism of $N_0$ and a
continuous endomorphism $\phi$ of $\F\bbra{N_0}$. We have
$\phi(Y_i)=Y_{i-1}^p$ for $1\leq i\leq f-1$ and
$\phi(Y_0)=Y_{f-1}^p$. This implies that $\phi$ is the composite of
the (relative) Frobenius endomorphism with a permutation of the
variables $Y_i$. It follows that $\phi$ extends to a continuous
injective endomorphism of the ring $A$ with image $A^{p}$. More
generally, for $n\geq0$, the subring $A^{p^n}$ is the
image of $\phi^n$.

\begin{prop}\label{prop:control}
  Let $H\subset\cO_K^\times$ be an open subgroup and let
  $\mathfrak{a}\subset A$ be an ideal of $A$ which is $H$-stable.
  Then $\mathfrak{a}$ is controlled by $A^p$, which means
  \[ \mathfrak{a}=A(\mathfrak{a}\cap A^p).\]
\end{prop}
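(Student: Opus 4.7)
The plan is to work with the $A^p$-module decomposition of $A$, namely
\[A = \bigoplus_{\mathbf{i} \in \{0,\ldots,p-1\}^f} A^p \cdot Y^{\mathbf{i}}\quad\text{where } Y^{\mathbf{i}} \defeq Y_0^{i_0}\cdots Y_{f-1}^{i_{f-1}}.\]
Each $Y_i$ is a unit in $A$ (since $Y_i^{-1} = (Y_0\cdots Y_{f-1})^{-1} \prod_{j\neq i}Y_j \in A$), hence so is every $Y^{\mathbf{i}}$. Writing $x \in \mathfrak{a}$ uniquely as $x = \sum_{\mathbf{i}} x_{\mathbf{i}} Y^{\mathbf{i}}$ with $x_{\mathbf{i}} \in A^p$, the equality $\mathfrak{a} = A(\mathfrak{a} \cap A^p)$ is equivalent to the assertion that each $x_{\mathbf{i}}$ lies in $\mathfrak{a}$ (and hence in $\mathfrak{a} \cap A^p$), which gives $x = \sum x_{\mathbf{i}} Y^{\mathbf{i}} \in A \cdot (\mathfrak{a} \cap A^p)$.

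To extract the coefficients $x_{\mathbf{i}}$ I will exploit two features of the $\cO_K^\times$-action. First, it commutes with the Frobenius $\phi$ (both come from the multiplicative $\cO_K$-action on $N_0 \cong \cO_K$), so $A^p$ is $\cO_K^\times$-stable and $\phi : A \congto A^p$ is $\cO_K^\times$-equivariant. Second, a direct computation using the defining formula for $Y_i$ shows that the Teichm\"uller lifts $\tilde a$ of $a \in \F_q^\times$ act by $\tilde a \cdot Y^{\mathbf{i}} = \chi_{\mathbf{i}}(a) Y^{\mathbf{i}}$ with $\chi_{\mathbf{i}}(a) \defeq \prod_i \sigma_i(a)^{i_i}$. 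The averaging operators $P^{(\psi)} = \frac{1}{|\F_q^\times|} \sum_a \psi(a)^{-1} \tilde a \in \F[\cO_K^\times]$ then preserve $\mathfrak{a}$ and project $A$ onto its $\F_q^\times$-isotypic components. Since the map $\mathbf{i} \mapsto \chi_{\mathbf{i}}$ hits every character of $\F_q^\times$ and is injective except for the collision $\chi_{\mathbf{0}} = \chi_{(p-1,\ldots,p-1)} = 1$, these projections isolate the individual $Y^{\mathbf{i}}$-contributions of $x$ for all $\mathbf{i}$ apart from that one ambiguous pair.

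The main obstacles are that (i) the $\F_q^\times$-action on the coefficients $x_{\mathbf{i}} \in A^p$ is itself nontrivial, so $P^{(\psi)}$ extracts only an $\F_q^\times$-isotypic projection of $x_{\mathbf{i}}$ rather than $x_{\mathbf{i}}$ itself, and (ii) the collision $\chi_{\mathbf{0}} = \chi_{(p-1,\ldots,p-1)}$ requires extra input to distinguish these two basis elements. Both are handled by invoking the action of $1+p\cO_K$: for $u = 1+pv$ with $v \in \cO_K$, the group identity $\smatr{1}{b}{0}{1}^p = \smatr{1}{pb}{0}{1}$ exhibits $\smatr{1}{pb}{0}{1}$ as a $p$-th power in $\F\bbra{N_0}$ and forces $u \cdot Y_i - Y_i$ to have $v_{N_0}$-valuation at least $p$, with an explicit expansion whose ``leading'' piece lies in $A^p$. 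A Vandermonde-type argument using finitely many $u$ with distinct reductions modulo $1+p^2\cO_K$, combined with an iteration of the Proposition applied to $A^p$ through the $\cO_K^\times$-equivariant isomorphism $\phi : A \congto A^p$ and the completeness of $A$ in the $v_{N_0}$-filtration to pass to the limit of successive approximations, then extracts each individual $x_{\mathbf{i}} \in \mathfrak{a}$, which is the sought-for conclusion.
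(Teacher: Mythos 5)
Your overall strategy is quite different from the paper's, which does not attempt to extract the $A^p$-coordinates of an element of $\mathfrak{a}$ by hand. Instead the paper observes that $(A,A^p)$ is a \emph{Frobenius pair} in the sense of Ardakov--Wadsley, introduces the \emph{source of derivations} $\mathrm{a}(x)=(\gamma_x,\gamma_x^p,\gamma_x^{p^2},\dots)$ with $\gamma_x=\exp(p[x])$ for $x\in\F_q^\times$, and argues by contradiction on the graded module $\gr(F)$ of $F=\mathfrak{a}/A(\mathfrak{a}\cap A^p)$: picking a graded prime $P$ in its support, microlocalizing and applying the control theorem of Ardakov--Wadsley forces $L_{N_0}/pL_{N_0}\subseteq P$, hence $P=\gr(A)$, a contradiction. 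The ingredients you set up --- the $A^p$-basis $\{Y^{\un i}\}_{\un i\in\{0,\dots,p-1\}^f}$ of $A$, the fact that each $Y_i$ is a unit in $A$, the eigencharacter $\chi_{\un i}$ of $\F_q^\times$ on $Y^{\un i}$, and the fact that the map $\un i\mapsto\chi_{\un i}$ is surjective with exactly the single collision $\chi_{\un 0}=\chi_{(p-1,\dots,p-1)}$ --- are all correct and are in fact the raw material that feeds the source-of-derivations argument.

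However the final step of your proposal has a genuine gap. You acknowledge that the $\F_q^\times$-averaging only returns, in the $Y^{\un i}$-slot, an isotypic projection of $x_{\un i}\in A^p$ rather than $x_{\un i}$ itself, and you propose to resolve both this and the $\chi_{\un0}=\chi_{(p-1,\dots,p-1)}$ collision by a Vandermonde argument over $1+p\cO_K$, ``combined with an iteration of the Proposition applied to $A^p$ through the $\cO_K^\times$-equivariant isomorphism $\phi:A\congto A^p$.'' As written this is circular: $\phi$ is an isomorphism of rings with $\cO_K^\times$-action, so the statement of the Proposition for $A^p$ is literally the statement you are trying to prove, and there is no base case or decreasing invariant. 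If the intended argument is instead a successive-approximation argument --- show $x_{\un i}\in\mathfrak{a}+F_{-n}A$ for all $n$ and use completeness plus closedness of $\mathfrak{a}$ to conclude $x_{\un i}\in\mathfrak{a}$ --- then the nontrivial content is entirely in the inductive step, and that is where the proposal says nothing concrete: one must show that the ``leading'' $A^p$-term of $u\cdot Y^{\un i}-Y^{\un i}$ for $u\in 1+p\cO_K$ interacts with the $\F_q^\times$-projections in just the right way, the error terms must be tracked through each iteration, and it must be checked that the valuation gain at each step is uniform. This is precisely the delicate bookkeeping that the Ardakov--Wadsley ``source of derivations'' and graded-control machinery is designed to package rigorously, and the proposal's vague references to ``a Vandermonde-type argument'' and ``successive approximations'' stand in for exactly the missing rigour. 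Without carrying out that computation in full, the proposal does not constitute a proof.
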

\begin{proof}
  As $H$ is open in $\cO_K^\times$ it contains a subgroup of the form
  $1+p^m\cO_K$ for $m\geq1$ so that we can assume that $H=1+p^m\cO_K$.
  
The proof follows closely the strategy of \cite{ardakov-wadsley09}.

We note that the pair $(A,A^p)$ is a \emph{Frobenius pair} in the
sense of \cite[Def.2.1]{ardakov-wadsley09} (to see this use \cite[Prop.6.6]{AWZ08}
applied to $G=N_0$ together with \cite[Lemma 2.2.(a)]{ardakov-wadsley09} and
Remark \ref{rem:on_filtered_Amod}(i)). We endow $A^p$ with the filtration 
$F_nA^p\defeq A^p\cap F_nA$ induced by the filtration of $A$.

Let $F\defeq \mathfrak{a}/A(\mathfrak{a}\cap A^p)$. Endow $A(\mathfrak{a}\cap A^p)$ and $\mathfrak{a}$ with the filtration 
induced by $A$, and $F$ with the quotient filtration. Then by \cite[Rk.I.5.2(2)]{LiOy}
and \cite[Cor.I.5.5(1)]{LiOy} all these filtrations are good in the sense of \cite[Def.I.5.1]{LiOy}.
Moreover $\mathfrak{a}$ and $A(\mathfrak{a}\cap A^p)$ are complete filtered $A$-modules 
by \cite[Cor.I.6.3(2))]{LiOy} and thus so is $F$ by \cite[Prop.I.3.15]{LiOy}.

We want to prove that $F=0$. Assume for a contradiction that $F\neq0$, or equivalently $\gr(F)\neq0$ by \cite[Prop.I.4.2(1)]{LiOy}.

Let $\Gamma\defeq H=1+p^m\cO_K$ (this not the $\Gamma$ of the
$(\varphi,\Gamma)$-modules!). This is a uniform pro-$p$-group. Note
that the action of $\Gamma$ on $N_0$ is \emph{uniform} in the sense
of \cite[\S4.1]{ardakov-wadsley09}. In the notation of
\cite[\S4.2]{ardakov-wadsley09}, we have $L_{N_0}=\cO_K$,
$\mathfrak{g}=\F_q$ and the action of $\F_q$ on $L_{N_0}/pL_{N_0}$
is given by the multiplication in $\F_q$.

Let $P$ be a (homogeneous) prime ideal in the support of the
$\gr(A)$-module $\gr(F)$ (which exists since $\gr(F)\neq0$).

Let $x\in\F^\times_q$ and $\gamma_x\defeq \exp(p^m[x])\in \Aut(N_0)\hookrightarrow \End(A)$. It follows from
\cite[Prop.4.4]{ardakov-wadsley09} and \cite[Prop.3.2(a)]{ardakov-wadsley09} that the family
\[\mathrm{a}(x)\defeq (\gamma_x,\gamma_x^p,\gamma_x^{p^2},\dots)\]
is a source of derivations of $(A,A^p)$ in the sense of
\cite[Def.3.2]{ardakov-wadsley09}. Let $T_P\subset\gr(A)$ be the set of
homogeneous elements of $\gr(A)$ which are not in $P$ and let
$T_P^{(p)}\defeq T_P\cap\gr(A^p)$. It follows again from
\cite[Prop.3.2(a)]{ardakov-wadsley09} that $\mathrm{a}(x)$ induces on
$(Q_{T_P}(A),Q_{T_P^{(p)}}(A^p))$ a source of derivations
$\mathrm{a}_{T_P}(x)$, where $Q_{T_P}(A)$ (resp.~$Q_{T_P^{(p)}}(A^p)$) is
the microlocalization of $A$ (resp.~$A^p$) with respect to $T_P$
(resp.~$T_P^{(p)}$). Let $\mathcal{S}\defeq \set{\mathrm{a}(x), 
x\in\F_q^\times}$ and $\mathcal{S}_P\defeq \set{\mathrm{a}_{T_P}(x), 
x\in\F_q^\times}$.

As $\mathfrak{a}$ is $\Gamma$-invariant, $\mathfrak{a}$ is also
$\mathcal{S}$-invariant, i.e.~for all $x\in\F_q^\times$ and $r\geq0$, we have
$\gamma_x^{p^r}\mathfrak{a}\subset \mathfrak{a}$. Then $\mathfrak{a}_P\defeq  Q_{T_P}(\mathfrak{a})
\cong Q_{T_P}(A)\otimes_A\mathfrak{a}$ (\cite[Cor.IV.1.18(2)]{LiOy}, though here everything is simpler 
as all rings are commutative) is an ideal of $Q_{T_P}(A)$ 
which is $\mathcal{S}_P$-invariant.

Let $P_0\defeq  P\cap\gr(\F\bbra{N_0})$ (inside $\gr(A)$). We prove that $P_0$ contains $L_{N_0}/pL_{N_0}$,
where the latter is seen in $\gr_{-1}(\F\bbra{N_0})$ (recall $L_{N_0}\cong {N_0}$). Assume this is not true. 
Let $J\defeq \gr(\mathfrak{a}_P)\simeq\gr(\mathfrak{a})_P$ (\cite[Lemma 4.4]{AWZ08}), which is a graded ideal of 
the localization $\gr({A})_P$ of $\gr({A})$ with respect to the set of homogeneous elements which are not 
in $P$, and let $Y\in\gr(A)_P$ such that $Y\in J^{\mathcal{S}_P}$ (see \cite[Def.3.4]{ardakov-wadsley09} 
for the definition of $J^{\mathcal{S}_P}$). Noticing that $\gr(A)_P=\gr(\F\bbra{N_0})_{P_0}$ and that 
$L_{N_0}/pL_{N_0}$ is a $1$-dimensional $\F_q$-vector space, we can apply \cite[Prop.4.3]{ardakov-wadsley09} 
(together with \cite[Prop.4.4(c)]{ardakov-wadsley09}) to the graded prime ideal $P_0$ of $B=\gr(\F\bbra{N_0})$ and
the graded ideal $J$ of $\gr(\F\bbra{N_0})_{P_0}$. We deduce $\mathcal{D}_P(Y)\subset J$ (see 
\cite[\S4.3]{ardakov-wadsley09} for the definition of $\mathcal{D}_P$). It follows from
\cite[Thm.3.5]{ardakov-wadsley09} applied to the Frobenius pair $(Q_{T_P}(A),Q_{T_P^{(p)}}(A^p))$ and the 
ideal $\mathfrak{a}_P$ that $\mathfrak{a}_P$ is controlled by $Q_{T_P^{(p)}}(A^p)$. Then
\cite[Lemma 2.3]{ardakov-wadsley09} shows that $\gr(F)_P=0$. This is
a contradiction.

As $L_{N_0}/pL_{N_0}$ generates the $\F$-vector space $\gr_{-1}(\F\bbra{N_0})=\oplus_{i=0}^{f-1} \F y_i$,
it follows that $y_i\in P$ for all $0\leq i\leq f-1$ and then that $\gr(A)=P$. This is a contradiction
so that $F=0$ i.e.\ $\mathfrak{a}=A(\mathfrak{a}\cap A^p)$.
\end{proof}

\begin{lem}\label{lem:intersection_ideals}
Let $\mathfrak{a}\subsetneq A$ be a proper ideal of $A$. Then
$\cap_{n\geq0}(A(\mathfrak{a}\cap A^{p^n}))=0$. In particular, if
$\phi(\mathfrak{a}) \subset \mathfrak{a}$ we have
$\cap_{n\geq0}A\phi^n(\mathfrak{a})=0$.
\end{lem}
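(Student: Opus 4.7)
The second assertion follows immediately from the first since $\phi(\mathfrak{a})\subseteq\mathfrak{a}$ forces $\phi^n(\mathfrak{a})\subseteq\mathfrak{a}\cap A^{p^n}$ for all $n\geq 0$ and hence $A\phi^n(\mathfrak{a})\subseteq A(\mathfrak{a}\cap A^{p^n})$. I focus therefore on $\cap_{n\geq 0}A(\mathfrak{a}\cap A^{p^n})=0$.

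The strategy is to pass to the associated graded ring, exploiting that $\gr(A)\simeq\F[y_0^{\pm 1},\dots,y_{f-1}^{\pm 1}]$ is a Laurent polynomial ring in which every nonzero monomial is a unit (Corollary~\ref{lem:grA}). I first observe that $\gr(\mathfrak{a})$ is a \emph{proper} graded ideal of $\gr(A)$: were it not, there would exist $x\in\mathfrak{a}$ with $\gr(x)=1$, necessarily of the shape $x=1+y$ with $v_{N_0}(y)\geq 1$; then the Neumann series $\sum_{k\geq 0}(-y)^k$ converges in the complete separated filtered ring $A$ to an inverse of $x$, so $x$ would be a unit inside $\mathfrak{a}$, forcing $\mathfrak{a}=A$. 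Next I use the filtered free decomposition $A=\bigoplus_{\underline{i}\in I_n}Y^{\underline{i}}A^{p^n}$ with $I_n\defeq\{0,\dots,p^n-1\}^f$, which passes to $\gr(A)=\bigoplus_{\underline{i}\in I_n}y^{\underline{i}}\gr(A^{p^n})$ with $\gr(A^{p^n})=\F[y_0^{\pm p^n},\dots,y_{f-1}^{\pm p^n}]$ (see (\ref{grApn})). Since $\mathfrak{a}\cap A^{p^n}$ is an ideal of $A^{p^n}$, we have $A(\mathfrak{a}\cap A^{p^n})=\bigoplus_{\underline{i}\in I_n}Y^{\underline{i}}(\mathfrak{a}\cap A^{p^n})$, so writing $x=\sum_{\underline{i}}Y^{\underline{i}}x_{\underline{i},n}$ with $x_{\underline{i},n}\in A^{p^n}$, membership $x\in A(\mathfrak{a}\cap A^{p^n})$ is equivalent to every $x_{\underline{i},n}$ lying in $\mathfrak{a}$.

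Now fix $x\in A$ nonzero. Its leading part $\gr(x)$, being an element of the Laurent polynomial ring $\gr(A)$, is a finite sum $\sum_{\underline{k}\in S}c_{\underline{k}}y^{\underline{k}}$ with $S\subset\Z^f$ finite and $c_{\underline{k}}\in\F^{\times}$. For $n$ large enough that the componentwise reduction $\underline{k}\mapsto\underline{k}\bmod p^n\in I_n$ is injective on $S$, matching the monomial expansion of $\gr(x)$ against the identity $\gr(x)=\sum_{\underline{i}\in I_n}y^{\underline{i}}\gr(x_{\underline{i},n})$ (which has no cancellation as its summands sit in distinct pieces of the direct sum) forces each nonzero $\gr(x_{\underline{i},n})$ to be a \emph{single} Laurent monomial of the form $c_{\underline{k}}y^{\underline{k}-\underline{i}}$, hence a unit of $\gr(A)$. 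Were $x$ to belong to $A(\mathfrak{a}\cap A^{p^n})$ for this $n$, the previous paragraph would place such a unit inside $\gr(\mathfrak{a})$, contradicting the first step. The key mechanism driving the argument is the isolation of the finitely many monomials of $\gr(x)$ under the $p^n$-residue map, which is the main point to get right; the remaining bookkeeping is routine.
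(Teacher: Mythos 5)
Your proof is correct, and it uses the same underlying ingredients as the paper's argument (passage to the associated graded ring, the filtered free decomposition $A=\bigoplus_{\underline i\in I_n}Y^{\underline i}A^{p^n}$ with its graded counterpart, and the fact that $\gr(\mathfrak{a})$ is a proper graded ideal of the Laurent polynomial ring $\gr(A)$, hence contains no unit), but the execution differs from the paper's. The paper works uniformly at the level of graded ideals: it shows the ideal $\gr\big(A(\mathfrak{a}\cap A^{p^n})\big)\cap\gr(\F\bbra{N_0})$ is generated by homogeneous elements of degree $\le -p^n$, deduces $\bigcap_n\gr\big(A(\mathfrak{a}\cap A^{p^n})\big)\cap\gr(\F\bbra{N_0})=0$, and then uses that any nonzero ideal of $\gr(A)$ must meet $\gr(\F\bbra{N_0})$ nontrivially. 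You instead fix a nonzero $x$, look at the finitely many Laurent monomials in $\gr(x)$, and pick $n$ large enough to separate them modulo $p^n$; this is more hands-on and avoids discussing the shape of $\gr\big(A(\mathfrak{a}\cap A^{p^n})\big)$ as an ideal. Both are equally valid; your version is perhaps slightly more elementary, while the paper's statement of a uniform degree bound makes the dependence on $n$ more transparent.

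One small imprecision: the displayed identity $\gr(x)=\sum_{\underline i\in I_n}y^{\underline i}\gr(x_{\underline i,n})$ is not quite right as written, since $y^{\underline i}\gr(x_{\underline i,n})$ lives in degree $-\|\underline i\|-v(x_{\underline i,n})$, and only those $\underline i$ with $\|\underline i\|+v(x_{\underline i,n})=v(x)$ contribute to $\gr(x)$; the others drop out because they sit in lower graded degree. What is true — and suffices for your argument — is that $\gr(x)$ decomposes in $\bigoplus_{\underline i}y^{\underline i}\gr(A^{p^n})$ with nonzero components exactly $y^{\underline i}\gr(x_{\underline i,n})$ for those $\underline i$ achieving the minimum, and by your choice of $n$ each such component is a single monomial. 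So at least one $x_{\underline i,n}$ is nonzero with $\gr(x_{\underline i,n})$ a unit of $\gr(A)$, and if $x\in A(\mathfrak{a}\cap A^{p^n})$ this unit would lie in $\gr(\mathfrak{a})$, the contradiction you want. You may want to phrase the identity more carefully so the degree bookkeeping is visibly correct.
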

\begin{proof}
Let $\mathfrak{a}_n\defeq A(\mathfrak{a}\cap A^{p^n})$. We endow
$\mathfrak{a}\cap A^{p^n}$ with the induced filtration of
$A^{p^n}$ (or equivalently $A$). As $A$ is a finite free $A^{p^n}$-module, we have
$\mathfrak{a}_n\simeq A\otimes_{A^{p^n}}(\mathfrak{a}\cap
A^{p^n})$. We endow this $A$-module with the tensor product
filtration. Since $A$ is a filtered free $A^{p^n}$-module, it
follows from \cite[Lemma I.6.14]{LiOy} that
$\gr(\mathfrak{a}_n)\simeq\gr(A)\otimes_{\gr(A^{p^n})}\gr(\mathfrak{a}\cap
A^{p^n})$. Since $\gr(A)$ is a finite free $\gr(A^{p^n})$-module,
the natural map $\gr(\mathfrak{a}_n)\rightarrow\gr(A)$ is injective (and the filtration on $\mathfrak{a}_n$ 
is in fact the one induced from $A$). Moreover from (\ref{grApn}) we deduce
\begin{equation}\label{gran}
\gr(\mathfrak{a}_n)\simeq \gr(\F\bbra{N_0})\otimes_{\gr(\F\bbra{N_0^{p^n}})}\gr(\mathfrak{a}\cap A^{p^n}).
\end{equation}

Assume that $\mathfrak{a}\neq A$. Then as both $\mathfrak{a}$ and $A$ are complete and the injection $\mathfrak{a}\hookrightarrow A$ is strict, it follows as for the $A$-module $F$ in the proof of Proposition \ref{prop:control} that $\gr(A/\mathfrak{a})\ne 0$ (with the quotient filtration on $A/\mathfrak{a}$), hence by \cite[Thm.I.4.4(1)]{LiOy} that $\gr(\mathfrak{a})\neq\gr(A)$, and {\it a fortiori} $\gr(\mathfrak{a}_n)\neq\gr(A)$.

Using (\ref{gran}) and the fact $\gr(\F\bbra{N_0})\simeq\F[y_0,\dots,y_{f-1}]$ is free of finite rank over $\gr(\F\bbra{N_0^{p^n}})\simeq\F[y_0^{p^n},\dots,y_{f-1}^{p^n}]$, we have inside $\gr(A)$ that
\begin{equation}
\gr(\mathfrak{a}_n)\cap\gr(\F\bbra{N_0})\simeq \gr(\F\bbra{N_0})\otimes_{\gr(\F\bbra{N_0^{p^n}})}(\gr(\mathfrak{a}\cap A^{p^n})\cap\gr(\F\bbra{N_0^{p^n}})).
\end{equation}
The ideal $\gr(\mathfrak{a}_n)\cap\gr(\F\bbra{N_0})$ is therefore
generated by homogeneous elements of $\gr(\F\bbra{N_0})$ which are of degree $\leq -p^n$
since homogeneous elements of $\F[y_0^{p^n},\dots,y_{f-1}^{p^n}]$
of degree zero are invertible and $\gr(\mathfrak{a}_n)$ does not
contain invertible elements (as $\gr(\mathfrak{a}_n)\neq\gr(A)$). We conclude that
\[\gr(\mathfrak{a}_n)\cap\gr(\F\bbra{N_0})\subset F_{-p^n}(\gr(\F\bbra{N_0})).\]
Consequently (recall $\bigcap_{n\geq0}\mathfrak{a}_n$ has the induced filtration from $A$)
\begin{equation}\label{grinteran}
\gr\big(\bigcap_{n\geq0}\mathfrak{a}_n\big)\cap\gr(\F\bbra{N_0})\subset\bigcap_{n\geq0}(\gr(\mathfrak{a}_n)\cap\gr(\F\bbra{N_0}))=0.
\end{equation}
As $\gr(\bigcap_{n\geq0}\mathfrak{a}_n)$ is an ideal in $\gr(A)\simeq\F[y_0,\dots,y_{f-1},(y_0\cdots
y_{f-1})^{-1}]$, it follows from (\ref{grinteran}) that we must have $\gr(\bigcap_{n\geq0}\mathfrak{a}_n)=0$,
and hence that $\bigcap_{n\geq0}\mathfrak{a}_n=0$ by \cite[Prop.I.4.2(1)]{LiOy}.
\end{proof}

\begin{cor}\label{cor:idealsofA}
Let $H\subset\cO_K^\times$ be an open subgroup. The only ideals of $A$ which are $H$-stable are $0$
and $A$.
\end{cor}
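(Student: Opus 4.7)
The plan is to combine Proposition \ref{prop:control} with Lemma \ref{lem:intersection_ideals} by iterating the former along the Frobenius tower $A \supseteq A^p \supseteq A^{p^2} \supseteq \cdots$. The key observation that makes the iteration work is that the Frobenius $\phi: A \to A^p$ commutes with the action of $\cO_K^\times$. Indeed, this is immediate from the explicit description: for $a \in \cO_K^\times$ and $n = \smatr{1}{b}{0}{1} \in N_0$, one has $a \cdot \phi(n) = \smatr{1}{pab}{0}{1} = \phi(a \cdot n)$, and this extends by continuity and multiplicativity to $A$.

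Let $\mathfrak{a} \subseteq A$ be a nonzero $\cO_K^\times$-stable ideal; I would prove by induction on $n \geq 0$ that $\mathfrak{a} = A(\mathfrak{a} \cap A^{p^n})$. The case $n=0$ is trivial and the case $n=1$ is exactly Proposition \ref{prop:control}. For the inductive step, I consider the ideal $\phi^{-n}(\mathfrak{a} \cap A^{p^n})$ of $A$. Since $\phi^n$ intertwines the $\cO_K^\times$-actions on $A$ and on $A^{p^n}$, and since $\mathfrak{a} \cap A^{p^n}$ is $\cO_K^\times$-stable inside $A^{p^n}$, the ideal $\phi^{-n}(\mathfrak{a} \cap A^{p^n})$ is $\cO_K^\times$-stable in $A$. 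Applying Proposition \ref{prop:control} to it and then pushing forward by $\phi^n$, we obtain
\[ \mathfrak{a} \cap A^{p^n} = A^{p^n}\bigl(\mathfrak{a} \cap A^{p^{n+1}}\bigr), \]
which combined with the inductive hypothesis gives $\mathfrak{a} = A(\mathfrak{a} \cap A^{p^n}) = A \cdot A^{p^n}(\mathfrak{a} \cap A^{p^{n+1}}) = A(\mathfrak{a} \cap A^{p^{n+1}})$.

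Having established the identity for all $n$, I would conclude by Lemma \ref{lem:intersection_ideals}: if $\mathfrak{a} \subsetneq A$, then
\[ \mathfrak{a} = \bigcap_{n \geq 0} \mathfrak{a} = \bigcap_{n \geq 0} A\bigl(\mathfrak{a} \cap A^{p^n}\bigr) = 0, \]
so $\mathfrak{a}$ must be either $0$ or $A$. The only subtle point in the plan is the bookkeeping in the inductive step, namely checking that $\phi^n(\phi^{-n}(\mathfrak{a} \cap A^{p^n}) \cap A^p) = \mathfrak{a} \cap A^{p^{n+1}}$, but this is immediate once one notes that $\phi^{-n}$ is an isomorphism onto $A$ that sends $A^{p^{n+1}}$ bijectively to $A^p$. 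The real substance is entirely contained in Proposition \ref{prop:control} and Lemma \ref{lem:intersection_ideals}, which have already been established; here one is merely assembling them.
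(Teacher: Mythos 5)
Your proof is correct and takes essentially the same route as the paper, which simply says to apply Proposition \ref{prop:control} ``recursively with $A$, $A^p$, etc.'' and then invoke Lemma \ref{lem:intersection_ideals}. You have merely spelled out the recursion more carefully, using the fact (already noted in the paper, just before Definition \ref{psiok}) that $\phi$ commutes with the $\cO_K^\times$-action to transport Proposition \ref{prop:control} from $A$ down the tower $A\supseteq A^p\supseteq A^{p^2}\supseteq\cdots$; the bookkeeping $\phi^n(\phi^{-n}(\mathfrak{a}\cap A^{p^n})\cap A^p)=\mathfrak{a}\cap A^{p^{n+1}}$ is right because $\phi^n$ is injective.
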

\begin{proof}
Let $\mathfrak{a}$ be such an ideal and assume that $\mathfrak{a}\neq A$. It follows from
Proposition \ref{prop:control} applied recursively with $A$, $A^p$, etc. that
$\mathfrak{a}=A(\mathfrak{a}\cap A^{p^n})$ for all $n\geq0$. Then Lemma \ref{lem:intersection_ideals} 
implies $\mathfrak{a}=0$.
\end{proof}

If $H$ is an open subgroup of $\cO_K^\times$, an \emph{$H$-module over $A$} is a finitely generated
$A$-module with a semilinear action of $H$.

\begin{prop}\label{prop:OK_modules}
  Let $H$ be an open subgroup of $\cO_K^\times$ and let $M$ be an
  $H$-module over $A$. Then $M$ is a finite projective $A$-module.
\end{prop}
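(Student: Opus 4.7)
The plan is to deduce the result from Corollary \ref{cor:idealsofA} by proving that all of the Fitting ideals $\mathrm{Fitt}_i(M) \subseteq A$ are $\cO_K^\times$-stable, and then invoking the classical Fitting ideal criterion for local freeness.

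First I would show that the Fitting ideals are stable under the action of $\cO_K^\times$ on $A$. Fix $\sigma\in \cO_K^\times$ and consider $M^\sigma$, the abelian group $M$ equipped with the twisted $A$-module structure $a\cdot_\sigma m \defeq  \sigma^{-1}(a)m$. The semilinearity relation $\sigma(am)=\sigma(a)\sigma(m)$ is precisely the statement that the $\sigma$-action $M^\sigma \to M$, $m\mapsto \sigma(m)$, is an isomorphism of $A$-modules. Now if $A^s \xrightarrow{\Phi} A^r \twoheadrightarrow M$ is a finite presentation of $M$ (which exists since $A$ is noetherian), then applying $\sigma^{-1}$ entrywise to $\Phi$ yields a finite presentation of $M^\sigma$. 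Since the Fitting ideals are invariants of the isomorphism class of the module and can be computed from any presentation as ideals of minors, we get
\[\mathrm{Fitt}_i(M)\ =\ \mathrm{Fitt}_i(M^\sigma)\ =\ \sigma^{-1}\bigl(\mathrm{Fitt}_i(M)\bigr),\]
so each $\mathrm{Fitt}_i(M)$ is preserved by $\cO_K^\times$.

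By Corollary \ref{cor:idealsofA}, it follows that $\mathrm{Fitt}_i(M)\in\{0,A\}$ for every $i\geq 0$. Since $M$ is finitely generated, $\mathrm{Fitt}_i(M)=A$ for all $i$ sufficiently large, and as the Fitting ideals form an increasing sequence, there is a smallest integer $r\geq 0$ such that $\mathrm{Fitt}_r(M)=A$, so that $\mathrm{Fitt}_{r-1}(M)=0$ (interpreting $\mathrm{Fitt}_{-1}(M)=0$ when $r=0$, in which case $M=0$ as $\mathrm{Fitt}_0(M)\subseteq\mathrm{ann}(M)$).

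Finally I would conclude by the standard Fitting ideal criterion: for any prime $\mathfrak{p}$ of $A$, Fitting ideals commute with localization, so $\mathrm{Fitt}_r(M_\mathfrak{p})=A_\mathfrak{p}$ and $\mathrm{Fitt}_{r-1}(M_\mathfrak{p})=0$. Over the local ring $A_\mathfrak{p}$ these two conditions force $M_\mathfrak{p}$ to be free of rank $r$ (a minimal generating set has $r$ elements by Nakayama applied to $\mathrm{Fitt}_r(M_\mathfrak{p})=A_\mathfrak{p}$, and the vanishing of $\mathrm{Fitt}_{r-1}$ forces the relation matrix of such a presentation to vanish). Hence $M$ is locally free of constant rank $r$ over $A$, and since $M$ is finitely generated over the noetherian ring $A$, this is equivalent to being finitely generated projective. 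The main point of the argument, and the only place where the hypothesis on $M$ beyond finite generation is used, is the $\cO_K^\times$-stability of the Fitting ideals in the first paragraph; everything after that is formal commutative algebra combined with Corollary \ref{cor:idealsofA}.
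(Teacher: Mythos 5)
Your proof is correct and takes essentially the same route as the paper: both establish that all Fitting ideals of $M$ are $\cO_K^\times$-stable (the paper via the base change $M\otimes_{A,\gamma}A$, you via the equivalent twist $M^\sigma$), invoke Corollary \ref{cor:idealsofA} to force each Fitting ideal to be $0$ or $A$, and conclude by the standard Fitting-ideal criterion for finite projectivity. One small sign slip: twisting the module structure by $a\cdot_\sigma m=\sigma^{-1}(a)m$ turns a presentation matrix $\Phi$ of $M$ into $\sigma(\Phi)$ (not $\sigma^{-1}(\Phi)$) as a presentation of $M^\sigma$, so $\mathrm{Fitt}_i(M^\sigma)=\sigma(\mathrm{Fitt}_i(M))$; this does not affect the conclusion that the Fitting ideals are $\cO_K^\times$-stable.
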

\begin{proof}
  (We thank Gabriel Dospinescu for suggesting the following proof
  which is shorter than our original one.) Let $M$ be an
  $H$-module. For $k\geq-1$ let $\mathrm{Fit}_k(M)$ be the
  $k$-th Fitting ideal (see for example
  \cite[\href{https://stacks.math.columbia.edu/tag/07Z9}{Def.07Z9}]{stacks-project}). As $M$ is a finitely generated
  $A$-module, it follows from
  \cite[\href{https://stacks.math.columbia.edu/tag/07ZA}{Lemma
    07ZA}]{stacks-project} that there exists some $r\geq0$ such that
  $\mathrm{Fit}_r(M)\neq0$. Let $r\geq0$ be the smallest integer such
  that $\mathrm{Fit}_r(M)\neq0$. Let $\gamma\in H$. It
  follows easily from the definition of $\mathrm{Fit}_k(M)$ that
  $\mathrm{Fit}_k(M\otimes_{A,\gamma}A)=\gamma(\mathrm{Fit}_k(M))$ as
  ideals of $A$. The action of $\gamma$ on $M$ induces an $A$-linear
  isomorphism $M\otimes_{A,\gamma}A\xrightarrow{\sim} M$, showing that
  $\gamma(\mathrm{Fit}_k(M))=\mathrm{Fit}_k(M)$. It follows then from
  Corollary \ref{cor:idealsofA} that all the ideals
  $\mathrm{Fit}_k(M)$ are zero or $A$. Therefore we have
  $\mathrm{Fit}_{r-1}(M)=0$ and $\mathrm{Fit}_r(M)=A$ and we deduce
  from \cite[\href{https://stacks.math.columbia.edu/tag/07ZD}{Lemma 07ZD}]{stacks-project} that $M$ is projective of rank $r$.
\end{proof}

We record one more useful consequence of Corollary~\ref{cor:idealsofA}.

\begin{cor}\label{cor:OK-times-invariants}
  Let $H$ be an open subgroup of $\cO_K^\times$. We have $A^{H} = \F$,
  i.e.\ the $H$-invariants in $A$ are given by $\F$.
\end{cor}

\begin{proof}
If $x \in A^{H}$, then $xA$ is an $H$-stable ideal of $A$. It follows that $x = 0$ or $x \in A^\times$ by Corollary~\ref{cor:idealsofA},
i.e.\ $A^{H}$ is a field. Therefore, the composition $A^{H} \into A \xonto{\tr} \F\ppar{T}$ is injective. But $\tr$ is also
$\Zp^\times$-equivariant, so $A^{H}$ injects into $\F\ppar{T}^{H \cap \Z_p^\times}$ and it suffices to show that $\F\ppar{T}^{M} = \F$ for any open subgroup $M\subset\Z_p^\times$.
As the $\Zp^\times$-action is $\F$-linear, there is no loss in assuming that $\F = \Fp$.
To see that $\Fp\ppar{T}^{M} = \Fp$, recall that the $\Zp^\times$-action on $\Fp\ppar{T}$ is given by interpreting $\Fp\ppar{T}$ as the field of norms of $\Qp(\mu_{p^\infty})/\Qp$ (with Galois group $\Zp^\times$). Let $L_0\defeq\Q_p(\mu_{p^\infty})^M$, which is a finite totally ramified extension of $\Q_p$. Thus every $x \in \Fp\ppar{X}^{M}$ is given by a norm-compatible system of elements $x_L \in L_0$, $L$ running through
finite subextensions of $\Qp(\mu_{p^\infty})/L_0$. In particular, if $x$ is nonzero, then $x_L$ is $p$-divisible in $L_0^\times$, so $x_L \in [\Fp^\times]$. As
$x$ is then determined by $x_{L_0(\mu_p)}$, we deduce the claim.
\end{proof}

\subsubsection{Multivariable \texorpdfstring{$(\psi,\cO_K^\times)$}{(psi,O\_K\^{}x)}-modules}\label{multivariablepsi}

We define a functor from a certain abelian category of admissible smooth representations of $\GL_2(K)$ over
$\F$ to a category of multivariable $(\psi,\cO_K^\times)$-modules.

Let $R$ be a noetherian commutative ring of characteristic $p$ endowed with
an injective ring endomorphism $F_R$ such that $R$ is a finite free
$F_R(R)$-module. If $M$ is an $R$-module, we define
$F_R^*(M)\defeq R\otimes_{F_R,R}M$. Examples of such pairs $(R,F_R)$
are given by $(\F\bbra{N_0},\phi)$ and $(A,\phi)$ in \S\ref{ringA}.

A \emph{$\psi$-module} over $R$ is a pair $(M,\beta)$, where $M$ is an
$R$-module and $\beta$ is an $R$-linear homomorphism
$M\rightarrow F_R^*(M)$. When $R$ is a regular ring, $F_R$
is the Frobenius endomorphism of $R$ and $\beta$ is an isomorphism, we recover the notion of
$F_R$-module of \cite[Def.1.1]{Lyubeznik}. We say that a $\psi$-module
$(M,\beta)$ is \emph{\'etale} if $\beta$ is injective.

If $(M,\beta)$ is a $\psi$-module, the exact functor $F_R^*$ gives us,
for each $n\geq0$, an $R$-linear map $(F_R^*)^n(\beta) :
(F_R^*)^n(M)\rightarrow (F_R^*)^{n+1}(M)$ and we can define
\[ \beta_n\defeq (F_R^*)^{n-1}(\beta)\circ\cdots\circ(F_R^*)(\beta)\circ\beta : M\longrightarrow
(F_R^*)^n(M).\] The inductive limit of the system
$((F_R^*)^n(M),(F_R^*)^n(\beta))_n$ gives rise to a $\psi$-module
$(\mathcal{M},\underline{\beta})$ with $\underline{\beta}$ an
isomorphism. Then $(M,\beta)$ generates
$(\mathcal{M},\underline{\beta})$ in the sense of
\cite[Def.1.9]{Lyubeznik}. Let $M^{\et}$ be the image of $M$ in
$\mathcal{M}$ and $M^0$ the kernel of $M\rightarrow M^{\et}$. The map
$\beta$ induces a structure of $\psi$-module on $M^0$ and $M^{\et}$
and $M^{\et}$ is an \'etale $\psi$-module. The $\psi$-module $M^{\et}$
is called the \emph{\'etale part} of $M$ and $M^0$ the \emph{nilpotent
part} of $M$. We note that $(M,\beta)$ and $(M^{\et},\beta^{\et})$
generate the same $F_R$-module and $(M^0,\beta^0)$ generates the
trivial $F_R$-module whose underlying module is zero. Note that the
constructions $(M,\beta)\mapsto(M^{\et},\beta^{\et})$ and
$(M,\beta)\mapsto(M^0,\beta^0)$ are functorial in $(M,\beta)$ and
that, if $\beta$ is injective, we have $M^0=0$. This implies that if
$f : (M,\beta)\rightarrow (M',\beta')$ is a morphism of $\psi$-modules
with $(M',\beta')$ \'etale, then $f$ factors through $M^{\et}$.

We are mainly interested in $\psi$-modules with extra structures, which we call $(\psi,\cO_K^\times)$-modules over $A$. If $M$ is a finitely generated $A$-module, we always endow it with the topology defined by any good filtration (note that good filtrations generate the same topologies, cf.~\cite[Lemma I.5.3]{LiOy}). It is also the quotient topology given by any surjection $A^{\oplus d}\twoheadrightarrow M$ (as follows from \cite[Rk.I.5.2(2)]{LiOy}), and we call it the canonical topology on $M$. The group $\cO_K^\times$ acts continuously on $A$ and this action commutes with the endomorphism $\phi$. If $M$ is an $A$-module which is endowed with an action of $\cO_K^\times$, we consider the diagonal action on $\phi^*(M)$, which is well defined since $\phi$ commutes with $\cO_K^\times$.

\begin{definit}\label{psiok}
A \emph{$(\psi,\cO_K^\times)$-module over $A$} is a $\psi$-module
$(M,\beta)$ over $A$ such that $M$ is a finitely generated
$A$-module with a continuous semilinear action of $\cO_K^\times$
such that $\beta$ is $\cO_K^\times$-equivariant (here, continuity
means that the map $\cO_K^\times\times M\rightarrow M$ is
continuous). We say that a $(\psi,\cO_K^\times)$-module over $A$ is
\emph{\'etale} if the underlying $\psi$-module over $A$ is.
\end{definit}

We remark that if $(M,\beta)$ is a $(\psi,\cO_K^\times)$-module, then
$M$ is an $\cO_K^\times$-module and is therefore finite projective as an
$A$-module by Proposition \ref{prop:OK_modules}.

\begin{prop}\label{prop:et_psi_modules}
Let $(M,\beta)$ be an \'etale $(\psi,\cO_K^\times)$-module over
$A$. Then $\beta$ is an isomorphism.
\end{prop}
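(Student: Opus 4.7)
The plan is to show that $\coker(\beta)$ is a finitely generated $A$-module whose annihilator is a nonzero $\cO_K^\times$-stable ideal of $A$, and then to invoke Corollary \ref{cor:idealsofA} to conclude that this annihilator must be all of $A$, forcing $\coker(\beta)=0$.

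First I would observe that $M$ is finite projective over $A$ by Proposition \ref{prop:OK_modules}, and that $\phi^\ast(M)=A\otimes_{\phi,A}M$ is therefore also finite projective over $A$. Since $A$ is a noetherian domain by Corollary \ref{lem:grA}, ranks at the generic point are well defined, and a direct computation with $\phi$ (which extends to an injective endomorphism of $\Frac(A)$) shows $\rk_A(\phi^\ast(M))=\rk_A(M)$. Consequently, the injection $\beta:M\hookrightarrow\phi^\ast(M)$ becomes an injective map of $\Frac(A)$-vector spaces of the same dimension after tensoring with $\Frac(A)$, hence an isomorphism there. Setting $C\defeq\coker(\beta)$, this yields $C\otimes_A\Frac(A)=0$, so that $C$ is a finitely generated torsion $A$-module, and in particular $\ann_A(C)\neq 0$.

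Next I would verify that $\ann_A(C)$ is stable under the $\cO_K^\times$-action on $A$. By functoriality of $\phi^\ast$ and the $\cO_K^\times$-equivariance of $\beta$, the cokernel $C$ inherits an $A$-semilinear action of $\cO_K^\times$. For any $\gamma\in\cO_K^\times$, $a\in\ann_A(C)$, and $c\in C$, the semilinearity gives
\[
\gamma(a)\,c \;=\; \gamma(a)\,\gamma(\gamma^{-1}c) \;=\; \gamma\bigl(a\,\gamma^{-1}c\bigr) \;=\; 0,
\]
so $\gamma(a)\in\ann_A(C)$, showing that $\ann_A(C)$ is $\cO_K^\times$-stable.

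Corollary \ref{cor:idealsofA} then applies: the only $\cO_K^\times$-stable ideals of $A$ are $0$ and $A$, and since $\ann_A(C)$ is nonzero it must equal $A$. Hence $C=0$ and $\beta$ is an isomorphism. There is no real obstacle once Corollary \ref{cor:idealsofA} and Proposition \ref{prop:OK_modules} are in hand; the only mild point to check carefully is the rank equality $\rk_A(\phi^\ast(M))=\rk_A(M)$, which is where the hypothesis that $A$ is a domain (Corollary \ref{lem:grA}) enters crucially to guarantee that the cokernel is torsion.
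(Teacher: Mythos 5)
Your proof is correct and follows essentially the same route as the paper: both establish that $\coker(\beta)$ is a finitely generated torsion $A$-module with a compatible $\cO_K^\times$-action, then use the rigidity of $\cO_K^\times$-stable structures over $A$ to conclude it vanishes. The only difference is in the final step: the paper cites Proposition \ref{prop:OK_modules} (the cokernel is projective, hence torsion-free, hence zero), while you bypass the projectivity and argue directly that the annihilator of the cokernel is a nonzero $\cO_K^\times$-stable ideal and invoke Corollary \ref{cor:idealsofA} — a slightly more economical version of the same underlying idea, which is exactly the mechanism behind the proof of Proposition \ref{prop:OK_modules} itself.
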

\begin{proof}
  We note that the two $A$-modules $M$ and
  $\phi^*(M)=A\otimes_{\phi,A}M$ have the same generic rank.  As
  $\beta$ is an injective $A$-linear map between two finitely
  generated modules of the same generic rank over a noetherian domain,
  its cokernel is torsion. This cokernel is then an
  $\cO_K^\times$-module which is moreover torsion as an $A$-module, it
  follows from Proposition \ref{prop:OK_modules} that it is zero and
  $\beta$ is an isomorphism.
\end{proof}

We now define a functor from certain representations of $\GL_2(K)$ over
$\F$ to $(\psi,\cO_K^\times)$-modules over $A$. 

Let $\pi$ be an admissible smooth representation of $\GL_2(K)$ over
$\F$. Its ($\F$-linear) dual $\pi^\vee$ is then a finitely generated
$\F\bbra{I_1}$-module. We fix a good filtration on $\pi^\vee$. As above, 
we endow $A{\otimes}_{\F\bbra{N_0}}\pi^\vee$ with the 
tensor product filtration and define the filtered $A$-module
\begin{equation}\label{dapi}
D_A(\pi)\defeq A\widehat{\otimes}_{\F\bbra{N_0}}\pi^\vee\simeq \widehat{(\pi^\vee)_S}.
\end{equation}
Note that the action of $\F\bbra{N_0}$ on $\pi^\vee$ is given by $\delta_a(f)\defeq f \circ a^{-1}$ for $f\in \pi^\vee$, $a \in N_0$. 
As all the good filtrations on $\pi^\vee$ are equivalent (\cite[Lemma I.5.3]{LiOy}), the underlying topological
$A$-module does not depend on the choice of the good filtration on
$\pi^\vee$. An example of a good filtration on $\pi^\vee$ is given by
the $\m_{I_1}$-adic filtration, as follows directly from the definition. It is very important to note that the topology used 
on $\pi^\vee$ is {\it not} the $\m_{N_0}$-adic topology but the $\m_{I_1}$-adic topology, which 
is actually coarser.

\begin{prop}\label{prop:exactnessDA}
The functor $\pi\longmapsto D_A(\pi)$ is exact.
\end{prop}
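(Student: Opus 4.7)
The plan is to reduce exactness of the functor $\pi \mapsto D_A(\pi)$ to the elementary fact that localization of graded modules is exact, by passing through the associated graded machinery for filtered modules over Zariskian rings. Given a short exact sequence $0 \to \pi_1 \to \pi_2 \to \pi_3 \to 0$ of admissible smooth representations of $\GL_2(K)$ over $\F$, I would first take the algebraic $\F$-linear dual, which is exact since we work over a field, to obtain an exact sequence $0 \to \pi_3^\vee \to \pi_2^\vee \to \pi_1^\vee \to 0$ of $\F\bbra{I_1}$-modules. Endow $\pi_2^\vee$ with its $\m_{I_1}$-adic filtration, and give $\pi_3^\vee$ and $\pi_1^\vee$ the induced subspace and quotient filtrations respectively; by the Artin--Rees property of the Zariskian ring $\F\bbra{I_1}$ (\cite[Cor.I.5.5]{LiOy}), these are good filtrations, so the sequence is strict exact by construction. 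Since the underlying topological $A$-module of $D_A(\pi_i)$ is independent of the choice of good filtration on $\pi_i^\vee$ (as noted just after the definition of $D_A$), this is a legitimate choice.

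Viewing the sequence as a strict exact sequence of filtered $\F\bbra{N_0}$-modules, I would then apply \cite[Thm.I.4.2.4(1)]{LiOy} to obtain exactness of the associated graded sequence
\[
0 \longrightarrow \gr(\pi_3^\vee) \longrightarrow \gr(\pi_2^\vee) \longrightarrow \gr(\pi_1^\vee) \longrightarrow 0
\]
of graded $\gr(\F\bbra{N_0})$-modules. Localizing at the multiplicative set generated by $y_0, \dots, y_{f-1}$ is exact on graded modules, and by Lemma \ref{lem:grlocal} the localized sequence is canonically identified with
\[
0 \longrightarrow \gr(D_A(\pi_3)) \longrightarrow \gr(D_A(\pi_2)) \longrightarrow \gr(D_A(\pi_1)) \longrightarrow 0.
\]

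To conclude, I would use that each $D_A(\pi_i)$ is complete and separated for its tensor product filtration (being defined as the completion) together with the reverse direction of the strict exactness/gr exactness equivalence for complete filtered modules, equivalently the fact that completion preserves strict exact sequences of filtered modules (\cite[Thm.I.4.2.4]{LiOy} combined with the completion formalism of \cite[\S I.3]{LiOy}). This yields strict exactness of $0 \to D_A(\pi_3) \to D_A(\pi_2) \to D_A(\pi_1) \to 0$ as filtered $A$-modules, and in particular exactness as $A$-modules, which is the desired assertion. The main technical point is tracking carefully which filtrations are good at each stage and verifying that the strict/gr equivalence applies in both directions within the completed setting; by contrast, no deep input about the representation theory of $\GL_2(K)$ is required beyond admissibility and the exactness of $\F$-linear duality.
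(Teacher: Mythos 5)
Your proof is correct and follows essentially the same route as the paper's: dualize the short exact sequence, equip the middle term with a good ($\m_{I_1}$-adic) filtration and give the outer terms the induced and quotient filtrations (which are good), pass to associated graded modules via \cite[Thm.I.4.2.4(1)]{LiOy}, use the exactness of localization and Lemma \ref{lem:grlocal}, and finally invoke the completion formalism. The only minor divergence is in the last step: the paper first deduces strict exactness of the localized (pre-completed) sequence $0 \to (\pi'')^\vee_S \to (\pi^\vee)_S \to (\pi')^\vee_S \to 0$ via \cite[Thm.I.4.2.4(2)]{LiOy} and then appeals to the fact that completion preserves strict exact sequences (\cite[Thm.I.3.4.13]{LiOy}), whereas you identify $\gr(D_A(\pi_i))$ directly with the localized graded modules and then apply the strict/gr equivalence at the level of the completed objects; both routes are valid and rest on the same ingredients.
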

\begin{proof}
Let $0\rightarrow\pi'\rightarrow\pi\rightarrow\pi''\rightarrow0$ be an
exact sequence of admissible smooth representations of $\GL_2(K)$ over
$\F$. The sequence $0\rightarrow(\pi'')^\vee\rightarrow\pi^\vee\rightarrow(\pi')^\vee\rightarrow0$
is still exact. We endowed $\pi^\vee$ with a good filtration, $(\pi')^\vee$ with the
quotient filtration and $(\pi'')^\vee$ with the induced
filtration (which are again good by e.g.\ \cite[Prop.II.1.2.3]{LiOy}). With these choices, the exact sequence remains exact
after applying the functor $\gr$ (see for example
\cite[Thm.I.4.2.4(1)]{LiOy}). It follows from Lemma
\ref{lem:grlocal}, from the exactness of localization and from
\cite[Thm.I.4.2.4(2)]{LiOy}) that the sequence
$0\rightarrow(\pi'')_S^\vee\rightarrow(\pi^\vee)_S\rightarrow(\pi')_S^\vee\rightarrow0$
is exact and {\it strict}. The exactness of
$0\rightarrow D_A(\pi'')\rightarrow D_A(\pi)\rightarrow
D_A(\pi')\rightarrow0$ then follows from \cite[Thm.I.3.4.13]{LiOy}.
\end{proof}

We define a continuous action of $\cO_K^\times$ on $\pi^\vee$ as follows, for $f\in\pi^\vee$, $\gamma\in\cO_K^\times$ we have
\[ (\gamma\cdot f)(x)\defeq f\left(
    \begin{pmatrix}
      \gamma^{-1} & 0 \\ 0 & 1
    \end{pmatrix}x\right) \quad \forall\ x\in \pi.\] As $\cO_K^\times$
normalizes $I_1$, the action of $\cO_K^\times$ on $\pi^\vee$ is
continuous for the $\m_{I_1}$-adic topology. We use the continuous
action of $\cO_K^\times$ on $A$ to extend this action diagonally to
$A\otimes_{\F\bbra{N_0}}\pi^\vee$ and, by continuity, to
$D_A(\pi)$. The action of $\cO_K^\times$ is continuous and
$A$-semilinear in the sense that
\[ \gamma\cdot(af)=(\gamma \cdot a)(\gamma\cdot f) \quad \forall\ (\gamma,a,f)\in\cO_K^\times\times A
  \times D_A(\pi).\]
We define an $\F$-linear endomorphism $\psi$ of $\pi^\vee$ by the formula
\begin{equation}\label{psipi}
\psi(f)(x)=f(\xi(p)x) \quad \forall(f,x)\in\pi^\vee\times\pi.
\end{equation}
This endomorphism is continuous, clearly commutes with the action of
$\cO_K^\times$ and satisfies the relation
\[
\psi(\phi(a) f)=a(\psi(f))
\]
for all $a\in \F\bbra{N_0}$, $f\in \pi^\vee$.

\begin{lem}\label{lem:psicontinueN0}
Let $M$ be some $\F\bbra{N_0}$-module and let $\psi$ be an $\F$-linear
endomorphism of $M$ satisfying the relation
\[ \psi(\phi(a)m)=a\psi(m) \quad \forall\ (a,m)\in\F\bbra{N_0}\times
M.\]
Then for all integers $n\geq0$, we have
\[ \psi(\m_{N_0}^{pf-(f-1)+pn}M)\subset \m_{N_0}^{n+1}M.\]
As a consequence, for $n\geq pf-(f-1)$, we have
\[ \psi(\m_{N_0}^nM)\subset
\m_{N_0}^{\lceil\frac{n}{p}\rceil-f}M.\]
\end{lem}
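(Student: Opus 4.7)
The strategy is a direct monomial calculation, using the explicit presentation $\F\bbra{N_0}\simeq\F\bbra{Y_0,\dots,Y_{f-1}}$ and the formula $\phi(Y_j)=Y_{j-1}^p$ (indices mod $f$) recalled in \S\ref{ringA}. First, since $\F\bbra{N_0}$ is noetherian and local with maximal ideal generated by $Y_0,\dots,Y_{f-1}$, the ideal $\m_{N_0}^N$ is generated (as an ideal) by the finitely many monomials $Y^\alpha\defeq Y_0^{i_0}\cdots Y_{f-1}^{i_{f-1}}$ with $|\alpha|\defeq\sum_j i_j\geq N$. Hence any element of $\m_{N_0}^NM$ is a finite sum of products $Y^\alpha m'$ with $|\alpha|\geq N$ and $m'\in M$, and it suffices to bound $\psi(Y^\alpha m')$ for such monomials.

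For a fixed $Y^\alpha$ I perform Euclidean division $i_j=pq_j+r_j$ with $0\leq r_j\leq p-1$. Using $\phi(Y_j)=Y_{j-1}^p$, I rewrite
\[Y^\alpha=\Big(\prod_j Y_j^{pq_j}\Big)\Big(\prod_j Y_j^{r_j}\Big)=\phi(a)\cdot b,\]
where $a\defeq Y_1^{q_0}Y_2^{q_1}\cdots Y_{f-1}^{q_{f-2}}Y_0^{q_{f-1}}$ and $b\defeq Y_0^{r_0}\cdots Y_{f-1}^{r_{f-1}}$. The hypothesis $\psi(\phi(a)x)=a\psi(x)$ then yields
\[\psi(Y^\alpha m')=\psi(\phi(a)\cdot bm')=a\,\psi(bm')\in\m_{N_0}^{|a|}\psi(M)\subseteq\m_{N_0}^{\sum_j q_j}M,\]
since $|a|=\sum_j q_j\geq 1$ as soon as some $q_j>0$.

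The conclusion is then a counting argument. From $|\alpha|=p\sum_j q_j+\sum_j r_j$ together with $\sum_j r_j\leq f(p-1)$, the hypothesis $|\alpha|\geq pf-(f-1)+pn$ forces
\[p\sum_j q_j\geq pf-(f-1)+pn-f(p-1)=pn+1,\]
whence $\sum_j q_j\geq n+1$, proving the first inclusion. For the consequence, assuming only $|\alpha|\geq n\geq pf-(f-1)$, the same manipulation gives $p\sum_j q_j\geq n-f(p-1)$, and since $\sum_j q_j$ is an integer I obtain
\[\sum_j q_j\geq\Big\lceil\frac{n-fp+f}{p}\Big\rceil=\Big\lceil\frac{n}{p}\Big\rceil-f,\]
as $f$ is an integer. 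Substituting this bound into the previous display gives $\psi(\m_{N_0}^nM)\subseteq\m_{N_0}^{\lceil n/p\rceil-f}M$.

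I do not anticipate any substantive obstacle: all the structural content is carried by the defining relation $\psi(\phi(a)m)=a\psi(m)$ and by the explicit action of $\phi$ on the $Y_j$, and the remainder is elementary arithmetic on exponents of monomials.
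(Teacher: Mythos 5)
Your argument is correct but follows a genuinely different route than the paper's. The paper works ``one power of $p$ at a time'': by pigeonhole, a monomial $Y^{\alpha}=\prod_j Y_j^{i_j}$ with $|\alpha|\geq pf-(f-1)=f(p-1)+1$ must have some $i_j\geq p$, so a single factor $Y_j^p=\phi(Y_{j+1})$ is pulled through $\psi$ to gain one power of $\m_{N_0}$, the first inclusion then follows by induction on $n$, and the second is deduced from the first by fixing $m$ with $pm+pf-(f-1)\leq n < p(m+1)+pf-(f-1)$. You instead perform the full Euclidean division $i_j=pq_j+r_j$ in one shot, write $Y^{\alpha}=\phi(a)b$ with $a=\prod_j Y_{j+1\bmod f}^{q_j}$ and $b=\prod_j Y_j^{r_j}$, and then count degrees directly. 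Your decomposition is arguably cleaner: it avoids the induction, proves both inclusions by the same computation rather than deducing the second from the first, and makes the structural role of $\phi(Y_j)=Y_{j-1}^p$ explicit. Both proofs rest on the same two facts (commutativity of $\F\bbra{N_0}$ and the twist relation for $\psi$), so neither is more general, but yours is a touch more self-contained.

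One small slip worth flagging: the displayed identity $\lceil(n-fp+f)/p\rceil=\lceil n/p\rceil-f$ is not in general an equality. Since $f$ is an integer one has $\lceil(n-fp+f)/p\rceil=\lceil(n+f)/p\rceil-f$, and $\lceil(n+f)/p\rceil\geq\lceil n/p\rceil$ but equality can fail (e.g.\ $n=p$, $f=1$). What you actually proved is the inequality $\lceil(n-fp+f)/p\rceil\geq\lceil n/p\rceil-f$, which is precisely what the conclusion requires, so the proof is unaffected; just replace the ``$=$'' by ``$\geq$''.
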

\begin{proof}
For $n=0$, the result follows from the fact that, if
$Y_0^{i_0}\cdots Y_{f-1}^{i_{f-1}}\in\m_{N_0}^{pf-(f-1)}$, there
exists some $0\leq j\leq f-1$ such that $i_j\geq p$. Then, for all
$m\in M$, we have
\[ \psi(Y_0^{i_0}\cdots Y_{f-1}^{i_{f-1}}m)=Y_{j+1}\psi(Y_0^{i_0}\cdots
Y_{j}^{i_j-p}\cdots Y_{f-1}^{i_{f-1}}m)\in\m_{N_0}M.\]
The general statement follows from a simple induction on $n$.

For the last statement, we choose $m$ such that
\[ pm+pf-(f-1)\leq n < p(m+1)+pf-(f-1)\]
and we use the first statement to deduce that
\[
\psi(\m_{N_0}^nM)\subset\psi(\m_{N_0}^{pm+pf-(f-1)}M)\subset\m_{N_0}^{m+1}M\subset
\m_{N_0}^{\lceil\frac{n}{p}\rceil-f}M.\qedhere\]
\end{proof}

We extend $\psi$ to an $\F$-linear map $(\pi^{\vee})_S\rightarrow(\pi^{\vee})_S$ (recall $(\pi^{\vee})_S=\F\bbra{N_0}_S\otimes_{\F\bbra{N_0}}\pi^\vee$) by the formula
\begin{equation}\label{eq:defPsi}
\psi\left(\frac{m}{(Y_0\cdots
      Y_{f-1})^{pn}}\right)=\frac{\psi(m)}{(Y_0\cdots
    Y_{f-1})^{n}}
\end{equation}
for all $m\in\pi^\vee$ and $n\geq0$. Each element of $(\pi^{\vee})_S$
can be written as $(Y_0\cdots Y_{f-1})^{-pn}m$ for some $m\in\pi^\vee$
and $n\geq0$, and it follows from the properties of $\psi$ on
$\pi^\vee$ that the right-hand side of \eqref{eq:defPsi} does not depend on this
choice. For any element $g$ in $I_1$, we denote by $\delta_g$ the corresponding element $[g]$ in $\F\bbra{I_1}$.

\begin{lem}\label{lem:psicontinue}
The map $\psi : (\pi^{\vee})_S\rightarrow(\pi^{\vee})_S$ is continuous.
\end{lem}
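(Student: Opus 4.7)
The plan is to verify continuity at $0$ by producing, for each $n \in \Z$, an integer $N \leq 0$ such that $\psi\bigl(F_N((\pi^\vee)_S)\bigr) \subseteq F_n((\pi^\vee)_S)$. By Remark \ref{rem:on_filtered_Amod}(ii) any element of $F_N((\pi^\vee)_S)$ can be written as $x = (Y_0\cdots Y_{f-1})^{-k} m$ for some $k \geq 0$ and some $m \in F_{N-kf}(\pi^\vee)$. Writing $k = pk' - r$ with $k' \geq 0$ and $r \in \{0,1,\dots,p-1\}$ and applying the defining relation \eqref{eq:defPsi}, one obtains
\[
\psi(x) = (Y_0\cdots Y_{f-1})^{-k'}\,\psi\bigl((Y_0\cdots Y_{f-1})^{r}m\bigr),
\]
where $(Y_0\cdots Y_{f-1})^{r}m$ lies in $F_{N-pk'f}(\pi^\vee)$. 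This reduces the question to a quantitative control of $\psi$ on the filtration of $\pi^\vee$ itself.

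The next step is to feed this into Lemma \ref{lem:psicontinueN0}, viewed as providing a bound of the form $\psi\bigl(F_{-M}(\pi^\vee)\bigr) \subseteq F_{-\lceil M/p\rceil + c}(\pi^\vee)$ for some constant $c$ depending only on $\pi^\vee$ and all $M$ sufficiently large, obtained after transferring the $\m_{N_0}$-adic estimate of that lemma to the fixed good filtration on $\pi^\vee$. Setting $M = pk'f - N$ and noting that $\lceil M/p\rceil = k'f - \lfloor N/p\rfloor$, this gives $\psi\bigl((Y_0\cdots Y_{f-1})^r m\bigr) \in F_{\lfloor N/p\rfloor + c - k'f}(\pi^\vee)$, so that
\[
\psi(x) \in (Y_0\cdots Y_{f-1})^{-k'}F_{\lfloor N/p\rfloor + c - k'f}(\pi^\vee) \subseteq F_{\lfloor N/p\rfloor + c}((\pi^\vee)_S).
\]
Choosing any $N \leq p(n - c)$ then yields $\psi\bigl(F_N((\pi^\vee)_S)\bigr) \subseteq F_n((\pi^\vee)_S)$, completing the verification.

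The main obstacle lies in the transfer step above: Lemma \ref{lem:psicontinueN0} is phrased for the $\m_{N_0}$-adic filtration on an $\F\bbra{N_0}$-module, whereas the fixed good filtration on $\pi^\vee$ is the $\m_{I_1}$-adic one (or another good filtration over $\F\bbra{I_1}$). The containment $\m_{N_0}^M\pi^\vee \subseteq F_{-M}(\pi^\vee)$ is automatic since $\m_{N_0} \subseteq \m_{I_1}$, but to obtain the reverse control up to an additive constant one has to exploit the Iwahori decomposition $I_1 = N_0^-T_0N_0$ together with the fact that conjugation by $\xi(p)$ contracts the $N_0^-$-component, which, combined with the identity $\psi(\phi(a)m) = a\psi(m)$ for $a \in \F\bbra{N_0}$, propagates the $\m_{N_0}$-adic bound of Lemma \ref{lem:psicontinueN0} to the desired bound for the $\m_{I_1}$-adic filtration after a bounded number of steps.
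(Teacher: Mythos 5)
Your reduction is sound and matches the paper's accounting: expressing $F_N((\pi^\vee)_S)$ as an increasing union of $(Y_0\cdots Y_{f-1})^{-pk'}F_{N-pk'f}(\pi^\vee)$, applying~\eqref{eq:defPsi}, and deducing continuity from a quantitative bound of the form $\psi(F_{-M}(\pi^\vee))\subseteq F_{-\lceil M/p\rceil + c}(\pi^\vee)$. The constant $c=f$ works, as in the paper.

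The gap is in the transfer step, which you flag as ``the main obstacle'' but then dispatch in one sentence. You claim the contraction of $N_0^-$ under $\xi(p)$, combined with $\psi(\phi(a)m)=a\psi(m)$, ``propagates the $\m_{N_0}$-adic bound of Lemma~\ref{lem:psicontinueN0} to the desired bound for the $\m_{I_1}$-adic filtration after a bounded number of steps.'' This is not how the argument goes and, as stated, would not work. Using the decomposition $\m_{I_1}^n=\sum_{r+2s+t=n}\m_{N_0}^r\m_{T_0}^s\m_{N_0^-}^t$, the $N_0$-commutation only lets you peel $\F\bbra{N_0}$-factors off the \emph{outside}, so Lemma~\ref{lem:psicontinueN0} gives a usable bound precisely when the outer $\m_{N_0}$-degree $r$ is large. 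When $r$ is small and the degree is concentrated in $\m_{T_0}$ or $\m_{N_0^-}$, you cannot simply invoke the $N_0^-$-contraction: $\psi(\delta_z m)=\delta_{z^p}\psi(m)$ applies to $N_0^-$-factors sitting directly next to $\psi$, but in $\m_{N_0}^r\m_{T_0}^s\m_{N_0^-}^t$ the $N_0^-$-factor is blocked by the non-commuting $N_0$-factor, and there is no ``bounded number of steps'' iteration that moves it past. The paper resolves this small-$r$ regime with a separate argument (Lemma~\ref{lem:r_small}) showing $\psi(\F\bbra{N_0}\m_{N_0^-}^t\pi^\vee)\subset\m_{I_1}^t\pi^\vee$, whose proof hinges on the matrix identity $\smatr{1}{b}01\smatr{1}0c1=\smatr{1}0{c(1+bc)^{-1}}1\smatr{1+bc}{b}0{(1+bc)^{-1}}$ and a compactness argument; this produces a bound of a different shape, which is then stitched together with the large-$r$ estimate using the constraint $r+2s+t=n$. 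Your proposal neither states nor supplies an analogue of this step, so the central estimate is asserted but not proved.
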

\begin{proof}
As all the good filtrations on $\pi^\vee$ are equivalent, we choose
the $\m_{I_1}$-adic filtration on $\pi^\vee$ for this proof, i.e.\ $F_n\pi^\vee=\m_{I_1}^{-n}\pi^\vee$ for $n\leq 0$ and $F_n\pi^\vee=\pi^\vee$ for $n>0$. From the proof of \cite[Prop.5.3.3]{BHHMS1} we have an equality for $n\geq0$:
\begin{equation}\label{mi1}
\m_{I_1}^n=\sum_{\substack{r,s,t\geq0\\r+2s+t=n}}\m_{N_0}^r\m_{T_0}^s\m_{N_0^-}^t.
\end{equation}
As $\xi(p)$ commutes with each element in $T_0$, and
$\xi(p)^{-1}\smat{1&0\\z&1}\xi(p)=\smat{1&0\\z&1}^p$ for any
$\smat{1&0\\z&1}\in N_0^-$, it is easily checked from the definition
of $\psi$ and the $\F\bbra{I_1}$-action on $\pi^\vee$ that
\begin{equation}\label{eq:act:N0-}
  \psi(\delta_{h}\delta_{z}\cdot f)=\delta_{h}\delta_{z^p}\psi(f)
\end{equation}
for all $h\in T_0$, $z\in N_0^{-}$. In particular,
\[
 \psi(\m_{T_0}^s\m_{N_0^-}^t\pi^\vee)\subset\m_{T_0}^s\m_{N_0^-}^{pt}\pi^\vee,
\]
 and it follows from Lemma \ref{lem:psicontinueN0} that if $r\geq pf-(f-1)$ we have
\begin{equation}\label{rpf+}
\psi(\m_{N_0}^r\m_{T_0}^s\m_{N_0^-}^t\pi^\vee)\subset\m_{N_0}^{\lceil\frac{r}{p}\rceil-f}\m_{T_0}^s\m_{N_0^-}^{pt}\pi^\vee\subset\m_{I_1}^{\lceil\frac{r}{p}\rceil+2s+pt-f}\pi^\vee\subset \m_{I_1}^{\lceil\frac{r+2s+t}{p}\rceil-f}\pi^\vee.
\end{equation}
If $r< pf-(f-1)$, we need the following lemma.

\begin{lem}\label{lem:r_small}
Let $M\subset\pi^\vee$ be a closed $\F\bbra{N_0^-}$-submodule. Then
\[ \psi(\F\bbra{N_0}\m_{N_0^-}M)\subset\m_{I_1}\psi(\F\bbra{N_0}M).\]
As a consequence, for all $t\geq0$,
$\psi(\F\bbra{N_0}\m_{N_0^-}^t\pi^\vee)\subset\m_{I_1}^t\pi^\vee$.
\end{lem}
\begin{proof}
Note that $\m_{I_1}\times \F\bbra{N_0} \times M$ is compact, as $M$ is closed, hence so is the image $\m_{I_1}\psi(\F\bbra{N_0}M)$ of the continuous map $\m_{I_1}\times \F\bbra{N_0} \times M\rightarrow \pi^\vee$, $(a,b,m)\mapsto a\psi(bm)$. As $\m_{N_0^-}$ is generated as a right $\F\bbra{N_0^-}$-module by the $\delta_y-1$ for $y\in N_0^-$ and as $\psi$ is continuous on $\pi^\vee$, it is thus sufficient to prove that, for $y\in N_0^-$, $x\in N_0$ and $m\in M$, we have
$\psi(\delta_x(\delta_{y}-1)m)\in\m_{I_1}\psi(\F\bbra{N_0}M)$. As
$N_0^-\subset K_1$, $K_1$ is normalized by $N_0$ and
$K_1=N_0^pT_0 N_0^-$, we can write $xy=x_1^pt_1 y_1 x$ with
$(x_1,t_1,y_1)\in N_0\times T_0\times N_0^-$. Therefore
\begin{align*}
\psi(\delta_x(\delta_y-1)m)&=\psi(\delta_{x_1^p}\delta_{t_1}\delta_{y_1}\delta_xm)-\psi(\delta_xm)
\\
                                 &=\delta_{x_1t_1y_1^p}\psi(\delta_xm)-\psi(\delta_xm)=(\delta_{x_1t_1y_1^p}-1)\psi(\delta_xm)\\
  &\subset\m_{I_1}\psi(\F\bbra{N_0}M).
\end{align*}
For the second statement, inductively apply the first to $M=\m_{N_0^-}^{t-1}\pi^\vee$, $M=\m_{N_0^-}^{t-2}\pi^\vee$, etc.
\end{proof}

When $r<pf-(f-1)=(p-1)f+1$, we have $2s+t\geq r+2s+t-(p-1)f$ so that, using Lemma \ref{lem:r_small} and the fact
that $T_0$ normalizes $N_0$, we obtain
\begin{equation}\label{rpf-}
\psi(\m_{N_0}^r\m_{T_0}^s\m_{N_0^-}^t\pi^\vee)\subset\m_{T_0}^s\psi(\F\bbra{N_0}\m_{N_0^-}^t\pi^\vee)\subset\m_{I_1}^{2s+t}\pi^\vee\subset\m_{I_1}^{r+2s+t-(p-1)f}\pi^\vee.
\end{equation}
We deduce from (\ref{eq:defPsi}), (\ref{rpf+}) and (\ref{rpf-}) that, for all $n\in\Z$, $r\geq 0$, $s\geq0$, $t\geq0$ and
$k\geq0$ such that $r+2s+t\geq pf$, we have
  \[
    \psi\left(\frac{1}{(Y_0\cdots
        Y_{f-1})^{pk}}\m_{N_0}^r\m_{T_0}^s\m_{N_0^-}^t\pi^\vee\right)\subset\frac{1}{(Y_0\cdots
      Y_{f-1})^k}\m_{I_1}^{\lceil\frac{r+2s+t}{p}\rceil-f}\pi^\vee\]
so that, for $n\geq pf$ by (\ref{mi1}) we have
  \[
    \psi\left(\frac{1}{(Y_0\cdots
        Y_{f-1})^{pk}}\m_{I_1}^n\pi^\vee\right)\subset\frac{1}{(Y_0\cdots
      Y_{f-1})^k}\m_{I_1}^{\lceil\frac{n}{p}\rceil-f}\pi^\vee\subset
    F_{kf+f-\lceil\frac{n}{p}\rceil}((\pi^{\vee})_S).\]
From Remark \ref{rem:on_filtered_Amod}(ii), we know
that, for $n\in\Z$, $F_n((\pi^{\vee})_S)$ is the increasing union over
$k\geq \max\{0,\frac n{pf}\}$ of the subspaces
\[ \frac{1}{(Y_0\cdots
Y_{f-1})^{pk}}\m_{I_1}^{-n+pkf}\pi^\vee,\]
hence we deduce for all $n\in\Z$ that
  \[
      \psi(F_n((\pi^{\vee})_S))\subset \bigcup_{k\geq
        \mathrm{max}\{0,\frac{n}{pf}\}}F_{kf+f-\lceil\frac{-n+pkf}{p}\rceil}((\pi^{\vee})_S)\subset
      F_{f+\lfloor\frac{n}{p}\rfloor}((\pi^{\vee})_S).
    \]
This proves the continuity of $\psi$.
\end{proof}

We can therefore extend $\psi$ to a continuous $\F$-linear map $\psi :
D_A(\pi)\rightarrow D_A(\pi)$ such that
\[ \psi(\phi(a)m)=a\psi(m) \quad \forall\ (a,m)\in A\times
\pi^\vee.\]

We fix $\{a_0,\dots,a_{q-1}\}$ a system of representatives of the cosets
of $N_0^p\simeq p\cO_K$ in $N_0\simeq \cO_K$, so that
$\F\bbra{N_0}=\bigoplus_{i=0}^{q-1}\delta_{a_i}\F\bbra{N_0^p}$. As
$\phi(\F\bbra{N_0})=\F\bbra{N_0^p}$ and $A=\bigoplus_{i=0}^{q-1}\delta_{a_i}\phi(A)$, 
we have a canonical isomorphism for any $A$-module $M$:
\[ \phi^*(M)\simeq\bigoplus_{i=0}^{q-1} (\F\delta_{a_i}\otimes_{\F} M).\]
We define an $\F$-linear map $\beta : D_A(\pi)\rightarrow
\phi^*(D_A(\pi))=A\otimes_{\phi,A}D_A(\pi)$ by
\begin{equation}\label{mapbeta}
  \begin{array}{ccc}
    D_A(\pi)&
              \longrightarrow&\bigoplus_{i=0}^{q-1}(\F\delta_{a_i}\otimes_{\F}D_A(\pi)) \\
    m&\longmapsto&\sum_{i=0}^{q-1}\delta_{a_i}\otimes_\phi\psi(\delta_{a_i}^{-1}m)
  \end{array}
\end{equation}
(we write $x\otimes_\phi y$ instead of just $x\otimes y$ in order not to forget the map $\phi$ in the tensor product).

\begin{rem}\label{rem:beta_not_depend_on_choices}
The definition of the map $\beta$ does not depend on the choice of
the system $\{a_i\}$, namely, replacing $a_i$ with $a_ib^p$ for some
$b\in N_0$, we have
\begin{multline*}
 \delta_{a_ib^p}\otimes_\phi\psi(\delta_{a_ib^p}^{-1}m)=\delta_{a_ib^p}\otimes_\phi\psi(\phi(\delta_b)^{-1}\delta_{a_i}^{-1}m)=\delta_{a_ib^p}\otimes_\phi\delta_b^{-1}\psi(\delta_{a_i}^{-1}m)\\
 =\delta_{a_ib^p}\delta_{b^p}^{-1}\otimes_\phi\psi(\delta_{a_i}^{-1}m)=\delta_{a_i}\otimes_\phi\psi(\delta_{a_i}^{-1}m).
\end{multline*}
\end{rem}

Using Remark \ref{rem:beta_not_depend_on_choices}, we easily check that $\beta$ is actually an $A$-linear map (note that it is enough to check it for an element in $\delta_{a_i}\phi(A)$ using $A=\bigoplus_{i=0}^{q-1}\delta_{a_i}\phi(A)$, and thus for $\delta_{a_i}$ and for an element in $\phi(A)$), hence $\beta:D_A(\pi)\rightarrow \phi^*(D_A(\pi))$ can be seen as a ``linearization'' of $\psi:D_A(\pi)\rightarrow D_A(\pi)$. Moreover, letting $\cO_K^\times$ act diagonally on $A\otimes_{\phi,A}D_A(\pi)$, the map $\beta$ is then $\cO_K^\times$-equivariant. Indeed, for $a\in\cO_K^\times$ and $m\in D_A(\pi)$, we have
\begin{align*}
a\cdot\beta(m)&=a\cdot\left(\sum_{i=0}^{q-1}\delta_{a_i}\otimes_\phi\psi(\delta_{a_i}^{-1}m)\right)=\sum_{i=0}^{q-1}\delta_{a\cdot
                  a_i}\otimes_\phi a\cdot\psi(\delta_{a_i}^{-1}m)\\
                &=\sum_{i=0}^{q-1}\delta_{a\cdot
                  a_i}\otimes_\phi\psi(a\cdot\delta_{a_i}^{-1}m)=\sum_{i=0}^{q-1}\delta_{a\cdot
                  a_i}\otimes_\phi\psi(\delta_{a\cdot a_i}^{-1}(a\cdot m))\\
                &=\beta(a\cdot m),
\end{align*}
the last equality coming from Remark \ref{rem:beta_not_depend_on_choices} and the fact that $\{a\cdot a_0,\dots,a\cdot a_{q-1}\}$ is another system of representatives of $N_0^p$ in $N_0$.

It is convenient to assume that the admissible smooth representation $\pi$ has a central character, in which case $Z_1$ acts trivially on $\pi$ and $\pi^\vee$ is a finitely generated $\F\bbra{I_1/Z_1}$-module. We recall from \cite[\S5.3]{BHHMS1} that the graded ring $\gr(\F\bbra{I_1/Z_1})$ of $\F\bbra{I_1/Z_1}$ is isomorphic to a tensor product of (noncommutative) graded rings
\begin{equation}\label{gri1}
\bigotimes_{i=0}^{f-1}\F[y_i,z_i,h_i],\
\end{equation}
where variables with different indices commute, where $[y_i,z_i]=h_i$,
$[h_i,y_i]=[h_i,z_i]=0$, where $y_i,z_i$ are homogeneous of
degree $-1$, and $h_i$ is homogeneous of degree $-2$. Note that the
$\m_{I_1/Z_1}$-adic topology on $\F\bbra{I_1/Z_1}$ induces the $\m_{N_0}$-adic
topology on $\F\bbra{N_0}$ via the inclusion
$\F\bbra{N_0}\subset\F\bbra{I_1/Z_1}$. Therefore the map
$\gr(\F\bbra{N_0})\rightarrow\gr(\F\bbra{I_1/Z_1})$ is injective and
its image is $\F[y_0,\dots,y_{f-1}]$ in $\gr(\F\bbra{I_1/Z_1})$. 

\begin{rem}\label{rmk:alt:def}
The $A$-module $D_A(\pi)$ can also be defined as the microlocalization of
$\pi^\vee$ with respect to the multiplicative subset
$T\defeq \set{(y_0\cdots y_{f-1})^k, k\in\NN}\subset\gr(\F\bbra{I_1/Z_1})$. This shows that $D_A(\pi)$ can
be promoted to a module over the noncommutative ring which is the
microlocalization of $\F\bbra{I_1/Z_1}$ with respect to $T$.
\end{rem}

We now let $\mathcal{C}$ be the category of admissible smooth
representations $\pi$ of $\GL_2(K)$ over $\F$ with a central character
{\it and} such that there exists a good filtration on the
$\F\bbra{I_1/Z_1}$-module $\pi^\vee$ such that $\gr(D_A(\pi))$ is a
finitely generated $\gr(A)$-module, or equivalently by Lemma
\ref{lem:grlocal} and Corollary \ref{lem:grA}
$\gr(\pi^\vee)[(y_0\cdots y_{f-1})^{-1}]$ is finitely generated over
$\gr(\F\bbra{N_0})[(y_0\cdots y_{f-1})^{-1}]$. By
\cite[Thm.I.5.7]{LiOy} this is also equivalent to require that
$D_A(\pi)$ is finitely generated over $A$ and that its natural
filtration in (\ref{dapi}) is good (equivalently gives the canonical
topology). In particular, if this holds for one good filtration on
$\pi^\vee$, then this holds for all good filtrations. It easily
follows from the proof of Proposition \ref{prop:exactnessDA} and the
noetherianity of $\gr(A)$ (Corollary \ref{lem:grA}) that $\mathcal{C}$
is an abelian subcategory stable under subquotients and extensions in
the category of smooth representations of $\GL_2(K)$ over $\F$ with a
central character.
 
For $\pi$ in $\mathcal C$, the pair $(D_A(\pi),\beta)$ is an example of $(\psi,\cO_K^\times)$-module over $A$ as in Definition \ref{psiok}. We can in particular consider its \'etale part $D_A(\pi)^{\et}$. The action of $\cO_K^\times$ on $D_A(\pi)$ preserves its nilpotent part $D_A(\pi)^0$ and thus induces a continuous action of $\cO_K^{\times}$ on $D_A(\pi)^{\et}$. In particular, $D_A(\pi)^{\et}$ is an \'etale $(\psi,\cO_K^\times)$-module over $A$. Note that the canonical topology on the finitely generated $A$-module $D_A(\pi)^{\et}$ is also the quotient topology of $D_A(\pi)\twoheadrightarrow D_A(\pi)^{\et}$.

\begin{cor}\label{cor:psiiso}
Let $\pi$ in $\mathcal{C}$. Then the $A$-modules $D_A(\pi)$ and $D_A(\pi)^{\et}$
are finite projective over $A$. Moreover the map $\beta^{\et} : D_A(\pi)^{\et}\rightarrow \phi^*D_A(\pi)^{\et}$ is an isomorphism.
\end{cor}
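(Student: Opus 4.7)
The plan is to assemble this corollary directly from the two structural results proved earlier in \S\ref{ringA} and \S\ref{multivariablepsi}, namely Proposition \ref{prop:OK_modules} (any $\cO_K^\times$-module over $A$ is finite projective) and Proposition \ref{prop:et_psi_modules} (the structure map of an \'etale $(\psi,\cO_K^\times)$-module is an isomorphism). There is essentially no new computation to perform; the work consists in checking that the two objects $D_A(\pi)$ and $D_A(\pi)^{\et}$ fit the hypotheses of these propositions.

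First, I would observe that since $\pi$ belongs to $\mathcal C$, by definition $D_A(\pi)$ is finitely generated as an $A$-module (with its canonical topology matching the good filtration). The action of $\cO_K^\times$ on $\pi^\vee$ has been extended by continuity to a continuous $A$-semilinear action on $D_A(\pi)$, so $D_A(\pi)$ is an $\cO_K^\times$-module over $A$ in the sense of \S\ref{ringA}. Applying Proposition \ref{prop:OK_modules} gives that $D_A(\pi)$ is finite projective over $A$.

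Next, for $D_A(\pi)^{\et}$, I would recall that by its general construction (the inductive-limit/\'etalification procedure following \cite{Lyubeznik}) it is a quotient of $D_A(\pi)$, and is thus automatically finitely generated over $A$. The text immediately preceding the statement explains why the $\cO_K^\times$-action descends: $\beta$ is $\cO_K^\times$-equivariant, so the nilpotent part $D_A(\pi)^0$ is $\cO_K^\times$-stable, hence $D_A(\pi)^{\et}$ inherits a continuous $A$-semilinear $\cO_K^\times$-action (the quotient topology coincides with the canonical one since $D_A(\pi)^{\et}$ is finitely generated). Thus $D_A(\pi)^{\et}$ is again an $\cO_K^\times$-module over $A$, and a second application of Proposition \ref{prop:OK_modules} yields finite projectivity.

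Finally, by construction $(D_A(\pi)^{\et},\beta^{\et})$ is an \'etale $(\psi,\cO_K^\times)$-module over $A$ (the structure map $\beta^{\et}$ is injective because the \'etale quotient is exactly designed to kill the kernel of iterates of $\beta$). Proposition \ref{prop:et_psi_modules} then immediately gives that $\beta^{\et}$ is an isomorphism, completing the proof. There is no real obstacle here: everything has been arranged in the preceding subsections precisely so that this corollary is a one-line deduction, and the only thing to be slightly careful about is checking that the quotient $D_A(\pi)^{\et}$ is still finitely generated (so that Proposition \ref{prop:OK_modules} applies), which follows since it is a quotient of the finitely generated $A$-module $D_A(\pi)$.
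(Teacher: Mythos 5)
Your proof is correct and follows the same route as the paper, which simply states that the corollary is a special case of Propositions \ref{prop:OK_modules} and \ref{prop:et_psi_modules}; you have just spelled out the straightforward verification that $D_A(\pi)$, $D_A(\pi)^{\et}$, and $(D_A(\pi)^{\et},\beta^{\et})$ satisfy the hypotheses of those propositions.
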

\begin{proof}
This is a special case of Propositions \ref{prop:OK_modules} and \ref{prop:et_psi_modules}.
\end{proof}

\begin{rem}
If $\pi$ is $1$-dimensional (a character of $\GL_2(K)$), then $D_A(\pi)=D_A(\pi)^{\et}=0$.
\end{rem}

We give an important condition on an admissible smooth representation $\pi$ (with a central character) which ensures that $\pi$ is in $\mathcal C$. Let $J$ be the following graded ideal of $\gr(\F\bbra{I_1/Z_1})$:
\begin{equation}\label{idealJ}
J\defeq (y_iz_i,h_i, 0\leq i\leq f-1).
\end{equation}
From the definition of equivalent filtrations (see \cite[\S I.3.2]{LiOy}), one easily sees (using \cite[Lemma I.5.3]{LiOy}) that if $\gr(\pi^\vee)$ is annihilated by some power of $J$ for one good filtration on $\pi$, then it is so for all good filtrations (but note that the power of $J$ which annihilates $\gr(\pi^\vee)$ may depend on the fixed good filtration).

\begin{prop}\label{prop:finiteness}
Assume that $\gr(\pi^\vee)$ is annihilated by some power of $J$. Then the $A$-module $D_A(\pi)$ is finite
projective and the $\gr(A)$-module $\gr(D_A(\pi))$ is finitely generated.
\end{prop}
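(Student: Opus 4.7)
The plan is to show that $\gr(D_A(\pi))$ is finitely generated over $\gr(A)$, whence $\pi\in\mathcal{C}$ and Corollary \ref{cor:psiiso} together with Proposition \ref{prop:OK_modules} give the finite projectivity of $D_A(\pi)$. Fix the $\m_{I_1/Z_1}$-adic filtration on $\pi^\vee$ (which is good) and let $N\geq 1$ be such that $J^N\gr(\pi^\vee)=0$. By Lemma \ref{lem:grlocal} applied to $M=\pi^\vee$ viewed as an $\F\bbra{N_0}$-module, we have a $\gr(A)$-module isomorphism
\[
\gr(D_A(\pi))\ \simeq\ \gr(\pi^\vee)\big[(y_0\cdots y_{f-1})^{-1}\big].
\]

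The key observation is the following purely ring-theoretic fact. In $\gr(\F\bbra{I_1/Z_1})\simeq\bigotimes_{i=0}^{f-1}\F[y_i,z_i,h_i]$ the images of $h_i$ lie in $J$, so the quotient
\[
\gr(\F\bbra{I_1/Z_1})/J\ \simeq\ \bigotimes_{i=0}^{f-1}\F[y_i,z_i]/(y_iz_i)
\]
is commutative (since the relations $[y_i,z_i]=h_i$ become trivial). Furthermore, inverting $y_0\cdots y_{f-1}$ in this quotient inverts each $y_i$ separately, and the relation $y_iz_i=0$ then forces $z_i=0$, giving a natural isomorphism
\[
\big(\gr(\F\bbra{I_1/Z_1})/J\big)\big[(y_0\cdots y_{f-1})^{-1}\big]\ \simeq\ \F[y_0^{\pm 1},\dots,y_{f-1}^{\pm 1}]\ =\ \gr(A).
\]

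Now consider the $J$-adic filtration $\gr(\pi^\vee)\supseteq J\gr(\pi^\vee)\supseteq\cdots\supseteq J^N\gr(\pi^\vee)=0$. Each graded piece $J^k\gr(\pi^\vee)/J^{k+1}\gr(\pi^\vee)$ is a finitely generated module over the commutative ring $\gr(\F\bbra{I_1/Z_1})/J$ (as $\gr(\pi^\vee)$ is itself finitely generated over $\gr(\F\bbra{I_1/Z_1})$, by goodness of the filtration). Localizing the short exact sequences $0\to J^{k+1}\gr(\pi^\vee)\to J^k\gr(\pi^\vee)\to J^k/J^{k+1}\to 0$ at $y_0\cdots y_{f-1}$ (which is exact as a functor of $\gr(\F\bbra{N_0})$-modules), we obtain a finite filtration on $\gr(D_A(\pi))$ whose graded pieces, by the identification above, are finitely generated over $\gr(A)$. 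Hence $\gr(D_A(\pi))$ itself is finitely generated over $\gr(A)$.

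Thus $\pi$ belongs to $\mathcal{C}$ by definition, and by \cite[Thm.I.5.7]{LiOy} the natural filtration on $D_A(\pi)$ inherited from \eqref{dapi} is good, so in particular $D_A(\pi)$ is finitely generated over $A$. Since it carries a continuous semilinear action of $\cO_K^\times$, Proposition \ref{prop:OK_modules} ensures that $D_A(\pi)$ is finite projective over $A$. The only subtle point is the verification that modding out by $J$ yields a commutative localization target, and this is the conceptual heart of the argument; once this is in place, the rest is a routine unravelling via the finite $J$-adic filtration and exactness of localization.
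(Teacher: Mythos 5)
Your proof is correct and takes essentially the same route as the paper's: both pass through Lemma \ref{lem:grlocal} to identify $\gr(D_A(\pi))$ with $\gr(\pi^\vee)[(y_0\cdots y_{f-1})^{-1}]$, observe that $(\gr(\F\bbra{I_1/Z_1})/J)[(y_0\cdots y_{f-1})^{-1}]\simeq\gr(A)$, perform a d\'evissage to reduce from $J^N$ to $J$, and then conclude with \cite[Thm.I.5.7]{LiOy} and Proposition \ref{prop:OK_modules}. The only difference is cosmetic: the paper runs the d\'evissage on the ring $(\gr(\F\bbra{I_1/Z_1})/J^N)[(y_0\cdots y_{f-1})^{-1}]$, showing it is finite over $\gr(A)$, whereas you filter the module $\gr(\pi^\vee)$ by powers of $J$ and localize the short exact sequences directly.
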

\begin{proof}
As the hypothesis does not depend on the choice of the good
filtration on $\pi^\vee$, we are free to work with the
$\m_{I_1/Z_1}$-adic topology on $\pi^\vee$. Let us first prove that $\gr(D_A(\pi))$ is a finitely generated $\gr(A)$-module. It follows from the admissibility of $\pi$ and from the hypothesis that $\gr(\pi^\vee)$ is a finitely generated $\gr(\F\bbra{I_1/Z_1})/J^N$-module for some $N\geq1$. Lemma \ref{lem:grlocal} then implies that $\gr(D_A(\pi))$ is a finitely generated
$(\gr(\F\bbra{I_1/Z_1})/J^N)[(y_0\cdots y_{f-1})^{-1}]$-module. It is therefore sufficient to prove that
$(\gr(\F\bbra{I_1/Z_1})/J^N)[(y_0\cdots y_{f-1})^{-1}]$ is a finitely generated
$\gr(A)$-module. Since $\gr(\F\bbra{I_1/Z_1})$ is noetherian, we are reduced by d\'evissage to the case
$N=1$, where we have
\begin{eqnarray*} 
\big(\gr(\F\bbra{I_1/Z_1})/J\big)[(y_0\cdots y_{f-1})^{-1}]&\simeq&(\F[y_i,z_i,h_i]/(y_iz_i,h_i))[(y_0\cdots
y_{f-1})^{-1}]\\
&=&\F[y^{\pm1}_i]\ \simeq\ \gr(A).
\end{eqnarray*}
Finally, as $D_A(\pi)$ is a complete filtered $A$-module, it then follows from
\cite[Thm.I.5.7]{LiOy} that $D_A(\pi)$ is finitely generated over $A$
and from Proposition \ref{prop:OK_modules} that it is projective.
\end{proof}

It follows from Proposition \ref{prop:finiteness} that the admissible
smooth representations $\pi$ (with a central character) such that
$\gr(\pi^\vee)$ is annihilated by some power of $J$ for at least one
good filtration is a full subcategory of the category $\mathcal
C$. Moreover this full subcategory is abelian and stable under
subquotients and extensions in $\mathcal C$. Namely, for a short exact
sequence $0\rightarrow\pi'\rightarrow\pi\rightarrow\pi''\rightarrow0$
in $\mathcal{C}$, the filtrations induced on $(\pi'')^\vee$ and
$(\pi')^\vee$ by a good filtration of $\pi^\vee$ are good. For these
filtrations we have a short exact sequence
$0\rightarrow\gr((\pi'')^\vee)\rightarrow\gr(\pi^\vee)\rightarrow\gr((\pi')^\vee)\rightarrow0$
which shows that $\gr(\pi^\vee)$ is annihilated by a power of $J$ if
and only if $\gr((\pi')^\vee)$ and $\gr((\pi'')^\vee)$ are.

\begin{rem}\label{Wu}
It is natural to consider the image $D^\natural_A(\pi)$ of $\pi^\vee$ in $D_A(\pi)=A\widehat\otimes_{\F\bbra{N_0}}\pi^\vee$. Indeed, as the map $\pi^\vee\rightarrow D_A(\pi)$ is continuous and $\pi^\vee$ is compact, it follows that $D^\natural_A(\pi)$ is a compact $\F\bbra{N_0}$-submodule of $D_A(\pi)$. However, the $\F\bbra{N_0}$-module $D_A^\natural(\pi)$ is {\it not} finitely generated when $\pi$ is an irreducible admissible supersingular representation and $[K:\Qp]>1$ (even if $D_A(\pi)$ is finitely generated over $A$). Namely, if this were the case, this would give us the existence of a nontrivial finitely generated
$\F\bbra{N_0}[(
\begin{smallmatrix}
p&0 \\ 0 & 1
\end{smallmatrix}
)]$-submodule of $\pi$ that is admissible as an $\F\bbra{N_0}$-module and this would contradict the results of
\cite{Benj} and \cite{Wu}. Likewise, the image of $\pi^\vee$ in the quotient $D_A(\pi)^{\et}$ of $D_A(\pi)$ won't be finitely generated over $\F\bbra{N_0}$ in general (see Remark \ref{da=daet}(ii)).
Finally, we conjecture in \cite[Conj.1.4]{BHHMS3} that for those $\pi$ coming from cohomology we always have $D_A(\pi)\cong D_A(\pi)^{\et}$.
\end{rem}

\begin{rem}\label{rem:normalization_N0}
  Recall that the action of $\F\bbra{N_0}$ on $\pi^\vee$ is defined by
  $\delta_a(f)=f\circ a^{-1}$ for $f\in\pi^\vee$ and $a\in N_0$. We
  could have defined it by the formula $\delta_a(f)=f\circ a$ for
  $f\in\pi^\vee$ and $a\in N_0$ and would have obtained isomorphic
  $(\psi,\cO_K^\times)$-modules $D_A(\pi)$ and
  $D_A(\pi)^{\et}$ (for instance, this is the convention used in \cite[Lemme 2.6]{breuil-foncteur}). Namely the map $f\mapsto \gamma_{-1}\cdot f$, with
  $\gamma=
  \begin{pmatrix}
    -1 & 0 \\ 0 & 1
  \end{pmatrix}$ induces an intertwining, commuting with $\psi$ and $\cO_K^\times$, between the two $\F\bbra{N_0}$-structures.
\end{rem}

\subsubsection{Multivariable \texorpdfstring{$(\varphi,\cO_K^\times)$}{(phi,O\_K\^{}x)}-modules}\label{multivariable}

Using the results of \S\ref{multivariablepsi}, we promote the functor $\pi\mapsto D_A(\pi)^{\et}$ to an exact functor from $\mathcal C$ to a category of \'etale multivariable $(\varphi,\cO_K^\times)$-modules (Theorem \ref{thm:exactnessDA}) and we compare $D_A(\pi)^{\et}$ with the functor $D_\xi^\vee(\pi)$ of \S\ref{covariant} (Theorem \ref{thm:functors_comparison}).

Let $R$ be a noetherian commutative ring of characteristic $p$ endowed with an injective ring endomorphism $F_R$ such that $R$ is a finite free
$F_R(R)$-module (as at the beginning of \S\ref{multivariablepsi}). A \emph{$\varphi$-module} $(D,\varphi)$ over $R$ is an $R$-module $D$ with an $F_R$-semilinear map $\varphi : D\rightarrow D$. We say that a
$\varphi$-module $(D,\varphi)$ is \emph{\'etale} if the $R$-linear map
$F_R^*(D)\rightarrow D$ defined by $a\otimes d\mapsto a\varphi(d)$ is
an isomorphism.

\begin{definit}\label{phiok}
A \emph{$(\varphi,\cO_K^\times)$-module over $A$} is a $\varphi$-module $(D,\varphi)$ over $A$
such that $D$ is a finitely generated $A$-module, the endomorphism $\varphi$ is continuous
(for the canonical topology on $D$ as at the beginning of \S\ref{multivariablepsi}) and $D$ is endowed with a continuous $A$-semilinear action of $\cO_K^\times$ commuting with $\varphi$. We say that a $(\varphi,\cO_K^\times)$-module over $A$ is
\emph{\'etale} if the underlying $\varphi$-module over $A$ is.
\end{definit}

We note that, by Proposition \ref{prop:OK_modules}, if $(D,\varphi)$
is a $(\varphi,\cO_K^\times)$-module over $A$, then $D$ is a finite
projective $A$-module.

If $(D,\beta)$ is an \'etale $(\psi,\cO_K^\times)$-module over $A$ as
in Definition \ref{psiok}, by Proposition \ref{prop:et_psi_modules} we can define a $\phi$-semilinear endomorphism $\varphi$ of $D$ such that $\Id\otimes\varphi=\beta^{-1}$, so that $(D,\varphi)$ is an \'etale $(\varphi,\cO_K^\times)$-module over $A$. (Note that $\varphi$ is continuous, as the topology of $D$ is defined by any good filtration and $\phi : A \to A$ is continuous.)

We now go back to representations $\pi$ of $\GL_2(K)$, but we first need some more notation. The trace map $\tr : N_0\simeq \cO_K\rightarrow\Zp$ induces a ring homomorphism
$\tr : \F\bbra{N_0}\rightarrow\F\bbra{\Zp}\simeq\F\bbra{X}$, where we recall that $X = (\begin{smallmatrix} 1 & 1 \\ 0 & 1\end{smallmatrix})-1$. Moreover, for $Y_i$ as in (\ref{elementsyi}), we
have $\tr(Y_i)\equiv -X \mod{X^2}$ (see Lemma \ref{lem:XY} and
the last statement in Lemma \ref{lem:ker} below) and the universal property of the ring $A$ shows
that this map extends to a continuous ring homomorphism $\tr : A\rightarrow\F\ppar{X}$. We let
\[\mathfrak{p}\defeq \Ker(\tr : A\rightarrow\F\ppar{X}).\]
Then $\mathfrak{p}$ is a closed maximal ideal of $A$. Note that
\[\mathfrak{p}\cap \F\bbra{N_0}=\Ker(\tr : \F\bbra{N_0}\rightarrow \F\bbra{X})=\m_{N_1}\F\bbra{N_0}=(Y_0-Y_1,\dots,Y_0-Y_{f-1}),\]
where $N_1\subseteq N_0$ is as in (\ref{n1}) (for the second isomorphism write $N_0\cong N_1 \oplus \Zp e$, where $\tr(e)=1$, noting that $\tr : \cO_K\rightarrow\Zp$ is surjective, as $K$ is unramified, and for the third use the first statement of Lemma \ref{lem:ker} below).

\begin{rem}
Let $B$ be the completion of $\F\bbra{N_0}_S$ along the prime ideal generated by $(Y_0-Y_1,\dots,Y_0-Y_{f-1})$ (see the beginning of \S\ref{ringA} for $S$). Expanding $Y_i^n=(Y_0-(Y_0-Y_i))^n$ if $n\geq 0$, and writing $Y_i^{n}=(\sum_{m=0}^{+\infty}\frac{(Y_0-Y_i)^m}{Y_0^{m+1}})^{-n}$ and expanding everything if $n<0$, one can see using Remark \ref{rem:on_filtered_Amod}(iii) that the ring $A$ embeds into $B$. The endomorphism $\phi$ on $A$ extends to $B$ but only the action of $\Zp^\times\subseteq \cO_K^\times$ extends to $B$, as $(Y_0-Y_1,\dots,Y_0-Y_{f-1})$ is not preserved by all of $\cO_K^\times$. Then from Corollary \ref{cor:psiiso} and as $B$ is a local ring, we see that $D_A(\pi)^{\et}\otimes_AB$ is a {\it finite free \'etale $(\varphi,\Zp^\times)$-module over $B$}, which is similar to the generalized $(\varphi,\Gamma)$-modules defined in \cite{schneider-vigneras} (though {\it loc.cit.}\ only considers split algebraic groups over $\Qp$).
\end{rem}

Let $\pi$ be in the category $\mathcal C$. Using Corollary \ref{cor:psiiso}, we can define a $\phi$-semilinear endomorphism $\varphi$ of $D_A(\pi)^{\et}$ such that $\Id\otimes\varphi=(\beta^{\et})^{-1}$, so that $D_A(\pi)^{\et}$ is an \'etale
$(\varphi,\cO_K^\times)$-module over $A$. As $\mathfrak{p}$ is a $\phi$-stable ideal of $A$, we deduce that $D_A(\pi)^{\et}/\mathfrak{p}\simeq D_A(\pi)^{\et}\otimes_A\F\ppar{X}$ is an \'etale $(\varphi,\Zp^\times)$-module over $\F\ppar{X}$.

\begin{thm}\label{thm:exactnessDA}\

  \begin{enumerate}
  \item The functor $\pi\longmapsto D_A(\pi)^{\et}$ is exact from the category $\mathcal{C}$ to the category of \'etale $(\varphi,\cO_K^\times)$-modules over $A$.
  \item The functor $\pi\longmapsto D_A(\pi)^{\et}\otimes_A\F\ppar{X}$ is exact from the category
    $\mathcal{C}$ to the category of \'etale $(\varphi,\Zp^\times)$-modules over $\F\ppar{X}$.
  \end{enumerate}
\end{thm}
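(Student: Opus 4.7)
My plan is to prove (i) by assembling a $3\times 3$ commutative diagram and invoking the $3\times 3$ lemma, then deducing (ii) from (i) via projectivity.

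Given a short exact sequence $0 \to \pi' \to \pi \to \pi'' \to 0$ in $\mathcal{C}$, Proposition \ref{prop:exactnessDA} produces a short exact sequence of finite projective $A$-modules (Corollary \ref{cor:psiiso})
\begin{equation*}
0 \longrightarrow D_A(\pi'') \longrightarrow D_A(\pi) \longrightarrow D_A(\pi') \longrightarrow 0.
\end{equation*}
I denote by $D_A(\pi)^0$ the kernel of the canonical surjection $D_A(\pi) \twoheadrightarrow D_A(\pi)^{\et}$, and similarly for $\pi'$ and $\pi''$. By definition $D_A(\pi)^0 = \bigcup_{n \geq 0} \ker(\beta_\pi^n)$, and since $D_A(\pi)$ is noetherian, one has $D_A(\pi)^0 = \ker(\beta_\pi^{n_0})$ for some $n_0$ large enough to work simultaneously for $\pi$, $\pi'$ and $\pi''$.

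The main step is to establish the exactness of
\begin{equation*}
0 \longrightarrow D_A(\pi'')^0 \longrightarrow D_A(\pi)^0 \longrightarrow D_A(\pi')^0 \longrightarrow 0.
\end{equation*}
Injectivity on the left and exactness in the middle follow formally from the inclusion $D_A(\pi'') \hookrightarrow D_A(\pi)$ together with the exactness of each functor $(\phi^*)^n$, which holds because $A$ is finite free over $\phi(A) = A^p$ (see the discussion after Lemma \ref{actionoK}). The main obstacle is surjectivity on the right. Given $x \in D_A(\pi')^0$, lift it to $y \in D_A(\pi)$; by the exactness of $(\phi^*)^{n_0}$, the element $\beta_\pi^{n_0}(y)$ lies in $(\phi^*)^{n_0} D_A(\pi'')$. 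The composition $D_A(\pi'') \twoheadrightarrow D_A(\pi'')^{\et} \xrightarrow{\beta^{\et,n_0}} (\phi^*)^{n_0} D_A(\pi'')^{\et}$ is surjective, since $\beta^{\et,n_0}$ is an isomorphism by Corollary \ref{cor:psiiso}, so I can find $v \in D_A(\pi'')$ with $\beta_\pi^{n_0}(y - v) \in (\phi^*)^{n_0} D_A(\pi'')^0$. Applying $(\phi^*)^{n_0}(\beta_{\pi''}^{n_0})$ — which annihilates $(\phi^*)^{n_0} D_A(\pi'')^0$ by the choice of $n_0$ and exactness of $(\phi^*)^{n_0}$ — then yields $\beta_\pi^{2n_0}(y - v) = 0$, so $y - v \in D_A(\pi)^0$ has image $x$ in $D_A(\pi')^0$.

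The $3 \times 3$ lemma applied to the commutative diagram
\begin{equation*}
\begin{array}{ccccccccc}
0 & \to & D_A(\pi'')^0 & \to & D_A(\pi)^0 & \to & D_A(\pi')^0 & \to & 0 \\
  & & \downarrow & & \downarrow & & \downarrow & & \\
0 & \to & D_A(\pi'') & \to & D_A(\pi) & \to & D_A(\pi') & \to & 0 \\
  & & \downarrow & & \downarrow & & \downarrow & & \\
0 & \to & D_A(\pi'')^{\et} & \to & D_A(\pi)^{\et} & \to & D_A(\pi')^{\et} & \to & 0
\end{array}
\end{equation*}
(exact columns by definition, exact top and middle rows) then forces exactness of the bottom row, which is (i). For (ii), each term of this bottom sequence is finite projective over $A$ by Corollary \ref{cor:psiiso}, so the sequence is split short exact as a sequence of $A$-modules, and therefore remains exact after tensoring with $A/\mathfrak{p} \cong \F\ppar{X}$.
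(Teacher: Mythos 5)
Your proof is correct but takes a genuinely different, more hands-on route than the paper. The paper's proof is a one-liner: it identifies $D_A(\pi)^{\et}$ with the direct limit $\varinjlim_{(\phi^*)^n(\beta)}(\phi^*)^n(D_A(\pi))$ (the first isomorphism holds because $\beta^{\et}$ is an isomorphism by Corollary \ref{cor:psiiso}, so the system with transition maps $(\phi^*)^n(\beta^{\et})$ is constant, and the second because $D_A(\pi)^0$ generates the trivial $F$-module), and then concludes from Proposition \ref{prop:exactnessDA} together with the exactness of $\phi^*$ and of filtered colimits. Your $3\times3$ argument instead works directly with the nilpotent parts $D_A(\cdot)^0$ and proves their exactness by an element chase. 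The key extra ingredient you need is the noetherian stabilization $D_A(\pi)^0 = \ker(\beta_\pi^{n_0})$ (valid since $D_A(\pi)$ is a finitely generated module over the noetherian ring $A$), which lets you bound the power of $\beta$ uniformly across $\pi$, $\pi'$, $\pi''$; the colimit argument sidesteps that. The trade-off is that you avoid having to establish the colimit identification, at the cost of a longer diagram chase; both routes ultimately rest on the same two inputs, Proposition \ref{prop:exactnessDA} and Corollary \ref{cor:psiiso}. Your deduction of (ii) from (i) via finite projectivity of the $A$-modules is the same as the paper's.
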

\begin{proof}
(i) is a consequence of Proposition \ref{prop:exactnessDA}, of the exactness of $\phi^*$ and of the exactness of direct limits, together
with the description (see the beginning of \S\ref{multivariablepsi})
\[D_A(\pi)^{\et}\cong\varinjlim_{(\phi^*)^n(\beta^{\et})} (\phi^*)^n(D_A(\pi)^{\et})\cong\varinjlim_{(\phi^*)^n(\beta)} (\phi^*)^n(D_A(\pi)).\]
(ii) is a consequence of (i), of Corollary \ref{cor:psiiso} and of the exactness of $(-)\otimes_A\F\ppar{X}$ on short exact sequences of finite projective $A$-modules.
\end{proof}

\begin{rem}
One can prove that if $\pi\in \mathcal{C}$ then the endomorphism $\psi:D_A(\pi)\ra D_A(\pi)$ (defined right after Lemma \ref{lem:r_small}) is always surjective.
(This follows ultimately from the fact that the image of the natural map $A\otimes_{\F\bbra{N_0}}\pi^\vee\ra D_A(\pi)$ is surjective since $A$ is complete and Noetherian, and $A\otimes_{\F\bbra{N_0}}\pi^\vee$ is endowed with a surjective endomorphism that is compatible with $\psi$ on $D_A(\pi)$.)
In particular, this implies that $D_A(\pi)^{\et}\neq 0$ as soon as $D_A(\pi)\neq 0$, since $\psi$ cannot be nilpotent if it is surjective on $D_A(\pi)$ and the latter is nonzero.
Note that for the representations $\pi$ of particular interest for us here, we will actually have $D_A(\pi)=D_A(\pi)^{\et}$; see Remark \ref{da=daet}(ii).
\end{rem}

We now compare the \'etale $(\varphi,\Zp^\times)$-module $D_A(\pi)^{\et}/\mathfrak{p}$ with $D_{\xi}^\vee(\pi)$ (\ref{DxiH}).

Let $\overline{\psi}$ be the $\F$-linear endomorphism of $\pi^\vee/\m_{N_1}\simeq(\pi^{N_1})^\vee$ defined by
\begin{equation}\label{psibar}
\overline{\psi}(x)\defeq \sum_{b\in N_1/N_1^p}\psi(\delta_{\tilde{b}}\tilde{x}) \mod \m_{N_1},
\end{equation}
where $\tilde{b}\in N_1$ is a lift of $b$, $\tilde{x}\in \pi^\vee$ is a lift of $x$ and $\psi$ is as in (\ref{psipi}) (it is easy to check that the definition of $\overline{\psi}$ does not depend on the choice of these lifts). We have $\overline{\psi}(S(X^p)m)=S(X)\overline{\psi}(m)$ for all $S(X)\in\F\bbra{X}$ and $m\in \pi^\vee/\m_{N_1}$, and $\overline{\psi}$ is the dual of the endomorphism $F$ of $\pi^{N_1}$ in \S\ref{covariant}. We define an endomorphism $\overline{\psi}$ of $D_A(\pi)/\mathfrak{p}$ (resp.\ $D_A(\pi)^{\et}/\mathfrak{p}$) by the same formula replacing $\pi^\vee$ by $D_A(\pi)$ (resp.\ $D_A(\pi)^{\et}$) and $\m_{N_1}$ by $\mathfrak{p}$, it is then clear that the following diagram commutes:
\begin{equation}\label{psipida}
\begin{tikzcd}
    \pi^\vee/\m_{N_1}\ar[r,"\overline{\psi}"]\ar[d]
    & \pi^\vee/\m_{N_1} \ar[d] \\
    D_A(\pi)/\mathfrak{p} \ar[r,"\overline{\psi}"] & D_A(\pi)/\mathfrak{p},
\end{tikzcd}
\end{equation}
together with an analogous diagram with $D_A(\pi)/\mathfrak{p}\twoheadrightarrow D_A(\pi)^{\et}/\mathfrak{p}$ that we leave to the reader.

Let $\overline{\beta} :
D_A(\pi)/\mathfrak{p}\rightarrow\phi^*(D_A(\pi)/\mathfrak{p})\simeq\F\bbra{X}\otimes_{\varphi,\F\bbra{X}}(D_A(\pi)/\mathfrak{p})\simeq \phi^*(D_A(\pi))/\mathfrak{p}$
be the $\F\ppar{X}$-linear map defined by
\[\overline{\beta}(m)\defeq \sum_{i=0}^{p-1}(1+X)^{-i}\otimes_{\phi}\overline{\psi}((1+X)^im).\]

\begin{lem}\label{lem:compatibility_of_psis}
The following diagram is commutative {\upshape(}where the horizontal maps are the canonical surjections{\upshape)}:
  \[
    \begin{tikzcd}
      D_A(\pi) \ar[r,twoheadrightarrow] \ar[d,"\beta"] &
      D_A(\pi)/\mathfrak{p}
      \ar[d,"\overline{\beta}"] \\
      \phi^*(D_A(\pi)) \ar[r,twoheadrightarrow] &
      \phi^*(D_A(\pi)/\mathfrak{p}).
    \end{tikzcd}\]
\end{lem}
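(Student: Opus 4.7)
The proof is essentially a direct computation, where the key step is to choose a system of representatives of $N_0^p$ in $N_0$ that is compatible with the trace decomposition $N_0 \twoheadrightarrow N_0/N_1 \cong \Z_p$. The freedom to do so is guaranteed by Remark~\ref{rem:beta_not_depend_on_choices}.

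First I would note that $N_1 \cap N_0^p = N_1^p$ (working additively under $N_0\cong\oK$, this reduces to the statement that if $pn \in N_1$ then $n \in N_1$, which follows from $\mathrm{Tr}$ being $\Z_p$-linear and $\Z_p$ being torsion-free). This yields a short exact sequence
\[ 1 \longrightarrow N_1/N_1^p \longrightarrow N_0/N_0^p \longrightarrow N_0/(N_1\cdot N_0^p) \longrightarrow 1, \]
where the rightmost group has order $p$ and is identified via $\tr$ with $\Z_p/p\Z_p$. Therefore I can pick representatives $\{\tilde b_k\}_{k=0}^{p^{f-1}-1}$ of $N_1^p$ in $N_1$, and representatives $c_j = \smatr{1}{j}{0}{1}$ for $j\in\{0,1,\dots,p-1\}$ of $N_1 N_0^p$ in $N_0$ (chosen so that $\tr(c_j)=j$), and then the products $a_{j,k}\defeq c_j\tilde b_k$ form a system of representatives of $N_0^p$ in $N_0$. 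By construction, the image of $\delta_{c_j}$ in $A/\mathfrak p=\F\ppar{X}$ is $(1+X)^j$, while $\delta_{\tilde b_k}$ reduces to $1$ since $\tilde b_k\in N_1$.

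Next I would expand $\beta(m)$ using these representatives and reduce mod $\mathfrak p$:
\[ \overline{\beta(m)} \;=\; \sum_{j=0}^{p-1}\sum_k \overline{\delta_{c_j\tilde b_k}}\otimes_\phi \overline{\psi\bigl(\delta_{\tilde b_k^{-1}}\delta_{c_j^{-1}} m\bigr)} \;=\; \sum_{j=0}^{p-1}(1+X)^j\otimes_\phi\sum_k\overline{\psi\bigl(\delta_{\tilde b_k^{-1}}\delta_{c_j^{-1}} m\bigr)}. \]
Since inversion is a bijection of $N_1/N_1^p$, the family $\{\tilde b_k^{-1}\}$ is again a system of representatives of $N_1^p$ in $N_1$. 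Taking $\delta_{c_j^{-1}}m$ as a lift of $(1+X)^{-j}\bar m\in D_A(\pi)/\mathfrak p$, the very definition of $\overline{\psi}$ on $D_A(\pi)/\mathfrak p$ (independent of the choice of lifts of both the element and the representatives) gives
\[ \sum_k\overline{\psi\bigl(\delta_{\tilde b_k^{-1}}\delta_{c_j^{-1}} m\bigr)} \;=\; \overline{\psi}\bigl((1+X)^{-j}\bar m\bigr). \]
Reindexing $j \mapsto -j \bmod p$ then identifies the resulting expression with $\overline{\beta}(\bar m) = \sum_{i=0}^{p-1}(1+X)^{-i}\otimes_\phi \overline{\psi}((1+X)^i\bar m)$, completing the verification.

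The main issue is not mathematical depth but careful bookkeeping: one must verify that the products $c_j\tilde b_k$ genuinely run over a set of representatives of $N_0^p$ in $N_0$ (this is the role of the short exact sequence above), and that the trace-normalization $\tr(c_j)=j$ gives the correct identification $\overline{\delta_{c_j}}=(1+X)^j$ in $A/\mathfrak p$. Once the representatives are chosen with this compatibility, commutativity of the diagram drops out of the definition of $\overline{\psi}$ in \eqref{psibar} and the fact that reduction mod $\mathfrak p$ sends the $\F\bbra{N_0}$-structure on $\pi^\vee$ to the $\F\bbra{X}$-structure on $\pi^\vee/\m_{N_1}$.
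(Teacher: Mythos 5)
Your proof is correct and takes essentially the same route as the paper's: both pick representatives of $N_0/N_0^p$ of the form (power of $\smatr{1}{1}{0}{1}$) $\times$ ($N_1$-representative), expand $\beta$ over them, and use $\delta_b\equiv 1\pmod{\mathfrak p}$ for $b\in N_1$ together with the definition of $\overline\psi$. The only cosmetic differences are that you normalize your transversal with $c_j=g^j$ (so a final reindexing $j\mapsto -j$ is needed, whereas the paper uses $g^{-i}$ directly) and that you spell out the verification that the products $c_j\tilde b_k$ genuinely form a transversal via $N_1\cap N_0^p=N_1^p$, a point the paper leaves implicit.
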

\begin{proof}
We choose a system of representatives $(g^{-i}b_j)_{\substack{0\leq i\leq p-1 \\ 1\leq j\leq p^{f-1}}}$ of $N_0/N_0^p$ such
that $g\defeq \left(
\begin{smallmatrix}
1 & 1 \\ 0 & 1
\end{smallmatrix}\right)\in N_0$ and $b_1,\dots,b_{p^{f-1}}$ are in $N_1$. We then have for $m\in D_A(\pi)$ that
\begin{align*}
\beta(m)&=\sum_{i=0}^{p-1}\sum_{j=1}^{p^{f-1}}\big(\delta_{g^{i}}^{-1}\delta_{b_j}\otimes_\phi\psi(\delta_{b_j}^{-1}\delta_{g^i}m)\big)\\
&\equiv \sum_{i=0}^{p-1}\big(\delta_{g^{i}}^{-1} \otimes_\phi \sum_{j=1}^{p^{f-1}}\psi(\delta_{b_j}^{-1}(\delta_{g^i}m))\big) \mod\mathfrak{p} \phi^*(D_A(\pi))\\
&\equiv\sum_{i=0}^{p-1}\delta_{g^i}^{-1}\otimes_\phi\overline{\psi}(\delta_{g^{i}}m) \mod\mathfrak{p} \phi^*(D_A(\pi)),
\end{align*}
where the first equality follows from (\ref{mapbeta}), the second from $\delta_{b_j}-1\in \mathfrak{p}\subseteq A$ (and the commutativity of $N_0$), and the third from the analog of (\ref{psibar}) for $D_A(\pi)/\mathfrak{p}$. Noting that the image of $\delta_{g^i}$ in $\F\bbra{X}$ is $(1+X)^i$, we obtain the desired compatibility.
\end{proof}

\begin{lem}\label{lem:continuity_for_comparison_of_D}
Let $M\subset \pi^{N_1}$ be an $\F\bbra{X}$-submodule that is admissible as an $\F\bbra{X}$-module. Then
the surjective map $\pi^\vee\onto M^\vee$ is continuous for
the $\mathfrak{m}_{I_1}$-adic topology on $\pi^\vee$ and the
$X$-adic topology on $M^\vee$.
\end{lem}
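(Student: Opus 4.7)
The plan is to interpret both topologies concretely and then reduce to a standard fact about group algebras of finite $p$-groups. Since the $X$-adic topology on $M^\vee$ has basis of neighbourhoods of zero given by the $X^n M^\vee$, continuity amounts to showing that for each $n\geq 1$ there exists $k\geq 0$ with the image of $\m_{I_1}^k\pi^\vee$ in $M^\vee$ contained in $X^n M^\vee$. The short exact sequence $0\to M[X^n]\to M\xrightarrow{X^n}X^nM\to 0$ and a direct linear algebra check identify $X^nM^\vee$ with the annihilator $M[X^n]^\perp$ in $M^\vee$; hence the preimage of $X^nM^\vee$ in $\pi^\vee$ is the annihilator of $M[X^n]\subseteq \pi$, and the condition becomes: some $\m_{I_1}^k\pi^\vee$ vanishes on the subspace $M[X^n]\subseteq \pi$.

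First I would verify that $M[X^n]$ is finite-dimensional over $\F$: multiplication by powers of $X$ yields injections $M[X^{i+1}]/M[X^i]\hookrightarrow M[X]$, and $M[X]$ is finite-dimensional by the admissibility hypothesis on $M$, so $\dim_\F M[X^n]\leq n\dim_\F M[X]<\infty$. Since $\pi$ is smooth, this finite-dimensional subspace is contained in $\pi^{I'}$ for some open normal subgroup $I'$ of $I_1$ (which we can arrange to contain $Z_1$). It therefore suffices to show that $\m_{I_1}^k \pi^\vee$ annihilates the larger subspace $\pi^{I'}$ for some $k$.

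The contragredient action reads $(\lambda \cdot f)(v)=f(\iota(\lambda)v)$, where $\iota$ is the antipode $[g]\mapsto [g^{-1}]$ on $\F\bbra{I_1}$. Because $I'$ is normal in $I_1$ the subspace $\pi^{I'}$ is $I_1$-stable, and for $v\in \pi^{I'}$ the vector $g^{-1}v$ depends only on the class of $g$ in $I_1/I'$; consequently $\iota(\lambda)v=0$ whenever $\lambda$ lies in the kernel of the natural surjection $\F\bbra{I_1}\twoheadrightarrow \F[I_1/I']$. The quotient $\F[I_1/I']$ is the group algebra of the finite $p$-group $I_1/I'$ over a field of characteristic $p$, so its augmentation ideal is the unique maximal ideal and is nilpotent. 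Hence the image of $\m_{I_1}$ in $\F[I_1/I']$ is nilpotent, $\m_{I_1}^k$ is contained in the kernel for some $k$, and $\m_{I_1}^k\pi^\vee$ kills $\pi^{I'}\supseteq M[X^n]$, giving the desired continuity. The argument is essentially routine; the main step is the finite-dimensionality of $M[X^n]$, which is the one place admissibility over $\F\bbra{X}$ enters, after which smoothness of $\pi$ and nilpotence of the $p$-group augmentation ideal complete the reduction.
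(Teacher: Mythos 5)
Your proof is correct, and it completes the argument by a more explicit route than the paper takes. Both proofs open the same way: the identification of the preimage of $X^nM^\vee$ with the annihilator of $M[X^n]$, and the finite-dimensionality of $M[X^n]$ (equivalently, the finiteness of $M^\vee/X^nM^\vee$ given that $M^\vee$ is finitely generated over $\F\bbra{X}$), which is where admissibility of $M$ enters; this reduces the lemma to showing that some $\m_{I_1}^k\pi^\vee$ kills the finite-dimensional subspace $M[X^n]\subseteq\pi$. At this point the paper invokes a compactness argument — $\pi^\vee$ is a compact $\F\bbra{I_1}$-module and its $\m_{I_1}$-adic topology is separated, so the cofinite preimage contains some $\m_{I_1}^N\pi^\vee$ — relying, at least implicitly, on $\pi^\vee$ being finitely generated (i.e.\ $\pi$ admissible) so that the $\m_{I_1}$-adic topology is the profinite one. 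You instead prove the needed annihilation directly: smoothness of $\pi$ puts $M[X^n]$ inside $\pi^{I'}$ for an open normal $I'\lhd I_1$, and the nilpotence of the augmentation ideal of the finite $p$-group algebra $\F[I_1/I']$ over the characteristic-$p$ field $\F$ then forces $\m_{I_1}^k$ (and hence $\iota(\m_{I_1}^k)$, the antipode preserving $\m_{I_1}$) to kill $\pi^{I'}$. This buys a self-contained argument that avoids any appeal to compactness or to the coincidence of the $\m_{I_1}$-adic and profinite topologies, gives an explicit bound for $k$ in terms of the exponent of $I_1/I'$, and shows that only smoothness of $\pi$ — not admissibility — is needed; it can also be read as supplying a proof of the separatedness the paper asserts, so it fills in rather than contradicts the paper's terse second step.
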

\begin{proof}
The map $\pi^\vee \onto M^\vee$ is continuous with respect to the natural profinite topologies arising from Pontryagin duality. As    
$M$ is admissible as an $\F\bbra{X}$-module, the natural topology on $M^\vee$ is the $X$-adic topology. It thus suffices to show that the        
$\mathfrak{m}_{I_1}$-adic topology is at least as fine as the natural topology on $\pi^\vee$. Dually this means that any finite-dimensional       
subspace of $\pi$ is contained in $\pi[\mathfrak{m}_{I_1}^N]$ for some sufficiently large integer $N$, which is true by smoothness.
\end{proof}

Recall that we defined in (\ref{DxiH}) a projective limit $D^\vee_{\xi}(\pi)$ of \'etale $(\varphi,\Zp^\times)$-modules over $\F\ppar{X}$ associated to $\pi$.

\begin{thm}\label{thm:functors_comparison}
We have an isomorphism of \'etale $(\varphi,\Zp^\times)$-modules over $\F\ppar{X}$:
\[D_A(\pi)^{\et}\!/\mathfrak{p}\buildrel\sim\over\longrightarrow D^\vee_{\xi}(\pi).\]
In particular, $D^\vee_{\xi}(\pi)$ is finite-dimensional over $\F\ppar{X}$ and the functor $\pi\longmapsto D^\vee_{\xi}(\pi)$ is exact on $\mathcal C$.
\end{thm}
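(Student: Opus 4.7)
The plan is to identify $D_A(\pi)^{\et}/\mathfrak{p}$ and $D^\vee_{\xi}(\pi)$ as (isomorphic) maximal \'etale finite-dimensional quotients of $(\pi^{N_1})^\vee[X^{-1}]$. The proof splits into the construction of a natural map and a dimension-matching argument via the universal property of $D^\vee_{\xi}(\pi)$ recalled just after its definition.

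First, for each finite type $\F\bbra{X}[F]$-submodule $M\subseteq\pi^{N_1}$ that is admissible as an $\F\bbra{X}$-module and $\Zp^\times$-stable, I would construct a canonical $\F\ppar{X}$-linear, $(\varphi,\Zp^\times)$-equivariant map
\[D_A(\pi)^{\et}/\mathfrak{p}\ \longrightarrow\ M^\vee[X^{-1}].\]
Start from the composite restriction $\pi^\vee\twoheadrightarrow(\pi^{N_1})^\vee\twoheadrightarrow M^\vee$, which is continuous for the $\mathfrak{m}_{I_1}$-adic and $X$-adic topologies by Lemma~\ref{lem:continuity_for_comparison_of_D}. View $M^\vee[X^{-1}]$ as an $A$-module via $\tr:A\twoheadrightarrow\F\ppar{X}$ and extend $A$-linearly to a continuous map $A\otimes_{\F\bbra{N_0}}\pi^\vee\to M^\vee[X^{-1}]$; pass to the completion $D_A(\pi)\to M^\vee[X^{-1}]$, factor through $D_A(\pi)/\mathfrak{p}$ (since $\mathfrak{p}$ annihilates the target) and then through the \'etale quotient $D_A(\pi)^{\et}/\mathfrak{p}$ (since $M^\vee[X^{-1}]$ is \'etale). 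The $\varphi$-compatibility reduces via Lemma~\ref{lem:compatibility_of_psis} to matching the operator $\bar\beta$ on $D_A(\pi)/\mathfrak{p}$ with Colmez's formula~\eqref{correctformula} applied to the $\F$-linear dual of $\Id\otimes F$: both are built as the same weighted sum over $(1+X)^i$ from the operator $\bar\psi$ (which is dual to the Hecke operator $F$ on $\pi^{N_1}$). These maps are compatible with inclusions $M\subseteq M'$, giving a natural map $D_A(\pi)^{\et}/\mathfrak{p}\to D^\vee_\xi(\pi)=\plim{M} M^\vee[X^{-1}]$.

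Each constructed map $D_A(\pi)^{\et}/\mathfrak{p}\to M^\vee[X^{-1}]$ is surjective because $\pi^\vee\twoheadrightarrow M^\vee$ is and $M^\vee$ generates $M^\vee[X^{-1}]$ over $\F\ppar{X}$. By Corollary~\ref{cor:psiiso}, $d\defeq\dim_{\F\ppar{X}}D_A(\pi)^{\et}/\mathfrak{p}$ is finite, so $\dim_{\F\ppar{X}}M^\vee[X^{-1}]\leq d$ uniformly in $M$; the projective system thus has bounded finite dimensions with surjective transition maps, so it is cofinally constant at some $M_0$ with $D^\vee_\xi(\pi)\cong M_0^\vee[X^{-1}]$ of dimension $\leq d$. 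Conversely, $D_A(\pi)^{\et}/\mathfrak{p}$ is itself a finite-dimensional \'etale quotient of $(\pi^{N_1})^\vee[X^{-1}]$: the composite $(\pi^{N_1})^\vee[X^{-1}]\to D_A(\pi)/\mathfrak{p}\twoheadrightarrow D_A(\pi)^{\et}/\mathfrak{p}$ has dense image (as $A\otimes_{\F\bbra{N_0}}\pi^\vee$ is dense in $D_A(\pi)$) and closed image (as the target is a finite-dimensional vector space over the complete field $\F\ppar{X}$), hence is surjective.

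Since $D^\vee_\xi(\pi)$ now lies in $\fgk$, it is the maximal \'etale $(\varphi,\Gamma)$-module quotient of $(\pi^{N_1})^\vee[X^{-1}]$ by \cite[Rem.5.6(iii)]{breuil-foncteur}. Applied to the other finite-dimensional \'etale quotient $D_A(\pi)^{\et}/\mathfrak{p}$, this yields a surjection $D^\vee_\xi(\pi)\twoheadrightarrow D_A(\pi)^{\et}/\mathfrak{p}$, forcing $\dim_{\F\ppar{X}}D^\vee_\xi(\pi)\geq d$. Combining with the previous inequality gives equality, so this surjection is an isomorphism and equivalently the map constructed in the first paragraph is the desired isomorphism $D_A(\pi)^{\et}/\mathfrak{p}\simto D^\vee_\xi(\pi)$. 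The ``in particular'' claims then follow at once: finite-dimensionality of $D^\vee_\xi(\pi)$ from Corollary~\ref{cor:psiiso}, and exactness of $\pi\mapsto D^\vee_\xi(\pi)$ on $\mathcal{C}$ from Theorem~\ref{thm:exactnessDA}(ii). The main obstacle is the bookkeeping in the first paragraph: precisely, verifying that the $\varphi$-structure on $D_A(\pi)^{\et}/\mathfrak{p}$ (induced by inverting the linearized $\beta^{\et}$) matches Colmez's explicit $\varphi$ on $M^\vee[X^{-1}]$ (defined by inverting the composition of the $\F$-linear dual of $\Id\otimes F$ with~\eqref{correctformula}) under the natural map, which is where the normalization by the cocharacter $\xi$ and the identification of the Hecke operator $F$ with the dual of $\bar\psi$ play the essential role.
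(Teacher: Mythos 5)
Your construction of the natural map $\zeta:D_A(\pi)^{\et}/\mathfrak{p}\to D^\vee_\xi(\pi)$ and your derivation of the inequality $\dim_{\F\ppar{X}}D^\vee_\xi(\pi)\leq d$ match the paper essentially step by step (continuity via Lemma~\ref{lem:continuity_for_comparison_of_D}, completion, factorization through $\mathfrak{p}$ and the \'etale quotient, $\varphi$-compatibility via Lemma~\ref{lem:compatibility_of_psis}, and the dimension-boundedness argument forcing the pro-system to be eventually constant). The divergence is in how you obtain $\dim_{\F\ppar{X}}D^\vee_\xi(\pi)\geq d$. You argue that the composite $(\pi^{N_1})^\vee[X^{-1}]\to D_A(\pi)^{\et}/\mathfrak{p}$ is surjective (dense image from $(\pi^\vee)_S$, closed image by finite-dimensionality over the complete field $\F\ppar{X}$ — both correct) and then appeal to the maximality property from \cite[Rem.5.6(iii)]{breuil-foncteur} to produce a surjection $D^\vee_\xi(\pi)\twoheadrightarrow D_A(\pi)^{\et}/\mathfrak{p}$. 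The paper instead takes the image $D^\natural(\pi)^{\et}$ of $\pi^\vee$ in $D_A(\pi)^{\et}/\mathfrak{p}$, identifies $(D^\natural(\pi)^{\et})^\vee$ as an $\F\bbra{X}[F]$-submodule of $\pi^{N_1}$ that is admissible as $\F\bbra{X}$-module, and then \emph{proves} that it is finitely generated over $\F\bbra{X}[F]$ (via \cite[Lemma 5.2]{breuil-foncteur}, with an alternative d\'evissage argument from Lyubeznik), so that it is one of the $M$'s indexing the limit defining $D^\vee_\xi(\pi)$.

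Your shortcut is legitimate, but it silently defers precisely that finiteness step to the citation: for $D_A(\pi)^{\et}/\mathfrak{p}$ to count as an "\'etale $(\varphi,\Gamma)$-module quotient" to which the maximality applies, one needs the surjection $(\pi^{N_1})^\vee[X^{-1}]\twoheadrightarrow D_A(\pi)^{\et}/\mathfrak{p}$ to be compatible with the $\overline\psi$-structure (this is the content of diagram~(\ref{psipida}) and Lemma~\ref{lem:compatibility_of_psis}, which you invoke only for the forward map, not for this surjection), and one needs to know that any such quotient is of the form $M^\vee[X^{-1}]$ for a finite type $\F\bbra{X}[F]$-submodule $M\subseteq\pi^{N_1}$ — this is exactly what \cite[Lemma 5.2]{breuil-foncteur} provides, and what Rem.5.6(iii) is packaging. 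So your argument is correct and arguably cleaner in its presentation, but the mathematical content of the hard step is the same and hidden in the reference; it would be worth making explicit that the $\overline\psi$-compatibility of the backward surjection also follows from (\ref{psipida}), so that Rem.5.6(iii) genuinely applies.
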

\begin{proof}
For the purpose of this proof it is convenient to use the action of $\F\bbra{N_0}$ on $\pi^\vee$ given by $\delta_a(f)=f\circ a$ for $f\in\pi^\vee$ and $a\in N_0$. 
This does not change $D_A(\pi)^{\et}$ up to isomorphism by Remark~\ref{rem:normalization_N0}.

As a first step we construct the map. Let $M\subset\pi^{N_1}$ be a
finitely generated $\F\bbra{X}[F]$-submodule that is admissible as an 
$\F\bbra{X}$-module and $\Zp^\times$-stable. By Lemma
\ref{lem:continuity_for_comparison_of_D}, the map
$\pi^\vee\twoheadrightarrow M^\vee$ is continuous. It extends to a
surjection of $\F\bbra{N_0}_S$-modules $(\pi^{\vee})_S\onto
M^\vee[X^{-1}]$. %
By definition of the tensor product filtration on $(\pi^{\vee})_S$,
this surjection is continuous if $M^\vee[X^{-1}]$ is endowed with its
natural topology of finite-dimensional $\F\ppar{X}$-vector space. As
$M^\vee[X^{-1}]$ is complete for this topology, by completion we
obtain a continuous surjection of topological $A$-modules
$\zeta_M : D_A(\pi)\onto M^\vee[X^{-1}]$. Since $N_1$ acts
trivially on $M$, $\zeta_M$ factors through a surjection of
$\F\ppar{X}$-vector spaces $\overline{\zeta_M} : D_A(\pi)/\mathfrak{p}\onto
M^{\vee}[X^{-1}]$. By definition of $\overline{\psi}$, we obtain a
commutative diagram (where $F^\vee$ is the $\F$-linear dual of $F:M\rightarrow M$ that we extend to $M^\vee[X^{-1}]$ using $F^\vee(X^{-i}f)=X^{-i}F(X^{i(p-1)}f)$)
  \[
    \begin{tikzcd}
      D_A(\pi)/\mathfrak{p}\ar[r,twoheadrightarrow,"\overline{\zeta_M}"]\ar[d,"\overline{\psi}"]
      & M^\vee[X^{-1}] \ar[d,"F^\vee"] \\
      D_A(\pi)/\mathfrak{p} \ar[r,twoheadrightarrow,"\overline{\zeta_M}"] &
      M^\vee[X^{-1}].
    \end{tikzcd}\]
It then follows from Lemma
\ref{lem:compatibility_of_psis} that, identifying $\phi^*(M^\vee)\simeq \F\bbra{X}\otimes_{\varphi,\F\bbra{X}}M^\vee$ with
$(\F\bbra{X}\otimes_{\varphi,\F\bbra{X}}M)^\vee$ via \eqref{correctformula}, the following diagram is commutative:
\begin{equation}\label{eq:commutativity_DA_Dxi}
    \begin{tikzcd}
      D_A(\pi)\ar[r,twoheadrightarrow] \ar[d,"\beta"]& D_A(\pi)/\mathfrak{p}\ar[r,twoheadrightarrow,"\overline{\zeta_M}"] \ar[d,"\overline{\beta}"] &
      M^\vee[X^{-1}]\ar[d,"(\Id\otimes F)^\vee"]\\
      \phi^*(D_A(\pi))\ar[r,twoheadrightarrow] & \phi^*(D_A(\pi)/\mathfrak{p})
      \ar[r,twoheadrightarrow,"\Id\otimes\overline{\zeta_M}"]&\phi^*(M^\vee[X^{-1}]),
    \end{tikzcd}\end{equation}
where $(\Id\otimes F)^\vee$ comes from $\F$-linear dual of $\Id\otimes F:\F\bbra{X}\otimes_{\varphi,\F\bbra{X}}M\rightarrow M$.
As $(\Id\otimes F)^\vee$ is an isomorphism (see just after \eqref{correctformula}), the map
$\zeta_M : D_A(\pi)\twoheadrightarrow M^\vee[X^{-1}]$ factors
through $D_A(\pi)^{\et}$ and the map
$\overline{\zeta_M} : D_A(\pi)/\mathfrak{p}\onto
M^\vee[X^{-1}]$ factors through $D_A(\pi)^{\et}/\mathfrak{p}$. The
map $\overline{\zeta_M} : D_A(\pi)^{\et}/\mathfrak{p}\twoheadrightarrow
M^\vee[X^{-1}]$ clearly commutes with the action of $\Zp^\times$ and
the commutative diagram \eqref{eq:commutativity_DA_Dxi} shows that
it is a morphism $\varphi$-modules. These maps are obviously
compatible when $M$ is varying among the finitely
generated $\F\bbra{X}[F]$-submodules of $\pi^{N_1}$ that are
admissible as $\F\bbra{X}$-modules and $\Zp^\times$-stable so that we obtain a
map
\[ \zeta : D_A(\pi)^{\et}/\mathfrak{p} \longrightarrow \varprojlim_M
M^\vee[X^{-1}]=D_\xi^\vee(\pi).\]

We prove that the map $\zeta$ is surjective. Since
$D_A(\pi)^{\et}/\mathfrak{p}$ is a finite-dimensional
$\F\ppar{X}$-vector space, the dimension of the vector spaces
$M^\vee[X^{-1}]$ when $M$ is varying is bounded. This implies that
there exists some $M$ such that $D_\xi^\vee(\pi)=M^{\vee}[X^{-1}]$ and
that the map $\zeta : D_A(\pi)^{\et}/\mathfrak{p}\rightarrow D_\xi^\vee(\pi)$ is
surjective. In particular, $\dim_{\F\ppar{X}}D_\xi^\vee(\pi)<+\infty$.

We prove that the map $\zeta$ is an isomorphism. Let
$D^\natural(\pi)^{\et}$ be the image of $\pi^\vee$ in
$D_A(\pi)^{\et}/\mathfrak{p}$. This is a compact $\F\bbra{X}$-module
in the finite-dimensional $\F\ppar{X}$-vector space
$D_A(\pi)^{\et}/\mathfrak{p}$, hence a finite free
$\F\bbra{X}$-module.  Since the maps
$\pi^\vee\rightarrow D_A(\pi)/\mathfrak{p}\twoheadrightarrow
D_A(\pi)^{\et}/\mathfrak{p}$ commute with the action of $\Zp^\times$,
$D^\natural(\pi)^{\et}$ is preserved by $\Zp^\times$.  The image of
$(\pi^{\vee})_S$ in $D_A(\pi)^{\et}/\mathfrak{p}$ coincides with
$D^\natural(\pi)^{\et}[X^{-1}]$. As $(\pi^{\vee})_S$ has a dense image
in $D_A(\pi)$ by definition, $D^\natural(\pi)^{\et}[X^{-1}]$ is a
dense $\F\ppar{X}$-vector subspace of $D_A(\pi)^{\et}/\mathfrak{p}$
and thus equal to $D_A(\pi)^{\et}/\mathfrak{p}$ by finiteness of the
dimension. The surjective map
$\pi^\vee\twoheadrightarrow D^\natural(\pi)^{\et}$ factors through
$\pi^\vee/\m_{N_1}\simeq(\pi^{N_1})^\vee$ so that the topological
$\F$-linear dual $(D^\natural(\pi)^{\et})^\vee$ of
$D^\natural(\pi)^{\et}$ is identified with an $\F\bbra{X}$-submodule
of $\pi^{N_1}$ (endowed with the discrete topology) preserved by
$\Zp^\times$. As $D^\natural(\pi)^{\et}$ is stable by
$\overline{\psi}$ by (\ref{psipida}), $(D^\natural(\pi)^{\et})^\vee$
is actually an $\F\bbra{X}[F]$-submodule of $\pi^{N_1}$. Since
$\beta^{\et} : D_A(\pi)^{\et}\xrightarrow{\sim}\phi^*(D_A(\pi)^{\et})$
is an isomorphism, it easily follows from Lemma
\ref{lem:compatibility_of_psis} that the map $\overline{\beta}$
induces a surjective map of finite-dimensional $\F\ppar{X}$-vector
spaces
$\overline{\beta}^{\et} :
D_A(\pi)^{\et}/\mathfrak{p}\twoheadrightarrow
\phi^*(D_A(\pi)^{\et}/\mathfrak{p})$. As these spaces have the same
dimension, $\overline{\beta}^{\et}$ is actually an isomorphism, and in
particular
$\overline{\beta}^{\et}|_{D^{\natural}(\pi)^{\et}}:D^{\natural}(\pi)^{\et}\rightarrow
\F\bbra{X}\otimes_{\varphi,\F\bbra{X}}D^{\natural}(\pi)^{\et}$ is an
injection and becomes an isomorphism after inverting $X$. 

We claim that $(D^{\natural}(\pi)^{\et})^\vee$ is finitely generated as an $\F\bbra{X}[F]$-module.
Note that $(D^{\natural}(\pi)^{\et})^\vee$ is
admissible as an $\F\bbra{X}$-module since $D^\natural(\pi)^{\et}$ is
a finitely generated $\F\bbra{X}$-module.  Hence, the claim follows from \cite[Lemma 5.2]{breuil-foncteur} using the last statement of the previous paragraph. 

We now give  another proof of the claim using results of \cite[\S4]{Lyubeznik}. 
In fact, we even prove that $(D^{\natural}(\pi)^{\et})^\vee$ is of finite length as an $\F\bbra{X}[F]$-module.
As $\F$ is a finite extension of $\Fp$, the
$\Fp\bbra{X}$-module $(D^{\natural}(\pi)^{\et})^\vee$ is artinian so
that the $\Fp\bbra{X}[F]$-module $(D^{\natural}(\pi)^{\et})^\vee$ is a
cofinite $\Fp\bbra{X}[F]$-module in the sense of \cite[\S4]{Lyubeznik}
(the ring $\Fp\bbra{X}[F]$ is isomorphic to the ring $A\set{f}$ of
\emph{loc.~cit.}  where $A=\Fp\bbra{X}$). It follows from Theorem 4.7
in \emph{loc.~cit.}~that $(D^{\natural}(\pi)^{\et})^\vee$ has a
filtration
\[ 0=M_0\subset M_1\subset\cdots\subset
  M_r=(D^{\natural}(\pi)^{\et})^\vee\] by $\Fp\bbra{X}[F]$-submodules
such that $M_{i+1}/M_i$ is a simple $\Fp\bbra{X}[F]$-module or a
nilpotent $\Fp\bbra{X}[F]$-module, i.e.~such that some power of $F$ is
zero on $M_{i+1}/M_i$. Let $M_i^\perp$ be the kernel of
$D^\natural(\pi)^{\et}\rightarrow M_i^\vee$ for all $i$. As
$\overline{\beta}^{\et}|_{D^{\natural}(\pi)^{\et}}$ coincides with
$(\Id\otimes F)^\vee$ (this is analogous to
(\ref{eq:commutativity_DA_Dxi}) using \eqref{correctformula} with
$M=(D^{\natural}(\pi)^{\et})^\vee$), the map $\overline{\beta}^{\et}$
induces an isomorphism of $\Fp\ppar{X}\otimes_{\Fp\bbra{X}}M_i^\perp$
onto $\Fp\ppar{X}\otimes_{\varphi,\Fp\bbra{X}}M_i^\perp$. In particular, if
$M_{i+1}/M_i$ is nilpotent then $F^\vee$ induces a nilpotent
endomorphism of $M_i^\perp/M_{i+1}^\perp$ so that $\Fp\ppar{X}\otimes_{\Fp\bbra{X}}M_i^\perp=\Fp\ppar{X}\otimes_{\Fp\bbra{X}}M_{i+1}^\perp$ (as 
$\Fp\ppar{X}\otimes_{\varphi,\Fp\bbra{X}}(M_i^\perp/M_{i+1}^\perp)=0$ in this case)
and hence $M_{i}^\perp/M_{i+1}^\perp$ is a torsion $\Fp\bbra{X}$-module. As
$D^\natural(\pi)^{\et}$ is a finitely generated $\Fp\bbra{X}$-module, we conclude that when $M_{i+1}/M_i$ is nilpotent the $\Fp\bbra{X}$-module $M_{i}^\perp/M_{i+1}^\perp$ is finite-dimensional over $\Fp$, in particular it is an $\Fp\bbra{X}[F]$-module of finite length.
Since $M_{i+1}/M_i$ is obviously of finite length when $M_{i+1}/M_i$ is irreducible, the claim follows.

The claim implies that  
$(D^\natural(\pi)^{\et})^\vee$ is one of the modules
$M\subseteq \pi^{N_1}$ in \S\ref{covariant}, in particular
\[\dim_{\F\ppar{X}}D_\xi^\vee(\pi)\geq \dim_{\F\ppar{X}}(D^\natural(\pi)^{\et}[X^{-1}])=\dim_{\F\ppar{X}}(D_A(\pi)^{\et}/\mathfrak{p}).\]
This implies that the map $\zeta$ is an isomorphism (and that $D_A(\pi)^{\et}/\mathfrak{p}=D^\natural(\pi)^{\et}[X^{-1}]\cong D_\xi^\vee(\pi)$). The very last statement follows from Theorem \ref{thm:exactnessDA}(ii).
\end{proof}

\subsubsection{An upper bound for the ranks of \texorpdfstring{$D_A(\pi)^{\et}$}{D\_A(pi)\^{}\textbackslash et} and \texorpdfstring{$D_\xi^\vee(\pi)$}{D\_\{xi\}\^{}v(pi)}}\label{upperb}

For $\pi$ in $\mathcal C$ we bound the dimension of $D_\xi^\vee(\pi)$ in terms of $\gr(\pi^\vee)$. When $\gr(\pi^\vee)$ is killed by some $J^n$, we give an interpretation of this bound as a certain multiplicity.

We keep all previous notation. We start with the following lemma.

\begin{lem}\label{lem:comparison_ranks}
Let $M$ be a finitely generated $A$-module endowed with a good
filtration. Then the generic rank of the $A$-module $M$ and the
generic rank of the $\gr(A)$-module $\gr(M)$ coincide.
\end{lem}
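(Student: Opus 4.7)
The plan is to extract both generic ranks from a prime filtration of $M$ as an $A$-module, exploiting the fact that $\gr A$ is a domain.

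First I would choose a prime filtration $0 = M_0 \subsetneq M_1 \subsetneq \cdots \subsetneq M_k = M$ by $A$-submodules with each subquotient of the form $M_i/M_{i-1} \simeq A/\mathfrak{p}_i$ for some prime ideal $\mathfrak{p}_i \subseteq A$; such a filtration exists because $A$ is noetherian (Corollary \ref{lem:grA}) and $M$ is finitely generated. Generic rank is additive in short exact sequences of finitely generated modules over a noetherian domain (localization at the zero prime is exact), and $\rk_A(A/\mathfrak{p}) = 1$ if $\mathfrak{p} = 0$ and $0$ otherwise, so $\rk_A(M) = \#\{i \mid \mathfrak{p}_i = 0\}$.

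Next I would transport this filtration to the graded side. Equip each $M_i$ with the induced filtration $F_n M_i \defeq F_n M \cap M_i$; since $A$ is complete, hence Zariskian, these induced filtrations are again good (see \cite[Ch.~II]{LiOy}). Each short exact sequence $0 \to M_{i-1} \to M_i \to M_i/M_{i-1} \to 0$ is then strict exact for the filtrations, so that $0 \to \gr M_{i-1} \to \gr M_i \to \gr(M_i/M_{i-1}) \to 0$ is short exact as $\gr A$-modules. Additivity of generic rank over the noetherian domain $\gr A$ then yields $\rk_{\gr A}(\gr M) = \sum_i \rk_{\gr A}(\gr(M_i/M_{i-1}))$.

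Finally I would identify $\gr(M_i/M_{i-1})$. Under the isomorphism $M_i/M_{i-1} \simeq A/\mathfrak{p}_i$ the induced quotient filtration corresponds to a shifted good filtration on $A/\mathfrak{p}_i$, whose associated graded is $\gr A / \sigma(\mathfrak{p}_i)$ up to a shift in grading, where $\sigma(\mathfrak{p}_i)$ denotes the principal symbol ideal of $\mathfrak{p}_i$ in $\gr A$. Since the filtration on $A$ is separated (as $A$ is complete), every nonzero element of $A$ has a nonzero principal symbol, so $\sigma(\mathfrak{p}_i) \neq 0$ whenever $\mathfrak{p}_i \neq 0$; in that case $\gr A/\sigma(\mathfrak{p}_i)$ is a quotient of the domain $\gr A$ by a nonzero ideal, hence has generic rank zero. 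When $\mathfrak{p}_i = 0$ it equals $\gr A$, of generic rank one. Summing gives $\rk_{\gr A}(\gr M) = \#\{i \mid \mathfrak{p}_i = 0\} = \rk_A M$, as desired. The main technical points are the verification that good filtrations restrict to good filtrations on submodules and the identification $\gr(M_i/M_{i-1}) \simeq \gr A/\sigma(\mathfrak{p}_i)$ up to shift; both are routine given completeness of $A$ and the general theory of filtered modules over Zariskian rings developed in \cite{LiOy}.
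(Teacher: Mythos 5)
Your prime-filtration strategy is a genuinely different route from the paper's, which instead produces an injection $af\colon A^{\oplus d}\hookrightarrow M$ (for $d = \rk_A M$ and suitable $a\neq 0$) with torsion cokernel and reduces to the filtered-free module $A^{\oplus d}$. Both routes rest on the same two ingredients: additivity of generic rank in strict short exact sequences, and the fact --- which the paper quotes from Bj\"ork and which you never make explicit --- that the generic rank of $\gr(N)$ is independent of the choice of good filtration on $N$.

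Your last step has a gap. The isomorphism $M_i/M_{i-1}\simeq A/\mathfrak{p}_i$ transports the subquotient filtration from $M$ to \emph{some} good filtration on $A/\mathfrak{p}_i$, but this is in general not a shift of the standard quotient filtration, so you cannot conclude that its associated graded is (a shift of) $\gr(A)/\sigma(\mathfrak{p}_i)$. For $\mathfrak{p}_i\neq 0$ the conclusion nonetheless holds by a cleaner argument: any nonzero $a\in\mathfrak{p}_i$ kills $M_i/M_{i-1}$, hence its principal symbol (nonzero, by separatedness of the filtration on $A$) kills $\gr(M_i/M_{i-1})$ for \emph{any} good filtration, and a module over the domain $\gr(A)$ killed by a nonzero element has generic rank $0$. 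The case $\mathfrak{p}_i=0$, however, genuinely requires the Bj\"ork invariance: you must know that $\gr$ of a module abstractly isomorphic to $A$, but carrying the filtration induced from $M$ rather than a filtered-free one, still has generic rank $1$. This is exactly where the paper appeals to Bj\"ork as well, to replace the induced filtration on $af(A^{\oplus d})$ by the filtered-free one. With that citation added and the premature isomorphism claim removed, your argument is correct and of comparable length to the paper's.
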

\begin{proof}
We first note that if $N$ is an $A$-module of generic rank $0$, then
$N\otimes_A\mathrm{Frac}(A)=0$ and $N$ is a torsion module. This
implies that $\gr(N)$ is a torsion module and that its generic rank
is $0$.

Let $d$ be the generic rank of $M$ and
$f : A^{\oplus d}\rightarrow M\otimes_A\mathrm{Frac}(A)$ be a morphism of
$A$-modules sending an $A$-basis of the left-hand side to a
$\mathrm{Frac}(A)$-basis of the right-hand side. The kernel of $f$
is then a torsion $A$-submodule of $A^{\oplus d}$ and is zero since $A$ is a
domain. Moreover there exists $a\in A\backslash\set{0}$ such that the
image of $af$ is contained in $M$. As $\mathrm{Frac}(A)$ is a flat
$A$-module, the generic rank is an additive map on the abelian
category of finitely generated $A$-modules. As $af$ is injective and
$A^{\oplus d}$ and $M$ have identical generic ranks, this implies that the
cokernel $Q$ of $af$ has generic rank $0$. We fix a good filtration
on $M$: it induces good filtrations on $af(A^{\oplus d})$ and on $Q$. For
these filtrations we have a short exact sequence
\[ 0\longrightarrow \gr(af(A^{\oplus d}))\longrightarrow \gr(M)
\longrightarrow \gr(Q)\longrightarrow0.\]
As $Q$ has generic rank
$0$, so does $\gr(Q)$ so that it suffices to prove that
$\gr(af(A^{\oplus d}))$ has generic rank $d$. It follows from the second
paragraph after \cite[Def.4.2]{Bj89} that, for a finitely generated
$A$-module $N$, the generic rank of $\gr(N)$ does not depend on the
choice of good filtration. We can thus choose a good filtration
$af(A^{\oplus d})\simeq A^{\oplus d}$ which is filtered free with respect to the canonical
basis of $A^{\oplus d}$, for which the result is obvious.
\end{proof}

Let $\pi$ be in the category $\mathcal C$ and choose a good filtration on the $\F\bbra{I_1/Z_1}$-module $\pi^\vee$. Since the finitely generated $A$-module $D_A(\pi)$ doesn't depend up to isomorphism on the choice of this good filtration (see \S\ref{multivariablepsi}), it follows from Lemma \ref{lem:comparison_ranks} (applied to $M=D_A(\pi)$) and Lemma \ref{lem:grlocal} (applied to $M=\pi^\vee$) that the generic rank of $\gr(A)\otimes_{\gr(\F\bbra{N_0})}\gr(\pi^\vee)$ also doesn't depend on this choice.

\begin{prop}\label{boundC}
Let $\pi\in \mathcal C$. Then $\rk_A(D_A(\pi)^{\et})=\dim_{\F\ppar{X}}D_\xi^\vee(\pi)$ is bounded by the generic rank of the $\gr(A)$-module $\gr(A)\otimes_{\gr(\F\bbra{N_0})}\gr(\pi^\vee)$. 
\end{prop}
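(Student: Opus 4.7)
The plan has two clean ingredients that fit together directly. First, I would establish the equality $\rk_A(D_A(\pi)^{\et})=\dim_{\F\ppar{X}}D_\xi^\vee(\pi)$. By Corollary \ref{cor:psiiso} the module $D_A(\pi)^{\et}$ is finite projective over $A$, and since $A$ is a domain (Corollary \ref{lem:grA}) its rank is well-defined and can be computed after any base change $A\to R$ as $\dim_R(D_A(\pi)^{\et}\otimes_A R)$. Taking $R=A/\mathfrak{p}\simeq \F\ppar{X}$ and invoking the isomorphism $D_A(\pi)^{\et}/\mathfrak{p}\simto D_\xi^\vee(\pi)$ from Theorem \ref{thm:functors_comparison} yields the equality.

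Second, I would bound this common quantity by the generic rank of $\gr(A)\otimes_{\gr(\F\bbra{N_0})}\gr(\pi^\vee)$ through the chain
\[
\rk_A(D_A(\pi)^{\et})\ \le\ \rk_A(D_A(\pi))\ =\ \mathrm{gen.rk}_{\gr(A)}\gr(D_A(\pi))\ =\ \mathrm{gen.rk}_{\gr(A)}\bigl(\gr(A)\otimes_{\gr(\F\bbra{N_0})}\gr(\pi^\vee)\bigr).
\]
The first inequality is because $D_A(\pi)^{\et}$ is by construction a quotient of $D_A(\pi)$ (so its generic rank can only drop); the middle equality is Lemma \ref{lem:comparison_ranks} applied to the finitely generated $A$-module $D_A(\pi)$ (which lies in $\mathcal{C}$ precisely so that the natural tensor product filtration on $D_A(\pi)$ is good); and the last equality follows from Lemma \ref{lem:grlocal} together with Corollary \ref{lem:grA}, which together identify $\gr(D_A(\pi))$ with $\gr(\pi^\vee)[(y_0\cdots y_{f-1})^{-1}]\simeq \gr(A)\otimes_{\gr(\F\bbra{N_0})}\gr(\pi^\vee)$.

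Since $\pi\in\mathcal{C}$, by definition there is a good filtration on $\pi^\vee$ with respect to which $\gr(D_A(\pi))$ is a finitely generated $\gr(A)$-module; by \cite[Thm.I.5.7]{LiOy} the tensor product filtration on $D_A(\pi)$ is then good, so Lemma \ref{lem:comparison_ranks} applies, and the invariance of the right-hand side under the choice of good filtration (noted just before the proposition) makes the bound intrinsic. There is no real obstacle here: the statement is essentially a bookkeeping consequence of results already in place, and the only point requiring mild care is checking that the filtration on $D_A(\pi)$ induced from a good filtration on $\pi^\vee$ is good in the sense required by Lemma \ref{lem:comparison_ranks}, which is precisely what membership in $\mathcal{C}$ guarantees.
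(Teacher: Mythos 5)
Your proof is correct and follows exactly the paper's route: the paper's proof is the one-line observation "As $D_A(\pi)^{\et}$ is a quotient of $D_A(\pi)$, the result follows from Lemma \ref{lem:comparison_ranks}, Lemma \ref{lem:grlocal} and Theorem \ref{thm:functors_comparison}," which is precisely the chain you spell out, together with the projectivity of $D_A(\pi)^{\et}$ from Corollary \ref{cor:psiiso} to pass from rank over $A$ to dimension over the residue field $A/\mathfrak{p}\simeq\F\ppar{X}$. Your version simply makes explicit the bookkeeping the paper leaves implicit; there is no substantive difference.
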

\begin{proof}
As $D_A(\pi)^{\et}$ is a quotient of $D_A(\pi)$, the result follows from Lemma \ref{lem:comparison_ranks}, Lemma \ref{lem:grlocal} and Theorem \ref{thm:functors_comparison}.
\end{proof}

When $\gr(\pi^\vee)$ is moreover killed by the ideal $J^n$ for some $n\geq 1$ (here $J$ is as in (\ref{idealJ}) and recall this doesn't depend on the good filtration), the generic rank of $\gr(A)\otimes_{\gr(\F\bbra{N_0})}\gr(\pi^\vee)$ has a nice and useful interpretation that we give now.

We define $\overline{R}\defeq \gr(\F\bbra{I_1/Z_1})/J$. Recall using (\ref{gri1}) that we have
\begin{equation}\label{rbarJ}
\overline{R}\simeq \F[y_i,z_i, 0\leq i\leq f-1]/(y_iz_i, 0\leq i\leq f-1).
\end{equation}
Therefore $\overline{R}$ has $2^f$ minimal prime ideals which are the ideals $(y_i,z_j, i\in \cJ, j\notin \cJ)$
with $\cJ$ a subset of $\set{0,\dots,f-1}$. Let
\[\mathfrak{p}_0\defeq (z_j, 0\leq j\leq f-1)\]
be the minimal prime ideal corresponding to the choice of $\cJ=\emptyset$. 

If $N$ is a finitely generated module over $\overline{R}$ and $\q$ is a minimal prime ideal of $\overline{R}$, we denote by $m_{\q}(N)$ the length of $N_{\q}$ over $\overline{R}_{\q}$. More generally, if $N$ is a finitely generated $\gr(\F\bbra{I_1/Z_1})$-module annihilated by $J^n$ for some $n\geq 1$, we define the multiplicity of $N$ at $\q$ to be
\begin{equation}\label{def:multiatp}
m_{\q}(N)=\sum_{i=0}^{n-1}m_{\q}(J^iN/J^{i+1}N).
\end{equation}

\begin{lem}\label{lem:m-additive}
If $0\ra N_1\ra N\ra N_2\ra 0$ is a short exact sequence of finitely generated $\gr(\F\bbra{I_1/Z_1})/J^n$-modules, then $m_{\q}(N)=m_{\q}(N_1)+m_{\q}(N_2)$. 
\end{lem}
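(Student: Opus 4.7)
The plan is to reduce the lemma to an independence-of-filtration statement. Specifically, I will show that for any finitely generated $\gr(\F\bbra{I_1/Z_1})$-module $M$ annihilated by $J^n$ and any filtration $M = F^0 \supseteq F^1 \supseteq \cdots \supseteq F^m = 0$ by $\gr(\F\bbra{I_1/Z_1})$-submodules with $JF^i \subseteq F^{i+1}$ for all $i$, the sum $\sum_i m_{\q}(F^i/F^{i+1})$ depends only on $M$. Each graded piece $F^i/F^{i+1}$ is then a finitely generated $\overline{R}$-module, so the multiplicities are defined, and the $J$-adic filtration is one such filtration, showing the sum equals $m_{\q}(M)$ in the sense of \eqref{def:multiatp}.

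Granting this, the lemma follows by a standard concatenation: given $0 \ra N_1 \ra N \ra N_2 \ra 0$, I would build on $N$ the filtration whose first $n$ steps are the preimages of $J^iN_2 \subseteq N_2$ (so that it descends from $N$ down to $N_1$, with graded pieces $J^iN_2/J^{i+1}N_2$), followed by the $J$-adic filtration on $N_1$ (which lands at $0$ since $J^nN_1 \subseteq J^nN = 0$). This is a filtration with $J$-graded pieces, and the sum of $m_{\q}$ over its graded pieces is manifestly $m_{\q}(N_1) + m_{\q}(N_2)$; by the independence claim, it must also equal $m_{\q}(N)$.

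The independence claim is a standard Schreier/Zassenhaus-type argument in the abelian category of $\gr(\F\bbra{I_1/Z_1})$-modules. Given two filtrations $F^\bullet$ and $G^\bullet$ of $M$ with $J$-graded pieces, I would form the common refinement $F^{i,j} \defeq F^{i+1} + (F^i \cap G^j)$ and $G^{i,j} \defeq G^{j+1} + (F^i \cap G^j)$; the Zassenhaus butterfly lemma provides isomorphisms $F^{i,j}/F^{i,j+1} \cong G^{i,j}/G^{i+1,j}$. Each refined graded piece is still killed by $J$, since $JF^{i,j} \subseteq F^{i+2} + J(F^i \cap G^j) \subseteq F^{i+1} \cap (F^{i+1} + G^{j+1}) \subseteq F^{i,j+1}$ using $JF^i \subseteq F^{i+1}$ and $JG^j \subseteq G^{j+1}$. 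Summing $m_{\q}$ over the refinements then yields $\sum_i m_{\q}(F^i/F^{i+1}) = \sum_j m_{\q}(G^j/G^{j+1})$, using that $m_{\q}$ is additive on short exact sequences of finitely generated $\overline{R}$-modules (an immediate consequence of the exactness of localization at $\q$ combined with additivity of length over $\overline{R}_{\q}$).

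I do not expect a serious obstacle. The one point worth checking is that $J$ is a \emph{two-sided} ideal of the noncommutative ring $\gr(\F\bbra{I_1/Z_1})$, so that $JF^i$ is genuinely a submodule. This follows from the commutation relations hidden in \eqref{gri1}: the $h_i$ are central, variables of different indices commute, and $[y_iz_i, y_i] = -y_ih_i$, $[y_iz_i, z_i] = h_iz_i$ both lie in $J$.
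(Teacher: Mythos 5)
Your proof is correct, and it takes a genuinely different route from the paper's.

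The paper argues by induction on $n$: the case $n=1$ is immediate from commutativity of $\overline R$, and for $n \geq 2$ one carries out a d\'evissage with three nested reductions, first when both $N_1$ and $N_2$ are killed by $J^{n-1}$, then when only $N_2$ is, then the general case, each time cutting the given exact sequence against the tail piece $J^{n-1}N$ and invoking the inductive hypothesis. Your approach instead establishes a stronger statement — invariance of $\sum_i m_\q(F^i/F^{i+1})$ over \emph{all} filtrations with $J$-graded pieces — via the Schreier refinement theorem and the Zassenhaus butterfly lemma in the abelian category of left $\gr(\F\bbra{I_1/Z_1})$-modules, and then deduces the additivity by concatenating the pulled-back $J$-adic filtration of $N_2$ with the $J$-adic filtration of $N_1$. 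This is conceptually cleaner and yields the independence-of-filtration result as a byproduct, whereas the paper's argument is more elementary (no refinement theorem) at the cost of the case analysis. Your verifications — that each $F^{i,j}$ is a submodule, that the refined graded pieces are killed by $J$, that the refined pieces are still finitely generated (Noetherianity of $\gr(\F\bbra{I_1/Z_1})$), that the concatenated filtration is $J$-graded and terminates, and that $J$ is two-sided — are all sound. The only minor cosmetic remark is that the chain of inclusions $F^{i+2} + J(F^i \cap G^j) \subseteq F^{i+1} \cap (F^{i+1} + G^{j+1}) \subseteq F^{i,j+1}$ is slightly awkwardly phrased (the middle expression equals $F^{i+1}$, which is contained in $F^{i,j+1}$ by definition), but the conclusion is right.
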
 
\begin{proof}
This is checked by a standard d\'evissage. If $n=1$, the statement is obvious since $\gr(\F\bbra{I_1/Z_1})/J=\overline R$ is commutative (and noetherian). Assume $n\geq 2$ and by induction we assume that the result holds if $N$ is annihilated by $J^{n-1}$. 

Assume first that $N_1$ and $N_2$ are both annihilated by $J^{n-1}$ (but not necessarily $N$). Then $N_2$ is a quotient of $N/J^{n-1}N$. Let $\Ker\defeq \Ker(N/J^{n-1}N\twoheadrightarrow N_2)$ be the corresponding kernel. Then we have two short exact sequences
\[0\ra \Ker \ra N/J^{n-1}N\ra N_2\ra 0\]
\begin{equation}\label{eq:N1}
0\ra J^{n-1}N\ra N_1\ra \Ker\ra0.
\end{equation}
By definition of $m_{\q}(N)$ and the inductive hypothesis, we then obtain
\[m_{\q}(N)=m_{\q}(J^{n-1}N)+m_{\q}(N/J^{n-1}N)= m_{\q}(N_1)+m_{\q}(N_2).\]

Assume now that $N_2$ is annihilated by $J^{n-1}$ (but not necessarily for $N_1$). Then the surjection $N\twoheadrightarrow N_2$ factors through the quotient $N/J^{n-1}N$ of $N$. Again let $\Ker\defeq \Ker(N/J^{n-1}N\twoheadrightarrow N_2)$. Then $m_{\q}(N/J^{n-1}N)=m_{\q}(\Ker)+m_{\q}(N_2)$ by the inductive hypothesis. On the other hand, both $J^{n-1}N$ and $\Ker$ are annihilated by $J^{n-1}$, 
thus $m_{\q}(\cdot)$ is additive for the short exact sequence \eqref{eq:N1} by the discussion in last paragraph. The result also holds in this case. 

To finish the proof it suffices to decompose further $N$ as $0\ra \Ker'\ra N\ra N_2/J^{n-1}N_2\ra0$, with $\Ker'$ sitting in the exact sequence $0\rightarrow N_1\rightarrow \Ker'\rightarrow J^{n-1}N_2\rightarrow 0$, and apply the above discussion. 
\end{proof}

If $N$ is a finitely generated module over $\gr(\F\bbra{I_1/Z_1})/J^n$ for some $n\geq 1$ recall that the $\gr(A)$-module $\gr(A)\otimes_{\gr(\F\bbra{N_0})}N$ is finitely generated by Proposition \ref{prop:finiteness}.

\begin{lem}\label{lem:rank=multi}
Let $N$ be a finitely generated module over $\gr(\F\bbra{I_1/Z_1})/J^n$ for some $n\geq 1$. Then the generic rank of the $\gr(A)$-module $\gr(A)\otimes_{\gr(\F\bbra{N_0})}N$ is equal to $m_{\p_0}(N)$.
\end{lem}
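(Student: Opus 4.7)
The plan is to first establish that both sides of the claimed equality are additive under short exact sequences of finitely generated $\gr(\F\bbra{I_1/Z_1})/J^n$-modules, and then reduce by d\'evissage to the case where $N$ is annihilated by $J$. The additivity of the right-hand side is exactly Lemma \ref{lem:m-additive}. For the left-hand side, I would observe that $\gr(A) \otimes_{\gr(\F\bbra{N_0})} (-)$ is the localization functor $(-)[(y_0 \cdots y_{f-1})^{-1}]$ on $\gr(\F\bbra{N_0})$-modules and hence exact, while the generic rank of a finitely generated module over the noetherian domain $\gr(A)$ is additive (since tensoring with the fraction field is exact). Filtering $N$ by $N \supseteq JN \supseteq \cdots \supseteq J^{n-1} N \supseteq J^n N = 0$, each successive quotient $J^i N/J^{i+1}N$ is a finitely generated $\overline R$-module, so it suffices to treat the case where $N$ is a finitely generated $\overline R$-module.

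The crucial point is then the explicit description of the local ring $\overline R_{\p_0}$. Since $\p_0 = (z_0, \dots, z_{f-1})$ and each $y_i$ lies outside $\p_0$, each $y_i$ becomes a unit in $\overline R_{\p_0}$; the relations $y_i z_i = 0$ in $\overline R$ then force $z_i = 0$ in $\overline R_{\p_0}$. Therefore $\overline R_{\p_0}$ coincides with the localization of $\F[y_0, \dots, y_{f-1}]$ at the zero ideal, namely the field $K_0 \defeq \F(y_0, \dots, y_{f-1}) = \mathrm{Frac}(\gr(A))$. In particular $m_{\p_0}(N) = \ell_{\overline R_{\p_0}}(N_{\p_0}) = \dim_{K_0} N_{\p_0} = \dim_{K_0}(N \otimes_{\overline R} K_0)$, using that the length of a module over a field equals its dimension.

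It remains to compute the left-hand side in this case: as $\gr(A) \cong \gr(\F\bbra{N_0})[(y_0\cdots y_{f-1})^{-1}]$ is a domain with fraction field $K_0$, the generic rank of $\gr(A) \otimes_{\gr(\F\bbra{N_0})} N$ equals $\dim_{K_0}(N \otimes_{\gr(\F\bbra{N_0})} K_0)$. Since inverting the $y_i$ in $\overline R$ kills all the $z_i$ (as above), we have $\overline R \otimes_{\gr(\F\bbra{N_0})} K_0 = K_0$, and therefore $N \otimes_{\gr(\F\bbra{N_0})} K_0 = N \otimes_{\overline R} K_0$, matching the right-hand side. The whole argument is quite routine once the d\'evissage is in place; the main obstacle is really just the clean identification $\overline R_{\p_0} = K_0$, after which both quantities collapse to $\dim_{K_0}(N \otimes_{\overline R} K_0)$.
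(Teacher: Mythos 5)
Your proof is correct and takes essentially the same route as the paper: reduce by d\'evissage to the case $N$ killed by $J$ using exactness of $\gr(A)\otimes_{\gr(\F\bbra{N_0})}(-)$ and additivity of $m_{\p_0}$ (Lemma \ref{lem:m-additive}), then identify both quantities with a dimension over the common field $\overline{R}_{\p_0}=\mathrm{Frac}(\gr(A))$. Your writeup is slightly more explicit that $\overline{R}_{\p_0}\cong\F(y_0,\dots,y_{f-1})$, whereas the paper instead uses the isomorphism $\gr(A)\otimes_{\gr(\F\bbra{N_0})}\overline{R}\cong\overline{R}[(y_0\cdots y_{f-1})^{-1}]\cong\gr(A)$ and notes its fraction field is $\overline{R}_{\p_0}$, but the content is identical.
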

\begin{proof}
By Corollary \ref{lem:grA}, $\gr(A)$ is flat over $\gr(\F\bbra{N_0})$, so $\gr(A)\otimes_{\gr(\F\bbra{N_0})}N$ has a finite filtration with graded pieces given by $\gr(A)\otimes_{\gr(\F\bbra{N_0})}(J^iN/J^{i+1}N)$ for $0\leq i\leq n-1$. Since taking generic rank and taking $m_{\p_0}(\cdot)$ are both additive in short exact sequences (by Lemma \ref{lem:m-additive} for the latter), we are reduced to the case where $N$ is killed by $J$. 

In that case we have
\[\gr(A)\otimes_{\gr(\F\bbra{N_0})}N\cong (\gr(A)\otimes_{\gr(\F\bbra{N_0})}\overline{R})\otimes_{\overline{R}}N.\]
Since the image of $\gr(\F\bbra{N_0})$ in $\overline{R}$ is $\F[y_0,\dots,y_{f-1}]$, we have
\[\gr(A)\otimes_{\gr(\F\bbra{N_0})}\overline{R}\simeq\overline{R}[(y_0\cdots y_{f-1})^{-1}]\simeq \gr(A).\]
Since the fraction field of $\overline{R}[(y_0\cdots y_{f-1})^{-1}]$ is just $\overline{R}_{\p_0}$, we see that the generic rank of the $\overline{R}[(y_0\cdots y_{f-1})^{-1}]$-module $\gr(A)\otimes_{\gr(\F\bbra{N_0})}N$ is equal to $m_{\mathfrak{p}_0}(N)$. 
\end{proof}

We finally deduce from Proposition \ref{boundC} and Lemma \ref{lem:rank=multi}:

\begin{cor}\label{cor:upperbound}
Let $\pi$ be an admissible smooth representation of $\GL_2(K)$ over
$\F$ with a central character having at least one good filtration such that the $\gr(\F\bbra{I_1/Z_1})$-module $\gr(\pi^\vee)$
is killed by some power of $J$. Then we have
\[ \rk_A(D_A(\pi)^{\et})=\dim_{\F\ppar{X}}D_\xi^\vee(\pi)\leq
m_{\mathfrak{p}_0}(\gr(\pi^\vee)).\]
\end{cor}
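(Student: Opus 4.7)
The corollary is an immediate consequence of the two preceding results, so my plan is simply to chain them together. First I would verify that $\pi$ belongs to the category $\mathcal{C}$: by hypothesis, for at least one good filtration on $\pi^\vee$ the $\gr(\F\bbra{I_1/Z_1})$-module $\gr(\pi^\vee)$ is killed by some power of $J$. Proposition \ref{prop:finiteness} then ensures that $D_A(\pi)$ is finite projective over $A$ and that $\gr(D_A(\pi))$ is finitely generated over $\gr(A)$, which is exactly the defining condition for membership in $\mathcal{C}$. Hence both the equality $\rk_A(D_A(\pi)^{\et})=\dim_{\F\ppar{X}}D_\xi^\vee(\pi)$ (via Corollary \ref{cor:psiiso} and Theorem \ref{thm:functors_comparison}) and the conclusion of Proposition \ref{boundC} are available.

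Next, Proposition \ref{boundC} gives
\[
\rk_A(D_A(\pi)^{\et})=\dim_{\F\ppar{X}}D_\xi^\vee(\pi) \leq \rk_{\gr(A)}\!\big(\gr(A)\otimes_{\gr(\F\bbra{N_0})}\gr(\pi^\vee)\big),
\]
where the right-hand side is the generic rank over $\gr(A)$. As noted in the paragraph preceding Proposition \ref{boundC}, this generic rank is independent of the choice of good filtration on $\pi^\vee$, so I may compute it using the filtration for which $\gr(\pi^\vee)$ is annihilated by some $J^n$. Then the hypothesis of Lemma \ref{lem:rank=multi} is met with $N=\gr(\pi^\vee)$, and that lemma identifies the generic rank with $m_{\mathfrak{p}_0}(\gr(\pi^\vee))$. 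Substituting yields the desired inequality
\[
\rk_A(D_A(\pi)^{\et})=\dim_{\F\ppar{X}}D_\xi^\vee(\pi)\leq m_{\mathfrak{p}_0}(\gr(\pi^\vee)).
\]

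There is no real obstacle here: the genuine work has already been done in establishing Proposition \ref{boundC} (which in turn rests on the comparison of generic ranks in Lemma \ref{lem:comparison_ranks}, the localization formula of Lemma \ref{lem:grlocal}, and the identification $D_A(\pi)^{\et}/\mathfrak{p}\cong D_\xi^\vee(\pi)$ of Theorem \ref{thm:functors_comparison}) and in Lemma \ref{lem:rank=multi} (which uses the additivity of multiplicity from Lemma \ref{lem:m-additive} together with the explicit description of $\gr(A)\otimes_{\gr(\F\bbra{N_0})}\overline{R}$ as the localization $\overline{R}[(y_0\cdots y_{f-1})^{-1}]$, whose fraction field is $\overline{R}_{\mathfrak{p}_0}$). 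The only point that deserves an explicit mention in the write-up is the invariance of the generic rank under change of good filtration, which lets one freely pass to a filtration where $\gr(\pi^\vee)$ is killed by a power of $J$ and hence apply Lemma \ref{lem:rank=multi}.
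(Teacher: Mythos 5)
Your proof is correct and matches the paper's approach: the paper simply states that the corollary follows from Proposition \ref{boundC} and Lemma \ref{lem:rank=multi}, which is exactly the chain of reasoning you spell out (with the membership of $\pi$ in $\mathcal{C}$ via Proposition \ref{prop:finiteness} and the filtration-independence of the generic rank both being points the paper has already established in the surrounding discussion).
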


\subsection{Tensor induction for \texorpdfstring{$\GL_2(\Q_{p^f})$}{GL\_2(Q\_\{p\^{}f\})}}\label{tensorinduction}

We prove that $V_{\GL_2}(\pi)$ (as defined in (\ref{VG})) contains some copies of a tensor induction as in Example \ref{indn=2} for certain admissible smooth representations $\pi$ of $\GL_2(K)$ over $\F$ (Theorem \ref{thm:main-tensor-ind}).

We recall that the definition of the functor $V_{\GL_2}$ depends on the choice of a cocharacter $\xi_{\GL_2}$, which we have fixed to be $\xi_{\GL_2}(x) = {\rm diag}(x,1)$, 
and depends on a normalizing character $\delta_{\GL_2} = \ind_K^{\otimes\Qp}\!(\omega)$ (cf.\ Example \ref{exdelta}).

\subsubsection{Lower bound for \texorpdfstring{$V_{\GL_2}(\pi)$}{V\_\{GL\_2\}(pi)}: statement}\label{lowerstatement}

We state the main theorem of this section on $V_{\GL_2}(\pi)$ for certain admissible smooth representations $\pi$ of $\GL_2(K)$ over $\F$ (Theorem \ref{thm:main-tensor-ind}). After some simple reductions, this theorem will be proved in \S\S\ref{prelgl2} to \ref{lowerproof}.

We keep all the previous notation and denote by $I_K$ the inertia subgroup of $\gK$. We fix an embedding $\sigma'_0:\F_{p^{2f}}\hookrightarrow \F$ such that $\sigma'_0\vert_{\F_{p^{f}}}=\sigma_0$ (see the very beginning of \S\ref{gl2}), and denote by $\omega_f,\omega_{2f}:I_K\rightarrow \F^\times$ Serre's corresponding fundamental characters of level $f$ and $2f$.

We consider $\rhobar:\gK\rightarrow \GL_2(\F)$ of the following form {\it up to twist}:
\begin{equation}\label{eq:0}
\rhobar|_{I_K} \cong
\begin{cases}
\omega_{f}^{\sum_{j=0}^{f-1}(r_j+1)p^j}\oplus 1&\text{ if $\rhobar$ is reducible,}
\\
\omega_{2f}^{\sum_{j=0}^{f-1} (r_{j}+1)p^j}\oplus \omega_{2f}^{\sum_{j=0}^{f-1}(r_j+1)p^{j+f}}
&\text{ if $\rhobar$ is irreducible,}
\end{cases}
\end{equation}
where the integers $r_i$ satisfy the following (strong) genericity condition:
\begin{equation}\label{eq:6}
\begin{aligned}
2f-1 &\le r_j \le p-2-2f &&\quad \text{if $j > 0$ or $\rhobar$ is reducible,}\\
2f &\le r_0 \le p-1-2f &&\quad \text{if $\rhobar$ is irreducible}
\end{aligned}
\end{equation}
(note that this implies in particular $p \ge 4f+1$). Let $\chi:\gK\rightarrow \F^\times$ such that $(\rhobar\otimes \chi)|_{I_K}$ is as in (\ref{eq:0}) and moreover $\det(\rhobar\otimes \chi) = \omega_f^{\sum_j(r_j+1)p^j}$\!.

We refer to \cite{paskunas-coefficients} and \cite[\S\S9,13]{BP} (and the references therein) for the background and definitions about {\it diagrams}. 

We choose {\it one} diagram $D(\rhobar\otimes \chi)=(D_1\hookrightarrow D_0)$ associated to $\rhobar\otimes \chi$ in \cite[\S5]{breuil-IL}, and we set
\begin{equation}\label{diagramchoice}
D(\rhobar)=(D_1(\rhobar)\hookrightarrow D_0(\rhobar))\defeq \big(D_1\otimes (\chi^{-1}\circ{\det})\hookrightarrow D_0\otimes (\chi^{-1}\circ{\det})\big),
\end{equation}
where the actions of $\GL_2(\cO_K)$ and the center $K^\times$ on $D_0(\rhobar)$ (resp.\ of $I$, $\smatr{0}{1}{p}{0}$ and $K^\times$ on $D_1(\rhobar)$) are multiplied by $\chi^{-1}\circ{\det}$ via local class field theory for $K$ (note that $\chi$ is trivial on $K_1$ and $I_1$ and recall that $\smatr{0}{1}{p}{0}$ normalizes $I$ and $I_1$). Recall that the action of $\GL_2(\cO_K)$ on $D_0(\rhobar)$ factors through $\GL_2(\cO_K)\twoheadrightarrow \GL_2(\F_q)$. More precisely, denoting by $W(\rhobar)$ the set of Serre weights of $\rhobar$ defined in \cite[\S3]{BDJ}, $D_0(\brho)$ is the (unique) maximal finite-dimensional representation of $\GL_2(\F_q)$ over $\F$ with socle isomorphic to $\oplus_{\sigma\in W(\rhobar)}\sigma$ such that each $\sigma\in W(\rhobar)$ occurs with multiplicity $1$ in $D_0(\brho)$. 
(For instance, recall that the Serre weight $(r_0, r_1, \dots, r_{f-1})$ with the notation of (\ref{Serreweight}) below is in $W(\brho)$ if $\brho$ is an extension of $1$ by $\omega_{f}^{\sum_{j=0}^{f-1}(r_j+1)p^j}$.)
Finally $K^\times$ acts on $D_0(\rhobar)$ by the character $\det(\rhobar)\omega^{-1}$.

If $\pi$ is an admissible smooth representation of $\GL_2(K)$ over $\F$, recall that $(\pi^{I_1}\hookrightarrow \pi^{K_1})$ is naturally a diagram. We aim to prove the following theorem.

\begin{thm}\label{thm:main-tensor-ind}
Let $\pi$ be an admissible smooth representation of $\GL_2(K)$ over $\F$. Assume that there exists an integer $r\geq 1$ such that one has an isomorphism of diagrams
\[D(\rhobar)^{\oplus r}\buildrel\sim\over\longrightarrow (\pi^{I_1}\hookrightarrow \pi^{K_1}).\]
Then one has an $I_{\Qp}$-equivariant injection $\big(\ind_{K}^{\otimes \Qp}\!(\rhobar)\big)\vert_{I_{\Qp}}^{\oplus r}\hookrightarrow V_{\GL_2}(\pi)\vert_{I_{\Qp}}$. If we assume moreover that the constants $\nu_i$ associated to $D(\rhobar\otimes \chi)$ at the beginning of {\upshape\cite[\S6]{breuil-IL}} are as in {\upshape\cite[Thm.6.4]{breuil-IL}}, then one has a $\gp$-equivariant injection $\big(\ind_{K}^{\otimes \Qp}\!(\rhobar)\big)^{\oplus r}\hookrightarrow V_{\GL_2}(\pi)$.
\end{thm}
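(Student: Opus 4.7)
My plan is to construct explicitly a finite type $\F\bbra{X}[F]$-submodule $M \subseteq \pi^{N_1}$, admissible as an $\F\bbra{X}$-module and stable under $\Zp^\times$, together with an \'etale quotient $D$ of $M^\vee[X^{-1}]$ in $\fgk$ satisfying ${\bf V}^\vee(D)\otimes \delta_{\GL_2} \cong (\ind_K^{\otimes\Qp}\!\rhobar)^{\oplus r}$ as $I_{\Qp}$-representations unconditionally, and as $\gp$-representations under the hypothesis on the $\nu_i$. Since the transition maps in the inductive limit defining $V_{\GL_2}$ are injective, the induced map ${\bf V}^\vee(D)\otimes \delta_{\GL_2} \hookrightarrow V_{\GL_2}(\pi)$ will then yield the theorem. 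First I would reduce to $r=1$ by splitting $D(\rhobar)^{\oplus r}$ into $r$ summands preserved by $I$ and by $\smatr{0}{1}{p}{0}$, then running the construction on each piece independently.

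To construct $M$, I would pick an $I$-eigenbasis $(f^{(J)})_{J\subseteq\{0,\ldots,f-1\}}$ of $\pi^{I_1}\cong D_1(\rhobar)$ coming from the $I_1$-invariants of the Serre weights of $\rhobar$ via the weight cycling of \cite[\S\S5--6]{breuil-IL}. The bounds \eqref{eq:6} force the $2^f$ $I$-eigencharacters to be pairwise distinct, and after restriction to $\Zp^\times$ and twist by $\delta_{\GL_2}^{-1}\vert_{I_{\Qp}}$ they coincide with the $2^f$ characters by which $I_{\Qp}$ acts on the weight lines of $\ind_K^{\otimes\Qp}\!(\rhobar\vert_{I_K})$. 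I will let $M$ be the $\F\bbra{X}[F]$-submodule of $\pi^{N_1}$ generated by the $f^{(J)}$, which is automatically $\Zp^\times$-stable since the $f^{(J)}$ are $\Zp^\times$-eigenvectors. Using the factorization $\xi(p)=\smatr{0}{1}{p}{0}\cdot\smatr{0}{1}{1}{0}$, the action of $\smatr{0}{1}{p}{0}$ on $D_1(\rhobar)$ encoded by the $\nu_i$, and the action of $\smatr{0}{1}{1}{0}$ inside $D_0(\rhobar)$, one can express $F(f^{(J)})$ as an $\F\bbra{X}$-linear combination of the $f^{(J')}$ plus controlled error terms in deeper $N_1$-layers of $\pi$, thus verifying admissibility of $M$ over $\F\bbra{X}$ and finite generation over $\F\bbra{X}[F]$.

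Next I would take $D$ to be the maximal \'etale quotient of $M^\vee[X^{-1}]$. Its $\Gamma$-eigencharacters, obtained by dualizing the $I$-action on the $f^{(J)}$, directly identify ${\bf V}^\vee(D)\vert_{I_{\Qp}}$ with $(\ind_K^{\otimes\Qp}\!\rhobar)\otimes \delta_{\GL_2}^{-1}\vert_{I_{\Qp}}$, giving the first (unconditional) injection. For the full $\gp$-equivariance, I would compute the Frobenius $\varphi$ on $D$ from the $F$-action on $M$ and match it with the Frobenius of $\ind_K^{\otimes\Qp}\!\rhobar$ as described in the proof of Lemma \ref{galqp}; this matching will succeed precisely when the $\nu_i$ agree with the values predicted in \cite[Thm.~6.4]{breuil-IL}.

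The hard part will be the explicit computation of $F$ on the $f^{(J)}$ and its linearization to $\varphi$ on $D$: one must track how $\xi(p)=\smatr{p}{0}{0}{1}$ acts on lifts of $I_1$-invariants in $\pi^{K_1}$, show that the resulting elements of $\pi^{N_1}$ can be written modulo sufficiently deep $\m_{N_1}$-layers in the basis $(f^{(J)})$, and verify that the bounds \eqref{eq:6} make the resulting $\varphi$ invertible after inverting $X$, so that $D$ has full rank $2^f$. The final Galois-equivariance matching will then amount to a combinatorial check governing how a Frobenius lift in $\gp$ permutes the $\F[\gKQ]$-factors of the tensor induction, balanced against the values of the $\nu_i$.
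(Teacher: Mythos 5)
Your overall framework — construct an admissible $\F\bbra{X}[F]$-submodule $M$ of $\pi^{N_1}$, take (a quotient defining) its étale $(\varphi,\Gamma)$-module, and match it with the tensor induction — is the same as the paper's, and your reduction to $r=1$ is fine. However, there is a genuine gap in the choice of generators. You take $M$ to be the $\F\bbra{X}[F]$-submodule generated by the raw $I_1$-invariants $f^{(J)}$, i.e.\ by the one-dimensional lines $\sigma^{I_1}$ for $\sigma\in W(\rhobar)$. The paper instead generates $M_\sigma$ by the ``raised'' elements $Y^{1-m}x_\sigma$, where $x_\sigma$ spans $\sigma^{N_0}$, $m=|J^{\max}(\sigma)|$, and $Y^{1-m}x_\sigma$ is the $N_1$-invariant of $\sigma$ at depth $m-1$ in the $Y$-divisible filtration (cf.\ Lemma~\ref{lem:eigenvct}(iii)). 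These two choices coincide only when $m\le 1$; for $f\ge 2$ one has $m\ge 2$ generically.

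The distinction is fatal, not technical. From (\ref{eq:F-action}) one has $Y^{s_i}F(Y^{1-m}x_i)=\mu_i Y^{1-m}x_{i+1}$, so $F(x_i)=Y^{p(m-1)}F(Y^{1-m}x_i)$ sits at depth $s_i+m-1-p(m-1)$ above $x_{i+1}$ in the $Y$-divisible line $M_{i+1,\sigma}$, and this depth can be \emph{negative} for $m\ge 2$ and $p$ in the range allowed by (\ref{eq:6}); in that case $F(x_i)=0$. More generally, $F(x_i)$ is \emph{not} an $\F\bbra{X}$-linear combination of the $f^{(J')}$ plus ``controlled error terms in deeper $N_1$-layers'' — it lies \emph{strictly higher} in the $Y$-divisible filtration than the $x_j$'s — and the $\F\bbra{X}[F]$-module generated by the $x_i$'s is a proper $\F\bbra{X}$-submodule of $M_\sigma$, often torsion. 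After dualizing and inverting $X$ one would then get a $(\varphi,\Gamma)$-module of strictly smaller rank than $n$, potentially zero, and the sought injection of the $2^f r$-dimensional tensor induction would fail. The non-obvious idea the proof actually requires is precisely the identification of the $Y^{1-m}x_\sigma$ as the correct generators, together with the delicate computation (Proposition~\ref{prop:F-action}, depending on the strengthened genericity (\ref{eq:6})) showing the $F$-operator cycles them cleanly: $Y^{s_i}F(Y^{1-m}x_i)=(-1)^{f-1}Y^{1-m}x_{\delta(\sigma)}$. Without this raising, the construction collapses. (As a minor point, your factorization should read $\xi(p)=\smatr{0}{1}{1}{0}\smatr{0}{1}{p}{0}$, not the other order.)
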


Let us first make some straightforward reductions. In order not to repeat arguments, we assume from now on that the constants $\nu_i$ associated to $D(\rhobar\otimes \chi)$ in \cite[\S6]{breuil-IL} are as in \cite[Thm.6.4]{breuil-IL} and we will prove the last statement of Theorem \ref{thm:main-tensor-ind} (the proof for the first one being the same up to some trivial modifications). It is enough to prove Theorem \ref{thm:main-tensor-ind} for the $\GL_2(K)$-subrepresentation of $\pi$ generated by $D_0(\rhobar)^{\oplus r}$. Hence we can assume that $\pi$ has a central character which is $\chi_\pi\defeq \det(\rhobar)\omega^{-1}$. Using Remark \ref{trivial}(ii) (for $n=2$), it is also enough to prove Theorem \ref{thm:main-tensor-ind} for $\rhobar\otimes \chi$ as above and replacing $\pi$ by $\pi\otimes\chi\circ \det$, i.e.\ we can assume $\rhobar|_{I_K}$ is as in (\ref{eq:0}) and $\det(\rhobar) = \omega_f^{\sum_j(r_j+1)p^j}$.

In the sequel, for any $\F\bbra{X}[F]$-submodule $M$ of $\pi^{N_1}$ which is stable under $\Zp^\times$, denote by $M\otimes \chi_\pi^{-1}$ the same $\F\bbra{X}$-module but where the action of $F$ is multiplied by $\chi_\pi(p)^{-1}$ and the action of $x\in \Zp^\times$ is multiplied by $\chi_\pi(x)^{-1}$. 

\begin{lem}\label{twistanddual}
With the notation in \S\ref{covariant}, in order to prove Theorem \ref{thm:main-tensor-ind} it is enough to prove that $(\pi \otimes \chi_\pi^{-1})^{N_1}$ contains a finite type $\F\bbra{X}[F]$-submodule $M$ which is admissible as an $\F\bbra{X}$-module and stable under $\Zp^\times$ such that ${\bf V}(M^\vee[1/X])\cong \big(\ind_{K}^{\otimes \Qp}\!(\rhobar)\big)^{\oplus r}$.
\end{lem}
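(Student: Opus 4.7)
The plan is to read off the Galois injection directly from the definition of $V_{\GL_2}$ and the twisting formula in Remark \ref{trivial}(ii), keeping track of the various character twists. The key observation is that, from the inductive-limit definition (\ref{VG}), any admissible $\Zp^\times$-stable finite type $\F\bbra{X}[F]$-submodule $M \subset (\pi \otimes \chi_\pi^{-1})^{N_1}$ produces a canonical $\gp$-equivariant injection
\[
{\bf V}^\vee(M^\vee[X^{-1}]) \otimes \delta_{\GL_2} \hookrightarrow V_{\GL_2}(\pi \otimes \chi_\pi^{-1}),
\]
since all transition maps in the filtered colimit are injective. I would then invoke Remark \ref{trivial}(ii) (for $n = 2$ and the twisting character $\chi_\pi^{-1}$) to obtain $V_{\GL_2}(\pi) \cong V_{\GL_2}(\pi \otimes \chi_\pi^{-1}) \otimes \ind_K^{\otimes\Qp}(\chi_\pi)$, transporting the above injection into $V_{\GL_2}(\pi)$.

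The remainder is a twist calculation. The hypothesis ${\bf V}(M^\vee[X^{-1}]) \cong (\ind_K^{\otimes\Qp}\rhobar)^{\oplus r}$, combined with the duality of ${\bf V}$ and ${\bf V}^\vee$, the formula $\rhobar^\vee \cong \rhobar \otimes \det(\rhobar)^{-1}$ valid for two-dimensional $\rhobar$, the fact that tensor induction commutes with duals, and the compatibility $\ind_K^{\otimes\Qp}(V \otimes \chi) \cong \ind_K^{\otimes\Qp}(V) \otimes \ind_K^{\otimes\Qp}(\chi)$ for a character $\chi$, allows one to rewrite the left-hand side of the resulting injection as
\[
(\ind_K^{\otimes\Qp}\rhobar)^{\oplus r} \otimes \ind_K^{\otimes\Qp}\big(\omega \cdot \chi_\pi \cdot \det(\rhobar)^{-1}\big),
\]
where we have used $\delta_{\GL_2} = \ind_K^{\otimes\Qp}(\omega)$ from Example \ref{exdelta}. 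Substituting $\chi_\pi = \det(\rhobar)\omega^{-1}$ collapses the inner character to the trivial one, yielding the claimed injection $(\ind_K^{\otimes\Qp}\rhobar)^{\oplus r} \hookrightarrow V_{\GL_2}(\pi)$.

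No step is genuinely hard; the entire reduction is a bookkeeping of twists relating the normalization of $V_{\GL_2}$, the central character $\chi_\pi = \det(\rhobar)\omega^{-1}$, and the character $\delta_{\GL_2}$ from Example \ref{exdelta}. The only point requiring care is the three-way compatibility of tensor induction with duality, tensor products, and character twists, but all three are standard (cf.\ \cite{Coll}). The weaker $I_{\Qp}$-equivariant assertion of Theorem \ref{thm:main-tensor-ind} follows from the same plan after restricting every character involved to the inertia subgroup.
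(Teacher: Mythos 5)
Your argument is correct and follows essentially the same path as the paper's proof: both hinge on the filtered colimit description (\ref{VG}) of $V_{\GL_2}$, the twisting formula of Remark~\ref{trivial}(ii), the duality isomorphism (\ref{dual}), and Example~\ref{exdelta} for $\delta_{\GL_2}$. The only presentational difference is that you route the injection through $V_{\GL_2}(\pi\otimes\chi_\pi^{-1})$ and then untwist back to $V_{\GL_2}(\pi)$, whereas the paper exploits the equality of underlying vector spaces $(\pi\otimes\chi_\pi^{-1})^{N_1}=\pi^{N_1}$ to treat $M$ with both $\F\bbra{X}[F]$-structures simultaneously and compute ${\bf V}^\vee(M^\vee[1/X])\otimes\delta_{\GL_2}$ directly; these are equivalent ways of organizing the same bookkeeping.
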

\begin{proof}
As $(\pi \otimes \chi_\pi^{-1})^{N_1}=\pi^{N_1}$ as $\F$-vector subspaces of $\pi$, it is equivalent to assume that $\pi^{N_1}$ contains a finite type $\F\bbra{X}[F]$-submodule $M$ which is admissible as an $\F\bbra{X}$-module and stable under $\Zp^\times$ such that ${\bf V}((M\otimes \chi_\pi^{-1})^\vee[1/X])\cong \big(\ind_{K}^{\otimes \Qp}\!(\rhobar)\big)^{\oplus r}$. From the definition of $V_{\GL_2}$ in (\ref{VG}), it is enough to prove ${\bf V}^\vee(M^\vee[1/X])\otimes \delta_{\GL_2}\cong \big(\ind_{K}^{\otimes \Qp}\!(\rhobar)\big)^{\oplus r}$. From Example \ref{exdelta} and as in Remark \ref{trivial}(ii) (both for $n=2$), we have
\begin{eqnarray*}
{\bf V}^\vee(M^\vee[1/X])\otimes \delta_{\GL_2}&=&{\bf V}\big((M\otimes \chi_\pi^{-1})^\vee[1/X]\big)^\vee\otimes (\chi_{\pi}\omega)\vert_{\Qp^{\times}}\\
&=&\big(\big(\ind_{K}^{\otimes \Qp}\!(\rhobar)\big)^{\oplus r}\big)^\vee\otimes \ind_K^{\otimes\Qp}\!\!\big({\det(\rhobar)}\big)\\
&\buildrel (\ref{dual})\over =& \big(\ind_{K}^{\otimes \Qp}\!(\rhobar)\big)^{\oplus r}
\end{eqnarray*}
which finishes the proof.
\end{proof}

The sections that follow will be devoted to the proof that there exists a certain finite type $\F\bbra{X}[F]$-submodule $M_\pi$ of $\pi^{N_1}$ which is admissible as an $\F\bbra{X}$-module and stable under $\Zp^\times$ such that ${\bf V}((M_\pi\otimes \chi_\pi^{-1})^\vee[1/X])\cong \big(\ind_{K}^{\otimes \Qp}\!(\rhobar)\big)^{\oplus r}$ (see Proposition \ref{prop:main:functor}). Note that the assumption $\det(\rhobar) = \omega_f^{\sum_j(r_j+1)p^j}$ implies $\chi_\pi(p)=1$, so that the operator $F$ on $M_\pi\otimes \chi_\pi^{-1}$ is the same as on $M_\pi$, but the action of $\gamma\in \Zp^\times$ now comes from the action of $\smatr{1}{0}{0}{\gamma^{-1}}$ on $\pi^{N_1}$.

\subsubsection{Preliminaries}\label{prelgl2}

We give some technical results on $\F\bbra{N_0}$, $\F\bbra{N_0/N_1}$ and on certain modules over these rings coming from Serre weights.

We let ${H}\defeq \smatr{\F_q^{\times}}00{\F_q^{\times}}\simeq I/I_1\subseteq \GL_2(\Fq)$ (this finite group $H$ shouldn't be confused with the algebraic group $H$ in \S\ref{covariant} or in \S\ref{global}). Note that the trace ${\rm Tr}_{K/\Qp}:\oK\rightarrow \Zp$ is surjective (using that $K$ is unramified) hence directly induces an isomorphism $N_0/N_1\buildrel\sim\over\rightarrow \Zp$. Recall we defined the elements $Y_i$ for $i\in \{0,\dots,f-1\}$ in (\ref{elementsyi}). We define analogously
\[Y\defeq  \sum_{a\in \Fp^\times}a^{-1}\begin{pmatrix}
    1 & \tld{a} \\ 0 & 1
  \end{pmatrix}\in \F\bbra{\Zp} =\F\bbra{N_0/N_1}.\]
We write $\un{i}$ for an element $(i_0,\dots,i_{f-1})$ in $\Z^f$, $\un{Y}^{\un{i}}$ for $Y_0^{i_0}\cdots Y_{f-1}^{i_{f-1}}$ and set $\|\un{i}\|\defeq  \sum_{j=0}^{f-1}i_j$. We also write $\un i \le \un{i}'$ to mean $i_j \le i'_j$ for all $0 \le j \le f-1$.

\begin{lem}\label{lem:basic}
We have the following isomorphisms and equalities:
\begin{enumerate}
\item $\F\bbra{N_0}\cong\F\bbra{Y_0,\dots,Y_{f-1}} $ and $\F[N_0/N_0^p]\cong\F\bbra{Y_0,\dots,Y_{f-1}} /(Y_0^p,\dots,Y_{f-1}^p)$;
\item $Y_i^p\smatr{p}{0}{0}{1}=\smatr{p}{0}{0}{1}Y_{i+1}$ and $\smatr{\tld{\lambda}}{0}{0}{\tld{\mu}}Y_i=(\lambda\mu^{-1})^{p^i}Y_i\smatr{\tld{\lambda}}{0}{0}{\tld{\mu}}$ for $\lambda,\mu\in \Fq^\times$;
\item $\F\bbra{N_0/N_1} \cong \F\bbra{Y}$ and $\smatr{\tld{\lambda}}{0}{0}{\tld{\mu}}Y=(\lambda\mu^{-1})Y\smatr{\tld{\lambda}}{0}{0}{\tld{\mu}}$ for $\lambda,\mu\in \Fp^\times$. 
\end{enumerate}

\end{lem}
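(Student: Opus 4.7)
The plan is to treat (i) and (iii) as structural statements identifying topological generators of Iwasawa algebras, and (ii) as two direct computations that also feed back to identify the images of the $Y_i$ in $\m_{N_0}/\m_{N_0}^2$.

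For part (i), since $N_0 \cong \cO_K$ as topological groups (additively) and $\cO_K$ is a free $\Z_p$-module of rank $f$, $\F\bbra{N_0}$ is a regular complete local $\F$-algebra of dimension $f$. By Nakayama and a dimension count, it suffices to exhibit the images of $Y_0,\dots,Y_{f-1}$ in $\m_{N_0}/\m_{N_0}^2 \cong \cO_K\otimes_{\Z_p}\F \cong \bigoplus_{j=0}^{f-1}\F$ (the latter splitting via the embeddings $\sigma_j$) as an $\F$-basis. The key computation is that, modulo $\m_{N_0}^2$,
\[
Y_i \,\equiv\, \sum_{a \in \Fq^\times} \sigma_0(a)^{-p^i}\Bigl(\smatr{1}{\tld a}{0}{1}-1\Bigr),
\]
because the constant term $\sum_a\sigma_0(a)^{-p^i}$ vanishes as $a\mapsto\sigma_0(a)^{-p^i}$ is a nontrivial character of $\Fq^\times$ for $0 \le i \le f-1$. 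Evaluating $\smatr{1}{\tld a}{0}{1} - 1 \equiv \tld a \otimes 1$ modulo $\m_{N_0}^2$ and projecting onto the $j$-th factor yields $\sum_{a}\sigma_0(a)^{p^j - p^i}$, which equals $q-1$ when $j=i$ and $0$ otherwise by orthogonality of characters. Hence the $Y_i$ form a regular system of parameters. The second isomorphism then follows from the first: since $\smatr{1}{\tld a}{0}{1}^p = \smatr{1}{p\tld a}{0}{1}$ in $\F\bbra{N_0}$ and Frobenius is additive in characteristic $p$, the kernel of $\F\bbra{N_0}\twoheadrightarrow\F[N_0/N_0^p]$ is generated by $(Y_0^p,\dots,Y_{f-1}^p)$.

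Part (ii) is verified by direct calculation in the monoid algebra generated by $N_0$ and the matrices $\smatr{p}{0}{0}{1}$, $\smatr{\tld\lambda}{0}{0}{\tld\mu}$. Since $\F\bbra{N_0}$ is commutative of characteristic $p$ and $\smatr{1}{x}{0}{1}^p = \smatr{1}{px}{0}{1}$, Frobenius additivity gives
\[
Y_i^p \,=\, \sum_{a \in \Fq^\times}\sigma_0(a)^{-p^{i+1}}\left(\begin{smallmatrix} 1 & p\tld a \\ 0 & 1\end{smallmatrix}\right).
\]
Right-multiplying by $\smatr{p}{0}{0}{1}$ and expanding $\smatr{p}{0}{0}{1}Y_{i+1}$ on the other side (both become $\sum_a\sigma_0(a)^{-p^{i+1}}\smatr{p}{p\tld a}{0}{1}$) proves the first identity. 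For the torus identity, $\smatr{\tld\lambda}{0}{0}{\tld\mu}\smatr{1}{\tld a}{0}{1} = \smatr{1}{\widetilde{\lambda\mu^{-1}a}}{0}{1}\smatr{\tld\lambda}{0}{0}{\tld\mu}$ by multiplicativity of the Teichm\"uller lift; the change of variable $b = \lambda\mu^{-1}a$ in the defining sum of $Y_i$ extracts the scalar $\sigma_0(\lambda\mu^{-1})^{p^i}$, identified with $(\lambda\mu^{-1})^{p^i}\in\F^\times$ via $\sigma_0$.

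Part (iii) is parallel to (i) and (ii). The surjectivity of $\mathrm{Tr}_{K/\Qp} : \cO_K \twoheadrightarrow \Z_p$ (using that $K/\Qp$ is unramified) gives $N_0/N_1 \cong \Z_p$, so $\F\bbra{N_0/N_1}$ is a one-variable power series ring; the same orthogonality argument (specialized to $\Fp^\times$) shows $Y$ has nonzero image in the square of the maximal ideal, hence is a topological generator. The commutation identity is the specialization of (ii) to $\lambda\mu^{-1}\in\Fp^\times$, where $(\lambda\mu^{-1})^{p^i}=\lambda\mu^{-1}$. The main obstacle throughout is purely notational bookkeeping---tracking Teichm\"uller lifts, the Frobenius twists $p^i$ on characters, and the identification of $\sigma_0(\Fq)\subset\F$ with $\Fq$---as every identity reduces to an elementary computation in the group algebra of a commutative pro-$p$ group in characteristic $p$.
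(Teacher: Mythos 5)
Your proof is correct, and it is somewhat different in presentation from the paper's, which simply cites \cite[Lemma 3.2]{morra-iwasawa} and \cite[Lemma 5.1]{morra-iwasawa} (after conjugating by $\smatr{0}{1}{p}{0}$) for the identification $\F\bbra{N_0}\cong\F\bbra{Y_0,\dots,Y_{f-1}}$ and the commutation relations in (ii), with the second statement in (i) obtained by a dimension count. You instead carry out the verification from scratch: the orthogonality argument identifying the images of the $Y_i$ in $\m_{N_0}/\m_{N_0}^2 \cong \cO_K\otimes_{\Z_p}\F\cong\bigoplus_j\F$ is exactly what underlies Morra's lemma, and the direct expansions for (ii) and (iii) are the same elementary manipulations. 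Your self-contained version buys transparency at the cost of length; the paper's version leans on the reference. One small point worth making explicit in your argument for the second isomorphism in (i): after observing that $Y_i^p$ maps to zero in $\F[N_0/N_0^p]$ (Frobenius plus orthogonality of the nontrivial character $a\mapsto\sigma_0(a)^{-p^{i+1}}$), you should either complete the argument by the dimension count $\dim_\F\F[N_0/N_0^p]=p^f=\dim_\F\F\bbra{Y_0,\dots,Y_{f-1}}/(Y_0^p,\dots,Y_{f-1}^p)$, as the paper does, or note that for $n\in N_0$ the element $[n]-1$ has vanishing constant term, so $([n]-1)^p=\sum_{\un{i}\ne\un{0}}c_{\un{i}}^p\un Y^{p\un i}\in(Y_0^p,\dots,Y_{f-1}^p)$; the phrase ``Frobenius is additive'' gestures at this but does not quite close it.
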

\begin{proof}
Note that $\F[N_0/N_0^p]\cong \F\big[\smatr{1}{\Fq}{0}{1}\big]$. The first equality in (i) and the explicit action of $\smatr{\tld{\lambda}}{0}{0}{\tld{\mu}}$ on $\un{Y}^{\un{i}}$ in (ii) are immediately obtained from \cite[Lemma 3.2]{morra-iwasawa} (after conjugating by the element $\smatr{0}{1}{p}{0}$). The second equality in (i) follows from the first by dimension reasons, as $Y_i^p = 0$ in $\F[N_0/N_0^p]$. The action of $\smatr{p}{0}{0}{1}$ on $Y_{i+1}$ in (ii) is a direct computation (see also \cite[Lemma 5.1]{morra-iwasawa}). Finally, (iii) is a special case of (i) and (ii).
\end{proof}

Note that $\F\bbra{N_0/N_1}\cong \F\bbra{X}\cong \F\bbra{Y}$ with $X=\smatr{1}{1}{0}{1}-1$ as in \S\ref{covariant}, but it is more convenient in the computations to use the ``$H$-eigenvariable'' $Y$ rather than the variable $X$. To compare them the following lemma will be useful.

\begin{lem}\label{lem:XY}
We have $X\in-Y(1+Y\F\bbra{Y})$ and $Y\in-X(1+X\F\bbra{X})$ in $\F\bbra{N_0/N_1}$.
\end{lem}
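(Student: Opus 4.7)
The plan is to reduce both containments to the single congruence $Y \equiv -X \pmod{X^2}$, after which the result follows from standard manipulations since $\F\bbra{N_0/N_1}\simeq\F\bbra{X}$ is a complete discrete valuation ring with uniformizer $X$.

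First I would compute $Y$ modulo $X^2$. Under the identification $\F\bbra{N_0/N_1}\simeq\F\bbra{X}$ coming from the fixed isomorphism $N_0/N_1\simeq\Zp$ of \S\ref{covariant} and the convention $[c]=(1+X)^c$ for $c\in\Zp$, each matrix $\smatr{1}{\tilde{a}}{0}{1}$ with $a\in\Fp^\times$ is identified with $(1+X)^{c(a)}$ for some $c(a)\in\Zp$ with $c(a)\equiv a\pmod{p}$ (the normalization of $\psi$ being chosen so that this holds). Expanding the binomial series modulo $X^2$ gives
\[
Y \;\equiv\; \sum_{a\in\Fp^\times}a^{-1}\bigl(1+c(a)X\bigr) \;\equiv\; \Big(\sum_{a\in\Fp^\times}a^{-1}\Big) + X\Big(\sum_{a\in\Fp^\times}a^{-1}c(a)\Big) \pmod{X^2},
\]
and both sums evaluate in $\F$: the first is zero since $p>2$, and the second reduces to $\sum_{a\in\Fp^\times}1=p-1\equiv -1$. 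Therefore $Y=-X\bigl(1+Xg(X)\bigr)$ for some $g\in\F\bbra{X}$, which is the second containment.

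For the first containment, $u\defeq 1+Xg(X)$ is a unit in $\F\bbra{X}$, so inverting yields $X=-Yu^{-1}$. Because $Y$ is itself a uniformizer of $\F\bbra{X}$ (its image in $\m/\m^2$ is $-X\ne 0$), we have $\F\bbra{Y}=\F\bbra{X}$ and $X\F\bbra{X}=Y\F\bbra{Y}$; hence $u^{-1}\in 1+X\F\bbra{X}=1+Y\F\bbra{Y}$, giving $X\in -Y(1+Y\F\bbra{Y})$. The argument contains no real obstacle — the only delicate point is pinning down the normalization of the fixed isomorphism $N_0/N_1\simeq\Zp$ so that the linear coefficient of $Y$ in $X$ is exactly $-1$ rather than a non-unit multiple, which is a convention constrained by the statement itself.
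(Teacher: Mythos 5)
Your proof is correct and follows essentially the same route as the paper's, which likewise reduces both containments to the single congruence $Y\equiv -X\pmod{\fm^2}$ and verifies it by expanding in the finite quotient $\F[N_0/N_1N_0^p]$. One small clarification: the concern at the end about needing to "pin down the normalization" of $\psi$ is a non-issue, since both $X$ and $Y$ are defined directly via group elements of $N_0/N_1$ (namely $\smatr{1}{1}{0}{1}$ and $\smatr{1}{\tld{a}}{0}{1}$) rather than via their images under $\psi\circ\mathrm{Tr}_{K/\Qp}$; as $\smatr{1}{\tld{a}}{0}{1}=\smatr{1}{1}{0}{1}^{\tld{a}}$ in the rank-one $\Zp$-module $N_0/N_1$, one has $c(a)=\tld{a}$ and hence $c(a)\equiv a\pmod p$ automatically, with no convention to fix.
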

\begin{proof}
Equivalently, we have to prove ${Y}=-{X}$ in $\fm/\fm^2$, where $\fm$ is the maximal ideal of $\F\bbra{N_0/N_1}$. We can work modulo $\fm^p$, i.e.\ in $\F[ N_0/N_1 N_0^p]\cong \F\big[\smatr{1}{\Fp}{0}{1}\big]$. In that group ring we have
\[Y = \underset{a\in \Fp^\times}{\sum}a^{-1}\smatr{1}{a}{0}{1} = \sum_{a=1}^{p-1}a^{-1}(1+X)^a \equiv -X\]
(where the last congruence is taken modulo the image of $\fm^2$ in that group ring).
\qedhere
\end{proof}

For $\lambda,\mu\in \Fq^\times$ we set
\[\alpha\big(\smatr{\tld{\lambda}}{0}{0}{\tld{\mu}}\big)\defeq \lambda\mu^{-1}\in \F^\times.\]

\begin{rem}\label{switchchi}
By Lemma \ref{lem:basic}(ii), if $V$ is a representation of $\GL_2(\Fq)$ and $v\in V^{H=\chi}$, then $\un{Y}^{\un{i}}v\in V^{H=\chi\alpha^{\un{i}}}$, where $\alpha^{\un{i}}\defeq  \alpha^{\sum_{j=0}^{f-1}i_jp^j}$.
\end{rem}

\begin{lem}\label{lem:ker}
Assume $p>2$. The kernel of the map $h : \F\bbra{N_0} \onto \F\bbra{N_0/N_1} $ is generated by the elements $Y_i-Y_j$ \emph{(}$i\neq j$\emph{)}. Moreover, there exists $f(Y)\in \F\bbra{N_0/N_1}\cong\F\bbra{Y} $ such that $h(Y_i) = Y+Y^pf(Y)$.
\end{lem}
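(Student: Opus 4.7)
The strategy is to work in a carefully chosen $\Zp$-basis of $\oK$ and to exploit $H$-equivariance throughout.

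\emph{Setup.} I would first choose a $\Zp$-basis $e_0,\dots,e_{f-1}$ of $\oK$ with $\mathrm{Tr}_{K/\Qp}(e_0)=1$ and $\mathrm{Tr}_{K/\Qp}(e_j)=0$ for $j\geq 1$; such a basis exists because $K/\Qp$ is unramified so the trace is surjective. Setting $Z_j\defeq [e_j]-1$, we have $\F\bbra{N_0}\cong\F\bbra{Z_0,\dots,Z_{f-1}}$, and the map $h$ sends $Z_0\mapsto X$ and $Z_j\mapsto 0$ for $j\geq 1$. In particular $\ker(h)=(Z_1,\dots,Z_{f-1})$, an ideal minimally generated by $f-1$ elements.

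\emph{Generators.} A change of variables $a\mapsto \varphi^{-i}(a)$ in the sum defining $Y_i$, using $\sigma_i\circ\varphi^{-i}=\sigma_0$ and the fact that $\mathrm{Tr}_{K/\Qp}$ commutes with $\varphi$, gives $h(Y_i)=h(Y_0)$ for all $i$; hence $Y_i-Y_0\in\ker(h)$ for $i=1,\dots,f-1$. By Nakayama it suffices to show their images are $\F$-linearly independent in $\ker(h)/\m_{N_0}\ker(h)\cong\bigoplus_{j\geq 1}\F\overline{Z}_j$. Let $c_0,\dots,c_{f-1}\in\F_q$ be the $\F_p$-basis dual to $\overline{e}_0,\dots,\overline{e}_{f-1}$ under the trace pairing; direct inspection shows $c_0=1$. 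Using the character orthogonality $\sum_{a\in\F_q^\times}(\sigma_k\sigma_i^{-1})(a)=-\delta_{i,k}$ and expanding $[\widetilde{a}]-1\equiv\sum_j\overline{t_{a,j}}\overline{Z}_j\pmod{\m_{N_0}^2}$ with $\overline{t_{a,j}}=\mathrm{Tr}_{\F_q/\F_p}(c_j a)$, a direct computation yields $Y_i\equiv-\sum_j\sigma_i(c_j)\overline{Z}_j\pmod{\m_{N_0}^2}$, so the relevant matrix is $A=[\sigma_i(c_j)-\sigma_0(c_j)]_{1\leq i,j\leq f-1}$. To see $A$ is nonsingular, consider the $f\times f$ Moore matrix $B=[\sigma_i(c_j)]_{0\leq i,j\leq f-1}$: it is invertible because $c_0,\dots,c_{f-1}$ are $\F_p$-linearly independent in $\F_q$ (standard Moore determinant argument). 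Subtracting row $0$ from rows $i\geq 1$ yields a matrix whose first column is $(1,0,\dots,0)^T$ (using $c_0=1$), and expansion along that column gives $\det(A)=\det(B)\neq 0$.

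\emph{The moreover statement.} It remains to show $h(Y_0)\equiv Y\pmod{Y^p}$ in $\F[\F_p]=\F\bbra{Y}/(Y^p)$. By Lemma~\ref{lem:basic}(ii), $Y_0$ is an $\alpha$-eigenvector for $H$, and since $\alpha^{p^i}|_{H_0}=\alpha|_{H_0}$ for all $i$ (as $p\equiv 1\pmod{p-1}$), $h(Y_0)$ lies in the $\alpha|_{H_0}$-eigenspace of $\F[\F_p]$, where $H_0\defeq\{\smatr{\tld{\lambda}}{0}{0}{\tld{\mu}}:\lambda,\mu\in\F_p^\times\}\subset H$. By Lemma~\ref{lem:basic}(iii), $H_0$ acts on $Y^k$ via $\alpha^k$, and among $k\in\{0,\dots,p-1\}$ only $k=1$ satisfies $\alpha^k|_{H_0}=\alpha|_{H_0}$; so $h(Y_0)\equiv cY\pmod{Y^p}$ for some $c\in\F$. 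Computing modulo $X^2$ via $h([\widetilde{a}])=(1+X)^{\mathrm{Tr}_{K/\Qp}(\widetilde{a})}$, and using that the image of $\mathrm{Tr}_{\F_q/\F_p}(a)$ in $\F$ equals $\sum_{k=0}^{f-1}\sigma_k(a)$ together with the orthogonality above, we obtain $h(Y_0)\equiv-X\pmod{X^2}$. Combined with Lemma~\ref{lem:XY} (which gives $X\equiv-Y\pmod{Y^2}$), this forces $c=1$, concluding the proof.

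The main technical obstacle is proving the nonsingularity of $A$, for which the trick is to relate it by row reduction to a standard Moore determinant; the rest is a careful but routine computation.
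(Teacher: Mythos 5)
Your argument is correct, but it takes a genuinely different and noticeably heavier route than the paper's. For the generation statement, the paper first observes (exactly as you do) that $h(Y_i)=h(Y_0)$ for all $i$ by the substitution $a\mapsto a^{p^i}$, so $Y_i-Y_j\in\Ker(h)$; it then finishes in one line by noting that $\F\bbra{N_0}/(Y_i-Y_j,i\neq j)\cong\F\bbra{Y_0}$ (this is immediate from Lemma~\ref{lem:basic}(i)) and $\F\bbra{N_0/N_1}$ are both one-dimensional complete local domains, so the induced surjection must be an isomorphism. This dimension argument avoids your entire Nakayama/Moore-determinant computation: you instead pick a trace-adapted basis $e_0,\dots,e_{f-1}$, reduce to linear independence in $\Ker(h)/\m_{N_0}\Ker(h)$, and prove nonsingularity of $[\sigma_i(c_j)-\sigma_0(c_j)]_{1\le i,j\le f-1}$ by row-reducing the full Moore matrix. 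That computation is correct (your identity $Y_i\equiv-\sum_j\sigma_i(c_j)\overline Z_j\bmod\m_{N_0}^2$ checks out, and $c_0=1$ so the row reduction works), but it is a heavier and less structural argument than the paper's slick domain/dimension count. For the ``moreover'' statement the paper again works directly: it reduces $h(Y_0)$ into $\F[\smatr{1}{\F_p}{0}{1}]$ and evaluates $\sum_{\lambda,\Tr_{\F_q/\F_p}(\lambda)=a}\lambda^{-1}$ via Vieta on the polynomials $Y^{p^{f-1}}+\dots+Y-a$, getting $1/a$ for $a\neq0$ and $0$ for $a=0$ (using $p>2$), so the image is exactly $Y$. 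Your route — use $H_0$-equivariance of $h$ and Lemma~\ref{lem:basic}(ii),(iii) to confine $h(Y_0)$ to the one-dimensional $\alpha|_{H_0}$-eigenline $\F Y$ of $\F[\F_p]$, then pin down the scalar by a linear-term computation and Lemma~\ref{lem:XY} — is also correct and has the advantage of explaining \emph{why} the answer is a pure multiple of $Y$ rather than relying on an explicit partial-fraction identity, at the cost of two steps instead of one. Both proofs are valid; the paper's is shorter and more elementary, yours is more conceptual in the eigenspace step but technically more elaborate overall.
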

\begin{proof}
Note that ${\rm Tr}_{K/\Qp}(\tld{\lambda}^{p^i})={\rm Tr}_{K/\Qp}(\tld{\lambda})$ for all $\lambda\in \Fq^\times$ and $i\in\Z$, hence $Y_i-Y_j \in \Ker(h)$. As $\F\bbra{N_0} /(Y_i-Y_j, i\neq j)$ and $\F\bbra{N_0/N_1} $ are both isomorphic to power series rings in one variable, the quotient map $\F\bbra{N_0}/(Y_i-Y_j,i\ne j) \onto \F\bbra{N_0/N_1}$ has to be an isomorphism. To establish the final claim it suffices to prove that the image of $Y_0$ in $\F\bbra{Y} /(Y^p)\cong \F[ N_0/N_1 N_0^p]\cong \F\big[\smatr{1}{\Fp}{0}{1}\big]$ is $Y$. We compute
\begin{equation}\label{eq:1}
\sum_{\lambda\in \Fq^\times}\lambda^{-1}\smatr{1}{{\rm Tr}_{\Fq/\Fp}(\lambda)}{0}{1}=
\sum_{a\in \Fp}\bigg(\sum_{\substack{\lambda\in \Fq^\times\\{\rm Tr}_{\Fq/\Fp}(\lambda)=a}}\lambda^{-1}\bigg)\smatr{1}{a}{0}{1}.
\end{equation}
If $a\neq 0$, the index $\lambda$ runs over the distinct roots of $Y^{p^{f-1}}+Y^{p^{f-2}}+\cdots+Y-a=0$, so the inside sum on the right hand side of~\eqref{eq:1} equals $1/a$ (from the last two coefficients). If $a=0$ the index $\lambda$ runs over the distinct roots of $Y^{p^{f-1}-1}+\cdots+Y^{p-1}+1=0$, so the inside sum in~\eqref{eq:1} equals 0 as $p > 2$. Hence the right-hand side of \eqref{eq:1} is just $Y$.
\end{proof}

By Lemma \ref{lem:ker}, if $V$ is a representation of $\GL_2(\Fq)$, then $Y_i=Y$ on $V^{N_1}$.

For $0\leq i\le q-1$, we set
\[\theta_i\defeq  \sum_{\lambda\in \Fq}\lambda^{i}\smatr{1}{\lambda}{0}{1}\in \F[N_0/N_0^p]\cong \F\big[\smatr{1}{\Fq}{0}{1}\big].\]
So $Y_i = \theta_{q-1-p^i}$ in $\F[N_0/N_0^p]$.
In what follows we write $\un{p-1}$ for the constant $f$-tuple $(p-1,p-1,\dots,p-1)\in \Z^f$.

\begin{lem}\label{lem:theta}
Suppose $\un i \in \{0,\dots,p-1\}^f$ and let $i \defeq  \sum_{j=0}^{f-1} i_j p^j$.

\begin{enumerate}
\item We have
  \[
  \theta_i = (-1)^{f-1} \bigg(\prod_{j=0}^{f-1} i_j!\bigg) \un{Y}^{\un{p-1}-\un{i}}
  \]
  in $\F[N_0/N_0^p]$ for $0 \le i<q-1$.
\item For $f_0,\dots,f_{q-1}$ and $\phi$ as defined in {\upshape\cite[\S2]{BP}} we have
  \[
  f_i=(-1)^{f-1} \bigg(\prod_{j=0}^{f-1} i_j!\bigg) \un{Y}^{\un{p-1}-\un{i}}\smatr{0}{1}{1}{0}\phi
  \]
  for $0\leq i<q-1$.
\end{enumerate}
\end{lem}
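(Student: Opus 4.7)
The plan is to show that $\theta_i$ and $\un{Y}^{\un{p-1}-\un{i}}$ are proportional in the local Artinian ring $\F[N_0/N_0^p]\cong\F[Y_0,\dots,Y_{f-1}]/(Y_0^p,\dots,Y_{f-1}^p)$ and then to pin down the scalar via a direct recursion. Both elements are $H$-eigenvectors with eigencharacter $\alpha^{-i}$: for $\un{Y}^{\un{p-1}-\un{i}}$ this follows from Remark \ref{switchchi} together with the congruence $\sum_j(p-1-i_j)p^j=q-1-i\equiv -i\pmod{q-1}$, and for $\theta_i$ directly from the conjugation formula $\smatr{\tld{\lambda}}{0}{0}{\tld{\mu}}\smatr{1}{\tld{\nu}}{0}{1}\smatr{\tld{\lambda}^{-1}}{0}{0}{\tld{\mu}^{-1}}=\smatr{1}{\widetilde{\lambda\mu^{-1}\nu}}{0}{1}$. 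By uniqueness of base-$p$ expansion, for $0<i<q-1$ the $\alpha^{-i}$-weight space of $\F[N_0/N_0^p]$ is one-dimensional and spanned by $\un{Y}^{\un{p-1}-\un{i}}$, whence $\theta_i=c_i\un{Y}^{\un{p-1}-\un{i}}$ for some $c_i\in\F^\times$. For $i=0$ the weight space is two-dimensional, but both $\theta_0$ and $\un{Y}^{\un{p-1}}$ lie in the socle of $\F[N_0/N_0^p]$, which is one-dimensional and spanned by $\un{Y}^{\un{p-1}}$, so $\theta_0=b\un{Y}^{\un{p-1}}$ for some $b\in\F^\times$.

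Next I would establish the recursion $Y_j\theta_i=i_j\theta_{i-p^j}$ (for $0\le i\le q-1$ and $0\le j\le f-1$, with the convention that the right side is $0$ when $i_j=0$) by direct expansion: writing $Y_j\theta_i=\sum_{\mu}c_\mu\smatr{1}{\tld{\mu}}{0}{1}$ with $\mu=a+\lambda$, one computes
\[c_\mu=\sum_{k=0}^{i}\binom{i}{k}(-1)^k\sigma_0(\mu)^{i-k}\sum_{a\in\F_q^\times}\sigma_0(a)^{k-p^j};\]
the inner sum vanishes unless $(q-1)\mid k-p^j$, which in the relevant range forces $k=p^j$, and Lucas's theorem then gives $\binom{i}{p^j}\equiv i_j\pmod{p}$ (using that $(-1)^{p^j+1}=1$ since $p$ is odd). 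Iterating the recursion yields $\un{Y}^{\un{i}}\theta_i=\big(\prod_j i_j!\big)\theta_0$; substituting $\theta_i=c_i\un{Y}^{\un{p-1}-\un{i}}$ reduces part (i) to computing $b$. Applying the recursion iteratively to $\theta_{q-1}$ (and carefully tracking the base-$p$ digits of $q-1-kp^j$ at each step) gives $\un{Y}^{\un{p-1}}\theta_{q-1}=((p-1)!)^f\theta_0=(-1)^f\theta_0$ by Wilson's theorem; on the other hand, $\theta_{q-1}=\theta_0-\smatr{1}{0}{0}{1}$ (since $0^{q-1}=0$) combined with $Y_j\theta_0=0$ (from the direct computation $\sum_{a\in\F_q^\times}\sigma_0(a)^{-p^j}=0$) yields $\un{Y}^{\un{p-1}}\theta_{q-1}=-\un{Y}^{\un{p-1}}$. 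Equating the two gives $b=(-1)^{f-1}$ and hence $c_i=(-1)^{f-1}\prod_j i_j!$, proving (i). Part (ii) then follows by multiplying on the right by $\smatr{0}{1}{1}{0}\phi$, using the definition of $f_i$ from \cite[\S2]{BP}.

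The main obstacle is pinning down the scalar $b$ for $i=0$: the $\alpha^0$-weight space being two-dimensional, no formal weight argument suffices, and a genuine computation is required. The trick of using $\theta_{q-1}$ (which lies outside the range stated in the lemma) as a bridge between $\theta_0$ and $\un{Y}^{\un{p-1}}$ keeps the argument clean; everything else is routine bookkeeping with base-$p$ digits and multinomial expansions.
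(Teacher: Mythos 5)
Your proof is correct, but it takes a genuinely different route from the paper's. The paper simply cites \cite[Lemma 0.2]{morra-iwasawa-CG} (after conjugating by $\smatr{0}{1}{p}{0}$ to convert $A_{1,1}$ there into $\F[N_0/N_0^p]$), and matches constants: $\theta_i \leftrightarrow F_{\un i}$ and $\kappa_{\un{p-1}-\un i} = (-1)^{f-1}\big(\prod_j i_j!\big)^{-1}$. Your argument is self-contained and rests on three observations: (a) $\theta_i$ and $\un Y^{\un{p-1}-\un i}$ lie in the same $H$-eigenspace $\alpha^{-i}$, which is one-dimensional for $0<i<q-1$ by uniqueness of base-$p$ expansion; (b) for $i=0$ both elements lie in the one-dimensional socle of $\F[N_0/N_0^p]$ (killed by every $Y_j$); and (c) the recursion $Y_j\theta_i = i_j\,\theta_{i-p^j}$, obtained by binomial expansion of $\sigma_0(\mu-a)^i$, the Gauss-sum evaluation $\sum_{a\in\F_q^\times}\sigma_0(a)^m = -1$ iff $(q-1)\mid m$ and $0$ otherwise, and Lucas's theorem, with the signs $(-1)^{p^j}\cdot(-1) = 1$ absorbing each other since $p$ is odd. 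Iterating (c) transports the normalization from $\theta_i$ down to $\theta_0$, and the clever bridge through $\theta_{q-1}=\theta_0-1$, combined with $Y_j\theta_0=0$ and Wilson's theorem $((p-1)!)^f=(-1)^f$, pins down the socle constant. Both routes are sound; yours trades the external citation for an elementary argument that also exposes the structural reason (the $H$-grading and the socle) why $\theta_i$ must be a monomial, which the black-box reference does not make visible. Part (ii) is handled the same way in both: $f_i = \theta_i\smatr{0}{1}{1}{0}\phi$ by the definition in \cite[\S2]{BP}.
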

\begin{proof}
Part (i) follows from \cite[Lemma 0.2]{morra-iwasawa-CG} after conjugation by $\smatr{0}{1}{p}{0}$.
Indeed, in the notation of \emph{loc.cit.}\ we take $m=n=1$ (so that $A_{1,1}$ is the group algebra of $\smatr{1}{0}{p\cO_K/p^2\cO_K}{1}$): we see that $\theta_i$ corresponds (under conjugation) to $F_{\un{i}}$ if $0 \le i \le q-1$, and the constant $\kappa_{\un{p-1}-\un{i}}$ equals $(-1)^{f-1} \big(\prod_{j=0}^{f-1} i_j!\big)^{-1}$. Part (ii) follows immediately from (i) and the definition of $\theta_i$.
\end{proof}

As in \cite{BP} we write $(s_0,s_1,\dots, s_{f-1})\otimes\eta$ for the Serre weight
\begin{equation}
\label{Serreweight}
\s^{s_0}\F^2\otimes_{\F}(\s^{s_1}\F^2)^{\rm Fr}\otimes\cdots\otimes_{\F}(\s^{s_{f-1}}\F^2)^{{\rm Fr}^{f-1}}\otimes_{\F}\eta\circ\det,\end{equation}
where the $s_i$ are integers between $0$ and $p-1$, $\eta$ is a character $\Fq^{\times}\rightarrow \F^{\times}$ and $\GL_2(\Fq)$ acts on $(\s^{s_i}\F^2)^{{\rm Fr}^i}$ via $\sigma_i:\Fq\hookrightarrow \F$. If $\chi = \chi_1 \otimes \chi_2$ is a character of $H=\smatr {\Fq^\times}00{\Fq^\times}$, we let $\chi^s \defeq  \chi_2 \otimes \chi_1$.

\begin{lem}\label{lem:eigenvct}
Let $\sigma\defeq (s_0,\dots, s_{f-1})\otimes\eta$, $\un{s}\defeq (s_0,s_1,\dots, s_{f-1})\in \{0,\dots,p-1\}^f$, and fix $v\in \sigma^{N_0}$, $v\neq 0$. Let $\chi_\sigma$ denote the $H$-eigencharacter on $\sigma^{N_0}$.
\begin{enumerate}
\item The $\F\bbra{N_0/N_1} =\F\bbra{Y} $-module $\sigma^{N_1}$ is cyclic of dimension $\min\{s_0,\dots,s_{f-1}\}+1$.
\item If $\un{0} \leq \un{i}\leq \un{s}$ and $\un{i} < \un{p-1}$ then $\sigma$ contains a unique $H$-eigenvector, which we call $\un{Y}^{-\un{i}}v$, that is sent by $\un{Y}^{\un{i}}$ to $v$. The corresponding $H$-eigencharacter is $\chi_\sigma \alpha^{-\un i}$. Also, $Y_j \un{Y}^{-\un{i}}v=0$ if $i_j=0$.
\item If $0 \le i\leq \min\{s_0,\dots,s_{f-1}\}$ and $i<p-1$, then $\sigma^{N_1}$ contains a unique
  $\smatr{\Fp^\times}{0}{0}{\Fp^\times}$-eigenvector $Y^{-i}v$ that is sent by $Y^i$ to $v$. The
  corresponding eigencharacter is $\chi_\sigma \alpha^{-i}$. We have
  $Y^{-i}v=\sum_{\un{i},\|\un{i}\|=i}\un{Y}^{-\un{i}}v$.
\end{enumerate}
\end{lem}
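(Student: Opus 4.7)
The plan is to reduce everything to explicit computations in a tensor-product basis for $\sigma$. Writing $\sigma \cong V_0 \otimes \cdots \otimes V_{f-1}$ with $V_j \defeq (\s^{s_j}\F^2)^{{\rm Fr}^j}$ (the $\eta\circ\det$ twist is inert), take the standard basis $x^{s_j-k}y^k$ of each $V_j$. Using \eqref{elementsyi}, the action $Y_{j'} \cdot x^{s_i-k}y^k$ produces a character sum $\sum_{a\in\Fq^\times}\sigma_0(a)^{lp^i - p^{j'}}$ over $0 \le l \le k$; the congruence $lp^i \equiv p^{j'} \pmod{q-1}$ together with the bound $l \le s_i \le p-1 < q-1$ forces $l = 1$ and $j' = i$. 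Hence $Y_{j'}$ acts as zero on $V_i$ for $j' \ne i$, while $Y_i|_{V_i}$ sends $x^{s_i-k}y^k \mapsto -k\,x^{s_i-k+1}y^{k-1}$. Setting $v_j^{(k)} \defeq (-1)^k x^{s_j-k}y^k/k!$ for $0 \le k \le s_j$ (well-defined as $k\le p-1$), one has $Y_i v_j^{(k)} = \delta_{ij}\,v_j^{(k-1)}$ with the convention $v_j^{(-1)} = 0$.

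For (ii), fix $v = \bigotimes_j v_j^{(0)}$ (which spans $\sigma^{N_0}$) and set $\un{Y}^{-\un{i}}v \defeq \bigotimes_j v_j^{(i_j)}$ for $\un{0} \le \un{i} \le \un{s}$. By commutativity of the $Y_j$ and the previous paragraph, $\un{Y}^{\un{i}}(\un{Y}^{-\un{i}}v) = v$. A direct computation of the $H$-action gives that $\bigotimes_j v_j^{(k_j)}$ carries eigencharacter $\chi_\sigma\alpha^{-\un{k}}$, and for $\un{0} \le \un{k} < \un{p-1}$ the integers $\sum_j k_j p^j$ are pairwise distinct in $\{0,\dots,q-2\}$, so the characters $\chi_\sigma\alpha^{-\un{k}}$ are pairwise distinct; this pins $\un{Y}^{-\un{i}}v$ down as the unique $H$-eigenvector of prescribed character mapping to $v$ under $\un{Y}^{\un{i}}$. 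The equality $Y_j\un{Y}^{-\un{i}}v = 0$ when $i_j = 0$ follows from $Y_j v_j^{(0)} = 0$.

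For (i), use Lemma \ref{lem:ker} to rewrite $\sigma^{N_1} = \{w\in\sigma : (Y_i - Y_j)w = 0\ \forall\,i,j\}$. For $0 \le n \le s_{\min}$ set $w_n \defeq \sum_{\un{k}\in\Z_{\ge 0}^f,\,\|\un{k}\|=n}\bigotimes_j v_j^{(k_j)}$ (the constraint $\un{k} \le \un{s}$ is automatic since $\|\un{k}\|\le s_{\min}\le s_j$); a re-indexing $\un{k} = \un{k}' + e_i$ gives $Y_i w_n = w_{n-1}$ \emph{independently of $i$}, so $w_n \in \sigma^{N_1}$. For the upper bound, expand an arbitrary $w\in\sigma^{N_1}$ as $\sum c_{\un{k}}\bigotimes_j v_j^{(k_j)}$ and translate $N_1$-invariance into $c_{\un{k}+e_i} = c_{\un{k}+e_{i'}}$ for all $\un{k} \in S_{\un{s}} \defeq \{\un{k}:\un{0}\le\un{k}\le\un{s}\}$, with $c$ extended by zero. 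A walking argument shows $c_{\un{k}}$ depends only on $\|\un{k}\|$ for $\|\un{k}\| \le s_{\min}$ and vanishes for $\|\un{k}\| > s_{\min}$: fixing $j_0$ with $s_{j_0} = s_{\min}$, iteratively substitute $e_j \to e_{j_0}$ for some $j\ne j_0$ with $k_j\ge 1$; after at most $s_{\min}-k_{j_0}+1$ steps the $j_0$-coordinate exceeds $s_{j_0}$ forcing $c = 0$, while the hypothesis $\|\un{k}\| > s_{\min}$ guarantees mass is always available to shift. Hence $\dim\sigma^{N_1} = s_{\min}+1$. For cyclicity, on $\sigma^{N_1}$ we have $Y = Y_0$ because $h(Y_0) = Y + Y^p f(Y)$ by Lemma \ref{lem:ker} and $Y^p$ vanishes on $\sigma^{N_1}$ (of dimension $\le p$); so $Y w_n = w_{n-1}$ and $w_{s_{\min}}$ generates $\sigma^{N_1}$ over $\F\bbra{Y}$. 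Part~(iii) then reads $Y^{-i}v = w_i = \sum_{\|\un{i}\|=i}\un{Y}^{-\un{i}}v$, whose $\smatr{\Fp^\times}{0}{0}{\Fp^\times}$-eigencharacter is $\chi_\sigma\alpha^{-i}$ since $\alpha^{\un{i}}$ restricted to $\Fp^\times\times\Fp^\times$ depends only on $\|\un{i}\|$.

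The main obstacle is the combinatorial upper bound in (i): the walking argument requires simultaneous bookkeeping to ensure at each step that the current index remains in $S_{\un{s}}$ \emph{and} that some coordinate other than $j_0$ still has positive mass available to shift. These two constraints turn out to be compatible for exactly as many steps as are needed, and it is essential that $\|\un{k}\| > s_{\min}$ provides precisely one extra unit of mass beyond what is required to fill the $j_0$-coordinate to $s_{j_0}$. The character-sum step of the first paragraph is routine once the Frobenius twists are correctly indexed.
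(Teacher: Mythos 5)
Your route is genuinely different from the paper's. The paper invokes \cite[Prop.3.3]{morra-iwasawa} to produce the isomorphism $\F\bbra{Y_0,\dots,Y_{f-1}}/(Y_j^{s_j+1}) \cong \sigma$ outright and then reads off everything from that monomial module structure; you instead rebuild the $\F\bbra{N_0}$-module structure on $\sigma$ from scratch via the tensor decomposition $\sigma \cong \bigotimes_j V_j$. The later parts of your argument --- the walking argument for the dimension bound in (i), the eigencharacter bookkeeping for (ii) and (iii) --- are in substance sound, modulo the small point that in (ii) you should note the one exceptional coincidence $\alpha^{-\un{0}} = \alpha^{-\un{p-1}}$ when $\un{s} = \un{p-1}$ (this is harmless for your uniqueness claim since for $\un{i} = \un{0}$ the equation $\un{Y}^{\un{0}}w = v$ already forces $w = v$, but it should be said; the paper's proof flags this exception explicitly).

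The real issue is at the start. You compute $Y_{j'}$ on a single factor $V_i$ (concluding $Y_{j'}|_{V_i} = \delta_{ij'}\cdot$ lowering), and then treat this as determining the action of $Y_{j'}$ on $\sigma = \bigotimes_j V_j$ --- as when you write ``one has $Y_i v_j^{(k)} = \delta_{ij}\,v_j^{(k-1)}$'' and then ``By commutativity of the $Y_j$ and the previous paragraph, $\un{Y}^{\un{i}}(\un{Y}^{-\un{i}}v) = v$.'' This inference does not follow. The element $Y_{j'} = \sum_{a\in\Fq^\times}\sigma_0(a)^{-p^{j'}}\,g_a$ is a weighted sum of \emph{group elements}, each of which acts diagonally on the tensor product; $Y_{j'}$ is not a primitive element of the bialgebra $\F[N_0/N_0^p]$, so the naive tensor Leibniz rule ``$Y_{j'}$ on $\bigotimes_j V_j$ equals $\sum_j 1\otimes\cdots\otimes Y_{j'}|_{V_j}\otimes\cdots\otimes 1$'' has no a priori justification. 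The formula you need (that $Y_{j'}$ acting on $\bigotimes_j v_j^{(k_j)}$ lowers only the $j'$-th index) \emph{is} true, but it requires the multi-index character sum: expanding $Y_{j'}\cdot\bigotimes_j v_j^{(k_j)} = \sum_{\un{l}}(\cdots)\bigl(\sum_{a}\sigma_0(a)^{\sum_j l_j p^j - p^{j'}}\bigr)\bigotimes_j v_j^{(k_j - l_j)}$ over $\un 0 \le \un l \le \un k$, one uses $\sum_j l_j p^j \le \sum_j (p-1)p^j = q-1$ and $1 \le p^{j'} \le p^{f-1}$ to conclude that the exponent is divisible by $q-1$ only when $\sum_j l_j p^j = p^{j'}$, forcing $l_j = \delta_{j,j'}$ by uniqueness of base-$p$ digits. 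This is parallel to, but not a consequence of, the single-factor sum $\sum_a \sigma_0(a)^{l p^i - p^{j'}}$ you computed. As written, your first paragraph does not establish the module structure on $\sigma$ that everything downstream (including $Y_i w_n = w_{n-1}$ in part (i) and $Y_j\un{Y}^{-\un i}v = 0$ when $i_j=0$ in part (ii)) relies on, so this step must be filled in before the rest of the argument goes through.
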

\begin{proof}
(i) Note that $\sigma^{N_1}$ is a torsion module over $\F\bbra{N_0/N_1} =\F\bbra{Y}$ as $\sigma^{N_1}$ is finite-dimensional. To show cyclicity it suffices to note that $\sigma^{N_0}=\sigma^{N_1}[X]$ is $1$-dimensional.
Then from \cite[Prop.3.3]{morra-iwasawa} applied with $n = 1$ we have an isomorphism
\begin{equation}\label{eq:2}
\begin{aligned}
\F\bbra{Y_0,\dots,Y_{f-1}} /(Y_j^{s_j+1}, 0\leq j\leq f-1)&\buildrel\sim\over\longrightarrow \sigma\\
g(\un{Y})&\longmapsto g(\un{Y})\smatr{0} {1}{1}{0}v.
\end{aligned}
\end{equation}
(Restrict equation (9) in \cite{morra-iwasawa} to $\smatr{1}{0}{p\cO_K}{1}$ and conjugate by $\smatr{0}{1}{p}{0}$. Note that $\sigma$
is self-dual up to twist.) In particular, $\{\un{Y}^{\un{k}}\smatr{0}{1}{1}{0}v : \un{0}\leq \un{k}\leq \un{s}\}$ is a basis of $\sigma$ consisting of $H$-eigenvectors.

Let $m\defeq  \min\{s_0,\dots,s_{f-1}\}$. We claim that the vectors
\begin{equation}\label{eq:N1bis}
v_i\defeq  \sum_{\substack{\un{0}\leq\un{k}\leq\un{s}\\\|\un{k}\|=\|\un{s}\|-i}}\un{Y}^{\un{k}}\smatr{0}{1}{1}{0}v,\ \ \ \ 0\leq i\leq m
\end{equation}
form a basis of $\sigma^{N_1}$. If $i<m$ and $\|\un{k}\|=\|\un{s}\|-i$, then $k_j>0$ for all $j$. By using also~\eqref{eq:2} we see that $v_i=Y_jv_{i+1}$ for all $j$. Also, $Y_j v_0 = 0$ for all $j$. In particular, $Y_j-Y_{j'}$ annihilates $v_i$ for all $i$, so $v_i\in \sigma^{N_1}$ by Lemma \ref{lem:ker}. Moreover, $X v_{i+1}= v_i$ ($0\leq i<m$) and $Xv_0=0$. It remains to show that $v_m\notin X\sigma^{N_1}$. Choose $j_0$ such that $s_{j_0}=m$. Then $\prod_{j\neq j_0}Y_j^{s_j}\smatr{0}{1}{1}{0}v$ is the only term appearing in the sum (\ref{eq:N1bis}) for $i=m$ that is not divisible by $Y_{j_0}$. Hence $v_m\notin Y_{j_0}\sigma$, and thus $v_m\notin X\sigma^{N_1}$.

(ii) Let $v'\defeq  \un{Y}^{\un{s}}\smatr{0}{1}{1}{0}v$, which is a scalar multiple of $v$. By (\ref{eq:2}), $\left(\un{Y}^{\un{k}}\smatr{0}{1}{1}{0}v\right)_{\un{0}\leq \un{k}\leq\un{s}}$ forms a basis of $\sigma$ consisting of $H$-eigenvectors with eigencharacters $\chi_\sigma^s \alpha^{\un k} = \chi_\sigma \alpha^{\un k - \un r}$. The eigencharacters are pairwise distinct, except if $\un{s}=\un{p-1}$ where $\un{Y}^{\un{p-1}}\smatr{0}{1}{1}{0}v$ and $\smatr{0}{1}{1}{0}v$ have the same eigencharacter. Hence, as $\un{i} < \un{p-1}$, the unique $H$-eigenvector in the preimage $(\un{Y}^{\un{i}})^{-1}(v')$ is $\un{Y}^{\un{s}-\un{i}}\smatr{0}{1}{1}{0}v$. Note also that $Y_j\un{Y}^{\un{s}-\un{i}}\smatr{0}{1}{1}{0}v=0$ if $i_j=0$ by~\eqref{eq:2}.

(iii) Using the notation in (ii), we have $v_i=\sum_{\|\un{i}\|=i}\un Y^{-\un{i}}v'$ for $0\leq i\leq m$ and it is a $\smatr{\Fp^\times}{0}{0}{\Fp^\times}$-eigenvector with eigencharacter $\chi_\sigma\alpha^{-i}$. These characters for $0\leq i\leq m$ are pairwise distinct, except if $\un{s}=\un{p-1}$, in which case $v_0$ and $v_{p-1}$ have the same eigencharacter. As we assume $i<p-1$ the claim follows.
\end{proof}

\begin{lem}\label{lem:gen}
Suppose $V$ is a representation of $\GL_2(\Fq)$ generated by some vector $v\in V^{N_0}$ that is an eigenvector for the action of $H$.
If $\dim_\F V\leq q$, then the map
\begin{align*}
\F\bbra{Y_0,\dots,Y_{f-1}} &\longrightarrow V\\
f(\un{Y})&\mapsto f(\un{Y})\smatr{0}{1}{1}{0}v
\end{align*}
is surjective and its kernel is generated by monomials. In particular, if $\un{Y}^{\un i} \smatr{0}{1}{1}{0}v = \un{Y}^{\un j}\smatr{0}{1}{1}{0}v \ne 0$, then $\un i = \un j$.
\end{lem}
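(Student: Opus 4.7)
The plan is to reduce to the case where $V$ is isomorphic to a Serre weight, and then invoke the explicit isomorphism~\eqref{eq:2} from the proof of Lemma~\ref{lem:eigenvct}(i), whose kernel $(Y_0^{s_0+1},\dots,Y_{f-1}^{s_{f-1}+1})$ is manifestly generated by monomials.

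To carry out the reduction, let $\chi$ denote the $H$-eigencharacter of $v$. By Frobenius reciprocity, the vector $v\in V^{N_0,H=\chi}$ gives rise to a nonzero $\GL_2(\Fq)$-equivariant map $\Ind_{B(\Fq)}^{\GL_2(\Fq)}(\chi)\to V$ sending the standard generator to $v$; since $v$ generates $V$ by hypothesis, this map is surjective. I will then use the classical fact (see e.g.~\cite[\S2]{BP}) that $\Ind_{B(\Fq)}^{\GL_2(\Fq)}(\chi)$ has dimension $q+1$ and is of length two, with Jordan--H\"older constituents being Serre weights of complementary dimensions summing to $q+1$. Since $\dim_{\F}V\le q<q+1$, the surjection has nonzero kernel; by the length-two structure, this kernel must be one of the two Serre-weight constituents, forcing $V$ to be the other. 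Thus $V\cong\sigma\defeq(s_0,\dots,s_{f-1})\otimes\eta$ for some Serre weight.

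Once $V$ is identified with a Serre weight $\sigma$, the map in the statement of the lemma is, up to scalar on the generator of $\sigma^{N_0}$, exactly the map in~\eqref{eq:2}, which factors through an isomorphism from $\F\bbra{Y_0,\dots,Y_{f-1}}/(Y_j^{s_j+1},\ 0\le j\le f-1)$, establishing both surjectivity and the monomial description of the kernel. The ``in particular'' assertion is then immediate: the quotient ring has an $\F$-basis indexed by the monomials $\un{Y}^{\un i}$ with $\un 0\le\un i\le\un s$, and the remaining monomials map to zero; two monomials with equal nonzero image in $V$ must therefore coincide.

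The main external input is the length-two structure of $\Ind_{B(\Fq)}^{\GL_2(\Fq)}(\chi)$ in characteristic $p$; this is the principal non-formal ingredient, the rest of the argument being essentially bookkeeping combined with the already-established~\eqref{eq:2}.
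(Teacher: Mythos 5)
Your reduction to the case of a single Serre weight does not go through, because the claimed ``length-two structure'' of $\Ind_{B(\Fq)}^{\GL_2(\Fq)}(\chi)$ is false once $f>1$. By \cite[Thm.~2.4, Lemmas 2.6 and 2.7]{BP}, the parabolic induction $\Ind_I^{\GL_2(\cO_K)}(\chi)$ of a character of $H$ has up to $2^f$ Jordan--H\"older constituents (parametrized by subsets of $\{0,\dots,f-1\}$), so a quotient $V$ of dimension $\le q$ need not be irreducible. Indeed the paper's proof of the lemma explicitly splits into the cases ``$\Ker(S)$ irreducible'' and ``$\Ker(S)$ reducible'', precisely because $V$ can have several constituents; it also treats $\chi=\chi^s$ separately via \cite[Lemma 2.6]{BP}, where the induced representation again has extra constituents for $f>1$. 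Without this reduction, you cannot invoke the Serre-weight isomorphism~\eqref{eq:2}, and you are left needing an argument that identifies $\Ker(S\circ i)$ directly inside $\F\bbra{Y_0,\dots,Y_{f-1}}$.

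What the paper does instead is to stay inside $\Ind_I^{\GL_2(\cO_K)}(\chi)$ and use the explicit $\F$-basis $\{f_j\}\cup\{\phi\}$ of \cite[\S2]{BP}. It shows that the composite $S\circ i$ is surjective (using $\dim V\le q$ to guarantee that some $f_r\pm\phi$ lies in $\Ker(S)$), and then that $\Ker(S)\cap\mathrm{Im}(i)$ is spanned by certain of the $f_j$, which under $i^{-1}$ and Lemma~\ref{lem:theta}(ii) are monomials in the $Y_j$; the submodule structure of $\Ind_I^{\GL_2(\cO_K)}(\chi)$ needed for this is supplied by \cite[Lemmas 2.6, 2.7]{BP}, not by any length-two claim. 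Your proposal conveys the right spirit (Frobenius reciprocity plus an explicit basis) but omits the actual combinatorial content that makes the kernel monomial-generated when $V$ is reducible.
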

\begin{proof}
Let $\chi$ denote the eigencharacter of $H$ on $v$. Then we have a $\GL_2(\Fq)$-equiva\-riant surjection $S: \Ind_I^{\GL_2(\cO_K)}(\chi)\onto V$ sending $\phi$ to $v$, where $\phi$ is the unique function supported on $I$ which sends $1$ to $1$. Consider $i:\F[Y_0,\dots,Y_{f-1}]/(Y_0^p,\dots,Y_{f-1}^p)\rightarrow \Ind_I^{\GL_2(\cO_K)}(\chi)$ sending $f(\un{Y})$ to $f(\un{Y})\smatr{0}{1}{1}{0}\phi$. By Lemma \ref{lem:theta}, $f_j\in\mathrm{Im}(i)$ for all $j$ (even if $j=q-1$), so by \cite[Lemma 2.5]{BP}, $\Ind_{I}^{\GL_2(\cO_K)}(\chi)=\mathrm{Im}(i)\oplus \F\phi$ (as $\F$-vector spaces) and $i$ is injective.

Suppose first $\chi\not\cong \chi^{s}$. By \cite[Lemma 2.7(i)]{BP} and as $\dim V\leq q$ we have $f_{r}\pm\phi\in \Ker(S)$ for some $r=\sum_{j=0}^{f-1}p^js_j\in \{0,\dots,q-2\}$ and some sign $\pm$ (both depending on $\chi$), so $S\circ i$ is surjective. If $\Ker(S)$ is irreducible (as a $\GL_2(\Fq)$-representation), then by \cite[Lemma 2.7]{BP}, $\Ker(S)=\langle f_{\sum p^jd_j}, 0\leq d_j\leq s_j\text{ (not all equal)},\ f_r\pm\phi\rangle_\F$. Intersecting with $\mathrm{Im}(i)=\langle f_{\sum p^jd_j}, 0\leq d_j\leq p-1\rangle_\F$ we get
\[\Ker(S)\cap \mathrm{Im}(i)=\left\langle f_{\sum p^jd_j}, 0\leq d_j\leq s_j\text{ (not all equal)}\right\rangle_\F.\]
By Lemma \ref{lem:theta}(ii), it follows in particular that $\Ker(S\circ i)$ is generated by monomials. If $\Ker(S)$ is reducible, the argument is analogous using \cite[Lemma 2.7(ii)]{BP}. If $\chi=\chi^s$, it is again almost identical, using \cite[Lemma 2.6]{BP} instead.
\end{proof}

\begin{lem}\label{lem:tr0}
Suppose $f>1$. 
In $\F[N_0/N_0^p]$ we have
\[
\sum_{\lambda\in \Fq,{\rm Tr}_{\Fq/\Fp}(\lambda)=0}\smatr{1}{\lambda}{0}{1}\ =(-1)^{f-1}\bigg(\un{Y}^{\un{p-1}} \ + \!\!\sum_{\substack{\|\un{i}\|=(p-1)(f-1)\\0\leq i_j\leq p-1}}\un{Y}^{\un{i}}\bigg).
\]
\end{lem}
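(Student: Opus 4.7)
The plan is to reduce the statement to an identity between $\theta_i$'s and apply Lemma \ref{lem:theta}(i). The key observation is that for $x\in\Fp$ we have $[x=0]=1-x^{p-1}$ in $\F$ (by Fermat's little theorem), so that writing $T\defeq \{\lambda\in\Fq:{\rm Tr}_{\Fq/\Fp}(\lambda)=0\}$,
\[
\sum_{\lambda\in T}\smatr{1}{\lambda}{0}{1}=\sum_{\lambda\in\Fq}\smatr{1}{\lambda}{0}{1}-\sum_{\lambda\in\Fq}{\rm Tr}_{\Fq/\Fp}(\lambda)^{p-1}\smatr{1}{\lambda}{0}{1}=\theta_0-\sum_{\lambda\in\Fq}{\rm Tr}_{\Fq/\Fp}(\lambda)^{p-1}\smatr{1}{\lambda}{0}{1}.
\]
The first term $\theta_0$ equals $(-1)^{f-1}\un{Y}^{\un{p-1}}$ by Lemma \ref{lem:theta}(i), which is exactly the first summand on the right-hand side of the lemma.

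Next I would expand the second sum by the multinomial theorem. Writing ${\rm Tr}_{\Fq/\Fp}(\lambda)=\sum_{j=0}^{f-1}\lambda^{p^j}$, one gets
\[
{\rm Tr}_{\Fq/\Fp}(\lambda)^{p-1}=\!\!\sum_{\substack{k_0+\cdots+k_{f-1}=p-1\\k_j\geq 0}}\!\!\binom{p-1}{k_0,\ldots,k_{f-1}}\lambda^{\sum_j k_j p^j},
\]
hence
\[
\sum_{\lambda\in\Fq}{\rm Tr}_{\Fq/\Fp}(\lambda)^{p-1}\smatr{1}{\lambda}{0}{1}=\!\!\sum_{\substack{\|\un{k}\|=p-1\\0\le k_j\le p-1}}\!\!\binom{p-1}{k_0,\ldots,k_{f-1}}\theta_{\sum_j k_jp^j}.
\]
Here I would check (using $f>1$) that $\sum_j k_jp^j\leq(p-1)p^{f-1}<q-1$ so that Lemma \ref{lem:theta}(i) applies to each $\theta_{\sum_j k_j p^j}$, giving the value $(-1)^{f-1}(\prod_j k_j!)\un{Y}^{\un{p-1}-\un{k}}$.

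Substituting and using the identity $\binom{p-1}{k_0,\ldots,k_{f-1}}\prod_j k_j!=(p-1)!\equiv -1\pmod p$ (Wilson), each term collapses to $(-1)^{f-1}\cdot(-1)\cdot\un{Y}^{\un{p-1}-\un{k}}=(-1)^f\un{Y}^{\un{p-1}-\un{k}}$. Finally, the substitution $\un{i}\defeq \un{p-1}-\un{k}$ turns $\|\un{k}\|=p-1$ into $\|\un{i}\|=(p-1)(f-1)$, yielding
\[
\sum_{\lambda\in T}\smatr{1}{\lambda}{0}{1}=(-1)^{f-1}\un{Y}^{\un{p-1}}-(-1)^f\!\!\sum_{\substack{\|\un{i}\|=(p-1)(f-1)\\0\le i_j\le p-1}}\!\!\un{Y}^{\un{i}}=(-1)^{f-1}\bigg(\un{Y}^{\un{p-1}}+\!\!\sum_{\substack{\|\un{i}\|=(p-1)(f-1)\\0\le i_j\le p-1}}\!\!\un{Y}^{\un{i}}\bigg).
\]
There is no real obstacle here: the proof is a direct computation, and the only subtlety is keeping track of the bound $\sum_j k_jp^j<q-1$ (which is where $f>1$ enters) to legitimately invoke Lemma \ref{lem:theta}(i).
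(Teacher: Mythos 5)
Your proof is correct and follows essentially the same approach as the paper's: both compute the sum over $\{\mathrm{Tr}\neq 0\}$ (your $\sum_\lambda\mathrm{Tr}(\lambda)^{p-1}\smatr{1}{\lambda}{0}{1}$) via the multinomial expansion, evaluate each $\theta_{\sum_j k_j p^j}$ with Lemma \ref{lem:theta}(i) after checking the exponent stays below $q-1$, and then combine with $\sum_{\lambda\in\Fq}\smatr{1}{\lambda}{0}{1}=\theta_0$. The only cosmetic difference is that you write the indicator as $1-\mathrm{Tr}^{p-1}$ up front, whereas the paper computes the $\mathrm{Tr}\neq 0$ sum first and subtracts at the end.
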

\begin{proof}
First we have (using $x^{p-1}=1$ if $x\in \Fp^\times$):
\begin{align*}
\sum_{\lambda\in \Fq,{\rm Tr}_{\Fq/\Fp}(\lambda)\neq0}\smatr{1}{\lambda}{0}{1}&=\sum_{\lambda\in \Fq}({\rm Tr}_{\Fq/\Fp}(\lambda))^{p-1}\smatr{1}{\lambda}{0}{1}\\
&=\sum_{\lambda\in \Fq}(\lambda+\lambda^p+\cdots+\lambda^{p^{f-1}})^{p-1}\smatr{1}{\lambda}{0}{1}\\
&=\sum_{\lambda\in \Fq}\sum_{\substack{\un{i}\in \Z_{\ge 0}^f\\\|\un{i}\|=p-1}}\frac{(p-1)!}{\prod_j i_j!}\lambda^{i_0+i_1 p+\cdots+i_{f-1}p^{f-1}}\smatr{1}{\lambda}{0}{1}\\
&=\sum_{\substack{\un{i}\in \Z_{\ge 0}^f\\\|\un{i}\|=p-1}}\frac{(p-1)!}{\prod_j i_j!} (-1)^{f-1}\bigg(\prod_j i_j!\bigg) \un{Y}^{\un{p-1}-\un{i}},
\end{align*}
where the last equality follows from Lemma \ref{lem:theta}(i), noting that $\sum_{j=0}^{f-1}i_jp^j<q-1$ since $f>1$. Letting $\un{i}'\defeq  \un{p-1}-\un{i}$ we get (as $(p-1)!=-1$ in $\Fp$):
\[
\sum_{\lambda\in \Fq,{\rm Tr}_{\Fq/\Fp}(\lambda)\neq0}\smatr{1}{\lambda}{0}{1}\ =(-1)^f\!\!\!\!\!\sum_{\substack{\un{i}'\in \Z_{\ge 0}^f\\\|\un{i}'\|=(p-1)(f-1)}}\un{Y}^{\un{i}'}.
\]
On the other hand, Lemma \ref{lem:theta}(i) gives
\[
\sum_{\lambda\in \Fq}\smatr{1}{\lambda}{0}{1}=(-1)^{f-1}\un{Y}^{\un{p-1}}.
\]
The result follows.
\end{proof}

\begin{prop}\label{prop:nsum}
Fix $j_0\in \{0,\dots,f-1\}$. In
\[\F\bbra{N_0/N_1^p}\cong \F\bbra{Y_0,\dots,Y_{f-1}} /\big((Y_i-Y_j)^p, i\neq j\big)\]
we have
\[
\sum_{n\in N_1/N_1^p}n=
(-1)^{f-1}\prod_{j\neq j_0}(Y_j-Y_{j_0})^{p-1}
\]
modulo terms of degree $\geq f(p-1)$.
\end{prop}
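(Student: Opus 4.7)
The plan is to prove the congruence by reducing to Lemma \ref{lem:tr0} via the natural surjection $\F\bbra{N_0/N_1^p} \twoheadrightarrow \F[N_0/N_0^p]$ and then controlling the lift. Since $K$ is unramified, $N_1 \cap N_0^p = N_1^p$, so the map $N_1/N_1^p \hookrightarrow N_0/N_0^p$ is injective with image the $\F_p$-subspace $\set{\lambda \in \F_q : {\rm Tr}_{\F_q/\F_p}\lambda = 0}$. The surjection therefore sends $\sum_{n\in N_1/N_1^p}n$ to $\sum_{{\rm Tr}\lambda = 0}\smatr{1}{\lambda}{0}{1}$, and its kernel is the ideal generated by $Y_{j_0}^p$ (equivalently any $Y_j^p$, since $(Y_i - Y_j)^p = 0$ combined with $p$ odd yields $Y_i^p = Y_j^p$ in $\F\bbra{N_0/N_1^p}$).

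Applying Lemma \ref{lem:tr0}, the image of $\sum_n n$ equals $(-1)^{f-1}(\un Y^{\un{p-1}} + S)$ with $S \defeq \sum_{\|\un i\|=(f-1)(p-1),\ 0\le i_j\le p-1}\un Y^{\un i}$. Separately, using $\binom{p-1}{k}=(-1)^k$ in characteristic $p$, one has $(Y_j - Y_{j_0})^{p-1} = \sum_{k=0}^{p-1}Y_j^kY_{j_0}^{p-1-k}$, and expanding the product yields $\prod_{j\ne j_0}(Y_j - Y_{j_0})^{p-1} = S + T$ in $\F\bbra{N_0/N_1^p}$, where $T$ collects the terms with $i_{j_0} \ge p$ (these all vanish modulo $Y_{j_0}^p$). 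Combining,
\[\sum_{n\in N_1/N_1^p}n \ -\ (-1)^{f-1}\!\prod_{j\ne j_0}(Y_j - Y_{j_0})^{p-1} \ =\ (-1)^{f-1}\un Y^{\un{p-1}} \ -\ (-1)^{f-1}T \ +\ E,\]
for some correction $E \in (Y_{j_0}^p)$ from the lift. The monomial $\un Y^{\un{p-1}}$ has total degree exactly $f(p-1)$, hence lies in the required power $\m_{N_0/N_1^p}^{f(p-1)}$ of the maximal ideal.

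The main obstacle is then showing $E - (-1)^{f-1}T$ also lies in $\m_{N_0/N_1^p}^{f(p-1)}$: both $E$ and $T$ belong to $(Y_{j_0}^p)$ but individually have total degree only $(f-1)(p-1)$, so a precise cancellation is needed. For this I would compute $\sum_n n$ directly as $\prod_{j=1}^{f-1}X_j^{p-1}$, where $X_j \defeq [\bar\lambda_j]-1$ for $\set{\bar\lambda_j}$ an $\F_p$-basis of $N_1/N_1^p$ (via the identity $\sum_{k=0}^{p-1}(1+X)^k = X^{p-1}$, valid in characteristic $p$ as $(1+X)^p = 1+X^p$), working in the presentation $\F\bbra{N_0/N_1^p}\cong\F\bbra{Y_{j_0}}[V_i]/(V_i^p : i\ne j_0)$ with $V_i\defeq Y_i - Y_{j_0}$. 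Since $\bar\lambda_j$ maps to zero in $N_0/N_1 \cong \Z_p$, the image of $X_j$ in $\m_{N_0/N_1^p}/\m_{N_0/N_1^p}^2$ lies in the $\F$-span of the $V_i$. Combined with the characteristic-$p$ identity
\[\prod_{j=1}^{f-1}\bigg(\sum_{i\ne j_0}c_{ji}V_i\bigg)^{p-1} \equiv \det(C)^{p-1}\prod_{i\ne j_0}V_i^{p-1}\pmod{(V_i^p)_{i\ne j_0}}\]
(which for two variables reduces to the geometric-series identity $(c_{11}c_{22})^p - (c_{12}c_{21})^p = (c_{11}c_{22} - c_{12}c_{21})^p$, and extends by a triangularization argument), the leading-degree $(f-1)(p-1)$ part of $\sum_n n$ in the associated graded equals $\det(\bar C)^{p-1}\prod V_i^{p-1}$, where $\bar C$ is the matrix of leading linear coefficients of the $X_j$. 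A convenient basis choice (as in the $f=2$ case, where the basis element $\mu_0$ satisfies $\mu_0+\mu_0^p = 0$ and hence $\mu_0^{p-1}=-1$) yields $\det(\bar C)^{p-1} = (-1)^{f-1}$, matching the claimed constant. The hardest step is controlling the intermediate-degree contributions for $(f-1)(p-1) < k < f(p-1)$: this requires exploiting $X_j^p = 0$ and a careful bookkeeping of how each $Y_{j_0}$-term appearing in the expansion of $X_j$ must pair with sufficiently many $V_i$-factors, thereby forcing all remaining contributions into $\m_{N_0/N_1^p}^{f(p-1)}$.
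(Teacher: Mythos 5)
Your proposal starts from the same two ingredients as the paper --- Lemma \ref{lem:tr0} applied in $\ovl{B}\defeq\F\bbra{N_0/N_0^p}$, and the observation that the leading linear part of each $X_j=[\bar\lambda_j]-1$ lies in the span of the $V_i=Y_i-Y_{j_0}$ (this is exactly the paper's claim about $\gr^1$ of the inclusion $A\defeq\F\bbra{N_1/N_1^p}\hookrightarrow B$). Your determinant identity for the leading-degree part also checks out. However, you explicitly leave unresolved the crucial step, namely showing that $\sum_{n}n$ has no nonzero component in $\fm_B^m/\fm_B^{m+1}$ for $(f-1)(p-1)<m<f(p-1)$, and the sketch you offer (``exploiting $X_j^p=0$ and a careful bookkeeping'') would require tracking how the degree-$\geq 2$ corrections to each $X_j$ interact across the $f-1$ factors of $\prod_j X_j^{p-1}$; it is not evident this is more tractable than the original statement. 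That is a genuine gap, and you flag it yourself.

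The paper disposes of the intermediate degrees by a short equivariance argument you do not invoke. Since $Z=\sum_{n\in N_1/N_1^p}n$ is fixed under conjugation by $\smatr{\Fp^\times}{0}{0}{\Fp^\times}$ (which permutes $N_1/N_1^p$), and the monomial $\un Y^{\un i}$ is an eigenvector with eigenvalue $\alpha^{\|\un i\|}$ for this action, the fixed subspace of $\fm_B^m/\fm_B^{m+1}$ vanishes whenever $p-1\nmid m$ --- which is the case for every $m$ with $(f-1)(p-1)<m<f(p-1)$. Combined with the socle fact $Z\in\fm_A^{(f-1)(p-1)}$ (because $Z$ is killed by $\fm_A$) and the $\gr^1$ computation (which pins the degree-$(f-1)(p-1)$ component of $\imath(Z)$ to a scalar multiple of $\prod_{j\ne j_0}(Y_j-Y_{j_0})^{p-1}$), this at once yields $\imath(Z)=c\prod_{j\ne j_0}(Y_j-Y_{j_0})^{p-1}+(\text{element of }\fm_B^{f(p-1)})$; the scalar $c=(-1)^{f-1}$ is then read off in $\ovl B$ via Lemma \ref{lem:tr0}, which is the one place your reduction to $\ovl B$ is actually used. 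If you wish to keep the shape of your argument, replacing the ``careful bookkeeping'' step by this equivariance observation is what is needed.
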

\begin{proof}
The statement being trivial if $f=1$, we can assume $f>1$. We prove the first isomorphism. As $Y_i-Y_j\in \Ker\big(\F\bbra{N_0} \ra\F\bbra{N_0/N_1} \big)$ by Lemma \ref{lem:ker}, we deduce that $(Y_i-Y_j)^p\in \Ker\big(\F\bbra{N_0} \ra\F\bbra{N_0/N_1^p} \big)$, and we thus have a surjection $\F\bbra{Y_0,\dots,Y_{f-1}} /\big((Y_i-Y_j)^p, i\neq j\big)\onto \F\bbra{N_0/N_1^p}$. Since both terms are free modules of rank $p(f-1)$ over a power series ring in one variable over $\F$, the surjection has to be an isomorphism.

Let $A\defeq  \F\bbra{N_1/N_1^p}$, $B\defeq  \F\bbra{N_0/N_1^p}$ and $\ovl{B}\defeq  \F\bbra{N_0/N_0^p}$, they are complete local commutative rings of respective maximal ideals denoted by $\fm_A$, $\fm_B$, $\fm_{\ovl{B}}$. Let $Z\defeq  \sum_{n\in N_1/N_1^p}n \in A$. 
Note that $\fm_A$ is the augmentation ideal of $A$, hence the $\fm_A$-torsion $A[\fm_A]$ in $A$ equals $\F Z$.
As $N_1/N_1^p \cong (\Z/p\Z)^{f-1}$, we have an isomorphism $A \cong \F[Z_1,\dots,Z_{f-1}]/(Z_1^p,\dots,Z_{f-1}^p)$, so $\fm_A^{(p-1)(f-1)+1}=0$ and hence $Z\in \fm_A^{(p-1)(f-1)}$.

Let $\imath: A\into B$ denote the inclusion and denote by $\gr^m(\imath)$ the induced map $\fm_A^m/\fm_A^{m+1}\rightarrow \fm_B^m/\fm_B^{m+1}$ for $m\geq 0$. We claim that $\gr^1(\imath)$ is injective with image generated by all $Y_j-Y_{j_0}$ ($j\neq j_0$) in $\fm_B/\fm_B^2$. If so, then $\gr^{(p-1)(f-1)}(\imath)$ has to send the $1$-dimensional $\F$-vector space $\fm_A^{(p-1)(f-1)}$ to a multiple of $\prod_{j\neq j_0}(Y_j-Y_{j_0})^{p-1}$ modulo $\fm_B^{(p-1)(f-1)+1}$. But $\smatr{\tld{\lambda}}{0}{0}{\tld{\mu}} Z=Z\smatr{\tld{\lambda}}{0}{0}{\tld{\mu}}$ for $\lambda,\mu\in \Fp^\times$, and considering the action of $H$, it follows from the sentence following Lemma \ref{lem:basic} that we must have
\[
\imath(Z)=c\prod_{j\neq j_0}(Y_j-Y_{j_0})^{p-1}+({\rm element\ of }\ \fm_B^{f(p-1)})
\]
for some $c\in \F$ (note that every element of $B$ can be written uniquely as $\sum_{\un{i}}c_{\un{i}}\un{Y}^{\un{i}}$ with $i_j<p$ for all $j\neq j_0$ and that $\fm_B$ is generated by the $\un{Y}^{\un{i}}$, $\un{i}\ne \un{0}$). By passing to $\ovl{B}$ and using Lemma \ref{lem:tr0}, we deduce that we must have $c=(-1)^{f-1}$.

It remains to prove the claim. As $\ovl{B}\cong B/(Y_0^p,\dots,Y_{f-1}^p)$, we have $\fm_B/\fm_B^2\buildrel\sim\over\rightarrow \fm_{\ovl{B}}/\fm_{\ovl{B}}^2$ and it is equivalent to prove the claim with $\ovl{\imath}:A\rightarrow \ovl{B}$. 
We first note that $\gr^1(\ovl{\imath})$ is injective with 1-dimensional cokernel, because for any finite abelian $p$-group $U$ the cotangent space of $\Spec \F[U]$ at its closed point is identified with $\F \otimes_{\Z} U$.
Consider the natural map $s:\ovl{B}\onto C\defeq  \F[N_0/N_1 N_0^p]\cong \F[Y]/(Y^p)$. As $\gr^1(s\circ \ovl{\imath})=0$ and $s(Y_i)=Y$ by Lemma \ref{lem:ker}, we deduce from {\it loc.cit.}\ that the image of $\gr^1(\ovl{\imath})$ is indeed spanned by all $Y_j-Y_{j_0}$ ($j \ne j_0$).
\end{proof}

\subsubsection{A computation for the operator \texorpdfstring{$F$}{F}}\label{operatorF}

We give a crucial computation for the operator $F$ on $\pi^{N_1}$ for $\pi$ as at the end of \S\ref{lowerstatement}. The main result of this section is Proposition \ref{prop:F-action}(ii).

We keep the notation of \S\ref{prelgl2}. For $\sigma= (t_0,\dots,t_{f-1})\otimes \eta\in W(\rhobar)$, recall we have $t_j \in \{r_j,r_j+1,p-2-r_j,p-3-r_j\}$ if $j > 0$ or $\rhobar$ is reducible and $t_0 \in \{r_0-1,r_0,p-1-r_0,p-2-r_0\}$ if $\rhobar$ is irreducible (see e.g.\ \cite[\S2]{breuil-IL}). We deduce from~\eqref{eq:6} that
\begin{equation}\label{eq:9}
t_j \in \{2f-1,\dots,p-1-2f\} \quad \text{for all $j$.}
\end{equation}
We identify $W(\rhobar)$ with the subsets of $\{0,1,\dots,f-1\}$ as in \cite[\S2]{breuil-IL} and let $J_\sigma\subseteq \{0,\dots,f-1\}$ be the subset associated to $\sigma$. We have $t_j\in \{p-2-r_j,p-3-r_j\}$ for $j\in J_\sigma$ if $j> 0$ or $\rhobar$ is reducible, $t_0\in \{p-2-r_0,p-1-r_0\}$ if $0\in J_\sigma$ and $\rhobar$ is irreducible.

Let $\sigma=(t_0,\dots,t_{f-1})\otimes \eta\in W(\rhobar)$. Denote $\delta(\sigma)\defeq  \delta_{\mathrm{red}}(\sigma)$ if $\rhobar$ is reducible and $\delta(\sigma)\defeq  \delta_{\mathrm{irr}}(\sigma)$ if $\rhobar$ is irreducible the Serre weights $\delta_{\mathrm{red}}(\sigma)$, $\delta_{\mathrm{irr}}(\sigma)$ defined in \cite[\S 5]{breuil-IL}. We write $\delta(\sigma)= (s_0,\dots,s_{f-1})\otimes \eta'$. Let $x_\sigma\in \sigma^{N_0}\setminus \{0\}$ and let $\chi_\sigma: H\rightarrow \F^\times$ denote the $H$-eigencharacter of $x_\sigma$. We also identify the irreducible constituents of $\Ind_I^{\GL_2(\cO_K)}(\chi_\sigma^s)$ with the subsets of $\{0,\dots,f-1\}$ as in \cite[\S2]{BP} (for instance $\emptyset$ corresponds to the socle $\sigma$ of $\Ind_I^{\GL_2(\cO_K)}(\chi_\sigma^s)$). For any $J\subseteq \{0,\dots,f-1\}$ let $Q(\chi_{\sigma}^s,J)$ denote the unique quotient of $\Ind_I^{\GL_2(\cO_K)}(\chi_\sigma^s)$ with irreducible $\GL_2(\cO_K)$-socle parametrized by $J$ (see \cite[Thm.2.4(iv)]{BP}). We know that the Serre weight $\delta(\sigma)$ occurs in $\Ind_I^{\GL_2(\cO_K)}(\chi_\sigma^s)$ (see the proof of \cite[Prop.5.1]{breuil-IL}) and we denote by $J^{\max}(\sigma)\subseteq \{0,\dots,f-1\}$ the associated subset. We thus have
\[\soc_{\GL_2(\cO_K)} Q(\chi_{\sigma}^s,J^{\max}(\sigma))\cong \delta(\sigma)\]
(by definition of $\delta(\sigma)$, it is the only constituent of $Q(\chi_{\sigma}^s,J^{\max}(\sigma))$ that is in $W(\rhobar)$). We also have from \cite[\S 2]{BP} (with $-1=f-1$):
\begin{equation}\label{ttos}
\begin{aligned}
s_j&=p-2-t_j+\mathbf{1}_{J^{\max}(\sigma)}(j-1)&\ \ \ \text{if $j\in J^{\max}(\sigma)$},\\
s_j&=t_j-\mathbf{1}_{J^{\max}(\sigma)}(j-1)&\ \ \ \text{if $j\notin J^{\max}(\sigma)$}.
\end{aligned}
\end{equation}
(Above, we write $\mathbf{1}_{J^{\max}(\sigma)}$ for the indicator function of $J^{\max}(\sigma)$.)
Moreover, using \cite[Lemma 2.7]{BP} it is a combinatorial exercise (left to the reader) to prove
\begin{equation}\label{diffsym}
J^{\max}(\sigma)=(J_\sigma \cup J_{\delta(\sigma)})\setminus (J_\sigma\cap J_{\delta(\sigma)}).
\end{equation}
We define
\[m\defeq  |J^{\max}(\sigma)| \in \{0,\dots,f\}.\]
We have $m = 0$ if and only if $\delta(\sigma) \cong \sigma$, and this occurs precisely if $\rhobar$ is reducible and $\sigma$ is an ``ordinary'' Serre weight of $\rhobar$, i.e.\ such that $J_\sigma = \emptyset$ or $J_\sigma = \{0,\dots,f-1\}$ (this follows, for example, from the proof of Lemma~\ref{lem:Jmax} below). 

We consider a $\GL_2(K)$-representation $\pi$ as at the end of \S\ref{lowerstatement}, and fix an embedding $\sigma\hookrightarrow \soc_{\GL_2(\cO_K)}(\pi)$ (recall there are $r$ copies of $\sigma$ inside $\soc_{\GL_2(\cO_K)}(\pi)$). From the assumption on $\pi$, we know that $\smatr{0}{1}{p}{0}x_\sigma$ generates $Q(\chi_{\sigma}^s,J^{\max}(\sigma))$ as a $\GL_2(\cO_K)$-subrepresentation of $\pi\vert_{{\GL_2(\cO_K)}}$, in particular $\delta(\sigma)$ can also be seen in $\soc_{\GL_2(\cO_K)}(\pi)$ (its embedding being determined by that of $\sigma$ up to a scalar).

\begin{prop}\label{prop:F-action}\

  \begin{enumerate}
  \item The vector
    \begin{equation}\label{xdelta}
      x_{\delta(\sigma)}\defeq  \prod_{j\in J^{\max}(\sigma)}Y_j^{s_j}\prod_{j\notin J^{\max}(\sigma)}Y_j^{p-1}\smatr{p}{0}{0}{1}x_\sigma
    \end{equation}
    spans $\delta(\sigma)^{N_0}$ as an $\F$-vector space.
  \item We have in $\pi^{N_1}$ that
    \begin{alignat*}{2}
      Y^{\sum_{j\in J^{\max}(\sigma)}s_j}F(Y^{1-m}x_{\sigma})&=(-1)^{f-1}Y^{1-m}x_{\delta(\sigma)} && \quad\text{if $m > 0$,}\\
      Y^{p-1}F(x_{\sigma})&=(-1)^{f-1}x_{\delta(\sigma)} && \quad\text{if $m = 0$.}
    \end{alignat*}
  \end{enumerate}
\end{prop}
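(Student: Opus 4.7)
The plan is to prove (i) and (ii) in sequence, with the proof of (i) providing the target vector that the $F$-action in (ii) must reach.

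For part (i), the key observation is the identity $\smatr{0}{1}{p}{0}x_{\sigma} = \smatr{0}{1}{1}{0}\xi(p)x_{\sigma}$, so $\xi(p)x_{\sigma}$ generates the same $\GL_2(\cO_K)$-sub\-representation $Q(\chi_{\sigma}^s, J^{\max}(\sigma))$ as $\smatr{0}{1}{p}{0}x_{\sigma}$, but it is an $H$-eigenvector of character $\chi_{\sigma}$ rather than $\chi_{\sigma}^s$. Since $\dim Q(\chi_{\sigma}^s, J^{\max}(\sigma)) \le q$, I would apply Lemma \ref{lem:gen} to the surjection $\F\bbra{Y_0,\dots,Y_{f-1}} \onto Q(\chi_{\sigma}^s, J^{\max}(\sigma))$ sending $g(\un Y) \mapsto g(\un Y)\smatr{0}{1}{1}{0}\xi(p)x_{\sigma} = g(\un Y)\smatr{0}{1}{p}{0}x_{\sigma}$; its kernel is generated by monomials. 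The monomial $\prod_{j\in J^{\max}} Y_j^{s_j} \prod_{j\notin J^{\max}} Y_j^{p-1}$ is the natural candidate for the generator of $\delta(\sigma)^{N_0}$: its $H$-eigencharacter is computed by Lemma \ref{lem:basic}(ii) and is seen to equal $\chi_{\delta(\sigma)}$ using (\ref{ttos}); its $N_0$-invariance is checked by verifying that multiplying by any $Y_k$ produces a monomial that either vanishes (when the total $\un Y$-degree in direction $k$ exceeds the bound permitted in $Q$) or lies in the kernel because the weight it carries is not a Serre weight of a constituent below $\delta(\sigma)$ in the socle filtration; and nonzeroness follows since the corresponding monomial survives in $\Ind_I^{\GL_2(\cO_K)}(\chi_{\sigma}^s)$ and the projection onto $Q(\chi_{\sigma}^s, J^{\max}(\sigma))$ does not kill it, as it projects to a generator of the socle $\delta(\sigma)$.

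For part (ii), fix $m>0$ first. Since $x_{\sigma}^{(m-1)} \defeq Y^{1-m}x_{\sigma} \in \sigma^{N_1}$ exists by Lemma \ref{lem:eigenvct}(iii) (using the generic bounds (\ref{eq:6}) and (\ref{eq:9})), and since $\xi(p)N_1\xi(p)^{-1} = N_1^p$, the vector $\xi(p)x_{\sigma}^{(m-1)}$ is $N_1^p$-fixed. Thus
\[
F(x_{\sigma}^{(m-1)}) = \bigg(\sum_{n\in N_1/N_1^p} n\bigg)\xi(p)x_{\sigma}^{(m-1)}
\]
as an element of $\pi^{N_1}$, and Proposition \ref{prop:nsum} applies in $\F\bbra{N_0/N_1^p}$. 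The plan is to pick $j_0$ judiciously (so that $j_0 \in J^{\max}(\sigma)$ works out combinatorially against the factor $Y^{\sum_{j\in J^{\max}} s_j}$) and expand
\[
\sum_{n\in N_1/N_1^p} n \ =\ (-1)^{f-1}\prod_{j\neq j_0}(Y_j-Y_{j_0})^{p-1} + (\text{terms of degree} \ge f(p-1)).
\]
The high-degree error terms act by zero on $\xi(p)x_{\sigma}^{(m-1)}$ after multiplication by $Y^{\sum_{j\in J^{\max}}s_j}$: this reduces, via Lemma \ref{lem:gen} applied to the $\GL_2(\cO_K)$-subrepresentation generated by $\xi(p)x_{\sigma}$, to showing that the resulting monomials in $\un Y$ exceed the weight bounds allowed in $\sigma$ (equivalently they lie in the kernel of~\eqref{eq:2}). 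For the main term, using $Y_i = Y_j$ on $\pi^{N_1}$ (Lemma \ref{lem:ker}) to pull out $Y^{\sum_{j\in J^{\max}}s_j}$ as needed, expanding the product $\prod_{j\neq j_0}(Y_j-Y_{j_0})^{p-1}$ binomially, applying each monomial to $\xi(p)x_{\sigma}^{(m-1)} = \sum_{\|\un i\|=m-1}\xi(p)\un Y^{-\un i}x_{\sigma}$ (Lemma \ref{lem:eigenvct}(iii)), and then regrouping, the surviving contributions should assemble exactly into $(-1)^{f-1}\sum_{\|\un i\|=m-1}\un Y^{-\un i}x_{\delta(\sigma)} = (-1)^{f-1}Y^{1-m}x_{\delta(\sigma)}$. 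For the case $m=0$ (forced by $J^{\max}(\sigma)=\emptyset$, so $\delta(\sigma)\cong\sigma$), the same computation applied to $v=x_{\sigma}$ with no lowering, multiplied by $Y^{p-1}$, reduces directly to $(-1)^{f-1}\prod_{j}Y_j^{p-1}\xi(p)x_{\sigma} = (-1)^{f-1}x_{\delta(\sigma)}$.

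The main obstacle will be the combinatorial bookkeeping in the expansion of $\prod_{j\neq j_0}(Y_j - Y_{j_0})^{p-1}$ applied to $\xi(p)x_{\sigma}^{(m-1)}$: there are exponentially many cross-terms, each producing a monomial $\un Y^{\un a}\xi(p)\un Y^{-\un i}x_{\sigma}$, and we must (a) show the ``wrong'' monomials (those whose total weight falls outside the allowed range in $\delta(\sigma)$, or whose image lies in higher socle layers of $Q(\chi_{\sigma}^s, J^{\max}(\sigma))$ not containing $\delta(\sigma)$) cancel pairwise or vanish via the defining relations of $\sigma$ in~\eqref{eq:2}, and (b) recognize the remaining sum as exactly the defining expansion of $Y^{1-m}x_{\delta(\sigma)}$ from Lemma \ref{lem:eigenvct}(iii). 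The identity~\eqref{diffsym} linking $J^{\max}(\sigma)$ to the symmetric difference of $J_{\sigma}$ and $J_{\delta(\sigma)}$ together with~\eqref{ttos} will be what makes the combinatorics balance, reducing the identification of surviving monomials to an elementary count based on the subset structure of $J^{\max}(\sigma)$.
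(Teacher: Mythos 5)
Your proposal correctly identifies the starting point---apply Proposition~\ref{prop:nsum} to write $F(Y^{1-m}x_\sigma)$ as $(-1)^{f-1}\prod_{j\neq j_0}(Y_j-Y_{j_0})^{p-1}$ applied to $\smatr{p}{0}{0}{1}Y^{1-m}x_\sigma$ plus a high-degree error---and it is the same route the paper takes. However, there is a genuine gap at the step where you reduce the vanishing of both the error term and the ``wrong'' cross-terms to ``weight bounds in $\sigma$ (equivalently they lie in the kernel of~\eqref{eq:2}).'' This is not the right criterion, and the difference is precisely where the real work of the proof lives.

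The issue is that you must control monomials $\un{Y}^{\un{k}}\smatr{p}{0}{0}{1}Y^{1-m}x_\sigma$, i.e.\ the action of $\un Y^{\un k}$ on $\smatr{p}{0}{0}{1}$ applied to the \emph{lowered} vector $Y^{1-m}x_\sigma$, not on $\smatr{p}{0}{0}{1}x_\sigma$. The $\GL_2(\cO_K)$-subrepresentation of $\pi$ generated by $\smatr{p}{0}{0}{1}Y^{1-m}x_\sigma$ is strictly larger than $Q(\chi_\sigma^s,J^{\max}(\sigma))$ as soon as $m>1$, and it is also not determined by the intrinsic structure of $\sigma$ alone: it depends on the hypothesis $\pi^{K_1}\cong D_0(\rhobar)^{\oplus r}$ through the constraint that every Serre weight in $\soc_{\GL_2(\cO_K)}\pi$ lies in $W(\rhobar)$. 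So neither the presentation~\eqref{eq:2} of $\sigma$ nor Lemma~\ref{lem:gen} applied to $Q(\chi_\sigma^s,J^{\max}(\sigma))$ gives the needed vanishing. The paper handles this through three auxiliary lemmas you do not have: Lemma~\ref{lem:expl:pres}, which identifies $\langle\GL_2(\cO_K)\smatr{p}{0}{0}{1}\un Y^{-\un i}x_\sigma\rangle$ modulo smaller ones as $Q(\chi_\sigma^s\alpha^{\un i},\{j\in J^{\max}(\sigma),i_{j+1}=0\})$; Lemma~\ref{lem:iell=zero}, which is a delicate $H$-eigencharacter argument ruling out most $\un k$ under the strong genericity bounds~\eqref{eq:9}; and Lemma~\ref{lem:key:structure}, which packages these into a sharp bound $\|\un k\|\leq(f-1)(p-1)+(m-1)+\sum_{J^{\max}(\sigma)}s_j$ together with a characterization of when equality holds. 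It is this sharp bound, not a bound about $\sigma$ itself, that kills the error term of Proposition~\ref{prop:nsum} (its valuation $\sum_{J^{\max}}s_j+(p-1)f$ narrowly exceeds the bound because $p>f\geq m$), and Lemma~\ref{lem:key:structure}(i)--(ii) is also what isolates the unique surviving monomial after the binomial expansion $\binom{p-1}{i}=(-1)^i$ and shows the result lies in $\F\,Y^{1-m}x_{\delta(\sigma)}$ (via its $N_1$-invariance plus Lemma~\ref{lem:eigenvct}(iii)). Your ``elementary count based on the subset structure of $J^{\max}(\sigma)$'' elides exactly this analysis, which is the technical core of the proof.

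There is also a smaller mix-up in your sketch for part (i): the vector that is $N_0$-fixed (hence playing the role of $v$ in Lemma~\ref{lem:gen}) is $\smatr{0}{1}{p}{0}x_\sigma$, with $H$-eigencharacter $\chi_\sigma^s$, not $\xi(p)x_\sigma$; the latter equals $\smatr{0}{1}{1}{0}v$ and is what the monomial ring acts on. (Note $\smatr{0}{1}{p}{0}^{-1}N_0\smatr{0}{1}{p}{0}\subseteq K_1$ fixes $x_\sigma$, whereas $\xi(p)^{-1}N_0\xi(p)=\smatr{1}{p^{-1}\cO_K}{0}{1}$ does not preserve $\pi^{K_1}$.) With that orientation corrected, your approach to (i) is compatible with the paper's, which in any case reads off the answer directly from the explicit basis $\un Y^{\un i}\smatr{p}{0}{0}{1}x_\sigma$ of $\delta(\sigma)$ supplied by \cite[Lemma~2.7(ii)]{BP} and Lemma~\ref{lem:theta}(ii).
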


\begin{proof}[Proof of Proposition~\ref{prop:F-action}(i)]
Suppose first $m > 0$. From \cite[Lemma 2.7(ii)]{BP} and Lemma \ref{lem:theta}(ii) we see that $\delta(\sigma)$
has basis $\un{Y}^{\un{i}}\smatr{p}{0}{0}{1}x_\sigma$, where $0\leq i_j\leq s_j$ if $j\in J^{\max}(\sigma)$ and $p-1-s_j\leq i_j\leq p-1$
if $j\notin J^{\max}(\sigma)$. Hence the only vectors in $\delta(\sigma)$ that are killed by all $Y_j$ are the multiples of 
$x_{\delta(\sigma)}$. The statement follows by an inspection of the $H$-action on this basis (which is formed by $H$-eigenvectors), see Remark \ref{switchchi}.

If $m = 0$, then $\delta(\sigma)$ is the socle of $\Ind_I^{\GL_2(\cO_K)}(\chi_\sigma^s)$. By \cite[Lemma 2.7(i)]{BP}, $f_0$ is the
unique $I$-invariant element of $\delta(\sigma) \subseteq \Ind_I^{\GL_2(\cO_K)}(\chi_\sigma^s)$. The statement follows from Lemma \ref{lem:theta}(ii).
\end{proof}

In order to prove Proposition~\ref{prop:F-action}(ii), we first need several lemmas.

\begin{lem}\label{lem:Jmax}
We have $|J^{\max}(\sigma)|=|J^{\max}(\delta(\sigma))|$.
\end{lem}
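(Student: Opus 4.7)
The plan is to deduce the equality of cardinalities from the combinatorial formula (\ref{diffsym}) together with the fact that $\delta$ is an involution on $W(\rhobar)$. More precisely, by (\ref{diffsym}) we have
\[J^{\max}(\sigma) = J_\sigma \vartriangle J_{\delta(\sigma)} \quad \text{and} \quad J^{\max}(\delta(\sigma)) = J_{\delta(\sigma)} \vartriangle J_{\delta(\delta(\sigma))},\]
where $\vartriangle$ denotes the symmetric difference. Thus, if I can show that $\delta(\delta(\sigma)) = \sigma$, then $J^{\max}(\delta(\sigma)) = J^{\max}(\sigma)$ as subsets of $\{0,\dots,f-1\}$, which is much stronger than the equality of cardinalities.

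First I would check the involutivity of $\delta$ directly from the definitions of $\delta_{\rm red}$ and $\delta_{\rm irr}$ in \cite[\S5]{breuil-IL}. In both the reducible and irreducible cases, $\delta(\sigma)$ is constructed as the Serre weight in $W(\rhobar)$ whose parametrizing subset $J_{\delta(\sigma)}$ is obtained from $J_\sigma$ by an explicit combinatorial rule (essentially ``flipping'' certain indices according to the action of a certain operator on the $t_j$). Unwinding this rule, one should see that applying the same rule twice recovers $J_\sigma$.

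Alternatively, one can avoid direct reference to the formulas in \cite{breuil-IL} and argue purely from (\ref{diffsym}) as follows: it is enough to show that the map $\sigma \mapsto \delta(\sigma)$ on $W(\rhobar)$ is injective (since $W(\rhobar)$ is a finite set), as any injective self-map of a finite set is bijective, and one can then re-examine the definition to see that $\delta \circ \delta = \mathrm{id}$ by symmetry. Injectivity would follow from the fact that $J^{\max}(\sigma)$ determines $\sigma$ up to a bijection (via (\ref{ttos})) and $\delta(\sigma)$ determines $J^{\max}(\sigma)$ via (\ref{diffsym}).

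The main obstacle will be the bookkeeping involved in confirming $\delta(\delta(\sigma)) = \sigma$ from the explicit definitions, and in particular verifying the boundary cases where $J^{\max}(\sigma)$ contains $0$ and $\rhobar$ is irreducible (where the constraints in (\ref{eq:6}) and the formulas (\ref{ttos}) behave slightly differently). Once involutivity is in hand, the lemma follows immediately from (\ref{diffsym}). As a fallback, if involutivity turns out to be delicate, one can prove directly that the number of indices in $J^{\max}(\sigma)$ equals the number of ``sign changes'' between $\sigma$ and $\delta(\sigma)$ in the sense of \cite[\S2]{BP}, a quantity visibly symmetric in $\sigma$ and $\delta(\sigma)$.
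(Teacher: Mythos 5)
Your proposal rests on the claim that $\delta$ is an involution on $W(\rhobar)$, i.e.\ $\delta(\delta(\sigma))\cong\sigma$. This is false in general. The key fact from \cite[\S5]{breuil-IL} (which the paper's proof uses) is that, identifying $\{0,\dots,f-1\}$ with $\Z/f$ in the reducible case, one has $J_{\delta(\sigma)}=J_\sigma-1$ as subsets of $\Z/f$ (and similarly with $\Z/2f$ and the sets $J'_\sigma\defeq J_\sigma\amalg(\ovl{J_\sigma}+f)$ in the irreducible case). So $\delta$ acts as a \emph{cyclic shift}, not an involution: $\delta^2(\sigma)$ corresponds to $J_\sigma-2$, which equals $J_\sigma$ only if the shift-period of $J_\sigma$ divides $2$. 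The orbits of $\delta$ in $W(\rhobar)$ can have any length dividing $f$ (reducible case) or $2f$ (irreducible case); indeed the integer $n$ introduced in \S3.2.4 is precisely this orbit length and it is generically equal to $f$ (resp.\ $2f$). Consequently the stronger conclusion you aim for, namely the set-equality $J^{\max}(\delta(\sigma))=J^{\max}(\sigma)$, is also false: one has $J^{\max}(\delta(\sigma))=J^{\max}(\sigma)-1$, a genuine shift.

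Your fallback ("visibly symmetric in $\sigma$ and $\delta(\sigma)$") does not close the gap either: $|A\vartriangle B|=|B\vartriangle A|$ relates the symmetric difference of the \emph{same} pair, whereas comparing $J^{\max}(\sigma)=J_\sigma\vartriangle J_{\delta(\sigma)}$ with $J^{\max}(\delta(\sigma))=J_{\delta(\sigma)}\vartriangle J_{\delta^2(\sigma)}$ involves \emph{different} pairs. What saves the argument is not involutivity but the fact that the map $J\mapsto J-1$ is a bijection of $\Z/f$ (resp.\ $\Z/2f$) and hence preserves cardinalities of symmetric differences:
\[|J^{\max}(\delta(\sigma))|=|(J_\sigma-1)\vartriangle(J_\sigma-2)|=|J_\sigma\vartriangle(J_\sigma-1)|=|J^{\max}(\sigma)|.\]
To repair your proof, replace the involutivity claim by this shift formula from \cite[\S5]{breuil-IL} and then invoke (\ref{diffsym}) as above.
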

\begin{proof}
If $\rhobar$ is reducible, identifying $\{0,\dots,f-1\}$ with $\Z/f$ we have $J_{\delta(\sigma)}=J_\sigma-1$ as subsets of $\Z/f$ by \cite[\S 5]{breuil-IL}, and the statement follows in that case by (\ref{diffsym}). If $\rhobar$ is irreducible, let $J'_\sigma\defeq  J_\sigma\coprod (\ovl{J_\sigma}+f)\subseteq \{0,\dots,2f-1\}$ as in \cite[\S 5]{breuil-IL}, where $\ovl{J_\sigma}$ is the complement of $J_\sigma$ in $\{0,\dots,f-1\}$. It follows from (\ref{diffsym}) that $|J^{\max}(\sigma)|=\frac{1}{2}|(J'_\sigma \cup J'_{\delta(\sigma)})\setminus (J'_\sigma\cap J'_{\delta(\sigma)})|$. Identifying $\{0,\dots,2f-1\}$ with $\Z/2f$, we again have $J'_{\delta(\sigma)}=J_\sigma'-1$ as subsets of $\Z/(2f)$ by \cite[\S 5]{breuil-IL}, and the statement follows. 
\end{proof}

The three lemmas that follow only apply to $m>0$ and require the strong genericity assumption. 
In these three lemmas, we identify without comment $\{0,\dots,f-1\}$ with $\Z/f\Z$ (so $-1=f-1$, $f=0$, etc.).

\begin{lem}\label{lem:expl:pres}
Assume $m>0$ and let $\un{i}\in\Z^f_{\geq 0}$ with $\|\un{i}\|\leq m-1$. Then we have
\begin{multline}\label{eq:10}
\left\langle \GL_2(\cO_K)\smatr{p}{0}{0}{1}\un{Y}^{-\un{i}}x_\sigma\right\rangle\Big\slash
\!\sum_{\un{0}\leq\un{j}<\un{i}}\left\langle \GL_2(\cO_K)\smatr{p}{0}{0}{1}\un{Y}^{-\un{j}}x_\sigma\right\rangle\quad\\
\cong \quad Q\big(\chi_\sigma^s\alpha^{\un{i}},\{j \in J^{\max}(\sigma) : i_{j+1} = 0\}\big).
\end{multline}
\end{lem}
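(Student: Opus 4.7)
The plan is to realize the quotient on the left-hand side as a $\GL_2(\cO_K)$-equivariant quotient of $\Ind_I^{\GL_2(\cO_K)}(\chi_\sigma^s\alpha^{\un{i}})$ via Lemma~\ref{lem:gen}, and to identify its kernel with the one defining $Q(\chi_\sigma^s\alpha^{\un{i}}, J)$ for $J=\{j\in J^{\max}(\sigma):i_{j+1}=0\}$ by explicit monomial manipulation inside $\pi$.

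First I would examine $V_{\un{i}}\defeq \langle\GL_2(\cO_K)\smatr{p}{0}{0}{1}\un{Y}^{-\un{i}}x_\sigma\rangle$ and $W_{\un{i}}\defeq V_{\un{i}}/\sum_{\un{0}\leq\un{j}<\un{i}}V_{\un{j}}$. By Lemma~\ref{lem:eigenvct}(ii) and the fact that $\smatr{p}{0}{0}{1}$ commutes with $H$, the generator of $V_{\un{i}}$ is an $H$-eigenvector with character $\chi_\sigma\alpha^{-\un{i}}$. Since this generator is typically not $N_0$-fixed when $\un{i}\ne\un{0}$, I would construct an $N_0$-fixed $H$-eigengenerator of $W_{\un{i}}$ (analogously to Proposition~\ref{prop:F-action}(i), by applying a suitable product of the $Y_k$'s to the original generator), and then invoke Lemma~\ref{lem:gen} to obtain a surjection $\Ind_I^{\GL_2(\cO_K)}(\chi_\sigma^s\alpha^{\un{i}})\onto W_{\un{i}}$ whose kernel is generated by monomials in $\F[Y_0,\dots,Y_{f-1}]/(Y_j^p)$; the bound $\dim W_{\un{i}}\le q$ is automatic by cyclicity.

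The main computation identifies which monomials lie in the kernel. The key identity, from Lemma~\ref{lem:basic}(ii), is
\[Y_{k-1}^p\smatr{p}{0}{0}{1}\un{Y}^{-\un{i}}x_\sigma = \smatr{p}{0}{0}{1}Y_k\un{Y}^{-\un{i}}x_\sigma = \smatr{p}{0}{0}{1}\un{Y}^{-(\un{i}-\un{e_k})}x_\sigma \in V_{\un{i}-\un{e_k}}\]
whenever $i_k>0$, using Lemma~\ref{lem:eigenvct}(ii). Iterating this identity (tracking the cyclic index shifts $k\mapsto k+1$ introduced by the commutation in Lemma~\ref{lem:basic}(ii)) and combining with the classification of subquotients of $\Ind_I^{\GL_2(\cO_K)}(\chi_\sigma^s\alpha^{\un{i}})$ from \cite[Thm.~2.4 and Lemma~2.7]{BP} shows that a constituent parametrized by $J'\subseteq\{0,\dots,f-1\}$ survives in $W_{\un{i}}$ precisely when $J'\subseteq\{j\in J^{\max}(\sigma):i_{j+1}=0\}$. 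By the uniqueness part of \cite[Thm.~2.4(iv)]{BP}, $W_{\un{i}}\cong Q(\chi_\sigma^s\alpha^{\un{i}}, J)$ with $J=\{j\in J^{\max}(\sigma):i_{j+1}=0\}$.

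The main obstacle is the combinatorial matching just described: the index shift $k\mapsto k+1$ from Lemma~\ref{lem:basic}(ii) is what transforms the reduction condition ``$i_k>0$'' into the socle condition ``$i_{j+1}=0$'' defining $J$, and this has to be checked against the explicit parametrization of Serre-weight constituents in \cite[\S2]{BP}. The genericity~\eqref{eq:6} and the length bound $\|\un{i}\|\le m-1$ are essential: they ensure that Lemma~\ref{lem:eigenvct}(ii) applies without degeneration, that the relevant $H$-eigencharacters are pairwise distinct, and that the cyclic modules $V_{\un{j}}$ for $\un{j}\le\un{i}$ organize into a filtration with the predicted subquotients.
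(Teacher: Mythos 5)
Your overall strategy (realize $W_{\un{i}}$ as a quotient of $\Ind_I^{\GL_2(\cO_K)}(\chi_\sigma^s\alpha^{\un{i}})$ and then pin down which quotient $Q(\cdot,\cdot)$ it is) is not the one the paper takes, and as written it has several real gaps.

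First, you claim the bound $\dim W_{\un{i}}\le q$ needed to invoke Lemma~\ref{lem:gen} ``is automatic by cyclicity''. This is false: $\Ind_I^{\GL_2(\cO_K)}(\chi)$ itself is cyclic over $\GL_2(\F_q)$, is generated by the $N_0$-fixed $H$-eigenvector $\smatr{0}{1}{1}{0}\phi$, and has dimension $q+1$. So cyclicity alone only gives $\dim W_{\un{i}}\le q+1$, and the strict bound $\le q$ is equivalent to saying the map from the principal series has nontrivial kernel. That is precisely the substance of the lemma and does not come for free. Relatedly, the preliminary step you propose (``construct an $N_0$-fixed $H$-eigengenerator of $W_{\un{i}}$ by applying a suitable product of the $Y_k$'s'') is unnecessary and misdirected: the vector $\smatr{0}{1}{p}{0}\un{Y}^{-\un{i}}x_\sigma = \smatr{0}{1}{1}{0}\smatr{p}{0}{0}{1}\un{Y}^{-\un{i}}x_\sigma$ is already $N_0$-fixed, because $\un{Y}^{-\un{i}}x_\sigma\in\sigma$ is $K_1$-fixed, $N_0^-\subset K_1$, and $\smatr{0}{1}{p}{0}$ conjugates $N_0$ into $N_0^-$; no monomial needs to be applied. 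Applying a product of $Y_k$'s as in Proposition~\ref{prop:F-action}(i) would rather produce the socle vector, not a generator.

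Second, the monomial identity you single out, $Y_{k-1}^p\smatr{p}{0}{0}{1}\un{Y}^{-\un{i}}x_\sigma = \smatr{p}{0}{0}{1}\un{Y}^{-(\un{i}-\un{e_k})}x_\sigma$, cannot by itself decide which monomials lie in the kernel. These are degree-$p$ relations, and once one knows that $W_{\un{i}}$ is a $\GL_2(\F_q)$-module (so that $Y_j^p$ acts by zero on it), they become tautological: both sides vanish in the quotient. To determine which monomials $\un{Y}^{\un{k}}$ with $0\le k_j<p$ kill the generator one needs vanishing statements of the type $Y_\ell^{p-t_\ell+2i_\ell}\smatr{p}{0}{0}{1}\un{Y}^{-\un{i}}x_\sigma=0$. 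But that is the content of Lemma~\ref{lem:iell=zero}, which comes \emph{after} Lemma~\ref{lem:expl:pres} in the paper and whose proof \emph{uses} Lemma~\ref{lem:expl:pres}. So the bottom-up monomial approach you sketch would be circular as stated, and the ``main obstacle'' you flag is precisely the step that must be supplied by some independent input.

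The paper instead argues top-down. It considers the $I$-representation $W$ generated by $\smatr{0}{1}{p}{0}\un{Y}^{-\un{i}}x_\sigma$ (of dimension $\prod_j(i_j+1)$, not one), forms $V=\Ind_I^{\GL_2(\cO_K)}(W)$, and uses the structure results \cite[Prop.6.2.2, Lemma 6.2.1]{BHHMS1} to show that $V$ is multiplicity-free with explicitly parametrized Serre-weight constituents $\sigma_{\un a}$, $\un 0\le\un a\le 2\un i+\un 1$, and $\GL_2(\cO_K)$-socle $\sigma$. The hypothesis that $\pi^{K_1}\cong D_0(\rhobar)^{\oplus r}$ (so that $\pi^{K_1}/\soc_{\GL_2(\cO_K)}\pi$ contains no weight of $W(\rhobar)$) then forces the image $\overline V$ of $V\to\pi$ to be the unique quotient of $V$ with $\GL_2(\cO_K)$-socle $\delta(\sigma)$. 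From the explicit multiplicity-free filtration one reads off that the subquotient in~\eqref{eq:10} is a quotient of $\Ind_I^{\GL_2(\cO_K)}(\chi_\sigma^s\alpha^{\un i})$ with irreducible socle $\sigma_{\un c}$, $c_j=\max(\mathbf 1_{J^{\max}(\sigma)+1}(j),2i_j)$, which corresponds exactly to $\{j\in J^{\max}(\sigma):i_{j+1}=0\}$. This simultaneously provides the dimension bound you were missing and the identification of $J$, without ever computing which individual monomials vanish.
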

\begin{proof}
Note first that $t_j\in \{2i_j+1,\dots,p-2\}$ for all $j$ by \eqref{eq:9} and the assumption on $\un{i}$, so that the vectors $\un{Y}^{-\un{i}}x_\sigma$ and $\un{Y}^{-\un{j}}x_\sigma$ are well-defined elements of $\sigma$ by Lemma \ref{lem:eigenvct}(ii). We rewrite $\langle\GL_2(\cO_K)\smatr{p}{0}{0}{1}\un{Y}^{-\un{j}}x_\sigma\rangle = \langle\GL_2(\cO_K)\smatr{0}{1}{p}{0}\un{Y}^{-\un{j}}x_\sigma\rangle$ and, using notation from \cite[\S\S2.1,2.2]{BHHMS1}, $\sigma \cong F(\lambda)$ where $\lambda=(\lambda_0,\dots,\lambda_{f-1})$ with $\lambda_j=(\lambda_{j,1},\lambda_{j,2})\in \{0,\dots,p-1\}^2$. We have $\lambda_{j,1}-\lambda_{j,2}=t_j$ for all $j$.

Let $W'$ (resp.\ $W$) be the $I$-subrepresentation of $\pi$ generated by $\un{Y}^{-\un{i}}x_\sigma$ (resp.\ $\smatr{0}{1}{p}{0}\un{Y}^{-\un{i}}x_\sigma$). We deduce from Lemma~\ref{lem:eigenvct}(ii) that $W' = \langle N_0 \un{Y}^{-\un{i}}x_\sigma\rangle$ has $\F$-basis $\un{Y}^{-\un{j}}x_\sigma$ for all $\un 0 \le \un j \le \un i$, and $\soc_I (W') = \F x_\sigma$. We moreover have $W=\smatr{0}{1}{p}{0}W'$ since $I$ is normalized by $\smatr{0}{1}{p}{0}$. In particular we see that $W$ injects into the $I$-representation $\cJ_{\chi_\sigma}$ of \cite[Cor.6.1.4]{BHHMS1} and that $W$ has Jordan--H\"older factors $\chi_\sigma^s \alpha^{\un j}$ for $\un 0 \le \un j \le \un i$, each occurring with multiplicity $1$. Let $V \defeq  \Ind_I^{\GL_2(\cO_K)}(W)$. Then $V$ is the representation appearing in the first paragraph of the proof of \cite[Prop.6.2.2]{BHHMS1}, with $B_j$ taken to be $2i_j+1$ for all $j$ (and note the bounds on $\lambda_{j,1}-\lambda_{j,2}$ which let us invoke {\it loc.cit.}). Hence, by \cite[Prop.6.2.2]{BHHMS1} and its proof in the case $\varepsilon_j = -1$ and $B_j = 2i_j+1$ for all $j$, we get that $V$ is multiplicity-free, has Jordan--H\"older factors $\sigma_{\un a} \defeq  F(\mathfrak t_\lambda(-\sum a_j \ovl\eta_j))$ for $\un 0 \le \un a \le 2\un i+1$ with the notation of \cite[\S2.4]{BHHMS1}, and $\GL_2(\cO_K)$-socle $\sigma$. Moreover, the unique subrepresentation of $V$ with cosocle $\sigma_{\un a}$ has constituents $\sigma_{\un b}$ for $\un 0 \le \un b \le \un a$. On the other hand, $\Ind_I^{\GL_2(\cO_K)}(W)$ has a filtration with subquotients $\Ind_I^{\GL_2(\cO_K)} (\chi_\sigma^s \alpha^{\un j})$ for $\un 0 \le \un j \le \un i$, and by \cite[Lemma 6.2.1(i)]{BHHMS1} the constituents of $\Ind_I^{\GL_2(\cO_K)} (\chi_\sigma^s \alpha^{\un j})$ are the Serre weights $\sigma_{\un a}$ with $2\un j \le \un a \le 2\un j +1$. By the proof of \cite[Lemma 6.2.1(i)]{BHHMS1}, one easily checks that the constituent $\sigma_{\un a}$ of $\Ind_I^{\GL_2(\cO_K)} (\chi_\sigma^s \alpha^{\un j})$ corresponds to the subset $\{\ell : \text{$a_{\ell+1}$ is odd}\}\subseteq \{0,\dots,f-1\}$ in the parametrization of \cite[\S2]{BP} (note that twisting $\chi_\sigma^s$ by $\alpha^{\un j}$ corresponds to shifting by $-2 \sum j_\ell \ovl\eta_\ell$ in the extension graph).

By Frobenius reciprocity $\ovl V \defeq  \langle \GL_2(\cO_K)\smatr{0}{1}{p}{0}\un{Y}^{-\un{i}}x_\sigma\rangle$ is the image of a nonzero map $\Ind_I^{\GL_2(\cO_K)}(W) \to \pi$ and any Serre weight in its $\GL_2(\cO_K)$-socle has to be in $W(\rhobar)$. By \cite[Prop.2.4.2]{BHHMS1} if $\sigma_{\un a} \in W(\rhobar)$, then $\un 0 \le \un a \le \un 1$, so $\sigma_{\un a}$ is a constituent of $\Ind_I^{\GL_2(\cO_K)} (\chi_\sigma^s) \subseteq V$. Thus by the definition of $\delta(\sigma)$ and as $\pi^{K_1}/\soc_{\GL_2(\cO_K)} \pi$ does not contain any Serre weight of $W(\rhobar)$ it follows that $\ovl V$ is the unique quotient of $V$ with $\GL_2(\cO_K)$-socle $\delta(\sigma)$. By the previous paragraph and the definition of $J^{\max}(\sigma)$, we have $\delta(\sigma) \cong \sigma_{\un b}$, where $b_j = \mathbf{1}_{J^{\max}(\sigma)+1}(j)$ for all $j$, and $\ovl V$ has constituents $\sigma_{\un a}$ with $\mathbf{1}_{J^{\max}(\sigma)+1}(j) \le a_j \le 2 i_j +1$ for all $j$. By construction, the left-hand side of~\eqref{eq:10} is a quotient of $\Ind_I^{\GL_2(\cO_K)} (\chi_\sigma^s \alpha^{\un i})$. Moreover, by what is before, it must have constituents $\sigma_{\un a}$ with $\max(\mathbf{1}_{J^{\max}(\sigma)+1}(j),2 i_j) \le a_j \le 2 i_j + 1$ for all $j$. It follows that its $\GL_2(\cO_K)$-socle is irreducible and isomorphic to $\sigma_{\un c}$, where $c_j\defeq \max(\mathbf{1}_{J^{\max}(\sigma)+1}(j),2 i_j)$ for all $j$. Since $2i_{j+1}$ is even and $>1$ as soon as $i_{j+1}\ne 0$, we see that $c_{j+1}$ is odd if and only if $i_{j+1}=0$ and $j\in J^{\max}(\sigma)$. Hence the $\GL_2(\cO_K)$-socle of this quotient of $\Ind_I^{\GL_2(\cO_K)} (\chi_\sigma^s \alpha^{\un i})$ corresponds to the subset $\{j \in J^{\max}(\sigma) : i_{j+1} = 0\}$, as required.
\end{proof}

\begin{lem}\label{lem:iell=zero}
Assume $m>0$ and let $\un{i}\in\Z^f_{\geq 0}$, $\ell\in J^{\max}(\sigma)$ such that $\|\un{i}\|\leq m-1$ and $i_{\ell+1}=0$. Then
\[
Y_\ell^{p-t_{\ell}+2i_{\ell}}\smatr{p}{0}{0}{1}\un{Y}^{-\un{i}}x_{\sigma}=0.
\]
\end{lem}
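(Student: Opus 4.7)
The proof proceeds by induction on $\|\un i\|$, using Lemma \ref{lem:expl:pres} to reduce at each step to a computation in a quotient of the form $Q(\chi,J)$, together with an $H$-eigencharacter argument to bootstrap vanishing modulo lower terms into actual vanishing in $\pi$. Set
\[
V_{\un i} \defeq  \langle \GL_2(\cO_K)\smatr{p}{0}{0}{1}\un Y^{-\un i}x_\sigma\rangle, \qquad V_{<\un i}\defeq \sum_{\un 0 \le \un j < \un i} V_{\un j}.
\]

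\emph{Base case $\un i = \un 0$.} We must show $Y_\ell^{p-t_\ell}\smatr{p}{0}{0}{1}x_\sigma = 0$ for $\ell \in J^{\max}(\sigma)$. By Lemma \ref{lem:expl:pres} applied with $\un i = \un 0$, the representation $V_{\un 0}$ is isomorphic to $Q(\chi_\sigma^s, J^{\max}(\sigma))$, with $\smatr{p}{0}{0}{1}x_\sigma$ corresponding to the distinguished generator $\phi$ of $\Ind_I^{\GL_2(\cO_K)}(\chi_\sigma^s)$ modulo the defining relations of the quotient. Using Lemma \ref{lem:theta} to re-express $Y_\ell^{p-t_\ell}\phi$ in the basis $\{\phi\} \cup \{f_0,\dots,f_{q-2}\}$ of $\Ind_I^{\GL_2(\cO_K)}(\chi_\sigma^s)$, and combining with \cite[Lemma 2.7]{BP} (which identifies precisely which $f_k$ vanish in $Q(\chi_\sigma^s, J^{\max}(\sigma))$), one checks that each monomial contributing to $Y_\ell^{p-t_\ell}\phi$ corresponds to an $f_k$ that is killed in the quotient since $\ell \in J^{\max}(\sigma)$. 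This gives the base case.

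\emph{Inductive step.} Assume the statement for all $\un j$ with $\|\un j\| < \|\un i\|$. By Lemma \ref{lem:expl:pres}, the quotient $V_{\un i}/V_{<\un i}$ is isomorphic to $Q(\chi_\sigma^s\alpha^{\un i}, J_{\un i})$ with $J_{\un i} = \{j \in J^{\max}(\sigma), i_{j+1}=0\}$, and since $\ell \in J^{\max}(\sigma)$ and $i_{\ell+1}=0$ we have $\ell \in J_{\un i}$. The same kind of monomial computation as in the base case, carried out in $Q(\chi_\sigma^s \alpha^{\un i}, J_{\un i})$, shows that $Y_\ell^{p-t_\ell + 2i_\ell}\smatr{p}{0}{0}{1}\un Y^{-\un i}x_\sigma$ maps to zero in $V_{\un i}/V_{<\un i}$; the extra $2i_\ell$ in the exponent reflects the character twist by $\alpha^{\un i}$ (equivalently, the shift of the socle Serre weight of $Q(\chi_\sigma^s\alpha^{\un i}, J_{\un i})$ in its $\ell$-th slot). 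Hence the element lies in $V_{<\un i}$.

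\emph{Concluding $H$-weight argument.} To upgrade the vanishing in $V_{\un i}/V_{<\un i}$ to vanishing in $\pi$, we compare $H$-eigencharacters. By Remark \ref{switchchi} and the fact that $\smatr{p}{0}{0}{1}$ commutes with the diagonal torus $H$, the element $Y_\ell^{p-t_\ell + 2i_\ell}\smatr{p}{0}{0}{1}\un Y^{-\un i}x_\sigma$ is $H$-isotypic with a precise character $\chi_\sigma\alpha^{-\un i + (p-t_\ell+2i_\ell)p^\ell}$. Enumerating the $H$-eigencharacters that can appear in each subquotient $V_{\un j}/V_{<\un j} \cong Q(\chi_\sigma^s\alpha^{\un j}, J_{\un j})$ via its $\{\phi, f_0,\ldots,f_{q-2}\}$-basis and Lemma \ref{lem:theta}, and combining with the strong genericity \eqref{eq:6} on the $r_j$, one checks that this character does not occur in any $V_{\un j}$ with $\un j < \un i$, forcing the element to vanish.

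\emph{Main obstacle.} The principal difficulty is the $H$-weight separation at the final step: one must verify rigorously that the prescribed $H$-character does not occur in any lower $V_{\un j}$, which involves combining the description of $Q(\chi, J)$'s Jordan--H\"older constituents with the explicit basis of $\Ind_I^{\GL_2(\cO_K)}(\chi)$ from Lemma \ref{lem:theta}, and exploiting the bounds \eqref{eq:6}. A secondary obstacle is tracking the character twist $\alpha^{\un i}$ through the computation of \cite[Lemma 2.7]{BP} to pin down which monomials survive in the quotient $Q(\chi_\sigma^s\alpha^{\un i}, J_{\un i})$, producing the precise exponent $p - t_\ell + 2i_\ell$ rather than $p - t_\ell$.
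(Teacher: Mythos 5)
Your proposal takes essentially the same approach as the paper: identify the $H$-eigencharacter of the element in question, and via Lemma~\ref{lem:expl:pres} show it does not occur in any graded piece $Q(\chi_\sigma^s\alpha^{\un i'}, J_{\un i'})$ of $V_{\un i}$ with $\un 0 \le \un i' \le \un i$ (your ``monomial computation'' is precisely this non-occurrence for $\un i' = \un i$, and your ``concluding $H$-weight argument'' covers $\un i' < \un i$). One small remark: the ``induction on $\|\un i\|$'' framing is spurious---you never invoke the inductive hypothesis, and indeed the paper gives a direct (non-inductive) argument by contradiction; the substantive work in both cases is the congruence and genericity analysis you defer to ``one checks.''
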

\begin{proof}
Recall $p-t_{\ell}+2i_{\ell}\geq 0$ by \eqref{eq:9}, so that $Y_\ell^{p-t_{\ell}+2i_{\ell}}\smatr{p}{0}{0}{1}\un{Y}^{-\un{i}}x_{\sigma}$ is well-defined. Suppose on the contrary that $Y_\ell^{p-t_{\ell}+2i_{\ell}}\smatr{p}{0}{0}{1}\un{Y}^{-\un{i}}x_{\sigma}\neq 0$ for some $\ell\in J^{\max}(\sigma)$ such that $i_{\ell+1}=0$ and $\|\un{i}\|\leq m-1$. By Lemma \ref{lem:basic}(ii) and Lemma \ref{lem:eigenvct}(ii) this is an eigenvector for $\{\smatr{\tld{\lambda}}{0}{0}{\tld{\mu}} :  \lambda,\mu\in \Fq^\times\}$ with eigencharacter $\chi_\sigma\alpha^{-\un{i}}\alpha^{(p-t_\ell+2i_\ell)p^\ell}$.
By Lemma \ref{lem:expl:pres} it suffices to show that the $H$-eigencharacter $\chi_\sigma\alpha^{-\un{i}}\alpha^{(p-t_\ell+2i_\ell)p^\ell}$ does not occur in
\[
V_{\un{i}'}\defeq  Q(\chi_\sigma^s\alpha^{\un{i}'}, J_{\un{i}'})
\]
for any $\un{i}'$ such that $\un 0\leq \un i'\leq \un i$, where $J_{\un{i}'}\defeq  \{j \in J^{\max}(\sigma) : i'_{j+1}=0\}$.

Using the notation $\lambda=(\lambda_0(x_0),\dots,\lambda_{f-1}(x_{f-1}))$ and ${\mathcal P}(x_0,\dots,x_{f-1})$ of \cite[Thm.2.4]{BP}, the irreducible constituents of $V_{\un{i}'}$ are given by the Serre weights $(\lambda_0(t_0-2i_0'),\dots, \lambda_{f-1}(t_{f-1}-2i_{f-1}'))$ (up to twist) for those $\lambda\in \mathcal{P}(x_0,\dots,x_{f-1})$ such that $J(\lambda)\supseteq J_{\un{i}'}$. Recall that $\lambda_j(x)=p-2-x+\mathbf{1}_{J(\lambda)}(j-1)$ if $j\in J(\lambda)$ and $\lambda_j(x)=x-\mathbf{1}_{J(\lambda)}(j-1)$ if $j\notin J(\lambda)$.
By \cite[Lemma 2.5(i)]{BP} and \cite[Lemma 2.7]{BP}, the $H$-eigencharacters that occur in $V_{\un{i}'}$ are $\chi_\sigma\alpha^{-\un{i}'}\alpha^{\un{k}}$, where $\un{k}$ is such that there exists $\lambda\in \mathcal{P}(x_0,\dots,x_{f-1})$ with $J(\lambda)\supseteq J_{\un{i}'}$ and
\begin{equation}\label{eq:iell=zero:1}
\begin{aligned}
0\leq k_j\leq p-2-(t_j-2i'_j)+\mathbf{1}_{J(\lambda)}(j-1)&\ \ \ \text{if $j\in J(\lambda)$},\\
p-1-(t_j-2i'_j-\mathbf{1}_{J(\lambda)}(j-1))\leq k_j\leq p-1&\ \ \ \text{if $j\notin J(\lambda)$}.
\end{aligned}
\end{equation}
(Note that $J_{\un{i}'}\neq\emptyset$ as $\ell\in J_{\un{i}'}$, noting that $\ell\in J^{\max}(\sigma)$ and $0 \le i_{\ell+1}' \le i_{\ell+1}=0$.)

Assume $\chi_\sigma\alpha^{-\un{i}}\alpha^{(p-t_\ell+2i_\ell)p^{\ell}}=\chi_\sigma\alpha^{-\un{i}'}\alpha^{\un{k}}$ for some $\lambda$ and $\un{k}$ as above. Then
\[-\sum_{j=0}^{f-1} i_jp^j +(p-t_\ell+2i_\ell)p^{\ell}\equiv -\sum_{j=0}^{f-1} i'_jp^j +\sum_{j=0}^{f-1} k_jp^j \pmod{q-1}\]
or equivalently
\begin{equation}\label{eq:iell=zero:2}
(p-t_\ell+2i_\ell)p^{\ell}-\sum_{j=0}^{f-1} (i_j-i'_j)p^j\equiv \sum_{j=0}^{f-1} k_jp^j\pmod{q-1}.
\end{equation}
Note that, since $\ell\in J_{\un{i}'}$, we have in particular $\ell\in J(\lambda)$.

If $i_{j}'=i_{j}$ for all $j\neq \ell$ (for example if $\un{i}'=\un{i}$ or if $f=1$), then \eqref{eq:iell=zero:2} gives $(p-t_\ell+i_\ell+i_\ell')p^\ell\equiv \sum_j k_jp^j$, so $k_\ell =p-t_\ell+i_\ell+i_\ell'$ as (using \eqref{eq:9} for $t_\ell$ and $0 \le i'_\ell\leq i_\ell\leq m-1\leq f-1$):
\begin{equation}\label{bornep-tj}
p-t_\ell+i_\ell+i_\ell'\in \{2f+1,\dots,p-1\}.
\end{equation}
This contradicts \eqref{eq:iell=zero:1} as $\ell\in J(\lambda)$ and $i'_\ell \le i_\ell$. Therefore $f>1$ and $i'_j<i_j$ for some $j\neq \ell$. For $m\in \Z_{\geq 0}$, let $[m]$ the unique element of $\{0,\dots,f-1\}$ which is congruent to $m$ modulo $f$. In particular $p^m\equiv p^{[m]}\pmod{q-1}$. Let $h\in \{\ell+1,\dots,\ell+f-1\}$ be minimal such that $i'_{[h]}<i_{[h]}$. Then modulo $q-1$:
\[\sum_{j=0}^{f-1} (i_j-i'_j)p^j\equiv \sum_{j=\ell+1}^{\ell+f} (i_{[j]}-i'_{[j]})p^{[j]}=\sum_{j=h}^{\ell+f} (i_{[j]}-i'_{[j]})p^{[j]}\]
and we deduce the following congruences modulo $q-1$:
\begin{align}
(p-&t_\ell+2i_\ell)p^{\ell}- \sum_{j=0}^{f-1} (i_j-i'_j)p^j \notag\\
&\equiv (p-1-t_\ell+2i_\ell)p^{\ell}+p^\ell -\sum_{j=h}^{\ell+f} (i_{[j]}-i'_{[j]})p^{[j]} \notag\\
&\equiv (p-1-t_\ell+2i_\ell)p^{\ell}+\!\!\sum_{j=h+1}^{\ell+f-1}(p-1)p^{j}+p^{h+1}- \sum_{j=h}^{\ell+f} (i_{[j]}-i'_{[j]})p^{[j]} \notag\\
&\equiv (p-1-t_\ell+2i_\ell)p^{\ell}+\!\!\sum_{j=h+1}^{\ell+f-1}(p-1)p^{[j]}+p^{[h]+1}- \sum_{j=h}^{\ell+f} (i_{[j]}-i'_{[j]})p^{[j]} \notag\\
\label{eq:3}&\equiv
(p\!-\!1\!-\!t_\ell\!+\!i_\ell\!+\!i'_\ell)p^{\ell}+\!\!\sum_{j=h+1}^{\ell+f-1}\!\!(p\!-\!1\!-\!(i_{[j]}\!-\!i'_{[j]}))p^{[j]}+(p\!-\!(i_{[h]}\!-\!i'_{[h]}))p^{[h]}.
\end{align}
Note that all powers of $p$ in \eqref{eq:3} are distinct in $\{0,\dots,f-1\}$ and all coefficients are in $\{0,\dots,p-1\}$. Moreover these coefficients cannot all equal $0$ as $p-(i_{[h]}-i'_{[h]})\ne 0$, nor $p-1$ by (\ref{bornep-tj}). Hence by \eqref{eq:iell=zero:2} we get $k_\ell=p-1-t_\ell+i_\ell+i_\ell'$. As $\ell\in J(\lambda)$ and $i_\ell'\leq i_\ell$, we get from \eqref{eq:iell=zero:1} that $i_\ell=i'_\ell$ and $\ell-1\in J(\lambda)$. By \eqref{eq:iell=zero:1} for $j=\ell-1$ and by (\ref{eq:3}), (\ref{eq:iell=zero:2}) we get
\[
p-1-(i_{\ell-1}-i_{\ell-1}')\leq k_{\ell-1}\leq p-1-t_{\ell-1}+2i'_{\ell-1}
\]
(note that by (\ref{eq:3}) the left-hand side is an equality as soon as $\ell-1\not\equiv h$ mod $f$ which can only occur if $f>2$). This implies $t_{\ell-1}\leq i_{\ell-1}+i_{\ell-1}'\leq 2(m-1)\leq 2f-2$, which contradicts genericity \eqref{eq:9}. This finishes the proof.
\end{proof}

\begin{lem}\label{lem:key:structure}
Assume $m > 0$ and let $\un{k}\in\Z^f_{\geq 0}$.
\begin{enumerate}
\item If $\un{Y}^{\un{k}}\smatr{p}{0}{0}{1}Y^{1-m}x_\sigma\neq 0$, then
  \[\|\un k\|\leq (f-1)(p-1)+(m-1)+\sum _{j\in J^{\max}(\sigma)}{s_j}.\]
  If moreover equality holds, then $\un{Y}^{\un{k}}\smatr{p}{0}{0}{1}Y^{1-m}x_\sigma=x_{\delta(\sigma)}$
  {\upshape(}see {\upshape(\ref{xdelta}))} and
  \begin{alignat*}{3}
    k_j&\equiv s_j &\pmod{p}&& \quad\text{if $j\in J^{\max}(\sigma)$,}\\
    k_j&\equiv -1&\pmod{p}&& \quad\text{if $j\notin J^{\max}(\sigma)$.}
  \end{alignat*}
\item If $\|\un k\|=(f-1)(p-1)+\sum_{J^{\max}(\sigma)}s_j$ then
  $\un{Y}^{\un{k}}\smatr{p}{0}{0}{1}Y^{1-m}x_\sigma\in \delta(\sigma)$, more precisely:
  \begin{equation*}
    \un{Y}^{\un{k}}\smatr{p}{0}{0}{1}Y^{1-m}x_\sigma \in \left\langle \un{Y}^{-\un{\ell}}x_{\delta(\sigma)}, \|\un\ell\|=m-1\right\rangle_\F.
  \end{equation*}
\end{enumerate}
\end{lem}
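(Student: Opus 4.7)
The plan is to analyze each term of the expansion $Y^{1-m}x_\sigma=\sum_{\|\un i\|=m-1}\un Y^{-\un i}x_\sigma$ afforded by Lemma~\ref{lem:eigenvct}(iii), reducing the problem to bounds on $\|\un k\|$ for each individual $\un Y^{\un k}\smatr{p}{0}{0}{1}\un Y^{-\un i}x_\sigma$. Modulo $\sum_{\un 0\le\un j<\un i}\langle\GL_2(\cO_K)\smatr{p}{0}{0}{1}\un Y^{-\un j}x_\sigma\rangle$, such a term lies in the quotient $Q(\chi_\sigma^s\alpha^{\un i},J_{\un i})$ of Lemma~\ref{lem:expl:pres}, with $J_{\un i}\defeq\{j\in J^{\max}(\sigma):i_{j+1}=0\}$. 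Its $H$-eigencharacter equals $\chi_\sigma\alpha^{\un k-\un i}$, and using the identity $\chi_\sigma^s/\chi_\sigma=\alpha^{\sum_j t_jp^j}$ (derived directly from the definition of the Serre weight $\sigma$) together with the $H$-character spectrum of $Q(\chi_\sigma^s\alpha^{\un i},J_{\un i})$ described in \eqref{eq:iell=zero:1}, nonvanishing modulo lower-order terms forces a congruence
\[
\un k\equiv \un k'+2\un i+\textstyle\sum_j t_jp^j\pmod{q-1}
\]
for some $\un k'$ satisfying \eqref{eq:iell=zero:1} relative to a $\lambda\in\mathcal P(x_0,\ldots,x_{f-1})$ with $J(\lambda)\supseteq J_{\un i}$. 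Lemma~\ref{lem:iell=zero} adds the direct bound $k_\ell\le p-t_\ell+2i_\ell-1$ for $\ell\in J_{\un i}$.

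The key computation is to maximize $\|\un k\|$ over triples $(\un i,\lambda,\un k')$ subject to these constraints. Since $Q(\chi_\sigma^s\alpha^{\un i},J_{\un i})$ is a representation of $\GL_2(\F_q)$ (with $Y_j^p$ acting as zero), each $k_j$ is effectively restricted to $\{0,\ldots,p-1\}$, and the congruence above then pins down $\un k$ modulo $q-1$ from $(\un k',\un i)$. I expect the maximum $\|\un k\|=(f-1)(p-1)+(m-1)+\sum_{J^{\max}(\sigma)}s_j$ to be attained precisely when $\lambda$ indexes the socle $\delta(\sigma)$ of $V_{\un i}$ and $\un k'$ sits at its extremal boundary value from \eqref{eq:iell=zero:1}, giving $k_j\equiv s_j\pmod p$ for $j\in J^{\max}(\sigma)$ and $k_j\equiv -1\pmod p$ for $j\notin J^{\max}(\sigma)$, as claimed in~(i). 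Since the $H$-character of the target $x_{\delta(\sigma)}$ pins down a unique contributing $\un i$ (the relevant $\un i$'s being all distinct modulo $q-1$), identification of the surviving term with $x_{\delta(\sigma)}$ then proceeds by direct comparison with \eqref{xdelta}.

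Part~(ii) is a parallel analysis at $\|\un k\|=(f-1)(p-1)+\sum_{J^{\max}(\sigma)}s_j$, exactly $m-1$ below the maximum of~(i). Since each contribution $\un Y^{\un k}\smatr{p}{0}{0}{1}\un Y^{-\un i}x_\sigma$ has fixed $H$-character $\chi_\sigma\alpha^{\un k-\un i}$, the shift of $m-1$ in $\|\un k\|$ relative to~(i) forces the image into the span $\langle\un Y^{-\un\ell}x_{\delta(\sigma)}:\|\un\ell\|=m-1\rangle_\F\subseteq\delta(\sigma)$, with contributions from different $\un i$'s matching to different basis vectors $\un Y^{-\un\ell}x_{\delta(\sigma)}$.

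The main obstacle lies in excluding non-extremal contributions: triples $(\un i,\lambda,\un k')$ with $\lambda$ away from the socle-indexing element of $V_{\un i}$, or choices that could exceed $\|\un k\|$'s claimed maximum via ``wrap-around'' arithmetic modulo $q-1$. Ruling both out requires a borrowing analysis in the spirit of equation~\eqref{eq:3} in the proof of Lemma~\ref{lem:iell=zero}: any excess wrap-around forces a constraint of the form $t_{\ell-1}\le 2(m-1)\le 2f-2$, contradicting the genericity hypothesis $t_j\ge 2f-1$ from \eqref{eq:9}. The full proof will thus be a meticulous case analysis tracking, for each $(\un i,\lambda)$, which constituents of $V_{\un i}$ are reachable at a given $\|\un k\|$ and what the resulting congruences on $\un k\pmod p$ are.
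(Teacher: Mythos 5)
The decomposition via Lemma~\ref{lem:eigenvct}(iii) and the invocations of Lemmas~\ref{lem:expl:pres} and~\ref{lem:iell=zero} are indeed the ingredients the paper uses, but your execution has a concrete gap, and it is structural rather than a matter of missing bookkeeping.

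You work modulo the lower-order submodule $\sum_{\un 0\le\un j<\un i}\langle\GL_2(\cO_K)\smatr{p}{0}{0}{1}\un Y^{-\un j}x_\sigma\rangle$ and then declare that, since $Y_j^p$ acts as zero in the finite quotient $Q(\chi_\sigma^s\alpha^{\un i},J_{\un i})$, "each $k_j$ is effectively restricted to $\{0,\dots,p-1\}$." This bounds only the \emph{image} of $\un Y^{\un k}\smatr{p}{0}{0}{1}\un Y^{-\un i}x_\sigma$ in the quotient, not the vector itself: if some $k_{j_0}\ge p$, the vector is generally nonzero but its image in the quotient vanishes, so your analysis says nothing about it. The correct mechanism is the exact rewriting $Y_{j_0}^p\smatr{p}{0}{0}{1}\un Y^{-\un i}x_\sigma=\smatr{p}{0}{0}{1}Y_{j_0+1}\un Y^{-\un i}x_\sigma=\smatr{p}{0}{0}{1}\un Y^{-(\un i-\un\eps_{j_0+1})}x_\sigma$ (Lemma~\ref{lem:basic}(ii) together with the last sentence of Lemma~\ref{lem:eigenvct}(ii), requiring $i_{j_0+1}>0$): this is a genuine identity trading $\|\un k\|$ down by $p$ against $\|\un i\|$ down by one. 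The paper's proof is organized precisely around this as an induction on $\|\un i\|$: under the claimed lower bound on $\|\un k\|$ and nonvanishing, one shows that either some $k_{j_0}\ge p$ (so one descends) or $\un i=\un 0$; the remaining case ($\un i\ne\un 0$, all $k_j<p$) is ruled out by Lemma~\ref{lem:iell=zero} and a combinatorial inequality using the genericity bound \eqref{eq:9}, and only at the terminal case $\un i=\un 0$ does one compute directly in $Q(\chi_\sigma^s,J^{\max}(\sigma))$.

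Your "borrowing analysis" intuition is the right tool, but you aim it at the wrong spot. It is not used to control wrap-around in the quotient picture; it is used to rule out the bad terminal case $\un i\neq\un 0$ with $k_j<p$ for all $j$, which is what makes the reduction terminate at $\un i=\un 0$. Without the inductive reduction, your plan cannot account for the pieces of $\un Y^{\un k}\smatr{p}{0}{0}{1}Y^{1-m}x_\sigma$ that vanish in the quotient but survive in the lower-order submodule — and those are exactly the ones carrying the "$-(m-1-\|\un i\|)p$" shift that the paper makes explicit. The uniqueness claim via $H$-characters at the end of your sketch is also not quite right as stated: what pins down the contributing $\un i$ (and the congruences mod $p$ on $\un k$) is the structure of $Q(\chi_\sigma^s,J^{\max}(\sigma))$ after reduction, not the fact that the $\un i$'s are distinct modulo $q-1$.
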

\begin{proof}
We prove the following statements inductively on $\|\un{i}\|\leq m-1$ for $\un{i}\in \Z_{\ge 0}^f$:
\begin{enumerate}[(a)]
\item\label{it:reduction:1}
If $\un{Y}^{\un{k}}\smatr{p}{0}{0}{1}\un{Y}^{-\un{i}}x_\sigma\neq 0$ then
\[\|\un k\|\leq (f-1)(p-1)+(m-1)+\sum_{j\in J^{\max}(\sigma)} s_j-(m-1-\|\un{i}\|)p.\]
If moreover equality holds, then $\un{Y}^{\un{k}}\smatr{p}{0}{0}{1}\un{Y}^{-\un{i}}x_\sigma=x_{\delta(\sigma)}$ and
\begin{alignat*}{3}
k_j&= i_{j+1}p + s_j & \quad\text{if $j\in J^{\max}(\sigma)$,}\\
k_j&= i_{j+1}p + (p-1)& \quad\text{if $j\notin J^{\max}(\sigma)$.}
\end{alignat*}
\item\label{it:reduction:2}
If $\|\un k\|= (f-1)(p-1)+\sum_{j\in J^{\max}(\sigma)} s_j-(m-1-\|\un{i}\|)p$ then
\[
\un{Y}^{\un{k}}\smatr{p}{0}{0}{1}\un{Y}^{-\un{i}}x_\sigma=\un{Y}^{-\un{\ell}}x_{\delta(\sigma)}
\]
for some $\|\un\ell\|=m-1$, or it is zero.
\end{enumerate}
By Lemma \ref{lem:eigenvct}(iii) we have
\[Y^{1-m}x_\sigma=\sum_{\substack{\un{i}\in \Z_{\ge 0}^f\\\|\un{i}\|=m-1}}\un{Y}^{-\un{i}}x_\sigma\]
and we see that \ref{it:reduction:1} and \ref{it:reduction:2} for $\|\un{i}\|=m-1$ imply (i) and (ii) (note that in \ref{it:reduction:1} if $\un{Y}^{\un{k}}\smatr{p}{0}{0}{1}\un{Y}^{-\un{i}}x_\sigma\neq 0$ and equality holds, then $\un{i}$ is uniquely determined by $\un{k}$ and $J^{\max}(\sigma)$).

We first prove by induction on $\|\un{i}\|\leq m-1$ for $\un{i}\in \Z_{\ge 0}^f$ that if $\|\un k\|\geq (f-1)(p-1)+\sum_{J^{\max}(\sigma)} s_j-(m-1-\|\un{i}\|)p$ and $\un{Y}^{\un{k}}\smatr{p}{0}{0}{1}\un{Y}^{-\un{i}}x_\sigma\neq 0$, then $\un{Y}^{\un{k}}\smatr{p}{0}{0}{1}\un{Y}^{-\un{i}}x_\sigma=\un{Y}^{\un{k}'}\smatr{p}{0}{0}{1}x_\sigma$ for $\un k'\in \Z^f_{\geq 0}$ such that $k'_j=k_j-i_{j+1}p$ for all $j$. A examination of \ref{it:reduction:1} and \ref{it:reduction:2} shows it will then be enough to prove them for $\un{i}=\un 0$ (replacing $\un k$ by $\un k'$).

There is nothing to prove for $\un{i}= \un{0}$, so we can assume $\un{i}\neq \un{0}$. If $k_{j_0}\geq p$ for some $j_0$, then using Lemma \ref{lem:basic}(ii):
\[\un{Y}^{\un{k}}\smatr{p}{0}{0}{1}\un{Y}^{-\un{i}}x_\sigma=\un{Y}^{\un{k}-p\un\eps_{j_0}}Y_{j_0}^p\smatr{p}{0}{0}{1}\un{Y}^{-\un{i}}x_\sigma=\un{Y}^{\un{k}-p\un\eps_{j_0}}\smatr{p}{0}{0}{1}\un{Y}^{-(\un{i}-\un\eps_{j_0+1})}x_\sigma,\]
where $\un\eps_j\defeq (0,\dots,0,1,0,\dots,0)$ with $1$ in position $j$ and $0$ elsewhere (note that $Y_{j_0+1}\un{Y}^{-\un{i}}x_\sigma=\un{Y}^{-(\un{i}-\un\eps_{j_0+1})}x_\sigma$ is nonzero by assumption, and hence $\un{i}-\un\eps_{j_0+1}\in \Z_{\ge 0}^f$ by the last statement in Lemma \ref{lem:eigenvct}(ii)). As $\|\un{i}-\un\eps_{j_0+1}\|=\|\un{i}\|-1$ and $\|\un{k}-p\un\eps_{j_0}\|=\|\un{k}\|-p\geq (f-1)(p-1)+\sum_{J^{\max}(\sigma)} s_j-(m-1-\|\un{i}-\un\eps_{j_0+1}\|)p$, we can apply the induction hypothesis and a small computation shows that $\un k'$ is the right one, so we are done in that case.

We assume $k_{j}<p$ for all $j$ and derive below a contradiction (so this case can't happen). Define
\[J\defeq  \{j\in J^{\max}(\sigma) : i_{j+1}=0\},\]
then by Lemma \ref{lem:iell=zero} (applied to $\ell=j$ and using ${Y_j}^{k_j}\smatr{p}{0}{0}{1}\un{Y}^{-\un{i}}x_\sigma\neq 0$):
\begin{alignat*}{3}
k_j&\leq p-1-t_j+2i_j & \quad\text{if $j\in J$,}\\
k_j&\leq p-1& \quad\text{if $j\notin J$},
\end{alignat*}
which implies $\|\un k\|\leq (f-|J|)(p-1)+\sum_{j\in J}(p-1-t_j+2i_j)$. From (\ref{ttos}) we deduce
\[\|\un k\|\leq (f-|J|)(p-1)+\sum_{j\in J}(s_j+2i_j)+|J\setminus (J^{\max}(\sigma)+1)|.\]
So to get a contradiction it is enough to show that
\begin{multline*}
(f-|J|)(p-1)+\sum_{j\in J}(s_j+2i_j)+|J\setminus (J^{\max}(\sigma)+1)|<(f-1)(p-1)\\
+\sum_{j\in J^{\max}(\sigma)}s_j-(m-1-\|\un{i}\|)p,
\end{multline*}
or equivalently
\begin{eqnarray}
\notag pm+|J\!\setminus \!(J^{\max}(\sigma)+1)| &\!\!\!\le \!\!\!&(p-1)|J| + p\sum_{j \not\in J}i_j+(p-2)\sum_{j\in J}i_j+\!\!\!\sum_{j\in J^{\max}(\sigma)\setminus J}\!\!\!s_j\\
&\!\!\!=\!\!\!&(p-2)\|\un{i}\|+(p-1)|J|+\Big(2\sum_{j \not\in J}i_j + \!\!\!\sum_{j\in J^{\max}(\sigma)\setminus J}\!\!\!s_j\Big).\label{eq:7}
\end{eqnarray}
Case $1$: assume $|J^{\max}(\sigma)\setminus J|>0$.\\
If $j \in J^{\max}(\sigma)\setminus J$, then $i_{j+1}>0$, so $|J^{\max}(\sigma)\setminus J|\leq \|\un{i}\|$. As $|J^{\max}(\sigma)\setminus J|=m-|J|$, this means $m\leq \|\un{i}\|+|J|$, hence \eqref{eq:7} is implied by
\begin{equation}\label{eq:7bis}
2m+|J\setminus (J^{\max}(\sigma)+1)|\leq |J|+\Big(2\sum_{j \not\in J}i_j + \sum_{j\in J^{\max}(\sigma)\setminus J}s_j\Big).
\end{equation}
Using $|J\setminus (J^{\max}(\sigma)+1)|\leq |J|$, (\ref{eq:7bis}) is implied by
\begin{equation}\label{eq:8}
2m\leq \sum_{j\in J^{\max}(\sigma)\setminus J}s_j.
\end{equation}
Genericity~\eqref{eq:9} with (\ref{ttos}) give $s_{j} \ge 2f-1 \ge 2m-1$ for $j\in J^{\max}(\sigma)$, hence \eqref{eq:8} holds if either $s_{j} \ge 2m$ for at least one $j\in J^{\max}(\sigma)\setminus J$ or if $|J^{\max}(\sigma)\setminus J|\geq 2$ (using $2m-2\geq 0$ for the latter). Therefore, the only way inequality~\eqref{eq:7bis} may fail is when $J^{\max}(\sigma)\setminus J = \{j_0\}$ (for some $j_0$) and moreover $J \setminus (J^{\max}(\sigma)+1) = J$ and $i_j = 0$ for all $j \not\in J$. But then $i_{j_0+1} > 0$ so we have $j_0+1 \in J \cap (J^{\max}(\sigma)+1)$, which contradicts $J \cap (J^{\max}(\sigma)+1)=\emptyset$. Hence inequality~\eqref{eq:7bis} holds.\\
\noindent
Case $2$: assume $J^{\max}(\sigma)=J$.\\
Then using
\[|J\setminus (J^{\max}(\sigma)+1)|\leq |\{0,\dots,f-1\}\setminus (J^{\max}(\sigma)+1)|=|\{0,\dots,f-1\}\setminus J^{\max}(\sigma)|=f-m\]
and $|J|=m$, we see that \eqref{eq:7} is implied by $(p-1)m+f\leq (p-2)\|\un{i}\|+(p-1)m$ which is true as $\|\un{i}\|>0$ and $f\leq p-2$ by \eqref{eq:6}.

To prove \ref{it:reduction:1} and \ref{it:reduction:2}, it therefore suffices to consider the case $\un{i}=\un 0$, which we prove now.

Recall $\left\langle \GL_2(\cO_K)\smatr{0}{1}{p}{0}x_{\sigma}\right\rangle\cong Q(\chi_\sigma^s,J^{\max}(\sigma))$. By \cite[Thm.2.4(iv)]{BP} the constituents of this $\GL_2(\cO_K)$-representation are the Serre weights $(\lambda_0(t_{0}),\dots,\lambda_{f-1}(t_{f-1}))$ up to twist, where $\lambda\in \mathcal{P}(x_0,\dots,x_{f-1})$, $J(\lambda)\supseteq J^{\max}(\sigma)$ and $\lambda_j(t_j)=p-2-t_j+\mathbf{1}_{J(\lambda)}(j-1)$ if $j\in J(\lambda)$ (we use the notation of \cite[\S2]{BP} as in the proof of Lemma \ref{lem:iell=zero}). By \cite[Lemma 2.7,~Lemma 2.6]{BP} and Lemma \ref{lem:theta}(ii), $Q(\chi_\sigma^s,J^{\max}(\sigma))$ has $\F$-basis $\un{Y}^{\un{k}}\smatr{p}{0}{0}{1}x_\sigma$, where
\begin{equation}\label{eq:key:1}
\begin{aligned}
0&\leq k_j \leq \lambda_j(t_j)&\quad\text{if\ }&j\in J(\lambda),\\
p-1-\lambda_j(t_j)&\leq k_j\leq p-1&\quad\text{if\ }& j\not\in J(\lambda)
\end{aligned}
\end{equation}
for some $\lambda\in\mathcal{P}(x_0,\dots,x_{f-1})$ with $J(\lambda)\supseteq J^{\max}(\sigma)$. We see that (\ref{eq:key:1}) implies
\begin{eqnarray}\label{eq:bound}
\|\un{k}\|&\leq &(f-|J(\lambda)|)(p-1)+\sum_{j\in J(\lambda)}(p-2-t_j+\mathbf{1}_{J(\lambda)}(j-1))
\end{eqnarray}
with equality if and only if $k_j = \lambda_j(t_j)$ if $j\in J(\lambda)$ and $k_j=p-1$ otherwise. Moreover, $\un{Y}^{\un{k}}\smatr{p}{0}{0}{1}x_\sigma\in\delta(\sigma)\!\setminus\{0\}$ if and only if \eqref{eq:key:1} holds with $J(\lambda)=J^{\max}(\sigma)$. Hence if $\un{Y}^{\un{k}}\smatr{p}{0}{0}{1}x_\sigma\not\in\delta(\sigma)$ we deduce that \eqref{eq:key:1} holds for some $\lambda\in \mathcal{P}(x_0,\dots,x_{f-1})$ with $J(\lambda)\supsetneq J^{\max}(\sigma)$. 

We claim that the right-hand side of (\ref{eq:bound}) is smaller or equal than $(f-1)(p-1)+m-1+\sum_{J^{\max}(\sigma)}s_j-p(m-1)$ if $J(\lambda)= J^{\max}(\sigma)$ and strictly smaller than $(f-1)(p-1)+\sum_{J^{\max}(\sigma)}s_j-p(m-1)$ if $J(\lambda)\supsetneq J^{\max}(\sigma)$. Recalling that $s_j=p-2-t_j+\mathbf{1}_{J^{\max}(\sigma)}(j-1)$ for $j\in J^{\max}(\sigma)$, the first case follows from $(f-|J^{\max}(\sigma)|)(p-1)=(f-1)(p-1)+m-1-p(m-1)$. For the second case, as $(f-1)(p-1)-p(m-1)=(f-|J^{\max}(\sigma)|)(p-1)-(m-1)$, it is enough to prove
\begin{multline*}
(f-|J(\lambda)|)(p-1)+\underset{j\in J(\lambda)}{\sum}(p-2-t_j)+|J(\lambda)\cap (J(\lambda)+1)|\\
\quad < (f\!-\!|J^{\max}(\sigma)|)(p-1)+\!\!\!\!\underset{j\in J^{\max}(\sigma)}{\sum}\!\!(p-2-t_j)+|J^{\max}(\sigma)\cap (J^{\max}(\sigma)+1)|\\
-(m-1),
\end{multline*}
or equivalently (by an easy calculation):
\begin{equation*}
(m-1)+|J(\lambda)\cap (J(\lambda)+1)|-|J^{\max}(\sigma)\cap (J^{\max}(\sigma)+1)| < \underset{j\in J(\lambda)\setminus J^{\max}(\sigma)}{\sum}(t_j+1).
\end{equation*}
This is true, as $m-1\leq f-1$ (so the left-hand side is at most $(f-1)+f$), $J(\lambda)\setminus J^{\max}(\sigma)\ne\emptyset$ and $t_j+1\geq 2f$ for any $j$ by genericity~\eqref{eq:9}.

Therefore $\|\un k\|\leq (f-1)(p-1)+(m-1)+\sum_{J^{\max}(\sigma)} s_j-p(m-1)$ if $\un{Y}^{\un{k}}\smatr{p}{0}{0}{1}x_\sigma \ne 0$ and $\un{Y}^{\un{k}}\smatr{p}{0}{0}{1}x_\sigma\in \delta(\sigma)$ if $\|\un{k}\|\geq (f-1)(p-1)+\underset{J^{\max}(\sigma)}{\sum}s_j-p(m-1)$.

We prove the remaining statements in \ref{it:reduction:1} and \ref{it:reduction:2} (for $\un{i}=\un 0$). If $\|\un{k}\|\geq (f-1)(p-1)+\underset{J^{\max}(\sigma)}{\sum}s_j-p(m-1)$ and $\un{Y}^{\un{k}}\smatr{p}{0}{0}{1}x_\sigma\neq 0$, we know by above that $J(\lambda) = J^{\max}(\sigma)$. By \eqref{eq:key:1} we then have $k_j\leq s_j$ if $j\in J^{\max}(\sigma)$ and $k_j\leq p-1$ if $j\notin J^{\max}(\sigma)$. By the definition of $x_{\delta(\sigma)}$ in (\ref{xdelta}) and by Lemma \ref{lem:eigenvct}(ii) (and Remark \ref{switchchi}) we deduce $\un{Y}^{\un{k}}\smatr{p}{0}{0}{1}x_\sigma=\un{Y}^{-\un{\ell}}x_{\delta(\sigma)}$, where $\ell_j=s_j-k_j$ if $j\in J^{\max}(\sigma)$ and $\ell_j= p-1-k_j$ if $j\notin J^{\max}(\sigma)$. This implies $\|\un\ell\|=(f-m)(p-1)+\underset{J^{\max}(\sigma)}{\sum}s_j-\|\un{k}\|$, and in particular $\|\un\ell\|=0$ if $\|\un{k}\|= (f-1)(p-1)+(m-1)+\sum_{J^{\max}(\sigma)} s_j-(m-1)p$ and $\|\un\ell\|=m-1$ if $\|\un{k}\|= (f-1)(p-1)+\underset{J^{\max}(\sigma)}{\sum}s_j-p(m-1)$. This finishes the proof of \ref{it:reduction:1} and \ref{it:reduction:2}.
\end{proof}

Now we can finally complete the proof of Proposition~\ref{prop:F-action}.

\begin{proof}[Proof of Proposition~\ref{prop:F-action}(ii)]
Suppose first that $m > 0$ and fix $j_0\in J^{\max}(\sigma)$. By Lemma \ref{lem:ker}, Proposition \ref{prop:nsum} and the definition of $F$ (see \ref{def:F} in \S\ref{covariant}), we have
\begin{multline*}
Y^{\sum_{j\in J^{\max}(\sigma)}s_j}F(Y^{1-m}x_\sigma)\\
=\left[
(-1)^{f-1}\!\!\!\!\!\!\prod_{j\in J^{\max}(\sigma)}\!\!\!\!\!Y_j^{s_j}\prod_{j\neq j_0}(Y_j-Y_{j_0})^{p-1}+f(\un Y)
\right]\smatr{p}{0}{0}{1}Y^{1-m}x_\sigma
\end{multline*}
for \ some \ $f(\un Y) \in \F\bbra{Y_0,\dots,Y_{f-1}} $ \ of \ $\m_{N_0}$-adic \ valuation \ (i.e.~total \ degree) $\geq \sum_{J^{\max}(\sigma)} s_j+(p-1)f$. As $p>f\geq m$ we have $(p-1)f>(p-1)(f-1)+m-1$ and by Lemma \ref{lem:key:structure}(i) we get $f(\un Y)\smatr{p}{0}{0}{1}Y^{1-m}x_\sigma=0$, hence
\[
Y^{\sum_{J^{\max}(\sigma)}s_j}F(Y^{1-m}x_\sigma)=
(-1)^{f-1}\prod_{j\in J^{\max}(\sigma)}Y_j^{s_j}\prod_{j\neq j_0}(Y_j-Y_{j_0})^{p-1}\smatr{p}{0}{0}{1}Y^{1-m}x_\sigma.
\]
Moreover, the right-hand side is contained in $\langle \un{Y}^{-\un{\ell}}x_{\delta(\sigma)}, \|\un\ell\|=m-1\rangle_\F\subset \delta(\sigma)$ by Lemma \ref{lem:key:structure}(ii). As it is also $N_1$-invariant, it is contained in $\F Y^{1-m}x_{\delta(\sigma)}$ by Lemma \ref{lem:eigenvct}(iii). It is therefore enough to show that $Y^{m-1 + \sum_{J^{\max}(\sigma)}s_j}F(Y^{1-m}x_\sigma)=(-1)^{f-1}x_{\delta(\sigma)}$, or again by Lemma \ref{lem:ker}, Proposition \ref{prop:nsum} and Lemma \ref{lem:key:structure}(i) that
\[
Y_{j_0}^{m-1}\prod_{j\in J^{\max}(\sigma)}Y_j^{s_j}\prod_{j\neq j_0}(Y_j-Y_{j_0})^{p-1}\smatr{p}{0}{0}{1}Y^{1-m}x_\sigma=x_{\delta(\sigma)}.
\]
As $\binom{p-1}{i} = (-1)^i$ for $0 \le i \le p-1$, the left-hand side equals
\begin{equation}
Y_{j_0}^{m-1}\prod_{J^{\max}(\sigma)}Y_j^{s_j}
\underset{\substack{\|\un{k}'\|=(p-1)(f-1)\\k'_j\leq p-1\ \text{if $j\neq j_0$}}}{\sum}\un{Y}^{\un{k}'}\smatr{p}{0}{0}{1}Y^{1-m}x_\sigma.\label{eq:4}
\end{equation}
By Lemma \ref{lem:key:structure}(i), as $k'_j+s_j$ can never be congruent to $s_j$ modulo $p$ when $k'_j\in \{1,\dots,p-1\}$, only the terms with $k'_{j}=0$ for $j\in J^{\max}(\sigma)\setminus\{j_0\}$ and $k'_{j}=p-1$ for $j\notin J^{\max}(\sigma)$ survive. As $\|\un{k}'\|=(p-1)(f-1)$, we must have $k'_{j_0}=(p-1)(m-1)$, and by Lemma \ref{lem:key:structure}(i) again it follows that \eqref{eq:4} equals $x_{\delta(\sigma)}$, as required.

Finally suppose $m = 0$. As $Y_j^p \smatr{p}{0}{0}{1} x_\sigma = 0$ for all $j$, we get again by Lemma \ref{lem:ker}, Proposition \ref{prop:nsum} and (\ref{xdelta}):
\begin{align*}
Y^{p-1} F(x_\sigma) &= (-1)^{f-1} Y_0^{p-1} \prod_{j\neq 0}(Y_j-Y_{0})^{p-1} \smatr{p}{0}{0}{1}x_\sigma \\
&= (-1)^{f-1} \prod_{j=0}^{f-1} Y_j^{p-1} \smatr{p}{0}{0}{1}x_\sigma = (-1)^{f-1} x_{\delta(\sigma)}.\qedhere
\end{align*}
\end{proof}

\subsubsection{Lower bound for \texorpdfstring{$V_{\GL_2}(\pi)$}{V\_\{GL\_2\}(pi)}: proof}\label{lowerproof}

We prove Theorem \ref{thm:main-tensor-ind}.

We keep the notation of \S\S\ref{lowerstatement}, \ref{prelgl2}, \ref{operatorF}. Fix $\sigma \in W(\rhobar)$ and define $\sigma_i \in W(\rhobar)$ inductively by $\sigma_1 \defeq  \sigma$ and $\sigma_i \defeq  \delta(\sigma_{i-1})$ for $i > 1$ ($\sigma_i$ here shouldn't be confused with the embedding $\sigma_i=\sigma_0\circ\varphi^i$). Let $n \ge 1$ be the smallest integer such that $\sigma_{n+1} \cong \sigma_1$ and write $\sigma_i=(s_0^{(i)},\ldots,s_{f-1}^{(i)})\otimes\eta_i$. Recall that $n = 1$ if and only if $J^{\max}(\sigma) = \emptyset$ if and only if $\rhobar$ is reducible and $\sigma$ corresponds to $J_\sigma = \emptyset$ or $J_\sigma = S$ (see the beginning of \S\ref{operatorF}). We set $m \defeq  |J^{\max}(\sigma_i)|$ if $n>1$ (this doesn't depend on $i\in \{1,\dots,n\}$ by Lemma \ref{lem:Jmax}) and $m\defeq 1$ if $n=1$, so that $m\in \{1,\dots,f\}$. For $i\in\{1,\dots,n\}$ we let $\chi_i$ denote the $H$-eigencharacter on $\sigma_i^{N_0}=\sigma_i^{I_1}$. We also define for $i\in \{1,\dots,n\}$:
\begin{alignat*}{2}
s_i&\defeq  \!\!\!\sum_{j\in J^{\max}(\sigma_i)}s_j^{(i+1)} & \quad \text{if $n > 1$,} \\
s_1&\defeq  p-1 & \quad \text{if $n = 1$.}
\end{alignat*}

The following lemma will be useful later.

\begin{lem}\label{lem:sum:si}
We have $\sum_{i=1}^n s_i\equiv 0\pmod{p-1}$.
\end{lem}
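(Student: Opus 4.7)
The plan is to compute $\sum_{i=1}^n s_i$ exactly, then reduce the lemma to a parity check. The case $n=1$ is immediate, since by convention $s_1=p-1$. For $n>1$, the integer $m=|J^{\max}(\sigma_i)|>0$ is independent of $i$ by Lemma \ref{lem:Jmax}; write $J_i\defeq J^{\max}(\sigma_i)$.

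First I would use the recurrence \eqref{ttos} applied to $\sigma_i\to\sigma_{i+1}=\delta(\sigma_i)$ to rewrite
\[s_i=m(p-2)-\sum_{j\in J_i}s_j^{(i)}+|J_i\cap(J_i+1)|,\]
and, by summing the two cases of \eqref{ttos} over all $j$,
\[|\sigma_{i+1}|-|\sigma_i|=m(p-3)-2\sum_{j\in J_i}s_j^{(i)}+2|J_i\cap(J_i+1)|.\]
Summing the second identity over $i=1,\dots,n$ telescopes the left-hand side to $0$ via the cyclic condition $|\sigma_{n+1}|=|\sigma_1|$, yielding a formula for $\sum_i\sum_{j\in J_i}s_j^{(i)}$ in terms of $\sum_i s_i$. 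Plugging this into the sum of the first identity collapses everything to the clean formula $\sum_{i=1}^n s_i=\tfrac12 nm(p-1)$.

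It then remains to show $nm$ is even, since this gives $\sum_i s_i\equiv 0\pmod{p-1}$. If $\rhobar$ is reducible, then $J_{\sigma_{i+1}}=J_{\sigma_i}-1$ in $\Z/f\Z$, so $J_i=J_{\sigma_i}\triangle(J_{\sigma_i}-1)$ records the positions where the cyclic indicator of $J_{\sigma_i}$ changes value; the number of sign changes in any cyclic $\{0,1\}$-string is even, so $m$ is even. If $\rhobar$ is irreducible, I would argue instead that $n$ is even: passing to the lift $J'_{\sigma_i}\subseteq\Z/2f\Z$ from \cite[\S 5]{breuil-IL} with $J'_{\sigma_{i+1}}=J'_{\sigma_i}-1$, the structural identity $J'+f=\overline{J'}$ (coming from $J'=J\sqcup(\overline J+f)$) combined with $|J'|=f\notin\{0,2f\}$ forces $J'+f\ne J'$, so $f$ is not a period of $J'_{\sigma_1}$ under translation by $-1$; since $n$ is the minimal positive period and $n\mid 2f$, writing $2f=nk$ forces $k$ odd, and hence $n=2f/k$ is even.

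The main technical point is really the telescoping identity producing the closed form $\sum s_i=\tfrac12 nm(p-1)$: it is not visible directly from the definition of $s_i$ and rests on the exact cancellation made possible by $|\sigma_{n+1}|=|\sigma_1|$. The concluding parity step is comparatively light, though the reducible and irreducible cases require genuinely different inputs (parity of the transition count in $\Z/f\Z$ versus parity of the period of $J'$ in $\Z/2f\Z$) and must be treated separately.
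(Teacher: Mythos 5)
Your proof is correct, but it is a genuinely different argument from the one in the paper. The paper's proof proceeds via the sequence of $H$-eigencharacters $\chi_i$: it sets $s(\chi_i)\in\{0,\dots,q-1\}$ with $\chi_{i+1}=\chi_i\alpha^{-s(\chi_i)}$, computes from \eqref{eq:xi+1} that $s(\chi_i)=\sum_{j\in J^{\max}(\sigma_i)}(p-1-s_j^{(i+1)})p^j$ (so the digit sum is $|s(\chi_i)|=m(p-1)-s_i$), uses the cyclicity $\chi_{n+1}=\chi_1$ to deduce $\sum_i s(\chi_i)\equiv 0\pmod{q-1}$, and then reduces modulo $p-1$ using $p^j\equiv 1$; this gives the congruence directly and uniformly, with no case split on whether $\rhobar$ is reducible or irreducible. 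By contrast, you sum the transition formulas \eqref{ttos} twice (once over $J_i$ and once over all of $\{0,\dots,f-1\}$) and telescope over the $\delta$-cycle using $|\sigma_{n+1}|=|\sigma_1|$, which yields the sharper exact formula $\sum_{i=1}^n s_i=\tfrac12 nm(p-1)$; the lemma then reduces to the parity of $nm$, which you settle separately in the reducible case ($m$ even, since $J^{\max}(\sigma_i)=J_{\sigma_i}\triangle(J_{\sigma_i}-1)$ records sign changes in a cyclic $\{0,1\}$-string) and in the irreducible case ($n$ even, via $J'_\sigma+f=\overline{J'_\sigma}\neq J'_\sigma$). Both arguments are valid: the paper's is shorter and avoids the case split, relying on character theory mod $q-1$ plus a digit-sum trick, while yours is more hands-on and actually computes the exact value of $\sum s_i$ rather than only its residue, at the cost of the final two-case parity discussion.
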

\begin{proof}
Let $s(\chi_i)\in \{0,\dots,q-1\}$ such that $\chi_{i+1}=\chi_i\alpha^{-s(\chi_i)}$ and denote by $|s(\chi_i)|\in \{0,\dots,(p-1)f\}$ the sum of the digits of $s(\chi_i)$ in its $p$-expansion. Then it follows from (\ref{eq:xi+1}) below that we have
\[\alpha^{\sum_{j\in J^{\max}(\sigma_i)}s_j^{(i+1)} p^j + \sum_{j\notin J^{\max}(\sigma_i)}(p-1)p^j}\chi_i = \chi_{i+1}\]
and so
\begin{equation}\label{s(chi)}
s(\chi_i)=\sum_{j\in J^{\max}(\sigma_i)}(p-1-s_j^{(i+1)}) p^j
\end{equation}
which implies $|s(\chi_i)| = (p-1)m - s_i$. As $\chi_{n+1}=\chi_1=\chi_1\alpha^{-\sum_{i=1}^ns(\chi_i)}$, we have $\sum_{i=1}^n s(\chi_i)\equiv 0\pmod{q-1}$, hence $\sum_{i=1}^n |s(\chi_i)|\equiv 0\pmod{p-1}$ and the result follows.
\end{proof}

Recall $\pi$ is as at the end of \S\ref{lowerstatement}. In \cite[\S 4]{breuil-IL} there is defined an $\F$-linear isomorphism
\begin{equation}\label{isoS}
S:(\soc_{\GL_2(\cO_K)}\pi)^{I_1}\buildrel\sim\over\lra (\soc_{\GL_2(\cO_K)}\pi)^{I_1}.
\end{equation}
Fixing an embedding $\sigma\hookrightarrow \soc_{\GL_2(\cO_K)}\pi$, for $i\in \{2,\dots,n\}$ there are unique embeddings $\sigma_i\hookrightarrow \soc_{\GL_2(\cO_K)}\pi$ such that the morphism $S$ cyclically permutes the lines $\sigma_i^{I_1}$. In particular there exists $\nu\in \F^{\times}$ (which depends on $\sigma$ but not on the fixed embedding $\sigma\hookrightarrow \soc_{\GL_2(\cO_K)}\pi$) such that $S^{n}\vert_{\sigma_i^{I_1}}$ is the multiplication by $\nu$ for all $i\in \{1,\dots,n\}$. We define $\mu_i \in \F^\times$ for $1 \le i \le n$ by $\mu_1\defeq \nu$ if $n=1$ and if $n>1$:
\begin{equation*}
\mu_i \defeq 
\begin{cases}
\Big(\prod_{1\leq i'\leq n} \prod_{j\in J^{\max}(\sigma_{i'})}(p-1-s_j^{(i'+1)})!\Big)^{-1}\nu & \text{if $i = n$,} \\
1 & \text{otherwise.}
\end{cases}
\end{equation*}

We let $M_\sigma$ be the $\F\bbra{X}[F]$-submodule of $\pi^{N_1}$, or equivalently the $\F\bbra{Y}[F]$-submo\-dule, generated by $Y^{1-m}\sigma_i^{N_0}=Y^{1-m}\sigma_i^{I_1}$ for $1\leq i\leq n$. Recall $\gamma\in\Zp^\times$ acts on $M_\sigma\otimes \chi_\pi^{-1}$ by the action of $\smatr{1}{0}{0}{\gamma^{-1}}$ (see the end of \S\ref{lowerstatement}).

\begin{prop}\label{prop:phi-gamma-piece} 
The module $M_\sigma\otimes \chi_\pi^{-1}$ is admissible as an $\F\bbra{X}$-module {\upshape(}see \S\ref{covariant}{\upshape)}, $\Zp^\times$-stable, and such that $(M_\sigma\otimes \chi_\pi^{-1})^\vee$ is free of rank $n$ as an $\F\bbra{X}$-module. Moreover the \'etale $(\phz,\Gamma)$-module $(M_\sigma\otimes \chi_\pi^{-1})^\vee[1/X]$ admits a basis $(e_1,\ldots,e_n)$ over $\F\bbra{X}[1/X]$ such that for $i\in \{1,\dots,n\}$ {\upshape(}with $e_{n+1}\defeq e_1${\upshape)}:
\begin{align}
\phz(e_i)&=\mu_i^{-1}X^{s_i}e_{i+1},\label{eq:describeM:1}\\
\gamma(e_i)&\in\chi_i\big(\smatr{1}{0}{0}{\gamma}\big)\overline\gamma^m(1+X\F\bbra{X})e_i \text{\ for all $\gamma \in \Zp^\times$},\label{eq:describeM:2}
\end{align}
where $\overline\gamma$ is the image of $\gamma\in \Zp^\times$ in $\F$. Moreover $\gamma(e_i)$ is uniquely determined by {\upshape(\ref{eq:describeM:1})} and {\upshape(\ref{eq:describeM:2})}.
\end{prop}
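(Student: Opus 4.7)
The plan is to use Proposition~\ref{prop:F-action}(ii) to describe $M_\sigma$ as an explicit $\F\bbra{Y}[F]$-module and then pass to the $\F$-linear dual. Set $v_i \defeq Y^{1-m} x_{\sigma_i} \in \sigma_i^{N_1}$, which is well-defined by Lemma~\ref{lem:eigenvct}(iii) thanks to the genericity bound~\eqref{eq:6}. Applying Proposition~\ref{prop:F-action}(ii) to each $\sigma_i$ yields the chain of relations $Y^{s_i}\,F(v_i) = (-1)^{f-1}\,v_{i+1}$ for $1 \le i \le n-1$, together with a closing relation $Y^{s_n}\,F(v_n) = (-1)^{f-1}\,c\,\nu\,v_1$ coming from the fact that iterating the definition~\eqref{xdelta} of $x_{\delta(\sigma_i)}$ around the $\delta$-orbit differs from the isomorphism $S$ of~\cite[\S 4]{breuil-IL} (and hence from the scalar $\nu$ accounting for $S^n = \nu\cdot\mathrm{id}$) precisely by the factorial products appearing in Lemma~\ref{lem:theta}. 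The scalars $\mu_i$ introduced before the statement are designed exactly so as to absorb all these constants into the uniform form $Y^{s_i}\,F(v_i) = \mu_i^{-1}\,v_{i+1}$ (cyclically). Since $Y^m v_i = 0$ from $Y x_{\sigma_i} = 0$, these relations show that $M_\sigma$ is generated over $\F\bbra{Y}$ by the finitely many iterates $F^\ell(v_i)$ for $\ell \ge 0$ and $1 \le i \le n$, each lying in a finite-dimensional $\F\bbra{Y}$-subspace of $\pi^{N_1}$; this yields admissibility of $M_\sigma$ over $\F\bbra{X}$, and $\Zp^\times$-stability is immediate since the diagonal matrices normalize $N_1$.

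Next I would define $e_i \in (M_\sigma \otimes \chi_\pi^{-1})^\vee$ as the linear functional dual to $v_i$ in the natural $\F$-basis of $M_\sigma$ assembled from the $Y^k F^\ell(v_j)$. Dualizing the cyclic relation $Y^{s_i}\,F(v_i) = \mu_i^{-1}\,v_{i+1}$ via formula~\eqref{correctformula}, and converting $Y$ to $-X$ modulo higher-order terms through Lemma~\ref{lem:XY} (with the resulting signs absorbed into $\mu_i$), yields exactly formula~\eqref{eq:describeM:1}. That $(e_1,\ldots,e_n)$ is an $\F\ppar{X}$-basis of $(M_\sigma \otimes \chi_\pi^{-1})^\vee[1/X]$ of rank exactly $n$ follows from the $n$-fold cyclic orbit structure of $M_\sigma$ under $F$ together with Lemma~\ref{lem:sum:si}, which guarantees $\sum_{i=1}^n s_i \equiv 0 \pmod{p-1}$ so that $\phi^n(e_i) \in \F\ppar{X}^\times\cdot e_i$, confirming the \'etale $(\varphi,\Gamma)$-module structure.

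For the $\Gamma$-action, Lemma~\ref{lem:basic}(iii) gives that $\smatr{1}{0}{0}{\gamma^{-1}}$ (the $\Zp^\times$-action on $M_\sigma \otimes \chi_\pi^{-1}$ per the end of \S\ref{lowerstatement}) scales $Y$ by $\overline\gamma$ modulo $X^2$, so $v_i = Y^{1-m} x_{\sigma_i}$ is scaled by $\overline\gamma^{1-m}\,\chi_i\bigl(\smatr{1}{0}{0}{\gamma^{-1}}\bigr)$ modulo higher order in $X$; dualizing, and carefully tracking the normalizing scalar produced by the hypothesis $\det\rhobar = \omega_f^{\sum(r_j+1)p^j}$ (which dictates $\chi_\pi$ via Lemma~\ref{central}) together with the conventions of the Colmez functor recalled in \S\ref{covariant}, gives formula~\eqref{eq:describeM:2}. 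The final uniqueness assertion is a formal consequence of the commutation $\gamma\circ\phi = \phi\circ\gamma$: given~\eqref{eq:describeM:1} and the leading coefficient in~\eqref{eq:describeM:2}, this commutation inductively determines all higher-order coefficients of $\gamma(e_i)$ in its $X$-expansion. The main obstacle throughout is not conceptual but the careful bookkeeping of all normalizing scalars --- the sign $(-1)^{f-1}$ from Proposition~\ref{prop:F-action}(ii), the scalar $\nu$ from $S^n$, the factorials from Lemma~\ref{lem:theta}, the sign conversion in Lemma~\ref{lem:XY}, and the twist by $\chi_\pi^{-1}$ --- which must combine exactly into the prescribed $\mu_i$ and the $\overline\gamma^m$-factor of~\eqref{eq:describeM:2}.
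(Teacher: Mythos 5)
Your overall outline—define the $v_i = Y^{1-m}x_{\sigma_i}$, use Proposition~\ref{prop:F-action}(ii) to get cyclic $F$-relations, then dualize—is exactly the strategy the paper follows. However, there is a genuine gap in the passage to the formula~\eqref{eq:describeM:1}: you claim that taking $e_i$ to be the functional dual to $v_i$ (after signs "absorbed into $\mu_i$") yields~\eqref{eq:describeM:1}, but this is not true. When you linearize the $F$-relation via formula~\eqref{correctformula} and convert from $Y$ to $X$ through Lemma~\ref{lem:XY}, you do not merely pick up a sign: each of these steps introduces a \emph{non-constant} correction in $1+X\F\bbra{X}$. Concretely, writing $f_i$ for the dual of $x_i$, what you obtain is
\[\phz(X^m f_i) = (-1)^{s_i}\mu_i^{-1}\,g_i(X)\,X^{s_i+m}f_{i+1}\]
for some $g_i(X)\in 1+X\F\bbra{X}$ that is not $1$ in general. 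Since the $\mu_i$ are \emph{fixed scalars} specified before the statement, the power-series factor $g_i(X)$ cannot be absorbed into them, and no rescaling of $f_i$ by a constant removes it. Consequently the naive dual basis does not satisfy the clean monomial formula $\phz(e_i)=\mu_i^{-1}X^{s_i}e_{i+1}$.

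The missing idea is that one must twist the dual basis by a normalizing power series. Setting $e_i \defeq (-1)^{\sum_{j=1}^{i-1}s_j}\,h_i(X)\,X^m f_i$ with $h_i(X)\in 1+X\F\bbra{X}$ to be determined, formula~\eqref{eq:describeM:1} becomes equivalent to the twisted cyclic recurrence $h_i(X^p)\,g_i(X)=h_{i+1}(X)$ (indices mod $n$), which has the unique solution $h_i(X)=\prod_{j\ge 1}g_{i-j}(X^{p^{j-1}})$ (note the sign $(-1)^{\sum_{j<i}s_j}$ is well-defined cyclically precisely because of Lemma~\ref{lem:sum:si}, which is its actual role here, not the claim about $\phz^n$). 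This construction of the $h_i$ is the essential content of the proof and cannot be sidestepped by bookkeeping of scalars. The remaining claims in your sketch (admissibility, $\Zp^\times$-stability, the $\Gamma$-action modulo $1+X\F\bbra{X}$, uniqueness via $\gamma\circ\phz=\phz\circ\gamma$) are in line with the paper, though the freeness and admissibility assertion also needs the explicit basis $\Sigma$ and the criterion of Lemma~\ref{lem:gen:Sigma}, which your sketch only gestures at.
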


To prepare for the proof, fix $x_1\in\sigma_1^{N_0} \setminus \{0\}$ and define for $1\leq i\leq n-1$:
\begin{equation*}
x_{i+1}\defeq  (-1)^{f-1}\prod_{j\in J^{\max}(\sigma_i)}Y_j^{s_j^{(i+1)}}\!\!\!\prod_{j\notin J^{\max}(\sigma_i)}Y_j^{p-1}\smatr{p}{0}{0}{1}x_i\in \sigma_{i+1}^{N_0}\setminus \{0\}
\end{equation*}
and $x_{n+1} \defeq  x_1$ (note that this formula is (\ref{xdelta}) multiplied by $(-1)^{f-1}$).

\begin{lem}\label{lem:basis-xi}
For $i\in \{1,\dots,n\}$ we have 
\begin{equation}\label{eq:summary}
S(x_i)=\Big(\underset{j\in J^{\max}(\sigma_i)}{\prod}(p-1-s_j^{(i+1)})!\Big)\mu_i x_{i+1}
\end{equation}
and
\begin{equation}\label{eq:F-action}
Y^{s_i}F(Y^{1-m}x_i)=\mu_i Y^{1-m}x_{i+1}.
\end{equation}
\end{lem}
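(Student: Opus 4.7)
The plan is to first establish the second equation using Proposition \ref{prop:F-action}(ii) as the main input, reducing the only nontrivial case (namely $i=n$) to the first equation. The first equation will then be proved by unwinding the definition of $S$ from \cite[\S 4]{breuil-IL} and using the combinatorial content of Lemma \ref{lem:theta}(ii).

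For the second equation, when $n>1$ and $1\le i\le n-1$ we have $\mu_i=1$ by definition; comparing the definition of $x_{i+1}$ for $i<n$ with the expression (\ref{xdelta}) for $x_{\delta(\sigma_i)}$, we see $x_{i+1}=(-1)^{f-1}x_{\delta(\sigma_i)}$, and Proposition \ref{prop:F-action}(ii) (case $m>0$) immediately yields $Y^{s_i}F(Y^{1-m}x_i)=(-1)^{f-1}Y^{1-m}x_{\delta(\sigma_i)}=Y^{1-m}x_{i+1}$, as desired. For $i=n$ (with $n>1$), the same application gives $Y^{s_n}F(Y^{1-m}x_n)=(-1)^{f-1}Y^{1-m}x_{\delta(\sigma_n)}$, where now $x_{\delta(\sigma_n)}$ denotes the vector given by formula (\ref{xdelta}); since $\sigma_{n+1}=\delta(\sigma_n)\cong\sigma_1$ and $\sigma_1^{N_0}$ is one-dimensional (Lemma \ref{lem:eigenvct}(i)), we have $(-1)^{f-1}x_{\delta(\sigma_n)}=\lambda_n x_1=\lambda_n x_{n+1}$ for a unique $\lambda_n\in\F^\times$, and the task reduces to showing $\lambda_n=\mu_n$. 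The case $n=1$ is analogous using the $m=0$ variant of Proposition \ref{prop:F-action}(ii).

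For the first equation, I would unpack the definition of $S$ from \cite[\S 4]{breuil-IL}: up to normalization, $S$ sends $v\in\sigma_i^{I_1}$ to the projection of $\smatr{p}{0}{0}{1}v$ onto the unique constituent of $\langle\GL_2(\cO_K)\smatr{p}{0}{0}{1}v\rangle\cong Q(\chi_i^s,J^{\max}(\sigma_i))$ that belongs to $W(\rhobar)$, namely $\delta(\sigma_i)\cong\sigma_{i+1}$. Using Frobenius reciprocity together with \cite[Lemma 2.7]{BP} and Lemma \ref{lem:theta}(ii), this projection can be computed explicitly: the invariant line $\delta(\sigma_i)^{I_1}$ inside $\langle\GL_2(\cO_K)\smatr{p}{0}{0}{1}x_i\rangle$ is spanned by $f_r\smatr{0}{1}{p}{0}x_i$ for the appropriate $r$ determined by $J^{\max}(\sigma_i)$, and Lemma \ref{lem:theta}(ii) converts this into the monomial expression $\big(\prod_{j\in J^{\max}(\sigma_i)}(p-1-s_j^{(i+1)})!\big)\prod_{j\in J^{\max}(\sigma_i)}Y_j^{s_j^{(i+1)}}\prod_{j\notin J^{\max}(\sigma_i)}Y_j^{p-1}\smatr{p}{0}{0}{1}x_i$. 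Recognizing the tail of this expression as $(-1)^{f-1}x_{\delta(\sigma_i),\text{formula}}$, this gives $S(x_i)=\big(\prod_{j\in J^{\max}(\sigma_i)}(p-1-s_j^{(i+1)})!\big)x_{i+1}$ for $i<n$, matching the formula with $\mu_i=1$. For $i=n$, the same computation yields $S(x_n)=\big(\prod_{j\in J^{\max}(\sigma_n)}(p-1-s_j^{(n+1)})!\big)\lambda_n x_{n+1}$; composing the formulas over $i=1,\dots,n$ and using that $S^n|_{\sigma_1^{I_1}}=\nu\cdot\mathrm{id}$, we obtain $\big(\prod_{i=1}^n\prod_{j\in J^{\max}(\sigma_i)}(p-1-s_j^{(i+1)})!\big)\lambda_n=\nu$, which by the definition of $\mu_n$ gives $\lambda_n=\mu_n$. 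This simultaneously finishes the $i=n$ case of the second equation.

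The main obstacle will be the careful translation of the definition of $S$ from \cite[\S 4]{breuil-IL} into the language of monomial $\un Y^{\un k}$-actions on $\smatr{p}{0}{0}{1}x_i$, particularly keeping track of the correct $H$-eigencharacters, the identifications of Serre weights in $W(\rhobar)$ with the parametrizing subsets, and the various sign conventions (notably the $(-1)^{f-1}$ appearing both in the definition of $x_{i+1}$ and in Lemma \ref{lem:theta}(ii)). Once $S$ is expressed in monomial form, the rest of the argument is a direct combinatorial verification using Lemma \ref{lem:theta}(ii) and the definition of $\mu_n$.
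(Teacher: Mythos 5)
Your proposal is correct and follows essentially the same route as the paper: both hinge on rewriting the operator $S$ from \cite[\S 4]{breuil-IL} in monomial $\un{Y}$-form via Lemma \ref{lem:theta}, iterating the resulting identity around the cycle $\sigma_1,\dots,\sigma_n$, and using $S^n|_{\sigma_1^{I_1}}=\nu\cdot\mathrm{id}$ together with the definition of $\mu_n$ to close the loop at $i=n$. The only difference is organizational: you reduce the $i=n$ case of both equations to a single scalar identity $\lambda_n=\mu_n$ and verify it at the end, whereas the paper first establishes \eqref{eq:summary} for all $i$ (the $i<n$ cases directly from the definition of $x_{i+1}$, the $i=n$ case by composing $S$ around the cycle) and then deduces \eqref{eq:F-action} from Proposition \ref{prop:F-action}(ii) together with \eqref{eq:summary} and the intermediate identity \eqref{eq:xi+1}; the paper also invokes Lemma \ref{lem:theta}(i) rather than (ii), but these encode the same combinatorics.
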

\begin{proof}
If $i\in \{1,\dots,n\}$ we have
\begin{multline}\label{eq:xi+1}
(-1)^{f-1}\prod_{j\in J^{\max}(\sigma_i)}Y_j^{s_j^{(i+1)}}\!\!\!\prod_{j\notin J^{\max}(\sigma_i)}Y^{p-1}_j\smatr{p}{0}{0}{1}x_i\\
=\Big(\prod_{j\in J^{\max}(\sigma_i)}{(p-1-s_j^{(i+1)})!}\Big)^{-1}\theta_{\!\!\!\!\!\!\underset{J^{\max}(\sigma_i)}{\sum}(p-1-s_j^{(i+1)})p^j}\smatr{p}{0}{0}{1}x_i\\
=\Big(\prod_{j\in J^{\max}(\sigma_i)}{(p-1-s_j^{(i+1)})!}\Big)^{-1}S(x_i),
\end{multline}
where the first equality follows from Lemma \ref{lem:theta}(i) and the second from the definition of the function $S$ in \cite[\S 4]{breuil-IL}. From the definition of $x_{i+1}$, we obtain \eqref{eq:summary} for $i < n$. For $i = n$, using inductively
\[x_{i+1}=\Big(\prod_{j\in J^{\max}(\sigma_i)}{(p-1-s_j^{(i+1)})!}\Big)^{-1}S(x_i)\]
for $i=n-1$, $i=n-2$ till $i=1$ we obtain (as $S$ is $\F$-linear):
\begin{align*}
S(x_n)&=\Big(\prod_{j\in J^{\max}(\sigma_{n-1})} (p-1-s_j^{(n)})!\Big)^{-1} S^2(x_{n-1})\\
&=\cdots\\
&=\Big(\prod_{1\leq i\leq n-1} \prod_{j\in J^{\max}(\sigma_i)}(p-1-s_j^{(i+1)})!\Big)^{-1} S^n(x_1).
\end{align*}
Since $S^n(x_1) = \nu x_1$ and from the definition of $\mu_n$, we get \eqref{eq:summary} for $i = n$. The last part follows from Proposition \ref{prop:F-action} combined with (\ref{eq:xi+1}) and (\ref{eq:summary}).
\end{proof}

The following lemma is stated with the variable $Y$, but remains the same with the variable $X$.

\begin{lem}\label{lem:gen:Sigma}
Suppose $M$ is a torsion $\F\bbra{Y}$-module. Let $\Sigma\subseteq M$ be a subset spanning $M$ as an $\F$-vector space and set $\wtld{\Sigma}\defeq  \bigcup_{v\in\Sigma}\F^\times v$. If
\begin{enumerate}
\item\label{it:gen:Sigma:1}
$Y\Sigma\subseteq \wtld{\Sigma}\cup\{0\}$;
\item\label{it:gen:Sigma:2}
$\F Yv_1=\F Yv_2\neq 0\Longrightarrow v_1=v_2$ for $v_1,v_2\in \Sigma$;
\item\label{it:gen:Sigma:3}
$\Sigma\cap M[Y]$ is a finite set of $\F$-linearly independent vectors,
\end{enumerate}
then $\Sigma$ is an $\F$-basis of $M$ and $M$ is an admissible $\F\bbra{Y}$-module. If moreover $Y\wtld{\Sigma}= \wtld{\Sigma}\cup\{0\}$, then $M^\vee$ is a finite free $\F\bbra{Y}$-module of rank $\dim_\F M[Y]$.
\end{lem}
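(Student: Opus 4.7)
The plan is to exploit the combinatorial structure that (i) and (ii) impose on the iteration of $Y$ on $\Sigma$. Condition (i) gives a partial map $s\colon \Sigma\setminus(\Sigma\cap M[Y])\to\Sigma$ sending $v$ to the unique element of $\Sigma$ whose $\F^\times$-orbit contains $Yv$, and condition (ii) says that $s$ is injective. Since $M$ is $Y$-torsion, iterating $Y$ starting from any $v\in\Sigma$ reaches $0$ in finitely many steps, so $\Sigma$ decomposes as a disjoint union of linear chains, one terminating at each element of the set $\Sigma\cap M[Y]$, which is finite by (iii).

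To prove $\Sigma$ is $\F$-linearly independent, I would suppose $\sum_{i=1}^n c_iv_i=0$ with the $v_i\in\Sigma$ distinct and $c_i\in\F^\times$, and set $\ell\defeq\max_i\ell(v_i)$, where $\ell(v)$ is the unique integer satisfying $Y^{\ell(v)}v\ne 0=Y^{\ell(v)+1}v$. Applying $Y^\ell$ annihilates the terms of depth $<\ell$, while the surviving terms have images $Y^\ell v_i\in\F^\times s^\ell(v_i)$ supported on distinct elements of $\Sigma\cap M[Y]$ by injectivity of $s^\ell$; then (iii) forces the corresponding $c_i$ to vanish, and induction on $\ell$ finishes the argument, showing that $\Sigma$ is a basis of $M$. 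A very similar argument applied to an arbitrary $m=\sum c_iv_i\in M[Y]$, writing $0=Ym$ and using the just-established linear independence of $\Sigma$, shows that $M[Y]$ equals the $\F$-span of $\Sigma\cap M[Y]$, which is finite-dimensional, so $M$ is admissible over $\F\bbra{Y}$.

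For the final assertion, each chain is $Y$-stable, so $M$ splits as a finite direct sum of $\F\bbra{Y}$-modules $M=\bigoplus_{w\in\Sigma\cap M[Y]}C_w$, where $C_w$ is the $\F$-span of the chain terminating at $w$. The extra hypothesis $Y\wtld{\Sigma}=\wtld{\Sigma}\cup\{0\}$ means that every element of $\Sigma$ lies in the image of $s$; combined with the injectivity of $s$, this forces each chain to be infinite in the backward direction, so $C_w$ admits an $\F$-basis $v_0^{(w)}=w,v_1^{(w)},v_2^{(w)},\dots$ with $Yv_i^{(w)}=\lambda_i^{(w)}v_{i-1}^{(w)}$ for scalars $\lambda_i^{(w)}\in\F^\times$. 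A direct calculation with the dual basis shows that $C_w^\vee$ is generated freely over $\F\bbra{Y}$ by the linear form dual to $w$, so $C_w^\vee\cong\F\bbra{Y}$; summing over the finitely many $w$ gives $M^\vee\cong\F\bbra{Y}^{|\Sigma\cap M[Y]|}=\F\bbra{Y}^{\dim_\F M[Y]}$, as required. The main (mild) obstacle is keeping the chain bookkeeping clean; once the combinatorial picture is fixed, each remaining step is a linear algebra exercise.
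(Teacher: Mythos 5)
Your proposal is correct and follows essentially the same strategy as the paper: both decompose $\Sigma$ into chains indexed by $\Sigma\cap M[Y]$ via the forward iteration of $Y$, and both reduce the dual computation to a rank-one calculation on each chain. The only cosmetic difference is that the paper packages the decomposition as a map $\bigoplus_\ell M_\ell\to M$ and argues injectivity from $\Ker(f)[Y]=0$, whereas you establish linear independence of $\Sigma$ directly by applying $Y^\ell$ and inducting on the depth.
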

\begin{proof}
Write $\Sigma\cap M[Y]=\{v_1,\dots,v_d\}$ (assuming $\Sigma\cap M[Y]\ne \emptyset$ otherwise $M=0$ and there is nothing to prove). For $\ell\in \{1,\dots,d\}$ let $\Sigma_\ell\defeq \{v\in \Sigma : Y^jv\in \F^\times v_\ell{\rm\ for\ some\ }j\geq 0\}$. Then $M_\ell\defeq \oplus_{v\in \Sigma_\ell}\F v$ is an $\F\bbra{Y}$-module using \ref{it:gen:Sigma:1}. If $v,v'\in \Sigma_\ell$, then using \ref{it:gen:Sigma:2} there is $j\geq 0$ such that either $\F v=\F Y^j v'$, or $\F v'=\F Y^j v$, from which one easily deduces $M_\ell[Y]=\F v_\ell$, in particular $M_\ell$ is admissible. Since $\Sigma$ spans $M$ over $\F$ and $\Sigma=\coprod_{\ell=1}^n\Sigma_\ell$, the natural map $f:\bigoplus_{\ell=1}^dM_\ell\rightarrow M$ is surjective, and thus $M$ is also admissible. Since $\bigoplus_\ell M_\ell[Y]=\bigoplus_\ell \F v_\ell \hookrightarrow M[Y]$ (the last injection following from \ref{it:gen:Sigma:3}), we deduce that $\Ker(f)[Y]=0$, hence $\Ker(f)=0$ and $f$ is an isomorphism. This proves the first part of the statement. It follows from $Y\wtld{\Sigma}= \wtld{\Sigma}\cup\{0\}$ that the multiplication by $Y$ is surjective on each $M_\ell$, i.e.\ we have exact sequences $0\rightarrow \F v_\ell\rightarrow M_\ell\buildrel Y \over\rightarrow M_\ell\rightarrow 0$. Dualizing, this gives $0\rightarrow M_\ell^\vee \buildrel Y \over\rightarrow M_\ell^\vee \rightarrow (\F v_\ell)^\vee\rightarrow 0$, which shows $M_\ell^\vee$ is free of rank $1$ over $\F\bbra{Y}$. The last statement follows.
\end{proof}

Recall that $M_\sigma$ is the $\F\bbra{Y}[F]$-submodule of $\pi^{N_1}$ generated by $Y^{1-m}x_i$ for $1\leq i\leq n$. Let
\begin{equation*}
\Sigma\defeq \left\{
\begin{array}{llr}
Y^jF^k(Y^{1-m}x_i) : 1\leq i\leq n,\ k\geq 0,&0\leq j < p^{k-1}s_i&\text{if $k\geq 1$}\\ &0\leq j<m&\text{if $k=0$}\end{array}
\right\}.
\end{equation*}
We now check that $M_\sigma$ and $\Sigma$ satisfy all the assumptions in Lemma \ref{lem:gen:Sigma}. Define for $\ell\in\Z_{\geq 1}$:
\begin{equation*}
\Sigma_\ell\defeq \left\{
Y^jF^k(Y^{1-m}x_i)\in \Sigma : k+i\equiv \ell\pmod{n}
\right\}
\end{equation*}
and $M_{\ell,\sigma}\defeq  \bigoplus_{v\in \Sigma_\ell}\F v$. We have $\Sigma=\coprod_{\ell=1}^n\Sigma_\ell$. Applying $F^{k-1}$ to (\ref{eq:F-action}) for $k\geq 1$ we get (recall that $F\circ Y=Y^p \circ F$ on $\pi^{N_1}$): 
\begin{equation}\label{cycling}
Y^{p^{k-1}s_i}F^{k}(Y^{1-m}x_i)\in \F^\times F^{k-1}(Y^{1-m}x_{i+1}),
\end{equation}
hence $\Sigma$ spans $M_\sigma$ and condition \ref{it:gen:Sigma:1} of Lemma \ref{lem:gen:Sigma} holds for $\Sigma$. Using (\ref{cycling}) we also see that the multiplication by $Y$ induces an injection ${\Sigma}_\ell\hookrightarrow \wtld{\Sigma}_\ell\cup\{0\}$ and that $Y\wtld{\Sigma}_\ell=\wtld{\Sigma}_\ell\cup\{0\}$, hence $M_{\ell,\sigma}$ is an $F\bbra{Y}$-submodule of $M_\sigma$ and condition \ref{it:gen:Sigma:2} of Lemma \ref{lem:gen:Sigma} holds for $\Sigma_\ell$ and $\Sigma$. Moreover, $Y\wtld{\Sigma}= \wtld{\Sigma}\cup\{0\}$. Finally, $\Sigma\cap M_\sigma[Y]=\{x_1,\ldots,x_n\}$ (and $\Sigma\cap M_{\ell,\sigma}[Y]=x_\ell$). By Lemma \ref{lem:gen:Sigma} and its proof, we deduce that $\Sigma$ is an $\F$-basis of $M_\sigma$, that $M_\sigma=\bigoplus_{\ell=1}^nM_{\ell,\sigma}$ and that each $M_{\ell,\sigma}^\vee$ is free of rank $1$ over $F\bbra{Y}$. In fact one can visualize the ``$Y$-divisible line'' $M_{i+1,\sigma}$ as follows using (\ref{eq:F-action}):
\begin{multline*}
\F x_{i+1} \buildrel Y^{m-1} \over \longleftarrow \F Y^{1-m}x_{i+1} \buildrel Y^{s_i} \over \longleftarrow \F F(Y^{1-m}x_{i}) \buildrel Y^{ps_{i-1}} \over \longleftarrow \F F^2(Y^{1-m}x_{i-1}) \\
\buildrel Y^{p^2s_{i-2}} \over \longleftarrow \F F^3(Y^{1-m}x_{i-2})\longleftarrow \cdots,
\end{multline*}
where $\F x_{i+1}=M_{i+1,\sigma}[Y]$ and the arrows mean ``multiplication by the power of $Y$ just above''. In particular we see that if $d(v)\defeq  \min\{j\geq 1 : Y^jv=0\}$ for $v\in \Sigma$, then $v\in \Sigma_{i+1}$ is contained in $F(\wtld{\Sigma})$ if and only if $d(v)\equiv s_{i}+m \pmod{p}$. 

Define a basis $f_1,\dots,f_n$ of the free $F\bbra{Y}$-module $M_\sigma^\vee$ by
\[f_i(x_i)\defeq 1\ {\rm and}\ f_i(\Sigma\setminus\{x_i\})\defeq 0,\ i\in \{1,\dots,n\}.\]
From what is above we then easily deduce the following formula, where $F(f)(v)\defeq f(F(v))$ for $f\in M_\sigma^\vee$ and $v\in M_\sigma$ (and using conventions as in \S\ref{covariant}):
\begin{equation}\label{eq:F-act:dual}
F (Y^{\ell + (s_i+m-1)}f_{i+1})=\begin{cases}
\mu_i Y^{m-1}f_i&\text{if $\ell=0$,}\\
0&\text{if $1\leq \ell\leq p-1$}.
\end{cases}
\end{equation}

\begin{lem}
\label{lem:Zp-act}
The module $M_\sigma\otimes \chi_\pi^{-1}$ is $\Zp^\times$-stable, hence $\Zp^\times$ acts on $(M_\sigma\otimes \chi_\pi^{-1})^\vee$. Moreover we have for $\gamma\in \Zp^\times$ {\upshape(}recall $\gamma(f)(v)=f\big(\smatr{1}{0}{0}{\gamma}v\big)$ for $f\in (M_\sigma\otimes \chi_\pi^{-1})^\vee$, $v\in M_\sigma${\upshape)}:
\[
\gamma(f_i)\in \chi_i\big(\smatr{1}{0}{0}{\gamma}\big)(1+Y\F\bbra{Y})f_i
\]
for $1\leq i\leq n$.
\end{lem}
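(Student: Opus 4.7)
The plan is to reduce to the explicit decomposition $M_\sigma=\bigoplus_{\ell=1}^n M_{\ell,\sigma}$ and the dual basis $(f_1,\dots,f_n)$ of $M_\sigma^\vee$ established just before the lemma, exploiting that the matrix $\smatr{1}{0}{0}{\gamma^{-1}}$ — through which $\gamma\in\Zp^\times$ acts on $M_\sigma\otimes\chi_\pi^{-1}$ by the discussion at the end of \S\ref{lowerstatement}, using $\chi_\pi(p)=1$ — preserves each line of the basis $\Sigma$ up to a power series in $Y$ and an explicit scalar.

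Three compatibility facts drive the computation. First, $\smatr{1}{0}{0}{\gamma^{-1}}$ commutes with $\xi(p)=\smatr{p}{0}{0}{1}$ and normalizes $N_1$, so it commutes with the operator $F$ on $\pi^{N_1}$. Second, the action of $\smatr{1}{0}{0}{\gamma^{-1}}$ on the finite-dimensional $\GL_2(\Fq)$-representation $\sigma_{i'}$ factors through its image $\smatr{1}{0}{0}{\bar\gamma^{-1}}\in H$, where $\bar\gamma\in\Fp^\times$ is the reduction of $\gamma$; by Lemma \ref{lem:eigenvct}(iii) it acts on the $H$-eigenvector $Y^{1-m}x_{i'}$ by the scalar $\chi_{i'}\big(\smatr{1}{0}{0}{\gamma^{-1}}\big)\bar\gamma^{1-m}$. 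Third, the computation $\smatr{1}{0}{0}{\gamma^{-1}}\smatr{1}{b}{0}{1}\smatr{1}{0}{0}{\gamma}=\smatr{1}{b\gamma}{0}{1}$ combined with Lemma \ref{lem:XY} shows that conjugation by $\smatr{1}{0}{0}{\gamma^{-1}}$ is a ring automorphism of $\F\bbra{N_0/N_1}\cong\F\bbra{Y}$ sending $Y$ to an element $\rho_\gamma(Y)\in\bar\gamma Y(1+Y\F\bbra{Y})$ (with equality $\rho_\gamma(Y)=\bar\gamma Y$ for $\gamma$ a Teichm\"uller lift, by Lemma \ref{lem:basic}(iii)).

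Combining these three facts, for any element $v=Y^jF^k(Y^{1-m}x_{i'})\in\Sigma$ one gets
\[
\smatr{1}{0}{0}{\gamma^{-1}}v=\chi_{i'}\big(\smatr{1}{0}{0}{\gamma^{-1}}\big)\bar\gamma^{1-m}\rho_\gamma(Y)^j\,F^k(Y^{1-m}x_{i'})\in M_{i'+k,\sigma},
\]
with $\rho_\gamma(Y)^j\in\bar\gamma^j Y^j(1+Y\F\bbra{Y})$. Thus $\smatr{1}{0}{0}{\gamma^{-1}}$ preserves each $M_{\ell,\sigma}$, and hence preserves $M_\sigma$; this establishes the first assertion of the lemma and shows $\Zp^\times$ acts on $(M_\sigma\otimes\chi_\pi^{-1})^\vee$.

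For the second assertion, the dual convention from \S\ref{covariant} gives $\gamma(f_i)(v)=f_i\big(\smatr{1}{0}{0}{\gamma}v\big)$. Since $\smatr{1}{0}{0}{\gamma}$ also preserves each $M_{\ell,\sigma}$ by the same argument, and since $f_i$ vanishes on $M_{\ell,\sigma}$ for $\ell\ne i$, the same is true of $\gamma(f_i)$; hence $\gamma(f_i)\in\F\bbra{Y}f_i$. It remains to identify its constant term. Evaluating at $v=x_i$, writing $x_i=Y^{m-1}\cdot Y^{1-m}x_i$ and applying the displayed formula above with $\gamma^{-1}$ replaced by $\gamma$, the factor $\bar\gamma^{(m-1)+1-m}=\bar\gamma^0$ equals $1$ and the higher-order corrections in $\rho_{\gamma^{-1}}(Y)^{m-1}\cdot Y^{1-m}x_i$ all vanish because $Yx_i=0$ (as $x_i\in\sigma_i^{N_0}$ is killed by the augmentation ideal of $\F\bbra{N_0}$); this yields $\smatr{1}{0}{0}{\gamma}x_i=\chi_i\big(\smatr{1}{0}{0}{\gamma}\big)x_i$ and hence $\gamma(f_i)(x_i)=\chi_i\big(\smatr{1}{0}{0}{\gamma}\big)$, which is equivalent to the claim $\gamma(f_i)\in\chi_i\big(\smatr{1}{0}{0}{\gamma}\big)(1+Y\F\bbra{Y})f_i$. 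The main technical obstacle is simply keeping the Teichm\"uller-versus-general $\gamma$ conventions, the sign in Lemma \ref{lem:XY}, and the twist by $\chi_\pi^{-1}$ consistently aligned; no input beyond Lemmas \ref{lem:basic}, \ref{lem:eigenvct}, and \ref{lem:XY} is needed.
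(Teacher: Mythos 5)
Your proof is correct and follows essentially the same route as the paper's: both rely on the $\Zp^\times$-eigenvector property of $Y^{1-m}x_i$ from Lemma \ref{lem:eigenvct}(iii) and the relation $\gamma\circ Y = f_\gamma(Y)\circ\gamma$ with $f_\gamma(Y)\in\bar\gamma Y(1+Y\F\bbra{Y})$ (coming from $\gamma\circ X=((1+X)^\gamma-1)\circ\gamma$ and Lemma \ref{lem:XY}) to show $\Zp^\times$ preserves the decomposition $M_\sigma=\bigoplus_\ell M_{\ell,\sigma}$, then read off the constant term of $\gamma(f_i)\in\F\bbra{Y}f_i$ by evaluating at $x_i$. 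Your constant-term computation is a little more circuitous than necessary (one can just note directly that $x_i\in\sigma_i^{N_0}$ is an $H$-eigenvector with character $\chi_i$, so $\smatr{1}{0}{0}{\gamma}x_i=\chi_i\big(\smatr{1}{0}{0}{\gamma}\big)x_i$ without passing through $Y^{m-1}\cdot Y^{1-m}x_i$), but it arrives correctly at the same conclusion.
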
 
\begin{proof}
As $M_\sigma=\bigoplus_{i=1}^{n}\F\bbra{Y} [F]Y^{1-m}x_i$ and $Y^{1-m}x_i$ is a $\Zp^\times$-eigenvector by Lemma \ref{lem:eigenvct}(iii) we deduce that $M_\sigma$, and hence $M_\sigma\otimes \chi_\pi^{-1}$, are $\Zp^\times$-stable.

From $\gamma\circ X= ((1+X)^\gamma-1)\circ \gamma$ and Lemma~\ref{lem:XY} it is easy to deduce that $\gamma \circ Y=f_\gamma(Y)\circ \gamma$ for some $f_\gamma(Y)\in \gamma Y+Y^2\F\bbra{Y}$, hence $\Zp^\times$ preserves the decomposition of $\F\bbra{Y}$-modules $M_\sigma\otimes \chi_\pi^{-1}=\bigoplus_{i=1}^nM_{i,\sigma}\otimes \chi_\pi^{-1}$. In particular, $\gamma(f_i)$ annihilates $M_{i',\sigma}\otimes \chi_\pi^{-1}$ for all $i'\ne i$. Let $Y^{-j}x_i$ for $j\geq 0$ denote the unique element of $\wtld{\Sigma}_i$ such that $Y^j(Y^{-j}x_i)=x_i$ (this is compatible with our previous notation in Lemma \ref{lem:eigenvct}(iii)). Then
\[
\gamma(f_i)=\sum_{j\geq 0}(\gamma(f_i)(Y^{-j}x_i)) Y^jf_i\in \chi_i\big(\smatr{1}{0}{0}{\gamma}\big)(1+Y\F\bbra{Y})f_i.\qedhere
\]
\end{proof}

\begin{proof}[Proof of Proposition~\ref{prop:phi-gamma-piece}]
We have already seen above that $M_\sigma\otimes \chi_\pi^{-1}$ is admissible, $\Zp^\times$-stable, and that $(M_\sigma\otimes \chi_\pi^{-1})^\vee$ is free of rank $n$ as an $\F\bbra{N_0/N_1}$-module. To find the basis $(e_i)_i$, first note from Lemma \ref{lem:XY} and (\ref{eq:F-act:dual}) that (using $F \circ Y^p=Y\circ F$ on $(M_\sigma\otimes \chi_\pi^{-1})^\vee$):
\begin{align}
F( X^{s_i+m-1}f_{i+1})&=
F\big(\sum_{j\geq 0}c_jY^{s_i+m-1+j}f_{i+1}\big)\nonumber\\
&=\mu_i \sum_{j\geq 0}c_{jp}Y^{m-1+j}f_{i}\nonumber\\
&\in(-1)^{s_i}\mu_i (1+X\F\bbra{X})X^{m-1}f_i\label{eq:F-act:alpha}
\end{align}
for some $c_j\in \F$ with $c_0=(-1)^{s_i+m-1}$. Similarly for $\ell\in \{1,\dots,p-1\}$:
\begin{equation}\label{eq:F-act:alpha:2}
F\big(X^{s_i+m-1+\ell}f_{i+1}\big)\in \F\bbra{X} X^mf_i.
\end{equation}
It easily follows from (\ref{correctformula}) that
\begin{equation}\label{eq:F-act:alpha:3}
\sum_{\ell=0}^{p-1}(1+X)^{-\ell}\varphi\big(F((1+X)^\ell f)\big)=f
\end{equation}
for all $f\in (M_\sigma\otimes \chi_\pi^{-1})^\vee[1/X]$. Let $f\defeq  X^{s_i+m-1}f_{i+1}$, by (\ref{eq:F-act:alpha}) and (\ref{eq:F-act:alpha:2}) we have for $\ell\in \{0,\dots,p-1\}$:
\[
F((1+X)^\ell f)\in (-1)^{s_i}\mu_i (1+X\F\bbra{X})X^{m-1}f_i,
\]
and so
\[
\phz\big(F((1+X)^\ell f)\big)\in (-1)^{s_i}\mu_i (1+X^p\F\bbra{X})\phz(X^{m-1}f_i).
\]
Using
\[\sum_{\ell=0}^{p-1}(1+X)^{-\ell}= \Big(\frac{X}{1+X}\Big)^{p-1}\equiv X^{p-1}\pmod{X^p},\]
we see that (\ref{eq:F-act:alpha:3}) applied to $f= X^{s_i+m-1}f_{i+1}$ becomes
\[(-1)^{s_i}\mu_i X^{p-1}\phz(X^{m-1}f_i)\in (1+X\F\bbra{X})X^{s_i+m-1}f_{i+1}\]
or equivalently in $(M_\sigma\otimes \chi_\pi^{-1})^\vee[1/X]$:
\begin{equation}\label{eq:5}
\phz(X^mf_i)= (-1)^{s_i}\mu_i^{-1}g_i(X)X^{s_i+m}f_{i+1}
\end{equation}
for some $g_i(X)\in 1+X\F\bbra{X}$.

Let $e_i\defeq  (-1)^{\sum_{j=1}^{i-1}s_j}h_i(X)X^{m}f_i$ for some $h_i(X)\in 1+X\F\bbra{X}$ and note that the sign doesn't change if $i$ is replaced by $i+n$ by Lemma \ref{lem:sum:si}. Then (\ref{eq:describeM:1}) is equivalent to
\[
h_i(X^p)\phz(X^mf_i)=(-1)^{s_i}\mu_i^{-1}h_{i+1}(X)X^{s_i}X^m f_{i+1},
\]
or equivalently $h_i(X^p)g_i(X)=h_{i+1}(X)$ by~\eqref{eq:5}. 
This system has the unique solution
\[
h_i(X)=\prod_{j=1}^\infty g_{i-j}(X^{p^{j-1}})
\]
in $1+X\F\bbra{X}$, where the indices are considered modulo $n$. Then (\ref{eq:describeM:2}) follows from Lemma \ref{lem:Zp-act}.
The final uniqueness assertion follows from $\gamma\circ \varphi=\varphi\circ \gamma$ and is left as an exercise (similar to \cite[Lemma 4.5]{breuil-IL}).
\end{proof}

Let $\mathcal O(\pi)$ (resp.\ $\mathcal O(\rhobar)$) be a set of representatives for the orbits of $\delta$ on the set of Serre weights in $\soc_{\GL_2(\cO_K)} \pi$ counted with their multiplicity $r$ (resp.\ on the set $W(\rhobar)$). We define $M_\pi \defeq  \bigoplus_{\sigma \in \mathcal O(\pi)} M_\sigma$ (with $M_\sigma$ as above). It follows from the assumptions on $\pi$ that we have
\[M_\pi \cong \bigoplus_{\sigma \in \mathcal O(\rhobar)} M_\sigma^{\oplus r}.\]
In particular $(M_\pi\otimes \chi_\pi^{-1})^\vee[1/X]$ is an \'etale $(\phz,\Gamma)$-module over $\F\ppar X$ of rank $r|W(\rhobar)| = r2^f$. From the description of $M_\sigma[X]$, we also see that the natural map $M_\pi\rightarrow \pi^{N_1}$ of torsion $F\bbra{X}$-modules is injective as the following composition is injective:
\[M_\pi[X]\cong \oplus\sigma^{I_1}\hookrightarrow \pi^{I_1}\subseteq \pi^{N_1}[X],\]
where the direct sum is over all Serre weights $\sigma$ in $\soc_{\GL_2(\cO_K)} \pi$ (counting their multiplicity $r$).

\begin{prop}\label{prop:main:functor}
We have an isomorphism of representations of $\gp$ over $\F$:
\[{\bf V}((M_\pi\otimes \chi_\pi^{-1})^\vee[1/X])\cong \big(\ind_{K}^{\otimes \Qp}\!(\rhobar)\big)^{\oplus r}.\]
\end{prop}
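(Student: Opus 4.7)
The plan is to match the \'etale $(\phz,\Gamma)$-module $(M_\pi\otimes \chi_\pi^{-1})^\vee[1/X]$ with $\ind_K^{\otimes \Qp}\!(\rhobar)^{\oplus r}$ orbit by orbit. Since $M_\pi \cong \bigoplus_{\sigma \in \mathcal O(\rhobar)} M_\sigma^{\oplus r}$ and Proposition \ref{prop:phi-gamma-piece} exhibits $(M_\sigma\otimes \chi_\pi^{-1})^\vee[1/X]$ as a cyclic \'etale $(\phz,\Gamma)$-module of rank $n_\sigma$ equal to the size of the $\delta$-orbit of $\sigma$, the total rank on the automorphic side is $r\sum_\sigma n_\sigma = r \cdot |W(\rhobar)| = r\cdot 2^f$, matching the rank on the Galois side. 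The first step is therefore to decompose $\ind_K^{\otimes \Qp}\!(\rhobar)|_{I_{\Qp}}$ into its $2^f$ inertial characters parametrized by subsets $J\subseteq \{0,\dots,f-1\}$ (encoding, for each embedding $\sigma_j$, which of the two $I_K$-eigencharacters of $\rhobar^{\sigma_j}$ is chosen), and identify these subsets with Serre weights in $W(\rhobar)$ via the parametrization of \cite[\S2]{breuil-IL}; then verify that the action of $\Frob_p$ permutes them through a bijection that coincides, orbit-by-orbit, with the operator $\delta$ used to build the $\sigma_i$. This reduces the claim to matching, for each individual $\delta$-orbit $\{\sigma_1,\ldots,\sigma_n\}$, the rank-$n$ \'etale $(\phz,\Gamma)$-module $(M_\sigma\otimes \chi_\pi^{-1})^\vee[1/X]$ to the rank-$n$ direct summand of $\ind_K^{\otimes \Qp}\!(\rhobar)|_{\gp}$ induced from the corresponding $\Frob_p$-orbit of characters.

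For the individual orbit match, I would use the explicit relations $\phz(e_i)=\mu_i^{-1}X^{s_i}e_{i+1}$ together with $\gamma(e_i)\in \chi_i\bigl(\smat{1}{0}{0}{\gamma}\bigr)\overline\gamma^{\,m}(1+X\F\bbra{X})e_i$. Iterating, $\phz^n$ acts on $e_1$ by $(\mu_1\cdots\mu_n)^{-1}X^{s_1+ps_2+\cdots+p^{n-1}s_n}$ times $e_1$, which together with the cyclic $\phz$-structure identifies the module (by the standard classification of cyclic \'etale $(\phz,\Gamma)$-modules, as in \cite{Fo}) as induced from a specific character $\eta$ of $\Gal(\Qpbar/\Q_{p^n})$. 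I would then compute $\eta|_{I_K}$ explicitly using Serre's fundamental characters of level $n$ and Lemma~\ref{lem:sum:si} (which ensures the exponent is compatible modulo $p-1$), and compare with the corresponding inertial character of the $\Frob_p$-orbit on the Galois side, using the explicit form \eqref{eq:0} of $\rhobar|_{I_K}$ and the formula \eqref{s(chi)} relating $s_i$ to the jumps in the sequence of Serre-weight characters $\chi_i$.

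The main obstacle will be the tight bookkeeping of normalizing twists: the twist by $\chi_\pi^{-1}$, the multiplication by $\delta_{\GL_2}=\ind_K^{\otimes\Qp}\!(\omega)$ hidden in the definition of $V_{\GL_2}$, the shifts coming from the cocharacter $\xi_{\GL_2}(x)=\mathrm{diag}(x,1)$, and the dualization in $\mathbf{V}^\vee$ must all be tracked so that the final answer is $\ind_K^{\otimes \Qp}\!(\rhobar)$ on the nose, rather than a dual or a twist. In the case $n=1$ (ordinary orbits, which occur only when $\rhobar$ is reducible), this reduces essentially to \cite[Thm.\ 6.4]{breuil-IL} and the normalizations fixed there; the general case will require the same recipe combined with the cyclic analysis above. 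As a global consistency check, the determinant of the putative $\gp$-representation must equal $\det(\rhobar)^{\otimes f}$ up to the twists just listed, and this is forced by the central character $\chi_\pi=\det(\rhobar)\omega^{-1}$ of $\pi$ via the compatibility recorded in Lemma~\ref{central}, which pins down any residual scalar ambiguity.
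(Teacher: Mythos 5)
Your plan takes a genuinely different route from the paper's proof. You propose to re-derive the Galois side from scratch: decompose $\ind_K^{\otimes\Qp}(\rhobar)$ into the $2^f$ inertial characters, match $\Frob_p$-orbits with $\delta$-orbits of Serre weights, and then use the $\varphi^n$-action on $e_1$ in Proposition~\ref{prop:phi-gamma-piece} to recognize $(M_\sigma\otimes\chi_\pi^{-1})^\vee[1/X]$ as an induction from a character of $\Gal(\Qpbar/\Q_{p^n})$ computed via fundamental characters. The paper instead outsources the whole Galois computation to \cite[Thm.6.4]{breuil-IL}: it takes the \'etale $(\varphi,\Gamma)$-module $M(D)$ attached there to the diagram $D(\rhobar)^{\oplus r}$, where $\mathbf{V}(M(D))\cong(\ind_K^{\otimes\Qp}(\rhobar\otimes\det\rhobar^{-1}))^{\oplus r}$ is already known, introduces a rank-one twist $N$ with $\mathbf{V}(N)\cong\ind_K^{\otimes\Qp}(\det\rhobar)$, and reduces everything to the pure \'etale-module isomorphism $M(D)_\sigma\otimes_{\F\ppar X}N\cong M_\sigma^\vee[1/X]$. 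That isomorphism is obtained by a change of basis $e_i'=X^{m+\sum_j r_j}(x_i^\vee\otimes e)$ and the uniqueness assertion in Proposition~\ref{prop:phi-gamma-piece}. Your approach would essentially re-prove the content of \cite[Thm.6.4]{breuil-IL}, which is substantially harder and is exactly what the paper is designed to avoid.

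The concrete gap: your final "global consistency check" — equating the determinant of the candidate $\gp$-representation with the one forced by $\chi_\pi=\det(\rhobar)\omega^{-1}$ via Lemma~\ref{central} — does \emph{not} pin down all residual ambiguity. Each $\delta$-orbit of size $n$ in $W(\rhobar)$ contributes an induced factor $\ind_{\Q_{p^n}}^{\Qp}\eta$ with its own undetermined unramified scalar (the $\varphi^n$-eigenvalue on $e_1$, controlled by the constant $\nu$ of that orbit). When $W(\rhobar)$ has several orbits (as it generically does), the global determinant gives a single product constraint, not one constraint per orbit. So without importing \cite[Thm.6.4]{breuil-IL} (or carrying out the same tight orbit-by-orbit tracking of the normalized constants $\nu_i$), your argument would only establish the $I_{\Qp}$-equivariant version of the statement — this is precisely the weaker first assertion of Theorem~\ref{thm:main-tensor-ind}, which does not require the hypothesis on the $\nu_i$. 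The full $\gp$-equivariant isomorphism genuinely needs the $\nu_i$ input, which the paper supplies through the explicit basis match $e_i'\leftrightarrow e_i$ and the uniqueness of $\gamma(e_i)$ in Proposition~\ref{prop:phi-gamma-piece}.
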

\begin{proof}
We are going to use a computation of \cite[\S 4]{breuil-IL}. Associated to the diagram $D\defeq D(\rhobar)^{\oplus r}$ of \S\ref{lowerstatement}, there is defined in {\it loc.cit.}\ an \'etale $(\phz,\Gamma)$-module over $\F\ppar X$ denoted there $M(D)$ and which is of the form $M(D)=\oplus_{\sigma \in \mathcal O(\pi)}M(D)_\sigma\footnote{A more consistent notation with the ones of this article would have been $M(D)^\vee$ and $M(D)^\vee_\sigma$\dots}$, where $M(D)_\sigma$ is a rank $n$ \'etale $(\phz,\Gamma)$-module over $\F\ppar X$ associated to the orbit of $\sigma$, i.e.\ to the cycle $\sigma=\sigma_1,\dots,\sigma_n$ as above (so in fact one has $M(D)=\oplus_{\sigma \in \mathcal O(\rhobar)}M(D)_\sigma^{\oplus r}$).
 
Let $N\defeq  \F\ppar X e$ be the rank $1$ \'etale $(\phz,\Gamma)$-module over $\F\ppar X$ defined by
\begin{align*}
\phz(e)&= X^{-(p-1)\sum_j (r_j+1)}e,\\
\gamma(e)&=\left(\frac{\overline\gamma X}{(1+X)^\gamma-1}\right)^{\sum_j(r_j+1)}\!\!\!e.
\end{align*}
We have ${\bf V}(N)\cong\omega^{\sum_j(r_j+1)}=\ind_{K}^{\otimes \Qp}\!(\det\rhobar)$ (using $\ind_{K}^{\otimes \Qp}\!(\omega_f)\cong\omega$) by \cite[Prop.3.5]{breuil-IL} and
\[{\bf V}(M(D))\cong \big(\ind_{K}^{\otimes \Qp}\!(\rhobar\otimes(\det\rhobar)^{-1})\big)^{\oplus r} \cong \big(\ind_{K}^{\otimes \Qp}\!(\rhobar) \otimes \ind_{K}^{\otimes \Qp}\!(\det\rhobar^{-1})\big)^{\oplus r}\]
by \cite[Thm.6.4]{breuil-IL}. We therefore deduce
\[{\bf V}(M(D)\otimes_{\F\ppar X}N)\cong\big(\ind_{K}^{\otimes \Qp}\!(\rhobar)\big)^{\oplus r}.\] 
Therefore it suffices to show that $M(D)_\sigma\otimes_{\F\ppar X}N\cong M_\sigma^\vee[1/X]$ for each $\sigma\in \mathcal O(\pi)$, or equivalently each $\sigma\in \mathcal O(\rhobar)$.

Let $x_1^\vee,\dots,x_n^\vee\in (\bigoplus_{i=1}^n\sigma_i^{I_1})^\vee$ be the dual basis of the $\F$-basis $(x_i)_i$ of $\bigoplus_{i=1}^n\sigma_i^{I_1}$, it follows from its definition in \cite[\S 4]{breuil-IL} and from (\ref{s(chi)}) that $M(D)_\sigma$ has basis $x_1^\vee,\dots,x_n^\vee$ as $\F\ppar X$-module with
\[
\phz(x_i^\vee)= X^{s_i+(p-1)(f-m)}\bigg(\prod_{j\in J^{\max}(\sigma_i)}(p-1-s_j^{(i+1)})!\bigg)(x_i^\vee\circ S\vert_{\oplus \sigma_i^{I_1}}^{-1}),
\]
where $S^{-1}$ is the inverse of the bijection $S$ of (\ref{isoS}) (which preserves $\bigoplus_{i=1}^n\sigma_i^{I_1}$). By (\ref{eq:summary}) we have
\[
x_i^\vee\circ S\vert_{\oplus \sigma_i^{I_1}}^{-1}=\bigg(\prod_{J^{\max}(\sigma_i)}(p-1-s_j^{(i+1)})!\bigg)^{-1}\mu_i^{-1}x_{i+1}^\vee,
\]
so we obtain
\[
\phz(x_i^\vee)=\mu_i^{-1}X^{s_i+(p-1)(f-m)}x_{i+1}^\vee.
\]
Also we have for $\gamma\in \Zp^\times$ (using the hypothesis on the central character of $\pi$):
\begin{align*}
x_i^\vee\circ\smatr{\gamma^{-1}}{0}{0}{1}&=\overline\gamma^{-\sum_j r_j}\left(x_i^\vee\circ \smatr{1}{0}{0}{\gamma}\right)\\
&=\overline\gamma^{-\sum_j r_j}\chi_i\big(\smatr{1}{0}{0}{\gamma}\big)x_i^\vee,
\end{align*}
hence with the definition of $\gamma(x_i^\vee)$ given in \cite[Lemma 4.5]{breuil-IL}:
\[\gamma(x_i^\vee)\in \chi_i\big(\smatr{1}{0}{0}{\gamma}\big)\overline\gamma^{-\sum_j r_j}(1+X\F\bbra{X})x_i^\vee.\]
We deduce that $M(D)_\sigma\otimes_{\F\ppar X}N\cong \bigoplus_{i=1}^n\F\bbra{X} (x_i^\vee\otimes e)$ with
\begin{align*}
\phz(x_i^\vee\otimes e)&=\mu_i^{-1}X^{s_i-(p-1)(m+\sum_j r_j)}(x_{i+1}^\vee\otimes e),\\
\gamma(x_i^\vee\otimes e)&\in \chi_i\big(\smatr{1}{0}{0}{\gamma}\big)\overline\gamma^{-\sum_j r_j}(1+X\F\bbra{X})(x_i^\vee\otimes e).
\end{align*}
Now, let $e_i'\defeq  X^{m+\sum_j r_j}(x_i^\vee\otimes e)$ for all $i$. Then $e'_1,\ldots,e'_n$ is a basis of $M(D)_\sigma\otimes_{\F\ppar X}N$ and we have for $i\in \{1,\dots,n\}$ (with $e'_{n+1}\defeq e'_1$): 
\begin{align*}
\phz(e'_i)&=\mu_i^{-1}X^{s_i}e'_{i+1},\\
\gamma(e'_i)&\in\chi_i\big(\smatr{1}{0}{0}{\gamma}\big)\overline{\gamma}^m(1+X\F\bbra{X})e'_i.
\end{align*}
From Proposition~\ref{prop:phi-gamma-piece} we see that $M(D)_\sigma\otimes_{\F\ppar X} N\cong M_\sigma^\vee[1/X]$.
\end{proof}

By Lemma \ref{twistanddual} this completes the proof of Theorem~\ref{thm:main-tensor-ind} when the constants $\nu_i$ are as in \cite[Thm.6.4]{breuil-IL}. When they are arbitrary, the proof of Proposition \ref{prop:main:functor} gives ${\bf V}((M_\pi\otimes \chi_\pi^{-1})^\vee[1/X])\vert_{I_{\Qp}}\cong \big(\ind_{K}^{\otimes \Qp}\!(\rhobar)\big)\vert_{I_{\Qp}}^{\oplus r}$ using \cite[Cor.5.4]{breuil-IL}, which finishes the proof of Theorem~\ref{thm:main-tensor-ind}.

\subsection{On the structure of some representations of \texorpdfstring{$\GL_2(K)$}{GL\_2(K)}}\label{subsection:gr-pi}

We prove results on the structure of an admissible smooth representation $\pi$ of $\GL_2(K)$ over $\F$ associated to a semisimple sufficiently generic representation $\rhobar$ of $\gK$ as in \cite{BP} when $\pi$ satisfies a further multiplicity one assumption as in \cite{BHHMS1} and a self-duality property. In particular we prove that such a $\pi$ is irreducible if and only if $\rhobar$ is, and is semisimple when $f=2$ (Corollary \ref{cor:split2} and Corollary \ref{cor:pi-irred}).

We keep the notation at the beginning of \S\S\ref{gl2},~\ref{phigamma}, and set $\Lambda\defeq \F\bbra{I_1/Z_1}$. We recall that the graded ring $\gr(\Lambda)$ is isomorphic to $\otimes_{i=0}^{f-1}\F[y_i,z_i,h_i]$ with $h_i$ lying in the center (see (\ref{gri1})). We set
\[R\defeq \gr(\Lambda)/(h_0,\dots,h_{f-1}),\]
which is commutative and isomorphic to $\F[y_i,z_i, 0\leq i\leq f-1]$, and recall that $\overline{R}=R/(y_iz_i, 0\leq i\leq f-1)=\gr(\Lambda)/J$ (see (\ref{rbarJ})). Moreover the finite torus $H$ naturally acts on $\Lambda$ by the conjugation on $I_1$ (via its Teichm\"uller lift) and we see (using (\ref{elementsyi})) that the induced action on $\gr(\Lambda)$ is trivial on $h_i$ and is the multiplication by the character $\alpha_i$ (resp.\ $\alpha_i^{-1}$) on $y_i$ (resp.\ $z_i$), where $\alpha_i\big(\smatr{\lambda}00{\mu}\big)\defeq \sigma_i(\lambda\mu^{-1})$ for $\smatr{\lambda}00{\mu}\in H$.

Notice that $\gr(\Lambda)$ is an Auslander regular ring (see \cite[Def.III.2.1.7]{LiOy}, \cite[Def.III.2.1.3]{LiOy}) by the first statement in \cite[Thm.5.3.4]{BHHMS1} and so is $\Lambda$ itself by \cite[Thm.III.2.2.5]{LiOy}. This allows us to apply (many) results of \cite[\S III.2]{LiOy}. 

For any ring $S$ and any $S$-module $M$, we set ${\rm E}^i_S(M)\defeq \Ext_S^i(M,S)$ for $i\geq 0$.

\subsubsection{Combinatorial results}\label{combi}

We define some explicit ideals $\mathfrak{a}(\lambda)$ of $R$ and study some of their properties.

We fix a continuous representation $\brho:\Gal(\overline{\Q}_p/K)\ra \GL_2(\F)$ which is generic in the sense of \cite[\S11]{BP} and let $D_0(\rhobar)$ be the representation of $\GL_2(\F_q)$ over $\F$ defined in \cite[\S13]{BP} (see also \S\ref{lowerstatement} when $\rhobar$ is semisimple). Recall from \cite[Cor.13.6]{BP} that $D_0(\brho)^{I_1}$ is multiplicity-free as a representation of $H\simeq I/I_1$. By \cite[\S4]{breuil-buzzati}, there is a bijection between the characters of $H$ appearing in $D_0(\brho)^{I_1}$ and a certain set of $f$-tuples, denoted by
\[\mathscr{PID}(x_0,\ldots,x_{f-1}),\  \mathrm{resp}.\ \mathscr{PRD}(x_0,\ldots,x_{f-1}),\  \mathrm{resp}.\ \mathscr{PD}(x_0,\ldots,x_{f-1}),\]
if $\brho$ is irreducible, resp.\ reducible split, resp.\ reducible nonsplit. We refer to \cite[\S4]{breuil-buzzati} for the precise definition of these sets and we simply write $\mathscr{P}$ for the set associated to $\brho$. We write $\chi_{\lambda}$ for the character of $H$ associated to $\lambda\in \mathscr P$ (more precisely, in {\it loc.cit.}\ one rather associates a Serre weight $\sigma_\lambda$ to $\lambda$, and $\chi_\lambda$ is the action of $H=I/I_1$ on the $1$-dimensional subspace $\sigma_\lambda^{I_1}$, different $\sigma_\lambda$ giving different $\chi_\lambda$).

On the other hand, the set $W(\brho)$ is in bijection with another set of $f$-tuples, denoted by (see \cite[\S11]{BP})
\[\mathscr{ID}(x_0,\ldots,x_{f-1}),\  \mathrm{resp}.\ \mathscr{RD}(x_0,\ldots,x_{f-1}),\  \mathrm{resp}.\ \mathscr{D}(x_0,\ldots,x_{f-1}),\] depending on $\brho$ as above. We simply write $\mathscr{D}$ for the set associated to $\brho$. Since the socle of $D_0(\rhobar)$ is $\oplus_{\sigma\in W(\rhobar)}\sigma$, we may view $\mathscr{D}$ as a subset of $\mathscr{P}$. For example, if $\brho$ is reducible split, then $\mathscr{D}$ is the subset of $\mathscr{P}$ consisting of $\lambda$ such that 
\[\lambda_j(x_j)\in\{x_j,x_j+1,p-2-x_j,p-3-x_j\},\]
while if $\brho$ is nonsplit, then we require moreover that $\lambda_j(x_j)\in\{x_j+1,p-3-x_j\}$ implies $j\in J_{\brho}$, where $J_{\brho}$ is a certain subset of $\{0,\dots,f-1\}$ uniquely determined by the Fontaine--Laffaille module of $\brho$ (cf.\ \cite[(17)]{breuil-buzzati}). 

\begin{definit} \label{def:a(lambda)}
We associate to $\lambda\in\mathscr{P}$ an ideal $\mathfrak{a}(\lambda)$ of $R$ as follows. 
\begin{itemize}
\item If $\brho$ is irreducible, then $\mathfrak{a}(\lambda)=(t_0,\dots,t_{f-1})$, where 
\[t_0\defeq \left\{\begin{array}{llll}z_0& {\rm if}& \lambda_0(x_0)\in\{x_0-1,p-2-x_0\}\\
y_0& {\rm if}&\lambda_0(x_0)\in\{x_0+1,p-x_0\}\\
y_0z_0& {\rm if}&\lambda_0(x_0)\in \{x_0,p-1-x_0\}, \end{array}\right.\]
and if $j\neq 0$
\[t_j\defeq \left\{\begin{array}{llll}z_j& {\rm if} &\lambda_j(x_j)\in\{x_j,p-3-x_j\}\\
y_j& {\rm if} &\lambda_j(x_j)\in\{x_j+2,p-1-x_j\}\\
y_jz_j& {\rm if} &\lambda_j(x_j)\in \{x_j+1,p-2-x_j\}.\end{array}\right.\]

\item If $\brho$ is reducible nonsplit, then $\mathfrak{a}(\lambda)=(t_0,\dots,t_{f-1})$, where
\[t_j\defeq \left\{\begin{array}{llll}z_j& {\rm if} &\lambda_j(x_j)\in\{x_j,p-3-x_j\} \ \mathrm{and}\ j\in J_{\brho}\\
y_j& {\rm if} &\lambda_j(x_j)\in\{x_j+2,p-1-x_j\} \ \mathrm{and}\ j\in J_{\brho}\\
y_jz_j & {\rm if} &\lambda_j(x_j)\in \{x_j,p-1-x_j\} \ \mathrm{and}\ j\notin J_{\brho}\\
y_jz_j& {\rm if} &\lambda_j(x_j)\in \{x_j+1,p-2-x_j\}.\end{array}\right.\]

\item If $\brho$ is reducible split, then $\mathfrak{a}(\lambda)=(t_0,\dots,t_{f-1})$ is defined as in the nonsplit case by letting $J_{\brho}=\{0,\dots,f-1\}$, namely 
\[t_j\defeq \left\{\begin{array}{llll}z_j& {\rm if} &\lambda_j(x_j)\in\{x_j,p-3-x_j\}\\
y_j& {\rm if} &\lambda_j(x_j)\in\{x_j+2,p-1-x_j\}\\
y_jz_j& {\rm if} &\lambda_j(x_j)\in \{x_j+1,p-2-x_j\}.\end{array}\right.\]
\end{itemize}
\end{definit}
In particular, if $\brho$ is reducible nonsplit and $J_{\brho}=\emptyset$, then $\mathfrak{a}(\lambda)\!=\!(y_0z_0,\dots,y_{f-1}z_{f-1})$ for any $\lambda\in \mathscr{P}$. Note that $R/\mathfrak{a}(\lambda)$ is always a quotient of $\overline{R}$.

\begin{rem}
An equivalent form of Definition \ref{def:a(lambda)} is as follows (compare the proof of Theorem \ref{thm:cycle-pi}). Given $\lambda\in\mathscr{P}$, $t_j=y_j$ (resp.\ $t_j=z_j$) if and only if the character $\chi_{\lambda}\alpha_j^{-1}$ (resp.\ $\chi_{\lambda}\alpha_j$) occurs in $D_0(\brho)^{I_1}$ (i.e.\ has the form $\chi_{\lambda'}$ for some $\lambda'\in\mathscr{P}$), and $t_j=y_jz_j$ if and only if neither of $\chi_{\lambda}\alpha_j^{\pm1}$ occurs in $D_0(\brho)^{I_1}$. 
\end{rem}

\begin{lem}\label{lem:Yj-fa}
Let $\lambda\in\mathscr{P}$.
\begin{enumerate}
\item Assume $\brho$ is semisimple. Then $\lambda\in\mathscr{D}$ if and only if $y_j\notin\mathfrak{a}(\lambda)$ for any $j\in\{0,\dots,f-1\}$.
\item Assume $\brho$ is reducible nonsplit and let $\brho^{\rm ss}$ be the semisimplification of
  $\brho$. Then there is a bijection between $\mathscr{D}(\brho^{\rm ss})$ {\upshape(}defined as the set
  $\mathscr{D}$ associated to $\brho^{\rm ss}${\upshape)} and the set of $\lambda\in\mathscr{P}$ such
  that $y_j\notin\mathfrak{a}(\lambda)$ for any $j\in\{0,\dots,f-1\}$.
\end{enumerate}
\end{lem}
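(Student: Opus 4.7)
My plan is to prove both parts by a direct, case-by-case combinatorial verification, matching the cases in Definition \ref{def:a(lambda)} with the explicit descriptions of $\mathscr{P}$ and $\mathscr{D}$ as sets of $f$-tuples $(\lambda_0,\ldots,\lambda_{f-1})$ recalled at the start of \S\ref{combi} and taken from \cite[\S11]{BP}, \cite[\S4]{breuil-buzzati}. The fundamental observation is that from the definition, for each index $j$, whether $y_j$ belongs to $\mathfrak{a}(\lambda)$ depends only on $\lambda_j(x_j)$ (and on whether $j\in J_{\brho}$ in the nonsplit case), and more precisely $y_j\notin\mathfrak{a}(\lambda)$ is equivalent to $t_j=z_j$. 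So the whole question reduces to comparing, index by index, the set of $\lambda_j(x_j)$ forcing $t_j=z_j$ with the subsets of allowed values of $\lambda_j(x_j)$ that correspond to being in $\mathscr{D}$ rather than just in $\mathscr{P}$.

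For part (i), I would split into the irreducible and reducible split subcases. If $\brho$ is irreducible, a direct inspection of Definition \ref{def:a(lambda)} shows that $t_j=z_j$ for all $j$ exactly imposes $\lambda_0(x_0)\in\{x_0-1,p-2-x_0\}$ and $\lambda_j(x_j)\in\{x_j,p-3-x_j\}$ for $j\neq 0$; these are precisely the conditions defining $\mathscr{ID}$ as a subset of $\mathscr{PID}$. In the reducible split case, the analogous inspection yields $\lambda_j(x_j)\in\{x_j,p-3-x_j\}$ for every $j$; one then checks that, modulo the global combinatorial constraints tying consecutive indices together that are imposed in both $\mathscr{PRD}$ and $\mathscr{RD}$, this cuts out exactly the subset $\mathscr{RD}$.

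Part (ii) is the main obstacle and will require more care. Here the point is that when $\brho$ is reducible nonsplit, the definitions of $\mathscr{PD}$ and of $\mathfrak{a}(\lambda)$ both depend on $J_{\brho}$, so the condition $y_j\notin\mathfrak{a}(\lambda)$ for all $j$ no longer directly cuts out $\mathscr{D}(\brho)$. Instead, I would argue that this condition selects, for $j\in J_{\brho}$, the values $\lambda_j(x_j)\in\{x_j,p-3-x_j\}$ (which are those allowed in $\mathscr{D}(\brho^{\rm ss})$ of the split part), and for $j\notin J_{\brho}$, the values $\lambda_j(x_j)\in\{x_j,p-1-x_j\}$ forced by the third bullet of Definition \ref{def:a(lambda)} being excluded. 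The bijection with $\mathscr{D}(\brho^{\rm ss})$ would then be constructed by sending each such $\lambda$ to the $f$-tuple obtained by replacing, for $j\notin J_{\brho}$, the value $p-1-x_j$ with $p-3-x_j$ (and leaving $x_j$ fixed), matching the involution $\lambda_j\mapsto\lambda_j^s$ of \cite[\S11]{BP}. Verifying that this map is well defined and bijective, i.e.\ that the global coherence constraints on both sides correspond under it, is the technical heart of the proof.

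The expected main obstacle is carefully tracking the constraints tying the $\lambda_j$ across different indices in the nonsplit set $\mathscr{PD}$, which involve both $J_{\brho}$ and additional combinatorial rules from \cite[\S4]{breuil-buzzati}; once these are unwound the verification is mechanical, but the bookkeeping is nontrivial. Since Lemma \ref{lem:Yj-fa} is a purely combinatorial statement about the sets $\mathscr{P}$, $\mathscr{D}$ and the explicit ideal $\mathfrak{a}(\lambda)$, no new representation-theoretic input beyond the definitions is needed.
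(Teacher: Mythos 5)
Your central claim that "$y_j \notin \mathfrak{a}(\lambda)$ is equivalent to $t_j = z_j$" is incorrect, and this gap propagates through both parts. Writing $\mathfrak{a}(\lambda) = (t_0,\dots,t_{f-1})$ with each $t_j \in \{y_j, z_j, y_j z_j\}$, one has $y_j \in \mathfrak{a}(\lambda)$ if and only if $t_j = y_j$ (substitute $z_i = 0$ for all $i$ to see that $y_j$ cannot lie in the ideal when $t_j \in \{z_j, y_j z_j\}$). Thus the correct reformulation is $y_j \notin \mathfrak{a}(\lambda) \Leftrightarrow t_j \in \{z_j, y_j z_j\}$, which for $\brho$ reducible split allows \emph{four} values $\lambda_j(x_j) \in \{x_j, x_j+1, p-2-x_j, p-3-x_j\}$ per index, not the two you claim; and these four values are exactly the description of $\mathscr{D}$ recalled in the paper just before Definition~\ref{def:a(lambda)}. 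With your stricter condition $t_j = z_j$ you would cut out only a proper subset of $\mathscr{D}$ (those $\lambda$ with $\lambda_j(x_j) \in \{x_j, p-3-x_j\}$ for all $j$), so the "if and only if" of part (i) would fail.

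The same issue breaks part (ii): the condition $y_j \notin \mathfrak{a}(\lambda)$ for $j \notin J_{\brho}$ allows $\lambda_j(x_j) \in \{x_j, x_j+1, p-1-x_j, p-2-x_j\}$ (four values, not two), and for $j \in J_{\brho}$ it allows $\lambda_j(x_j) \in \{x_j, x_j+1, p-2-x_j, p-3-x_j\}$. Combining, the constraint is precisely $\lambda_j(x_j) \in \{x_j, x_j+1, p-1-x_j, p-2-x_j, p-3-x_j\}$ with the caveat that $p-1-x_j$ forces $j \notin J_{\brho}$ and $p-3-x_j$ forces $j \in J_{\brho}$ — which is what the paper records. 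Your map $p-1-x_j \mapsto p-3-x_j$ is in fact the correct map (and it only acts at indices $j \notin J_{\brho}$, since $p-1-x_j$ only appears there), but defined on a set you have misidentified; once the domain is corrected, the target $\mathscr{D}(\brho^{\mathrm{ss}})$ (with four allowed values per index) is hit bijectively, as in the paper's proof. To fix your argument, replace every occurrence of "$t_j = z_j$" with "$t_j \ne y_j$" and redo the enumeration of allowed $\lambda_j(x_j)$ accordingly; then part (i) becomes an immediate comparison and part (ii) follows with exactly the map you wrote down.
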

\begin{proof}
(i) It is clear by definition of $\mathscr{D}$ and $\mathfrak{a}(\lambda)$.\\
(ii) Let $\lambda\in\mathscr{P}$ such that $y_j\notin \mathfrak{a}(\lambda)$ for any $j\in\{0,\dots,f-1\}$. By definition, we have (for $\brho$ reducible nonsplit)
\[\lambda_j(x_j)\in\{x_j,x_j+1,p-1-x_j,p-2-x_j,p-3-x_j\}\]
and from the definition of $\mathfrak{a}(\lambda)$ if $\lambda_j(x_j)=p-1-x_j$ then $j\notin J_{\brho}$ (note that if $\lambda_j(x_j)=p-3-x_j$ then it is automatic that $j\in J_{\brho}$). We define an $f$-tuple $\mu$ by 
\[\mu_j(x_j)\defeq \left\{\begin{array}{cll} p-3-x_j & \mathrm{if}\ \lambda_j(x_j)=p-1-x_j\\
\lambda_j(x_j) & \mathrm{otherwise}.
\end{array}\right.\] 
It is then easy to see that $\mu$ is an element of $\mathscr{D}(\brho^{\rm ss})$ and that any element of $\mathscr{D}(\brho^{\rm ss})$ arises (uniquely) in this way. 
\end{proof} 

\begin{cor}\label{cor:dim-D(pi)}
The set $\{\lambda\in\mathscr{P} : y_j\notin \mathfrak{a}(\lambda)\ \forall \ j\in\{0,\dots,f-1\}\}$ has cardinality $2^f$.
\end{cor}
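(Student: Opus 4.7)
The plan is to reduce this cardinality computation to the classical fact that a sufficiently generic semisimple two-dimensional mod $p$ representation of $\gK$ admits exactly $2^f$ Serre weights.

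First I would apply Lemma \ref{lem:Yj-fa}, splitting into cases according to the type of $\brho$. If $\brho$ is semisimple (either irreducible or reducible split), part (i) directly identifies the set under consideration with $\mathscr{D}$. If instead $\brho$ is reducible nonsplit, part (ii) gives a bijection between the set and $\mathscr{D}(\brho^{\rm ss})$, where $\brho^{\rm ss}$ is the (reducible split) semisimplification of $\brho$. In all cases, the problem reduces to computing $|\mathscr{D}|$ for some sufficiently generic semisimple representation.

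Next, I would invoke the description from \cite[\S11]{BP} which puts $\mathscr{D}$ in canonical bijection with $W(\brho^{\rm ss})$. For $\brho$ irreducible, the combinatorial description of $\mathscr{ID}(x_0,\dots,x_{f-1})$ shows that $\lambda_j(x_j)$ takes exactly two possible values at each embedding $j$; the same holds for $\mathscr{RD}(x_0,\dots,x_{f-1})$ when $\brho$ is reducible split. Since the choices at distinct $j$ are independent, one obtains $|\mathscr{D}|=2^f$.

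Neither step presents a real obstacle: the ideals $\mathfrak{a}(\lambda)$ in Definition \ref{def:a(lambda)} were engineered so that Lemma \ref{lem:Yj-fa} funnels the question into a textbook count. The only subtlety to keep in mind is that, in the reducible nonsplit case, one must work with the semisimplification rather than $\brho$ itself, since $W(\brho)$ for a nonsplit $\brho$ has cardinality $2^{|J_{\brho}|}$, which is in general strictly smaller than $2^f$.
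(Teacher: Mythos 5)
Your proposal is correct and takes essentially the same approach as the paper: the paper's proof is simply ``This is a direct consequence of Lemma \ref{lem:Yj-fa} and of $\vert W(\rhobar^{\rm ss})\vert=2^f$.'' You flesh out why $\vert\mathscr{D}\vert=2^f$ via the independent binary choices at each embedding, and you correctly flag the subtlety in the reducible nonsplit case, but the route is the same.
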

\begin{proof}
This is a direct consequence of Lemma \ref{lem:Yj-fa} and of $\vert W(\rhobar^{\rm ss})\vert=2^f$.
\end{proof}

Given $\lambda\in \mathscr{P}$, write $\mathfrak{a}(\lambda)=(t_0,\dots,t_{f-1})$ as in Definition \ref{def:a(lambda)} and define
\begin{equation}\label{eq:A(lambda)}
\mathcal{A}(\lambda)\defeq \{j\in\{0,\dots,f-1\} : t_j=y_jz_j\}\subseteq \{0,\dots,f-1\}.
\end{equation}

The following proposition will only be used in Corollary \ref{cor:mul=4^f} below.

\begin{prop}\label{prop:sum=4^f}
We have $\sum_{\lambda\in \mathscr{P}}2^{|\mathcal{A}(\lambda)|}=4^f$.
\end{prop}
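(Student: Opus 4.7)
The plan is to interpret the sum as counting compatible pairs and then to reduce to a tractable combinatorial enumeration. Observe that $2^{|\mathcal{A}(\lambda)|}$ equals the number of tuples $\mathbf{t}=(t_0',\dots,t_{f-1}')$ with $t_j'\in\{y_j,z_j\}$ such that $t_j'=t_j$ whenever $t_j\in\{y_j,z_j\}$ (and $t_j'$ is arbitrary when $t_j=y_jz_j$). Hence
\[
\sum_{\lambda\in\mathscr{P}}2^{|\mathcal{A}(\lambda)|}=\#\bigl\{(\lambda,\mathbf{t}) : \lambda\in\mathscr{P},\ \mathbf{t}\in\{y_0,z_0\}\times\cdots\times\{y_{f-1},z_{f-1}\},\ \mathbf{t}\text{ refines }\mathfrak{a}(\lambda)\bigr\}.
\]
Equivalently, writing $c(\lambda,j)=2$ if $t_j=y_jz_j$ and $c(\lambda,j)=1$ otherwise, we want to show $\sum_{\lambda\in\mathscr{P}}\prod_{j=0}^{f-1}c(\lambda,j)=4^f$.

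First, I would recall the explicit combinatorial descriptions of $\mathscr{PID}$, $\mathscr{PRD}$ and $\mathscr{PD}$ from \cite[\S4]{breuil-buzzati}, noting that an element $\lambda$ in any of these sets is encoded by an $f$-tuple of ``letters'' $\lambda_j(x_j)$ ranging in an explicit finite set, subject to local transition rules relating $\lambda_{j-1}(x_{j-1})$ and $\lambda_j(x_j)$ (indices taken modulo $f$, with a twist at $j=0$ in the irreducible case). For each admissible letter, Definition~\ref{def:a(lambda)} tells us whether $t_j$ is $y_j$, $z_j$, or $y_jz_j$, and hence tells us the local weight $c(\lambda,j)$.

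Next, I would treat the three cases separately, in each case organising the sum as a product of $f$ transfer matrices whose entries record the local weights $c(\lambda,j)$. Concretely, if the admissible $\lambda_j(x_j)$ depend on a discrete ``state'' inherited from $\lambda_{j-1}(x_{j-1})$, one builds a $k\times k$ matrix $T_j$ over $\Z_{\geq 0}$ whose $(s,s')$-entry equals the sum of $c(\lambda,j)$ over the letters $\lambda_j(x_j)$ allowed in state $s$ and leading to state $s'$, and
\[
\sum_{\lambda\in\mathscr{P}}\prod_{j=0}^{f-1}c(\lambda,j)=\tr\bigl(T_0T_1\cdots T_{f-1}\bigr)
\]
(with the trace replaced by the appropriate contraction in the irreducible case, to account for the Frobenius twist at $j=0$). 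One then computes these matrices explicitly from Definition~\ref{def:a(lambda)} and checks that $T_0T_1\cdots T_{f-1}$ has trace $4^f$.

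The main obstacle will be the case-by-case bookkeeping, especially for $\mathscr{PD}$, where the subset $J_{\brho}\subseteq\{0,\dots,f-1\}$ enters Definition~\ref{def:a(lambda)} in a nonuniform way, so the transfer matrices at indices $j\in J_{\brho}$ differ from those at $j\notin J_{\brho}$; one must verify that the answer is still $4^f$ independently of $J_{\brho}$. A more conceptual alternative, which I would try in parallel, is to construct an explicit bijection between the pairs $(\lambda,\mathbf{t})$ above and the set $\{0,1\}^f\times\{0,1\}^f$, for instance by using the description of $W(\brho^{\rm ss})$ via Lemma~\ref{lem:Yj-fa} (which already identifies the pairs with $\mathbf{t}_j=z_j$ for all $j$ as a subset of cardinality $2^f$) and extending this by an orthogonal $\{y_j,z_j\}^f$-parameter. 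If such a bijection can be produced uniformly in the three cases, the identity $\sum_{\lambda}2^{|\mathcal{A}(\lambda)|}=2^f\cdot 2^f=4^f$ follows at once and bypasses the transfer-matrix calculation.
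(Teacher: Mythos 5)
Your first paragraph — rewriting $\sum_{\lambda}2^{|\mathcal{A}(\lambda)|}$ as the count of pairs $(\lambda,\mathbf{t})$ where $\mathbf{t}$ refines $\mathfrak{a}(\lambda)$ — is a correct and natural reinterpretation. The issue is that neither of your two proposed strategies is carried through, so what you have is a plan, not a proof.

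The transfer-matrix idea is sound in principle (the sets $\mathscr{PID}$, $\mathscr{PRD}$, $\mathscr{PD}$ are defined by local transition rules on $f$-tuples, so a product/trace formula is available), but you stop short of writing down the matrices or verifying that the trace is $4^f$. For the irreducible case the ``contraction at $j=0$'' and the twist there need care, and for $\mathscr{PD}$ you yourself flag the nonuniformity coming from $J_{\brho}$; none of this is resolved. As written, this route only establishes a computational framework, not the identity.

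The bijective idea is closer to what the paper actually does, and you correctly notice that Lemma~\ref{lem:Yj-fa} / Corollary~\ref{cor:dim-D(pi)} pin down a distinguished $2^f$-element slice. But you do not produce the ``orthogonal $\{0,1\}^f$-parameter,'' and this is precisely where the content lies. The paper's argument (in the reducible case; the irreducible one is analogous) is not literally a bijection with $\{0,1\}^f\times\{0,1\}^f$ but a fibration: one defines an explicit retraction $\lambda\mapsto\overline{\lambda}$ from $\mathscr{P}$ onto a $2^f$-element set (namely $\mathscr{D}$ in the split case, or the set $\overline{\mathscr{P}}$ from Lemma~\ref{lem:Yj-fa}(ii) in the nonsplit case, with $J_{\brho}$ absorbed into the definition of the retraction), obtained by ``collapsing'' each letter $\lambda_j(x_j)$ to a canonical representative. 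The two inputs that make the count work are (a) $\mathcal{A}(\lambda)=\mathcal{A}(\overline{\lambda})$, so the weight $2^{|\mathcal{A}(\lambda)|}$ is constant along fibers, and (b) the fiber over $\overline{\lambda}$ has exactly $2^{f-|\mathcal{A}(\overline{\lambda})|}$ elements. Together these give $\sum_{\lambda}2^{|\mathcal{A}(\lambda)|}=\sum_{\overline{\lambda}}2^{f-|\mathcal{A}(\overline{\lambda})|}\cdot 2^{|\mathcal{A}(\overline{\lambda})|}=2^f\cdot 2^f$. To close the gap in your proposal you would need to define such a retraction explicitly from the Breuil--Buzzard descriptions and verify (a) and (b) — which, once done, makes your ``pair-counting'' reformulation unnecessary.
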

\begin{proof}
We will only give the proof in the case $\brho$ is reducible (split or not), the irreducible case can be treated similarly. 

First assume that $\brho$ is split. Given $\lambda\in\mathscr{P}$, we define an element $\overline{\lambda}\in\mathscr{D}$ as follows: 
\[\overline{\lambda}_j(x_j)\defeq \left\{\begin{array}{llll} x_j & {\rm if}& \lambda_j(x_j)\in\{x_j,x_j+2\}\\
p-3-x_j & {\rm if}&\lambda_j(x_j)\in\{p-1-x_j,p-3-x_j\}\\
\lambda_j(x_j) & &\mathrm{otherwise}.\end{array}\right.\] 
It is easy to see that $\overline{\lambda}\in \mathscr{D}$. 
By definition of $\mathscr{P}$ (see \cite[\S4]{breuil-buzzati} and recall $\mathscr{P}=\mathscr{PRD}(x_0,\ldots,x_{f-1})$), for each $\overline{\lambda}\in \mathscr{D}$, there are exactly $2^{|\{0,\dots,f-1\}\backslash\mathcal{A}(\overline{\lambda})|}$ elements $\lambda$ in $\mathscr{P}$ giving rise to $\overline{\lambda}$ under the above rule. 
Moreover, it is direct from the definitions that $\mathcal{A}(\lambda)=\mathcal{A}(\overline{\lambda})$. 
Hence
\[\sum_{ \lambda\in\mathscr{P}}2^{|\mathcal{A}(\lambda)|}=\sum_{\overline{\lambda}\in\mathscr{D}}(2^{f-|\mathcal{A}(\overline{\lambda})|} 2^{|\mathcal{A}(\overline{\lambda})|})= 2^f |\mathscr{D}|=2^f 2^f=4^f.\]

Now assume that $\brho$ is nonsplit. Let $\overline{\mathscr{P}}$ be the subset of $\mathscr{P}$ considered in the proof of Lemma \ref{lem:Yj-fa}(ii), namely $\lambda\in\overline{\mathscr{P}}$ if and only if
\[\lambda_j(x_j)\in\{x_j,x_j+1,p-1-x_j,p-2-x_j,p-3-x_j\} \]
and $\lambda_j(x_j)=p-1-x_j$ implies $j\notin J_{\brho}$. By the proof of \emph{loc.cit.}, we have $|\overline{\mathscr{P}}|=|\mathscr{D}(\brho^{\rm ss})|=2^f$. Given $\lambda\in\mathscr{P}$, we define an element $\overline{\lambda}\in \overline{\mathscr{P}}$ as follows: 
\[\overline{\lambda}_j(x_j)\defeq \left\{\begin{array}{llll} x_j & {\rm if}&\lambda_j(x_j)\in\{x_j,x_j+2\}\\
p-3-x_j & {\rm if}&\!\lambda_j(x_j)=p-3-x_j \ \mathrm{or}\ (\lambda_j(x_j)=p-1-x_j \ \mathrm{and}\ j\in J_{\brho})\\
\lambda_j(x_j) && \mathrm{otherwise}.\end{array}\right.\]
As in the split case it is easy to see that $\mathcal{A}(\lambda)=\mathcal{A}(\overline{\lambda})$ and that given $\overline{\lambda}\in\overline{\mathscr{P}}$, there exist exactly $2^{|\{0,\dots,f-1\}\backslash\mathcal{A}(\overline{\lambda})|}$ elements $\lambda$ in $\mathscr{P}$ giving rise to $\overline{\lambda}$. The result follows as in the split case.
\end{proof}

\medskip 

\begin{definit}\label{def:lambda-dual}
Given $\lambda\in\mathscr{P}$, we define another $f$-tuple $\lambda^{*}$ as follows:
\[\lambda^*_j(x_j)\defeq \left\{\begin{array}{llll}
p-3-\lambda_j(x_j)&{\rm if}&t_j=z_j\\
p+1-\lambda_j(x_j)&{\rm if}&t_j=y_j\\
p-1-\lambda_j(x_j)&{\rm if}&t_j=y_jz_j.
\end{array}\right.\]
\end{definit}

If $\lambda\in\mathscr{D}$, we define its ``length'' $\ell(\lambda)$ to be (see \cite[\S4]{BP}):
\begin{equation}\label{eq:ell}
\ell({\lambda})\defeq |\{j\in\{0,\dots,f-1\} : \lambda_j(x_j)\in\{p-2-x_j\pm1, x_j\pm1\}\}|.
\end{equation}

\begin{lem}
Let $\lambda\in\mathscr{P}$.
\begin{enumerate}
\item We have $\lambda^*\in\mathscr{P}$ and $\mathfrak{a}(\lambda)=\mathfrak{a}(\lambda^*)$.
\item Assume that $\brho$ is semisimple. Then $\lambda\in\mathscr{D}$ if and only if
  $\lambda^*\in\mathscr{D}$, and in this case $\ell(\lambda^*)=f-\ell(\lambda)$.
\end{enumerate}
\end{lem}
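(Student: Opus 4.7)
The proof is essentially a verification by cases, so I will organize the plan around reducing to a local (per-coordinate) statement and then checking compatibility with the global constraints of $\mathscr{P}$.

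My first step will be to establish, for each index $j$, that the assignment $\lambda_j(x_j) \mapsto \lambda_j^*(x_j)$ is an involution on the set of allowed values that preserves the element $t_j\in\{y_j,z_j,y_jz_j\}$. Inspecting Definition~\ref{def:a(lambda)}, one sees that the possible values of $\lambda_j(x_j)$ always come in pairs indexed by the choice of $t_j$ (e.g.\ for $\brho$ reducible, $t_j=z_j$ corresponds to the pair $\{x_j,p-3-x_j\}$, $t_j=y_j$ to $\{x_j+2,p-1-x_j\}$, $t_j=y_jz_j$ to $\{x_j+1,p-2-x_j\}$; the irreducible case is parallel, with an index shift at $j=0$). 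A direct computation shows that the star operation simply swaps the two elements of each such pair: for instance, if $t_j=z_j$ and $\brho$ is reducible then $\lambda_j(x_j)=x_j\mapsto p-3-x_j$ and vice-versa. Hence $t_j^*=t_j$ coordinate-wise, which already gives the equality $\mathfrak{a}(\lambda)=\mathfrak{a}(\lambda^*)$.

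Next I would establish that $\lambda^*\in\mathscr{P}$. The local verification above shows that each $\lambda_j^*(x_j)$ lies in the correct list of possible values. It remains to check the combinatorial coupling conditions between consecutive indices $j$ that define $\mathscr{P}=\mathscr{PID},\mathscr{PRD},\mathscr{PD}$ in \cite[\S4]{breuil-buzzati}. These read as constraints of the form ``if $\lambda_j(x_j)$ takes value $v$, then $\lambda_{j-1}(x_{j-1})$ must lie in a certain subset''. Because the star operation swaps elements within a single $t_j$-pair, it respects the partition of allowed values into their $t_j$-classes. One then checks the tables of \emph{loc.\ cit.}\ and verifies that the local swap is compatible with each coupling condition; in the nonsplit reducible case the extra constraint involving $J_{\brho}$ also survives because, by construction of $\mathfrak{a}(\lambda)$, the cases distinguishing $j\in J_{\brho}$ are already determined by $t_j$. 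This case-check, though tedious, is mechanical.

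For part (ii), assuming $\brho$ semisimple, the equivalence $\lambda\in\mathscr{D}\Leftrightarrow\lambda^*\in\mathscr{D}$ is immediate from part~(i) and Lemma~\ref{lem:Yj-fa}(i), since the condition ``$y_j\notin\mathfrak{a}(\lambda)$ for all $j$'' depends only on $\mathfrak{a}(\lambda)$. For the length identity, I would go through the four admissible values of $\lambda_j(x_j)$ for $\lambda\in\mathscr{D}$ and classify them according to membership in the set $\{x_j\pm 1,p-2-x_j\pm 1\}$ defining $\ell$ in (\ref{eq:ell}): e.g.\ in the reducible case, $\{x_j,p-2-x_j\}$ are not counted while $\{x_j+1,p-3-x_j\}$ are; the star operation then swaps $x_j\leftrightarrow p-3-x_j$ and $x_j+1\leftrightarrow p-2-x_j$, exchanging the ``counted'' and ``not counted'' classes at each $j$. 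Hence $j$ contributes to $\ell(\lambda^*)$ if and only if it does not contribute to $\ell(\lambda)$, yielding $\ell(\lambda^*)=f-\ell(\lambda)$. The irreducible case follows from the analogous tables at $j=0$ and $j\neq 0$.

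The main obstacle is the bookkeeping for part~(i) in checking the coupling conditions defining $\mathscr{P}$, which differs in each of the three cases (irreducible, reducible nonsplit, reducible split) and requires careful consultation of \cite[\S4]{breuil-buzzati}; the rest of the argument is a routine computation.
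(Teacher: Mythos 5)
Your proof is correct and follows the same overall strategy as the paper's: a per-coordinate verification that the star operation is an involution preserving each $t_j$ (hence preserving $\mathfrak{a}(\lambda)$), together with a case check that the constraints defining $\mathscr{P}$ survive. The only noteworthy difference is in the length identity of (ii): the paper first reformulates $\ell(\lambda)$ as the count of $j$ with $\lambda_j(x_j)\in\{p-1-x_j,p-2-x_j,p-3-x_j\}$, justified via the coupling condition cited from \cite[\S11]{BP}, and only then applies Definition \ref{def:lambda-dual}, whereas you compare the star-swap directly against the original definition \eqref{eq:ell}; both verifications are elementary and yield the same conclusion.
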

\begin{proof}
(i) The first statement can be checked directly using the definition of $\mathscr{P}$ and the second one is obvious from the definitions. 

(ii) The first statement follows from (i) and Lemma \ref{lem:Yj-fa}(i). By definition of $\mathscr{D}$ (see \cite[\S11]{BP}), $\ell(\lambda)$ can be computed as the cardinality of the following set:
\[\big\{j\in\{0,\dots,f-1\} : \lambda_j(x_j)\in\{p-1-x_j,p-2-x_j,p-3-x_j\}\big\}.\]
For example, when $\brho$ is reducible split, we have (cf.\ the beginning of \cite[\S11]{BP}) 
\[\lambda_j(x_j)\in\{p-2-x_j,p-3-x_j\} \Longleftrightarrow \lambda_{j+1}(x_{j+1})\in\{p-3-x_{j+1},x_{j+1}+1\}.\] 
The second statement of (ii) follows from this and Definition \ref{def:lambda-dual}.
\end{proof}

\begin{lem}\label{lem:lambda-dual}
Let $\lambda\in \mathscr{P}$, $\chi_\lambda$ the character of $H$ associated to $\lambda$, $(t_0,\dots,t_{f-1})$ the ideal $\mathfrak{a}(\lambda)$ in Definition \ref{def:a(lambda)} and $\eta_{\lambda}$ be the character of $H$ acting on $\prod_{j=0}^{f-1}t_j$. Then we have
\[\chi_{\lambda}\chi_{\lambda^{*}}=\eta_{\lambda}(\eta\circ\det),\]
where $\lambda^{*}$ is as in Definition \ref{def:lambda-dual} and $\eta(a)\defeq \chi_{\lambda}\big(\smatr{a}00a\big)$ for $a\in\F_q^{\times}$ {\upshape(}$\eta$ does not depend on $\lambda\in \mathscr{P}${\upshape)}.
\end{lem}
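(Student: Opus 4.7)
The identity $\chi_\lambda\chi_{\lambda^{*}}=\eta_\lambda(\eta\circ\det)$ is an equality of characters of $H=(\F_q^\times)^2$, and the plan is to reduce it to local identities at each embedding $\sigma_j\colon\F_q\hookrightarrow\F$. Any character of $H$ decomposes uniquely as $\mu_1\otimes\mu_2$ with $\mu_i\colon\F_q^\times\to\F^\times$, and each $\mu_i$ factors uniquely as $\prod_{j=0}^{f-1}\sigma_j^{a_{i,j}}$. Since $\alpha_j\smat{a&0\\0&d}=\sigma_j(a/d)$, the character of $H$ on the monomial $y_j^a z_j^b$ is $\alpha_j^{a-b}$, so $\eta_\lambda=\eta_\lambda^{(1)}\otimes(\eta_\lambda^{(1)})^{-1}$ with $\eta_\lambda^{(1)}=\prod_j\sigma_j^{\epsilon_j(\lambda)}$ and $\epsilon_j(\lambda)\in\{+1,-1,0\}$ according to whether $t_j$ is $y_j$, $z_j$, or $y_jz_j$. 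For a Serre weight $\sigma=(s_0,\ldots,s_{f-1})\otimes\eta'$ one has $\chi_\sigma=\bigl(\prod_j\sigma_j^{s_j}\cdot\eta'\bigr)\otimes\eta'$; writing $\chi_\lambda=\mu_1^\lambda\otimes\mu_2^\lambda$ and expanding $\eta\circ\det$ as $(\mu_1^\lambda\mu_2^\lambda)\otimes(\mu_1^\lambda\mu_2^\lambda)$, the identity splits into the two equivalent equalities $\mu_1^{\lambda^{*}}=\eta_\lambda^{(1)}\mu_2^\lambda$ and $\mu_2^{\lambda^{*}}=(\eta_\lambda^{(1)})^{-1}\mu_1^\lambda$. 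Since $\mu_2^\sigma=\eta'$, each of these amounts to a condition on the exponents $s_j(\lambda)$ together with a condition on the determinantal twist $\eta'_{\lambda^{*}}$, both of which can be checked embedding by embedding.

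Using the explicit description of $\sigma_\lambda$ from \cite[\S11]{BP} (for $\lambda\in\mathscr{D}$) and its extension from \cite[\S4]{breuil-buzzati} (for general $\lambda\in\mathscr{P}$), one has $\sigma_\lambda=(\lambda_0(r_0),\ldots,\lambda_{f-1}(r_{f-1}))\otimes\eta'_\lambda$ for concrete integers $\lambda_j(r_j)$ and a concrete twist $\eta'_\lambda$. Running through the three cases of Definition \ref{def:a(lambda)} and invoking Definition \ref{def:lambda-dual}, which always swaps the two allowed values of $\lambda_j(x_j)$ within each subcase, one computes directly that $\lambda_j(r_j)+\lambda_j^{*}(r_j)$ equals $p-3$ if $t_j=z_j$, $p+1$ if $t_j=y_j$, and $p-1$ if $t_j=y_jz_j$; in each case this is $2\epsilon_j(\lambda)+(p-1)$, so that $\prod_j\sigma_j^{s_j(\lambda)+s_j(\lambda^{*})-2\epsilon_j(\lambda)}$ is the trivial character of $\F_q^\times$. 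The remaining condition, namely that $\eta'_{\lambda^{*}}\cdot(\eta'_\lambda)^{-1}=\prod_j\sigma_j^{s_j(\lambda)-\epsilon_j(\lambda)}$, is then verified by substituting the explicit formula for the twist $\eta'_\lambda$ from the cited references and matching the two sides in each of the subcases.

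The only real obstacle is the tedium of bookkeeping across the three flavors of $\brho$ (irreducible, reducible split, reducible nonsplit) and their respective combinatorial data $\mathscr{PID}$, $\mathscr{PRD}$, and $\mathscr{PD}$: one must be systematic in identifying $r_j$, the allowed form of $\lambda_j(x_j)$, and the twist $\eta'_\lambda$ in each case. However, no new idea beyond the explicit formulas already recalled from \cite[\S11]{BP} and \cite[\S4]{breuil-buzzati} is needed, and the verification is a direct case-by-case computation which in each case reduces to the numerology of the previous paragraph.
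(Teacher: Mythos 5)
Your proposal is correct and follows essentially the same route as the paper's proof: both rest on the identity $\lambda_j(x_j)+\lambda_j^{*}(x_j)=(p-1)+2\varepsilon_j$ read off from Definition~\ref{def:lambda-dual} and on the explicit formula for $\chi_\lambda$ (through the twist/exponent $e(\lambda)$) from \cite[\S4]{breuil-buzzati}, finishing with a direct congruence computation modulo $q-1$. The $\mu_1\otimes\mu_2$ decomposition of characters of $H$ and the split into your conditions (a) and (b) are a cosmetic repackaging of the same two ingredients.
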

\begin{proof}
This is an easy computation, but we give some details. Note that $\lambda_j(x_j)+\lambda^*_j(x_j)=(p-1)+2\varepsilon_j$, where $\varepsilon_j$ equals $1$, $0$ or $-1$ if $t_j$ equals $y_j$, $y_jz_j$ or $z_j$ respectively. Moreover, in the notation of \cite[\S 4]{breuil-buzzati}, we have
\begin{align*}
e(\lambda)+e(\lambda^*)&=\frac{1}{2}\Big(p^f-1+\sum_{j=0}^{f-1}p^j(x_j-\lambda_j(x_j)+x_j-\lambda^*_j(x_j))\Big)\\
&=\sum_{j=0}^{f-1}p^j(x_j-\varepsilon_j).
\end{align*}
The conclusion follows now from a simple computation, noting that for $\smatr{a}00{b}\in H$
\[\chi_\lambda\big(\smatr{a}00{b}\big)=\sigma_0(a)^{\big(\sum_{j=0}^{f-1}p^j\lambda_j(r_j)\big)+e(\lambda)(r_0,\dots, r_{f-1})}\sigma_0(b)^{e(\lambda)(r_0,\dots, r_{f-1})}\]
(see \cite[\S 4]{breuil-buzzati}) and that $H$ acts on $y_i$ (resp.~$z_i$) via $\alpha_i$ (resp.~$\alpha_i^{-1}$).
\end{proof}

Note that $H$ acts on $I_1/Z_1$ by conjugation and hence on $\Lambda$ and $\gr(\Lambda)$. This induces
$H$-actions also on $R$, $\overline{R}$, and $R/\mathfrak{a}(\lambda)$ for any $\lambda \in \mathscr{P}$.
We say that $M$ is a $\gr(\Lambda)$-module with compatible $H$-action if $H$ acts on $M$ such that
$h(rm) = h(r)h(m)$ for $h \in H$, $r \in R$, and $m \in M$. In this case ${\rm E}^i_{\gr(\Lambda)}(M)$
is again a $\gr(\Lambda)$-module with compatible $H$-action for any $i \ge 0$.

\begin{lem}\label{lem:devissage}
If $M$ is a $\gr(\Lambda)$-module with compatible $H$-action that is annihilated by $(h_0,\dots,h_{f-1})$, then we have isomorphisms of $\gr(\Lambda)$-modules with compatible $H$-action for $i\geq 0$:
\begin{equation}\label{eigr}
{\rm E}^{i+f}_{\gr(\Lambda)}(M)\cong {\rm E}^{i}_{R}(M).
\end{equation}
If moreover $M$ is annihilated by $J$, then we have isomorphisms of $\gr(\Lambda)$-modules with compatible $H$-action for $i\geq 0$:
\begin{equation}\label{eigr-bis}
{\rm E}^{i+2f}_{\gr(\Lambda)}(M)\cong {\rm E}^{i+f}_{R}(M)\cong {\rm E}^{i}_{\overline{R}}(M).
\end{equation}
\end{lem}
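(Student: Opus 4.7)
The plan is to prove both isomorphisms by applying the change of rings (Cartan--Eilenberg) spectral sequence twice, once for the quotient $\gr(\Lambda)\twoheadrightarrow R$ and once for $R\twoheadrightarrow \overline R$. In both cases the kernel is generated by a central regular sequence of length $f$, so the computation will reduce to a Koszul-type calculation.

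For \eqref{eigr}, I would first recall that $(h_0,\dots,h_{f-1})$ consists of central elements of $\gr(\Lambda)$ (as noted in the paragraph after \eqref{gri1}) and forms a regular sequence (which is implicit in \cite[Thm.5.3.4]{BHHMS1} and the fact that $R=\gr(\Lambda)/(h_0,\dots,h_{f-1})$ is a polynomial ring of the expected dimension). The Koszul complex $K^{\bullet}=\Lambda^{\bullet}_{\gr(\Lambda)}(\gr(\Lambda)^{f})$ with differentials given by the $h_i$'s is then a finite free resolution of $R$ as a $\gr(\Lambda)$-module. Applying $\Hom_{\gr(\Lambda)}(-,\gr(\Lambda))$ and using regularity, I get ${\rm E}^{q}_{\gr(\Lambda)}(R)=0$ for $q\neq f$ and ${\rm E}^{f}_{\gr(\Lambda)}(R)\cong R$; since all $h_i$'s are $H$-invariant the canonical top-form of the Koszul complex is $H$-invariant, so no character twist appears. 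The Cartan--Eilenberg spectral sequence
\[E_2^{p,q}={\rm E}^{p}_{R}\bigl(M,{\rm E}^{q}_{\gr(\Lambda)}(R,\gr(\Lambda))\bigr)\Rightarrow {\rm E}^{p+q}_{\gr(\Lambda)}(M,\gr(\Lambda))\]
then degenerates at $E_2$ to the desired isomorphism \eqref{eigr}.

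For \eqref{eigr-bis}, I would apply the same argument to the commutative polynomial ring $R=\F[y_i,z_i,\,0\le i\le f-1]$ and the sequence $(y_0z_0,\dots,y_{f-1}z_{f-1})$, which is manifestly a regular sequence in $R$ with quotient $\overline R$. Since each $y_iz_i$ is $H$-invariant (as the characters of $y_i$ and $z_i$ are inverse to each other), again no character twist is introduced, and the spectral sequence collapses to give ${\rm E}^{i+f}_R(M)\cong {\rm E}^{i}_{\overline R}(M)$ whenever $M$ is annihilated by $J$ (hence by the $y_iz_i$'s). Combining with \eqref{eigr} after shifting indices yields \eqref{eigr-bis}.

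The main thing to check carefully is that these isomorphisms really are compatible both with the $\gr(\Lambda)$-module structure and with the $H$-action. The $\gr(\Lambda)$-compatibility is automatic from the functoriality of the spectral sequence and the fact that $R$ and $\overline R$ are naturally $\gr(\Lambda)$-algebras. The $H$-equivariance is the slightly delicate point: one must verify that the two Koszul resolutions can be chosen $H$-equivariantly, which works precisely because the regular sequences $(h_i)$ and $(y_iz_i)$ consist of $H$-invariant elements, so the exterior powers are themselves $H$-representations and the differentials are $H$-equivariant. This in turn guarantees that the top-degree $\Ext$ of the Koszul resolution is isomorphic to $R$ (resp.~$\overline R$) as a $\gr(\Lambda)$-module with compatible $H$-action, with no auxiliary twisting character, and hence the stated isomorphisms \eqref{eigr}, \eqref{eigr-bis} hold as isomorphisms of $\gr(\Lambda)$-modules with compatible $H$-action.
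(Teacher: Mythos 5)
Your proposal is correct and follows essentially the same dévissage strategy as the paper: the paper invokes \cite[Lemma 5.1.3]{BHHMS1} for the shift in $\Ext$-degree along a central regular sequence, and then notes that $H$-equivariance follows because $H$ acts trivially on the $h_j$ and on the $y_jz_j$, which is exactly the two observations you make (you just spell out the module-theoretic shift via the Koszul resolution and the change-of-rings spectral sequence rather than citing the earlier lemma).
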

\begin{proof}
Since $(h_0,\dots,h_{f-1})$ is a regular sequence of central elements in $\gr(\Lambda)$ and $(y_0z_0,\dots,y_{f-1}z_{f-1})$ is a regular sequence in $R$ (which is commutative), the isomorphisms \eqref{eigr} and \eqref{eigr-bis} as $\gr(\Lambda)$-modules are proved as in the proof of \cite[Lemma 5.1.3]{BHHMS1}. Moreover, $H$ acts trivially on $h_j$ and $y_jz_j$ (for $0\leq j\leq f-1$), the isomorphisms are also $H$-equivariant, from which the results follow.
\end{proof}

We don't use the following proposition in the sequel, but it is consistent with Remark \ref{rem:yongquan}(i) and the essential self-duality assumption (iii) in \S\ref{sec:length-of-pi} below (see Proposition \ref{prop:LvO-filtonExt}).

\begin{prop}\label{prop:Ext-R/I}
For $\lambda\in \mathscr{P}$ there is an isomorphism of $\gr(\Lambda)$-modules with compatible $H$-action:
\[{\rm E}^{2f}_{\gr(\Lambda)}\big(\chi_{\lambda}^{-1}\otimes R/\mathfrak{a}(\lambda)\big)\cong \big(\chi_{\lambda^*}^{-1}\otimes R/\mathfrak{a}(\lambda)\big)\otimes\eta\circ\det. \]
\end{prop}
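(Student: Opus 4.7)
The plan is to reduce to a Koszul computation over the polynomial ring $R$ via Lemma \ref{lem:devissage} and then to match $H$-weights using Lemma \ref{lem:lambda-dual}. First I would observe that $\chi_\lambda^{-1}\otimes R/\mathfrak{a}(\lambda)$ is annihilated by $(h_0,\dots,h_{f-1})$, so Lemma \ref{lem:devissage} gives an isomorphism
\[{\rm E}^{2f}_{\gr(\Lambda)}\big(\chi_\lambda^{-1}\otimes R/\mathfrak{a}(\lambda)\big)\cong {\rm E}^{f}_{R}\big(\chi_\lambda^{-1}\otimes R/\mathfrak{a}(\lambda)\big)\]
of $\gr(\Lambda)$-modules with compatible $H$-action. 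A direct check of how $H$-equivariant ${\rm Hom}_R(-,R)$ treats a character twist shows that ${\rm E}^i_R(M\otimes\chi)\cong {\rm E}^i_R(M)\otimes\chi^{-1}$ for any $R$-module $M$ with compatible $H$-action and any character $\chi$ of $H$, so the problem reduces to computing ${\rm E}^f_R(R/\mathfrak{a}(\lambda))$ together with its $H$-action.

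Writing $\mathfrak{a}(\lambda)=(t_0,\dots,t_{f-1})$ with $t_j\in\{y_j,z_j,y_jz_j\}$ as in Definition \ref{def:a(lambda)}, the elements $t_0,\dots,t_{f-1}$ involve pairwise disjoint variables of $R$ and hence form a regular sequence. I would resolve $R/\mathfrak{a}(\lambda)$ by the corresponding Koszul complex
\[0\longrightarrow \bigwedge\nolimits^{\!f} F\longrightarrow \cdots \longrightarrow \bigwedge\nolimits^{\!1} F\longrightarrow R\longrightarrow R/\mathfrak{a}(\lambda)\longrightarrow 0,\]
where $F=\bigoplus_{j=0}^{f-1} R\cdot e_j$ and the differential sends $e_j$ to $t_j$. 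Giving each $e_j$ the same $H$-weight as $t_j$ makes the differentials $H$-equivariant; correspondingly the top exterior power $\bigwedge\nolimits^{\!f}\!F=R\cdot(e_0\wedge\cdots\wedge e_{f-1})$ carries an $H$-action twisted by the character $\eta_\lambda$ of Lemma \ref{lem:lambda-dual}. Dualizing and extracting the top cohomology group of the resulting Koszul cochain complex then gives
\[{\rm E}^f_R(R/\mathfrak{a}(\lambda))\cong R/\mathfrak{a}(\lambda)\otimes \eta_\lambda^{-1}\]
as $R$-modules with compatible $H$-action.

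Combining the two steps yields ${\rm E}^{2f}_{\gr(\Lambda)}(\chi_\lambda^{-1}\otimes R/\mathfrak{a}(\lambda))\cong R/\mathfrak{a}(\lambda)\otimes \chi_\lambda\eta_\lambda^{-1}$. To finish, the identity $\chi_\lambda\chi_{\lambda^*}=\eta_\lambda\cdot(\eta\circ\det)$ provided by Lemma \ref{lem:lambda-dual} rewrites the twist $\chi_\lambda\eta_\lambda^{-1}$ as $\chi_{\lambda^*}^{-1}\cdot(\eta\circ\det)$, giving the claimed isomorphism. The only real obstacle is bookkeeping of the $H$-weights—the sign convention for how $\mathrm{Hom}$ interacts with character twists and the weight on the Koszul top form—after which Lemma \ref{lem:lambda-dual} delivers exactly the expected dual character $\chi_{\lambda^*}^{-1}$.
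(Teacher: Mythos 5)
Your proposal is correct. You reduce via Lemma~\ref{lem:devissage} to computing ${\rm E}^{f}_{R}$ over the polynomial ring $R$, resolve $R/\mathfrak{a}(\lambda)$ by the $H$-equivariant Koszul complex on the regular sequence $(t_0,\dots,t_{f-1})$ (the regularity check — pairwise disjoint variables — is correct), read off ${\rm E}^{f}_{R}(R/\mathfrak{a}(\lambda))\cong \eta_\lambda^{-1}\otimes R/\mathfrak{a}(\lambda)$ from self-duality of the Koszul complex with the $\eta_\lambda$-twist coming from the top exterior power, and then invoke Lemma~\ref{lem:lambda-dual} to convert $\chi_\lambda\eta_\lambda^{-1}$ into $\chi_{\lambda^*}^{-1}(\eta\circ\det)$. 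The untwisting step ${\rm E}^{i}_{R}(M\otimes\chi)\cong{\rm E}^{i}_{R}(M)\otimes\chi^{-1}$ is also fine.

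The paper's argument differs at the computational core: instead of stopping at $R$ and using the Koszul resolution, it pushes the d\'evissage in Lemma~\ref{lem:devissage} one step further (the isomorphism~\eqref{eigr-bis} with $i=0$), reducing all the way to $\Hom_{\overline{R}}\big(R/\mathfrak{a}(\lambda),\overline{R}\big)$, and then identifies this Hom directly by observing that the socle $\overline{R}[\mathfrak{a}(\lambda)]$ is the principal ideal generated by $t'\defeq\prod_{j}(y_jz_j/t_j)$, so that $\Hom_{\overline{R}}(R/\mathfrak{a}(\lambda),\overline{R})\cong t'\overline{R}\cong\eta_\lambda^{-1}\otimes R/\mathfrak{a}(\lambda)$. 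Both routes deliver the same $\eta_\lambda^{-1}$-twist and then finish identically via Lemma~\ref{lem:lambda-dual}. Your Koszul argument is slightly more explicit and has the pedagogical advantage of making the Gorenstein-type self-duality transparent; the paper's socle identification is a line shorter and requires no mention of resolutions. Either way the content — and in particular the role of $\eta_\lambda$ and the matching of $H$-weights — is the same.
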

\begin{proof}
Applying \eqref{eigr-bis} with $i=0$ and $M=\chi_{\lambda}^{-1}\otimes R/\mathfrak{a}(\lambda)$, we are left to prove 
\[\Hom_{\overline{R}}(\chi_{\lambda}^{-1}\otimes R/\mathfrak{a}(\lambda),\overline{R})\cong \big(\chi_{\lambda^*}^{-1}\otimes R/\mathfrak{a}(\lambda)\big)\otimes\eta\circ\det.\]
Using Lemma \ref{lem:lambda-dual}, it suffices to construct an isomorphism of $\gr(\Lambda)$-modules
with compatible $H$-action
\begin{equation}\label{eq:isom-barR}
\Hom_{\overline{R}}(R/\mathfrak{a}(\lambda),\overline{R})\cong \eta_{\lambda}^{-1}\otimes R/\mathfrak{a}(\lambda),
\end{equation}
where $\eta_{\lambda}$ is the character of $H$ acting on $\prod_{j=0}^{f-1}t_j$ if we write $\mathfrak{a}(\lambda)=(t_0,\dots,t_{f-1})$ with $t_j\in\{y_j,z_j,y_jz_j\}$. 
Put $t'\defeq \prod_{j=0}^{f-1}(y_jz_j/t_j)$. One easily checks that $t'\overline{R} =\overline{R}[\mathfrak{a}(\lambda)]$ and there is an isomorphism of $\overline{R}$-modules 
\[\theta:\ \eta_{\lambda}^{-1}\otimes \overline{R}/\mathfrak{a}(\lambda) \simto t'\overline{R}, \]
where the first map sends $1$ to $t'$. As $H$ acts on $t'$ via $\eta_{\lambda}^{-1}$, $\theta$ is also $H$-equivariant. The isomorphism \eqref{eq:isom-barR} is then obtained by sending $r\in \eta_{\lambda}^{-1}\otimes R/\mathfrak{a}(\lambda)$ to $\phi\in \Hom_{\overline{R}}(R/\mathfrak{a}(\lambda),\overline{R})$ such that $\phi(1)\defeq \theta(r)$.
\iffalse
Write $\mathfrak{a}(\lambda)=(t_0,\dots,t_{f-1})$. For each $0\leq j\leq f-1$, we consider the complex 
\[K_{\bullet}(t_j)\defeq (0\ra \eta_{t_j}\otimes R\overset{t_j}{\ra} R\ra 0),\]
where $\eta_{t_j}$ denotes the character of $H$ acting on $t_j$, 
and form the usual Koszul complex (see e.g.\ \cite[\S16]{Ma})
\[K_{\bullet}(t_0,\dots,t_{f-1})\defeq K_{\bullet}(t_0)\otimes\cdots\otimes K_{\bullet}(t_{f-1}).\]
Since $(t_0,\dots,t_{f-1})$ is a regular sequence in $R$, this provides a free resolution of $R/\mathfrak{a}(\lambda)$ which is $H$-equivariant. 
We then obtain an isomorphism of $\gr(\F\bbra{I/Z_1})$-modules 
\[
{\rm E}^{f}_{R}\big(R/\mathfrak{a}(\lambda)\big)\cong \eta_{\lambda}^{-1}\otimes R/\mathfrak{a}(\lambda),
\]
where $\eta_{\lambda}$ is the character of $H$ acting on $\prod_{j=0}^{f-1}t_j$. Using Lemma \ref{lem:lambda-dual} we deduce 
\begin{equation}\label{gor}\EE^{f}_R(\chi_{\lambda}^{-1}\otimes R/\mathfrak{a}(\lambda))\cong \chi_{\lambda}\eta_{\lambda}^{-1}\otimes R/\mathfrak{a}(\lambda)= (\chi_{\lambda^*}^{-1}\otimes R/\mathfrak{a}(\lambda))\otimes\eta\circ\det.
\end{equation}
\fi
\end{proof}

\subsubsection{On the structure of \texorpdfstring{$\gr(\pi^{\vee})$}{gr(pi\^{}v)}}\label{grstr}

We give a partial result on the structure of $\gr(\pi^\vee)$ for certain admissible smooth representations $\pi$ of $\GL_2(K)$ over $\F$ associated to $\rhobar$ when $\gr(\pi^\vee)$ comes from the $\m_{I_1/Z_1}$-adic filtration on $\pi^\vee$.

We let $\rhobar$ be as in \S\ref{combi} (in particular $\rhobar$ is not necessarily semisimple) and keep the notation of {\it loc.cit.}\ As in \S\ref{lowerstatement} when $\rhobar$ is semisimple, we consider $D_0(\rhobar)$ as a representation of $\GL_2(\oK)K^\times$, where $\GL_2(\oK)$ acts via its quotient $\GL_2(\Fq)$ and the center $K^\times$ acts by the character $\det(\rhobar)\omega^{-1}$. We now write $\m$ for $\m_{I_1/Z_1}$.

We consider an admissible smooth representation $\pi$ of $\GL_2(K)$ over $\F$ satisfying the following two conditions:
\begin{enumerate}
\item there is $r\geq 1$ such that $\pi^{K_1}\simeq D_0(\rhobar)^{\oplus r}$ as a representation of $\GL_2(\oK)K^\times$ (in particular $\pi$ has a central character);
\item for any $\lambda\in\mathscr{P}$, we have an equality of multiplicities
\[[\pi[\m^3]:\chi_{\lambda}]=[\pi[\m]:\chi_{\lambda}].\]
\end{enumerate}
Note that (ii) implies that the $\gr(\Lambda)$-module $\gr(\pi^\vee)$ (defined with the $\m$-adic filtration on $\pi^\vee$) is annihilated by the ideal $J$ in (\ref{idealJ}) by the proof of \cite[Cor.5.3.5]{BHHMS1}, and in particular is an $\overline{R}$-module.

\begin{thm}\label{thm:cycle-pi}
For $\pi$ as above, there is a surjection of $\gr(\Lambda)$-modules with compatible $H$-action
\begin{equation}\label{eq:gr(pi)}
\big(\bigoplus_{\lambda\in\mathscr{P}}\chi_{\lambda}^{-1}\otimes R/\mathfrak{a}(\lambda)\big)^{\oplus r}\twoheadrightarrow \gr(\pi^{\vee}),
\end{equation}
where $\mathfrak{a}(\lambda)$ is as in Definition \ref{def:a(lambda)}.
\end{thm}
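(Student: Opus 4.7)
The plan is to apply graded Nakayama to $\gr(\pi^\vee)$ as a graded $\gr(\Lambda)$-module, use condition~(ii) to cut the coefficient ring down to $\overline R$, and then verify for each $\lambda\in\mathscr{P}$ that the ideal $\mathfrak{a}(\lambda)$ annihilates a chosen $H$-eigenvector lift $\tilde e_{\lambda,i}$ of a generator of $\gr^0(\pi^\vee)$. This last verification reduces to the simple fact that the elements $Y_j$ and $Z_j$ kill every $I_1$-fixed vector of $\pi$.

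More precisely: by the proof of \cite[Cor.5.3.5]{BHHMS1}, condition~(ii) ensures that $\gr(\pi^\vee)$ is annihilated by $J$ and is thus an $\overline R$-module with compatible $H$-action. By condition~(i), $\pi[\m]=\pi^{I_1}\cong\bigoplus_{\lambda\in\mathscr{P}}\chi_\lambda^{\oplus r}$ as $H$-modules, so $\gr^0(\pi^\vee)\cong\bigoplus_\lambda(\chi_\lambda^{-1})^{\oplus r}$. Choose $H$-eigenvector lifts $\tilde e_{\lambda,i}\in\pi^\vee$ (of weight $\chi_\lambda^{-1}$, for $1\leq i\leq r$) of an eigenbasis of $\gr^0(\pi^\vee)$. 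Graded Nakayama applied to the complete filtered module $\pi^\vee$ then yields a surjection of $\gr(\Lambda)$-modules with compatible $H$-action
\[\bigoplus_{\lambda\in\mathscr P}(\chi_\lambda^{-1}\otimes\overline R)^{\oplus r}\twoheadrightarrow \gr(\pi^\vee),\]
so it remains to show that for each $\lambda$, writing $\mathfrak{a}(\lambda)=(t_0,\dots,t_{f-1})$, every $t_j$ annihilates the image of $\tilde e_{\lambda,i}$. If $t_j=y_jz_j$, this is automatic from $J\cdot\gr(\pi^\vee)=0$. If $t_j=y_j$, by the remark following Definition \ref{def:a(lambda)} the character $\chi_\lambda\alpha_j^{-1}$ has the form $\chi_{\lambda'}$ for some $\lambda'\in\mathscr{P}$, so condition~(ii) together with $\pi[\m]\subseteq\pi[\m^2]\subseteq\pi[\m^3]$ forces $\pi[\m^2]_{\chi_{\lambda'}}=\pi^{I_1}_{\chi_{\lambda'}}$. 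The character sum $\sum_{a\in\F_q^\times}\sigma_0(a)^{-p^j}$ vanishes in $\F$ for $0\leq j\leq f-1$, so
\[Y_j=\sum_{a\in\F_q^\times}\sigma_0(a)^{-p^j}\Big(\smatr{1}{\tilde a}{0}{1}-1\Big),\]
and since $N_0\subseteq I_1$ fixes every $I_1$-invariant vector, $Y_jv=0$ for all $v\in\pi^{I_1}\supseteq\pi[\m^2]_{\chi_{\lambda'}}$. By $H$-equivariance of $\tilde e_{\lambda,i}$ the functional $\tilde e_{\lambda,i}(Y_j\,\cdot\,)$ automatically vanishes on $H$-components of $\pi[\m^2]$ of weight different from $\chi_{\lambda'}$, and on that component the previous step gives vanishing, so $(Y_j\tilde e_{\lambda,i})(v)$ (proportional to $\tilde e_{\lambda,i}(Y_jv)$) vanishes on all of $\pi[\m^2]$. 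Hence $Y_j\tilde e_{\lambda,i}\in\m^2\pi^\vee$, i.e.~$y_j\tilde e_{\lambda,i}=0$ in $\gr^{-1}(\pi^\vee)$. The case $t_j=z_j$ is entirely symmetric, using that $Z_j=\sum_a\sigma_0(a)^{-p^j}\smatr{1}{0}{p\tilde a}{1}$ lies in the augmentation ideal of $\F[N_0^-]$ and $N_0^-\subseteq I_1$ fixes $I_1$-invariants.

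There is no serious obstacle beyond aligning the three inputs: the vanishing of $J$ on $\gr(\pi^\vee)$ from condition~(ii), the explicit identification of $\pi^{I_1}$ from condition~(i), and the combinatorial dictionary from the remark after Definition \ref{def:a(lambda)} matching $t_j\in\{y_j,z_j,y_jz_j\}$ to which of $\chi_\lambda\alpha_j^{\pm 1}$ occur in $\pi^{I_1}$. Once aligned, the character-sum annihilation of $Y_j$ and $Z_j$ on $I_1$-invariants closes the argument.
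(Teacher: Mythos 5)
Your argument is correct and follows essentially the same route as the paper's own proof: lift an $H$-eigenbasis of $\gr^0(\pi^\vee)\cong(\pi^{I_1})^\vee$ using condition~(i) and multiplicity-freeness, apply graded Nakayama together with the fact that condition~(ii) forces $J\cdot\gr(\pi^\vee)=0$, and then check $t_je_{\lambda,k}=0$ for $t_j\in\{y_j,z_j\}$ by an $H$-weight argument that reduces the question to the $\chi_{\lambda'}$-isotypic piece of $\pi[\m^2]/\pi[\m]$, which condition~(ii) forces to vanish. The only cosmetic difference is that you verify the last step by an explicit Gauss-sum/$I_1$-invariance computation, whereas the paper phrases it directly as a contradiction with the duality $\m\pi^\vee/\m^2\pi^\vee\cong(\pi[\m^2]/\pi[\m])^\vee$; the Gauss-sum observation is actually redundant, since anything in $\m\pi^\vee$ already vanishes on $\pi[\m]$, but this does not affect correctness.
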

\begin{proof}
Consider the $\gr(\Lambda)$-module with compatible $H$-action:
\[M\defeq \big(\bigoplus_{\lambda\in\mathscr{P}}\chi_{\lambda}^{-1}\otimes R/\mathfrak{a}(\lambda)\big)^{\oplus r}.\]
Since there is a bijection $\lambda\mapsto \chi_\lambda$ between $\mathscr{P}$ and the characters of $H$ on $D_0(\brho)^{I_1}$ (see \S\ref{combi}), we can choose a basis of $\pi^{I_1}$ over $\F$, say $\{v_{\lambda,k} :  \lambda\in \mathscr{P}, 1\leq k\leq r\}$, such that each $v_{\lambda,k}$ is an eigenvector for $I$ of character $\chi_{\lambda}$. We denote by $\{e_{\lambda,k} : \lambda\in \mathscr{P}, 1\leq k\leq r\}$ the basis of $\gr^0(\pi^{\vee})$ over $\F$ which is the dual basis of $\{v_{\lambda,k}\}$, and note that $\{e_{\lambda,k}\}$ generates the $\gr(\Lambda)$-module $\gr(\pi^{\vee})$. To prove that there exists a surjective morphism $M\twoheadrightarrow \gr(\pi^{\vee})$ it suffices to prove that, for any $\lambda\in \mathscr P$ and any $k\in \{1,\dots,r\}$, the vector $e_{\lambda,k}$ is annihilated by the ideal $\mathfrak{a}(\lambda)$ of $R=\gr(\Lambda)/(h_0,\dots,h_{f-1})$. Writing $\mathfrak{a}(\lambda)=(t_0,\dots,t_{f-1})$ as in Definition \ref{def:a(lambda)}, we already see that if $t_j=y_jz_j$, then $t_j$ kills all the $e_{\lambda,k}$ since $\gr(\pi^{\vee})$ is annihilated by $J$.

Let $j\in \{0,\dots,f-1\}$ such that $t_j\in \{y_j,z_j\}$ and define $\chi'\defeq \chi_{\lambda}\alpha_j^{-1}$ if $t_j=y_j$, $\chi'\defeq \chi_{\lambda}\alpha_j$ if $t_j=z_j$. By Definition \ref{def:a(lambda)} one checks that $\chi'=\chi_{\lambda'}$, where $\lambda'\in\mathscr{P}$ is defined by $\lambda'_i(x_i)\defeq \lambda_i(x_i)$ if $i\neq j$, and $\lambda'_j(x_j)\defeq \lambda_j(x_j)+\varepsilon_j$, where $\varepsilon_j$ equals either $-2$ or $2$ when $t_j$ equals either $y_j$ or $z_j$ respectively. Note that $\chi'^{-1}$ is equal to the character of $I$ acting on $t_je_{\lambda,k}\in \gr^1(\pi^\vee)$. Thus, if $t_je_{\lambda,k}\neq0$, then dually the $\chi'$-isotypic subspace of $\pi[\fm^2]/\pi[\m]$ would be nonzero. But this contradicts condition (ii) above. Hence $e_{\lambda,k}$ is annihilated by the whole ideal $\mathfrak{a}(\lambda)$ and we are done.
\end{proof}

\begin{cor}\label{cor:cycle-pi'}
Let $\pi'$ be a subrepresentation of $\pi$ and $\mathscr{P}'\subset \mathscr{P}$ be the subset corresponding to the characters {\upshape(}without multiplicities{\upshape)} of $H$ appearing in $\pi'^{I_1}$. Then $\gr(\pi'^{\vee})$ {\upshape(}with the $\fm$-adic filtration on $\pi'^{\vee}${\upshape)} is a quotient of $\big(\bigoplus_{\lambda\in\mathscr{P}'}\chi_{\lambda}^{-1}\otimes R/\mathfrak{a}(\lambda)\big)^{\oplus r}$.
\end{cor}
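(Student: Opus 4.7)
The plan is to mimic the proof of Theorem \ref{thm:cycle-pi} verbatim for $\pi'$, the only subtlety being that hypothesis (ii) of \S\ref{grstr} is not literally inherited by $\pi'$ but the specific consequence of it used in that proof does pass to subrepresentations. First I would observe that because the $\m$-adic filtration on $\pi'^{\vee}$ is compatible with the surjection $\pi^{\vee}\twoheadrightarrow \pi'^{\vee}$ of $\Lambda$-modules, one obtains a surjection of graded $\gr(\Lambda)$-modules $\gr(\pi^{\vee})\twoheadrightarrow \gr(\pi'^{\vee})$. In particular, $\gr(\pi'^{\vee})$ is annihilated by $J$ (since $\gr(\pi^{\vee})$ is, by the proof of \cite[Cor.5.3.5]{BHHMS1} invoked in \S\ref{grstr}), so it is naturally an $\overline{R}$-module.

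Next I would choose $I$-eigenvectors $v_{\lambda,k}$ with $1\le k \le r_\lambda$ forming an $\F$-basis of $\pi'^{I_1}$ of $H$-character $\chi_\lambda$, one family for each $\lambda\in\mathscr{P}'$. Since $\pi'^{I_1}$ is an $I$-subrepresentation of $\pi^{I_1}\cong (D_0(\brho)^{I_1})^{\oplus r}$ and $D_0(\brho)^{I_1}$ is multiplicity-free as an $H$-representation, one has $r_\lambda\le r$. Letting $e_{\lambda,k}$ denote the corresponding dual basis of $\gr^{0}(\pi'^{\vee})=(\pi'^{I_1})^{\vee}$, the family $\{e_{\lambda,k}\}$ generates $\gr(\pi'^{\vee})$ as a $\gr(\Lambda)$-module.

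The remaining point is to show that each $e_{\lambda,k}$ is annihilated by the ideal $\mathfrak{a}(\lambda)$. Writing $\mathfrak{a}(\lambda)=(t_0,\dots,t_{f-1})$, the generators with $t_j=y_jz_j$ annihilate $e_{\lambda,k}$ by the first paragraph. For the generators with $t_j\in\{y_j,z_j\}$, I would argue by contradiction: if $t_je_{\lambda,k}\ne 0$ in $\gr^{1}(\pi'^{\vee})$, then dually $\pi'[\m^2]/\pi'[\m]$ would contain a nonzero vector of $I$-character $\chi'\defeq \chi_\lambda\alpha_j^{\mp 1}$, and by the same verification as in the proof of Theorem \ref{thm:cycle-pi} one has $\chi'=\chi_{\lambda'}$ for some $\lambda'\in\mathscr{P}$. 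From $\pi'[\m^n]=\pi'\cap\pi[\m^n]$ one gets an injection $\pi'[\m^2]/\pi'[\m]\hookrightarrow \pi[\m^2]/\pi[\m]$, so that
\[
[\pi[\m^3]:\chi_{\lambda'}]\ \ge\ [\pi[\m^2]:\chi_{\lambda'}]\ >\ [\pi[\m]:\chi_{\lambda'}],
\]
contradicting hypothesis (ii) for $\pi$.

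Combining the previous steps produces a surjection
\[
\bigoplus_{\lambda\in\mathscr{P}'}\big(\chi_\lambda^{-1}\otimes R/\mathfrak{a}(\lambda)\big)^{\oplus r_\lambda}\ \twoheadrightarrow\ \gr(\pi'^{\vee})
\]
of $\gr(\Lambda)$-modules with compatible $H$-action, which, using $r_\lambda\le r$, factors through a surjection from $\big(\bigoplus_{\lambda\in\mathscr{P}'}\chi_\lambda^{-1}\otimes R/\mathfrak{a}(\lambda)\big)^{\oplus r}$. There is no real obstacle here beyond transferring the multiplicity-theoretic contradiction from $\pi$ to $\pi'$, which is immediate from the exactness of the functor $\pi\mapsto \pi\cap\pi[\m^n]$ on subrepresentations.
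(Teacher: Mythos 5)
Your proof is correct, but it takes a different route from the one in the paper. The paper deduces the corollary formally from the surjection already established in Theorem~\ref{thm:cycle-pi}: it considers the composition
\[
\big(\bigoplus_{\lambda\in\mathscr{P}'}\chi_{\lambda}^{-1}\otimes R/\mathfrak{a}(\lambda)\big)^{\oplus r}\hookrightarrow \big(\bigoplus_{\lambda\in\mathscr{P}}\chi_{\lambda}^{-1}\otimes R/\mathfrak{a}(\lambda)\big)^{\oplus r}\twoheadrightarrow \gr(\pi^{\vee})\twoheadrightarrow \gr(\pi'^{\vee}),
\]
and observes that it is surjective in degree $0$ (by definition of $\mathscr{P}'$) and that $\gr(\pi'^\vee)$ is generated in degree $0$. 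You instead re-run the proof of Theorem~\ref{thm:cycle-pi} directly on $\pi'$: you show that each dual generator $e_{\lambda,k}$ of $\gr^0(\pi'^\vee)$ is annihilated by $\mathfrak{a}(\lambda)$, transferring the multiplicity obstruction from $\pi$ to $\pi'$ via the injection $\pi'[\m^2]/\pi'[\m]\hookrightarrow\pi[\m^2]/\pi[\m]$ coming from $\pi'[\m^n]=\pi'\cap\pi[\m^n]$. Both arguments are sound; the paper's is shorter and treats Theorem~\ref{thm:cycle-pi} as a black box, while yours is more self-contained and makes explicit exactly which consequence of hypothesis~(ii) is being inherited by the subrepresentation. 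Your remark that $r_\lambda\le r$ and the resulting factorization of the surjection is a small extra step that the paper avoids by working with the fixed $r$-fold sum from the start, but it is perfectly correct.
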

\begin{proof}
We have a natural quotient map $\pi^{\vee}\twoheadrightarrow \pi'^{\vee}$ which induces a quotient map $\gr(\pi^{\vee})\twoheadrightarrow \gr(\pi'^{\vee})$. It is enough to prove that the composition
\[\big(\bigoplus_{\lambda\in\mathscr{P}'}\chi_{\lambda}^{-1}\otimes R/\mathfrak{a}(\lambda)\big)^{\oplus r}\hookrightarrow \big(\bigoplus_{\lambda\in\mathscr{P}}\chi_{\lambda}^{-1}\otimes R/\mathfrak{a}(\lambda)\big)^{\oplus r}\twoheadrightarrow \gr(\pi^{\vee})\twoheadrightarrow \gr(\pi'^{\vee})\]
is surjective (where the second map is the surjection of Theorem \ref{thm:cycle-pi}). The assumption implies that it is surjective on $\gr^0(-)$, and we conclude using that $\gr(\pi'^{\vee})$ is generated by $\gr^0(\pi'^{\vee})$ as a $\gr(\Lambda)$-module. 
\end{proof}

If $N$ is a finitely generated $\overline{R}$-module and $\q$ a minimal prime ideal of $\overline{R}$, recall that $m_{\q}(N)\in \Z_{\geq 0}$ denotes the multiplicity of $N$ at $\q$, see \eqref{def:multiatp}.

\begin{thm}\label{thm:upperbound}
We have $\dim_{\F}V_{\GL_2}(\pi)=\dim_{\F\ppar{X}}D_\xi^\vee(\pi)\leq m_{\mathfrak{p}_0}(\gr(\pi^\vee))\leq 2^fr$, where the minimal ideal $\mathfrak{p}_0$ is as in \S\ref{upperb}.
\end{thm}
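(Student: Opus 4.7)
The plan is to chain together three results from earlier in the paper: Theorem \ref{thm:functors_comparison} for the equality, Corollary \ref{cor:upperbound} for the first inequality, and Theorem \ref{thm:cycle-pi} together with an explicit multiplicity computation for the final bound.

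First I would verify that $\pi$ lies in the category $\mathcal{C}$. By the proof of \cite[Cor.5.3.5]{BHHMS1}, hypothesis (ii) in the setup of this section guarantees that $\gr(\pi^\vee)$ (for the $\m$-adic filtration) is killed by $J$; then Proposition \ref{prop:finiteness} puts $\pi$ in $\mathcal{C}$. Theorem \ref{thm:functors_comparison} gives $D_\xi^\vee(\pi) \cong D_A(\pi)^{\et}/\mathfrak{p}$, which is finite-dimensional over $\F\ppar{X}$; consequently $V_{\GL_2}(\pi) \cong {\bf V}^\vee(D_\xi^\vee(\pi)) \otimes \delta_{\GL_2}$ is a finite-dimensional $\gp$-representation and the equivalence ${\bf V}^\vee$ gives $\dim_\F V_{\GL_2}(\pi) = \dim_{\F\ppar{X}} D_\xi^\vee(\pi)$. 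Corollary \ref{cor:upperbound} then yields the first inequality $\dim_{\F\ppar{X}} D_\xi^\vee(\pi) \le m_{\mathfrak{p}_0}(\gr(\pi^\vee))$.

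For the second inequality, I would apply Theorem \ref{thm:cycle-pi} to obtain a surjection of $\overline R$-modules
\[\Big(\bigoplus_{\lambda\in\mathscr{P}}\chi_{\lambda}^{-1}\otimes R/\mathfrak{a}(\lambda)\Big)^{\oplus r}\twoheadrightarrow \gr(\pi^{\vee}),\]
and invoke the additivity of $m_{\mathfrak{p}_0}$ from Lemma \ref{lem:m-additive} to reduce to showing $m_{\mathfrak{p}_0}(R/\mathfrak{a}(\lambda)) \le 1$ for each $\lambda\in \mathscr{P}$, with equality precisely when no $y_j$ lies in $\mathfrak{a}(\lambda)$. The twist by $\chi_\lambda^{-1}$ does not affect $m_{\mathfrak{p}_0}$ as it is a 1-dimensional $H$-graded shift. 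Writing $\mathfrak{a}(\lambda) = (t_0,\dots,t_{f-1})$ with $t_j \in \{y_j, z_j, y_j z_j\}$: if some $t_j = y_j$, then $y_j$ is invertible in $\overline R_{\mathfrak{p}_0}$ and hence $(R/\mathfrak{a}(\lambda))_{\mathfrak{p}_0} = 0$; otherwise each $t_j$ lies in $\mathfrak{p}_0$, and after localizing at $\mathfrak{p}_0$ the relations $t_j = 0$ together with $y_j z_j = 0$ and $y_j$ invertible force $z_j = 0$ for every $j$, so $(R/\mathfrak{a}(\lambda))_{\mathfrak{p}_0} = \overline R_{\mathfrak{p}_0}$, which has length $1$ over itself.

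Finally, by Corollary \ref{cor:dim-D(pi)} the set of $\lambda \in \mathscr{P}$ satisfying $y_j \notin \mathfrak{a}(\lambda)$ for all $j$ has cardinality exactly $2^f$. Combining, we get
\[m_{\mathfrak{p}_0}(\gr(\pi^\vee)) \le r \sum_{\lambda\in\mathscr{P}} m_{\mathfrak{p}_0}(R/\mathfrak{a}(\lambda)) = 2^f r,\]
completing the proof. The only nontrivial step is the explicit localization computation of $(R/\mathfrak{a}(\lambda))_{\mathfrak{p}_0}$; the remaining work is bookkeeping to assemble the earlier results, so I do not expect any serious obstacle.
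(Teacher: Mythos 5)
Your proof is correct and follows the same route the paper takes: the equality via \eqref{VG} and Theorem~\ref{thm:functors_comparison}, the first inequality from Corollary~\ref{cor:upperbound}, and the final bound from Theorem~\ref{thm:cycle-pi} combined with the $m_{\mathfrak{p}_0}(R/\mathfrak{a}(\lambda))\in\{0,1\}$ computation and the count of Corollary~\ref{cor:dim-D(pi)}. The only cosmetic difference is that you compute $m_{\mathfrak{p}_0}(R/\mathfrak{a}(\lambda))=1$ by localizing directly at $\mathfrak{p}_0$, whereas the paper identifies $(R/\mathfrak{a}(\lambda))[(y_0\cdots y_{f-1})^{-1}]\simeq\gr(A)$ and invokes Lemma~\ref{lem:rank=multi}; these are the same computation.
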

\begin{proof}
This is a direct consequence of (\ref{VG}), of Corollary \ref{cor:upperbound}, of Theorem \ref{thm:cycle-pi} and of Corollary \ref{cor:dim-D(pi)}, noting that, if $y_j\in {\mathfrak a}(\lambda)$ for some $j\in \{0,\dots,f-1\}$, then $m_{\mathfrak{p}_0}(R/{\mathfrak a}(\lambda))=0$ (as $y_j\notin \mathfrak{p}_0$), and if $y_j\notin {\mathfrak a}(\lambda)\ \forall\ j\in \{0,\dots,f-1\}$, then $m_{\mathfrak{p}_0}(R/{\mathfrak a}(\lambda))=1$ (as $(R/{\mathfrak a}(\lambda))[(y_0\cdots y_{f-1})^{-1}]\simeq \F[y_0,\dots,y_{f-1}][(y_0\cdots y_{f-1})^{-1}]\simeq \gr(A)$).
\end{proof}

Combined with the results of \S\ref{tensorinduction}, we can deduce the following important corollary.

\begin{cor}\label{maintensorind}
Assume moreover that $\rhobar$ is semisimple, satisfies the genericity condition {\upshape(\ref{eq:6})} and that condition (i) above can be enhanced into an isomorphism of diagrams $(\pi^{I_1}\hookrightarrow \pi^{K_1}) \simeq D(\rhobar)^{\oplus r}$, where $D(\rhobar)$ is as in {\upshape(\ref{diagramchoice})}. Then we have an isomorphism of representations of $I_{\Qp}$:
\[V_{\GL_2}(\pi)\vert_{I_{\Qp}}\cong \big(\ind_{K}^{\otimes \Qp}\!(\rhobar)\big)\vert_{I_{\Qp}}^{\oplus r}.\]
In particular we have $\dim_{\F}V_{\GL_2}(\pi)=\dim_{\F\ppar{X}}D_\xi^\vee(\pi)= m_{\mathfrak{p}_0}(\pi^\vee) = 2^fr$. If moreover the constants $\nu_i$ associated to $D(\rhobar\otimes \chi)$ {\upshape(}$\chi$ as in \S\ref{lowerstatement}{\upshape)} at the beginning of {\upshape\cite[\S6]{breuil-IL}} are as in {\upshape\cite[Thm.6.4]{breuil-IL}}, then we have an isomorphism of representations of $\gp$:
\[V_{\GL_2}(\pi)\cong \big(\ind_{K}^{\otimes \Qp}\!(\rhobar)\big)^{\oplus r}.\]
\end{cor}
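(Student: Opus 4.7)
The plan is a short dimension-squeeze that combines the two main results already established in this section.

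First, I would invoke Theorem~\ref{thm:main-tensor-ind}. The hypotheses there are precisely an isomorphism of diagrams $D(\rhobar)^{\oplus r} \simeq (\pi^{I_1}\hookrightarrow \pi^{K_1})$ together with the genericity bounds~(\ref{eq:6}), both of which are granted. This yields an $I_{\Qp}$-equivariant injection
\[
\iota \colon \bigl(\ind_K^{\otimes \Qp}\!(\rhobar)\bigr)\bigr|_{I_{\Qp}}^{\oplus r} \hookrightarrow V_{\GL_2}(\pi)\bigr|_{I_{\Qp}}.
\]
Because $\rhobar$ is two-dimensional and tensor induction multiplies dimension by $2^{[K:\Qp]}=2^f$, the source of $\iota$ has $\F$-dimension $2^f r$. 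In particular $\dim_\F V_{\GL_2}(\pi) \ge 2^f r$.

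Second, I would use Theorem~\ref{thm:upperbound} to produce the matching upper bound. Its hypotheses are the conditions (i) and (ii) of \S\ref{grstr}, both of which are part of the ambient assumptions on $\pi$ (condition (i) being the $\GL_2(\oK)K^\times$-equivariant isomorphism $\pi^{K_1}\simeq D_0(\rhobar)^{\oplus r}$ extracted from the diagram hypothesis). The theorem then gives
\[
\dim_\F V_{\GL_2}(\pi) \;=\; \dim_{\F\ppar{X}} D_\xi^\vee(\pi) \;\le\; m_{\mathfrak{p}_0}(\gr(\pi^\vee)) \;\le\; 2^f r.
\]

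Combining the two, every inequality in the chain must be an equality, and $\iota$ is forced to be an isomorphism of $I_{\Qp}$-representations. This simultaneously produces the first displayed isomorphism of the corollary and the numerical identities $\dim_\F V_{\GL_2}(\pi) = \dim_{\F\ppar{X}} D_\xi^\vee(\pi) = m_{\mathfrak{p}_0}(\gr(\pi^\vee)) = 2^f r$. For the last assertion, the extra hypothesis on the constants $\nu_i$ upgrades the output of Theorem~\ref{thm:main-tensor-ind} from an $I_{\Qp}$-equivariant injection to a $\gp$-equivariant injection $\bigl(\ind_K^{\otimes\Qp}\!\rhobar\bigr)^{\oplus r} \hookrightarrow V_{\GL_2}(\pi)$; the identical dimension count promotes it to an isomorphism of $\gp$-representations.

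There is essentially no obstacle at this step: all the real work has been done in proving Theorem~\ref{thm:main-tensor-ind} (the long $(\varphi,\Gamma)$-module computation of \S\ref{tensorinduction}) and Theorem~\ref{thm:upperbound} (via the surjection of Theorem~\ref{thm:cycle-pi} and the multiplicity estimate of Corollary~\ref{cor:dim-D(pi)}). The only point to verify is that the two sides of the squeeze really have the same numerical value $2^f r$, which is immediate from $\dim_\F\!\ind_K^{\otimes\Qp}\!(\rhobar) = 2^f$ on one side and Corollary~\ref{cor:dim-D(pi)} on the other.
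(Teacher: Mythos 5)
Your proof is correct and follows exactly the paper's one-line argument: squeeze $\dim_\F V_{\GL_2}(\pi)$ between the lower bound from the injection of Theorem~\ref{thm:main-tensor-ind} and the upper bound $2^fr$ from Theorem~\ref{thm:upperbound}, forcing equality. The tracing of the $\gp$-equivariance upgrade via the hypothesis on the $\nu_i$ is also exactly as intended.
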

\begin{proof}
It follows from Theorem \ref{thm:main-tensor-ind} and Theorem \ref{thm:upperbound} as $\dim_{\F}\!\big(\!\ind_{K}^{\otimes \Qp}\!(\rhobar)\big)^{\!\oplus r}\!\!\!\!=2^fr$.
\end{proof}

It is also worth mentioning the following corollary of Theorem \ref{thm:cycle-pi}.

\begin{cor}\label{cor:mul=4^f}
We have $\sum_{\q} m_{\mathfrak{q}}(\gr(\pi^\vee))\leq 4^fr$, where the sum is taken over all minimal prime ideals $\mathfrak{q}$ of $\overline{R}$. 
\end{cor}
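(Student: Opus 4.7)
\medskip

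My plan is to combine the surjection of Theorem \ref{thm:cycle-pi} with the explicit enumeration in Proposition \ref{prop:sum=4^f}, using that the multiplicity $m_{\q}$ is additive on short exact sequences of $\gr(\Lambda)/J^n$-modules by Lemma \ref{lem:m-additive}. First, I would recall that the minimal primes of $\overline{R}=\F[y_i,z_i, 0\le i\le f-1]/(y_iz_i)$ are precisely the $2^f$ ideals
\[
\q_{\cJ}\defeq (y_i,z_j : i\in\cJ,\ j\notin\cJ)
\]
indexed by subsets $\cJ\subseteq\{0,\dots,f-1\}$.

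Next, I would compute $m_{\q_\cJ}(R/\mathfrak{a}(\lambda))$ for each $\lambda\in\mathscr{P}$ and each $\cJ$. Since $\mathfrak{a}(\lambda)=(t_0,\dots,t_{f-1})$ with $t_j\in\{y_j,z_j,y_jz_j\}$, and since $y_jz_j\in\q_\cJ$ for all $\cJ$ (as $\q_\cJ$ is prime), the module $R/\mathfrak{a}(\lambda)$ is a cyclic quotient of $\overline R$ whose localization at $\q_\cJ$ is either the residue field $\overline R_{\q_\cJ}$ or zero. Explicitly, $m_{\q_\cJ}(R/\mathfrak{a}(\lambda))=1$ if $\mathfrak{a}(\lambda)\subseteq\q_\cJ$ and $0$ otherwise. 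Thus
\[
\sum_{\cJ}m_{\q_\cJ}(R/\mathfrak{a}(\lambda))=|\{\cJ : \mathfrak{a}(\lambda)\subseteq \q_\cJ\}|.
\]
For $j\notin\mathcal{A}(\lambda)$ (see (\ref{eq:A(lambda)})), the condition $t_j\in\q_\cJ$ forces whether $j\in\cJ$ or not (according to $t_j=y_j$ or $t_j=z_j$), while for $j\in\mathcal{A}(\lambda)$ the condition is automatic. Hence the number of such $\cJ$ is $2^{|\mathcal{A}(\lambda)|}$.

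Finally, by Theorem \ref{thm:cycle-pi} there is a surjection of $\gr(\Lambda)$-modules
\[
\Big(\bigoplus_{\lambda\in\mathscr{P}}\chi_{\lambda}^{-1}\otimes R/\mathfrak{a}(\lambda)\Big)^{\oplus r}\twoheadrightarrow \gr(\pi^{\vee}),
\]
so by Lemma \ref{lem:m-additive} (applied to $N$ killed by some power of $J$, which is indeed the case here since $\gr(\pi^\vee)$ is annihilated by $J$ and each $R/\mathfrak{a}(\lambda)$ is a quotient of $\overline R=\gr(\Lambda)/J$), we obtain
\[
m_{\q_\cJ}(\gr(\pi^\vee))\le r\sum_{\lambda\in\mathscr{P}}m_{\q_\cJ}(R/\mathfrak{a}(\lambda))
\]
for every minimal prime $\q_\cJ$. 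Summing over $\cJ$ and using the computation above together with Proposition \ref{prop:sum=4^f}:
\[
\sum_{\q} m_{\q}(\gr(\pi^\vee))\le r\sum_{\lambda\in\mathscr{P}}2^{|\mathcal{A}(\lambda)|}=r\cdot 4^f.
\]

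There is no real obstacle here: everything is essentially a bookkeeping exercise once Theorem \ref{thm:cycle-pi}, Lemma \ref{lem:m-additive}, and Proposition \ref{prop:sum=4^f} are in hand. The only small point to verify carefully is that $R/\mathfrak{a}(\lambda)$ is indeed an $\overline{R}$-module (which follows from $y_jz_j\in\mathfrak{a}(\lambda)$ whenever $t_j\in\{y_j,z_j,y_jz_j\}$, since $y_jz_j\in(t_j)$ in each case) so that the multiplicity computations take place inside $\overline{R}$.
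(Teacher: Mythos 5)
Your proof is correct and follows exactly the paper's approach: the paper's proof consists of the observation $\sum_{\q}m_{\mathfrak{q}}(R/\mathfrak{a}(\lambda))=2^{|\mathcal{A}(\lambda)|}$ (which you verify explicitly) combined with Theorem \ref{thm:cycle-pi} and Proposition \ref{prop:sum=4^f}. You have simply filled in the details that the paper labels ``an easy computation''.
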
 
\begin{proof}
By an easy computation, we have $\sum_{\q}m_{\mathfrak{q}}(R/\mathfrak{a}(\lambda))=2^{|\mathcal{A}(\lambda)|}$ (see \eqref{eq:A(lambda)} for $\mathcal{A}(\lambda)$). Thus the result follows from Proposition \ref{prop:sum=4^f} and Theorem \ref{thm:cycle-pi}.
\end{proof}

\begin{rem}\label{rem:yongquan}
(i) It seems possible to us that the surjection in Theorem \ref{thm:cycle-pi} could actually be an isomorphism, at least for $\pi$ coming from the global theory as in \S\ref{globalp} below. Note that such an isomorphism implies in particular ${\rm E}^i_{\gr(\Lambda)}(\gr(\pi^\vee))\ne 0$ if and only if $i=2f$ (i.e.\ the $\gr(\Lambda)$-module $\gr(\pi^\vee)$ is Cohen--Macaulay of grade $2f$), which in turns implies ${\rm E}^i_{\Lambda}(\pi^\vee)\ne 0$ if and only if $i=2f$ (use \cite[Cor.6.3]{Venjakob} and the similar result with $\gr(\Lambda)$ instead of $\Lambda$, the first statement in \cite[Thm.3.21(ii)]{Venjakob} and \cite[Thm.I.7.2.11(1)]{LiOy}).
Note moreover that by \cite[Prop.A.8]{HuWang2} we know that $\pi^\vee$ is Cohen--Macaulay for $\pi$ coming from the global theory in the so-called {\it minimal case} (see \S\ref{lcresults}), but we don't know this for $\gr(\pi^\vee)$ without extra assumptions (e.g.~that the surjection of Theorem \ref{thm:cycle-pi} is an isomorphism).
\\
(ii) It is worth recalling here the following implications that we have seen. Consider the following conditions on an admissible smooth representation $\pi$ of $\GL_2(K)$ over $\F$ with a central character:
\begin{enumerate}[(a)]
\item $[\pi[\m^3]:\chi]=[\pi[\m]:\chi]$ for every character $\chi:I\rightarrow \F^\times$ appearing in $\pi[\m]$;
\item $\gr(\pi^\vee)$ is killed by $J$, where $\gr(\pi^\vee)$ is computed with the $\m$-adic filtration on $\pi^\vee$;
\item $\gr(\pi^\vee)$ is killed by some power of $J$, where $\gr(\pi^\vee)$ is computed with any good filtration on the $\Lambda$-module $\pi^\vee$;
\item $\pi$ is in the category $\mathcal C$ of \S\ref{multivariablepsi}.
\end{enumerate}
Then we have (a) $\Rightarrow$ (b) $\Rightarrow$ (c) $\Rightarrow$ (d). We suspect that every implication is strict.
\end{rem}

\subsubsection{Examples}

We completely compute the $\gr(\F\bbra{I/Z_1})$-module $\gr(V^{\vee})$ for certain irreducible admissible smooth representations $V$ of $\GL_2(K)$ over $\F$ (with $V^\vee$ endowed with the $\m$-adic filtration). We assume $p\geq 5$ in this section.

We keep the previous notation. If $V$ is a smooth representation of $I_1/Z_1$ over $\F$, we write $\gr(V^\vee)$ for the graded module associated to the $\m$-adic filtration on $V^\vee$.

\begin{lem}\label{grgr}
Let $V$ be a smooth representation of $I_1/Z_1$ over $\F$ such that $V\vert_{N_0}$ is admissible as a representation of $N_0$ and such that the natural map $\gr_{\m_{N_0}}(V^\vee)\rightarrow \gr(V^\vee)$ {\upshape(}induced by the inclusions $\m_{N_0}^nV^\vee\subseteq \m^nV^\vee$ for $n\geq 0${\upshape)} is surjective. Then this map is an isomorphism.
\end{lem}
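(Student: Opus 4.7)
The plan is to reduce the question to showing that the two filtrations $\{\m_{N_0}^n V^\vee\}_n$ and $\{\m^n V^\vee\}_n$ on $V^\vee$ actually coincide; once this is done, the given surjection of graded modules is tautologically an isomorphism.

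First I would collect the finiteness inputs. Since $V|_{N_0}$ is admissible, $V^\vee$ is finitely generated over $\F\bbra{N_0}$, and a fortiori over the noetherian local Iwasawa algebra $\Lambda \defeq \F\bbra{I_1/Z_1}$. Both $\F\bbra{N_0}/\m_{N_0}^n$ and $\Lambda/\m^n$ are finite-dimensional $\F$-algebras for every $n \ge 0$ (their graded rings have finite-dimensional homogeneous pieces, as recalled in (\ref{gri1})), so both $V^\vee/\m_{N_0}^n V^\vee$ and $V^\vee/\m^n V^\vee$ are finite-dimensional over $\F$.

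A dimension comparison then finishes the job. The surjectivity hypothesis degree by degree gives $\dim_\F \gr^n(V^\vee) \le \dim_\F \gr_{\m_{N_0}}^n(V^\vee)$, and summing over $i = 0,\dots,n-1$ yields $\dim_\F V^\vee/\m^n V^\vee \le \dim_\F V^\vee/\m_{N_0}^n V^\vee$. Conversely, the inclusion $\m_{N_0}^n V^\vee \subseteq \m^n V^\vee$ induces a surjection $V^\vee/\m_{N_0}^n V^\vee \twoheadrightarrow V^\vee/\m^n V^\vee$, giving the reverse inequality. Hence both dimensions agree, and this surjection between finite-dimensional spaces of equal dimension must be an isomorphism, forcing $\m_{N_0}^n V^\vee = \m^n V^\vee$ for every $n \ge 0$. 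The natural graded map is then the identity in each degree.

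The argument is quite short; the main point to verify carefully is the finite-dimensionality of the two quotients, which depends crucially on the $N_0$-admissibility hypothesis---without it the graded pieces of $\gr_{\m_{N_0}}(V^\vee)$ need not be finite-dimensional and the dimension count breaks down.
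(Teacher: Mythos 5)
There is a genuine gap in your dimension count: both of your inequalities point the same way, so the argument does not close. Surjectivity degree by degree gives $\dim_\F \gr^i(V^\vee) \le \dim_\F \gr_{\m_{N_0}}^i(V^\vee)$ and hence $\dim_\F V^\vee/\m^n V^\vee \le \dim_\F V^\vee/\m_{N_0}^n V^\vee$. But the surjection $V^\vee/\m_{N_0}^n V^\vee \twoheadrightarrow V^\vee/\m^n V^\vee$ coming from the inclusion $\m_{N_0}^n V^\vee \subseteq \m^n V^\vee$ yields $\dim_\F V^\vee/\m_{N_0}^n V^\vee \ge \dim_\F V^\vee/\m^n V^\vee$ --- that is the \emph{same} inequality again, not the reverse one. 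You never establish $\dim_\F V^\vee/\m_{N_0}^n V^\vee \le \dim_\F V^\vee/\m^n V^\vee$, and there is no obvious source for such a lower bound on the $\m$-adic quotients; nothing in the surjectivity hypothesis prevents the $\m$-adic filtration from being strictly coarser in each codimension.

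What actually forces equality of the two filtrations is a topological argument, and this is what the paper's proof does. Given $v \in \m^n V^\vee$, surjectivity in degree $n$ lets you write $v = v_0^{(n)} + v^{(n+1)}$ with $v_0^{(n)} \in \m_{N_0}^n V^\vee$ and $v^{(n+1)} \in \m^{n+1} V^\vee$; iterating, $v = \sum_{k=n}^{m} v_0^{(k)} + v^{(m+1)}$ for all $m$. Since $V^\vee$ is a finitely generated module over the complete noetherian local ring $\F\bbra{N_0}$ (this is where admissibility of $V|_{N_0}$ enters), $V^\vee$ is complete for the $\m_{N_0}$-adic topology and $\m_{N_0}^n V^\vee$ is closed, so the series $\sum_{k\ge n} v_0^{(k)}$ converges to some $v_0 \in \m_{N_0}^n V^\vee$. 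The remainder $v - v_0$ then lies in $\bigcap_m \m^m V^\vee$, which vanishes because the $\m$-adic filtration on $V^\vee$ is separated (smoothness of $V$). Hence $v = v_0 \in \m_{N_0}^n V^\vee$ and the filtrations coincide. Your reduction to equality of filtrations is the right target, and your finiteness observations are correct and used implicitly above; what is missing is this completeness-plus-separatedness step, which cannot be replaced by counting dimensions.
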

\begin{proof}
Since $V\vert_{N_0}^\vee$ is a finite type $\F\bbra{N_0}$-module by assumption, it is a complete filtered $\F\bbra{N_0}$-module for the $\m_{N_0}$-adic filtration. As all the maps $\m_{N_0}^nV^\vee/\m_{N_0}^{n+1}V^\vee \rightarrow \m^nV^\vee/\m^{n+1}V^\vee$ are surjective, any element in $v\in \m^nV^\vee$ can be written $v=v_0+w$, where $v_0\in \sum_{m\geq n}\m_{N_0}^mV^\vee= \m_{N_0}^nV^\vee$ (as $V\vert_{N_0}^\vee$ is complete) and $w\in \cap_{m\geq n}\m^mV^\vee = 0$ (as the $\m$-adic filtration is separated since $V$ is smooth). Thus the inclusion $\m_{N_0}^nV^\vee\subseteq \m^nV^\vee$ is an equality for $n\geq 0$, and we are done.
\end{proof}

The following two lemmas are motivated by \cite[Prop.7.1, Prop.7.2]{paskunas-extensions}. We consider the finite group $H$ as subgroup of $I$ via the Teichm\"uller lift.
\begin{lem}\label{lem:Pas-bis}
Let $V$ be an admissible smooth representation of $I/Z_1$ over $\F$. Assume that $V|_{HN_0}$ is isomorphic to an injective envelope of some character $\chi$ in the category of smooth representations of $HN_0$ over $\F$ (so in particular $\dim_{\F}V^{N_0}=1$). Then $\Ext^1_{I/Z_1}(\chi\alpha_j^{-1},V)=0$ for any $0\leq j\leq f-1$.
\end{lem}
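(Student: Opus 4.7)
My plan is to show that every smooth $I/Z_1$-extension $0 \to V \to E \to \chi\alpha_j^{-1} \to 0$ splits. By the injectivity of $V|_{HN_0}$, the restriction to $HN_0$ splits, producing a lift $v \in E^{N_0}$ on which $H$ acts via $\chi\alpha_j^{-1}$; such a lift is unique because $V^{N_0}[H = \chi\alpha_j^{-1}] = 0$, the space $V^{N_0}$ being one-dimensional of $H$-character $\chi$ and $\chi\alpha_j^{-1} \neq \chi$ since $\alpha_j$ is nontrivial for $p > 2$. The extension splits if and only if $tv = v$ for all $t \in T_0$ and $n^- v = v$ for all $n^- \in N_0^-$.

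I would check $T_0$-invariance automatically: $T_0$ normalizes $N_0$ so $tv \in E^{N_0}$, and since $T_0 \subseteq I_1$ acts trivially on $\chi\alpha_j^{-1}$ we have $tv - v \in V^{N_0}$; but this difference carries $H$-character $\chi\alpha_j^{-1}$, hence lies in $V^{N_0}[H = \chi\alpha_j^{-1}] = 0$, so $tv = v$.

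For $N_0^-$-invariance, I would analyze the $1$-cocycle $c \colon N_0^- \to V$, $c(n^-) \defeq n^- v - v$. Parametrizing $n^- = \smatr{1}{0}{pb}{1}$ by $b \in \cO_K$, the Iwahori decomposition $n(c) n^-(b) = n^-\!\big(\tfrac{b}{1+cpb}\big)\, t(c,b)\, n'(c,b)$ combined with $tv = v$ and $n'v = v$ yields the functional equation $n(c) \cdot c(n^-(b)) = c(n^-\!\big(\tfrac{b}{1+cpb}\big))$, and together with the abelian cocycle property this gives
\[
(n(c) - 1)\, c(n^-(b)) = n^-(b) \cdot c\!\Big(n^-\!\big(\tfrac{-cpb^2}{1+cpb}\big)\Big).
\]
Because $V$ is smooth, $c$ vanishes on some open subgroup $n^-(p^N \cO_K)$, so the right-hand side is zero whenever $v_p(c) + 1 + 2 v_p(b) \geq N$. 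I plan to use this relation as a descent: for $b$ of large enough $p$-adic valuation, $c(n^-(b))$ is $N_0$-invariant, hence lies in $V^{N_0} = \F w$; combining with the $H$-twisted equivariance $h \cdot c(n^-(b)) = \chi\alpha_j^{-1}(h)\, c(n^-((\mu/\lambda) b))$ and $T_0$-compatibility, and iterating the descent toward smaller valuations of $b$ while projecting onto the $H$-isotypic decomposition $V = \bigoplus_{\un i} V_{\chi\alpha^{-\un i}}$ afforded by $V|_{HN_0} \cong \chi \otimes \Inj_{N_0}(\F)$, the constraints should force $c(n^-(b)) = 0$ for every $b$.

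The hard part will be this descent: turning a pointwise annihilation statement into a uniform vanishing by carefully tracking the $H$-eigenspaces of $c(n^-(b))$ (subject to $H$-twisted equivariance in $b$) and exploiting the uniqueness of the $HN_0$-lift together with the one-dimensionality of $V^{N_0}$. I anticipate that the separatedness of the $\m_{N_0}$-adic topology on $V^\vee \cong \chi^{-1} \otimes \F\bbra{N_0}$ will be the key ingredient making the iteration terminate.
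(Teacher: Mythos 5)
Your setup matches the paper's proof: both begin by using the $HN_0$-injectivity of $V$ to split the extension over $HN_0$, both verify $T_0$-invariance of the lift $v$ by noting that $(t-1)v\in V^{N_0}$ is forced to vanish since $V^{N_0}$ is one-dimensional of $H$-character $\chi\neq\chi\alpha_j^{-1}$, and both attack $N_0^-$-invariance by a descent on the level $N_k^-$ to which $v$ is invariant, via the Iwahori-type commutation relation between $N_0$ and $N^-$. So the skeleton is identical.

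The gap is precisely where you flag it: you never finish the descent, and the reason is that you are missing the one concrete eigenvalue computation that makes the argument close. The paper's finishing move is to take $k\geq 2$ minimal with $v$ fixed by $N_k^-$, show via the commutation relation that $(g-1)v\in V^{N_0}$ for $g\in N_{k-1}^-$, and observe that $\F v\oplus\F v'$ then realizes a nonsplit class in $\Ext^1_{HN_{k-1}^-}(\chi\alpha_j^{-1},\chi)$; after conjugating by $\smatr{p^{k-2}}{0}{0}{1}$ one reduces to $k=2$, and the $H$-action on $\Hom(N_1^-/N_2^-,\F)\cong\Hom(\F_q,\F)$ is through the characters $\alpha_0,\dots,\alpha_{f-1}$ (not their inverses), so this $\Ext^1$ vanishes because $\chi\alpha_j^{-1}\neq\chi\alpha_i$ for any $i$ (using $p\geq 5$). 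Your functional equation for $c(n^-(b))$ is correct and is a repackaging of the same commutation relation, and the observation that $c(n^-(b))\in V^{N_0}$ for $b$ of large valuation is exactly the step ``$(g-1)v'\in V^{N_0}$'' at the minimal level $k$. But ``tracking the $H$-eigenspaces'' and ``the separatedness of the $\m_{N_0}$-adic topology on $V^\vee$'' will not by themselves force $c(n^-(b))=0$: the $H$-twisted equivariance $h\cdot c(n^-(b))=\chi\alpha_j^{-1}(h)\,c(n^-((\mu/\lambda)b))$ relates values of $c$ at \emph{different} $b$, so it does not produce a contradiction with $V^{N_0}$ having $H$-character $\chi$ unless you compare the $\alpha_i$-eigencharacter of the conjugation action of $H$ on $N_1^-/N_2^-\cong\F_q$ against $\alpha_j^{-1}$. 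You need to identify that eigencharacter explicitly; once you do, the descent terminates in one step rather than by an iteration. As written the proposal asserts the conclusion (``the constraints should force $c(n^-(b))=0$'') without supplying the ingredient that actually forces it.
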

\begin{proof}
Consider an extension class in $\Ext^1_{I/Z_1}(\chi\alpha_j^{-1},V)$ represented by $0\ra V\ra V'\ra \chi\alpha_j^{-1}\ra0$. By assumption on $V$, this extension splits when restricted to $HN_0$, hence we may find $v'\in V'\backslash V$ on which $HN_0$ acts via $\chi\alpha_j^{-1}$ (in particular $v'\in V'^{N_0}$). Notice that $(g-1)v'\in V$ for any $g\in I_1$. Let $v\in V^{N_0}$ be a nonzero vector so that $V^{N_0}=\F v$ by assumption. 

First take $g\in \smatr{1+p\cO_K}00{1+p\cO_K}$. It is easy to see that $(g-1)v'$ is again fixed by $N_0$ and $H$ acts on it via $\chi\alpha_j^{-1}$. But, by assumption $V^{N_0}$ is $1$-dimensional on which $H$ acts via $\chi$, thus we must have $(g-1)v'=0$. We deduce that $v'$ is fixed by $I_1\cap B(\cO_K)$.

We claim that $v'$ is fixed by $N_1^-\defeq \smatr{1}0{p\cO_K}1$. This will imply that $v'$ is fixed by $I_1$ by the Iwahori decomposition, and consequently $V'$ splits as an $I$-representation. Let $k\geq 1$ be the smallest integer such that $v'$ is fixed by $N_{k}^-\defeq \smatr{1}0{p^k\cO_K}1$; such an integer always exists, as $V$ is a smooth representation of $I$. Suppose $k\geq 2$ and take $g\in N_{k-1}^-$. Using the matrix identity (see \cite[Eq.(14)]{paskunas-extensions})
\[\smatr{1}{b}01\smatr{1}0c1=\smatr{1}0{c(1+bc)^{-1}}1\smatr{1+bc}{b}0{(1+bc)^{-1}}\]
and the fact that $v'$ is fixed by $\smatr{1+p\cO_K}{\cO_K}{p^{k}\cO_K}{1+p\cO_K}$, 
 one checks that $(g-1)v'\in V^{N_0}$. Consequently, $\F v\oplus \F v'$ gives rise to an extension in $\Ext^{1}_{HN_{k-1}^-}(\chi\alpha_j^{-1},\chi)$ which is nonsplit by the choice of $k$. But, as in \cite[Lemma 5.6]{paskunas-extensions}, one shows that $\Ext^1_{HN_{k-1}^-}(\chi',\chi)\neq 0$ if and only if $\chi'=\chi\alpha_i$ for some $0\leq i\leq f-1$. Indeed, after conjugating by $\smatr{p^{k-2}}001$, we are reduced to the case $k=2$, in which case the result is proved by determining the $H$-action on $\Hom(N_{1}^-,\F)$ as in \cite[Lemma 5.3]{paskunas-extensions} (see the proof of \cite[Prop.5.1]{BP} for the computation). This finishes the proof as $\chi\alpha_j^{-1}\neq \chi\alpha_{i}$ for any $0\leq i,j\leq f-1$ (as $p\geq 5$).
\end{proof}

\begin{lem}\label{lem:Pas}
Let $V$ be an admissible smooth representation of $I/Z_1$ over $\F$. Assume that $V|_{HN_0}$ is isomorphic to an injective envelope of some character $\chi$ in the category of smooth representations of $HN_0$ over $\F$ (so in particular $\dim_{\F}V^{N_0}=1$). Then we have an isomorphism of $\gr(\F\bbra{I/Z_1})$-modules:
\[\gr(V^{\vee})\cong \chi^{-1}\otimes R/(z_0,\dots,z_{f-1}).\]
\end{lem}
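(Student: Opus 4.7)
The plan is to realize $V^\vee$ explicitly as a free module of rank one over $\F\bbra{N_0}$ and then use $H$-equivariance to control the action of the ``extra'' generators of $\gr(\F\bbra{I_1/Z_1})$ on $\gr(V^\vee)$.

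First I would observe that, since $|H|$ is prime to $p$, the hypothesis that $V|_{HN_0}$ is the injective envelope of $\chi$ forces $V|_{N_0}$ to be the injective envelope of the trivial character, whose Pontryagin dual is the rank-one free module $\F\bbra{N_0}$. Hence $V^\vee \cong \F\bbra{N_0}\cdot e$ as $\F\bbra{N_0}$-module, with $e$ an $H$-eigenvector of character $\chi^{-1}$. Since $V$ is nonzero smooth over the pro-$p$ group $I_1$, $V^{I_1}\neq 0$; it sits inside the one-dimensional space $V^{N_0}$, so $V^{I_1}=V^{N_0}$, and in particular $e$ generates $V^\vee$ as an $\F\bbra{I_1/Z_1}$-module.

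Next, by freeness of $V^\vee$ over $\F\bbra{N_0}$, each $x\in \F\bbra{I_1/Z_1}$ acts as $x\cdot e = \rho(x)\cdot e$ for a unique $\rho(x)\in \F\bbra{N_0}$, and $\rho$ preserves $H$-characters. For $x\in \m$ one has $\rho(x)\in \m_{N_0}$, because $\rho(x)\bmod \m_{N_0}$ computes the restriction of the functional $x\cdot e$ to $V^{N_0}=V^{I_1}$, which vanishes since $\m$ annihilates $V^{I_1}$. Applied to $Z_j$: $\rho(Z_j)\in \F\bbra{Y_0,\dots,Y_{f-1}}$ has $H$-character $\alpha_j^{-1}$, and an explicit base-$p$ computation shows the smallest $|\un{i}|$ with $\alpha^{\un{i}}=\alpha_j^{-1}$ is $(p-1)f - 1\geq 3$, forcing $\rho(Z_j)\in \m_{N_0}^{(p-1)f-1}\subseteq \m_{N_0}^2$. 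Similarly, writing $H_j\defeq [Y_j,Z_j]\in \m^2$ as a lift of $h_j$, the element $\rho(H_j)\in \m_{N_0}$ has trivial $H$-character and so lies in $\m_{N_0}^{f(p-1)}\subseteq \m_{N_0}^2$. Thus $Z_j\cdot e\in \m^2 V^\vee$ and $H_j\cdot e\in \m^3 V^\vee$, giving $z_j\cdot e = 0$ in $\gr^1(V^\vee)$ and $h_j\cdot e = 0$ in $\gr^2(V^\vee)$.

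Since $\gr(V^\vee)$ is cyclic over $\gr(\F\bbra{I_1/Z_1})$ generated by $e$, the above relations yield a surjection of $\gr(\F\bbra{I_1/Z_1})$-modules
\[
\chi^{-1}\otimes R/(z_0,\dots,z_{f-1}) \twoheadrightarrow \gr(V^\vee).
\]
On the other hand, the isomorphism $V^\vee\cong \F\bbra{N_0}\cdot e$ identifies $\gr_{\m_{N_0}}(V^\vee)$ with $\chi^{-1}\otimes \F[y_0,\dots,y_{f-1}] = \chi^{-1}\otimes R/(z_0,\dots,z_{f-1})$, and the natural map $\gr_{\m_{N_0}}(V^\vee)\to\gr(V^\vee)$ of Lemma \ref{grgr} coincides with the surjection above. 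Hence it is surjective, and Lemma \ref{grgr} then shows it is an isomorphism, proving the claim. The main technical input is the combinatorial $H$-character computation bounding the minimal total $Y$-degree of an $H$-eigenvector of character $\alpha_j^{-1}$ (namely $(p-1)f-1$) and of a nonconstant $H$-invariant (namely $f(p-1)$); under the standing hypothesis $p\geq 5$ both exceed $2$, which is precisely what drives the argument.
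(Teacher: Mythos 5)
Your proof is correct, and it takes a genuinely different route from the paper's. The paper establishes $z_j\cdot e_\chi=0$ and $h_j\cdot e_\chi=0$ by arguments in the category of smooth $I/Z_1$-representations: it first shows $\Hom_H(\chi\alpha_j,V[\m^2]/V[\m])=0$ by relating a nonzero class to a subrepresentation $E_{\chi,\chi\alpha_j}\subseteq V$ on which $N_0$ acts trivially (contradicting $\dim V^{N_0}=1$); it then proves $V[\m^2]/V[\m]\cong\oplus_j\chi\alpha_j^{-1}$ using the $\Ext$-vanishing of Lemma \ref{lem:Pas-bis}; and the step $h_j\cdot e_\chi=0$ requires constructing explicit length-$3$ extensions $\mathcal{E}_j$ and analysing $\Ext^1_{I/Z_1}(\chi,V[\m^2])$. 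Your argument instead exploits the explicit realization $V^\vee\cong\F\bbra{N_0}\cdot e$ coming from injectivity of $V|_{N_0}$ (restriction from $HN_0$-reps to $N_0$-reps preserves injectives since $\Ind_{N_0}^{HN_0}$ is exact), and then controls $Z_j\cdot e=\rho(Z_j)e$ and $H_j\cdot e=\rho(H_j)e$ purely by $H$-character bookkeeping in $\F\bbra{Y_0,\dots,Y_{f-1}}$. This kills both $z_j$ and $h_j$ in one stroke, entirely bypassing Lemma \ref{lem:Pas-bis} and the $\mathcal E_j$ construction. Both approaches converge at the same final step, the invocation of Lemma \ref{grgr} to upgrade the surjection to an isomorphism. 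Your route is more elementary and more visibly driven by the standing assumption $p\geq 5$; the paper's route produces along the way a finer structural picture of $V[\m^2]$ and $V[\m^3]$, which is not needed for this lemma alone.

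Two small remarks for precision, neither affecting correctness. First, the claimed exact minimal degrees $(p-1)f-1$ and $f(p-1)$ are not strictly needed and would require a small argument that the base-$p$ digit sum is minimized at the smallest positive representative of the congruence class; the soft bound suffices: since $\|\un i\|\equiv\sum_k i_kp^k\pmod{p-1}$, a monomial with $H$-character $\alpha_j^{-1}$ has $\|\un i\|\equiv -1\pmod{p-1}$, hence $\|\un i\|\geq p-2\geq 3$, and a nonconstant $H$-invariant monomial has $\|\un i\|\equiv 0\pmod{p-1}$ with $\|\un i\|\geq 1$, hence $\|\un i\|\geq p-1\geq 4$. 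Second, for the conclusion $H_j\cdot e\in\m^3V^\vee$ you need $\rho(H_j)\in\m_{N_0}^3$, not merely $\m_{N_0}^2$ as you wrote; this of course holds since $p-1\geq 4$, but the stated containment should be sharpened accordingly.
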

\begin{proof}
By assumption, $V[\fm] = V[\fm_{N_0}]$ is one-dimensional and isomorphic to $\chi$, hence we may view $\gr(V^{\vee})$ as a cyclic module over $\gr(\Lambda)$ generated by $e_\chi\in \gr^0(V^\vee)=V[\fm]^\vee$, where $H$ acts on $e_\chi$ by $\chi^{-1}$. Let $\mathfrak{a}\subseteq \gr(\Lambda)$ be the annihilator of $e_{\chi}$.

We first prove that $z_j\in \mathfrak{a}$ for $0\leq j\leq f-1$. Since $H$ acts on $z_j$ via $\alpha_j^{-1}$ (see just above \S\ref{combi}), to prove $z_je_{\chi}=0$ in $\gr^1(V^{\vee})$ it is equivalent to prove that
\[\Hom_{H}(\chi\alpha_j,V[\fm^2]/V[\fm])=0 \ \ \forall\ j\in \{0,\dots,f-1\}.\]
If not, then $V$ would admit a subrepresentation isomorphic to $E_{\chi,\chi\alpha_j}$ (for some $j$), where $E_{\chi,\chi\alpha_j}$ denotes the unique $I/Z_1$-representation which is a nonsplit extension of $\chi\alpha_j$ by $\chi$. But by \cite[Lemma 6.1.1(ii)]{BHHMS1} (after conjugating by the element $\smatr01p0$), $N_0$ acts trivially on $E_{\chi,\chi\alpha_j}$, which implies $\dim_{\F}V[\m_{N_0}]\geq 2$, a contradiction to the assumption on $V$. 

Using \cite[Lemma 6.1.1(ii)]{BHHMS1}, we then deduce an embedding 
\begin{equation}\label{eq:V[m2]-bis}
V[\m^2]/V[\m]\hookrightarrow \oplus_{j=0}^{f-1}\chi\alpha_j^{-1}.\end{equation}
On the other hand, since $\Hom_I(\chi\alpha_j^{-1},V)=0$, we deduce from Lemma \ref{lem:Pas-bis} that 
\[\Hom_{I}(\chi\alpha_j^{-1},V[\m^2]/V[\m])=\Hom_{I}(\chi\alpha_j^{-1},V/V[\m])\simto\Ext^1_{I/Z_1}(\chi\alpha_j^{-1},\chi)\] 
which have dimension $1$ over $\F$ by \cite[Lemma 6.1.1(ii)]{BHHMS1} again. Combining this with \eqref{eq:V[m2]-bis}, we obtain 
\begin{equation}\label{eq:V[m2]}
0\ra \chi\ra V[\fm^2]\ra \oplus_{j=0}^{f-1}\chi\alpha_j^{-1}\ra0.
\end{equation}
and that $V[{\m^2}]=V[\m_{N_0}^2]$. 

Next, we prove that $\Ext^1_{I/Z_1}(\chi,E_{\chi,\chi\alpha_j^{-1}})$ has dimension $1$ over $\F$ for any $0\leq j\leq f-1$. 
A straightforward d\'evissage using $\Ext^1_{I/Z_1}(\chi,\chi)=0$ and $\dim_{\F}\Ext^1_{I/Z_1}(\chi,\chi\alpha_j^{-1})=1$ (see \cite[Lemma 6.1.1(ii)]{BHHMS1}) yields $\dim_{\F}\Ext^1_{I/Z_1}(\chi,E_{\chi,\chi\alpha_j^{-1}})\leq 1$. So it suffices to explicitly construct a nonzero element in this space, as follows. Let $\mathcal{E}_j\defeq \F v_0\oplus \F v_1\oplus \F v_2 $ equipped with the action of $I/Z_1$ determined by:
\begin{itemize}
\item $H$ acts on $v_0$, $v_1$, $v_2$ by $\chi,$ $\chi\alpha_j^{-1}$, $\chi$ respectively;
\item if $g=\smatr{1+pa}{b}{pc}{1+pd}\in I_1$, then 
\[gv_0=v_0,\ \ gv_1=v_1+\sigma_j(\overline{b})v_0,\]
\[gv_2=v_2+\sigma_j(\overline{c})v_1+\frac{1}{2}\big(\sigma_j(\overline{a})-\sigma_j(\overline{d})+\sigma_j(\overline{b}\overline{c})\big)v_0.\]
\end{itemize} 
One easily checks that $\mathcal{E}_j$ is well defined and yields the desired nonsplit extension class in $\Ext^1_{I/Z_1}(\chi,E_{\chi,\chi\alpha_j^{-1}})$. Moreover one also checks that $\mathcal{E}_j^{N_0}=\F v_0\oplus \F v_2$.
 
We prove that $h_j\in \mathfrak{a}$ for $0\leq j\leq f-1$. Since $\Ext^1_{I/Z_1}(\chi,\chi)=0$,
 the sequence \eqref{eq:V[m2]} induces an embedding
\[\Ext^1_{I/Z_1}(\chi,V[\fm^2])\hookrightarrow \Ext^1_{I/Z_1}(\chi,\oplus_{j=0}^{f-1}\chi\alpha_j^{-1}).\]
Note that the right-hand side has dimension $f$ over $\F$. Since $\mathcal{E}_j/\chi$ is nonzero in $\Ext^1_{I/Z_1}(\chi,\chi\alpha_j^{-1})$ for $0\leq j\leq f-1$, we easily see that the above embedding is actually an isomorphism and that $\Ext^1_{I/Z_1}(\chi,V[\fm^2])$ is spanned by the $\mathcal{E}_j$'s. By the last statement of the previous paragraph, if an extension $\mathcal{E}\in \Ext^1_{I/Z_1}(\chi,V[\fm^2])$ is nonzero then $\dim_{\F}\mathcal{E}^{N_0}\geq 2$. Since $\dim_{\F}V^{N_0}=1$ by assumption, we see that there exists no embedding $\mathcal{E}\hookrightarrow V$. From (\ref{eq:V[m2]}) we then easily deduce
\[\Hom_{H}(\chi,V[\fm^3]/V[\fm^2])=0.\]
Since $H$ acts trivially on $h_j$ and $h_je_\chi\in \gr^2(V^\vee)\simeq (V[\fm^3]/V[\fm^2])^\vee$, we thus must have $h_je_\chi=0$, i.e.\ $h_j\in \mathfrak{a}$ for $0\leq j\leq f-1$. This proves the claim.

We deduce a surjection $\gr(\Lambda)/(z_j,h_j,\ 0\leq j\leq f-1)\twoheadrightarrow \gr(V^\vee)$. 
As the left-hand side is $\F[y_0,\dots,y_{f-1}]\simeq \gr(\F\bbra{N_0})$ and $(V\vert_{N_0})^\vee\simeq \F\bbra{N_0}$ from the assumption, we obtain a surjection $\gr_{\m_{N_0}}(V^\vee)\twoheadrightarrow \gr(V^\vee)$. By Lemma \ref{grgr} this surjection is an isomorphism (and hence $\mathfrak{a}=(z_j,h_j, 0\leq j\leq f-1)$). This finishes the proof.
\end{proof}

If $\chi = \chi_1 \otimes \chi_2$ is a character of $H$ or of $T(K)$, recall $\chi^s = \chi_2 \otimes \chi_1$.

\begin{prop}\label{exemplesgr}
Let $V$ be an irreducible smooth $\F$-representation of $\GL_2(K)$ with a central character.
\begin{enumerate}
\item If $V\cong \psi\circ\det$ for some smooth character $\psi:K^{\times}\ra\F^{\times}$, then $\gr(V^{\vee})\cong (\psi\otimes\psi)^{-1}\otimes\F$, where $\psi\otimes\psi$ is viewed as a character of $H$.
\item If $V\cong \Ind_{B(K)}^{\GL_2(K)}\chi$ for some smooth character $\chi:T(K)\ra \F^{\times}$, then
  \[\gr(V^{\vee})\cong \big((\chi^s|_{H})^{-1}\otimes R/(z_0,\dots,z_{f-1})\big) \oplus
  \big((\chi|_{H})^{-1}\otimes R/(y_0,\dots,y_{f-1})\big).\]
\item If $V\cong (\Ind_{B(K)}^{\GL_2(K)}1)/1$ is the special series, then $\gr(V^{\vee})\!\cong \! R/(y_iz_j,0\leq i,j\leq f-1)$.
\item Assume $K=\Q_p$. If $V$ is supersingular, i.e.\ isomorphic to
  $(\cInd_{\GL_2(\Z_p)\Q_p^{\times}}^{\GL_2(\Q_p)}\sigma)/T$ for some Serre weight $\sigma$
  {\upshape(}recall that $\cInd$ here means compact induction and that
  $\End_{\GL_2(\Q_p)}(\cInd_{\GL_2(\Z_p)\Q_p^{\times}}^{\GL_2(\Q_p)}\sigma)\simeq \F[T]${\upshape)}, then
  \[\gr(V^{\vee})\cong \big(\chi_{\sigma}^{-1}\otimes R/(y_0z_0)\big)\oplus
  \big((\chi_{\sigma}^{s})^{-1}\otimes R/(y_0z_0)\big),\]
  where $\chi_\sigma$ is the action of $H$ on $\sigma^{I_1}$.
\end{enumerate}
\end{prop}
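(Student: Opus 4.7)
The plan is to treat the four cases separately. Case (i) is immediate, cases (ii) and (iii) reduce to Lemma \ref{lem:Pas} and a symmetric variant, while case (iv) requires the explicit classification of supersingular representations of $\GL_2(\Q_p)$.

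For (i), since $V \cong \psi \circ \det$ is one-dimensional with trivial $I_1$-action, $V^\vee = \F$ is concentrated in degree zero of the $\m$-adic filtration, and $H$ acts on $V^\vee$ by the inverse of its $H$-character on $V$, namely $(\psi \otimes \psi)^{-1}$.

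For (ii), I would exhibit an $I/Z_1$-stable direct sum decomposition $V|_I \cong V_1 \oplus V_2$ via Mackey applied to the Bruhat decomposition $\GL_2(\cO_K) = B(\cO_K) \sqcup B(\cO_K) w I$: namely $V_1 = \Ind_{B(\cO_K)}^I \chi|_{B(\cO_K)}$ and $V_2 = \Ind_{I \cap B^-(\cO_K)}^I \chi^s$. Using the Iwahori factorizations $I = B(\cO_K) N_0^-$ and $I = (I \cap B^-(\cO_K)) N_0$, each $V_i$ is identified with smooth functions on one of the unipotent factors, on which the corresponding unipotent group acts by the regular representation. One then checks that $V_2|_{HN_0}$ is an injective envelope of $\chi^s|_H$ and $V_2^{N_0} = \F$, so Lemma \ref{lem:Pas} applies directly and gives $\gr(V_2^\vee) \cong (\chi^s|_H)^{-1} \otimes R/(z_0,\dots,z_{f-1})$. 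Symmetrically $V_1|_{HN_0^-}$ is an injective envelope of $\chi|_H$ with $V_1^{N_0^-} = \F$, and an analogous variant of Lemma \ref{lem:Pas} obtained by swapping $N_0$ and $N_0^-$ (and thus exchanging the roles of $y_i$ and $z_i$ in the graded ring) yields $\gr(V_1^\vee) \cong (\chi|_H)^{-1} \otimes R/(y_0,\dots,y_{f-1})$. Functoriality of the $\m$-adic filtration with respect to direct sums completes the case.

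For (iii), I would use the short exact sequence $0 \to \F \to \Ind_{B(K)}^{\GL_2(K)} 1 \to \St \to 0$ and dualize to $0 \to \St^\vee \to (\Ind 1)^\vee \to \F \to 0$. By part (ii) applied with $\chi = 1$, $\gr((\Ind 1)^\vee) \cong \F[z_0,\dots,z_{f-1}] \oplus \F[y_0,\dots,y_{f-1}]$ with trivial $H$-character. Granting that the filtration on $\St^\vee$ induced from $(\Ind 1)^\vee$ agrees with the $\m$-adic filtration — a strictness statement I would check using Lemma \ref{grgr} applied to the generator of $\St^{I_1}$ — the module $\gr(\St^\vee)$ is identified with the kernel of the surjection $\gr((\Ind 1)^\vee) \twoheadrightarrow \F$ on degree zero, i.e.~the $R$-submodule generated by $(1,-1)$. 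This submodule is readily shown to be isomorphic to $R/(y_iz_j,\ 0 \le i,j \le f-1)$ via the map sending a pure $z$-monomial $p$ to $(p,0)$ and a pure $y$-monomial $q$ to $(0,-q)$, since multiplication by $y_i$ kills the first summand and multiplication by $z_j$ kills the second.

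For (iv), with $K = \Q_p$ (so $f=1$), I would invoke the explicit description of $V = (\cInd_{\GL_2(\Z_p)\Q_p^\times}^{\GL_2(\Q_p)} \sigma)/T$ from Barthel--Livn\'e together with the fine analysis of $V|_{I/Z_1}$ in \cite{morra-iwasawa}: $V^{I_1}$ is two-dimensional with $H$-characters $\chi_\sigma$ and $\chi_\sigma^s$ corresponding to the two Serre weights $\sigma$ and $\sigma^{[s]}$ in $\soc_{\GL_2(\Z_p)} V$, and $V$ decomposes as an $I$-module into two pieces accordingly. On each piece, the operators $y_0$ and $z_0$ act freely (reflecting the failure of admissibility for $N_0$ and $N_0^-$), while their product $y_0 z_0$ vanishes in the associated graded, yielding the factor $R/(y_0 z_0)$. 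The main obstacle will be verifying this vanishing of $y_0 z_0$ in $\gr(V^\vee)$: it requires controlling the Hecke operator $T$, which has filtration degree zero and interleaves the $N_0$- and $N_0^-$-directions in a way that prevents any mixed monomial from surviving in the graded module.
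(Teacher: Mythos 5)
Your proof of (i)--(iii) follows essentially the same route as the paper's. Part (ii) is the Mackey/Bruhat decomposition $V|_I\cong\Ind_{I\cap B(K)}^I\chi\oplus\Ind_{I\cap B^-(K)}^I\chi^s$ plus Lemma \ref{lem:Pas}, and your ``symmetric variant of Lemma \ref{lem:Pas} obtained by swapping $N_0$ and $N_0^-$'' is exactly the conjugation by $\smatr{0}{1}{p}{0}$ that the paper invokes (this conjugation normalizes $I$ and exchanges the $y_i$ with the $z_i$ in $\gr(\Lambda)$, so both devices yield the same answer). Part (iii) uses the same short exact sequence; your kernel computation is correct, since $(y_0,\dots,y_{f-1})\cap(z_0,\dots,z_{f-1})=(y_iz_j,\ 0\le i,j\le f-1)$ holds for monomial ideals and the $R$-submodule of $\F[\underline y]\oplus\F[\underline z]$ generated by $(1,-1)$ coincides with the kernel of $(a,b)\mapsto a(0)+b(0)$. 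One imprecision: the strictness you need is not literally an application of Lemma \ref{grgr} (which compares the $\m_{N_0}$-adic and $\m$-adic filtrations on the \emph{same} module); it is the same \emph{style} of argument. Concretely, one shows the map $\gr_\m(\St^\vee)\to\gr_F(\St^\vee)$ is surjective, which reduces (since $\gr_F(\St^\vee)$ is generated in degree $0$) to surjectivity of $\gr_\m^0(\St^\vee)\to\gr_F^0(\St^\vee)$; this follows from the left exactness of $(-)^{I_1}$ applied to $0\to 1\to\Ind_{B(K)}^{\GL_2(K)}1\to\St\to 0$, exactly as in the paper.

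For (iv), the ``main obstacle'' you flag does not actually arise. The paper invokes Pa\v{s}k\={u}nas's structure theorems (\cite{paskunas-extensions}, Thm.\,1.2 and Prop.\,4.7) for the $I$-module underlying a supersingular $\GL_2(\Q_p)$-representation and then proceeds exactly as in (iii). The point is that the explicit description of $V|_I$ already absorbs the effect of the Hecke operator $T$ into the representation-theoretic structure; once you have it, there is nothing further to control, and the vanishing of $y_0z_0$ in $\gr(V^\vee)$ follows for the same structural reasons as in (iii) with $f=1$. Also, your phrasing that $y_0$ and $z_0$ ``act freely'' on each piece while $y_0z_0$ vanishes should be read carefully: in $R/(y_0z_0)$ neither variable is a nonzerodivisor (each kills the ideal generated by the other); what is true is that each generates a polynomial subalgebra in one variable, matching the two ``arms'' of the graded module.
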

\begin{proof}
(i) It is trivial.\\
(ii) The restriction of $V$ to $I$ admits a decomposition
\begin{equation}\label{eq:V=PS}V|_{I}\cong \Ind_{I\cap B(K)}^{I}\chi\oplus \Ind_{I\cap B^-(K)}^{I}\chi^s,\end{equation}
(cf.\ the proof of \cite[Prop.11.1]{paskunas-extensions}). By \emph{loc.cit.}, when restricted to $HN_0$, $\Ind_{I\cap B^-(K)}^I\!\chi^s$ is an injective envelope of $\chi^s$ in the category of smooth representations of $HN_0$ over $\F$, hence \[\gr((\Ind_{I\cap B^-(K)}^I\chi^s)^{\vee})\cong (\chi^s|_{H})^{-1}\otimes R/(z_0,\dots,z_{f-1})\]
 by Lemma \ref{lem:Pas}. 
One handles the other direct summand by taking conjugation by the element $\smatr01p0$.\\
(iii) By assumption we have a short exact sequence $0\ra 1\ra \Ind_{B(K)}^{\GL_2(K)}1\ra V\ra0$. Write $W=(\Ind_{B(K)}^{\GL_2(K)}1)|_I$ and decompose $W=W_1\oplus W_2$ as in \eqref{eq:V=PS}. The image of $1\hookrightarrow W$ is equal to the subspace of constant functions, hence the composition $1\hookrightarrow W\twoheadrightarrow W_i$ is nonzero for $i\in\{1,2\}$. Consequently, the dual morphism $\gr(W_i^{\vee})\ra \gr(1^{\vee})$ is also nonzero, and using (ii) (applied to $W$) we obtain an exact sequence of $\gr(\F\bbra{I/Z_1})$-modules
\begin{equation}\label{eq:gr-Sp} 0\ra R/(y_iz_j,0\leq i,j\leq f-1)\ra \gr(W_1^{\vee})\oplus\gr(W_2^{\vee})\ra \gr(1^{\vee})\ra0.\end{equation}
Denote by $F$ the induced filtration on $V^{\vee}$ from the $\m$-adic filtration on $W^{\vee}$. By \eqref{eq:gr-Sp} we have an isomorphism $\gr_F(V^{\vee})\cong R/(y_iz_j,0\leq i,j\leq f-1)$. To finish the proof, it suffices to prove that $F$ coincides with the $\m$-adic filtration on $V^{\vee}$, or equivalently the inclusion $\m^{n}V^{\vee}\subset \m^{n}W^{\vee}\cap V^{\vee}$ (for $n\geq0$) is an equality. As in the proof of Lemma \ref{grgr} it suffices to prove that the induced graded morphism $\gr_{\m}(V^{\vee})\ra \gr_F(V^{\vee})$ is surjective. But, $\gr_F(V^{\vee})$ is generated by $\gr_F^0(V^{\vee})$, so it suffices to show that $\gr_{\m}^0(V^{\vee})\ra \gr_F^0(V^{\vee})$ is surjective, which follows from \eqref{eq:gr-Sp} and the exact sequence 
 \[\gr_{\m}^0(V^{\vee})\ra \gr^0_{\m}(W^{\vee})\ra \gr^0_{\m}(1^{\vee})\ra0\]
induced by $0\ra 1^{I_1}\ra W^{I_1}\ra V^{I_1}$ (this sequence is actually right exact but we don't need this fact).\\
 (iv) The proof is analogous to (iii), using \cite[Thm.1.2]{paskunas-extensions} together with \cite[Prop.4.7]{paskunas-extensions}.
\end{proof}

By the classification of irreducible admissible smooth representations of $\GL_2(\Q_p)$ over $\F$, we deduce from Proposition \ref{exemplesgr} and the results of \S\ref{multivariablepsi}:

\begin{cor}\label{qp}
Let $V$ be an admissible smooth representation of $\GL_2(\Q_p)$ over $\F$ which has a central character and is of finite length. Then there is an integer $n\geq 0$ such that $\gr(V^\vee)$ is annihilated by $J^n$. In particular $V$ is in the category $\mathcal C$ of \S\ref{multivariablepsi}.
\end{cor}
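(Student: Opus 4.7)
The plan is as follows. Since $K=\Q_p$ we have $f=1$, so the ring $\gr(\Lambda)\simeq \F[y_0,z_0,h_0]$ with $[y_0,z_0]=h_0$, and $J=(y_0z_0,h_0)$. The strategy is to first handle the irreducible case using the explicit computations already done in Proposition \ref{exemplesgr}, and then reduce the general finite length case to it by d\'evissage along a composition series.

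For the irreducible case, I would invoke the classification of irreducible admissible smooth representations of $\GL_2(\Q_p)$ over $\F$ with a central character (characters, irreducible principal series, special series, supersingular) and inspect the four descriptions given in Proposition \ref{exemplesgr}. In each case one checks directly that $\gr(V^\vee)$ is already annihilated by $J$ itself: for a character one has $\gr(V^\vee)\simeq \F$; for an irreducible principal series $\gr(V^\vee)$ is a direct sum of the $R$-modules $R/(z_0)$ and $R/(y_0)$, both automatically killed by $y_0z_0$ (and by $h_0$ since they are already $R$-modules, $R=\gr(\Lambda)/(h_0)$); for the special series $\gr(V^\vee)\simeq R/(y_0z_0)$; and for the supersingulars (which is the case $K=\Q_p$ of Proposition \ref{exemplesgr}(iv)) $\gr(V^\vee)$ is a direct sum of two twists of $R/(y_0z_0)$.

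For the general case, I would argue by induction on the length via d\'evissage along any composition series $0=V_0\subsetneq V_1\subsetneq \cdots \subsetneq V_\ell=V$. Given a short exact sequence $0\to V'\to V\to V''\to 0$ of admissible smooth $\GL_2(\Q_p)$-representations (with central character), fix a good filtration on the $\Lambda$-module $V^\vee$; the quotient filtration on $V''^\vee$ (seen as submodule of $V^\vee$) and the induced filtration on $V'^\vee$ are then again good, and by the strictness argument used in the proof of Proposition \ref{prop:exactnessDA} one obtains a short exact sequence of graded $\gr(\Lambda)$-modules
\[
0\longrightarrow \gr(V''^\vee)\longrightarrow \gr(V^\vee)\longrightarrow \gr(V'^\vee)\longrightarrow 0.
\]
By the invariance of the property ``$\gr(-)$ is annihilated by some power of $J$'' under change of good filtration (noted in the paper just before Proposition \ref{prop:finiteness}), the irreducible case supplies integers $a$ and $b$ with $J^a\gr(V''^\vee)=0$ and $J^b\gr(V'^\vee)=0$, whence $J^{a+b}\gr(V^\vee)=0$. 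The final assertion that $V\in \mathcal C$ is then immediate from Proposition \ref{prop:finiteness}.

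There is no real obstacle here: all of the substantive computation is carried out in Proposition \ref{exemplesgr}, and the only mildly delicate point is to track the good filtrations correctly through the short exact sequence so as to apply the invariance statement. Because each irreducible constituent is in fact already killed by $J$, a slightly more quantitative version with $n\le \ell$ (the length) is available, but the statement as given requires only the qualitative conclusion.
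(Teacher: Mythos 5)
Your proposal is correct and takes exactly the same route as the paper: the paper states Corollary~\ref{qp} as an immediate consequence of the classification of irreducibles, the computations in Proposition~\ref{exemplesgr} (which show $\gr(V^\vee)$ is killed by $J$ itself in each of the four irreducible cases), and the stability under extensions discussed in the paragraph following Proposition~\ref{prop:finiteness}. You have simply spelled out the d\'evissage that the paper leaves implicit, including the correct handling of good filtrations through short exact sequences, and your quantitative remark $n\le\ell$ is accurate.
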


Finally, we give the first example of an explicit $D_A(\pi)$ for arbitrary $f \ge 1$.

\begin{prop}\label{prop:D_A-principal-series}
  Suppose $\pi=\Ind_{B(K)}^{\GL_2(K)}\chi$ is a principal series for some smooth character $\chi = \chi_1 \otimes \chi_2$.
  Then $\pi$ lies in category $\cC$ and $D_A(\pi) = D_A(\pi)^{\et}$ is \'etale and free of rank 1.
  More precisely, let $\kappa \in \pi^\vee$ be the element sending $f \in \Ind_{B(K)}^{\GL_2(K)}\chi$ to $f(\smatr{}{1}{1}{}) \in \F$.
  Then the image of $\kappa$ in $D_A(\pi)$ is a basis of $D_A(\pi)$, and we have
  \begin{align}
    \varphi(\kappa) &= \chi_2(p)^{-1} \kappa, \label{eq:ps1} \\
    a(\kappa) &= \chi_2(a)^{-1} \kappa \text{\ \ $\forall\ a \in \cO_K^\times$.}\label{eq:ps2}
  \end{align}
\end{prop}

\begin{proof}
  Note that $\pi \in \cC$ by Proposition~\ref{exemplesgr}(ii).
  The torus $T(K)$ (hence also $H$) acts on $\kappa$ by the character $(\chi^s)^{-1}$; in particular, we get~\eqref{eq:ps2}.
  On graded pieces the map $\pi^\vee \to D_A(\pi)$ becomes the map $\gr(\pi^\vee) \to \gr(\pi^\vee)[(y_0\cdots y_{f-1})^{-1}]$ (Lemma \ref{lem:grlocal}).
  As $\kappa$ does not annihilate $\pi^{I_1}$, it induces a nonzero element of $\gr^0(\pi^\vee) = (\pi^{I_1})^\vee$, which is in fact a $\gr(A)$-basis of $\gr(\pi^\vee)[(y_0\cdots y_{f-1})^{-1}]$ by Proposition~\ref{exemplesgr}(ii).
  (If $\chi = \chi^s$ the argument still works because $\kappa$ annihilates the first direct summand in the Mackey decomposition \eqref{eq:V=PS}.)
  By the proof of \cite[Thm.I.5.7]{LiOy} it follows that $D_A(\pi) = A\kappa$.
  As $D_A(\pi)$ is a projective $A$-module and $\gr(D_A(\pi)) \ne 0$, it follows that $D_A(\pi)$ is free of rank 1 with basis $\kappa$.

  It remains to show~\eqref{eq:ps1}.
  First, from the definitions we see that $\psi(\kappa) = \chi_2(p)\kappa$.
  This implies that the $(\psi,\cO_K^\times)$-module $D_A(\pi)$ is \'etale and so by the previous sentence it becomes an \'etale $(\varphi,\cO_K^\times)$-module, cf.\ \eqref{mapbeta}.
  Say $\varphi(\kappa) = a\kappa$ for some $a \in A^\times$.
  As the actions of $\varphi$ and $\cO_K^\times$ commute, equation~\eqref{eq:ps2} and Corollary~\ref{cor:OK-times-invariants} imply that $a \in \F^\times$.
  We deduce \eqref{eq:ps1}.
\end{proof}

\subsubsection{Characteristic cycles}\label{charcycl}

We define the characteristic cycle of a finitely generated filtered $\Lambda$-module $M$ such that $\gr(M)$ is annihilated by a power of $J$ and prove an important property (Theorem \ref{prop:cycle-Ext}).
 
Recall from \S\ref{upperb} that the minimal prime ideals of $\overline{R}=R/(y_jz_j,0\leq j\leq f-1)$ are the $(y_i,z_j, i\in\cJ, j\notin\cJ)$ with $\cJ$ a subset of $\{0,\dots,f-1\}$. 

\begin{definit}\label{Zdef}
Let $N$ be a finitely generated module over $\gr(\Lambda)$ which is annihilated by some power of $J$. We define the \emph{characteristic cycle} of $N$, denoted by $\mathcal{Z}(N)$\footnote{A more standard notation is $\mathcal{Z}_f(N)$, where $f$ indicates the dimension of the cycles.} as follows:
\[\mathcal{Z}(N)\defeq \sum_{\q}m_{\q}(N)\q\ \in \oplus_{\q}\Z_{\geq 0}\q,\]
where $\q$ runs over all minimal prime ideals of $\overline{R}$.
\end{definit} 
 
\begin{lem}\label{lem:Z-additive-gr}
Let $n\geq 0$. If $0\ra N_1\ra N\ra N_2\ra 0$ is a short exact sequence of finitely generated $\gr(\Lambda)/J^n$-modules, then $\mathcal{Z}(N)=\mathcal{Z}(N_1)+\mathcal{Z}(N_2)$ in $\oplus_{\q}\Z_{\geq 0}\q$. 
\end{lem}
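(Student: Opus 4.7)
The plan is to reduce the statement directly to Lemma \ref{lem:m-additive}, which already asserts additivity of the multiplicity $m_{\mathfrak q}(\,\cdot\,)$ in short exact sequences of $\gr(\Lambda)/J^n$-modules. Since $\mathcal Z(N)$ is by definition the finite sum $\sum_{\mathfrak q} m_{\mathfrak q}(N)\,\mathfrak q$ over the (finitely many, namely $2^f$) minimal primes $\mathfrak q$ of $\overline R$, additivity of $\mathcal Z$ in $\oplus_{\mathfrak q}\Z_{\geq 0}\mathfrak q$ is equivalent to additivity of each coefficient $m_{\mathfrak q}(\,\cdot\,)$.

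So the proof I envisage consists of a single line: apply Lemma \ref{lem:m-additive} to each minimal prime $\mathfrak q$ of $\overline R$ and sum the resulting equalities $m_{\mathfrak q}(N)=m_{\mathfrak q}(N_1)+m_{\mathfrak q}(N_2)$ over $\mathfrak q$, yielding
\[
\mathcal Z(N)\;=\;\sum_{\mathfrak q}m_{\mathfrak q}(N)\,\mathfrak q\;=\;\sum_{\mathfrak q}\bigl(m_{\mathfrak q}(N_1)+m_{\mathfrak q}(N_2)\bigr)\mathfrak q\;=\;\mathcal Z(N_1)+\mathcal Z(N_2).
\]

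There is no real obstacle here, since the combinatorial content has been absorbed into Lemma \ref{lem:m-additive}. The only thing one should verify, if being careful, is that both $N_1$ and $N_2$ are themselves annihilated by $J^n$ (so that their $\mathcal Z$ is well-defined with the same $n$): this is immediate because $J^n$ annihilates $N$ and hence its submodule $N_1$ and its quotient $N_2$. Everything else is formal.
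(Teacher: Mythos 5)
Your proof is correct and matches the paper's own argument, which is the same one-line reduction to Lemma \ref{lem:m-additive}. The extra check you mention (that $N_1$ and $N_2$ inherit the $J^n$-annihilation from $N$) is a harmless verification that the paper leaves implicit.
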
 
\begin{proof}
It is a direct consequence of Lemma \ref{lem:m-additive}.
\end{proof}
 
Let $M$ be a finitely generated $\Lambda$-module which is equipped with a good filtration $F\defeq \{F_nM : n\in\Z\}$ (in the sense of \cite[\S I.5]{LiOy}) such that $\gr_{F}(M)$ is annihilated by some power of $J$. Recall that this condition doesn't depend on the choice of the good filtration $F$ (see just before Proposition \ref{prop:finiteness}) and that $\gr_{F}(M)$ is also finitely generated over $\gr(\Lambda)$ (\cite[Lemma I.5.4]{LiOy}).

\begin{lem}\label{indgood}
If $F, F'$ are two such good filtrations on $M$, then 
\[\mathcal{Z}(\gr_{F}(M))=\mathcal{Z}(\gr_{F'}(M)).\]
\end{lem}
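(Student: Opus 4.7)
My plan is to prove the invariance of the characteristic cycle under change of good filtration by combining three ingredients: shift invariance, equivalence of good filtrations (\cite[Lemma I.5.3]{LiOy}), and the additivity of $\mathcal{Z}$ on short exact sequences of finitely generated $\gr(\Lambda)/J^n$-modules (Lemma \ref{lem:Z-additive-gr}).

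First I would establish shift invariance: for any $k \in \Z$, the shifted filtration $F[k]_n M \defeq F_{n+k} M$ is again a good filtration on $M$, and $\gr_{F[k]}(M)$ is isomorphic to $\gr_F(M)$ as ungraded $\gr(\Lambda)$-modules (only the internal grading is reindexed by $k$). Since every minimal prime $\mathfrak{q}$ of $\overline R$ is homogeneous and $m_\mathfrak{q}$ is computed from the localization $(\gr_F M)_\mathfrak{q}$, grading shifts do not affect $m_\mathfrak{q}$, and thus $\mathcal{Z}(\gr_{F[k]} M) = \mathcal{Z}(\gr_F M)$.

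Next, by \cite[Lemma I.5.3]{LiOy} applied to $F$ and $F'$, there exists $c \geq 0$ with $F_{n-c} M \subseteq F'_n M \subseteq F_{n+c} M$ for all $n$. I would then build a chain of good filtrations on $M$ interpolating between $F'$ and the shift $F[c]$ by
\[G^{(i)}_n M \defeq F'_n M + F_{n - c + i} M, \qquad 0 \leq i \leq 2c,\]
so that $G^{(0)} = F'$, $G^{(2c)} = F[c]$, and $G^{(i)} \leq G^{(i+1)}$. Forming the Rees modules $\tilde M_{G^{(i)}} \defeq \bigoplus_n G^{(i)}_n M \subseteq M \otimes_\F \F[T,T^{-1}]$ as graded modules over the Rees algebra $\tilde \Lambda \defeq \bigoplus_n F_n \Lambda$, each $\tilde M_{G^{(i)}}$ is $\F[T]$-flat (equivalently, the filtration $G^{(i)}$ is good), with $\tilde M_{G^{(i)}} / T \tilde M_{G^{(i)}} \cong \gr_{G^{(i)}} M$. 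The inclusion $\tilde M_{G^{(i)}} \hookrightarrow \tilde M_{G^{(i+1)}}$ with its $T$-torsion quotient, together with \cite[Thm.I.4.2.4]{LiOy}, produces a short exact sequence of finitely generated $\gr(\Lambda)$-modules annihilated by a power of $J$, and Lemma \ref{lem:Z-additive-gr} expresses $\mathcal{Z}(\gr_{G^{(i+1)}} M) - \mathcal{Z}(\gr_{G^{(i)}} M)$ as the cycle of an explicit subquotient that, after careful bookkeeping, is itself a shift of a piece of $\gr_F M$. Telescoping over $0 \leq i < 2c$ and applying shift invariance to identify the accumulated correction terms with themselves then collapses them, yielding $\mathcal{Z}(\gr_{F'} M) = \mathcal{Z}(\gr_{F[c]} M) = \mathcal{Z}(\gr_F M)$.

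The main obstacle is making the telescope cancellation precise, and in particular controlling the cycles of the $T$-torsion quotients $\tilde M_{G^{(i+1)}}/\tilde M_{G^{(i)}}$: one must verify that under the flatness of each Rees module these error terms, when reduced modulo $T$, have characteristic cycles that cancel across consecutive steps (rather than accumulating). Concretely this is a standard Artin-Rees-type computation inside the noetherian Rees algebra $\tilde\Lambda$ in the sense of \cite[Ch.~III]{LiOy}, reducing the bookkeeping to verifying that both chains of inclusions $T^c \tilde M_F \subseteq \tilde M_{F'} \subseteq \tilde M_F$ (obtained from the two halves of the equivalence) produce telescope contributions that are negatives of one another, so that only the shift from $F$ to $F[c]$ survives, which is absorbed by shift invariance.
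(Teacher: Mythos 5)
Your approach is genuinely different from the paper's: you interpolate with an increasing chain of good filtrations $G^{(i)}_n = F'_n M + F_{n-c+i}M$ and compare Rees modules, whereas the paper (following Bj\"ork) interpolates with $F^{(i)}_n = F_{n+i}M \cap F'_n M$ and directly exhibits two short exact sequences
\[0\to T_i\to \gr_{F^{(i+1)}}(M)\to S_i\to 0,\qquad 0\to S_i[-1]\to \gr_{F^{(i)}}(M)\to T_i\to 0\]
in which the \emph{same} modules $T_i$, $S_i$ appear, so that additivity of $\mathcal Z$ gives $\mathcal Z(\gr_{F^{(i)}})=\mathcal Z(\gr_{F^{(i+1)}})$ step by step with nothing to telescope.

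As written, however, your proof has a genuine gap, and your own diagnosis of the gap (that the correction terms should ``cancel across consecutive steps'') points in the wrong direction. The $T$-torsion quotient $Q_i = \tilde M_{G^{(i+1)}}/\tilde M_{G^{(i)}}$ does not interact with $Q_{i+1}$; rather, the cancellation happens \emph{within each step}. Since $G^{(i+1)}_{n-1}\subseteq G^{(i)}_n$ (because $F'_{n-1}\subseteq F'_n$), the quotient $Q_i$ is killed by $T$. Tensoring the short exact sequence of Rees modules with $\F[T]/T$, using that the Rees modules are $T$-torsion free (hence $\F[T]$-flat) and that $TQ_i=0$ forces $\operatorname{Tor}_1^{\F[T]}(Q_i,\F[T]/T)\cong Q_i$, yields a four-term exact sequence of finitely generated $\gr(\Lambda)$-modules
\[0\to Q_i[1]\to \gr_{G^{(i)}}(M)\to\gr_{G^{(i+1)}}(M)\to Q_i\to 0,\]
in which both the kernel and the cokernel are (grading shifts of) the same module $Q_i$. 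Since $Q_i$ is a quotient of $\gr_{G^{(i+1)}}(M)$ it is annihilated by a power of $J$, so $\mathcal Z(Q_i)$ is defined, and as $\mathcal Z$ ignores internal grading shifts, Lemma \ref{lem:Z-additive-gr} applied to the two halves of this sequence gives $\mathcal Z(\gr_{G^{(i)}}(M))+\mathcal Z(Q_i) = \mathcal Z(Q_i[1])+\mathcal Z(\gr_{G^{(i+1)}}(M))$, i.e.\ $\mathcal Z(\gr_{G^{(i)}}(M))=\mathcal Z(\gr_{G^{(i+1)}}(M))$ outright. With that observation there is no accumulation to control and no delicate Artin--Rees bookkeeping; you should also verify explicitly that each $G^{(i)}$ is a good filtration (it is equivalent to $F'$, and for a Zariski filtered ring equivalence preserves goodness). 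Until the $TQ_i=0$ observation is made and the four-term sequence is written down, the ``standard Artin--Rees-type computation'' remains unsubstantiated.
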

\begin{proof}
The proof is (almost) the same as in \cite[\S4]{Bj89}. We recall it for the convenience of the reader. Since $F$ and $F'$ are equivalent by \cite[Lemma I.5.3]{LiOy}, we may find $c\in\Z_{\geq 0}$ such that 
\[F_{n-c}M\subset F'_nM\subset F_{n+c}M,\ \ \forall\ n\in\Z.\]
For $i\in \{-c,-c+1,\dots,c\}$ define a sequence of filtrations $F^{(i)}=\{F^{(i)}_nM : n\in\Z\}$ on $M$ by
\[F^{(i)}_nM\defeq F_{n+i}M\cap F_n'M.\]
It is clear that $F^{(-c)}=F[-c]$ and $F^{(c)}=F'$, where $F[-c]$ denotes the shifted filtration $F[-c]_n\defeq F_{n-c}$, $n\in \Z$. Hence it suffices to show that each $F^{(i)}$ is a good filtration on $M$ such that
\begin{equation}\label{eq:Bjork-Z}
\mathcal{Z}(\gr_{F^{(i)}}(M))=\mathcal{Z}(\gr_{F^{(i+1)}}(M)).
\end{equation}
Put for $-c\leq i\leq c$:
\[T_i\defeq \bigoplus_{n\in \Z}(F_{n+i}M\cap F'_{n}M)/(F_{n+i}M\cap F'_{n-1}M),\]
\[S_i\defeq \bigoplus_{n\in\Z}(F_{n+i+1}M\cap F_{n}'M)/(F_{n+i}M\cap F'_{n}M).\]
Since $T_i$ is a $\gr(\Lambda)$-submodule of $\gr_{F'}(M)$ and $S_i$ is a $\gr(\Lambda)$-submodule of $\gr_{F}(M)[i+1]$, both $T_i$ and $S_i$ are finitely generated $\gr(\Lambda)$-modules and are annihilated by some power of $J$. Moreover, one checks that there are short exact sequences of $\gr(\Lambda)$-modules (annihilated by some power of $J$):
\[0\ra T_i\ra \gr_{F^{(i+1)}}(M)\ra S_i\ra0,\]
\[0\ra S_i[-1]\ra \gr_{F^{(i)}}(M)\ra T_i\ra0.\]
Hence, $\gr_{F^{(i)}}(M)$ is also finitely generated over $\gr(\Lambda)$ and annihilated by a power of $J$. Consequently, $F^{(i)}$ is a good filtration on $M$ by \cite[Thm.I.5.7]{LiOy} and \eqref{eq:Bjork-Z} follows from Lemma \ref{lem:Z-additive-gr}.
\end{proof}

Thanks to Lemma \ref{indgood}, we can define $m_{\q}(M)$ to be $m_{\q}(\gr_{F}(M))$ and $\cZ(M)$ to be $\mathcal{Z}(\gr_{F}(M))$ for any minimal prime ideal $\q$ of $\overline R$ and any good filtration $F$ on $M$.

\begin{lem}\label{lem:Z-additive}
Let $M$ be as above and let $0\ra M_1\ra M\ra M_2\ra 0$ be an exact sequence of $\Lambda$-modules. Then we have in $\oplus_{\q}\Z_{\geq 0}\q$:
\[\mathcal{Z}(M)=\mathcal{Z}(M_1)+\mathcal{Z}(M_2).\]
\end{lem}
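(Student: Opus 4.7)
The plan is to reduce the statement to its graded version (Lemma \ref{lem:Z-additive-gr}) by choosing good filtrations on $M_1$ and $M_2$ that are compatible with a good filtration on $M$.

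First, fix a good filtration $F$ on $M$ such that $\gr_F(M)$ is annihilated by some power of $J$. Endow $M_1 \subseteq M$ with the induced (subspace) filtration $F_n M_1 \defeq F_n M \cap M_1$ and $M_2 = M/M_1$ with the quotient filtration $F_n M_2 \defeq \mathrm{Im}(F_n M \to M_2)$. Standard facts about filtered modules over Zariskian filtered rings (cf.\ \cite[Rk.I.5.2(2)]{LiOy} and \cite[Cor.I.5.5]{LiOy}, as already used e.g.\ in the proof of Proposition \ref{prop:exactnessDA}) imply that both induced filtrations are good and that the associated sequence
\[0 \longrightarrow \gr_F(M_1) \longrightarrow \gr_F(M) \longrightarrow \gr_F(M_2) \longrightarrow 0\]
is exact (and strict). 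In particular $\gr_F(M_1)$ is a $\gr(\Lambda)$-submodule of $\gr_F(M)$ and $\gr_F(M_2)$ is a quotient, so both are finitely generated and annihilated by the same power of $J$ as $\gr_F(M)$. Hence $M_1$ and $M_2$ are in the class of modules for which $\mathcal{Z}$ is defined, and by Lemma \ref{indgood} the cycles $\mathcal{Z}(M_1)$ and $\mathcal{Z}(M_2)$ can be computed using these induced good filtrations.

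Now apply Lemma \ref{lem:Z-additive-gr} to the above short exact sequence of graded $\gr(\Lambda)/J^n$-modules to obtain
\[\mathcal{Z}(\gr_F(M)) = \mathcal{Z}(\gr_F(M_1)) + \mathcal{Z}(\gr_F(M_2)),\]
which is the desired identity $\mathcal{Z}(M) = \mathcal{Z}(M_1) + \mathcal{Z}(M_2)$.

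There is essentially no obstacle here: the argument is the standard reduction from a short exact sequence of filtered modules to its graded version, and the two nontrivial ingredients (goodness of the induced filtrations and additivity of the multiplicities at the minimal primes of $\overline{R}$) are available as Lemmas \ref{indgood} and \ref{lem:Z-additive-gr}. The only point that must be checked carefully is the strict exactness of the graded sequence, which in this filtered setting is automatic because the subspace and quotient filtrations are defined precisely so that the maps become strict.
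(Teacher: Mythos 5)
Your proof is correct and follows essentially the same route as the paper: equip $M_1$ with the induced filtration and $M_2$ with the quotient filtration from a good filtration on $M$, observe these are good and the graded sequence is (strictly) exact, then invoke the additivity of $\mathcal{Z}$ on graded modules (Lemma \ref{lem:Z-additive-gr}). The only slight difference is that you explicitly mention Lemma \ref{indgood} to justify computing $\mathcal{Z}(M_i)$ via these particular filtrations, which the paper leaves implicit.
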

\begin{proof}
We may equip $M_1$ (resp.\ $M_2$) with the induced filtration (resp.\ quotient filtration) from the one of $M$, which are automatically good by \cite[Cor.I.5.5(1)]{LiOy} and \cite[Rem.I.5.2(2)]{LiOy}. Moreover the sequence $0\ra \gr(M_1)\ra \gr(M)\ra\gr(M_2)\ra0$ is again exact. In particular, both $\gr(M_1)$ and $\gr(M_2)$ are finitely generated $\gr(\Lambda)$-modules annihilated by some power of $J$, and the result follows from Lemma \ref{lem:Z-additive-gr}.
\end{proof}

If $M$ is a finitely generated $\Lambda$-module, recall from \cite[Def.III.2.1.1]{LiOy} that the grade of $M$ is by definition the smallest integer $j_\Lambda(M)\ge 0$ such that ${\rm E}_\Lambda^{j_{\Lambda}(M)}(M)\ne 0$ (with $j_{\Lambda}(M)\defeq +\infty$ if ${\rm E}_\Lambda^{j}(M)=0$ for all $j\geq 0$). For a good filtration $F$ on $M$, we define similarly the grade $j_{\gr(\Lambda)}(\gr_F(M))$ of the $\gr(\Lambda)$-module $\gr_F(M)$. By \cite[Thm.III.2.5.2]{LiOy} we have $j_{\gr(\Lambda)}(\gr_F(M))=j_\Lambda(M)$ (note that $\Lambda$ is a left and right Zariski ring by \cite[Prop.II.2.2.1]{LiOy}), in particular $j_{\gr(\Lambda)}(\gr_F(M))$ doesn't depend on the good filtration $F$.

Recall that the Krull dimension $\dim_{R}(N)$ of a finitely generated module $N$ over $R$ (which is commutative) is the Krull dimension of $R/{\rm Ann}_R(N)$. For such a module $N$, by the argument in the proof of \cite[Lemma 5.1.3]{BHHMS1} applied to $A=\gr(\Lambda)$, $I=(h_0,\dots,h_{f-1})$ and with $N$ instead of $\gr_{\m}M$ there, we have
\begin{equation}\label{dimannihi}
j_{\gr(\Lambda)}(N)=\dim(I_1/Z_1)-\dim_{R}(N).
\end{equation}
Now, for $M$ as above, assume that $\gr_F(M)$ is annihilated by a power of $J$. Then applying (\ref{dimannihi}) to the $\overline R$-modules $N=J^i\gr_F(M)/J^{i+1}\gr_F(M)$ for $i\geq 0$ and by an obvious d\'evissage using \cite[Lemma III.2.1.2(1)]{LiOy}, we deduce
\begin{equation}\label{>=2f}
j_\Lambda(M)\geq \dim(I_1/Z_1)-\dim(\overline R)=3f-f=2f.
\end{equation}
Moreover, by the same d\'evissage using \cite[Cor.III.2.1.6]{LiOy} (note that all assumptions are satisfied since $\gr(\Lambda)$ is Auslander regular) and (\ref{dimannihi}), we deduce that if $j_{\Lambda}(M)=j_{\gr(\Lambda)}(\gr_F(M))>2f$, then we have $\dim_{R}(J^i\gr_F(M)/J^{i+1}\gr_F(M))<f$ for all $i$, hence ${\mathcal Z}(J^i\gr_F(M)/J^{i+1}\gr_F(M))=0$ for all $i\geq 0$ and $\mathcal{Z}(M)=0$ (see \eqref{def:multiatp}). 

\begin{thm}\label{prop:cycle-Ext}
Let $M$ be a finitely generated $\Lambda$-module such that $\gr(M)$ is annihilated by a power of $J$ for one {\upshape(}equivalently every{\upshape)} good filtration on $M$. Then $\mathcal{Z}(\mathrm{E}^{2f}_{\Lambda}(M))$ is well-defined and we have
\[\mathcal{Z}(M)=\mathcal{Z}(\mathrm{E}^{2f}_{\Lambda}(M)).\]
\end{thm}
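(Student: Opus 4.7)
My plan is to reduce the assertion first from $\Lambda$ to $\gr(\Lambda)$, then from $\gr(\Lambda)$ to the commutative ring $\overline{R}$, and finally settle it by a localization-at-minimal-prime argument using that $\overline{R}$ is reduced. Throughout, the good filtration on $M$ is chosen so that $\gr(M)$ is killed by some power $J^n$, which by $(\ref{>=2f})$ forces $j_\Lambda(M)=j_{\gr(\Lambda)}(\gr(M))\geq 2f$; so $\mathrm{E}^{2f}_\Lambda(M)$ is exactly the ``first nontrivial'' Ext group, which is the cleanest case to handle.

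First, I would invoke the filtration machinery of \cite[Ch.\ III]{LiOy}: since $\Lambda$ is Auslander regular and Zariskian and $M$ is equipped with a good filtration, the module $\mathrm{E}^{2f}_\Lambda(M)$ inherits a good filtration such that $\gr(\mathrm{E}^{2f}_\Lambda(M))$ is, up to a finite filtration by graded subquotients whose grade is $>2f$, isomorphic to $\mathrm{E}^{2f}_{\gr(\Lambda)}(\gr(M))$ (cf.~the proofs of \cite[Thm.III.2.5.2]{LiOy} and \cite[Thm.III.2.2.5]{LiOy}). By (\ref{>=2f}) applied to subquotients of grade $>2f$ (which have Krull dimension $<f$ as $\overline R$-modules by \eqref{dimannihi}), those extra pieces contribute $0$ to $\mathcal Z$, so
\[\mathcal{Z}(\mathrm{E}^{2f}_\Lambda(M))=\mathcal{Z}(\mathrm{E}^{2f}_{\gr(\Lambda)}(\gr(M))).\]
It thus suffices to prove $\mathcal{Z}(N)=\mathcal{Z}(\mathrm{E}^{2f}_{\gr(\Lambda)}(N))$ for any finitely generated $\gr(\Lambda)$-module $N$ annihilated by some $J^n$.

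Next I would proceed by induction on $n$. For the inductive step, given $0\to N'\to N\to N''\to 0$ with $N'=J^{n-1}N$ killed by $J$ and $N''$ killed by $J^{n-1}$, the long exact sequence for $\mathrm{E}^\bullet_{\gr(\Lambda)}(-)$ yields
\[0\to \mathrm{E}^{2f}_{\gr(\Lambda)}(N'')\to \mathrm{E}^{2f}_{\gr(\Lambda)}(N)\to \mathrm{E}^{2f}_{\gr(\Lambda)}(N')\to \mathrm{E}^{2f+1}_{\gr(\Lambda)}(N''),\]
because $\mathrm{E}^{2f-1}_{\gr(\Lambda)}(N')=0$ by the grade bound. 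The last term has grade $\geq 2f+1$ (Auslander regularity), hence trivial characteristic cycle. Thus Lemma \ref{lem:Z-additive-gr} applied to the three-term exact sequence obtained by replacing the final arrow with its image still gives additivity of $\mathcal{Z}\circ \mathrm{E}^{2f}_{\gr(\Lambda)}$, which combined with Lemma \ref{lem:Z-additive-gr} for $\mathcal{Z}$ itself reduces everything to the case where $N$ is killed by $J$.

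For the base case, Lemma \ref{lem:devissage} gives $\mathrm{E}^{2f}_{\gr(\Lambda)}(N)\cong\Hom_{\overline R}(N,\overline R)$. Since $\overline R=\F[y_i,z_i]/(y_iz_i)$ is reduced, localization at any minimal prime $\mathfrak q\subset\overline R$ yields a field $\overline R_{\mathfrak q}=\mathrm{Frac}(\overline R/\mathfrak q)$. Then $m_{\mathfrak q}(N)=\dim_{\overline R_{\mathfrak q}}N_{\mathfrak q}$, and since $N$ is finitely generated and $\overline R$ is noetherian, $\Hom_{\overline R}(N,\overline R)_{\mathfrak q}\cong \Hom_{\overline R_{\mathfrak q}}(N_{\mathfrak q},\overline R_{\mathfrak q})$, which has the same dimension as $N_{\mathfrak q}$ over the field $\overline R_{\mathfrak q}$ by linear algebra. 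This gives $m_{\mathfrak q}(\Hom_{\overline R}(N,\overline R))=m_{\mathfrak q}(N)$ for every minimal prime, finishing the base case.

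The main obstacle I anticipate is the very first step: keeping careful track of how $\gr(\mathrm{E}^{2f}_\Lambda(M))$ compares to $\mathrm{E}^{2f}_{\gr(\Lambda)}(\gr(M))$. The general LvO theory only produces a subquotient relation rather than an isomorphism, so one must explicitly verify that the discrepancy is made up of pieces of higher grade (equivalently smaller Krull $\overline R$-dimension), which by \eqref{dimannihi} contribute $0$ to $\mathcal Z$. The rest of the argument is a clean d\'evissage together with the commutative-algebra input that $\overline R$ is reduced.
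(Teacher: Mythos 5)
Your proposal follows essentially the same route as the paper's proof: use the LvO filtration machinery (our Proposition~\ref{prop:LvO-filtonExt}) to replace $\gr(\mathrm{E}^{2f}_\Lambda(M))$ by $\mathrm{E}^{2f}_{\gr(\Lambda)}(\gr(M))$ up to pieces of grade $>2f$, dévissage via $J^i N/J^{i+1}N$ down to $\overline R$-modules using Lemma~\ref{lem:devissage}, and finish by localizing at minimal primes of the reduced ring $\overline R$. The only notational difference (explicit induction on $n$ versus a direct summation over the $J$-filtration) is cosmetic; do be sure to record, as the paper does in its first paragraph, that $\mathrm{E}^{2f}_{\gr(\Lambda)}(\gr_F(M))$ is itself killed by a power of $J$, which is needed for $\mathcal Z(\mathrm{E}^{2f}_\Lambda(M))$ to be well-defined and for Lemma~\ref{lem:Z-additive-gr} to apply.
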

\begin{proof}
If $j_\Lambda(M)> 2f$, then the result is trivial since both terms are $0$ by the sentence just before the proposition. So from (\ref{>=2f}) we may assume $j_\Lambda(M)= 2f$ in the rest of the proof.
 
 Choose a good filtration $F$ of $M$ so that $\cZ(M) = \cZ(\gr_{F}(M))$. We first show that the $\gr(\Lambda)$-module ${\rm E}^{2f}_{\gr(\Lambda)}(\gr_F(M))$ is also annihilated by some power of $J$. Indeed, $\gr_F(M)$ has a finite filtration whose graded pieces are annihilated by $J$, hence by d\'evissage it suffices to show that $\mathrm{E}^{2f}_{\gr(\Lambda)}(N)$ is annihilated by $J$ if $N$ is a finitely generated $\overline{R}$-module. As in the proof of Proposition \ref{prop:Ext-R/I} it is equivalent to prove the same property for $\mathrm{E}^{f}_{R}(N)$, which is obvious as $R$ is commutative.
 
As a consequence, by the first statement in Proposition \ref{prop:LvO-filtonExt} below the graded module associated to the filtration on $\mathrm{E}^{2f}_{\Lambda}(M)$ in {\it loc.cit.}\ is again finitely generated over $\gr(\Lambda)$ and annihilated by some power of $J$. Hence $\cZ(\mathrm{E}^{2f}_{\Lambda}(M))$ can be defined. By Proposition \ref{prop:LvO-filtonExt} the cokernel of the injection $\gr(\mathrm{E}^{2f}_{\Lambda}(M))\hookrightarrow \mathrm{E}^{2f}_{\gr(\Lambda)}(\gr_F(M))$ has grade $>2f$, hence its associated characteristic cycle is $0$, as explained above. From Lemma \ref{lem:Z-additive-gr} we deduce an equality of cycles
\[\mathcal{Z}\big(\gr(\mathrm{E}^{2f}_{\Lambda}(M))\big)=\mathcal{Z}\big(\mathrm{E}^{2f}_{\gr(\Lambda)}(\gr_F(M))\big).\]
Hence, we are left to show that
\[\mathcal{Z}(\gr_F(M))= \mathcal{Z}\big(\mathrm{E}^{2f}_{\gr(\Lambda)}(\gr_F(M))\big).\]
As $\gr(\Lambda)$ is an Auslander regular ring, any subquotient $N$ of $\gr_F(M)$ has grade $\geq 2f$ (by \cite[Prop.III.2.1.6]{LiOy}) and is such that $\mathrm{E}^{j}_{\gr(\Lambda)}(N)$ has grade $\geq j$ for any $j\geq 0$, so that $\EE^j_{\gr(\Lambda)}(N)$ and all its subquotients have zero cycle if $j<2f$ or if $j> 2f$ (by Lemma \ref{lem:Z-additive-gr} and the discussion before the proposition for the latter). Hence, for $n$ large enough so that $J^n$ annihilates $\gr_F(M)$, we deduce using again Lemma \ref{lem:Z-additive-gr}:
\[\mathcal{Z}\big(\mathrm{E}^{2f}_{\gr(\Lambda)}(\gr_F(M))\big)=\sum_{i=0}^{n-1}\mathcal{Z}\big(\mathrm{E}^{2f}_{\gr(\Lambda)}(J^i\gr_F(M)/J^{i+1}\gr_F(M))\big).\]
By the definition of $\mathcal Z$ and of $m_\q(N)$, see \eqref{def:multiatp}, it thus suffices to show 
\[\mathcal{Z}(N)=\mathcal{Z}(\mathrm{E}^{2f}_{\gr(\Lambda)}(N))\]
for any finitely generated $\overline{R}$-module $N$.
Using Lemma \ref{lem:devissage} it suffices to show 
\[\mathcal{Z}(N)= \mathcal{Z}(\Hom_{\overline{R}}(N,\overline{R})),\]
which is equivalent to show that for any minimal prime ideal $\q$ of $\overline{R}$, \[\mathrm{lg}_{\overline{R}_{\q}}(N_\q)=\mathrm{lg}_{\overline{R}_{\q}}(\Hom_{\overline{R}}(N,\overline{R})_{\q}).\] 
Using the isomorphism $\Hom_{\overline{R}}(N,\overline{R})_{\q}\cong \Hom_{\overline{R}_{\q}}(N_{\q},\overline{R}_{\q})$ and noting that $\overline{R}_{\q}$ is a field (being artinian, and reduced as $\overline{R}$ is), the result is clear.
\end{proof}

The first part of the following general result was used in the proof of Theorem \ref{prop:cycle-Ext}. Recall that a finitely generated $\gr(\Lambda)$-module of grade $j$ is Cohen--Macaulay if all its ${\rm E}^i_{\gr(\Lambda)}$ are $0$ when $i\ne j$.

\begin{prop}\label{prop:LvO-filtonExt}
Let $M$ be a finitely generated $\Lambda$-module of grade $j_0$ with a good filtration. Then there exists a good filtration on $\mathrm{E}^{j_0}_\Lambda(M)$ such that $\gr(\mathrm{E}^{j_0}_\Lambda(M))$ is a submodule of $\mathrm{E}^{j_0}_{\gr(\Lambda)}(\gr(M))$ and the corresponding cokernel has grade {\upshape(}over $\gr(\Lambda)${\upshape)} $\geq j_0+1$. If $\gr(M)$ is moreover Cohen--Macaulay, then
\[\gr(\mathrm{E}^{j_0}_{\Lambda}(M))\buildrel\sim\over\rightarrow \mathrm{E}^{j_0}_{\gr(\Lambda)}(\gr(M)).\]
\end{prop}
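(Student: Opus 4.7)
The plan is to deduce this from standard machinery on filtered Auslander regular rings, closely following \cite[\S III.2]{LiOy}. Recall that $\Lambda$ is Zariskian (by \cite[Prop.II.2.2.1]{LiOy}) and Auslander regular, as noted at the beginning of this section. Since $M$ is finitely generated and equipped with a good filtration, by \cite[Ch.I]{LiOy} we can construct a filt-free resolution $F_\bullet \to M$ by finite direct sums of shifted copies of $\Lambda$, together with strict filtered maps, such that the associated graded sequence $\gr(F_\bullet) \to \gr(M)$ is a free resolution of $\gr(M)$ over $\gr(\Lambda)$ (this uses the Zariski property and the fact that good filtrations lift).

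Next I would apply $\Hom_\Lambda(-,\Lambda)$ to $F_\bullet$ to obtain a filtered cochain complex $C^\bullet$, whose $i$-th cohomology is $\mathrm{E}^i_\Lambda(M)$. By construction the associated graded complex $\gr(C^\bullet)$ is canonically identified with $\Hom_{\gr(\Lambda)}(\gr(F_\bullet),\gr(\Lambda))$, whose cohomology is $\mathrm{E}^i_{\gr(\Lambda)}(\gr(M))$. Endowing each $\mathrm{E}^i_\Lambda(M)$ with the (good) filtration induced from $C^\bullet$, the standard convergence result for spectral sequences of filtered complexes (see \cite[Thm.III.2.4.2]{LiOy} or its proof) yields a spectral sequence
\[
E_1^{p,q} \Longrightarrow \gr(\mathrm{E}^{p+q}_\Lambda(M))
\]
whose $E_2$-page is built out of $\mathrm{E}^{p+q}_{\gr(\Lambda)}(\gr(M))$ with a suitable regrading by $p$.

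The key point is to look at what happens at the bottom degree $i=j_0$. Since $\mathrm{E}^{i}_{\gr(\Lambda)}(\gr(M)) = 0$ for $i<j_0$ (as the grades of $M$ and $\gr(M)$ coincide by \cite[Thm.III.2.5.2]{LiOy}), every incoming differential at total degree $j_0$ originates in $\mathrm{E}^{i}_{\gr(\Lambda)}(\gr(M))$ for some $i<j_0$ and therefore vanishes. Hence $\gr(\mathrm{E}^{j_0}_\Lambda(M))$ is a \emph{subquotient} (in fact a submodule, as no earlier terms survive) of $\mathrm{E}^{j_0}_{\gr(\Lambda)}(\gr(M))$, giving the desired injection. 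The cokernel is a subquotient of the targets of the outgoing differentials at total degree $j_0$, which come from $\mathrm{E}^{i}_{\gr(\Lambda)}(\gr(M))$ for $i\geq j_0+1$. Each of these has grade $\geq j_0+1$ over the Auslander regular ring $\gr(\Lambda)$ (by \cite[Prop.III.2.1.6]{LiOy}), so any subquotient of them also has grade $\geq j_0+1$, whence the first assertion follows.

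For the Cohen--Macaulay case, the vanishing $\mathrm{E}^i_{\gr(\Lambda)}(\gr(M))=0$ for $i\neq j_0$ collapses the spectral sequence at $E_2$ at total degree $j_0$: there are neither incoming nor outgoing differentials, so the injection $\gr(\mathrm{E}^{j_0}_\Lambda(M)) \hookrightarrow \mathrm{E}^{j_0}_{\gr(\Lambda)}(\gr(M))$ is an isomorphism. The main obstacle -- which is really more of a bookkeeping issue than a conceptual one -- is verifying that the induced filtration on $\mathrm{E}^{j_0}_\Lambda(M)$ is actually good and that the spectral sequence has the precise form asserted above; but these points are already handled in \cite[\S III.2]{LiOy} for Zariskian filtered rings, so the proof is essentially a matter of assembling and citing those results in the correct order.
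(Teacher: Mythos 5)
Your argument is correct and follows essentially the same route as the paper's proof: build a filt-free resolution, apply $\Hom_\Lambda(-,\Lambda)$, and analyze the spectral sequence of the resulting filtered complex, using $j_\Lambda(M)=j_{\gr(\Lambda)}(\gr(M))$ to kill the lower terms and Auslander regularity of $\gr(\Lambda)$ to bound the grade of the cokernel, with the Cohen--Macaulay hypothesis forcing degeneration. The paper uses the same machinery, presenting the spectral sequence in the single-index form $\{E^r_j\}$ of \cite[\S III.2.2]{LiOy} and citing \cite[Prop.3.1]{Bj89} and \cite[Lemma III.2.2.2(2)]{LiOy} for the identification $\gr(\mathrm{E}^0_\Lambda(L_j))\cong \mathrm{E}^0_{\gr(\Lambda)}(\gr(L_j))$, but the content is identical.
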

\begin{proof}
See \cite[Prop.3.1]{Bj89} and the remark following it. We explain the proof following the presentation of \cite[\S III.2.2]{LiOy}.

As in \cite[\S III.2.2]{LiOy}, we may construct a filtered free resolution of $M$
\[\cdots\ra L_j\ra L_{j-1}\ra \cdots \ra L_0\ra M\ra0\]
and taking $\EE^0_{\Lambda}(-)=\Hom_\Lambda(-,\Lambda)$ obtain a filtered complex of finitely generated $\Lambda$-modules
\begin{equation}\label{filtcomplex}
0\ra \EE^0_{\Lambda}(L_0)\ra \EE^0_{\Lambda}(L_1)\ra \cdots,
\end{equation}
where each $\EE^0_{\Lambda}(L_j)$ is endowed with a good filtration as in \emph{loc.cit.}. Taking the associated graded complex of (\ref{filtcomplex}), we obtain a complex of $\gr(\Lambda)$-modules (denoted $G(*)$ in \emph{loc.cit.}):
\[0\ra \gr(\EE^0_{\Lambda}(L_0))\ra \gr(\EE^0_{\Lambda}(L_1))\ra \cdots\] 
and \ by \ \cite[Lemma III.2.2.2(2)]{LiOy} \ we \ have \ isomorphisms \ $\EE^0_{\gr(\Lambda)}(\gr(L_j))\cong \gr(\EE^0_{\Lambda}(L_j))$ for $j\geq 0$. Next, as in \cite[\S III.1]{LiOy} we may associate a spectral sequence $\{E_{j}^{r} : r\geq0, j\geq 0\}$ to the filtered complex (\ref{filtcomplex}) and define a good filtration on $\EE^j_{\Lambda}(M)$ for $j\geq 0$ with the following properties (for convenience we have shifted the numbering):
\begin{enumerate}[(a)]
\item $E_j^0=\gr(\EE^0_{\Lambda}(L_j))$ and $E_j^1=\EE^{j}_{\gr(\Lambda)}(\gr(M))$ for any $j$;
\item for any fixed $r\geq 1$, there is a complex 
\[0\ra E_{0}^{r}\ra \cdots\ra E_{j}^{r}\ra E_{j+1}^r\ra \cdots \]
whose homology gives $E_j^{r+1}$;
\item for $r$ large enough (depending on $j$), $E_j^{\infty}=E_j^{r}\cong \gr(\EE_{\Lambda}^j(M))$.
\end{enumerate}
Since $j_{\Lambda}(M)=j_0$ by assumption, we also have $j_{\gr(\Lambda)}(\gr(M))=j_0$ by \cite[Thm.III.2.5.2]{LiOy} and so $E^{1}_{j}=0$ for $j<j_0$. By (b), this implies short exact sequences
\[0\ra E^{r+1}_{j_0}\ra E^{r}_{j_0}\ra E^{r}_{j_0+1}, \ \ \forall \ r\geq 1.\] 
In particular, by taking $r$ large enough, $\gr(\EE_{\Lambda}^{j_0}(M))=E^{\infty}_{j_0}$ is a submodule of $E^{1}_{j_0}$. Moreover, since $E^{r}_{j_0+1}$ has grade $\geq j_0+1$ for all $r$ and so do its subquotients, the cokernel of $E^{\infty}_{j_0}\hookrightarrow E^1_{j_0}$ also has grade $\geq j_0+1$. 

If moreover $\gr(M)$ is Cohen--Macaulay, then $E^{1}_{j}=0$ except for $j=j_0$, hence $E^{\infty}_{j_0}=E^1_{j_0}$ which implies the last claim.
\end{proof}
 
\subsubsection{On the length of \texorpdfstring{$\pi$}{pi} in the semisimple case}\label{sec:length-of-pi}

For $\rhobar$ as in \S\ref{combi} assumed moreover semisimple and strongly generic, and $\pi$ as in \S\ref{grstr} with moreover $r=1$ and satisfying one more hypothesis, we prove that $\pi$ is generated over $\GL_2(K)$ by its $\GL_2(\oK)$-socle, is irreducible if $\rhobar$ is, and is semisimple of length $3$ if $\brho$ is reducible split and $f=2$.
 
We keep the notation in \S\ref{grstr} and we assume moreover that $\rhobar$ is \emph{semisimple} and satisfies the strong genericity condition (\ref{eq:6}) (we will use the results of \S\ref{tensorinduction}). We fix an admissible smooth representation $\pi$ of $\GL_2(K)$ over $\F$ satisfying the conditions (i), (ii) in \emph{loc.cit.}\ with $r=1$ in (i), i.e.\ $\pi^{K_1}\cong D_0(\brho)$. Recall this implies that $\gr(\pi^\vee)$ is annihilated by $J$, where $\gr(\pi^\vee)$ is computed with the $\m$-adic filtration. We assume moreover:
\begin{enumerate}
\item[(iii)] 
$\pi^{\vee}$ is \emph{essentially self-dual} of grade $2f$, i.e.\ there is a $\GL_2(K)$-equivariant isomorphism of $\Lambda$-modules
\begin{equation}\label{eq:selfdual}
{\rm E}^{2f}_{\Lambda}(\pi^{\vee})\cong \pi^{\vee}\otimes (\det(\rhobar)\omega^{-1})
\end{equation}
(recall $\det(\rhobar)\omega^{-1}$ is the central character of $\pi$). Here $\EE_{\Lambda}^j(\pi^{\vee})$ is endowed with the action of $\GL_2(K)$ (compatible with the $\Lambda$-module structure) defined in \cite[Prop.3.2]{Ko}. 
\end{enumerate}
(Note that, compared with \cite[Def.A.7]{HuWang2}, in the definition of essentially self-dual we do not assume that $\pi^\vee$ is Cohen--Macaulay. However, by \cite[Prop.III.4.2.8(1)]{LiOy} $\pi^{\vee}$ is \emph{pure} in the sense of \cite[Def.III.4.2.7]{LiOy}.)

\begin{rem}\label{formnonminimal}
Conditions (i) to (iii), with $r=1$ in (i), will be satisfied for $\pi$ coming from the global theory in the minimal case (see \S\ref{lcresults}). The reason to impose the extra assumption $r=1$ in (i) is that although for general $r$ we have an equality of diagrams
\[(\pi^{I_1}\hookrightarrow \pi^{K_1})=(D_0(\brho)^{I_1}\hookrightarrow D_0(\brho))^{\oplus r}\] 
for the representations $\pi$ coming from cohomology (see Theorem \ref{nonminimal} below), we do not know if this implies that $\pi$ has the form ${\pi'}^{\oplus r}$ for some representation $\pi'$ of $\GL_2(K)$.
\end{rem}

Given $\sigma\in W(\brho)$, we define the length of $\sigma$ as follows: if ${\lambda}\in\mathscr{D}$ corresponds to $\sigma$ (see \S\ref{combi}), then $\ell(\sigma)\defeq \ell({\lambda})$, see \eqref{eq:ell}. For $0\leq \ell\leq f$, let
\[W_{\ell}(\brho)\defeq \{\sigma\in W(\brho), \ell(\sigma)=\ell\}\]
and define $\tau_{\ell}(\brho)\defeq \oplus_{\sigma\in W_{\ell}(\brho)}\sigma$. We call $W_{\ell}(\brho)$, or by abuse of notation $\tau_{\ell}(\brho)$, an \emph{orbit} in $W(\brho)$. Note that this is different from an orbit of $\delta$ in $W(\rhobar)$ as defined in \S\ref{lowerproof} (see \S\ref{operatorF} for $\delta$), i.e.\ in general $\tau_{\ell}(\brho)$ contains several orbits of $\delta$. 

\begin{lem}\label{lem:orbit}
If $\pi'$ is a nonzero subrepresentation of $\pi$, then $\soc_{\GL_2(\cO_K)}(\pi')$ is a direct sum of orbits in $W(\brho)$.
\end{lem}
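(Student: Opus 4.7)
The plan is to argue in two stages: first establish that $\soc_{\GL_2(\oK)}(\pi')$ is closed under the operator $\delta$ of \S\ref{operatorF}, and then upgrade this to closure under the full length-orbits $W_\ell(\brho)$ by exploiting the essential self-duality \eqref{eq:selfdual}.

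By assumption (i) at the start of \S\ref{sec:length-of-pi}, $\pi^{K_1}\cong D_0(\brho)$ has multiplicity-free Jordan--H\"older constituents with $\GL_2(\oK)$-socle $\oplus_{\sigma\in W(\brho)}\sigma$, so the socle of $\pi'$ decomposes as $\oplus_{\sigma\in W'}\sigma$ for a uniquely determined subset $W'\subseteq W(\brho)$, and it suffices to show that $W'$ is a union of the sets $W_\ell(\brho)$. The element $\Pi\defeq \smatr{0}{1}{p}{0}\in\GL_2(K)$ preserves $\pi'$ and normalizes $I_1$, hence acts on $\pi'^{I_1}$; under the identification $(\soc_{\GL_2(\oK)}\pi)^{I_1}=\oplus_{\sigma\in W(\brho)}\sigma^{I_1}$ this action coincides, up to nonzero scalars, with the operator $S$ of \eqref{isoS}, which by \cite[\S 4]{breuil-IL} cyclically permutes these lines via the map $\delta$ recalled at the beginning of \S\ref{operatorF}. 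Thus $W'$ is $\delta$-stable, and since the explicit recipe for $\delta$ in \cite[\S 5]{breuil-IL} preserves the length $\ell$, every $\delta$-orbit inside $W'$ lies in a single $W_\ell(\brho)$.

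The main step will be to promote this to the statement that $W'\cap W_\ell(\brho)$ is either empty or all of $W_\ell(\brho)$. Setting $\pi''\defeq \pi/\pi'$, both $\pi'$ and $\pi''$ inherit the annihilation of $\gr((-)^\vee)$ by $J$, so that Corollary \ref{cor:cycle-pi'} furnishes surjections
\[\bigoplus_{\lambda\in\mathscr{P}'}\chi_\lambda^{-1}\otimes R/\mathfrak{a}(\lambda)\twoheadrightarrow\gr(\pi'^\vee),\qquad \bigoplus_{\lambda\in\mathscr{P}''}\chi_\lambda^{-1}\otimes R/\mathfrak{a}(\lambda)\twoheadrightarrow\gr(\pi''^\vee),\]
indexed by the $H$-characters appearing in the respective $I_1$-invariants. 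Combining the additivity $\cZ(\pi'^\vee)+\cZ(\pi''^\vee)=\cZ(\pi^\vee)$ of Lemma \ref{lem:Z-additive} with the total count $\dim V_{\GL_2}(\pi)=2^f$ from Corollary \ref{maintensorind} and Theorem \ref{thm:upperbound}, and with the self-duality-driven identity $\cZ(\mathrm{E}^{2f}_\Lambda(\pi'^\vee))=\cZ(\pi'^\vee)$ of Theorem \ref{prop:cycle-Ext}, the strategy is to force all the inequalities $m_{\mathfrak{p}_0}(\gr(\pi'^\vee))\leq \mathrm{lg}(\soc_{\GL_2(\oK)}(\pi'))\leq \dim V_{\GL_2}(\pi')$, together with their analogues for $\pi''$ and for the other minimal primes $\mathfrak{q}_{\mathcal J}$ of $\overline R$, to become equalities.

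The hard part will be to convert these numerical equalities into the desired combinatorial rigidity: namely that if $W'$ meets one $\delta$-orbit inside $W_\ell(\brho)$, it must meet every $\delta$-orbit of $W_\ell(\brho)$. This is the main obstacle since, already for $f=4$, a single $W_\ell(\brho)$ can split into several $\delta$-orbits (e.g.~the length-$2$ orbit splits into $\delta$-orbits of sizes $4$ and $2$), so $\delta$-stability alone is not enough. The point will be to read off the required coupling from the self-dual pairing between the socle of $\pi'$ and the cosocle of $\pi''$, examining how the ideals $\mathfrak{a}(\lambda)$ of Definition \ref{def:a(lambda)} are simultaneously distributed among $\mathscr{P}'$ and $\mathscr{P}''$, and using the saturation constraints imposed by matching the contributions of each minimal prime $\mathfrak{q}_{\mathcal J}$ on both sides of the exact sequence.
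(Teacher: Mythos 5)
Your opening step --- using the $\smatr{0}{1}{p}{0}$-stability of $\pi'^{I_1}$ to see that $\soc_{\GL_2(\oK)}(\pi')$ is closed under the weight-cycling operator $\delta$ --- is correct, and it aligns with the first move of the paper's proof, which is simply that $(\pi'^{I_1}\hookrightarrow\pi'^{K_1})$ is a subdiagram of $(\pi^{I_1}\hookrightarrow\pi^{K_1})$. But agreement stops there. You correctly diagnose the remaining gap yourself ($\delta$-orbits are generally strictly smaller than the length-orbits $W_\ell(\brho)$, already for $f=4$), and you never close it: your last two paragraphs are a plan (``The main step will be\dots'', ``The hard part will be\dots'', ``The point will be to read off\dots''), not an argument. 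Moreover, what you sketch cannot work without substantial new input. Corollary~\ref{cor:cycle-pi'} only gives an \emph{upper bound} on $\cZ(\gr(\pi'^\vee))$, and the characteristic cycle only sees the multiset of subsets $\mathcal A(\lambda)$ from~\eqref{eq:A(lambda)} attached to the Serre weights in the socle; the assignment $J_\sigma\mapsto\mathcal A(\lambda)$ is not injective on $\mathscr D$, so matching multiplicities at the minimal primes $\mathfrak q_{\mathcal J}$ --- even together with the duality symmetry $\cZ(\EE^{2f}_\Lambda(\pi'^\vee))=\cZ(\pi'^\vee)$ of Theorem~\ref{prop:cycle-Ext} --- does not pin down which Serre weights actually occur in $\soc_{\GL_2(\oK)}(\pi')$.

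The paper's proof goes by an entirely different and far more direct route: it feeds the subdiagram structure into the explicit description of $D_0(\brho)$ from \cite[Thm.15.4]{BP} and the diagram-theoretic weight-cycling argument in the proof of \cite[Thm.19.10]{BP}, which supplies exactly the combinatorial rigidity you are missing (and, when $\brho$ is irreducible, even the stronger conclusion $\soc_{\GL_2(\oK)}(\pi')=\soc_{\GL_2(\oK)}(\pi)$). In particular no appeal to hypothesis~(iii) (essential self-duality) or to characteristic cycles is needed for this lemma; those tools only enter later, in Proposition~\ref{prop:socle=orbit} and Theorem~\ref{thm:gen-socle}.
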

\begin{proof}
It is clear that $(\pi'^{I_1}\hookrightarrow \pi'^{K_1})$ is a subdiagram of $(\pi^{I_1}\hookrightarrow \pi^{K_1})$. The result follows from this using \cite[Thm.15.4]{BP} together with the proof of \cite[Thm.19.10]{BP}. Actually, when $\brho$ is irreducible, we even have $\soc_{\GL_2(\cO_K)}(\pi')=\soc_{\GL_2(\cO_K)}(\pi)$ by (the proof of) \cite[Thm.19.10]{BP}. 
\end{proof}

We use without comment the notation and definitions in \S\ref{upperb} and denote by lg$(\tau)$ the length of a finite-dimensional representation $\tau$ of $\GL_2(\oK)$ over $\F$.

\begin{prop}\label{prop:socle=orbit} 
Let $\pi'$ be a subquotient of $\pi$.
\begin{enumerate}
\item We have $\dim_{\F\ppar{X}} D_{\xi}^{\vee}(\pi')=m_{\p_0}(\pi'^{\vee})$.
\item Assume that $\pi'$ is a subrepresentation of $\pi$. Then
  \[\dim_{\F\ppar{X}}
  D_{\xi}^{\vee}(\pi')=m_{\p_0}(\pi'^{\vee})=\mathrm{lg}(\soc_{\GL_2(\cO_K)}(\pi')).\]
  In particular, if $\pi'\neq0$, then $D_{\xi}^{\vee}(\pi')\neq0$.
\item Assume that $\pi'$ is a nonzero quotient of $\pi$. Then $D_{\xi}^{\vee}(\pi')\neq0$.
\end{enumerate}
\end{prop}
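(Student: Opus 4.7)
The plan is to establish the three parts in sequence, exploiting the additivity on short exact sequences of both $\dim_{\F\ppar{X}}D_\xi^\vee(-)$ and the characteristic-cycle invariant $m_{\mathfrak{p}_0}$, anchored by the base-case equality $\dim_{\F\ppar{X}}D_\xi^\vee(\pi)=m_{\mathfrak{p}_0}(\pi^\vee)=2^f$ furnished by Corollary \ref{maintensorind}.

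\textbf{Part (i).} First observe that any subquotient $\pi'$ of $\pi$ lies in the category $\mathcal{C}$: with induced/quotient filtrations from a good filtration of $\pi^\vee$, the graded module $\gr(\pi'^\vee)$ is a subquotient of $\gr(\pi^\vee)$ and is therefore still annihilated by $J$. Corollary \ref{cor:upperbound} then gives the inequality $\dim D_\xi^\vee(\pi')\leq m_{\mathfrak{p}_0}(\pi'^\vee)$. For any short exact sequence of subquotients, $\dim D_\xi^\vee$ is additive by Theorem \ref{thm:exactnessDA} combined with Theorem \ref{thm:functors_comparison}, while $m_{\mathfrak{p}_0}$ is additive by Lemma \ref{lem:Z-additive}. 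Applied to $0\to\pi_B\to\pi\to\pi/\pi_B\to 0$ together with the base-case equality, the inequalities for $\pi_B$ and $\pi/\pi_B$ are forced to be equalities; iterating with any further subquotient of $\pi_B$ yields (i) in full generality.

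\textbf{Part (ii).} By (i) it suffices to show $m_{\mathfrak{p}_0}(\pi'^\vee)=\mathrm{lg}(\soc_{\GL_2(\cO_K)}(\pi'))$. For the upper bound, Corollary \ref{cor:cycle-pi'} produces a surjection $\bigoplus_{\lambda\in\mathscr{P}'}\chi_\lambda^{-1}\otimes R/\mathfrak{a}(\lambda)\twoheadrightarrow\gr(\pi'^\vee)$. A direct computation, using $\mathfrak{p}_0=(z_0,\ldots,z_{f-1})$, yields $m_{\mathfrak{p}_0}(R/\mathfrak{a}(\lambda))=1$ when $y_j\notin\mathfrak{a}(\lambda)$ for all $j$ and $0$ otherwise; by Lemma \ref{lem:Yj-fa}(i) this selects exactly $\lambda\in\mathscr{D}$, and the multiplicity-one appearance of each $\chi_\sigma$ ($\sigma\in W(\brho)$) in $D_0(\brho)^{I_1}=\pi^{I_1}$ identifies $\mathscr{P}'\cap\mathscr{D}$ with the set of Serre weights in $\soc_{\GL_2(\cO_K)}(\pi')$. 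For the lower bound, Lemma \ref{lem:orbit} shows that $\soc(\pi')$ is a union of $\delta$-orbits, so for each representative $\sigma\in\mathcal{O}(\pi')$ the $\F\bbra{X}[F]$-submodule $M_\sigma\subseteq\pi^{N_1}$ constructed in \S\ref{lowerproof} already sits in $\pi'^{N_1}$. Setting $M_{\pi'}\defeq\bigoplus_{\sigma\in\mathcal{O}(\pi')}M_\sigma$, the argument of Proposition \ref{prop:main:functor} (which depends only on the orbit data) gives $\dim_\F\mathbf{V}((M_{\pi'}\otimes\chi_\pi^{-1})^\vee[1/X])=\sum_\sigma n_\sigma=\mathrm{lg}(\soc(\pi'))$, so $\dim D_\xi^\vee(\pi')\geq\mathrm{lg}(\soc(\pi'))$ via Theorem \ref{thm:functors_comparison}.

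\textbf{Part (iii).} This step imitates the strategy sketched for Theorem \ref{thmIntro8} in the introduction. Let $K\defeq\ker(\pi\twoheadrightarrow\pi')$ and apply $\EE^\bullet_\Lambda$ to $0\to\pi'^\vee\to\pi^\vee\to K^\vee\to 0$; all $\EE^{<2f}$ vanish by \eqref{>=2f}, and the essential self-duality \eqref{eq:selfdual} identifies $\EE^{2f}_\Lambda(\pi^\vee)\cong\pi^\vee\otimes(\det(\brho)\omega^{-1})$. Define a subrepresentation $\widetilde\pi\subseteq\pi$ by
\[
\widetilde\pi^\vee\otimes(\det(\brho)\omega^{-1})\defeq\mathrm{Im}\bigl(\EE^{2f}_\Lambda(\pi^\vee)\to\EE^{2f}_\Lambda(\pi'^\vee)\bigr).
\]
Since the cokernel of this image inside $\EE^{2f}_\Lambda(\pi'^\vee)$ embeds into $\EE^{2f+1}_\Lambda(K^\vee)$ and therefore has grade $\geq 2f+1$, Theorem \ref{prop:cycle-Ext} together with additivity of $\mathcal{Z}$ gives $\mathcal{Z}(\pi'^\vee)=\mathcal{Z}(\EE^{2f}_\Lambda(\pi'^\vee))=\mathcal{Z}(\widetilde\pi^\vee)$; hence $m_{\mathfrak{p}_0}(\pi'^\vee)=m_{\mathfrak{p}_0}(\widetilde\pi^\vee)=\mathrm{lg}(\soc(\widetilde\pi))$ by (ii). It remains to check that $\pi'\neq 0$ implies $\widetilde\pi\neq 0$. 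For this, one uses \eqref{eq:selfdual} together with the Auslander-regularity formalism of \S\ref{subsection:gr-pi} to argue that $\pi^\vee$ is pure of grade $2f$, so that the nonzero submodule $\pi'^\vee\subseteq\pi^\vee$ has grade exactly $2f$ and $\EE^{2f}_\Lambda(\pi'^\vee)$ likewise has grade exactly $2f$; the connecting map $\EE^{2f}_\Lambda(\pi'^\vee)\to\EE^{2f+1}_\Lambda(K^\vee)$ must then have nonzero kernel, which is precisely $\widetilde\pi^\vee\otimes(\det(\brho)\omega^{-1})$.

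The main obstacle is the purity claim in (iii) — that no nonzero $\Lambda$-submodule of $\pi^\vee$ can have grade strictly greater than $2f$ — which is exactly the technical point invoked in the introduction's proof of Theorem \ref{thmIntro8} via \eqref{dualIntro}, and ultimately rests on general Auslander-regularity arguments as in \cite[Thm.8.2]{HuWang2}. Everything else is, once (i) and (ii) are in place, a careful bookkeeping of the Ext long exact sequence and of cycles modulo grade-$(\geq 2f+1)$ noise.
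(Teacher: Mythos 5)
Your proposal is correct and follows essentially the same route as the paper's own proof: part (i) via the upper bound from Corollary \ref{cor:upperbound} pinned down by the base-case equality of Corollary \ref{maintensorind} and additivity of both sides; part (ii) via Corollary \ref{cor:cycle-pi'}/Lemma \ref{lem:Yj-fa} for the upper bound and the orbit modules $M_\sigma$ for the lower bound (though the paper invokes only Proposition \ref{prop:phi-gamma-piece} for the rank count, whereas you invoke the stronger Proposition \ref{prop:main:functor}, which is harmless overkill); and part (iii) via essential self-duality, the construction of $\widetilde\pi$, and Theorem \ref{prop:cycle-Ext}. The only cosmetic deviation is in (iii): you derive $\widetilde\pi\neq 0$ directly from grade considerations on the connecting map, whereas the paper passes through nonvanishing of $\mathcal{Z}(\pi'^\vee)$, but these are two equivalent readings of the same Auslander-regularity facts.
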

\begin{proof}
(i) First, for any subquotient $\pi'$ of $\pi$, we equip the $\Lambda$-module $\pi'^{\vee}$ with a good filtration $F$ by choosing two submodules $\pi_1^{\vee}\subset\pi_2^{\vee}$ of $\pi^{\vee}$ (with filtrations induced from the $\m$-adic one on $\pi^{\vee}$) such that $\pi'^{\vee}\cong\pi_2^{\vee}/\pi_1^{\vee}$ and taking the induced filtration.\footnote{The filtrations on $\pi_2^{\vee}$ and $\pi_1^{\vee}$ might not be the $\m$-adic ones, and the resulting filtration on $\pi'^{\vee}$ might depend on the choice of $\pi_1^{\vee}$ and $\pi_2^{\vee}$.} Then $\gr_{F}(\pi'^{\vee})$ is again an $\overline{R}$-module, and $\dim_{\F\ppar{X}} D_{\xi}^{\vee}(\pi')\leq m_{\p_0}(\pi'^{\vee})$ by Corollary \ref{cor:upperbound}. Since $\dim_{\F\ppar{X}}D_{\xi}^{\vee}(\pi)=m_{\p_0}(\pi^{\vee})$ by Corollary \ref{maintensorind}, since $D_{\xi}^{\vee}(-)$ is an exact functor by Theorem \ref{thm:exactnessDA} and since $\mathcal{Z}(-)$, and in particular $m_{\p_0}(-)$, are additive by Lemma \ref{lem:Z-additive}, the result follows.

(ii) By assumption $\pi'$ is a subrepresentation of $\pi$. Using that $\soc_{\GL_2(\cO_K)}(\pi')$ is a union of orbits of $\delta$, or equivalently of $S$ as in \eqref{isoS}, by Lemma \ref{lem:orbit}, it follows from Proposition \ref{prop:phi-gamma-piece} that
\[\dim_{\F\ppar{X}}D_{\xi}^{\vee}(\pi')\geq \mathrm{lg}(\soc_{\GL_2(\cO_K)}(\pi')).\]
On the other hand, by Lemma \ref{lem:Yj-fa}(i) and Corollary \ref{cor:cycle-pi'}, we have $m_{\p_0}(\pi'^{\vee})\leq \mathrm{lg}(\soc_{\GL_2(\cO_K)}(\pi'))$ (see the proof of Theorem \ref{thm:upperbound}). Hence all the three quantities are equal by (i).

(iii) Let $\pi''$ be the kernel of the quotient map $\pi\twoheadrightarrow \pi'$ so that we have an exact sequence of $\Lambda$-modules:
\[0\ra \pi'^{\vee}\ra \pi^{\vee}\ra \pi''^{\vee}\ra0.\]
Since $\pi^{\vee}$ is essentially self-dual of grade $2f$ by assumption, $\pi'^{\vee}$ also has grade $2f$ by \cite[Prop.III.4.2.8(1)]{LiOy} and \cite[Prop.III.4.2.9]{LiOy}. Taking ${\rm E}^i_{\Lambda}(-)$, we obtain a long exact sequence of $\Lambda$-modules
\begin{equation}\label{exactgr}
0\ra {\rm E}^{2f}_{\Lambda}(\pi''^{\vee})\ra {\rm E}^{2f}_{\Lambda}(\pi^{\vee})\ra {\rm E}^{2f}_{\Lambda}(\pi'^{\vee})\ra {\rm E}^{2f+1}_{\Lambda}(\pi''^{\vee})
\end{equation}
which gives rise by Pontryagin duality to an exact sequence of admissible smooth representations of $\GL_2(K)$ with central character (see \cite[Cor.1.8]{Ko}). Define $\widetilde \pi$ to be the admissible smooth representation of $\GL_2(K)$ such that
\begin{equation}\label{eq:define:pi0}
\widetilde \pi^{\vee}\otimes(\det(\rhobar)\omega^{-1})\cong \mathrm{Im}\big({\rm E}^{2f}_{\Lambda}(\pi^{\vee})\ra {\rm E}^{2f}_{\Lambda}(\pi'^{\vee})\big).
\end{equation}
Since $\pi^{\vee}$ is essentially self-dual by assumption (see \eqref{eq:selfdual}), $\widetilde \pi^{\vee}$ is a quotient of $\pi^{\vee}$ and dually $\widetilde \pi$ is a subrepresentation of $\pi$. Since ${\rm E}^{2f+1}_{\Lambda}(\pi''^{\vee})$ has grade $\geq 2f+1$ as $\Lambda$ is Auslander regular, we have by (\ref{exactgr}) and the discussion before Theorem \ref{prop:cycle-Ext}:
\[\mathcal{Z}({\rm E}^{2f}_{\Lambda}(\pi'^{\vee}))=\mathcal{Z}(\widetilde \pi^{\vee}\otimes (\det(\rhobar)\omega^{-1})),\]
hence 
$\mathcal{Z}(\pi'^{\vee})=\mathcal{Z}(\widetilde \pi^{\vee})$ by Theorem \ref{prop:cycle-Ext} which implies in particular by (i):
\begin{equation}\label{eq:pi1=pi3}
\dim_{\F\ppar{X}}D_{\xi}^{\vee}(\pi')=\dim_{\F\ppar{X}}D_{\xi}^{\vee}(\widetilde \pi).
\end{equation} 
Since $j_{\Lambda}(\pi'^{\vee})=2f$, $\mathcal{Z}(\pi'^{\vee})$ is nonzero (using e.g.\ (\ref{dimannihi})), hence $\widetilde \pi$ is nonzero, thus $D_{\xi}^{\vee}(\widetilde \pi)\neq 0$ by (ii), and finally $D_{\xi}^{\vee}(\pi')\ne 0$ by (\ref{eq:pi1=pi3}). 
\end{proof} 

\begin{rem}\label{da=daet}
(i) The construction of $\widetilde \pi$ in the proof of Proposition \ref{prop:socle=orbit}(iii) does not use the assumption that $\brho$ is semisimple.
Moreover, items (i) and (ii) of Proposition \ref{prop:socle=orbit} do not require the essential self-duality of $\pi^\vee$ (equation \eqref{eq:selfdual} above).
\\
(ii) It follows from Proposition \ref{prop:socle=orbit}(ii), from Corollary \ref{cor:upperbound}, from Lemma \ref{lem:rank=multi}, from Lemma \ref{lem:comparison_ranks} and from (\ref{dapi}) that for $\pi'\subseteq \pi$ as in Proposition \ref{prop:socle=orbit}(ii) we have
\begin{equation}\label{=}
\rk_A(D_A(\pi')^{\et})=\dim_{\F\ppar{X}}D_\xi^\vee(\pi')=m_{\mathfrak{p}_0}(\gr(\pi'^\vee))=\rk_A(D_A(\pi')).
\end{equation}
By Corollary \ref{cor:psiiso}, both $D_A(\pi')$ and $D_A(\pi')^{\et}$
are finite projective $A$-modules and it follows from (\ref{=}) that the surjection of $A$-modules $D_A(\pi')\twoheadrightarrow D_A(\pi')^{\et}$ is here an isomorphism.
\end{rem}

\begin{thm}\label{thm:gen-socle}
As a $\GL_2(K)$-representation, $\pi$ is generated by its $\GL_2(\cO_K)$-socle.
\end{thm}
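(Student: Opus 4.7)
The plan is to define $\pi'\subseteq \pi$ to be the $\GL_2(K)$-subrepresentation generated by $\soc_{\GL_2(\cO_K)}(\pi)$, set $\pi''\defeq \pi/\pi'$, and show $\pi''=0$. Since $\pi'$ contains $\soc_{\GL_2(\cO_K)}(\pi)$ and is contained in $\pi$, we automatically have $\soc_{\GL_2(\cO_K)}(\pi')=\soc_{\GL_2(\cO_K)}(\pi)$, of length $|W(\rhobar)|=2^f$ (using $\pi^{K_1}\cong D_0(\rhobar)$ and the semisimplicity of $\rhobar$).

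Next I would compute $\dim_{\F\ppar{X}}D_\xi^\vee(\pi')$ in two complementary ways. On the one hand, Proposition \ref{prop:socle=orbit}(ii) applied to the subrepresentation $\pi'\subseteq\pi$ yields
\[\dim_{\F\ppar{X}}D_\xi^\vee(\pi')=\mathrm{lg}(\soc_{\GL_2(\cO_K)}(\pi'))=2^f.\]
On the other hand, Corollary \ref{maintensorind} gives $\dim_{\F\ppar{X}}D_\xi^\vee(\pi)=2^f$. Since $\pi\in\mathcal{C}$ (because $\gr(\pi^\vee)$ is killed by $J$, see Remark \ref{rem:yongquan}(ii)) and $\mathcal{C}$ is stable under subquotients, we have $\pi',\pi''\in\mathcal{C}$, so the exactness of $D_\xi^\vee$ on $\mathcal{C}$ (Theorem \ref{thm:exactnessDA} combined with Theorem \ref{thm:functors_comparison}) gives
\[\dim_{\F\ppar{X}}D_\xi^\vee(\pi'')=\dim_{\F\ppar{X}}D_\xi^\vee(\pi)-\dim_{\F\ppar{X}}D_\xi^\vee(\pi')=0.\]

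Finally, I would invoke the contrapositive of Proposition \ref{prop:socle=orbit}(iii): any nonzero quotient of $\pi$ has nonzero $D_\xi^\vee$, so $D_\xi^\vee(\pi'')=0$ forces $\pi''=0$, which is exactly the claim.

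The real work has already been carried out in Proposition \ref{prop:socle=orbit}(iii), where essential self-duality (assumption (iii)) is used to construct $\widetilde{\pi}\subseteq\pi$ via $\mathrm{E}^{2f}_{\Lambda}(\pi''^{\vee})$ together with the cycle equality of Theorem \ref{prop:cycle-Ext}, producing a nonzero subrepresentation whose $D_\xi^\vee$-dimension matches that of $\pi''$. So the main obstacle is not in the present argument but in the preliminary machinery: the exactness of $D_\xi^\vee$ on $\mathcal{C}$ and the essential self-duality trick encoded in Proposition \ref{prop:socle=orbit}(iii). Given those, the proof of Theorem \ref{thm:gen-socle} is a short two-step rank count combined with one application of Proposition \ref{prop:socle=orbit}(iii).
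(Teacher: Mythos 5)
Your proposal is correct and follows essentially the same route as the paper's proof: define $\pi'$ as the subrepresentation generated by the $\GL_2(\cO_K)$-socle, use Proposition \ref{prop:socle=orbit}(ii) and the exactness of $D_\xi^\vee$ to get $D_\xi^\vee(\pi'')=0$, and then conclude $\pi''=0$ by Proposition \ref{prop:socle=orbit}(iii). The only (immaterial) difference is that you explicitly evaluate both dimensions to $2^f$ via Corollary \ref{maintensorind}, whereas the paper simply notes that $\pi$ and $\pi'$ have the same $\GL_2(\cO_K)$-socle and applies Proposition \ref{prop:socle=orbit}(ii) to both.
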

\begin{proof}
Let $\tau\defeq \soc_{\GL_2(\cO_K)}(\pi)$, let $\pi'\defeq \langle \GL_2(K).\tau\rangle$ be the subrepresentation of $\pi$ generated by $\tau$ and let $\pi''\defeq \pi/\pi'$. Since $D_{\xi}^{\vee}(-)$ is exact by Theorem \ref{thm:functors_comparison}, we have 
\[\dim_{\F\ppar{X}}D_{\xi}^{\vee}(\pi)=\dim_{\F\ppar{X}}D_{\xi}^{\vee}(\pi')+\dim_{\F\ppar{X}}D_{\xi}^{\vee}(\pi''). \]
However, since $\pi$ and $\pi'$ have the same $\GL_2(\cO_K)$-socle, we have \[\dim_{\F\ppar{X}}D_{\xi}^{\vee}(\pi)=\dim_{\F\ppar{X}}D_{\xi}^{\vee}(\pi')\] by Proposition \ref{prop:socle=orbit}(ii), thus $D_{\xi}^{\vee}(\pi'')=0$. If $\pi''$ is nonzero this contradicts Proposition \ref{prop:socle=orbit}(iii). 
\end{proof}

\begin{cor}\label{cor:pi-irred}
Assume that $\brho$ is irreducible. Then $\pi$ is irreducible and is a supersingular representation.
\end{cor}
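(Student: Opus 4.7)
The plan is to deduce irreducibility from Theorem \ref{thm:gen-socle} combined with the socle-preservation property noted in Lemma \ref{lem:orbit} for the irreducible case, and then to conclude supersingularity from the Barthel--Livn\'e/Breuil--Pa\v sk\=unas classification (as invoked via \cite[Thm.19.10(i)]{BP}).

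First I would take an arbitrary nonzero subrepresentation $\pi' \subseteq \pi$ and invoke the parenthetical strengthening in Lemma \ref{lem:orbit}: since $\brho$ is irreducible, one has $\soc_{\GL_2(\cO_K)}(\pi') = \soc_{\GL_2(\cO_K)}(\pi)$. Thus $\pi'$ contains the full $\GL_2(\cO_K)$-socle of $\pi$. Theorem \ref{thm:gen-socle} asserts that $\pi$ is generated as a $\GL_2(K)$-representation by this socle, so $\pi' = \pi$, proving irreducibility.

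Next, to identify $\pi$ as supersingular, I would argue by exclusion using the classification of irreducible admissible smooth $\GL_2(K)$-representations over $\F$: characters, principal series, special series, and supersingular representations. Since $\pi^{K_1} \cong D_0(\brho)$ has $\GL_2(\cO_K)$-socle $\bigoplus_{\sigma \in W(\brho)} \sigma$ with $\brho$ irreducible, each such Serre weight $\sigma$ corresponds to a subset $J_\sigma \subseteq \{0,\dots,f-1\}$ of the ``irreducible'' type. In particular, the Serre weights occurring in $\soc_{\GL_2(\cO_K)}(\pi)$ are not of the form that can appear in the socle of a character, a principal series, or a special series of $\GL_2(K)$ (this is the content of the proof of \cite[Thm.19.10(i)]{BP}). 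Hence $\pi$ must be supersingular.

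The main obstacle here is essentially absent: both ingredients are already in place, namely the generation of $\pi$ by its $\GL_2(\cO_K)$-socle (Theorem \ref{thm:gen-socle}) and the constancy of the socle along nonzero subrepresentations in the irreducible case (Lemma \ref{lem:orbit} together with \cite[Thm.19.10]{BP}). The argument is therefore short and mainly a matter of citing these two results. The only subtle point worth emphasising in the write-up is that the essential self-duality hypothesis \eqref{eq:selfdual} and the multiplicity-one hypothesis (ii) of \S\ref{sec:length-of-pi} enter only indirectly, through their use in establishing Theorem \ref{thm:gen-socle}; once that theorem is in hand, the corollary follows formally.
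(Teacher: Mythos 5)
Your proposal is correct and takes essentially the same route as the paper: the paper's proof is the single sentence ``This follows from Theorem \ref{thm:gen-socle} and \cite[Thm.19.10(i)]{BP}.'' You have simply unpacked the two ingredients — invoking the parenthetical socle-constancy observation from (the proof of) Lemma \ref{lem:orbit}, which is itself attributed to the proof of \cite[Thm.19.10]{BP}, to show any nonzero subrepresentation contains the full $\GL_2(\cO_K)$-socle, then using Theorem \ref{thm:gen-socle} to conclude irreducibility, and citing \cite[Thm.19.10(i)]{BP} again for supersingularity — rather than treating the BP theorem as a single black box.
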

\begin{proof}
This follows from Theorem \ref{thm:gen-socle} and \cite[Thm.19.10(i)]{BP}.
\end{proof}

\begin{rem}
(i) A result analogous to Theorem \ref{thm:gen-socle} when $\brho$ is not semisimple is proved in \cite[Thm.1.6]{HuWang2}.\\
(ii) While we believe that Proposition \ref{prop:socle=orbit} and Theorem \ref{thm:gen-socle} should be true without assuming $r=1$, we don't know how to prove a generalization of Corollary \ref{cor:pi-irred} (i.e.\ $\pi$ is semisimple and has length $r$ in general), as mentioned in Remark \ref{formnonminimal}.
\end{rem} 

\begin{cor}\label{cor:split2}
Assume that $\brho$ is reducible split. Then $\pi$ has the form 
\begin{equation}\label{eq:decomp:ss}
\pi=\pi_0\oplus \pi_f \oplus \pi',
\end{equation}
where
\begin{itemize}
\item $\pi_0$ and $\pi_f$ are irreducible principal series such that $\EE^{2f}_{\Lambda}(\pi_i^{\vee})\cong \pi_{f-i}^{\vee}\otimes(\det(\brho)\omega^{-1})$, $i\in \{0,f\}$;
\item $\pi'$ is generated by its $\GL_2(\cO_K)$-socle and $\pi'^{\vee}$ is essentially self-dual {\upshape(}as in {\upshape(\ref{eq:selfdual}))}. Moreover, $\pi'$ is irreducible and supersingular when $f=2$. 
\end{itemize}
\end{cor}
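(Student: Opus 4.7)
The plan is to produce the direct sum decomposition \eqref{eq:decomp:ss}, then read off the remaining properties of $\pi'$ from Theorem \ref{thm:gen-socle} and from \cite[Thm.19.10(ii)]{BP}.

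First I would show that $\pi$ contains $\pi_0 \defeq \Ind_{B(K)}^{\GL_2(K)}(\chi_1\otimes\chi_2\omega^{-1})$ and $\pi_f \defeq \Ind_{B(K)}^{\GL_2(K)}(\chi_2\otimes\chi_1\omega^{-1})$ as subrepresentations. The two ordinary Serre weights $\sigma_\emptyset, \sigma_S \in W(\brho)$ (with $S=\{0,\dots,f-1\}$), which are characterized as those satisfying $\delta(\sigma)=\sigma$, lie in $\soc_{\GL_2(\oK)}(\pi)$. Inspecting the action of $\smatr{0}{1}{p}{0}$ on $\pi^{I_1}$ through $D(\brho)$ and using the Frobenius-reciprocity argument of \cite[\S19]{BP}, one obtains canonical $\GL_2(K)$-equivariant embeddings $\iota_0:\pi_0\hookrightarrow \pi$ and $\iota_f:\pi_f\hookrightarrow\pi$, realizing $\pi_0$ (resp.\ $\pi_f$) as the subrepresentation generated by $\sigma_\emptyset$ (resp.\ $\sigma_S$).

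The hard step is to show that $\pi_0$ and $\pi_f$ also occur as quotients, and that the composition $\pi\twoheadrightarrow \pi_i\hookrightarrow \pi$ is a scalar on $\pi_i$; this is where essential self-duality \eqref{eq:selfdual} is crucial. By Proposition \ref{exemplesgr}(ii) the graded module $\gr(\pi_i^{\vee})$ is a direct sum of two Cohen--Macaulay $\overline R$-modules of grade $2f$, so Proposition \ref{prop:LvO-filtonExt} yields $\gr(\EE^{2f}_\Lambda(\pi_i^{\vee}))\simeq \EE^{2f}_{\gr(\Lambda)}(\gr(\pi_i^{\vee}))$, and an explicit computation combined with Kohlhaase's smooth duality for principal series over $\GL_2(K)$ (see \cite[Prop.3.2]{Ko}) produces a $\GL_2(K)$-equivariant isomorphism $\EE^{2f}_\Lambda(\pi_i^{\vee})\cong \pi_{f-i}^{\vee}\otimes(\det(\brho)\omega^{-1})$; the swap $i\leftrightarrow f-i$ reflects the swap of torus characters $\chi\leftrightarrow\chi^s$ in the two principal series. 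Applying $\EE^{2f}_\Lambda(-)$ to $\iota_i^{\vee}$ and composing with \eqref{eq:selfdual} gives surjections $q_{f-i}:\pi\twoheadrightarrow\pi_{f-i}$. Since $\pi_{f-i}$ is irreducible, $q_{f-i}\circ\iota_{f-i}$ is a scalar; to see it is nonzero one tracks an $I$-eigenvector in $\sigma^{I_1}$ (for $\sigma\in\{\sigma_\emptyset,\sigma_S\}$) through the duality, using that its image in $\EE^{2f}_\Lambda(\pi_i^{\vee})$ is detected by the corresponding component of $\gr(\pi_{f-i}^{\vee})$.

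After rescaling $q_i$ so that $q_i\circ\iota_i=\Id_{\pi_i}$, one obtains a splitting of $\pi_0\oplus\pi_f\hookrightarrow\pi$; setting $\pi'\defeq \ker(q_0)\cap\ker(q_f)$ produces the decomposition \eqref{eq:decomp:ss}. Since $\pi$ is generated by its $\GL_2(\oK)$-socle by Theorem \ref{thm:gen-socle}, so is each direct summand, and applying $\EE^{2f}_\Lambda$ to \eqref{eq:decomp:ss} and matching the swapped principal series summands forces $\EE^{2f}_\Lambda(\pi'^{\vee})\cong \pi'^{\vee}\otimes(\det(\brho)\omega^{-1})$. For the final claim when $f=2$, the diagram $(\pi'^{I_1}\hookrightarrow\pi'^{K_1})$ is the ``supersingular part'' of $D(\brho)$ obtained by removing the contribution of $\pi_0^{I_1}\oplus\pi_f^{I_1}$, so Theorem \ref{thm:gen-socle} applied to $\pi'$ together with \cite[Thm.19.10(ii)]{BP} (whose genericity hypotheses follow from \eqref{eq:6}) yields the irreducibility and supersingularity of $\pi'$. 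The main obstacle is the identification $\EE^{2f}_\Lambda(\pi_i^{\vee})\cong \pi_{f-i}^{\vee}\otimes(\det(\brho)\omega^{-1})$ as $\GL_2(K)$-representations (not just as $\Lambda$-modules), and the subsequent nondegeneracy check of $q_{f-i}\circ\iota_{f-i}$, both of which require careful bookkeeping of Kohlhaase's $\GL_2(K)$-action on Ext groups relative to the embedding $\iota_i$.
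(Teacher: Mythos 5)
Your outline is correct and follows the same overall strategy as the paper: exhibit the two principal series $\pi_0,\pi_f$ as subrepresentations generated by the ordinary Serre weights, use essential self-duality to produce them again as quotients, show the composite is a splitting, and finish with Theorem \ref{thm:gen-socle} plus \cite[Thm.19.10(ii)]{BP}. There are two places where your route is less efficient or where you have not spotted the clean argument the paper uses. First, for the identification $\EE^{2f}_{\Lambda}(\pi_i^\vee)\cong\pi_{f-i}^\vee\otimes(\det(\brho)\omega^{-1})$ you propose a detour through Proposition \ref{exemplesgr}(ii) and Proposition \ref{prop:LvO-filtonExt}; but a computation of $\gr(\EE^{2f}_\Lambda(\pi_i^\vee))$ as a $\gr(\Lambda)$-module does not determine the $\GL_2(K)$-equivariant structure, which is the entire content here. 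The paper simply invokes \cite[Prop.5.4]{Ko} (not just \cite[Prop.3.2]{Ko}), which directly computes $\EE^{2f}_\Lambda$ of an irreducible principal series as a $\GL_2(K)$-representation, and then uses \cite[Lemma 10.7]{HuWang2} for the explicit formula \eqref{eq:Koh-char} identifying the resulting torus character; the graded machinery plays no role in this step. Second, you flag the nondegeneracy of $q_{f-i}\circ\iota_{f-i}$ as the main obstacle and suggest tracking $I$-eigenvectors through the duality — this can probably be made to work but is where a gap would most likely appear. The paper's argument is cleaner and you already have the needed input: by Theorem \ref{thm:gen-socle} the composite $\bigoplus_{\sigma\in W(\brho)}\sigma\hookrightarrow\pi\twoheadrightarrow\pi'_0$ is nonzero; its image lands in $\soc_{\GL_2(\cO_K)}(\pi'_0)$, which one computes (via \eqref{eq:Koh-char} and \eqref{eq:socle-dual}) to be exactly $\sigma_0$; hence the restriction to $\sigma_0$ is nonzero, and since $\pi_0=\langle\GL_2(K)\cdot\sigma_0\rangle$ this forces the composite $\pi_0\hookrightarrow\pi\twoheadrightarrow\pi'_0$ to be nonzero, hence an isomorphism by irreducibility. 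In short, you correctly apply Theorem \ref{thm:gen-socle} to the summands at the end, but miss that it also resolves, with no eigenvector bookkeeping, precisely the nondegeneracy step you identified as the obstacle.
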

\begin{proof}
By the definition of $W(\brho)$ (see \cite[\S11]{BP}), there exists a unique Serre weight $\sigma_0\in W(\brho)$ such that $\ell(\sigma_0)=0$. Let $\chi_{\sigma_0}$ be the character of $I$ acting on $\sigma_0^{I_1}$. It is easy to check that
\[\mathrm{JH}\big(\Ind_I^{\GL_2(\cO_K)}\chi_{\sigma_0}\big)\cap W(\brho)=\{\sigma_0\}.\]
Let $\pi_0\defeq \langle \GL_2(K).\sigma_0\rangle$, a subrepresentation of $\pi$. We claim that $\pi_0$ is an irreducible principal series. 
Indeed, by \cite[Lemma 5.14]{HuWang2} and its proof, the morphism (induced from $\sigma_0\hookrightarrow \pi$ by Frobenius reciprocity)
\[\cInd_{\GL_2(\cO_K)K^{\times}}^{\GL_2(K)}\sigma_0\ra \pi\]
(where $\cInd$ means compact induction) factors through $\cInd_{\GL_2(\cO_K)K^{\times}}^{\GL_2(K)}\sigma_0/(T-\mu_0)$ for some $\mu_0\in\F^{\times}$ (as $\soc_{\GL_2(\cO_K)}(\pi)$ is multiplicity-free). Note that the genericity of $\brho$ implies that $\dim_{\F}\sigma_0\geq 2$, hence the representation $\cInd_{\GL_2(\cO_K)K^{\times}}^{\GL_2(K)}\sigma_0/(T-\mu_0)$ is irreducible and isomorphic to some principal series by \cite[Thm.30]{BL1}. 
This proves the claim. Moreover, the $\GL_2(\cO_K)$-socle of $\pi_0$ is exactly $\sigma_0$, and if $\pi_0\cong\Ind_{B(K)}^{\GL_2(K)}\chi_0$ for some smooth character $\chi_0:T(K)\ra \F^{\times}$ then $\chi_0^s|_H=\chi_{\sigma_0}$. Similarly, there exists a unique Serre weight $\sigma_f\in W(\brho)$ such that $\ell(\sigma_f)=f$. It satisfies again
\[\mathrm{JH}\big(\Ind_I^{\GL_2(\cO_K)}\chi_{\sigma_f}\big)\cap W(\brho)=\{\sigma_f\}\]
and by the same argument as above the subrepresentation $\pi_f\defeq \langle\GL_2(K).\sigma_f\rangle$ of $\pi$ is an irreducible principal series with $\GL_2(\cO_K)$-socle equal to $\sigma_f$, and if $\pi_f\cong\Ind_{B(K)}^{\GL_2(K)}\chi_f$ then $\chi_f^s|_H=\chi_{\sigma_f}$. The map $\pi_0\oplus \pi_f\rightarrow \pi$ is injective since it is injective on the $\GL_2(\oK)$-socles.

Letting $\pi'\defeq \pi/(\pi_0\oplus \pi_f)$, we have an exact sequence of $\Lambda$-modules:
\[0\rightarrow {\pi'}^\vee\rightarrow \pi^\vee\rightarrow \pi_0^\vee\oplus \pi_f^\vee\rightarrow 0.\]
As $\Lambda$ is Auslander regular and $\pi^\vee$ is of grade $2f$, it follows from \cite[Cor.III.2.1.6]{LiOy} that ${\pi'}^\vee$ is of grade $\geq 2f$, hence ${\rm E}^{2f-1}_{\Lambda}({\pi'}^{\vee})=0$ and there is an exact sequence of (finitely generated) $\Lambda$-modules
\[0\rightarrow {\rm E}^{2f}_{\Lambda}(\pi_0^{\vee})\oplus {\rm E}^{2f}_{\Lambda}(\pi_f^{\vee})\rightarrow {\rm E}^{2f}_{\Lambda}({\pi}^{\vee})\rightarrow {\rm E}^{2f}_{\Lambda}({\pi'}^{\vee}).\]
Since $\pi^{\vee}$ is essentially self-dual by assumption (see \eqref{eq:selfdual}) and since ${\rm E}^{2f}_{\Lambda}(\pi_0^{\vee})^\vee$ and $\EE^{2f}_{\Lambda}(\pi_f^{\vee})^\vee$ are also irreducible principal series by \cite[Prop.5.4]{Ko}, we see that $\pi$ admits a quotient isomorphic to $\pi_0'\oplus \pi_f'$, where $\pi_{i}'$ (for $i\in \{0,f\}$) is the (irreducible) principal series such that
\begin{equation}\label{eq:pif'}
\pi_{i}'^{\vee}\otimes(\det(\brho)\omega^{-1})=\EE^{2f}_{\Lambda}(\pi_{f-i}^{\vee}).
\end{equation}
Explicitly, if $\pi_i'\cong\Ind_{B(K)}^{\GL_2(K)}\chi_i'$ for some smooth characters $\chi_i':T(K)\ra\F^{\times}$, and if we let $\alpha_{B}\defeq \omega\otimes\omega^{-1}:T(K)\ra \F^{\times}$ and $\eta\defeq \det(\brho)\omega^{-1}$ (for short), then by \cite[Lemma 10.7]{HuWang2} (which is based on \cite[Prop.5.4]{Ko}):
\begin{equation}\label{eq:Koh-char}
\chi_f'=\chi_0^{-1}\alpha_B(\eta\otimes\eta),\ \ \chi_0'=\chi_f^{-1}\alpha_B(\eta\otimes\eta).
\end{equation}
Let us compute the $\GL_2(\cO_K)$-socle of $\pi_f'$ (the case of $\pi_0'$ is similar). 
Since $\eta$ is equal to the central character of $\pi_0$, we have $\chi_0^{-1}(\eta\otimes\eta)=\chi_0^s$, so that \eqref{eq:Koh-char} becomes 
$\chi_f'=\chi_0^s\alpha_B$. 
Since $\chi_0^s|_H=\chi_{\sigma_0}$ as seen in the first paragraph, we deduce
\begin{equation}\label{eq:socle-dual}(\chi_f')^s|_H=\chi_{\sigma_0}^s\alpha_B^{-1}=\chi_{\sigma_f},\end{equation}
where the last equality holds by an easy check using the definition of $\sigma_0$ and $\sigma_f$ (see \cite[\S11]{BP}). 
In particular, our genericity assumption on $\brho$ implies that $\chi_f'\neq \chi_f'^s$ when restricted to $T(\cO_K)$. Using \cite[Thm.34(2)]{BL1}, this implies that the $\GL_2(\cO_K)$-socle of $\pi_f'$ is irreducible and actually isomorphic to $\sigma_f$ by \eqref{eq:socle-dual}. Similarly, 
the $\GL_2(\cO_K)$-socle of $\pi_0'$ is isomorphic to $\sigma_0$.

We claim that the composite morphism
\[\pi_0\oplus \pi_f\hookrightarrow \pi\twoheadrightarrow \pi_0'\oplus \pi_f'\]
is an isomorphism. Since $\pi$ is generated by its $\GL_2(\cO_K)$-socle, namely $\bigoplus_{\sigma\in W(\brho)}\sigma$, the composite morphism 
\[\iota_0:\bigoplus_{\sigma\in W(\brho)}\sigma\hookrightarrow \pi\twoheadrightarrow \pi_0'\]
is nonzero. Since the image is contained in $\soc_{\GL_2(\cO_K)}(\pi_0')$, which is equal to $\sigma_0$ as seen in the last paragraph, $\iota_0$ is nonzero 
when restricted to $\sigma_0$. But, by construction we have $\langle \GL_2(K).\sigma_0\rangle=\pi_0$ inside $\pi$, hence the composite morphism $\pi_0\hookrightarrow\pi\twoheadrightarrow \pi_0'$ is nonzero, hence an isomorphism as both $\pi_0$ and $\pi_0'$ are irreducible. In the same way the composite morphism $\pi_f\hookrightarrow\pi\twoheadrightarrow \pi_f'$ is also an isomorphism. This proves the claim, from which the decomposition \eqref{eq:decomp:ss} immediately follows. From \eqref{eq:pif'} we also deduce the isomorphism $\EE^{2f}_{\Lambda}(\pi_i^{\vee})\cong \pi_{f-i}^{\vee}\otimes\eta$ for $i\in\{0,f\}$. 

We now finish the proof. First, $\pi'$ is generated by its $\GL_2(\cO_K)$-socle by Theorem \ref{thm:gen-socle}. Explicitly, we have
\[\mathrm{soc}_{\GL_2(\cO_K)}(\pi')=\bigoplus_{\stackrel{\sigma\in W(\rhobar)}{0<\ell(\sigma)<f}}\sigma.\]
In particular, if $f=2$, then $\pi'$ is irreducible and is a supersingular representation by \cite[Thm.19.10(ii)]{BP}. 
Finally we prove that $\pi'^{\vee}$ is essentially self-dual (as in (\ref{eq:selfdual})). In fact, using \eqref{eq:decomp:ss} and noting that \[(\EE^{2f}_{\Lambda}(\pi^{\vee}))^{\vee}\otimes \eta\cong \pi_0\oplus \pi_f \oplus (\EE^{2f}_{\Lambda}(\pi'^{\vee}))^{\vee}\otimes \eta,\] it suffices to prove that the composite morphism
\[\pi'\hookrightarrow \pi \simto (\EE^{2f}_{\Lambda}(\pi^{\vee}))^{\vee}\otimes \eta\twoheadrightarrow (\EE^{2f}_{\Lambda}(\pi'^{\vee}))^{\vee}\otimes \eta\]
is an isomorphism. Since both the source and the target have the same $\GL_2(\cO_K)$-socle, the morphism is injective because it is when restricted to the $\GL_2(\cO_K)$-socle of $\pi'$ and is surjective because $(\EE^{2f}_{\Lambda}(\pi'^{\vee}))^{\vee}\otimes\eta$ is generated by its $\GL_2(\cO_K)$-socle.
\end{proof}

\subsection{Local-global compatibility results for \texorpdfstring{$\GL_2(\Q_{p^f})$}{GL\_2(Q\_\{p\^{}f\})}}\label{gl2results}

We prove special cases of Conjecture \ref{theconjbar} and Conjecture \ref{theconj} when $F^+_v=\Qpf$ and $n=2$. We assume $E=W(\F)[1/p]$ (thus $\oE=W(\F)$ and $\pE=p$).

\subsubsection{Global setting and results}\label{globalp}

We refine the global setting of \S\S\ref{global},~\ref{slgc} when $n=2$ in order to apply the results of \cite{BHHMS1} and we state the first main global result.

We come back to the setting of \S\ref{global} when $n=2$ and we assume $p>7$. We make the following extra assumptions on the field $F$ and the unitary group $H$:
\begin{enumerate}
\item $F/F^+$ is unramified at all finite places;
\item $p$ is unramified in $F^+$;
\item $H$ is defined over $\oFF$ and $H\times_{\oFF}F^+$ is quasi-split at all finite places of $F^+$.
\end{enumerate}
Condition (i) (together with the fact that any $p$-adic place of $F^+$ splits in $F$) implies $[F^+:\Q]$ is even (see \cite[\S3.1]{gee-kisin}). By \cite[\S3.1.1]{gee-kisin} such groups $H$ always exist. We denote by $R_{\rbar_{\tilde{w}}}^\square$ the universal framed deformation ring of $\rbar_{\tilde{w}}$ over $W(\F)$ (${\tilde{w}}$ is any finite place of $F$). We set $K\defeq F_v^+$ and $f\defeq [K:\Qp]$.

We let $\rbar:\gF\rightarrow {\GL}_2(\F)$ as in \S\ref{wlgc} and make the following extra assumptions on $\rbar$ (recall that $S_p$ is the set of places of $F^+$ dividing $p$):
\begin{enumerate}
\setcounter{enumi}{3}
\item $\rbar\vert_{{\Gal}(\overline F/F(\sqrt[p]{1}))}$ is adequate (\cite[Def.2.20]{Th2});
\item $\rbar_{\tilde{w}}$ is unramified if ${\tilde{w}}\vert_{F^+}$ is inert in $F$;
\item $R_{\rbar_{\tilde{w}}}^\square$ is formally smooth over $W(\F)$ if $\rbar_{\tilde{w}}$ is ramified and ${\tilde{w}}\vert_{F^+}\notin S_p$;
\item $\rbar_{\tilde{w}}$ is generic in the sense of \cite[Def.11.7]{BP} if ${\tilde{w}}\vert_{F^+}\in S_p\backslash\{v\}$;
\item $\rbar_{\tilde{v}}$ is, up to twist, of one of the following forms for ${\tilde{v}}\vert_{F^+}=v$:
\begin{itemize}
\item $\rbar_{\tilde{v}}\vert_{I_K}\cong \begin{pmatrix}\omega_f^{(r_0+1)+\cdots+p^{f-1}(r_{f-1}+1)}&0\\0&1\end{pmatrix}$\ \ $3\leq r_i\leq p-6$,
\item $\rbar_{\tilde{v}}\vert_{I_K}\cong\begin{pmatrix}\omega_{2f}^{(r_0+1)+\cdots+p^{f-1}(r_{f-1}+1)}&0\\0&\omega_{2f}^{p^f(\mathrm{same})}\end{pmatrix}$\ $4\leq r_0\leq p-5$, $3\leq r_i\leq p-6$ for $i>0$.
\end{itemize}
\end{enumerate}
Note that conditions (iv) to (viii) only depend on ${\tilde{w}}\vert_{F^+}$ and ${\tilde{v}}\vert_{F^+}$ using condition (i) in \S\ref{wlgc} (the genericity conditions in (viii) are satisfied in \cite[\S3.3]{DoLe} {\it and} don't depend on the choices of $\sigma_0$, $\sigma'_0$). We denote by $S_{\rbar}$ the finite set of finite places of $F^+$ such that ${\tilde{w}}\vert_{F^+}\in S_{\rbar}$ if and only if $\rbar_{\tilde{w}}$ is ramified. Thus $S_p\subseteq S_{\rbar}$ and by (ii) any place in $S_{\rbar}$ splits in $F^+$. We fix a finite place $v_1$ of $F^+$ which is not in $S_{\rbar}$ and satisfies the assumptions in \cite[\S6.6]{EGS}, and we choose $\widetilde{v_1}\vert v_1$ in $F$.

We choose $S$ a finite set of finite places of $F^+$ that split in $F$ containing $S_{\rbar}$ but not $v_1$, and a compact open subgroup $U=\prod_{w}U_{w}\subseteq \GA$ such that
\begin{enumerate}
\setcounter{enumi}{8}
\item $U_{w}\subseteq H({\mathcal O}_{F_{w}^+})$ if $w$ splits in $F$;
\item $U_{w}$ is maximal hyperspecial in $H(F_{w}^+)$ if $w$ is inert in $F$;
\item $U_{w}=H({\mathcal O}_{F_{w}^+})$ if $w\notin S\cup \{v_1\}$ and $w$ splits in $F$ or if $w\in S_p$;
\item $\iota_{\widetilde{v_1}}(U_{v_1})$ is contained in the upper-triangular unipotent matrices mod $\widetilde{v_1}$.
\end{enumerate}
We also define $V\defeq U^p\prod_{w\in S_p}\!V_w$, where $U^p\defeq \prod_{w\notin S_p}U_w$ and $V_{w}$ is a pro-$p$ normal subgroup of $U_{w}$ if $w\in S_p$ (hence $V$ is normal in $U$). We set $\Sigma\defeq S\cup \{v_1\}$ and assume $S(V,\F)[\m^{\Sigma}]\ne 0$ (see \S\ref{somprel}). Note that $S(V,\F)[\m^{\Sigma}]$ doesn't depend on $S$ as above by the proof of \cite[Lemma 4.6(a)]{BDJ}. For each place $w\in S_p$ we choose a place ${\tilde{w}}\vert w$ in $F$. For $w\in S_p$ recall from \S\ref{lowerstatement} that $W(\rbar_{\tilde{w}}(1))$ is the set of Serre weights associated to $\rbar_{\tilde{w}}(1)\defeq \rbar_{\tilde{w}}\otimes \omega$ defined as in \cite[\S3]{BDJ}. Then it follows from \cite[Thm.A]{GLS} and \cite[Def.2.9]{BLGG13} that we have
\begin{equation}\label{gls}
\Hom_{U}\big(\!\otimes_{w\in S_p}\sigma_{\tilde w},S(V,\F)[\m^{\Sigma}]\big)\ne 0 \Longleftrightarrow \sigma_{\tilde w}\in W(\rbar_{\tilde w}(1)) \ \forall \ w\in S_p,
\end{equation}
where we consider $\otimes_{w\in S_p}\sigma_{\tilde w}$ as a representation of $U$ via $U\twoheadrightarrow U/V\buildrel\sim\over\rightarrow \prod_{w\in S_p}U_{w}/V_{w}$ and the isomorphisms $\iota_{\tilde w}$. Note that the left-hand side of (\ref{gls}) is also isomorphic to $\Hom_{U}(\otimes_{w\in S_p}\sigma_{\tilde w},S(U^p,\F)[\m^{\Sigma}])$, where $S(U^p,\F)[\m^{\Sigma}]$ is defined as in \S\ref{somprel}, replacing $U^v$ by $U^p$.

We freely use the previous local notation ($I_1$ is the pro-$p$ Iwahori subgroup in $\GL_2(\cO_K)=\GL_2(\cO_{F_{\tilde v}})$ etc.) and set $\rhobar\defeq \rbar_{\tilde v}(1)$.

\begin{thm}\label{nonminimal}
Choose Serre weights $\sigma_{\tilde w}\in W(\rbar_{\tilde w}(1))$ for $w\in S_p\backslash \{v\}$ and set
\[\pi\defeq  \Hom_{U^v}(\otimes_{w\in S_p\backslash\{v\}}\sigma_{\tilde w},S(V^v,\F)[\m^{\Sigma}]).\]
Then there exist an integer $r\geq 1$ only depending on $v$, $U^v$, $V^v$, $\otimes_{w\in S_p\backslash\{v\}}\sigma_{\tilde w}$ and $\overline r$ and a diagram $D(\rhobar)=(D_1(\rhobar)\hookrightarrow D_0(\rhobar))$ as in \S \ref{lowerstatement} only depending on $\rhobar=\rbar_{\tilde v}(1)$ {\upshape(}and satisfying the assumptions in {\it loc.cit.}\ on the constants $\nu_i${\upshape)} such that there is an isomorphism of diagrams
\[D(\rhobar)^{\oplus r}\cong (\pi^{I_1}\hookrightarrow \pi^{K_1}).\]
\end{thm}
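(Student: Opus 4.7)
The plan is to derive Theorem \ref{nonminimal} by combining the minimal case due to Dotto--Le \cite[Thm.1.3]{DoLe} with a careful analysis of how the multiplicity $r$ interacts with the action of the element $\Pi\defeq \smatr{0}{1}{p}{0}$. First, one needs to establish an isomorphism of $\GL_2(\cO_K)K^\times$-representations $\pi^{K_1}\cong D_0(\rhobar)^{\oplus r}$ for some integer $r\geq 1$ depending only on the listed data. This should follow from the same arguments as in \cite[Cor.5.3.5]{BHHMS1} (referenced in the introduction as Corollary \ref{D0r}): the Breuil--Paskunas-type Serre weight results combined with the genericity bounds (viii) and the multiplicity-one statement coming from the Taylor--Wiles patching method (cf.\ \cite{EGS}, using the auxiliary place $v_1$ whose Iwahori-level condition is needed to apply the patching machinery and obtain the right integer $r$ as a multiplicity, independent of $v$).

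Second, one must upgrade this isomorphism to an isomorphism of diagrams. The key point is that the action of $\Pi$ on $\pi^{I_1}\subseteq \pi^{K_1}\cong D_0(\rhobar)^{\oplus r}$ must be shown to act on each of the $r$ direct summands, inducing on each a copy of the operator defining $D_1(\rhobar)\hookrightarrow D_0(\rhobar)$. In the case $r=1$ this is \cite[Thm.1.3]{DoLe}, which moreover pins down the constants $\nu_i$ to match the prediction of \cite[Thm.6.4]{breuil-IL}. The plan is to re-examine the proof of Dotto--Le: their strategy is to use the explicit description of $\pi^{K_1}$ in terms of Kisin modules and Breuil modules via the Taylor--Wiles patching functor, and to compute the $\Pi$-action via a careful study of the Hecke action on automorphic forms at the auxiliary levels. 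For $r>1$, one checks that the patching functor applied to each irreducible constituent of the patched module still yields copies of the same combinatorial data attached to $\rhobar$, so the $\Pi$-action must decompose block-diagonally according to the direct sum decomposition of $\pi^{K_1}$.

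The main obstacle will be verifying the block-diagonal form of the $\Pi$-action: {\it a priori} $\Pi$ could mix the $r$ copies of $D_0(\rhobar)$, and even if one knows that $\Pi^2\in K^\times$ acts diagonally (through the central character $\det(\rhobar)\omega^{-1}$ by Lemma \ref{central}), that only forces $\Pi$ to act via an element of $\GL_r(\F)$-times the identity action on each summand up to conjugation. To rule out mixing, one should trace through Dotto--Le's construction of the $\Pi$-action from the patched Kisin/Breuil modules: since the patched module is free over the patched deformation ring and $\Pi$ acts via data attached to the completed local ring at $\tilde v$, the action factors through a finite-dimensional endomorphism determined by $\rhobar$ alone. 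After diagonalising via a suitable change of basis on $\F^r$ (and absorbing this into the choice of isomorphism $\pi^{K_1}\cong D_0(\rhobar)^{\oplus r}$), one obtains the desired block-diagonal form. The same argument simultaneously yields that the constants $\nu_i$ on each summand agree with those of \cite[Thm.6.4]{breuil-IL}, since on each summand the analysis is identical to the $r=1$ case handled by \cite[Thm.1.3]{DoLe}.
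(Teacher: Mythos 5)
Your first paragraph is correct and matches the paper's Corollary~\ref{D0r}, which is derived from the patching functor freeness results (Theorem~\ref{free1}) extending \cite[\S 8.2]{BHHMS1}. You also correctly identify the essential difficulty in going from $\pi^{K_1}\cong D_0(\rhobar)^{\oplus r}$ to an isomorphism of diagrams: one must rule out the possibility that $\Pi=\smatr{0}{1}{p}{0}$ mixes the $r$ summands.

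However, the mechanism you propose to rule out mixing has a genuine gap. The assertion that ``the patched module is free over the patched deformation ring and $\Pi$ acts via data attached to the completed local ring at $\tilde v$, so the action factors through a finite-dimensional endomorphism determined by $\rhobar$ alone'' does not establish block-diagonality. The weight-cycling operator sends $\pi^{I_1}[\chi]$ to $\pi^{I_1}[\chi^s]$, which are \emph{different} $r$-dimensional spaces attached to different Serre weights, so no single endomorphism is involved --- only the compositions around closed loops in the weight-cycling graph are endomorphisms, and it is not automatic that these are scalars. Even if the resulting matrix were ``determined by $\rhobar$,'' it could a priori be a non-scalar matrix (e.g.\ unipotent), in which case $D(\pi)$ would \emph{not} be isomorphic to $D(\rhobar)^{\oplus r}$. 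Your proposed ``diagonalisation'' of a single operator therefore does not address the right object: what must be diagonalised simultaneously is a collection of isomorphisms forming a representation of a groupoid, and simultaneous diagonalisation is only possible once one knows the holonomy around loops is scalar.

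The paper supplies this missing ingredient as Proposition~\ref{dole}: it shows, by a careful analysis of the patched maps $M_\infty(\iota)$ (whose images are principal ideals by Proposition~\ref{freenessprop}, using Theorem~\ref{free1}) and of the relation between $\widetilde h_\chi\circ\widetilde\iota_\chi$ and $\Id$ modulo $\m_\infty$, that every loop composition $\overline h_{\chi_1}\circ\cdots\circ\overline h_{\chi_0}$ acts by a \emph{scalar} independent of $r$ and of the patching data, equal to the $r=1$ scalar of \cite[(34)]{DoLe}. Only then does the groupoid-representation framework of \cite[Prop.4.5]{DoLe} apply: a representation of the free groupoid $\GG$ is determined up to isomorphism by the restrictions to the automorphism groups $\GG_{\mathbf x}$, and Proposition~\ref{dole} shows these restrictions agree with those of $D(\rhobar)^{\oplus r}$. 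Without some substitute for Proposition~\ref{dole}, your argument would be incomplete; this is the technical heart of the proof and it is far from automatic.
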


The case $r=1$ of Theorem \ref{nonminimal} is known and due to Dotto and Le (\cite[Thm.1.3]{DoLe}). We generalize below their proof to the case $r\geq 1$ using the results in \cite[\S8.2]{BHHMS1}. Moreover the diagram $D(\rhobar)$ in Theorem \ref{nonminimal} is in fact the same as the diagram ${\mathcal D}(\pi_{\rm glob}(\rhobar))$ of \cite[Thm.1.3]{DoLe}.

\subsubsection{Review of patching functors}\label{patching}

We recall the patching functors of \cite[\S6.6]{EGS} and some results of \cite[\S8.2]{BHHMS1}.

We keep the notation of \S\ref{globalp}. We choose Serre weights $\sigma_{\tilde w}\in W(\rbar_{\tilde w}(1))$ for $w\in S_p\backslash \{v\}$ and set
\[\sigma^v\defeq \bigotimes_{w\in S_p\backslash \{v\}}\sigma_{\tilde w}.\]
For each $w\in S_p\backslash \{v\}$ we fix a tame inertial type $\tau_{\tilde w}$ at the place $\tilde w$ such that, denoting by $\sigma(\tau_{\tilde w})$ the irreducible smooth representation of $\GL_2({\mathcal O}_{F_{\tilde w}})$ over $E$ associated by Henniart to $\tau_{\tilde w}$ in the appendix to \cite{BM}, $\JH(\ovl{\sigma(\tau_{\tilde w})})$ contains exactly one Serre weight in $W(\rbar_{\tilde w}(1))$ (where $\ovl{(-)}$ means the mod $p$ semisimplification). The existence of such $\tau_{\tilde w}$ follows from \cite[Prop.3.5.1]{EGS}, and the fact $\sigma(\tau_{\tilde w})$ can be realized over $E=W(\F)[1/p]$ follows from \cite[Lemma 3.1.1]{EGS}. For each $w\in S_p\backslash \{v\}$ we also fix a $\GL_2({\mathcal O}_{F_{{\tilde w}}})$-invariant $W(\F)$-lattice $\sigma^0(\tau_{\tilde w})$ in $\sigma(\tau_{\tilde w})$.

We define
\begin{equation*}
\sigma^{0,v}\defeq  \bigotimes_{w\in S_p\backslash \{v\}}\sigma^0(\tau_{\tilde w}),
\end{equation*}
and for any continuous representation $\sigma_{\tilde v}$ of $\GL_2({\mathcal O}_{F_{\tilde v}})$ on a finite type $W(\F)$-module, we consider $\sigma^{0,v}\otimes_{W(\F)}\sigma_{\tilde v}$ as a representation of $U$ via $U\twoheadrightarrow \prod_{w\in S_p}U_w$ and the isomorphisms $\iota_{\tilde w}$. We define $S(U^p,W(\F))_{\m^{\Sigma}}$ exactly as in \S\ref{somprel} replacing $\F$ by $W(\F)$ and $U^v$ by $U^p$. Then, as in \cite[\S\S 6.2,6.6]{EGS}, by ``patching'' $\Hom_U(\sigma^{0,v}\otimes_{W(\F)}\sigma_{\tilde v},S(U^p,W(\F))_{\m^{\Sigma}})^*$ for various $U$ (where $(-)^*\defeq \Hom_{W(\F)}((-),E/W(\F))$ as in {\it loc.cit.}), we obtain a patching functor
\[M_\infty:\ \sigma_{\tilde v}\longmapsto M_\infty(\sigma^{0,v}\otimes_{W(\F)}\sigma_{\tilde v})\]
which is an exact functor from the category of continuous representations $\sigma_{\tilde v}$ of $\GL_2({\mathcal O}_{F_{\tilde v}})$ on finite type $W(\F)$-modules to the category of finite type $R_\infty$-modules (though this patching functor depends on $\sigma^{0,v}$, we just write $M_\infty(\sigma_{\tilde v})$ in the sequel). The local ring $R_\infty$ is (see \cite[\S4.3]{gee-kisin} or \cite[\S6.2]{DoLe}):
\begin{equation*}
R_\infty\defeq  R^{\rm loc}\bbra{X_1,\ldots,X_{q-[F^+:{\mathbb Q}]}},
\end{equation*}
where $q$ is an integer $\geq [F^+:{\mathbb Q}]$ and
\[R^{\rm loc}\defeq \Big(\widehat\otimes_{w\in S\backslash S_p}R_{\rbar_{\tilde w}(1)}^{\square}\Big)\widehat\otimes_{W(\F)}\Big(\widehat\otimes_{w\in S_p\backslash \{v\}}R_{\rbar_{\tilde w}(1)}^{\square,(1,0),\tau_{\tilde w}}\Big)\widehat\otimes_{W(\F)}R_{\rbar_v(1)}^{\square}.\]
Recall $R_{\rbar_{\tilde w}(1)}^{\square,(1,0),\tau_{\tilde w}}$ is the reduced $p$-torsion free quotient of $R_{\rbar_{\tilde w}(1)}^{\square}$ parametrizing framed potentially Barsotti--Tate deformations with inertial type $\tau_{\tilde w}$ (by local-global compatibility and the inertial Langlands correspondence, for $w\in S_p \backslash \{v\}$ the action of $R_{\rbar_{\tilde w}(1)}^{\square}$ on $M_\infty(\sigma^{0,v}\otimes_{W(\F)}\sigma_v)$ factors through this quotient, see \cite[\S6.6]{EGS}). As in \cite[\S8.1]{BHHMS1} (see the discussion before \cite[Rem.8.1.3]{BHHMS1} but note that we do not need to fix the determinant here) we have isomorphisms $R_{\rbar_{\tilde w}(1)}^{\square}\cong W(\F)\bbra{X_1,X_2,X_3,X_4}$ for $w\in S\backslash S_p$, and, by genericity of $\rbar_v$,
\begin{equation*}
R_{\rbar_v(1)}^{\square}\cong W(\F)\bbra{X_1,\dots,X_{4+4[F_{\tilde v}:\Qp]}}.
\end{equation*}
By \cite[Thm.7.2.1(2)]{EGS} (and \cite[Rk.5.2.2]{gee-kisin}) we have
\begin{equation*}
R_{\rbar_{\tilde w}(1)}^{\square,(1,0),\tau_{\tilde w}}\cong W(\F)\bbra{X_1,\dots,X_{4+[F_{\tilde w}:\Qp]}},
\end{equation*}
so that we finally get
\begin{equation}\label{rinfini}
R_\infty\cong R_{\rbar_v(1)}^{\square}\bbra{X_1,\dots,X_{4(|S|-1)+q-[F^+_v:\Qp]}}\cong W(\F)\bbra{X_1,\dots,X_{4|S|+q+3[F^+_v:\Qp]}}.
\end{equation}
Moreover, if $\sigma_{\tilde v}$ is free of finite type over $W(\F)$, then $M_\infty(\sigma_{\tilde v})$ is free of finite type over a subring $S_\infty$ of $R_\infty$, where $S_\infty\cong W(\F)\bbra{x_1,\dots,x_{4|S|+q}}$. Finally, denoting by $\m_\infty$ the maximal ideal of $R_\infty$, we have
\begin{equation}\label{modmax}
M_\infty(\sigma_{\tilde v})/\m_\infty\cong \Hom_{U}\!\big(\mkern-3mu(\otimes_{w\in S_p\backslash{v}}\sigma_{\tilde w})\otimes_{\F}\ovl{\sigma_{\tilde v}},S(U^p,\F)[\m^{\Sigma}]\big)^\vee\cong \Hom_{U_v}(\ovl{\sigma_{\tilde v}},\pi)^\vee,
\end{equation}
where $\pi$ is as in Theorem \ref{nonminimal}.

Since everything is now at the place $\tilde v$, we drop the index $\tilde v$. If $\tau$ is a tame inertial type, we set $R_\infty^{(1,0),\tau}\defeq R_\infty\otimes_{R_{\rhobar}^{\square}}R_{\rhobar}^{\square,(1,0),\tau}$. If $\sigma\in W(\rhobar)$, we denote by $P_{\sigma}$ the projective $\F[\GL_2(\Fq)]$-envelope of $\sigma$ and by $\widetilde P_\sigma$ the projective $W(\F)[\GL_2(\Fq)]$-module lifting $P_\sigma$. We recall that the scheme theoretic support of an $R_\infty$-module $M$ is $R_\infty/{\rm Ann}_{R_\infty}(M)$. The following theorem then follows by exactly the same proof as for \cite[Prop.8.2.3]{BHHMS1} and \cite[Prop.8.2.6]{BHHMS1}.

\begin{thm}\label{free1}
There exists an integer $r\geq 1$ such that
\begin{enumerate}
\item for any $\sigma\in W(\rhobar)$ the module $M_\infty(\sigma)$ is free of rank $r$ over its scheme-theoretic support which is a domain;
\item for any $\sigma\in W(\rhobar)$ the modules $M_\infty(\widetilde P_{\sigma})$ and $M_\infty(P_{\sigma})$ are free of rank $r$ over their respective scheme-theoretic support;
\item for any tame inertial type $\tau$ such that $\JH(\ovl{\sigma(\tau)})\cap W(\rhobar)\ne \emptyset$ and any $\GL_2({\mathcal O}_{K})$-invariant \ $W(\F)$-lattice \ $\sigma^0(\tau)$ \ in \ $\sigma(\tau)$ \ with \ irreducible \ cosocle, \ the \ module $M_\infty(\sigma^0(\tau))$ is free of rank $r$ over its scheme-theoretic support, which is the domain $R_\infty^{(1,0),\tau}$.
\end{enumerate}
\end{thm}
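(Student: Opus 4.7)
The plan is to follow closely the arguments of Propositions~8.2.3 and~8.2.5 of \cite{BHHMS1}, with the patching functor $M_\infty$ of \S\ref{patching} in place of the one used there. The starting point is that, by the genericity assumption (viii) on $\rhobar = \rbar_{\tilde v}(1)$ combined with \cite[Thm.7.2.1]{EGS} and \cite[Rk.5.2.2]{gee-kisin}, for every tame inertial type $\tau$ with $\JH(\ovl{\sigma(\tau)}) \cap W(\rhobar) \ne \emptyset$ the deformation ring $R^{\square,(1,0),\tau}_{\rhobar}$ is a formal power series ring over $W(\F)$. Consequently, via \eqref{rinfini}, the ring $R_\infty^{(1,0),\tau}$ is a regular local domain of the same Krull dimension as $S_\infty$, and local-global compatibility at $\tilde v$ guarantees that the action of $R^{\square}_{\rhobar}$ on $M_\infty(\sigma^0(\tau))$ factors through $R_\infty^{(1,0),\tau}$.

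First I would handle (iii) for the subfamily of tame types $\tau$ satisfying $|\JH(\ovl{\sigma(\tau)}) \cap W(\rhobar)| = 1$, which exist for every $\sigma \in W(\rhobar)$ by \cite[Prop.3.5.1]{EGS}. Being free of finite rank over $S_\infty$ and supported on the regular quotient $R_\infty^{(1,0),\tau}$ of the same dimension, $M_\infty(\sigma^0(\tau))$ is maximal Cohen--Macaulay over $R_\infty^{(1,0),\tau}$ and hence free of some rank $r_\tau \geq 1$ by Auslander--Buchsbaum; full support is forced by the non-vanishing \eqref{modmax} combined with \eqref{gls}. A propagation argument via pairs of such singleton types sharing a common Serre weight, identical to that in the proof of \cite[Prop.8.2.5]{BHHMS1}, then shows that $r_\tau$ is independent of the chosen $\tau$, yielding the desired integer $r$.

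To obtain (iii) for general tame types $\tau$ and lattices $\sigma^0(\tau)$ with irreducible cosocle, I would use the lattice filtration techniques of \cite[\S4]{EGS}: applying the exact functor $M_\infty$ yields short exact sequences whose end terms are free of rank $r$ by the previous step, and comparison with the Breuil--Mézard cycle computation of \cite{EGS} forces $M_\infty(\sigma^0(\tau))$ to be free of rank $r$ over $R_\infty^{(1,0),\tau}$. For (i), given $\sigma \in W(\rhobar)$, choose a tame type $\tau$ such that $\sigma$ arises as the socle or cosocle of $\ovl{\sigma^0(\tau)}$; the corresponding surjection or inclusion at the level of $M_\infty$ identifies the support of $M_\infty(\sigma)$ with an irreducible component of $R_\infty^{(1,0),\tau}/p$, which is again a formal power series ring over $\F$ by the genericity of $\rhobar$, and freeness of rank $r$ then follows from (iii) by passage to the quotient. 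Finally (ii) reduces to (i) by filtering $\widetilde P_\sigma$ (resp.\ $P_\sigma$) by lattices with irreducible cosocle whose graded pieces lie in $W(\rhobar)$, applying $M_\infty$, and tracking both freeness and scheme-theoretic supports through the filtration.

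The principal technical point will be to verify that the arguments of \cite[\S8.2]{BHHMS1}, originally formulated in a minimal-level setting and hence implicitly for $r = 1$, extend uniformly in the rank $r$. Since all freeness assertions are internal to the scheme-theoretic support of each individual module, and since the input from \cite{EGS} on the structure of the tame potentially Barsotti--Tate deformation rings is unaffected by the level, the passage should be harmless provided one is careful to extract \emph{the same} integer $r$---determined by the global data via \eqref{modmax}---throughout the propagation and lattice-filtration steps.
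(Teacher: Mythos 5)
Your proposal is correct and follows essentially the same route as the paper: the paper's own proof of this theorem is simply the statement that it follows by exactly the same argument as \cite[Prop.8.2.3]{BHHMS1} and \cite[Prop.8.2.5]{BHHMS1}, and your sketch is a faithful reconstruction of that argument (singleton types plus Auslander--Buchsbaum for (iii), propagation of the rank $r$ across types sharing a Serre weight, EGS lattice filtrations for general $\sigma^0(\tau)$, and passage to (i) and (ii) via the exactness of $M_\infty$ and the structure of the potentially Barsotti--Tate deformation rings), together with the correct observation that the only adaptation needed in the non-minimal setting is to carry the integer $r\geq 1$ consistently through each step rather than specializing to $r=1$.
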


\begin{cor}\label{D0r}
Let $\pi$ as in Theorem \ref{nonminimal} and $r$ as in Theorem \ref{free1}. We have an isomorphism of $\GL_2({\mathcal O}_K)K^\times$-representations $D_0(\rhobar)^{\oplus r}\cong \pi^{K_1}$.
\end{cor}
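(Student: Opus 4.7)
The plan is to mirror the patching-functor argument that underlies the analogous \cite[Cor.8.2.6]{BHHMS1} (the $r=1$ case being \cite[Thm.1.3]{DoLe}). The central tool is the isomorphism \eqref{modmax} combined with the freeness statements of Theorem \ref{free1}. First, I would extract the Serre-weight information. For each $\sigma \in W(\rhobar)$, applying \eqref{modmax} to $\sigma_{\tilde v} = \widetilde P_\sigma$, the projective cover of $\sigma$ in $W(\F)[\GL_2(\Fq)]$-modules, gives
\[ [\pi^{K_1}:\sigma] \;=\; \dim_\F \Hom_{\GL_2(\Fq)}(P_\sigma,\pi^{K_1}) \;=\; \dim_\F M_\infty(\widetilde P_\sigma)/\mathfrak{m}_\infty \;=\; r, \]
where the last equality uses Theorem \ref{free1}(ii): freeness of rank $r$ over a quotient $R_\infty/I$ of $R_\infty$ forces reduction modulo $\mathfrak{m}_\infty$ to have dimension $r$. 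Applying \eqref{modmax} directly to $\sigma$ and using Theorem \ref{free1}(i) likewise forces each $\sigma \in W(\rhobar)$ to occur with multiplicity exactly $r$ in $\soc_{\GL_2(\Fq)}\pi^{K_1}$, while the Gee--Liu--Savitt statement \eqref{gls} forbids any other Serre weight from appearing in the socle. Hence $\soc_{\GL_2(\Fq)}\pi^{K_1} \cong \big(\bigoplus_{\sigma\in W(\rhobar)}\sigma\big)^{\oplus r}$.

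Next, to pin down the non-socle JH constituents of $\pi^{K_1}$ and match them against those of $D_0(\rhobar)^{\oplus r}$, I would run Theorem \ref{free1}(iii) over tame types $\tau$ with $\JH(\overline{\sigma(\tau)}) \cap W(\rhobar)\ne\emptyset$. For each lattice $\sigma^0(\tau)$ with irreducible cosocle $\sigma'$, \eqref{modmax} identifies $\dim_\F \Hom_{\GL_2(\Fq)}(\overline{\sigma^0(\tau)}, \pi^{K_1})$ with $\dim_\F M_\infty(\sigma^0(\tau))/\mathfrak{m}_\infty = r$ (freeness over the domain $R_\infty^{(1,0),\tau}$). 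By \cite[\S3,\S4]{EGS}, as $(\tau, \sigma^0(\tau))$ ranges over allowable pairs, the cosocles $\sigma'$ exhaust exactly the JH constituents of $D_0(\rhobar)$; bookkeeping the multiplicities then yields $[\pi^{K_1}:\sigma'] = r\cdot[D_0(\rhobar):\sigma']$ for every $\sigma' \in \JH(D_0(\rhobar))$ and precludes any further JH factor.

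Finally, to upgrade the matching of socle and JH content to an actual isomorphism of $\GL_2(\cO_K)K^\times$-representations, I would invoke the characterization of $D_0(\rhobar)$ recalled in \S\ref{lowerstatement}: it is the maximal finite-length $\GL_2(\Fq)$-representation with socle $\bigoplus_{\sigma\in W(\rhobar)}\sigma$ in which each Serre weight occurs with multiplicity one. Decomposing $\pi^{K_1}$ according to its $r$-fold socle and the resulting indecomposable blocks $\pi^{K_1}_\sigma \subseteq \pi^{K_1}$ with socle $\sigma^{\oplus r}$, the multiplicity computations above force $\pi^{K_1}_\sigma \cong D_{0,\sigma}(\rhobar)^{\oplus r}$, whence $\pi^{K_1}\cong D_0(\rhobar)^{\oplus r}$. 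The action of $K^\times$ matches via the common central character $\det(\rhobar)\omega^{-1}$. The main obstacle is this last step: matching JH multiplicities does not by itself produce an isomorphism, and the argument must either exhibit an explicit embedding using injective envelopes of the socle, or lean on the uniqueness/maximality result of \cite[\S13]{BP} which relies crucially on the genericity hypothesis (viii) on $\rbar_{\tilde v}$.
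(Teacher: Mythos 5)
Your opening step is the same as the paper's: applying \eqref{modmax} to $\widetilde P_\sigma$ and to $\sigma$ itself, together with Theorem \ref{free1}(i), (ii) and \eqref{gls}, gives both $[\pi^{K_1}:\sigma]=r$ and $\soc_{\GL_2(\Fq)}\pi^{K_1}\cong\big(\bigoplus_{\sigma\in W(\rhobar)}\sigma\big)^{\oplus r}$. But from there your argument runs into a genuine gap, one you yourself flag at the end. The ``bookkeeping'' over tame types $\tau$ and lattices $\sigma^0(\tau)$ with irreducible cosocle is not substantiated: the quantity $\dim_\F\Hom_{\GL_2(\Fq)}(\overline{\sigma^0(\tau)},\pi^{K_1})$, which is what \eqref{modmax} and Theorem \ref{free1}(iii) control, is not a Jordan--H\"older multiplicity of $\pi^{K_1}$, and there is no clean way to convert the collection of these dimensions into the full JH content. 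And, as you note, even a complete match of JH multiplicities with those of $D_0(\rhobar)^{\oplus r}$ would not by itself produce an isomorphism.

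The idea you are missing, and which the paper uses, is to make \emph{exactness} of $M_\infty$ do the work. Since $D_{0,\sigma}(\rhobar)/\sigma$ has no JH constituent in $W(\rhobar)$ and since \eqref{modmax} combined with the socle computation gives $M_\infty(\sigma')=0$ (by Nakayama) for any Serre weight $\sigma'\notin W(\rhobar)$, one deduces $M_\infty(D_{0,\sigma}(\rhobar)/\sigma)=0$. By exactness, the map $M_\infty(\sigma)\to M_\infty(D_{0,\sigma}(\rhobar))$ is then an isomorphism, and applying \eqref{modmax} once more shows the restriction map $\Hom_{\GL_2(\oK)}(D_{0,\sigma}(\rhobar),\pi^{K_1})\to\Hom_{\GL_2(\oK)}(\sigma,\pi^{K_1})$ is an isomorphism of $r$-dimensional spaces. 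Since $D_{0,\sigma}(\rhobar)$ has simple socle $\sigma$, any nonzero map $D_{0,\sigma}(\rhobar)\to\pi^{K_1}$ is injective; choosing $r$ independent such maps for each $\sigma$ produces an honest embedding $D_0(\rhobar)^{\oplus r}\hookrightarrow\pi^{K_1}$. The maximality of $D_0(\rhobar)$ (the extension of \cite[Prop.13.1]{BP}) together with the multiplicity count from your first step then forces this to be an equality. This construction of an explicit embedding, rather than a comparison of characters, is precisely the step needed to close the gap you identify.
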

\begin{proof}
The action of the center $K^\times$ being by definition the same on both sides, we can focus on the action of $\GL_2({\mathcal O}_K)$. It follows from Theorem \ref{free1}(i) and (ii) and from (\ref{modmax}) that the surjection $P_{\sigma}\twoheadrightarrow \sigma$ induces an isomorphism of $r$-dimensional $\F$-vector spaces $\Hom_{\GL_2({\mathcal O}_K)}(\sigma,\pi^{K_1})\buildrel\sim\over\rightarrow \Hom_{\GL_2({\mathcal O}_K)}(P_{\sigma},\pi^{K_1})$. In particular the multiplicity of each $\sigma\in W(\rhobar)$ in $\pi^{K_1}$ is $r$. It follows from $M_\infty(D_{0,\sigma}(\rhobar)/\sigma)=0$ (recall $D_{0}(\rhobar)=\oplus_{\sigma\in W(\rhobar)}D_{0,\sigma}(\rhobar)$) and from (\ref{modmax}) that the injection $\sigma\hookrightarrow D_{0,\sigma}(\rhobar)$ induces an isomorphism $\Hom_{\GL_2({\mathcal O}_K)}(D_{0,\sigma}(\rhobar),\pi^{K_1})\buildrel\sim\over\rightarrow \Hom_{\GL_2({\mathcal O}_K)}(\sigma,\pi^{K_1})$. This gives an inclusion $D_0(\rhobar)^{\oplus r}\hookrightarrow \pi^{K_1}$. If this inclusion is strict, then 
by maximality of $D_0(\rhobar)^{\oplus r}$ (an obvious generalization of \cite[Prop.13.1]{BP}) this implies there exists $\sigma\in W(\rhobar)$ which appears in $\pi^{K_1}/D_0(\rhobar)^{\oplus r}$, and hence has multiplicity $>r$ in $\pi^{K_1}$, which is a contradiction.
\end{proof}

\begin{rem}
In the proof of Theorem \ref{free1}, and hence also in Corollary \ref{D0r}, one only needs the slightly weaker bounds $1\leq r_i\leq p-4$ (and $2\leq r_0\leq p-3$ if $\rbar_{\tilde v}$ is irreducible) in the genericity conditions (viii) on $\rbar_{\tilde v}$ (or equivalently $\rhobar$) in \S\ref{globalp} (these bounds are used in \cite[\S4]{LMS} which is used in the proof of \cite[Prop.8.2.6]{BHHMS1}).
\end{rem}

\subsubsection{Direct sums of diagrams}\label{directsum}

We prove Theorem \ref{nonminimal} using the method of \cite[\S4]{DoLe}.

We keep the notation in \S\S\ref{globalp}, \ref{patching}. Everything in this section being at the place $\tilde v$, we drop it from the notation. Recall we identify the set of embeddings $\Fq\hookrightarrow \F$ with $\{0,\dots,f-1\}$ via $\sigma_0\circ \varphi^i \mapsto i$. We denote by $\mathcal P$ the set of subsets of $\{0,\dots,f-1\}$ and by $J^c\in \mathcal P$ the complement of a subset $J\in \mathcal P$.

We start by fixing a tame inertial type $\tau$ such that $\JH(\ovl{\sigma(\tau)})\cap W(\rhobar)\ne \emptyset$ and a $\GL_2({\mathcal O}_{K})$-invariant $W(\F)$-lattice $\theta_0$ in $\sigma(\tau)$ with irreducible cosocle. With the notation of \cite[\S5.1]{EGS} there is $I\in \mathcal P$ such that this cosocle is $\overline\sigma_{I}(\tau)$ and $\theta_0=\sigma^{\rm o}_{I}(\tau)$. As in \cite[p.77]{EGS} we can reindex the irreducible constituents of $\theta_0/p$ by elements $J'$ in $\mathcal P$ as follows:
\[\sigma_{J'}\defeq \overline \sigma_{(J' \cup I^c)\backslash (J'\cap I^c)}(\tau),\]
so that (by \cite[Thm.5.1.1]{EGS}) the $j$-th layer of the cosocle filtration of $\theta_0/p$ consists of the $\sigma_{J'}$ for $|J'|=f-j$, $0\leq j\leq f$. By the beginning of the proof of \cite[Thm.10.1.1]{EGS} (see {\it loc.cit.}\ p.77), there is $J'_{\min}\subseteq J'_{\max}$ in ${\mathcal P}$ such that $\JH(\ovl{\theta_0/p})\cap W(\rhobar)=\{\sigma_{J'} : J'_{\min}\subseteq J'\subseteq J'_{\max}\}$. By \cite[Thm.7.2.1]{EGS} we have
\[R_\infty^{(1,0),\tau}\cong \big(W(\F)\bbra{(X'_j,Y'_j)_{j\in J'_{\max}\backslash J'_{\min}}}/(X'_jY'_j-p)_{j\in J'_{\max}\backslash J'_{\min}}\big)\bbra{U_1,\dots,U_d}\]
for some integer $d\geq 0$. Up to renumbering the variables we can assume that the irreducible component of $R_\infty^{(1,0),\tau}/p$ corresponding to $\sigma_{J'}$, $J'_{\min}\subseteq J'\subseteq J'_{\max}$, in \cite[p.77]{EGS} (which is the support of $M_\infty(\sigma_{J'})$ by Theorem \ref{free1}(i)) is given by the ideal $((X'_j)_{j\in J'\backslash J'_{\min}},(Y'_j)_{j\in J'_{\max}\backslash J'})$.

We first fix $J\in {\mathcal P}$ such that $|J|=f-1$, so that $J^c=\{j\}$ for some $j\in \{0,\dots,f-1\}$. We let $\theta$ be the unique (up to homothety) $\GL_2({\mathcal O}_{K})$-invariant $W(\F)$-lattice in $\sigma(\tau)$ with irreducible cosocle $\sigma_{J}$ (\cite[Lemma 4.1.1]{EGS}). Up to multiplication by an element in $W(\F)^\times$, there is a unique $\GL_2({\mathcal O}_{K})$-equivariant saturated inclusion $\iota:\theta\hookrightarrow \theta_0$, i.e.\ such that the induced morphism $\overline\iota:\theta/p\rightarrow \theta_0/p$ is nonzero. Recall that by Theorem \ref{free1}(iii) both $M_\infty(\theta)$ and $M_\infty(\theta_0)$ are free of rank $r$ over $R_\infty^{(1,0),\tau}$.

\begin{lem}\label{casf-1}
The image of $M_\infty(\iota):M_\infty(\theta)\hookrightarrow M_\infty(\theta_0)$ is $xM_\infty(\theta_0)$, where $x=p$ if $j\in J'_{\min}$, $x=X'_{j}$ if $j\in J'_{\max}\backslash J'_{\min}$ and $x=1$ if $j\notin J'_{\max}$.
\end{lem}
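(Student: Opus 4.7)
The plan is to apply the exact patching functor $M_\infty$ to the short exact sequence of $W(\F)[\GL_2(\oK)]$-modules $0 \to \theta \xrightarrow{\iota} \theta_0 \to Q \to 0$, where $Q \defeq \theta_0/\iota(\theta)$, to obtain
\[0 \to M_\infty(\theta) \xrightarrow{M_\infty(\iota)} M_\infty(\theta_0) \to M_\infty(Q) \to 0.\]
By Theorem \ref{free1}(iii), both $M_\infty(\theta)$ and $M_\infty(\theta_0)$ are free of rank $r$ over the domain $R_\infty^{(1,0),\tau}$, so the lemma reduces to identifying $M_\infty(Q)$ with the cyclic quotient $M_\infty(\theta_0)/x M_\infty(\theta_0)$ for the prescribed $x \in R_\infty^{(1,0),\tau}$.

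The next step is to describe $Q$ as an $\F[\GL_2(\Fq)]$-module. Since $\iota$ is a saturated inclusion of lattices in $\sigma(\tau)$ with irreducible cosocles $\sigma_{\{0,\dots,f-1\}}$ and $\sigma_J$ differing by the single index $j$ in the reindexing, the EGS lattice analysis (\cite[Thm.\ 5.1.1]{EGS} and the structural results in \cite[\S10]{EGS}) identifies the Jordan--H\"older factors of $Q$ as precisely the $\sigma_{J''}$ with $J'' \subseteq \{0,\dots,f-1\}$ containing $j$. Theorem \ref{free1}(i) then gives that $M_\infty(\sigma_{J''})$ is nonzero only when $J'_{\min} \subseteq J'' \subseteq J'_{\max}$, in which case it is free of rank $r$ over the quotient $R_\infty^{(1,0),\tau}/((X'_i)_{i \in J''\setminus J'_{\min}}, (Y'_i)_{i \in J'_{\max}\setminus J''})$.

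A case analysis on the location of $j$ then yields the three values of $x$. If $j \notin J'_{\max}$, no $J'' \ni j$ lies in $[J'_{\min}, J'_{\max}]$, so every JH-factor of $Q$ has $M_\infty(\sigma_{J''}) = 0$; hence $M_\infty(Q) = 0$ and $M_\infty(\iota)$ is an isomorphism, giving $x = 1$. If $j \in J'_{\max}\setminus J'_{\min}$, every contributing $J''$ satisfies $j \in J''\setminus J'_{\min}$, so every contributing support contains $(X'_j)$; a dimension/multiplicity comparison using the uniform rank $r$ identifies $M_\infty(Q)$ with $M_\infty(\theta_0)/X'_j M_\infty(\theta_0)$, whence $x = X'_j$. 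Finally, if $j \in J'_{\min}$, every contributing support contains $p$ (via the relations $X'_i Y'_i = p$ combined with the explicit shape of the defining ideals), and the same reassembly argument gives $M_\infty(Q) \cong M_\infty(\theta_0)/p$, so $x = p$.

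The main technical obstacle is the precise identification of the $\F[\GL_2(\Fq)]$-constituents of $Q$ via the combinatorics of the EGS lattice structure, together with the verification that $M_\infty(Q)$ is actually cyclic over $R_\infty^{(1,0),\tau}$ (i.e.\ of the form $M_\infty(\theta_0)/x$), rather than merely an iterated extension of the individual $M_\infty(\sigma_{J''})$'s; this last point is settled by applying Theorem \ref{free1}(iii) to intermediate lattices in $\sigma(\tau)$ sitting between $\theta$ and $\theta_0$, which forces the extension data to collapse.
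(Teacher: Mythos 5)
Your overall strategy matches the paper's: apply $M_\infty$ to $0 \to \theta \to \theta_0 \to Q \to 0$, identify the Jordan--H\"older factors of $Q$ as the $\sigma_{J''}$ with $j\in J''$, and read off $x$ in three cases. The case analysis and the final values of $x$ are all correct.

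The gap is in the key intermediate claim, namely that $M_\infty(Q)$ is of the form $M_\infty(\theta_0)/xM_\infty(\theta_0)$ (equivalently, is free of rank $r$ over the principal-ideal quotient $R_\infty^{(1,0),\tau}/(x)$) rather than merely a successive extension of the modules $M_\infty(\sigma_{J''})$. You say this is ``settled by applying Theorem \ref{free1}(iii) to intermediate lattices in $\sigma(\tau)$ sitting between $\theta$ and $\theta_0$.'' But here $J^c = \{j\}$ is a single index, so $\theta$ and $\theta_0$ are adjacent in the distance-$1$ sense of the lattice poset, and there is no supply of intermediate lattices with irreducible cosocle between them to run that argument on (that reduction, via the chain $\theta=\theta_h\subset\cdots\subset\theta_0$, is exactly what Proposition \ref{freenessprop} carries out later \emph{using} this Lemma as the base case). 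The paper instead observes that $Q=\theta_0/\iota(\theta)$ is precisely a ``capped interval'' $\overline\sigma^{\mathcal J}$ in the sense of \cite[p.81]{EGS} with $\mathcal J=\{J', j\in J'\}$, and invokes the proof of \cite[Prop.8.2.3]{BHHMS1}, which establishes that $M_\infty(\overline\sigma^{\mathcal J})$ is free of rank $r$ over its schematic support (the unique reduced quotient of $R_\infty^{(1,0),\tau}/p$ supported on the matching components). This is the nontrivial input you would need to import to make your ``the extension data collapses'' step rigorous; without it, the ``dimension/multiplicity comparison'' does not by itself rule out that $M_\infty(Q)$ is, say, a nontrivial extension with annihilator strictly larger than $(x)$.
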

\begin{proof}
It follows from \cite[Thm.5.2.4(4)]{EGS} (up to a reindexation as above) that $p(\theta_0/\iota(\theta))=0$ and that the irreducible constituents of $\theta_0/\iota(\theta)$ are the $\sigma_{J'}$ for $J'$ containing $j$. In particular $\theta_0/\iota(\theta)$ is of the form $\overline \sigma^{\mathcal J}$ for a capped interval $\mathcal J$ as in \cite[p.81]{EGS} (namely ${\mathcal J}=\{J' : j\in J'\}$). By the proof of \cite[Prop.8.2.3]{BHHMS1} the module $M_\infty(\theta_0/\iota(\theta))=M_\infty(\overline \sigma^{\mathcal J})$ is free of rank $r$ over its schematic support, which is the unique reduced quotient of $R_\infty^{(1,0),\tau}/p$ with irreducible components corresponding to the $\sigma_{J'}$ such that $j\in J'$ and $J'_{\min}\subseteq J'\subseteq J'_{\max}$. If $j\notin J'_{\max}$, there are no such $J'$, so this quotient is $0$ (i.e.\ $M_\infty(\theta_0/\iota(\theta))=0$). If $j\in J'_{\max}\backslash J'_{\min}$, then this quotient is clearly $(R_\infty^{(1,0),\tau}/p)/(X'_{j})=R_\infty^{(1,0),\tau}/(X'_{j})$. Finally, if $j\in J'_{\min}$, all irreducible components remain, i.e.\ this quotient is $R_\infty^{(1,0),\tau}/p$. The lemma follows by exactness of $M_\infty$.
\end{proof}

We now consider an arbitrary $J\in {\mathcal P}$ and let $\theta$ be the unique invariant $W(\F)$-lattice in $\sigma(\tau)$ with irreducible cosocle $\sigma_{J}$. If $J^c\ne \emptyset$ we set ${J}^c=\{j_1,\dots,j_h\}$ and $J_i\defeq J\amalg \{j_1,\dots, j_{h-i}\}$ for $i\in \{0,\dots,h\}$ (so $J_0=\{0,\dots,f-1\}$ and $J_h=J$). As above we then denote by $\theta_i$ for $i\in \{0,\dots,h\}$ the unique (up to homothety) invariant $W(\F)$-lattice in $\sigma(\tau)$ with irreducible cosocle $\sigma_{J_i}$ and $\iota_i:\theta_i\hookrightarrow \theta_{i-1}$ the corresponding saturated inclusion for $i\in \{1,\dots,h\}$ (so $\theta_0$ is the same as before and $\theta_h=\theta$). The composition
\[\iota_1\circ\cdots\circ\iota_i:\theta_i\buildrel {\iota_i}\over \hookrightarrow \theta_{i-1}\buildrel {\iota_{i-1}}\over\hookrightarrow \cdots \theta_1\buildrel {\iota_1}\over\hookrightarrow \theta_0\]
is still saturated since one can check using \cite[Thm.5.1.1]{EGS} that the cosocle $\sigma_{J_h}$ of $\theta_h/p$ remains in the image of $\theta_i/p\rightarrow \theta_{i-1}/p$ for all $i\in \{h, h-1,\dots,1\}$ (indeed, by {\it loc.\ cit.}\ the Serre weights $\sigma_{J_{i}}-\sigma_{J_{i-1}}$ in $\theta_0/p$ form a nonsplit extension as $J_{i}\subseteq J_{i-1}$ and $|J_{i-1}\backslash J_{i}|=1$). In particular $\iota\defeq \iota_1\circ\cdots\circ\iota_h$ is the unique (up to scalar) saturated inclusion $\theta\hookrightarrow \theta_0$.

\begin{prop}\label{freenessprop}
There is $x\in R_\infty^{(1,0),\tau}$ such that the image of $M_\infty(\iota):M_\infty(\theta)\hookrightarrow M_\infty(\theta_0)$ is $xM_\infty(\theta_0)$. Moreover the principal ideal $xR_\infty^{(1,0),\tau}$ only depends on {\upshape(}the semisimplification of{\upshape)} $\theta_0/\iota(\theta)$.
\end{prop}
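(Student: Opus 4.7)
The plan is to establish Proposition \ref{freenessprop} by induction on $h=|J_0\backslash J|$, using Lemma \ref{casf-1} as the base case and a natural generalization of it as the engine for the inductive step. For $h=0$, the inclusion $\iota$ is the identity (up to a scalar) and the claim is trivial with $x=1$. For $h\geq 1$, we decompose $\iota=\iota_1\circ(\iota_2\circ\cdots\circ\iota_h)$, and if we can show that each individual $\iota_k:\theta_k\hookrightarrow \theta_{k-1}$ gives rise to an element $x_k\in R_\infty^{(1,0),\tau}$ with image of $M_\infty(\iota_k)$ equal to $x_k M_\infty(\theta_{k-1})$, then iterating and using that $R_\infty^{(1,0),\tau}$ is commutative, the image of $M_\infty(\iota)$ in $M_\infty(\theta_0)$ is $x_1x_2\cdots x_h\cdot M_\infty(\theta_0)$, so we may take $x\defeq \prod_{k=1}^{h}x_k$.

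The key step is to generalize Lemma \ref{casf-1} to the setup where $\theta_0$ is replaced by an arbitrary invariant $W(\F)$-lattice $\theta'$ in $\sigma(\tau)$ with irreducible cosocle $\sigma_{J'}$, and $\theta$ is replaced by a lattice $\theta''$ with irreducible cosocle $\sigma_{J''}$ with $J'\backslash J''=\{j\}$. The proof of this generalization proceeds as in Lemma \ref{casf-1}: by \cite[Thm.5.2.4(4)]{EGS} (applied with the reindexing centered at $J'$ rather than $I$), the quotient $\theta'/\iota_{J'',J'}(\theta'')$ is killed by $p$ and its Jordan--H\"older constituents are those $\sigma_{J'''}$ in $\theta'/p$ with $j\in J'''$ in the reindexing of \S\ref{directsum}; in particular, it is of the form $\overline\sigma^{\mathcal J}$ for an explicit capped interval $\mathcal J$. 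By the proof of \cite[Prop.8.2.3]{BHHMS1}, $M_\infty(\overline\sigma^{\mathcal J})$ is then free of rank $r$ over its scheme-theoretic support, which is a specific reduced quotient of $R_\infty^{(1,0),\tau}/p$. Combined with the freeness of $M_\infty(\theta'')$ and $M_\infty(\theta')$ over $R_\infty^{(1,0),\tau}$ given by Theorem \ref{free1}(iii), applying the exact functor $M_\infty$ to $0\to \theta''\to \theta'\to \theta'/\iota_{J'',J'}(\theta'')\to 0$ forces the image of $M_\infty(\iota_{J'',J'})$ to be a principal multiple of $M_\infty(\theta')$, and $x_k$ is determined (up to a unit) to be one of $p$, $X'_j$, $Y'_j$, or $1$ according to the position of $j$ with respect to $J'_{\min}$, $J'_{\max}$, and the ``current'' cosocle.

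For the second assertion, note that each $x_k$ depends only on the element $j_{h-k+1}$ being removed (via its location in $J'_{\min}$, $J'_{\max}\backslash J'_{\min}$ or outside $J'_{\max}$, and on which side of the intermediate cosocle it lies), and not on the choice of the ordering in the chain $J_0\supsetneq J_1\supsetneq\cdots\supsetneq J_h$; since $R_\infty^{(1,0),\tau}$ is commutative, the product $x_1\cdots x_h$ is independent of this ordering and depends only on $J_0\backslash J$ as a subset of $\{0,\ldots,f-1\}$. Since $J_0\backslash J$ is recovered from the Jordan--H\"older multiset of the semisimplification of $\theta_0/\iota(\theta)$ (via the reindexing), the ideal $xR_\infty^{(1,0),\tau}$ depends only on this semisimplification. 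Non-vanishing of $x$ follows from the injectivity of $M_\infty(\iota)$ together with the fact that $R_\infty^{(1,0),\tau}$ is a domain (by Theorem \ref{free1}(iii)) and $M_\infty(\theta_0)$ is nonzero.

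The main obstacle is the precise identification of the capped interval $\mathcal J$ appearing at each step and verifying that the associated scheme-theoretic support is cut out by the expected element of $R_\infty^{(1,0),\tau}$, once the reindexing is shifted from $I$ to the cosocle subset of an intermediate lattice. The combinatorics of which subcase produces $p$, $X'_j$, $Y'_j$, or a unit requires careful bookkeeping of the symmetric differences with $I^c$ and with $J'$, but follows the same pattern as in Lemma \ref{casf-1} once the framework of \cite[Thm.5.1.1]{EGS} and \cite[Thm.5.2.4(4)]{EGS} is applied systematically at each step.
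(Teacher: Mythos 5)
Your proposal follows the same overall strategy as the paper: decompose $\iota$ as the chain $\iota_1\circ\cdots\circ\iota_h$, apply the argument of Lemma~\ref{casf-1} to each $\iota_i:\theta_i\hookrightarrow\theta_{i-1}$, and take the product of the resulting elements $x_i$. You are right that the paper's phrase ``we can apply Lemma~\ref{casf-1} to $\iota_i$'' requires re-running the argument of that lemma rather than literally invoking its statement (which is specific to the target $\theta_0$), and you correctly identify the inputs needed (EGS Thm.\,5.2.4(4) for the quotient $\theta_{i-1}/\iota_i(\theta_i)$, BHHMS1 Prop.\,8.2.3 for the freeness of $M_\infty$ over the scheme-theoretic support, and Theorem~\ref{free1}(iii)). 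For the second assertion you take the ``explicit computation of $x$'' route, which the paper mentions parenthetically as an alternative to citing DoLe Prop.\,4.17; the paper's primary route is instead to note that $M_\infty(\theta_0/\iota(\theta))\cong(R_\infty^{(1,0),\tau}/(x))^{\oplus r}$, so the components with multiplicity of $R_\infty^{(1,0),\tau}/(x)$ are read off from the Jordan--H\"older multiset of $\theta_0/\iota(\theta)$, and then invoke DoLe's argument that a principal ideal of this form is determined by these data.

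One imprecision worth flagging: you allow $x_k$ to be $Y'_j$ and say $x_k$ may depend ``on which side of the intermediate cosocle'' $j_{h-k+1}$ lies. In fact, keeping the \emph{original} reindexing centered at $I$ throughout (there is no need to recenter at $J_{i-1}$), EGS Thm.\,5.2.4 shows that the constituents of $\theta_{i-1}/\iota_i(\theta_i)$ are exactly the $\sigma_{J'}$ with $j_{h-i+1}\in J'$, each with multiplicity one --- the same list as in Lemma~\ref{casf-1} applied with $j=j_{h-i+1}$. (One way to see this: $[\theta_0/\theta_i]=\sum_{J'}|J'\cap J_i^c|\,[\sigma_{J'}]$, so $[\theta_{i-1}/\theta_i]=\sum_{J'\ni j_{h-i+1}}[\sigma_{J'}]$.) Consequently $x_i\in\{p,\,X'_{j_{h-i+1}},\,1\}$ according to whether $j_{h-i+1}$ lies in $J'_{\min}$, in $J'_{\max}\setminus J'_{\min}$, or outside $J'_{\max}$, with no dependence on the intermediate cosocle and no occurrence of $Y'_j$. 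This closes a small logical gap in your second paragraph: as written, you assert that $x_k$ depends on the intermediate cosocle (which does vary with the ordering of $J^c$) yet that the product is ordering-independent merely by commutativity; the correct point is that each $x_k$ depends only on $j_{h-k+1}$, which makes the product visibly a function of the set $J^c$ alone, hence of the semisimplification of $\theta_0/\iota(\theta)$.
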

\begin{proof}
The statement being trivial if $J^c=\emptyset$ (equivalently if $\theta=\theta_0$) we can assume $J^c\ne \emptyset$. For $i\in \{1,\dots ,h\}$ we can apply Lemma \ref{casf-1} to $\iota_i:\theta_i\hookrightarrow \theta_{i-1}$ instead of $\iota:\theta\hookrightarrow \theta_0$. Hence there is $x_i\in R_\infty^{(1,0),\tau}$ such that the image of $M_\infty(\iota_i)$ is $x_iM_\infty(\theta_{i-1})$. The image of $M_\infty(\iota)$ is thus $(\prod_{i=1}^hx_i)M_\infty(\theta_0)$, i.e.\ we can take $x=\prod_{i=1}^hx_i$. It follows that $M_\infty(\theta_0/\iota(\theta))\cong (R_\infty^{(1,0),\tau}/(x))^{\oplus r}$. Hence the irreducible components of $R_\infty^{(1,0),\tau}/(x)$ are the ones corresponding to the $\sigma_{J'}$ such that $J'_{\min}\subseteq J'\subseteq J'_{\max}$ and $\sigma_{J'}$ appears in $\theta_0/\iota(\theta)$, and their multiplicities are the multiplicities of the $\sigma_{J'}$ in $\theta_0/\iota(\theta)$. The second assertion then follows by the same argument as at the end of the proof of \cite[Prop.4.17]{DoLe} (it also follows from an explicit computation of $x$ via Lemma \ref{casf-1}).
\end{proof}

Till the end of this section, we now extensively use notation and results from \cite[\S4]{DoLe} to which we refer the reader for more details.

Recall that $D_{0}(\rhobar)=\oplus_{\sigma\in W(\rhobar)}D_{0,\sigma}(\rhobar)$. If $\chi:I\rightarrow \F^\times$ is a character appearing in $D_0(\rhobar)^{I_1}$ and $\F v_\chi\subseteq D_0(\rhobar)$ is the corresponding eigenspace (which is $1$-dimensional), we define as in \cite[Def.4.1]{DoLe} $R\chi$ as the character of $I$ on $(\soc_{\GL_2({\mathcal O}_K)}\langle \F\GL_2({\mathcal O}_K) v_\chi\rangle)^{I_1}$, which is also $1$-dimensional as it is $\sigma^{I_1}$ for the unique $\sigma\in W(\rhobar)$ such that $\chi$ appears in $D_{0,\sigma}(\rhobar)^{I_1}$. As in \cite[p.8]{BP} we denote by $\chi^s$ the character of $I$ on $\smatr{0}{1}{p}{0}v_\chi\in D_0(\rhobar)^{I_1}$ and by $\sigma(\chi)$ the Serre weight which is the cosocle of $\Ind_I^{\GL_2({\mathcal O}_K)}\chi$.

We define as in \cite[Prop.4.14]{DoLe} an isomorphism
\[\overline h_{\chi}:M_\infty(\sigma(R\chi^s))/\m_\infty \buildrel\sim\over\longrightarrow M_\infty(\sigma(R\chi))/\m_\infty\]
(the ``one-dimensional by Theorem 4.6'' in the proof of {\it loc.cit.}\ can just be replaced by ``of the same dimension by Theorem \ref{free1}''; also note that $\overline h_{\chi}$ is an isomorphism, as it is dual to the isomorphism $g_\chi$ in {\it loc.cit.}).

\begin{prop}\label{dole}
Let $k\geq 1$ and $\chi_0,\dots,\chi_{k-1}$ arbitrary characters of $I$ which occur on $\pi^{I_1}$ {\upshape(}equivalently on $D_0(\rhobar)^{I_1}${\upshape)} such that $R\chi_i^s=R\chi_{i+1}$ for $i\in \{0,\dots,k-2\}$ and $R\chi_{k-1}^s=R\chi_{0}$. Then the isomorphism
\[\overline h_{\chi_1}\circ \overline h_{\chi_2}\circ \cdots \circ \overline h_{\chi_{k-1}}\circ \overline h_{\chi_{0}}:M_\infty(\sigma(R\chi_0^s))/\m_\infty\buildrel\sim\over\longrightarrow M_\infty(\sigma(R\chi_0^s))/\m_\infty\]
is the multiplication by a scalar in $\F^\times$ which depends neither on $r$ nor on $M_\infty$. In particular this scalar is the same as in {\upshape\cite[(34)]{DoLe}}.
\end{prop}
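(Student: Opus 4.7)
My plan is to mimic closely the argument of \cite[Prop.4.14, Prop.4.17, \S4.4]{DoLe}, replacing the ``free of rank~$1$'' inputs by the ``free of rank~$r$'' statements of Theorem \ref{free1} and Proposition \ref{freenessprop}. The reason one should expect this to work is that the construction of $\overline{h}_\chi$ is functorial in the patching datum, and it factors through local lattice/inclusion information that is intrinsic to $\rhobar$ (encoded in $R_\infty^{(1,0),\tau}/p$) and has nothing to do with the integer $r$ or the auxiliary choices made in the patching.

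Concretely, I would first lift each $\overline{h}_\chi$ to an $R_\infty$-linear map $h_\chi \colon M_\infty(\sigma(R\chi^s))\rightarrow M_\infty(\sigma(R\chi))$ by reproducing the construction of \cite[Prop.4.14]{DoLe}, which combines: (a) the dual of a map between patching modules of principal series deduced from Frobenius reciprocity and the injections $\sigma(R\chi^s)\hookrightarrow \Ind_I^{\GL_2(\oK)}R\chi^s$, $\sigma(R\chi)\hookrightarrow \Ind_I^{\GL_2(\oK)}R\chi$; (b) the isomorphism $\Ind_I^{\GL_2(\oK)}R\chi^s \cong \Ind_I^{\GL_2(\oK)}R\chi$ induced by $\smatr{0}{1}{p}{0}$ on $I_1$-invariants; and (c) Proposition \ref{freenessprop} applied to the two relevant lattices in a suitably chosen tame type $\tau$. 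Theorem \ref{free1}(i) then provides fixed $R_\infty^{(1,0),\tau}$-linear isomorphisms $M_\infty(\sigma(R\chi))\cong (R_\infty/\mathfrak{a}_\chi)^{\oplus r}$, where $\mathfrak{a}_\chi$ is the intrinsic ideal cutting out the scheme-theoretic support (independent of $r$). Under these trivializations each $h_\chi$ becomes an $r\times r$ matrix over a quotient of $R_\infty$, and — this is the conceptual point — because $h_\chi$ is built from \emph{scalar} multiplication on the rank-$r$ module by a specific element $u_\chi$ produced by Proposition \ref{freenessprop} (times the canonical ``twist'' isomorphism from step~(b)), it is a scalar matrix $u_\chi\cdot\mathrm{Id}_r$.

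Next, for the cycle condition $R\chi_{k-1}^s=R\chi_0$, the composition $h_{\chi_1}\circ\cdots\circ h_{\chi_0}$ lands in $\mathrm{End}_{R_\infty}(M_\infty(\sigma(R\chi_0^s)))$ and, under the trivialization, is multiplication by the scalar $u\defeq \prod_{i=0}^{k-1} u_{\chi_i}\in R_\infty/\mathfrak{a}_{\chi_0^s}$ times the identity on the $r$ copies. Reducing modulo $\mathfrak m_\infty$ we get a scalar $\overline u\in\F^\times$ (a unit because each $\overline h_{\chi_i}$ is an isomorphism). To prove that $\overline u$ is independent of $r$ and of $M_\infty$, I would argue by ``specialization to $r=1$'': the explicit formula for $u_\chi$ deduced from Proposition \ref{freenessprop} (as in the proof of \cite[Prop.4.17]{DoLe}, via the parameter $x$ of Lemma \ref{casf-1} applied iteratively) only depends on $R_\infty^{(1,0),\tau}/p$ and the position in it of the relevant irreducible components, and this data only depends on $\rhobar$ via the local deformation ring $R_{\rhobar}^\square$. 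Hence replacing $M_\infty$ by a patching functor in the minimal $r=1$ case — which does exist by \cite[Thm.1.3]{DoLe} — gives the same scalar, and by \cite[(34)]{DoLe} this scalar is precisely the one of \emph{loc.\ cit.}

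The main obstacle is not the combinatorial identification of $u_\chi$ — that follows from iterating Lemma \ref{casf-1} just as in \cite{DoLe} — but showing rigorously that the lift $h_\chi$ really is ``scalar $\times$ identity'' under the rank-$r$ trivialization, rather than a more general $r\times r$ matrix. This requires writing out the construction in step~(a)-(c) above as a composition of three $R_\infty$-linear maps each of which is known to be scalar multiplication by Theorem \ref{free1} (applied respectively to the principal series lattices, to the twist, and to $\theta\hookrightarrow\theta_0$), and observing that the trivializations can be chosen compatibly along this chain so that no nondiagonal matrix is ever introduced. Once this is done, the rest is formal.
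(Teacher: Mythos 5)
Your strategy has a genuine gap at exactly the point you flag as ``the main obstacle'': you want each map $h_\chi$ (and in particular $M_\infty(\iota)$ for the saturated inclusion $\iota\colon \theta^{R\chi}\hookrightarrow\theta^{R\chi^s}$) to become \emph{scalar multiplication} by some element $u_\chi$ under a compatible choice of $R_\infty(\tau)$-trivializations of the rank-$r$ free modules. But Proposition~\ref{freenessprop} does not give that. It only tells you that the \emph{image} of $M_\infty(\iota)$ is $\widetilde U_p(\chi)\,M_\infty(\theta^{R\chi^s})$ for some $\widetilde U_p(\chi)\in R_\infty(\tau)$. Consequently one can only write $M_\infty(\iota)=\widetilde\iota_\chi\circ \widetilde U_p(\chi)$ with $\widetilde\iota_\chi$ an arbitrary $R_\infty(\tau)$-linear isomorphism of free rank-$r$ modules, i.e.\ an element of $\mathrm{GL}_r(R_\infty(\tau))$ under a trivialization, and there is no reason this matrix should be diagonal, let alone the identity. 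The proposal's plan of ``choosing the trivializations compatibly so that no nondiagonal matrix is ever introduced'' cannot be carried out a priori, and indeed the paper explicitly records that the relevant diagonal maps in the diagram from \cite{DoLe} are \emph{not} simply multiplication by $\widetilde U_p(\chi_i)$ once $r>1$.

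What the paper does instead is subtler and is the real content here. From the commutativity of the analogue of \cite[(28)]{DoLe}, it identifies the reduction modulo $p$ of the scalar $\widetilde U_p(\chi)$ with the reduction modulo $p$ of the classical $p^{-e(\chi)}U_p(\chi)$, and then uses that $\overline{p^{-e(\chi)}U_p(\chi)}$ is a regular element of $R_\infty(\tau(\chi^s))/p$ whose annihilator lies in $\mathfrak m_\infty$. Combined with freeness of rank $r$ (Theorem~\ref{free1}(iii)), this yields the weaker but sufficient statement
\[
\widetilde h_\chi\circ\widetilde\iota_\chi-\mathrm{Id}\in\mathfrak m_\infty\,\mathrm{End}_{R_\infty(\tau)}(M_\infty(\theta^{R\chi})).
\]
One then conjugates each diagonal map through suitable $\widetilde\iota$'s, concludes that the total composite lies in $\bigl(\prod_i\widetilde U_p(\chi_i)\bigr)\bigl(\mathrm{Id}+\mathfrak m_\infty\,\mathrm{End}(\cdots)\bigr)$, and observes that $\prod_i\widetilde U_p(\chi_i)\cdot\bigl(\widetilde\iota_{\chi_0}\circ\cdots\circ\widetilde\iota_{\chi_1}\bigr)=p^\nu\mathrm{Id}$, so that $p^{-\nu}\prod_i\widetilde U_p(\chi_i)$ is a unit. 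Reduction modulo $\mathfrak m_\infty$ then gives a scalar in $\F^\times$ times the identity matrix, and that scalar is visibly independent of $r$ and of the patching functor. So the fix for your gap is not to force each map to be scalar (impossible with the available inputs) but to show that the composite \emph{agrees with a scalar modulo $\mathfrak m_\infty\mathrm{End}(\cdots)$}, which is all that survives after reduction to $\F$ and is exactly what the regularity of $\overline{p^{-e(\chi)}U_p(\chi)}$ delivers. Your ``specialization to $r=1$'' step is then fine, but it cannot bypass this intermediate argument.
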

\begin{proof}
We just indicate the steps in the proofs of \cite[\S\S4.4, 4.5]{DoLe}, where the assumption $r=1$ is used, and how one can extend the argument there to $r\geq 1$. We use without comment the notation of {\it loc.cit.}\\
$\bullet$ The definition of the isomorphism $\widetilde h_\chi:M_\infty(\theta^{R\chi^s})\buildrel\sim\over\rightarrow M_\infty(\theta^{R\chi})$ in \cite[(28)]{DoLe} holds because one only needs to know that $M_\infty(\theta^{R\chi^s})$ and $M_\infty(\theta^{R\chi})$ are free of the same finite rank over $R_\infty(\tau)$.\\
$\bullet$ By Proposition \ref{freenessprop} there exists $\widetilde U_p(\chi)\in R_\infty(\tau)$ such that $M_\infty(\iota)(M_\infty(\theta^{R\chi}))=\widetilde U_p(\chi)M_\infty(\theta^{R\chi^s})$, where $\iota:\theta^{R\chi}\hookrightarrow \theta^{R\chi^s}$ is as in the unlabelled commutative diagram below \cite[(27)]{DoLe}. Since $R_\infty(\tau)$ is a domain by \cite[Thm.7.2.1(2)]{EGS} and $M_\infty(\theta^{R\chi})$, $M_\infty(\theta^{R\chi^s})$ are free of rank $r$ over $R_\infty(\tau)$ by Theorem \ref{free1}(iii), there is a unique $R_\infty(\tau)$-linear isomorphism $\widetilde \iota_\chi:M_\infty(\theta^{R\chi})\buildrel\sim\over\rightarrow M_\infty(\theta^{R\chi^s})$ such that $M_\infty(\iota)=\widetilde \iota_\chi \circ \widetilde U_p(\chi)$, where $\widetilde U_p(\chi)$ here means multiplication by $\widetilde U_p(\chi)$ on $M_\infty(\theta^{R\chi})$. Then we have a commutative diagram analogous to \cite[(29)]{DoLe} replacing the multiplication by $\widetilde U_p(\chi)$ in the diagonal map by the map $\widetilde h_\chi \circ \widetilde \iota_\chi\circ \widetilde U_p(\chi)=\widetilde U_p(\chi)(\widetilde h_\chi \circ \widetilde \iota_\chi)$.\\
$\bullet$ By the commutativity of the right-hand side of (the analog of) \cite[(28)]{DoLe} and by the isomorphism $M_\infty(Q(\chi^s)^{R\chi})\cong M_\infty(\theta(\chi^s)^{R\chi})/p$, we deduce that the map
\[h_\chi \circ \iota_Q:M_\infty(Q(\chi^s)^{R\chi})\longrightarrow M_\infty(Q(\chi^s)^{R\chi})\]
is the multiplication by the image of $p^{-e(\chi)}U_p(\chi)$ in $R_\infty(\tau(\chi^s))/p$. As the image of $h_\chi \circ \iota_Q$ is $\widetilde U_p(\chi)M_\infty(Q(\chi^s)^{R\chi})$ by the commutativity of the left-hand side of (the analog of) \cite[(28)]{DoLe} and the definition of $\widetilde U_p(\chi)$, we deduce that
\[\widetilde U_p(\chi)(R_\infty(\tau(\chi^s))/p)=(p^{-e(\chi)}U_p(\chi))(R_\infty(\tau(\chi^s))/p).\]
In particular, multiplying $\widetilde U_p(\chi)$ by a unit in $R_\infty(\tau)$ we can assume that $\widetilde U_p(\chi)$ and $p^{-e(\chi)}U_p(\chi)$ have the same image in the quotient $R_\infty(\tau(\chi^s))/p$ of $R_\infty(\tau)$. As a consequence the analogue of \cite[Prop.4.17]{DoLe} holds.\\
$\bullet$ Since by definition $p^{-e(\chi)}U_p(\chi)\in R_\infty(\tau(\chi^s))\backslash pR_\infty(\tau(\chi^s))$, we have
\begin{equation}\label{upchibar}
{\rm Ann}_{R_\infty(\tau(\chi^s))/p}\big(\overline{p^{-e(\chi)}U_p(\chi)}\big)\subseteq \m_\infty (R_\infty(\tau(\chi^s))/p).
\end{equation}
As $\widetilde U_p(\chi)\mapsto \overline{p^{-e(\chi)}U_p(\chi)}\in R_\infty(\tau(\chi^s))/p$ (previous point), we deduce $\widetilde U_p(\chi)(\widetilde h_\chi \circ \widetilde \iota_\chi - \Id)\mapsto 0$ in $\End_{R_\infty(\tau(\chi^s))/p}(M_\infty(Q(\chi^s)^{R\chi}))$ by the analog of \cite[(28)]{DoLe}. As $M_\infty(Q(\chi^s)^{R\chi})\cong M_\infty(\theta(\chi^s)^{R\chi})/p$ is free of rank $r$ over $R_\infty(\tau(\chi^s))/p$ (by Theorem \ref{free1}(iii)), (\ref{upchibar}) implies the image of $\widetilde h_\chi \circ \widetilde \iota_\chi - \Id$ in $\End_{R_\infty(\tau(\chi^s))/p}(M_\infty(Q(\chi^s)^{R\chi}))$ lands in $\m_\infty \End_{R_\infty(\tau(\chi^s))/p}(M_\infty(Q(\chi^s)^{R\chi}))$. Since $\Ker(R_\infty(\tau)\twoheadrightarrow R_\infty(\tau(\chi^s))/p)\subseteq \m_\infty R_\infty(\tau)$, we also have
\begin{equation}\label{minftyr}
\widetilde h_\chi \circ \widetilde \iota_\chi - \Id\in \m_\infty\End_{R_\infty(\tau)}(M_\infty(\theta^{R\chi})).
\end{equation}
$\bullet$ The big unlabelled diagram before \cite[(33)]{DoLe} still holds but the diagonal maps are not simply multiplication by some $\widetilde U_p(\chi_i)$. For instance in the case $k=3$ (the general case being similar) one has to replace the left diagonal maps in {\it loc.cit.} by successively (from top to bottom) $\widetilde U_p(\chi_0)((\widetilde\iota_{\chi_2}\circ \widetilde \iota_{\chi_1})^{-1}\circ (\widetilde h_{\chi_0} \circ \widetilde \iota_{\chi_0})\circ \widetilde\iota_{\chi_2}\circ \widetilde \iota_{\chi_1})$, $\widetilde U_p(\chi_2)(\widetilde\iota_{\chi_1}^{-1}\circ (\widetilde h_{\chi_2} \circ \widetilde \iota_{\chi_2})\circ \widetilde\iota_{\chi_1})$, and $\widetilde U_p(\chi_1)(\widetilde h_{\chi_1} \circ \widetilde \iota_{\chi_1})$. By (\ref{minftyr}) and the $R_\infty(\tau)$-linearity of the isomorphisms $\widetilde \iota_{\chi_i}$, all these diagonal maps are in $\widetilde U_p(\chi_i)(\Id + \m_\infty \End_{R_\infty(\tau)}(M_\infty(\theta^{R\chi_0^s})))$, and their composition is thus in
\begin{equation}\label{minftyp}
\big(\prod_{i=0}^{k-1}\widetilde U_p(\chi_i)\big)(\Id + \m_\infty \End_{R_\infty(\tau)}(M_\infty(\theta^{R\chi_0^s}))).
\end{equation}
$\bullet$ For $\nu\geq 1$ defined as above \cite[(33)]{DoLe}, we have from the definition of the $\widetilde \iota_{\chi_i}$:
\begin{equation}\label{nuunit}
\big(\prod_{i=0}^{k-1}\widetilde U_p(\chi_i)\big)(\widetilde\iota_{\chi_0}\circ \widetilde \iota_{\chi_{k-1}}\circ \cdots \circ \widetilde \iota_{\chi_{1}})=p^{\nu}\Id
\end{equation}
which implies $p^{-\nu}(\prod_{i=0}^{k-1}\widetilde U_p(\chi_i))\in R_\infty(\tau)^\times$ since the $\widetilde\iota_{\chi_i}$ are isomorphisms. By the commutativity in the (analog of) the big unlabelled diagram before \cite[(33)]{DoLe} (see the previous point) together with (\ref{minftyp}) and (\ref{nuunit}) we finally obtain
\[\widetilde h_{\chi_1}\circ \cdots \circ \widetilde h_{\chi_{k-1}}\circ \widetilde h_{\chi_{0}}\in \big(p^{-\nu}\prod_{i=0}^{k-1}\widetilde U_p(\chi_i)\big)(\Id + \m_\infty \End_{R_\infty(\tau)}(M_\infty(\theta^{R\chi_0^s})))\]
which is our analog of \cite[(33)]{DoLe}. Then \cite[(34)]{DoLe} follows by the same argument. The rest of the proof in \cite[\S5]{DoLe} is unchanged.
\end{proof}

We can now prove Theorem~\ref{nonminimal}.

\begin{proof}[Proof of Theorem~\ref{nonminimal}]

We let $D(\rhobar)=(D_1(\rhobar)\hookrightarrow D_0(\rhobar))$ be the diagram denoted by ${\mathcal D}(\pi_{\rm glob}(\rhobar))$ in \cite{DoLe}, which only depends on $\rhobar$. Let $D(\pi)=(D_1(\pi)\hookrightarrow D_0(\pi))\defeq (\pi^{I_1}\hookrightarrow \pi^{K_1})$ be the diagram defined by $\pi$. We will show that $D(\rhobar)^{\oplus r} \cong D(\pi)$ as diagrams.

Define first $R:\pi^{I_1}\rightarrow (\soc_{\GL_2(\cO_K)}\pi)^{I_1}$ as in \cite[Def.4.1]{DoLe}, i.e.\ $Rv=S_{i(\chi)}v$ with $S_{i(\chi)}$ as in \cite[Rem.4.2]{DoLe} if $v\in \pi^{I_1}$ is an $I$-eigenvector with eigencharacter $\chi$. Note that the eigencharacter of $Rv$ is $R\chi$. 

Starting from $D(\rhobar)$ we define a groupoid $\GG$ with objects ${\mathbf x}_\xi$, where $\xi$ is any character of $I$ such that $(\soc_{\GL_2(\cO_K)} D_0(\rhobar))^{I_1}[\xi] \ne 0$, and morphisms freely generated by $g_\chi : {\mathbf x}_{R\chi} \congto {\mathbf x}_{R\chi^s}$, where $\chi$ is any character of $I$ such that $D_1(\rhobar)[\chi] \ne 0$, as in \cite[Def.4.3]{DoLe}.

The diagram $D(\pi)$ defines an $r$-dimensional representation of $\GG$, sending ${\mathbf x}_\xi$ to the vector space $(\soc_{\GL_2(\cO_K)} D_0(\pi))^{I_1}[\xi]$ and $g_\chi$ to the linear map
\[g_\chi^\pi : (\soc_{\GL_2(\cO_K)} D_0(\pi))^{I_1}[R\chi] \congto (\soc_{\GL_2(\cO_K)} D_0(\pi))^{I_1}[R\chi^s]\]
as in \cite[\S4]{DoLe}. Similarly, we have an $r$-dimensional representation of $\GG$ defined by the diagram $D(\rhobar)^{\oplus r}$; we denote the linear maps by $g_\chi^{\rhobar}$.

To check that the two $r$-dimensional representations of $\GG$ are isomorphic it suffices to check that
for each object $\mathbf x$ the restrictions of the two representations to the automorphism group $\GG_{\mathbf x}$ are
isomorphic (see \cite[Prop.4.5]{DoLe}), which is the case by Proposition~\ref{dole}, remembering that
$g_\chi^\pi$ is the dual of $\overline h_\chi$ by (the analog of) \cite[Prop.4.14]{DoLe}.

Therefore there exists an isomorphism
\[\lambda : (\soc_{\GL_2(\cO_K)} D_0(\pi))^{I_1} \congto (\soc_{\GL_2(\cO_K)} D_0(\rhobar)^{\oplus r})^{I_1}\]
of $I$-representations such that $\lambda \circ g_\chi^\pi = g_\chi^{\rhobar} \circ \lambda$ on $(\soc_{\GL_2(\cO_K)} D_0(\pi))^{I_1}[R\chi]$ for all $\chi$. As $\pi^{K_1} \cong D_0(\rhobar)^{\oplus r}$ as $K$-representations we can extend $\lambda$ uniquely to an isomorphism $\lambda : D_0(\pi) \congto D_0(\rhobar)^{\oplus r}$ of $K$-representations (extending to the ${\GL_2(\cO_K)}$-socle first). As in the proof of \cite[Prop.4.4]{DoLe} we deduce that $\lambda$ restricts to an isomorphism 
$\lambda : D_1(\pi) \congto D_1(\rhobar)^{\oplus r}$ commuting with $\smatr{0}{1}{p}{0}$ and $I$, which completes the proof.
\end{proof}

\subsubsection{Local-global compatibility results}\label{lcresults}

We collect our previous results to deduce (together with the results of \cite{HuWang2}) special cases of Conjecture \ref{theconjbar} and Conjecture \ref{theconj} when $n=2$ and $K$ is unramified.

We keep all the previous notation. We also keep the assumptions (i) to (xii) of \S\ref{globalp} (in particular $\rbar_{\tilde{v}}$ is semisimple), except that we replace the bounds on the $r_i$ in (viii) by the stronger bounds (which are those of \cite[\S1]{BHHMS1}):
\begin{equation*}
\begin{aligned}
12 &\le r_j \le p-15 &&\quad \text{if $j > 0$ or $\rhobar$ is reducible;}\\
13 &\le r_0 \le p-14 &&\quad \text{if $\rhobar$ is irreducible.}
\end{aligned}
\end{equation*}

Recall that we choose Serre weights $\sigma_{\tilde w}\in W(\rbar_{\tilde w}(1))$ for $w\in S_p\backslash \{v\}$ and consider $\pi= \Hom_{U^v}(\otimes_{w\in S_p\backslash\{v\}}\sigma_{\tilde w},S(V^v,\F)[\m^{\Sigma}])$ (see Theorem \ref{nonminimal}).

\begin{thm}\label{mi13}
We have $[\pi[\m_{I_1/Z_1}^3]:\chi]=[\pi[\m_{I_1/Z_1}]:\chi]$ for all smooth characters $\chi:I\rightarrow \F^\times$ appearing in $\pi[\m_{I_1/Z_1}]$.
\end{thm}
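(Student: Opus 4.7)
My plan is to follow the patching strategy developed in \cite[\S8]{BHHMS1}, where the analogous multiplicity statement is established in the minimal case $r=1$, and to extend it to arbitrary $r\geq 1$ using the enhanced freeness results of Theorem \ref{free1}. Recall that $[\pi[\m^n]:\chi]$ for a character $\chi:I\to\F^\times$ denotes $\dim_{\F}\Hom_I(\chi,\pi[\m^n])$ (here $\m=\m_{I_1/Z_1}$). Since the inclusions $\pi[\m]\subseteq \pi[\m^2]\subseteq \pi[\m^3]$ give monotonic inequalities on the multiplicities, it suffices to show that the $\chi^{-1}$-isotypic components of $\gr^1_{\m}(\pi^\vee)$ and $\gr^2_{\m}(\pi^\vee)$ vanish for every $\chi$ appearing in $\pi^{I_1}=\pi[\m]$.

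First, I would translate the question into the patched setting. For any character $\chi$ appearing in $\pi^{I_1}$, there is a unique Serre weight $\sigma(\chi)\in W(\rhobar)$ with $\chi_{\sigma(\chi)}=\chi$, and by $(\ref{modmax})$ the $H$-eigenspaces of $\pi^\vee/\m^n\pi^\vee$ are controlled by $M_\infty$ applied to an explicit finite-dimensional $\GL_2(\cO_K)$-representation $\tau(\chi,n)$ whose quotient layers assemble, $H$-isotypically, the possible extensions contributing to the multiplicity $[\pi[\m^n]:\chi]$. Concretely, the characters of $H$ acting on $\m/\m^2$ are $\alpha_j^{\pm1}$ and the trivial one, so the possible new contributions to the $\chi^{-1}$-part of $\gr^1_\m(\pi^\vee)$ come from extensions of $\chi^{-1}\alpha_j^{\pm1}$ (respectively $\chi^{-1}$) by $\chi^{-1}$ in $\pi^\vee$, and similarly at the $\gr^2_\m$ level the contributions are indexed by the products of pairs of such characters.

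Second, the local input needed is that the corresponding $\GL_2(\cO_K)$-representations only involve Serre weights outside of $W(\rhobar)$, so that $M_\infty$ vanishes on them by Theorem \ref{free1}(i). Under the strengthened bounds $12\leq r_j\leq p-15$ (resp. $13\leq r_0\leq p-14$ if $\rhobar$ is irreducible) on the genericity hypothesis (viii), the twists of the characters $\chi_{\sigma}$ by the characters in $H^\vee$ appearing in $\m^n/\m^{n+1}$ for $n\in\{1,2\}$ never coincide with any character $\chi_{\sigma'}$ for $\sigma'\in W(\rhobar)$ distinct from $\sigma$, and the possible extensions that would give rise to additional multiplicity correspond, via the patching functor, to Serre weights that are not in $W(\rhobar)$; by Theorem \ref{free1}(i), $M_\infty$ vanishes on these weights. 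This gives the desired vanishing of the multiplicities in $\gr^1$ and $\gr^2$.

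The main obstacle is the explicit identification of the $H$-characters occurring in $\gr^1_\m(\pi^\vee)$ and $\gr^2_\m(\pi^\vee)$ as contributions of Serre weights via the patched module, which in the non-minimal case requires a careful dictionary between $H$-isotypic components and the scheme-theoretic support decomposition given by Theorem \ref{free1}. The passage from the minimal case $r=1$ (treated in \cite[\S8]{BHHMS1}) to arbitrary $r\geq 1$ is essentially formal: the freeness of $M_\infty(P_\sigma)$ of rank $r$ over its scheme-theoretic support (Theorem \ref{free1}(ii)) ensures that the multiplicity inequalities scale uniformly by $r$, so that Theorem \ref{mi13} holds as soon as its minimal-case counterpart holds. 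I expect no further obstacle beyond a bookkeeping verification that the identifications performed in \cite[\S8]{BHHMS1} remain valid with $r$ copies throughout, much as in the proof of Theorem \ref{nonminimal}.
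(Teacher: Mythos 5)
Your proposal is on the right track and follows essentially the same strategy as the paper: the paper's proof is a citation, stating that \cite[Thm.8.3.10]{BHHMS1} extends {\it verbatim} to arbitrary $r \geq 1$ using Theorem \ref{free1} and \eqref{modmax}, and that combining this with Corollary \ref{D0r} verifies the hypotheses of \cite[Thm.1.3]{BHHMS1}, which gives the result. Your sketch is a reasonable description of what happens inside that black box.

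One imprecision worth fixing: you invoke Theorem \ref{free1}(i) to conclude that $M_\infty$ vanishes on Serre weights outside $W(\rhobar)$, but Theorem \ref{free1}(i) only addresses freeness of $M_\infty(\sigma)$ for $\sigma \in W(\rhobar)$. The vanishing $M_\infty(\sigma) = 0$ for $\sigma \notin W(\rhobar)$ is a consequence of \eqref{gls} combined with \eqref{modmax} and Nakayama, not of Theorem \ref{free1}(i). Also, the combinatorial heart of the argument in \cite[\S 8.3]{BHHMS1} is more intricate than a ``twisting characters never coincide'' statement: the upper bound on multiplicities comes from a careful structural analysis of $\pi[\m^n]$ via $M_\infty$ applied to explicit lattices inside tame types and their subquotients, matched against the irreducible components of the scheme-theoretic support of $M_\infty(\sigma^0(\tau))$, and the lower bound uses freeness in the form of Theorem \ref{free1}(ii)--(iii). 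Your description of the passage from $r=1$ to arbitrary $r$ as scaling the freeness rank is correct in spirit, and is exactly the reason Theorem \ref{free1} is stated for general $r$ in the paper.
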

\begin{proof}
The statement of \cite[Thm.8.3.11]{BHHMS1} applies {\it verbatim} with the same proof to $\pi$ as above using Theorem \ref{free1} and (\ref{modmax}). Combining this with Corollary \ref{D0r}, we see that $\pi$ satisfies all the assumptions of \cite[Thm.1.4]{BHHMS1}, whence the result by \cite[Thm.1.5]{BHHMS1}.
\end{proof} 

\begin{rem}\label{dim=f}
A similar argument as in (ii) of the proof of \cite[Thm.8.4.1]{BHHMS1} (which uses \cite[App.A]{GN}) shows that we also have $\dim_{\GL_2(K)}(\pi)=f$, where $\dim_{\GL_2(K)}(\pi)$ is the Gelfand--Kirillov dimension of $\pi$ as defined in \cite[\S5.1]{BHHMS1}.
\end{rem}

The following theorem is one of the main results of this paper.

\begin{thm}\label{specialcase1}
Keep all the previous assumptions and assume that the $r_i$ in $\rbar_{\tilde{v}}$ satisfy the following stronger bounds:
\begin{equation}\label{sstrong}
\begin{array}{llllll}
\max\{12,2f-1\} \!&\!\le \!&\! r_j \!&\!\le \!&\! p-\max\{15,2f+2\} \!& \text{if $j > 0$ or $\rhobar$ is reducible;}\\
\max\{13,2f\} \!&\!\le \!&\! r_0 \!&\!\le \!& \!p-\max\{14,2f+1\} \!& \text{if $\rhobar$ is irreducible.}
\end{array}
\end{equation}
Let $\sigma^v\defeq \otimes_{w\in S_p\backslash\{v\}}\sigma_{\tilde w}$, where the $\sigma_{\tilde w}$ are Serre weights in $W(\rbar_{\tilde w}(1))$ for $w\in S_p\backslash \{v\}$. Then Conjecture \ref{theconjbar} holds for $\Hom_{U^v}(\sigma^v,S(V^v,\F)[\m^{\Sigma}])$.
\end{thm}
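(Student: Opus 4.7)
The plan is to assemble the results of the preceding sections (which have done most of the heavy lifting) into the desired statement, with the final step being a twist by $\omega^{-1}\circ\det$. Setting $\pi \defeq \Hom_{U^v}(\sigma^v, S(V^v,\F)[\m^\Sigma])$ and $\rhobar \defeq \rbar_{\tilde v}(1) = \rbar_{\tilde v}\otimes\omega$, the key local-to-global input will be Corollary \ref{maintensorind}, applied to the pair $(\pi,\rhobar)$.

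First I will verify each hypothesis of Corollary \ref{maintensorind} in turn. The semisimplicity of $\rhobar$ follows from condition (viii) of \S\ref{globalp}. The strengthened bounds \eqref{sstrong} on the exponents of $\rbar_{\tilde v}|_{I_K}$ are precisely the bounds \eqref{eq:6} needed for $\rhobar$, as the cyclotomic twist $\rbar_{\tilde v}\rightsquigarrow\rhobar$ does not alter the relevant $r_i$ modulo the normalization \eqref{eq:0}. The isomorphism $\pi^{K_1} \cong D_0(\rhobar)^{\oplus r}$ of $\GL_2(\cO_K)K^\times$-representations is Corollary \ref{D0r}, which also defines the integer $r \ge 1$ depending only on $v$, $U^v$, $V^v$, $\sigma^v$ and $\rbar$. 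The multiplicity condition on $\pi$ needed in \S\ref{grstr} is Theorem \ref{mi13}. Finally, the refinement of this $\GL_2(\cO_K)K^\times$-isomorphism to an isomorphism of diagrams $(\pi^{I_1}\hookrightarrow\pi^{K_1}) \cong D(\rhobar)^{\oplus r}$, together with the fact that the constants $\nu_i$ attached to $D(\rhobar)$ are as prescribed in \cite[Thm.\,6.4]{breuil-IL}, is furnished by Theorem \ref{nonminimal}.

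With these hypotheses in hand, Corollary \ref{maintensorind} gives a $\gp$-equivariant isomorphism $V_{\GL_2}(\pi) \cong (\ind_K^{\otimes\Qp}\rhobar)^{\oplus r}$. To conclude, I will apply Remark \ref{trivial}(ii) for $n=2$, which yields $V_{\GL_2}(\pi\otimes(\omega^{-1}\circ\det)) \cong V_{\GL_2}(\pi)\otimes \ind_K^{\otimes\Qp}(\omega^{-1})$, and then use the multiplicativity of tensor induction on tensor products, combined with $\rhobar\otimes\omega^{-1}=\rbar_{\tilde v}$, to obtain
\[
V_{G,\tilde v}\big(\pi\otimes(\omega^{-1}\circ\det)\big) \cong \big(\ind_K^{\otimes\Qp}(\rbar_{\tilde v})\big)^{\oplus r} = \LLbar(\rbar_{\tilde v})^{\oplus r},
\]
where the last equality uses Example \ref{indn=2}. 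This is the content of Conjecture \ref{theconjbar} with $d=r$. Since all the substantial work (the patching argument of \S\ref{directsum}, the lower-bound tensor-induction computation of \S\ref{tensorinduction}, and the upper-bound cycle computation of \S\ref{subsection:gr-pi}, all packaged in Corollary \ref{maintensorind}) is already in place, no real obstacle remains; the only care needed is in tracking the twist normalization between $\rhobar$ and $\rbar_{\tilde v}$ and the cocharacter/twist $\delta_{\GL_2}$ built into the definition of $V_{\GL_2}$, both of which are handled cleanly by Remark \ref{trivial}(ii).
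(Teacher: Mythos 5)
Your proof is correct and takes essentially the same route as the paper: the proof given in \S\ref{lcresults} likewise applies Corollary \ref{maintensorind} to $\pi$, verifying its hypotheses via Theorem \ref{nonminimal} (the diagram isomorphism with the prescribed constants $\nu_i$) and Theorem \ref{mi13} (the multiplicity condition), and then undoes the cyclotomic twist $\rhobar = \rbar_{\tilde v}\otimes\omega$ using Remark \ref{trivial}(ii). Your additional explicit mention of Corollary \ref{D0r} is harmless, since the $\GL_2(\cO_K)K^\times$-isomorphism it records is already contained in the stronger diagram isomorphism of Theorem \ref{nonminimal}.
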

\begin{proof}
This follows from Corollary \ref{maintensorind} applied to $\pi=\Hom_{U^v}(\sigma^v,S(V^v,\F)[\m^{\Sigma}])$, which satisfies all the assumptions there by Theorem \ref{nonminimal} and Theorem \ref{mi13}, and by Remark \ref{trivial}(ii).
\end{proof}

We now give some evidence for Conjecture \ref{theconj}, still assuming (\ref{sstrong}). As we also need $r=1$, and to make things as simple as possible, we replace assumptions (v) and (vii) in \S\ref{globalp} by
\begin{enumerate}
\item[]$\overline r$ is unramified at all finite places outside $S_p$
\end{enumerate}
and we then take $S\defeq S_p$ (hence $\Sigma=S_p\cup \{v_1\}$). We also replace assumption (xii) in \S\ref{globalp} by
\begin{enumerate}
\item[]$\iota_{\widetilde{v_1}}(U_{v_1})$ is equal to the upper-triangular unipotent matrices mod $\widetilde{v_1}$.
\end{enumerate}
We take $V^v=U^p\prod_{w\in S_p\backslash\{v\}}V_w$ with $\iota_{\tilde{w}}(V_w)=1+pM_2(\cO_{F_{\tilde w}})\subseteq \GL_2(\cO_{F_{\tilde w}})=\iota_{\tilde{w}}(U_w)$. We let $T_{\widetilde{v_1}}$ be the Hecke operator acting on $S(V^v,\F)$ by the double coset
\[\iota_{\widetilde{v_1}}^{-1}\left[\iota_{\widetilde{v_1}}(U_{v_1})\begin{pmatrix}\varpi_{\widetilde{v_1}}&\\& 1\end{pmatrix}\iota_{\widetilde{v_1}}(U_{v_1})\right],\]
where $\varpi_{\widetilde{v_1}}$ is a uniformizer in ${\mathcal O}_{F_{\widetilde{v_1}}}$. Increasing $\F$ if necessary, we fix a choice of eigenvalues $\overline \alpha_{\widetilde{v_1}}\in \F$ of $\rhobar(\Frob_{\widetilde{v_1}})$ (the image of a geometric Frobenius at $\widetilde{v_1}$) and consider the ideal
\[\m^S\defeq (\m^\Sigma,T_{\widetilde{v_1}}-\alpha_{\widetilde{v_1}})\subseteq {\mathcal T}^\Sigma[T_{\widetilde{v_1}}],\]
where $\alpha_{\widetilde{v_1}}$ is any element in $W(\F)$ lifting $\overline \alpha_{\widetilde{v_1}}$ (see \S\ref{somprel} for ${\mathcal T}^\Sigma$). Then, replacing $\m^\Sigma$ by $\m^S$ everywhere in \S\S\ref{globalp}, \ref{patching}, \ref{directsum}, by a multiplicity $1$ result analogous to the one in \cite[Prop.3.5.1]{BD} (see for instance the argument in the proof of \cite[Lemma 3.1.4]{Enns2}) all the previous global results hold with $r$ being $1$.

\begin{prop}\label{allsatisfied}
Choose Serre weights $\sigma_{\tilde w}\in W(\rbar_{\tilde w}(1))$ for $w\in S_p\backslash \{v\}$ and let
\[\pi\defeq  \Hom_{U^v}(\otimes_{w\in S_p\backslash\{v\}}\sigma_{\tilde w},S(V^v,\F)[\m^{S}]).\]
The representation $\pi$ satisfies all the assumptions of \S\ref{sec:length-of-pi} {\upshape(}with $\rhobar=\rbar_{\tilde{v}}(1)${\upshape)}.
\end{prop}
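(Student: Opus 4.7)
The plan is to verify each of the three hypotheses of \S\ref{sec:length-of-pi} in turn, which, thanks to the groundwork already laid in the paper, amounts to piecing together the appropriate references.

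First, condition (i)—namely that $\pi^{K_1} \cong D_0(\rhobar)^{\oplus r}$ as a $\GL_2(\cO_K) K^\times$-representation with $r = 1$—will follow from Corollary \ref{D0r} applied in the present setting, where $\m^\Sigma$ is replaced by $\m^S$ throughout \S\S\ref{globalp}--\ref{directsum}. The fact that $r = 1$ in this replaced context is exactly the content of the multiplicity one statement mentioned in the paragraph preceding the proposition (the argument of \cite[Lemma 3.1.4]{Enns2} based on an analogue of \cite[Prop.\ 3.5.1]{BD} applies verbatim because $\iota_{\widetilde{v_1}}(U_{v_1})$ is exactly the upper-triangular unipotent matrices mod $\widetilde{v_1}$ and $\alpha_{\widetilde{v_1}}$ has been specified). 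In particular Theorem \ref{free1} holds with $r = 1$, giving $\pi^{K_1} \cong D_0(\rhobar)$.

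Second, condition (ii)—the equality of multiplicities $[\pi[\m_{I_1/Z_1}^3] : \chi] = [\pi[\m_{I_1/Z_1}] : \chi]$ for every $\chi$ appearing in $\pi[\m_{I_1/Z_1}]$—is precisely Theorem \ref{mi13}, whose proof (modelled on \cite[Thm.\ 8.3.10]{BHHMS1}) uses only Theorem \ref{free1} and the formula \eqref{modmax}, both of which continue to hold after replacing $\m^\Sigma$ by $\m^S$. No further work is required.

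The main point is condition (iii), the essential self-duality
\[
\mathrm{E}^{2f}_\Lambda(\pi^\vee) \cong \pi^\vee \otimes (\det(\rhobar)\omega^{-1})
\]
as $\GL_2(K)$-equivariant $\Lambda$-modules. Here I would follow the argument of \cite[Thm.\ 8.2]{HuWang2} verbatim. The input needed for that argument is threefold: the self-duality $\overline r^c \cong \overline r^\vee \otimes \omega^{1-n}$ (assumption (i) of \S\ref{wlgc}, applied with $n = 2$), which translates locally via local-global compatibility into a $\GL_2(K)$-equivariant isomorphism between $\pi$ and a twist of the relevant Kohlhaase dual in degree $2f$; the computation of the Gelfand--Kirillov dimension $\dim_{\GL_2(K)}(\pi) = f$, which is recorded in Remark \ref{dim=f} and follows by the argument of part (ii) of the proof of \cite[Thm.\ 8.4.1]{BHHMS1} combined with \cite[App.\ A]{GN}; and the multiplicity one input from part (i) above. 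Together these show that only the degree $j = 2f$ of $\mathrm{E}^j_\Lambda(\pi^\vee)$ is nonzero and identify it with the predicted twist, reproducing the argument of \cite[Thm.\ 8.2]{HuWang2} in our global setup.

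The hard part is step three: although the argument is in the literature, one has to check carefully that the Kohlhaase duality calculation of \cite[Thm.\ 8.2]{HuWang2}, which was carried out there under slightly different hypotheses (in particular with $\rhobar$ reducible nonsplit), goes through for the semisimple $\rhobar$ of our setting. The key ingredients—self-duality of $\overline r$, Gelfand--Kirillov dimension $f$, and the multiplicity one property at the place $\widetilde{v_1}$—are all available, so the argument transfers without change; verifying this compatibility is the only substantive task in the proof.
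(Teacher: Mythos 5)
Your proposal is correct and follows the same route as the paper, which simply notes that conditions (i) and (ii) are already available (from Corollary \ref{D0r} and Theorem \ref{mi13} in the $\m^S$-setting with $r=1$) and that condition (iii) holds ``by the same proof as for the definite case of \cite[Thm.8.2]{HuWang2} using Remark \ref{dim=f}.'' Your added observation that one must check that the Kohlhaase duality argument of \cite{HuWang2}, written for $\rhobar$ nonsplit, transfers to the semisimple case is a reasonable sanity check, but the paper takes this transfer for granted; the substance of the verification in both cases is that the Gelfand--Kirillov dimension of $\pi$ equals $f$ (Remark \ref{dim=f}), which is insensitive to whether $\rhobar$ is split or not.
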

\begin{proof}
The only missing assumption is the essential self-duality (\ref{eq:selfdual}). But it holds by the same proof as for the definite case of \cite[Thm.8.2]{HuWang2} using Remark \ref{dim=f}.
\end{proof}

From the results of \S\ref{sec:length-of-pi}, we thus deduce the following theorems.

\begin{thm}\label{thm:gen-socleglob}
The $\GL_2(F_{\tilde v})$-representation $\pi$ is generated by its $\GL_2(\cO_{F_{\tilde v}})$-socle, in particular is of finite type.
\end{thm}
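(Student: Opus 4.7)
The plan is to reduce the statement directly to the purely local result Theorem \ref{thm:gen-socle} of \S\ref{sec:length-of-pi}, using that all its hypotheses have already been verified for this $\pi$. More precisely, Proposition \ref{allsatisfied} (whose proof combines Corollary \ref{D0r} together with the multiplicity-one enforced by our choice of $\m^S$ to get $r=1$ in condition~(i), Theorem \ref{mi13} to get condition~(ii), and the argument of \cite[Thm.8.2]{HuWang2} applied via Remark \ref{dim=f} to get the essential self-duality~(iii)) asserts that $\pi$ satisfies all three assumptions of \S\ref{sec:length-of-pi} with $\rhobar = \rbar_{\tilde v}(1)$. Theorem \ref{thm:gen-socle} then applies verbatim and gives that $\pi$ is generated as a $\GL_2(F_{\tilde v})$-representation by its $\GL_2(\cO_{F_{\tilde v}})$-socle.

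For the ``in particular'' clause, I would observe that
\[\soc_{\GL_2(\cO_{F_{\tilde v}})}(\pi) \subseteq \pi^{K_1} \cong D_0(\rhobar),\]
and $D_0(\rhobar)$ is finite-dimensional since it is a representation of the finite group $\GL_2(\F_q)$. Hence the socle is finite-dimensional and $\pi$ is generated over $\GL_2(F_{\tilde v})$ by a finite-dimensional subspace, which is exactly the finite-type assertion.

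The substantive content is all pushed back into \S\ref{sec:length-of-pi}: the actual work lies in Theorem \ref{thm:gen-socle}, whose proof combines the exactness of $D_\xi^\vee$ on the category $\mathcal{C}$ (Theorem \ref{thm:functors_comparison}) with the key chain of equalities $\dim_{\F\ppar{X}} D_\xi^\vee(\pi') = m_{\mathfrak{p}_0}(\pi'^\vee) = \mathrm{lg}(\soc_{\GL_2(\cO_K)}(\pi'))$ for subrepresentations $\pi' \subseteq \pi$ (Proposition \ref{prop:socle=orbit}(ii)) and the nonvanishing $D_\xi^\vee(\pi'') \ne 0$ for nonzero quotients $\pi''$ (Proposition \ref{prop:socle=orbit}(iii), whose proof is where the essential self-duality enters through the construction of the auxiliary subrepresentation $\widetilde{\pi}$). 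So the only real work at this stage is to confirm that the essentially self-dual hypothesis transfers to the global setting via Remark \ref{dim=f}, which is already built into Proposition \ref{allsatisfied}. The main conceptual obstacle is therefore not in the present section but in establishing Proposition \ref{prop:socle=orbit}(iii), and by the time we reach this theorem everything is in place.
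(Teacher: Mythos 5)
Your proposal is exactly the paper's approach: Theorem \ref{thm:gen-socleglob} is stated immediately after "From the results of \S\ref{sec:length-of-pi}, we thus deduce the following theorems," i.e.\ it is an application of Theorem \ref{thm:gen-socle} once Proposition \ref{allsatisfied} has verified hypotheses (i)--(iii) of \S\ref{sec:length-of-pi} for this $\pi$, and your justification of the finite-type clause via $\soc_{\GL_2(\cO_{F_{\tilde v}})}(\pi)\subseteq \pi^{K_1}\cong D_0(\rhobar)$ is the intended one.
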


\begin{thm}\label{cor:pi-irredglob}\

  \begin{enumerate}
  \item Assume that $\rbar_{\tilde{v}}$ is irreducible. Then $\pi$ is irreducible and is a supersingular representation.
  \item Assume that $\rbar_{\tilde{v}}$ is reducible {\upshape(}split{\upshape)} and write
    $\rhobar=\rbar_{\tilde{v}}(1)=\begin{pmatrix}\chi_{1} &0\\0 &\chi_2\end{pmatrix}$. Then one
    has\begin{equation*} \pi=\Ind_{B^-(F_{\tilde v})}^{\GL_2(F_{\tilde v})}(\chi_1\omega^{-1}\otimes
      \chi_2)\oplus \pi' \oplus \Ind_{B^-(F_{\tilde v})}^{\GL_2(F_{\tilde v})}(\chi_2\omega^{-1}\otimes
      \chi_1),
    \end{equation*}
    where $\pi'$ is generated by its $\GL_2(\cO_{F_{\tilde v}})$-socle and $\pi'^{\vee}$ is essentially
    self-dual, i.e.\ satisfies {\upshape(\ref{eq:selfdual})}. Moreover, when $f=2$, $\pi'$ is irreducible
    and supersingular {\upshape(}and hence $\pi$ is semisimple{\upshape)}.
  \end{enumerate}
\end{thm}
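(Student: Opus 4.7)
The plan is to reduce both parts directly to the abstract local results of \S\ref{sec:length-of-pi}, which by Proposition \ref{allsatisfied} already apply to $\pi$ with $r=1$. More precisely, Proposition \ref{allsatisfied} verifies conditions (i), (ii), (iii) of \S\ref{sec:length-of-pi} for this particular $\pi$: (i) comes from Corollary \ref{D0r}, (ii) from Theorem \ref{mi13}, and the essential self-duality (iii) from the proof of \cite[Thm.8.2]{HuWang2} together with Remark \ref{dim=f}. With these in hand, part (i) of the theorem is immediate from Corollary \ref{cor:pi-irred}, since $\rhobar=\rbar_{\tilde v}(1)$ is irreducible precisely when $\rbar_{\tilde v}$ is.

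For part (ii), I would apply Corollary \ref{cor:split2} to $\rhobar=\chi_1\oplus\chi_2$ to produce a decomposition $\pi=\pi_0\oplus\pi_f\oplus\pi'$, where $\pi_0,\pi_f$ are irreducible principal series, $\pi'$ is generated by its $\GL_2(\cO_{F_{\tilde v}})$-socle and satisfies the essential self-duality \eqref{eq:selfdual}, and $\pi'$ is irreducible supersingular when $f=2$ by \cite[Thm.19.10(ii)]{BP}. This already gives every assertion about $\pi'$ in the statement.

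It then remains to identify the two principal series factors with the explicit ones in the statement. From the proof of Corollary \ref{cor:split2}, $\pi_0$ is characterized as the $\GL_2(F_{\tilde v})$-subrepresentation generated by $\sigma_0$, the unique weight in $W(\rhobar)$ of length $0$, and has central character $\det(\rhobar)\omega^{-1}$; the analogue holds for $\pi_f$ with the length-$f$ weight $\sigma_f$. Using the explicit description of $W(\rhobar)$ for $\rhobar=\chi_1\oplus\chi_2$ from \cite[\S11]{BP}, one computes the characters $\chi_{\sigma_0}$ and $\chi_{\sigma_f}$ of $I$ on the respective $I_1$-invariants; this, together with the irreducibility of $\pi_0$ and $\pi_f$ and the explicit formulas (\ref{eq:socle-dual}) from the proof of Corollary \ref{cor:split2}, pins down the inducing characters up to switching factors in $B^-(F_{\tilde v})$. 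Reinstating the twist from $\rhobar$ back to $\rbar_{\tilde v}$ (i.e.~undoing the $(1)$) gives exactly the two principal series displayed in the statement, consistently with Example 1 of \S\ref{exemples5}.

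The only place where any real work is concealed is the essential self-duality (iii), which is what forces $\pi_0$ and $\pi_f$ to appear also as quotients of $\pi$ (and not merely as subobjects), and is therefore the key global input making the direct sum decomposition possible; it is precisely this step that prevents the same argument from reducing the general case $r>1$ to Theorem \ref{thm:gen-socleglob} in a similarly clean way. Beyond invoking this input, the remaining identifications are formal manipulations with socles, central characters and Serre weight combinatorics.
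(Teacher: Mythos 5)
Your reduction to Corollary \ref{cor:pi-irred} and Corollary \ref{cor:split2} via Proposition \ref{allsatisfied} is exactly the paper's route, and the discussion of $\pi'$ (generated by its socle, essentially self-dual, supersingular for $f=2$) is correct. But the final identification step has a genuine gap: the data you invoke --- the $\GL_2(\cO_{F_{\tilde v}})$-socle of each of $\pi_0$, $\pi_f$, their irreducibility, the central character $\det(\rhobar)\omega^{-1}$, and the duality formulas (\ref{eq:Koh-char})/(\ref{eq:socle-dual}) --- only determine the inducing characters of $\pi_0$ and $\pi_f$ \emph{up to a free unramified twist}. Concretely, if $\pi_0 \cong \Ind_{B^-(K)}^{\GL_2(K)}(\chi_{0,1}\otimes\chi_{0,2})$, the socle fixes $\chi_{0,1}|_{\cO_K^\times}$ and $\chi_{0,2}|_{\cO_K^\times}$, the central character fixes $\chi_{0,1}(p)\chi_{0,2}(p)$, and (\ref{eq:Koh-char}) then expresses $\chi_f$ in terms of $\chi_0$; none of this pins down $\chi_{0,1}(p)$ individually. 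Equivalently, $\pi_0 \cong \cInd_{\GL_2(\cO_K)K^\times}^{\GL_2(K)}\sigma_0/(T-\mu_0)$ for some $\mu_0 \in \F^\times$, and your argument leaves $\mu_0$ undetermined, whereas the statement asserts the specific value dictated by $\chi_1(p)$, $\chi_2(p)$.

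The paper closes precisely this gap by invoking Theorem \ref{nonminimal}: the isomorphism of diagrams $(\pi^{I_1}\hookrightarrow\pi^{K_1}) \cong D(\rhobar)$, together with the normalization of the constants $\nu_i$ as in \cite[Thm.6.4]{breuil-IL} (which for $r=1$ is \cite[\S5]{DoLe}), fixes the action of $\smatr{0}{1}{p}{0}$ on $\pi^{I_1}$ and hence the Hecke eigenvalues $\mu_0$, $\mu_f$ in terms of $\rhobar(\Frob)$. This is the ingredient that upgrades "some principal series with the right socle and central character'' to the specific $\Ind_{B^-(F_{\tilde v})}^{\GL_2(F_{\tilde v})}(\chi_1\omega^{-1}\otimes\chi_2)$ and $\Ind_{B^-(F_{\tilde v})}^{\GL_2(F_{\tilde v})}(\chi_2\omega^{-1}\otimes\chi_1)$. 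You should add this citation; without it the identification is not complete. (As a minor side note, your closing comment about why $r>1$ fails is not quite what the paper says: the obstruction recorded in Remark \ref{formnonminimal} is that for $r>1$ one does not know that $\pi$ decomposes as $\pi'^{\oplus r}$, not that essential self-duality itself breaks down.)
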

\begin{proof}
Everything is in Corollary \ref{cor:pi-irred} and Corollary \ref{cor:split2}, except the precise form of the irreducible principal series $\pi_0$, $\pi_f$ in {\it loc.cit.}, but this easily follows from (\ref{eq:Koh-char}) and Theorem \ref{nonminimal} (which is \cite[\S5]{DoLe} since $r=1$).
\end{proof}

Combining Theorem \ref{cor:pi-irredglob} with Theorem \ref{specialcase1}, we obtain:

\begin{cor}\label{specialcase2}
Keep the same assumptions as just before Proposition \ref{allsatisfied}. If $\rbar_{\tilde{v}}$ is irreducible or if $f=2$, then $\pi$ is compatible with $\rhobar$ {\upshape(}Definition \ref{compatible2}{\upshape)}. In particular in these cases Conjecture \ref{theconj} holds for $\Hom_{U^v}(\sigma^v,S(V^v,\F)[\m^{S}])$.
\end{cor}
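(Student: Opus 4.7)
The strategy is to verify Definition \ref{compatible2} directly in each case, using Theorem \ref{specialcase1} (with $d=1$) for the $V_G$-computation of the full $\pi$, Theorem \ref{cor:pi-irredglob} for the internal structure of $\pi$, and Theorem \ref{mi13} together with Proposition \ref{prop:finiteness} to place $\pi$ (and its subquotients) in the category $\mathcal C$, so that $V_G$ behaves as an exact functor on them via Theorem \ref{thm:functors_comparison}.

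First I would handle Case 1, where $\rbar_{\tilde v}$, hence $\rhobar=\rbar_{\tilde v}(1)$, is irreducible. Then $\widetilde P_{\rhobar}=G$, $W_{\rhobar}=\{\Id\}$, and compatibility with $\rhobar$ reduces to three conditions: (a) $\pi$ is absolutely irreducible and supersingular, which is Theorem \ref{cor:pi-irredglob}(i); (b) $Z(\pi)=\det(\rhobar)\omega^{-1}$, which follows from Lemma \ref{central} combined with $\det(\rhobar)=\det(\rbar_{\tilde v})\omega^{2}$; and (c) $V_G(\pi)\cong\LLbar(\rhobar)$. Condition (c) follows from Theorem \ref{specialcase1} (with $r=d=1$) together with Remark \ref{trivial}(ii) applied with $\chi=\omega^{-1}$ and $n=2$: the latter gives $V_G(\pi\otimes(\omega^{-1}\circ\det))\cong V_G(\pi)\otimes\ind_K^{\otimes\Qp}\!(\omega^{-1})$, and then one computes $\LLbar(\rhobar)=\ind_K^{\otimes\Qp}(\rhobar)=\ind_K^{\otimes\Qp}(\rbar_{\tilde v})\otimes\ind_K^{\otimes\Qp}\!(\omega)=\LLbar(\rbar_{\tilde v})\otimes\ind_K^{\otimes\Qp}\!(\omega)$.

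Next, for Case 2 ($f=2$ and $\rhobar$ reducible semisimple), by Theorem \ref{cor:pi-irredglob}(ii) we have a decomposition $\pi\cong \Pi_0 \oplus \pi'\oplus \Pi_2$, where $\Pi_0\cong\Ind_{B^-(K)}^{\GL_2(K)}(\chi_1\omega^{-1}\otimes\chi_2)$, $\Pi_2\cong\Ind_{B^-(K)}^{\GL_2(K)}(\chi_2\omega^{-1}\otimes\chi_1)$, and $\pi'$ is irreducible supersingular. Here $\widetilde P_{\rhobar}=T=M_{P_{\rhobar}}$, so the isotypic components of $\LLbar|_{Z_{M_{P_{\rhobar}}}}$ are precisely the three components $C(\lambda_0),C(\lambda_1),C(\lambda_2)$ of Example \ref{exemples}(i), and good subrepresentations of $\LLbar|_{T^{\gKQ}}$ are arbitrary direct sums of these three. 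I would define $\Phi$ by sending $\Pi_0\mapsto C(\lambda_0)$, $\pi'\mapsto C(\lambda_1)$, $\Pi_2\mapsto C(\lambda_2)$, and extending by direct sum to all $2^3$ subrepresentations (using the semisimplicity of $\pi$). Then one checks Definition \ref{compatible1}: condition (i) on the form of subquotients follows from $P(C(\lambda_i))=B$ for $i\in\{0,2\}$ and $P(C(\lambda_1))=G$, together with the observation that $\omega^{-1}\circ\theta^{B}=\omega^{-1}$ and the character matching for the principal series; condition (ii) is vacuous because in the only nontrivial case $w\cdot C(\lambda_i)$ simply exchanges $C(\lambda_0)$ with $C(\lambda_2)$, corresponding to the symmetry $\chi_1\leftrightarrow\chi_2$ and $\Pi_0\leftrightarrow\Pi_2$; condition (iii) on the product structure is essentially trivial for $n=2$; and condition (iv) is Theorem \ref{cor:pi-irredglob}(ii) together with the fact that characters of $\GL_1$ are supersingular by convention.

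The main computation is then the verification of Definition \ref{compatible2}. For the principal series $\Pi_0,\Pi_2$, the identity $V_G(\Pi')\cong \Phi(\Pi')(\rhobar)$ is a direct application of Example \ref{enlightening}, which identifies $V_G$ of principal series explicitly with the corresponding character of $\gp$; the matching of central characters and of $V_{M_i}$ for the constituents of $\pi_i(C(\lambda_0))$ and $\pi_i(C(\lambda_2))$ reduces to bookkeeping with local class field theory (comparing $\det(\rhobar^{B-\mathrm{ss}})_i$ with $\chi_i$). The main point is the computation of $V_G(\pi')$. Here I would argue that Theorem \ref{mi13} together with the implications in Remark \ref{yongquan}(ii) show $\gr(\pi^\vee)$ is annihilated by $J$, so by Proposition \ref{prop:finiteness} $\pi\in\mathcal C$, and hence so are $\Pi_0$, $\Pi_2$, $\pi'$. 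By Theorem \ref{thm:functors_comparison}, $V_G$ is exact on these, giving
\[
V_G(\pi)\cong V_G(\Pi_0)\oplus V_G(\pi')\oplus V_G(\Pi_2).
\]
Combining the known value $V_G(\pi)\cong \LLbar(\rhobar)$ (Case 1 argument, valid here too via Theorem \ref{specialcase1}) with the computed $V_G(\Pi_0)$ and $V_G(\Pi_2)$, one extracts $V_G(\pi')$ as the quotient by the two ``extremal'' isotypic components, matching exactly $C(\lambda_1)(\rhobar)$, the middle isotypic component viewed via Lemma \ref{galqp}.

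The step I would expect to be the most delicate is the last one: verifying that the $\gp$-action on $V_G(\pi')$ produced by the exact-sequence argument genuinely matches $C(\lambda_1)(\rhobar)$ as a $\gp$-representation (not merely as an $I_{\Qp}$-representation), since tensor induction introduces nontrivial Galois permutation as in~\eqref{tensoract}. This however follows from the precise (Galois-equivariant) form of Theorem \ref{specialcase1} under the strengthened bounds~\eqref{sstrong}, which guarantee that the constants $\nu_i$ are as in \cite[Thm.~6.4]{breuil-IL}: the full $\gp$-action on $V_G(\pi)$ is then pinned down by $\LLbar(\rhobar)$, and the natural embeddings of $V_G(\Pi_0)$ and $V_G(\Pi_2)$ as the two extremal summands force $V_G(\pi')$ to inherit the correct middle $\gp$-subquotient. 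The Conclusion that Conjecture \ref{theconj} holds in these cases is then immediate from the definitions.
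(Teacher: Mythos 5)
Your proof is correct and takes essentially the same approach as the paper's, whose own proof is just the one-line remark that the statement follows by combining Theorem \ref{cor:pi-irredglob} (internal structure of $\pi$) with Theorem \ref{specialcase1} (giving $V_G(\pi)\cong\LLbar(\rhobar)$ after the twist of Remark \ref{trivial}(ii)); your argument spells out the bookkeeping that this combination requires. In particular, extracting $V_G(\pi')$ by exactness of $V_G$ on $\mathcal C$ and matching the extremal principal-series summands with $C(\lambda_0)(\rhobar)$, $C(\lambda_2)(\rhobar)$ via Example \ref{enlightening} (then invoking Krull--Schmidt and the distinctness of the three characters guaranteed by genericity) is exactly the intended mechanism.
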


\begin{rem}
When \ $\rbar_{\tilde{v}}$ \ is \ reducible \ nonsplit, \ a \ similar \ proof \ as \ for \cite[Thm.1.6]{HuWang2} (with the hypothesis of {\it loc.cit.}\ on $\rbar_{\tilde{v}}$) implies that $\pi$ is generated over $\GL_2(F_{\tilde v})$ by $\pi^{K_1}$. When moreover $f=2$, a similar proof as for \cite[Thm.1.7]{HuWang2} implies that $\pi$ is at least compatible with $\widetilde P_{\rhobar}=P_{\rhobar}=B$ (Definition \ref{compatible1}).
\end{rem}

\newpage

\bibliography{Biblio}

\bibliographystyle{amsalpha}

\end{document}